\numberwithin{equation}{section}
\numberwithin{figure}{section}
\theoremstyle{plain}
\newtheorem{thm}{\protect\theoremname}[section]
\theoremstyle{definition}
\newtheorem{defn}[thm]{\protect\definitionname}
\theoremstyle{definition}
\theoremstyle{remark}
\newtheorem{rem}[thm]{\protect\remarkname}
\theoremstyle{plain}
\newtheorem{cor}[thm]{\protect\corollaryname}
\theoremstyle{definition}
\newtheorem{example}[thm]{\protect\examplename}
\theoremstyle{plain}
\newtheorem{prop}[thm]{\protect\propositionname}
\theoremstyle{plain}
\newtheorem{lem}[thm]{\protect\lemmaname}
\theoremstyle{plain}
\newtheorem{fact}[thm]{\protect\factname}
\theoremstyle{plain}
\newtheorem*{thm*}{\protect\theoremname}
\theoremstyle{remark}
\newtheorem{claim}[thm]{\protect\claimname}
\DeclareMathOperator{\VFA}{VFA}
\DeclareMathOperator{\ACFA}{ACFA}
\DeclareMathOperator{\ACVF}{ACVF}
\DeclareMathOperator{\OGA}{OGA}
\DeclareMathOperator{\FA}{FA}
\DeclareMathOperator{\Spec}{Spec}
\DeclareMathOperator{\VF}{VF}
\DeclareMathOperator{\tp}{tp}
\DeclareMathOperator{\tdeg}{tdeg}
\DeclareMathOperator{\acl}{acl}
\DeclareMathOperator{\AKE}{AKE}
\DeclareMathOperator{\dimm}{dim}
\providecommand{\claimname}{Claim}
\providecommand{\corollaryname}{Corollary}
\providecommand{\definitionname}{Definition}
\providecommand{\examplename}{Example}
\providecommand{\factname}{Fact}
\providecommand{\lemmaname}{Lemma}
\providecommand{\problemname}{Problem}
\providecommand{\propositionname}{Proposition}
\providecommand{\remarkname}{Remark}
\providecommand{\theoremname}{Theorem}
\begin{document}
\title{Specialization of Difference Equations and High Frobenius Powers} 
\author{Yuval Dor}  
\author{Ehud Hrushovski}
\address{yuval.dor4@gmail.com}
\address{hrushovski@maths.ox.ac.uk, Mathematical Institute, Oxford}

\begin{abstract}

 We study valued fields equipped with an automorphism $\sigma$ which is
locally infinitely contracting in the sense that $\alpha\ll\sigma\alpha$ for
all $0<\alpha\in\Gamma$. We show that various notions of valuation theory, such
as Henselian and strictly Henselian hulls, admit meaningful transformal analogues. We prove canonical amalgamation results, and exhibit the way  that transformal wild ramification is
controlled by torsors over generalized vector groups. Model theoretically, we
determine the model companion:   it is decidable, admits a simple axiomatization, and enjoys elimination of quantifiers up to algebraically
bounded quantifiers.

  The model companion is shown to agree with the limit theory of the Frobenius
action on an algebraically closed and nontrivially valued field. This opens the
way to a motivic intersection theory for difference varieties that was
previously available only in characteristic zero. As a first consequence, the
class of algebraically closed valued fields equipped with a distinguished
Frobenius $x\mapsto x^{q}$ is decidable, uniformly in $q$.
\end{abstract}

\maketitle
\tableofcontents{}

\newpage{}

\section*{Introduction}

\subsection{Motivation and Background} By a \emph{difference field} we mean a field $K$ equipped with a distinguished endomorphism $\sigma \colon K \to K$ of fields. Difference fields encode finite difference equations; these are analogous to differential equations, but involving a discrete difference operator $\sigma$ rather than a derivation. Here $\sigma$ is treated as the generic endomorphism: the only rules are $(x+y)^\sigma = x^\sigma + y^\sigma$ and $(xy)^\sigma = x^\sigma y^\sigma$.

In \cite{hrushovski2004elementary} a study of the geometry of difference equations was initiated. The notion of a difference variety cut out by difference equations makes sense intrinsically, and various notions of algebraic geometry, such as smoothness and dimension, admit meaningful analogues in these more generalized settings.

From a model theoretic point of view, the ambient geometry for the study of difference equations is provided by the theory $\ACFA$, the model companion of the class of difference fields. The models of $\ACFA$ obey a generalized form of the Nullstellensatz; their role in the category of difference fields is analogous to the role of algebraically closed fields in the category of ordinary fields.

One of the main motivations for the present work is the intriguing connection between $\ACFA$ and the model theory of the Frobenius endomorphism. We say that $K$ is a \emph{Frobenius difference field} if it is of positive characteristic $p$ and $\sigma = p^n$ coincides with a power of the Frobenius endomorphism of $K$. With this terminology, we have the following:

\begin{thm}
\label{thm:frobacfa}
\begin{enumerate}
\item The class of difference fields admits a model companion, denoted by $\ACFA$.
\item The theory $\ACFA$ is precisely the limit theory of the Frobenius action on an algebraically closed field, i.e, it is the set of sentences true over all algebraically closed Frobenius difference fields, outside a finite set of exceptional prime powers.
\item The elementary theory of the class of algebraically closed Frobenius difference fields is decidable
\end{enumerate}
\end{thm}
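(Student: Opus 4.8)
The plan is to obtain all three parts from a single deep input — a twisted Lang--Weil estimate for difference varieties — together with otherwise routine model theory; part (1) is classical and included for completeness. Recall that the $\ACFA$ axioms assert that $K$ is algebraically closed, that $\sigma$ is an automorphism, and that for every absolutely irreducible $U\subseteq V\times V^{\sigma}$ with both coordinate projections dominant there is $a\in K$ with $(a,\sigma a)\in U$. That these axiomatize the model companion of the class of difference fields is the theorem of Macintyre and of Chatzidakis--Hrushovski: every difference field embeds in a model (given such a $U$, the generic point of $U$ together with its $\sigma$-iterates generates a difference-domain extension realizing the axiom, and one iterates), and $\ACFA$ is model complete, which reduces to amalgamating two models over a common difference subfield — a special, trivially valued, case of the canonical amalgamation results this paper develops in the valued setting. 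This gives (1).

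The substance is (2), and I would obtain it from the \emph{twisted Lang--Weil estimate}: for a difference scheme $X$ of transformal dimension $d$, presented over a finitely generated difference ring, the number of points of $X$ over $\overline{\mathbb F_{q}}$ at which $\sigma$ is realized as the $q$-power Frobenius grows like $c\,q^{d}$ as $q\to\infty$, with $c\in\mathbb Q_{>0}$ depending only on the data and uniformly over parametrized families. The point of the present approach is that this asymptotic is exactly the quantity computed by the motivic intersection theory that the nontrivially valued model companion $\VFA$ makes available: a nonprincipal ultraproduct of Frobenius actions on algebraically closed nontrivially valued fields of characteristic $p$, taken along $q\to\infty$, is locally infinitely contracting and a model of $\VFA$, and point counts over $\overline{\mathbb F_{q}}$ are transferred to, and evaluated in, this non-archimedean geometry. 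Granting the estimate, each genericity axiom of $\ACFA$ holds in $(\overline{\mathbb F_{q}},\mathrm{Frob}_{q})$ for all but finitely many $q$ — the relevant $U\subseteq V\times V^{\sigma}$ has transformal dimension $\dim V\ge 0$ and $c>0$, so its Frobenius specialization is nonempty for $q$ large, uniformly in the parameters — so \emph{every} nonprincipal ultraproduct $\prod_{q}(\overline{\mathbb F_{q}},\mathrm{Frob}_{q})/\mathcal U$ is a model of $\ACFA$.

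From here (2) is formal. If $\ACFA\vdash\varphi$ but $\neg\varphi$ held in infinitely many algebraically closed Frobenius difference fields, a nonprincipal ultrafilter concentrating on that set would produce a model of $\ACFA$ satisfying $\neg\varphi$, a contradiction; so $\varphi$ holds outside a finite exceptional set. Conversely, if $\ACFA\nvdash\varphi$ then $\neg\varphi$ is consistent with some completion of $\ACFA$, which in characteristic $p$ is $\ACFA_{p}$ together with a specification of $\sigma$ as an element $\nu\in\mathrm{Gal}(\overline{\mathbb F_{p}}/\mathbb F_{p})=\widehat{\mathbb Z}$; then $\neg\varphi$ already follows from finitely many genericity axioms together with a congruence $n\equiv\nu\pmod N$, and for every sufficiently large $n$ in that residue class the field $(\overline{\mathbb F_{p^{n}}},\mathrm{Frob}_{p^{n}})$ satisfies the genericity axioms (uniform twisted Lang--Weil) and the congruence (by choice of $n$), hence $\neg\varphi$. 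The characteristic-zero completions are handled the same way, spreading over infinitely many primes $p$ via a Chebotarev-type argument to match the action of $\sigma$ on a given finite Galois extension. Thus $\ACFA$ is precisely the set of sentences true in all but finitely many algebraically closed Frobenius difference fields.

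Part (3) then combines three effectivity inputs: $\ACFA$ is decidable, its completions being given by the characteristic together with the effectively axiomatized theory of the action on the absolute numbers; the finite exceptional set attached in (2) to a sentence $\varphi$ is \emph{computable} from $\varphi$, given an effective form of the twisted Lang--Weil bound; and $\mathrm{Th}(\overline{\mathbb F_{q}},\mathrm{Frob}_{q})$ is decidable uniformly in $q$. Hence to decide whether $\varphi$ holds in all algebraically closed Frobenius difference fields one checks $\ACFA\vdash\varphi$ and then verifies $\varphi$ directly in the finitely many — computably bounded — exceptional structures. I expect the twisted Lang--Weil estimate to be the one real obstacle: both its proof, where the non-archimedean intersection theory for difference varieties must be set up and a rationality and positivity statement for the limiting measure extracted, and the effective control of the error term needed for (3).
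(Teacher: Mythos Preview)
The paper does not prove this theorem; it is cited as a known result from \cite{hrushovski2004elementary}, with the key algebro-geometric estimate later reproved in \cite{shuddhodan2021hrushovski}. Your overall architecture --- twisted Lang--Weil forces each $\ACFA$ genericity axiom to hold in $(\overline{\mathbf F_q},\mathrm{Frob}_q)$ for all large $q$, hence every nonprincipal ultraproduct is a model of $\ACFA$, and the converse direction and decidability follow by compactness and effectivity --- is correct and is exactly the shape of the cited argument.

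There is, however, a genuine circularity in your proposed route to the twisted Lang--Weil estimate itself. You suggest obtaining it via ``the motivic intersection theory that the nontrivially valued model companion $\VFA$ makes available'', transferring point counts into the ultraproduct of $(\overline{\mathbf F_q},v,\mathrm{Frob}_q)$ and evaluating them there. But to know anything useful about that ultraproduct --- in particular, that it is a model of $\widetilde{\VFA}$ so that the stable-embeddedness and the Weil--Cavalieri machinery of Remark~\ref{rem-rationality} apply --- one must verify that its residue field is a model of $\ACFA$. The residue field is itself an ultraproduct of the $(\overline{\mathbf F_q},\mathrm{Frob}_q)$, so this verification \emph{is} part~(2) of the theorem you are trying to prove. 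Indeed, the paper's own proof of the valued analogue (Theorem~\ref{frobarevfa}) explicitly invokes the main theorem of \cite{hrushovski2004elementary} at precisely this point. The motivic intersection theory is a \emph{consequence} of the package, not an independent route to the estimate.

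The estimate in \cite{hrushovski2004elementary} is obtained instead by a specialization argument reducing to the classical Lang--Weil/Deligne bounds for ordinary varieties over finite fields; the later proof in \cite{shuddhodan2021hrushovski} is purely algebro-geometric. Either of these would serve as the black box in your outline; the $\widetilde{\VFA}$ theory cannot.
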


This was proved in \cite{hrushovski2004elementary}.  Later, 
the key algebro-geometric estimate was given a new and purely algebro-geometric proof in  \cite{shuddhodan2021hrushovski}. 

Theorem \ref{thm:frobacfa} is a part of a far reaching connection between transformal geometry and asymptotic algebraic geometry, mediated by the Frobenius. Loosely speaking, the Frobenius specialization of a difference equation is obtained by moving to characteristic $p$ and formally setting the generic endomorphism $\sigma$ to agree with the Frobenius endomorphism $x \mapsto x^q$. The idea advanced in \cite{hrushovski2004elementary} is that for $q$ large, properties of the original equation are faithfully reflected in the Frobenius specialization. Most importantly, under the specialization $\sigma \mapsto q$, the model theoretic notion of total dimension $n$ is intertwined with the combinatorial notion of cardinality $c \cdot q^n$.

\subsection{Main Results} The main goal of the present work is to show that the transfer principle of Theorem \ref{thm:frobacfa} remains valid over valuation rings. This is the first step in establishing the framework for a theory of specialization of difference equations initiated in \cite{hrushovski2004elementary}.

Characteristic of the Frobenius action is its highly contracting nature on the open unit ball. This is captured by the following definition, which is the central object of study in this work:

\begin{defn}
\begin{enumerate}
\item By a \emph{transformal valued field} we mean a valued field $K$ equipped with a distinguished endomorphism $\sigma \colon K \to K$ of fields such that $\sigma^{-1}\left(\mathcal{O}\right) = \mathcal{O}$ and $\sigma^{-1}\left(\mathcal{M}\right) = \mathcal{M}$
\item Let $K$ be a transformal valued field. We say that $K$ is $\omega$-\textit{increasing}
if the induced action of $\sigma$ on $\Gamma$ is rapidly increasing:
to wit, we have the inequality $n\alpha<\sigma\alpha$ for all $0<\alpha\in\Gamma$
and $n\in\mathbf{N}$. 
\item Write $\VFA$\footnote{See Remark \ref{rem-terminology} for a note on terminology} for the theory of $\omega$-increasing transformal valued fields which are perfect and inversive; that is, the Frobenius endomorphism and $\sigma$ are both onto.\end{enumerate}
\end{defn}

The theory $\VFA$ was introduced in \cite{hrushovski2004elementary}, but strong structure theory was only obtained under finite generation assumptions. Loosely speaking, the uniformization theorem of \cite{hrushovski2004elementary} can be viewed as a transformal analogue of the classification of local fields of positive characteristic. In characteristic zero, Durhan (formerly Azgin \cite{azgin2010valued}) proved the existence of a relative model companion of $\VFA$.

Here we work in the absolute settings and with no assumption on characteristic. In view of Theorem \ref{thm:frobacfa}, one is led to conjecture that the asymptotic theory of the Frobenius action on an algebraically closed and nontrivially valued field is the model companion of $\VFA$, and this indeed turns out to be the case:

\begin{defn}
\label{def-wvfa-tilde}Let $\widetilde{\VFA}$ be the following
first order theory of $\omega$-increasing transformal valued fields
$K$:

\begin{enumerate}

\item The field $K$ is \textit{transformally Henselian}\footnote{See Remark \ref{rem-terminology} for a note on terminology}: residual simple
roots lift to integral roots (see Definition \ref{def:transformally-henselian}
for a precise statement)

\item The residue field $k$ of $K$ is an existentially closed difference
field; that is, it is a model of $\ACFA$

\item The valuation group $\Gamma$ of $K$ is nonzero and transformally
divisible; that is, we have $\nu\Gamma=\Gamma$ for all $0\neq\nu\in\mathbf{Z}\left[\sigma\right]$

\item Let $\tau x$ be a nonconstant additive difference operator given
by a linear combination of difference monomials of the form $x^{\sigma^{n}\cdot p^{m}}$
where $n,m\in\mathbf{N}$; then $\tau$ is onto on $K$-points.
\end{enumerate}
\end{defn}

We prove the following:

\begin{thm}
\label{main-thm}

\begin{enumerate}

\item Every model of $\VFA$ embeds in a model of $\widetilde{\VFA}$;
the theory $\widetilde{\VFA}$ is model complete.

\item Let $F$ be an algebraically closed model of $\VFA$. Then
the theory $\widetilde{\VFA_{F}}$ of models of $\widetilde{\VFA}$
over $F$ is complete; hence the theory $\widetilde{\VFA}$
eliminates quantifiers to the level of the field theoretic algebraic
closure.

\item In $\widetilde{\VFA}$, the residue field $k$ and the valuation
group $\Gamma$ are stably embedded and model theoretically fully
orthogonal; the induced structure is the abstract difference field
and the abstract ordered module structure, respectively.

\item Let $F\subseteq K$ be an embedding of models of $\VFA$
with $K$ a model of $\widetilde{\VFA}$. Then $F$ is model
theoretically algebraically closed in $K$ if and only if it is field
theoretically algebraically closed and transformally Henselian.
\end{enumerate}
\end{thm}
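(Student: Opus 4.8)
The proof follows the Ax--Kochen--Ershov template, in the relative form used by Durhan in characteristic zero but now carried out absolutely: everything is reduced to an \emph{embedding theorem} plus a back-and-forth. First I would verify the two halves of existential closedness. That existentially closed models of $\VFA$ satisfy the axioms of $\widetilde{\VFA}$ is essentially a repackaging of the definitions, together with the fact that each axiom scheme (transformal Henselization, making the residue field a model of $\ACFA$, transformal divisibility of $\Gamma$, surjectivity of the nonconstant additive operators $\tau$) can be realized by a suitable extension. Conversely, every model of $\VFA$ embeds in one of $\widetilde{\VFA}$ by iterating these hull constructions $\omega$ times and taking a union; here one invokes the structure theory developed earlier in the paper to see that the hulls exist inside a fixed model completion and that the $\omega$-increasing, perfect, and inversive hypotheses are preserved along the tower.

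The core is the following statement: if $K\models\widetilde{\VFA}$ is sufficiently saturated, $F\subseteq K$ is a common substructure that is field-theoretically algebraically closed and transformally Henselian (in the sense of Definition \ref{def:transformally-henselian}), and $F\subseteq L$ is an extension lying in some model of $\VFA$, then $L$ embeds into $K$ over $F$. I would prove it by writing $F\subseteq L$ as a transfinite tower of extensions of four basic types and handling each in turn: (i) residue extensions, absorbed using model completeness of $\ACFA$ applied in the residue sort; (ii) value-group extensions, absorbed using the model theory of transformally divisible ordered $\mathbf{Z}[\sigma]$-modules, which eliminate quantifiers; (iii) finite field-algebraic extensions, which are vacuous since $F$ is field-algebraically closed; and (iv) \emph{immediate transformal extensions}, the decisive case. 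For (iv) I would develop a theory of pseudo-convergent sequences relative to difference polynomials: the $\omega$-increasing hypothesis isolates a unique dominant difference monomial, reducing matters to the additive (linearized) situation, and surjectivity of the operators $\tau$ together with transformal Henselianity then produces a pseudo-limit in $K$ and shows that the resulting immediate extension is unique up to $F$-isomorphism --- precisely the point at which the canonical amalgamation results and the description of transformal wild ramification by torsors over generalized vector groups are used.

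From the embedding theorem the four parts follow by now-standard arguments. Model completeness (part (1)): given models $K_{1}\subseteq K_{2}$ of $\widetilde{\VFA}$, pass to saturated extensions and alternately apply the embedding theorem, always taking common substructures inside the field-algebraic and transformally Henselian hulls, which remain inside the ambient models. Part (2): running the same amalgamation over a fixed algebraically closed $F\models\VFA$ shows that any two models of $\widetilde{\VFA}$ over $F$ are elementarily equivalent over $F$, so $\widetilde{\VFA_{F}}$ is complete; quantifier elimination down to the field-theoretic algebraic closure is the standard reformulation, since the embedding theorem shows that the quantifier-free type of a tuple over the field-algebraic closure of the generated substructure already determines its full type. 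Part (3): a $K$-definable subset of $k^{n}$ must be invariant under all automorphisms of a saturated $K$ fixing $k$ pointwise, and the embedding theorem supplies enough such automorphisms to force it to be $\ACFA$-definable; the same applies to $\Gamma$, and the two sorts never interact along the tower, which gives full orthogonality. Part (4): if $F$ is model-theoretically algebraically closed in $K$ it is plainly field-algebraically closed, and a failure of transformal Henselianity would exhibit a residual simple root whose lift is field-algebraic over $F$ yet outside $F$; conversely, if $F$ is field-algebraically closed and transformally Henselian, then any $a\in K\setminus F$ generates a nontrivial extension of type (i), (ii), or (iv), and in each case the embedding theorem produces infinitely many $F$-conjugates of $a$, so $a\notin\acl(F)$.

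The main obstacle is case (iv) of the embedding theorem. In contrast to the classical Henselian setting, the ``linear algebra'' governing immediate extensions is over the twisted polynomial ring $\mathbf{Z}[\sigma]$, and transformal ramification can be wild even in equicharacteristic zero; controlling it canonically requires the full force of the amalgamation theorems and the vector-group-torsor analysis proved earlier, and the non-obvious reduction of an arbitrary difference polynomial to its linearization --- legitimate precisely because $\sigma$ is $\omega$-increasing --- is the technical crux on which the whole argument rests.
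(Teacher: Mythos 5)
Your outline is the natural AKE-style program, and much of it is sound, but the pivotal step (iv) contains a claim that the paper explicitly exhibits as false, and this is precisely the point on which a na\"ive transfer of Durhan's characteristic-zero argument breaks down. You write that over a transformally Henselian, field-algebraically closed $F$, the $\omega$-increasing hypothesis isolates a dominant additive monomial, surjectivity of the operators $\tau$ produces a pseudo-limit in $K$, and ``the resulting immediate extension is unique up to $F$-isomorphism.'' That uniqueness fails in positive characteristic. Example~\ref{exa:non-unique-spherically} and Corollary~\ref{immediate-non-unique} construct an algebraically closed, transformally Henselian $F$ whose residue field is a model of $\ACFA$ and whose value group is a model of $\widetilde{\omega\OGA}$, and which nevertheless has a continuum of pairwise non-isomorphic spherically complete immediate extensions. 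The linearization to an additive operator $\tau x - c$ is indeed the content of Proposition~\ref{additivediff}, but it does not yield a uniqueness statement for pseudo-limits of $\tau x = c$; the wild ramification of twisted Artin--Schreier type extensions (Example~\ref{example-no-qe-charp-wildly-ramified}) is exactly the source of the ambiguity. The introduction flags this as the obstruction to Durhan's method: ``The key to Durhan's proof (as well as any other model completeness theorem for enriched valued fields we are aware of) is a uniqueness statement for spherically complete immediate extensions. Example~\ref{exa:non-unique-spherically} shows that no statement of this form can hold in positive characteristic, no matter the assumption on the residue field.''

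The paper's resolution is not to compare two immediate transformally algebraic extensions of a fixed base at all, but to arrange that no such extensions exist. For the amalgamation theorem (Theorem~\ref{TheAmalgamationProperty}) the base is replaced by its transformally Archimedean ultrapower (Section~\ref{sec:Transformally-Archimedean-Ultrap}), which has no immediate transformally algebraic extensions (Lemma~\ref{EC-TA-Ultrapower}, via Corollary~\ref{no-immediate-countably} and Proposition~\ref{additivediff}); over such a base every transformally algebraic extension enlarges the residue field and is handled by the relative transformal Henselization formalism, which \emph{is} canonical (Theorem~\ref{transformalhenselization}, Lemma~\ref{unramified-linearly-disjoint amalgamation}). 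For existential closedness (Theorem~\ref{model-complete}) one instead builds a transfinite tower (or uses the direct scatteredness argument of Remark~\ref{scatteredness}), always re-enlarging the base inside the model to a transformally Newtonian field before applying Lemma~\ref{lem:partially-EC}; the process strictly increases $v(b-c)$ at each stage and terminates because transformally algebraic extensions do not enlarge $\Gamma\otimes\mathbf{Q}(\sigma)$ (Corollary~\ref{cor-transformally-alg-no-valgp-adjunction}). So the role of ``surjectivity of $\tau$'' is not to single out a pseudo-limit up to isomorphism, but to show (via Proposition~\ref{additivediff} and Corollary~\ref{cor-criterion-talg-max}) that the enlarged base is transformally algebraically maximal, so that case (iv) never arises over it. Your vague appeal to ``the canonical amalgamation results'' cannot patch this: the amalgamation theorem is what is being proved, and its proof already requires the base change that your argument omits. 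Parts (2)--(4) as you describe them do follow from the embedding theorem by the standard back-and-forth, stable-embeddedness, and definable-closure arguments (cf.\ Propositions~\ref{saturation} and~\ref{dcl}), so the remaining work is concentrated entirely in replacing the false uniqueness claim by the ultrapower/Newtonian-base manoeuvre.
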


Using Theorem \ref{thm:frobacfa}, we deduce:

\begin{cor}
\label{frob:non}
Let $\widetilde{T}$ be the limit theory of the Frobenius action on an algebraically closed and nontrivially valued field; that is, the set of sentences true over all algebraically closed and nontrivially valued Frobenius transformal valued fields, outside a finite set of exceptional prime powers. Then the theory $\widetilde{T}$ is model complete and decidable; indeed, it coincides precisely with the model companion $\widetilde{\VFA}$ of $\VFA$.
\end{cor}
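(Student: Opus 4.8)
The plan is to deduce Corollary~\ref{frob:non} from the combination of Theorem~\ref{thm:frobacfa} and the structural part of Theorem~\ref{main-thm}. First I would fix an ultrafilter on the set of (admissible) prime powers $q$ and form the ultraproduct $K_\infty = \prod_{q\to\mathcal{U}} K_q$, where $K_q$ is an algebraically closed nontrivially valued field carrying the Frobenius $x\mapsto x^q$. The key point to check is that $K_\infty \models \widetilde{\VFA}$: the underlying field is algebraically closed and perfect, the residue map and value group pass to the ultraproduct, and $\sigma = \lim x^q$ is $\omega$-increasing since $v(x^q) = q\cdot v(x)$ grows faster than any fixed multiple $n\cdot v(x)$ on the ultraproduct (here the nontriviality of the valuation is essential to get $\Gamma \neq 0$). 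For the residue field one invokes Theorem~\ref{thm:frobacfa}: the residue field of $K_q$ is an algebraically closed Frobenius difference field, so the ultraproduct of the residue fields is a model of $\ACFA$, and one checks this ultraproduct of residue fields is the residue field of $K_\infty$. Transformal divisibility of $\Gamma$ and surjectivity of nonconstant additive difference operators $\tau$ likewise hold in $K_q$ for $q$ large (these are solvability statements of bounded complexity that hold in $\ACVF$-like fashion over the Frobenius, or can be arranged by passing to a suitable elementary extension) and therefore pass to $K_\infty$; transformal Henselianity is the one axiom where I would be most careful, since each $K_q$ is only a valued field with an automorphism rather than a priori transformally Henselian — here I would either take $K_q$ to be (a model companion completion / henselization of) the Frobenius field so that simple residual roots lift, or argue that the relevant lifting statements are first-order of bounded complexity and hold in $K_q$ for all large $q$ by an explicit Frobenius computation, so that transformal Henselianity passes to the ultraproduct.

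Granting $K_\infty \models \widetilde{\VFA}$, model completeness of $\widetilde{\VFA}$ (Theorem~\ref{main-thm}(1)) gives that $\widetilde{T}$, the set of sentences true in all such $K_q$ for large $q$, contains $\widetilde{\VFA}$; and since by \L o\'s every sentence of $\widetilde{T}$ holds in $K_\infty$, which is a model of the complete-modulo-nothing\,... more precisely: to get the reverse inclusion I would show $\widetilde{\VFA}$ is complete as a theory, or at least that $\widetilde{T}$ and $\widetilde{\VFA}$ have the same models. Completeness of $\widetilde{\VFA}$ itself follows from Theorem~\ref{main-thm}(2) applied over the prime model: the algebraic closure inside a model of $\widetilde{\VFA}$ of the minimal difference subfield (the prime field with $\sigma$, suitably inversive-perfect closed, with the trivial-on-nothing valuation extended) is itself a model of $\VFA$ that embeds canonically, so all models of $\widetilde{\VFA}$ are elementarily equivalent; hence $\widetilde{\VFA}$ is a complete decidable theory and $\widetilde{T} = \widetilde{\VFA}$. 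Decidability then follows because $\widetilde{\VFA}$ has a recursive axiomatization (Definition~\ref{def-wvfa-tilde} is an explicit recursive scheme once one knows $\ACFA$ is recursively axiomatized and decidable, which is Theorem~\ref{thm:frobacfa}) together with completeness, so the theory is recursive.

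The main obstacle I anticipate is verifying that each axiom of $\widetilde{\VFA}$ actually holds in the Frobenius ultraproduct $K_\infty$ with the right uniformity in $q$ — in particular transformal Henselianity and the surjectivity of additive difference operators $\tau$, which are not literally first-order axioms of a single $K_q$ unless one sets up $K_q$ carefully. The clean route is to replace each $K_q$ by an $\aleph_1$-saturated elementary extension and use that the relevant solvability statements, being consequences of the (known, from Theorem~\ref{thm:frobacfa}) theory of the Frobenius together with algebraic closedness of the value group side, transfer; then \L o\'s plus model completeness of $\widetilde{\VFA}$ close the argument. The rest — $\omega$-increasingness, nontriviality and divisibility of $\Gamma$, and the $\ACFA$ residue field — is a direct transcription of Theorem~\ref{thm:frobacfa} through the residue/value-group functors, and I expect it to be routine.
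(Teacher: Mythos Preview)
Your forward inclusion $\widetilde{\VFA}\subseteq\widetilde{T}$ is essentially the paper's argument: each axiom of $\widetilde{\VFA}$ holds in $K_q$ for $q$ large. Your worries about transformal Henselianity and surjectivity of additive operators are unfounded, since after substituting $\sigma\mapsto q$ a transformal Hensel configuration becomes an ordinary Hensel configuration in the algebraically closed (hence Henselian) field $K_q$, and an additive difference operator becomes an additive polynomial, which is surjective on any algebraically closed field; this is exactly how the paper handles it (Lemma~\ref{lem:(Basic-properties)}(1)).

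The genuine gap is your claim that $\widetilde{\VFA}$ is complete. It is not: the completions of $\widetilde{\VFA}$ are parametrized by the isomorphism type of the restriction of $\sigma$ to the algebraic closure $k_0$ of the prime field, and there are many such (just as $\ACFA$ is not complete). Your attempt to extract completeness from Theorem~\ref{main-thm}(2) fails because there is no canonical ``prime'' algebraically closed $F$: different lifts of $\sigma$ to $k_0$ give nonisomorphic algebraically closed bases and hence different completions. Consequently both your reverse inclusion $\widetilde{T}\subseteq\widetilde{\VFA}$ and your decidability argument collapse, since each relies on this false completeness.

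The paper's route for the reverse inclusion is to show that every completion of $\widetilde{\VFA}$ is the theory of some Frobenius ultraproduct. This uses two ingredients you do not invoke: stable embeddedness of the residue field (Theorem~\ref{main-thm}(3)), which implies that the completion of $\widetilde{\VFA}$ is determined by the $\ACFA$-theory of $k$; and Theorem~\ref{thm:frobacfa}(2), which says every completion of $\ACFA$ is realized by a Frobenius ultraproduct. Decidability likewise is not free from a recursive axiomatization alone (the theory is incomplete); the paper argues separately that $\widetilde{\VFA}$ is recursive by reducing to decidability of $\ACFA$, and that the Frobenius theory $T$ is decidable using the effective bounds of Fact~\ref{effective-frob}.
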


Some remarks on Theorem \ref{main-thm} are in order.

\textbf{Model Theoretic Tameness}. The first clause of Theorem \ref{main-thm} implies that
the models of $\widetilde{\VFA}$ are existentially closed. 
This leads to the 
second clause, that places $\widetilde{\VFA}$ in the model-theoretic `tame' world:
the category of definable sets is of directly geometric nature. In particular,
deep diophantine questions or G\"{o}delian phenomena do not enter the
picture. The stable embeddedness of the residue field has the highly
nontrivial consequence that the image of a difference variety under
the residue map is already definable in the language of difference
fields, namely it is the image of a difference variety under a finite
morphism, or a boolean combination of definable sets of this form.
The same holds for a more sophisticated `derived' residual image, implicit in the statement
that transformal dimension zero sets in $\VFA$ are residually analyzable; see Remark \ref{rem-rationality} for further discussion and an application.

\textbf{Quantifier Elimination.}
In Proposition \ref{qe-wvfa} we give a precise (though somewhat inexplicit) quantifier elimination result for the theory $\widetilde{\VFA}$, strengthening the second clause of Theorem \ref{main-thm}. In Theorem \ref{TheAmalgamationProperty} we give a partial characterization of the amalgamation bases; this characterization is complete if higher powers of $\sigma$ are taken into account. Example \ref{example-no-qe-charp-wildly-ramified} and Corollary \ref{cor-nonsplit-h-finite} show that our quantifier elimination results are in some sense optimal. For a comparison with the quantifier elimination results of Durhan (requiring the choice of an angular coefficient map) see Remark \ref{rem-ake-splitting-qe}.

\textbf{The Ramification Theory of Valued Fields.} Let $K$ be a valued field which is strictly Henselian; that is, the valued field $K$ is Henselian, and the residue field is separably closed. Then $K$ is algebraically closed precisely in the event that its valuation group is divisible and additive operators given by linear combinations of the Frobenius are onto. The axiomatization given in Theorem \ref{main-thm} and Definition \ref{def-wvfa-tilde} can be seen as a transformal analogue of this fact.

Wildly ramified extensions of Henselian valued fields are controlled by additive equations involving powers of the Frobenius; the situation in $\VFA$ is similar (see Section \ref{sec:Wild-Ramification}).

Going further, if $K$ is strictly Henselian then the absolute Galois group of $K$ is solvable, with subquotients non-canonically isomorphic to finite linear algebraic groups over the residue field. As already suggested in \cite{hrushovski2004elementary}, this phenomenom is related to the fact that in $\widetilde{\VFA}$, transformally algebraic extensions are analyzable over the residue field.

\textbf{The Transformal Henselization.} The notion of \textit{transformal Henselianity} was introduced
``tentatively'' in \cite{hrushovski2004elementary}. The definition is 
a direct transformal analogue of Hensel's
lifting lemma, see Definition \ref{def:transformally-henselian}.
We believe that the tentative status of this Definition can now be
safely removed. If $K$ is a model of $\VFA$, then $K$ admits a
 transformally Henselian hull, which is unique up to a
unique isomorphism; we also give a relative version (a strict transformal Henselization) which is canonical relative to a given extension of the residue field, and whose finite covers are controlled; see Theorem \ref{transformalhenselization} for a precise statement.

If $K$ is a Henselian valued field, then the valuation lifts uniquely to any algebraic extension; as a matter of fact, Henselian valued fields are characterized by this property. In the $\omega$-increasing regime, this restriction leads to a vacuous theory, as Example \ref{ex:hens-talg-extensions-not-unique} shows. Nevertheless, working over a transformally Henselian base, once the valuation of a transformally algebraic extension is fixed, then it extends uniquely to any larger base; see Corollary \ref{cor-talg-unique-basechange} for a precise statement and further discussion.

\textbf{The work of Durhan-Van-den-Dries} In characteristic zero, the existence of a model companion of $\VFA$ is due to Durhan (formerly Azgin, \cite{azgin2010valued}). His work involves a Newton-Hensel style approximation scheme applied to the coefficients of a transformal operator. Earlier work using this idea includes Scanlon's thesis (see \cite{scanlon2000model}), work of Belair-Macintyre-Scanlon \cite{belair2007model}, and the work of Durhan-Van den Dries \cite{azgin2011elementary}

Let $K$ be a model of $\VFA$ of characteristic zero. We say that $K$ is \textit{transformally Kaplansky} if nonconstant additive operators given by linear combinations of difference monomials of the form $x^{\sigma^n}$ are onto on the residue field of $K$. Durhan shows that if $K$ is transformally Kaplansky, then $K$ admits a spherically complete immediate extension which is unique up to isomorphism. Furthermore, he proves that the class of models of $\VFA$ which are transformally Kaplansky and transformally algebraically maximal is first order, namely they are characterized by the fact that they obey the Newton-Hensel style approximation scheme mentioned above. Finally, fields of this form enjoy elimination of field quantifiers in the three sorted language, after choosing a splitting of the valuation map.

\textbf{The Difficulties in Finite Characteristic.} The key to Durhan's proof (as well as any other model completeness theorem for enriched valued fields we are aware of) is a uniqueness statement for spherically complete immediate extensions. Example \ref{exa:non-unique-spherically} and Corollary \ref{immediate-non-unique} show that no statement of this form can hold in positive characteristic, no matter the assumption on the residue field. 

Durhan's work gives a quantifier elimination result relative to the residue field and the valuation group, after splitting the valuation map. In positive characteristic, one must deal with the twisted Artin-Schreier extensions of Example \ref{example-no-qe-charp-wildly-ramified}, which are typical; Corollary \ref{cor-no-qe-relative-tores} shows that elimination of field quantifiers in the three sorted language is impossible in general.

Heuristically, the major difficulty in finite characteristic boils down to the following phenomenon. Loosely speaking, in $\VFA_{\mathbf{F}_{p}}$ one considers \textit{partial}
difference equations, involving $\sigma$ and the Frobenius; this
is in contrast with the study of $\VFA_{\mathbf{Q}}$, or the
study of Frobenius transformal valued fields, where only one of these
automorphism is present.

\textbf{Previous Work.} Valued fields of equal characteristic zero equipped with an automorphism were studied by various authors. In his thesis \cite{pal2012multiplicative} Pal considers the case where the action of $\sigma$ on $\Gamma$ is via multiplication by a positive rational, or more generally via multiplication by a positive real (appropriately defined). Durhan-Van-den-Dries study the isometric case \cite{azgin2011elementary} where the induced action of $\sigma$ on $\Gamma$ is the identity. In \cite{durhan2015quantifier} Durhan and Onay establish a uniqueness result for spherically complete immediate extensions with no assumptions whatsoever on the induced action on the valuation group.

Model theoretic properties of $\widetilde{\VFA_{\mathbf{Q}}}$ were studied in the work of Chernikov-Hils and Hils-Rideau (\cite{chernikov2014valued}, \cite{hils2021ei}). Chernikov and Hils show that in $\widetilde{\VFA_{\mathbf{Q}}}$, forking independence is governed by quantifier free formulas, which are \text{NIP}; in particular, in Shelah's classification, the theory $\widetilde{\VFA_{\mathbf{Q}}}$ enjoys the $\text{NTP}_2$ property. It seems likely that the results hold in finite characteristic as well; to make this precise, one would need to refine the amalgamation theorem of \ref{TheAmalgamationProperty} to give control of finite $\sigma$-invariant Galois extensions of the amalgamation, as in the construction of the strict transformal Henselization (Theorem \ref{transformalhenselization}), though we do not pursue this at the moment. The work of Hils-Rideau \cite{hils2021ei} gives elimination of imaginaries in the geometric sorts of \cite{haskell2005stable}; the result follows from a more general Ax-Kochen-Ershov principle which reduces the imaginary sorts of an enriched valued field of equal characteristic zero to to those of the residue field and the valuation group.

\begin{rem}
    In this work all transformal valued fields are assumed perfect and inversive. This assumption has been dispensed with in subsequent work of the first author and Halevi (see \cite{dor2023contracting}). In particular, the asymptotic theory of the class of separably closed Frobenius transformal valued fields (which need not be perfect) is decidable.
\end{rem}

\begin{rem}
\label{rem-terminology} Some remarks on terminology and choices made in this paper.
\begin{enumerate}
\item In this work all transformal valued fields are assumed perfect and inversive unless explicitly stated to the contrary. The assumption that the Frobenius is onto is needed to make sense of the notion of a Frobenius twist (see Section \ref{sec:Difference-Algebra}) and the assumption that $\sigma$ is onto for the notion of a $\sigma$-invariant Galois extension to be meaningful (see the criterion of Proposition \ref{criterion-unique-algclosure}). Since a perfect inversive hull exists and is unique up to a unique isomorphism, for the purpose of establishing the existence of a model companion, these assumptions are harmless. In \cite{hrushovski2004elementary} finite generation constraints play an essential role; in order for ramification to be visible at the level of the value group, it is necessary not to assume that the fields are inversive.
    \item In this work we use the acronym $\VFA$ to describe the class of $\omega$-increasing transformal valued fields which are perfect and inversive. This terminology is potentially misleading, since one can consider valued fields equipped with an automorphism subject to various other constraints (see e.g Scanlon's thesis \cite{scanlon2000model}). It would perhaps be more accurate to use the terminology $\omega\VFA$ or $\VFA^{\omega}$ instead. For typographical reasons, we will stick with $\VFA$; since the $\omega$-increasing constraint is imposed throughout the paper, this convention is not likely to cause confusion.
\item In this work the notion of transformal Henselianity (Definition \ref{transformalhenselization}) plays a central role. This notion should not be confused with the closely related notion of $\sigma$-Henselianity studied in \cite{azgin2010valued} and elsewhere in the literature. The ultraproduct of Henselian Frobenius transformal valued fields is transformally Henselian, but not in general $\sigma$-Henselian. On the other hand, a model of $\VFA$ of characteristic zero is $\sigma$-Henselian precisely when it has no immediate transformally algebraic extensions, and transformal additive linear operators are onto on the residue field; in view of Corollary \ref{frob:non} it seems natural to stick with our terminology, whereas the class of $\sigma$-Henselian fields can be rightfully called "transformally Kaplansky and transformally algebraically maximal". The $\sigma$-Henselian approximation scheme can then either be viewed as an interesting notion in its own right or else as a first order characterization of the class of transformally Kaplansky, transformally algebraically maximal models of $\VFA$.

\end{enumerate}
\end{rem}

\begin{rem}
\label{rem:transformal-analogues}

    One of the central goals of this paper is to define and study analogues of classical notions of valuation theory in the presence of an $\omega$-increasing automorphism; for example, transformally Henselian and strictly transformally Henselian models of $\VFA$ are in many ways analogous to Henselian and strictly Henselian valued fields in the category of ordinary valued fields.

    On occasion however we will want to refer to the the underlying valued field of a model of $\VFA$, ignoring $\sigma$. Thus, for example, we use the term "algebraically Henselian" to simply mean "Henselian" (as opposed to transformally Henselian).

\end{rem}

\begin{rem}
    The transformal analogues of Remark \ref{rem:transformal-analogues} are motivated by the case of Frobenius transformal valued fields. For example, a transformally unramified extension of models of $\VFA$ (see Definition \ref{def:tunramified}) is analogous to an unramified extension of ordinary valued fields, viewed as transformal valued fields via the Frobenius endomorphism. This general heuristic is supported by Corollary \ref{frob:non}.
\end{rem}

\begin{rem}
\label{rem-rationality}
In earlier unpublished work, one of us (EH) constructed \textit{Weil-Cavalieri
rings} that generalize the Grothendieck rings of pseudo-finite fields.
The Weil-Cavalieri rings are built out of definable sets in ACFA of
transformal dimension zero (generalizing definable sets over pseudo-finite
fields). However, while equality of classes in the Grothendieck ring
takes into account only definable bijections, the Weil-Cavalieri equivalence
is able to incorporate a \textit{principle of invariance of number}
a la Leibniz, Poncelet and Schubert: in a continuously moving family
of definable sets of transformal dimension zero, the classes remain
constant. Here continuity must be understood to refer to the generic
type of the difference field; whereas pseudofinite fields are in themselves
discrete. Using this depends essentially on the consequences
of Theorem \ref{main-thm} (3) mentioned above. See the account in
\cite{giabicani2011theorie}; the restriction to characteristic zero
there can now, thanks to the present work, be removed.

We mention without proof a corollary that we hope to come back to
later, combining ideas from \cite{hrushovski2004elementary} with
the Weil-Cavalieri ring. For large characteristic compared to the
data, it was the content of the unpublished MA thesis of one of us
(YD) in the Hebrew University. This large characteristic assumption
can now be removed. When the definable set $X$ does not utilize the
valuation, we understand this can also be proved using the methods
of \cite{shuddhodan2021hrushovski}.

Fix a prime $p$. Let $K$ be an algebraically closed and nontrivially
valued field of characteristic $p>0$. For each $n\in\mathbf{N}$
let $K_{n}=\left(K,x\mapsto x^{p^{n}}\right)$ be the displayed Frobenius
transformal valued field. Let $F=F(x,x^{\sigma},\cdots,x^{\sigma^{N}})$
be a nonzero difference polynomial over $K$, and denote $Z_{F}=\{x:F(x)=0\}$.
Let $X$ be a definable subset of $Z_{F}$ in the language of transformal
valued fields. Then 
\[
|X(K_{n})|=\sum_{i=1}^{m}\alpha_{i}t_{i}^{n}
\]
for some $t_{1},\ldots,t_{m},\alpha_{1},\ldots,\alpha_{m}\in\mathbf{Q}^{alg}$,
and large enough $n\in\mathbf{N}$. The cardinality of $X\left(K_{n}\right)$
is $c\cdot p^{nd}+O\left(p^{n\cdot\left(d-\frac{1}{2}\right)}\right)$
for some natural number $d$ and some rational number $0<c\in\mathbf{Q}$.
The natural number $d$ is the \textit{inertial dimension} of $X$,
studied in \cite{hrushovski2004elementary}.
\end{rem}

\subsection{Scatteredness and Rationality of Cuts}

One of the main themes of this paper is that the $\omega$-increasing
nature of $\sigma$ severely restricts the ultrametric structure on
transformally algebraic extensions of models of $\VFA$. The
purpose of this section is to make this precise and explain the algebraic
and model theoretic ideas surrounding this principle. We also give
examples illustrating the proof of the amalgamation theorem.

\subsubsection{\label{subsec:Functorial-Amalgamation-of}Functorial Amalgamation
of $\mathbf{R}$-valued Fields}

Let $\mathcal{C}$ be the category of complete algebraically closed
$\mathbf{R}$-valued fields. A theorem of Poineau \cite{poineau2013espaces}
shows that $\mathcal{C}$ admits \textit{functorial}, linearly disjoint
amalgamation. Here is the idea in the case of a single variable; see
\cite{ben2015tensor} and \cite{hrushovski2010non}, 14.6.3 for more.
Let $K$ be a complete $\mathbf{R}$-valued field which is algebraically
closed. Let $L$ be a spherically complete immediate extension, and
$E=K\left(x\right)$ a simple transcendental immediate extension;
we want to equip the field of fractions of $E\otimes_{K}L$ with a
valuation in a functorial manner.

For this we argue as follows. Let $\mathcal{B}$ be the nested family
of all closed $K$-balls containing $x$. Since $K$ is complete,
the set $I$ of valuative radii of the balls in $\mathcal{B}$ is
bounded; hence it admits a least upper bound $\alpha\in\mathbf{R}$.
There is in $L$ a \textit{unique} closed ball $B$ of valuative radius
$\alpha$ contained in the intersection. The amalgamation is given
by declaring $x$ to be generic in $B$ over $L$, i.e, to avoid any
finite union of open $L$-definable subballs.

\subsubsection{Rationality of Cuts}

One would like to use a similar argument in $\VFA$. But the
argument of \ref{subsec:Functorial-Amalgamation-of} used, in a crucial
way, the \textit{least upper bound property} of $\mathbf{R}$. 

To be more precise, let $\nicefrac{L}{K}$ be an extension of valued
fields which does not enlarge the valuation group. Given $a\in L$
let us write
\[
C\left(a\right)=\left\{ v\left(a-b\right)\colon b\in K\right\} 
\]
for the \textit{cut} determined by the element $a$ over $K$. The
key question becomes whether this cut is \textit{rational}, i.e, whether
it is the cut just above some element of $\Gamma$. For complete $\mathbf{R}$-valued
fields this is automatic. But in the $\omega$-increasing setting,
over a base which is nontrivially valued, the valuation group is never Archimedean (see \ref{defs:oag}); one must prove that the cut is in fact rational!
\begin{example}
Let 
\[
x=t^{\sigma^{-1}}+t^{\sigma^{-1}+\sigma^{-2}}+\ldots
\]
 be the displayed formal power series. One checks that $x^{\sigma}-tx=t$. The sequence of truncations of $x$ is a pseudo-Cauchy sequence which converges to $x$; the associated sequence of valuative radii of balls is given by the series of exponents $\sigma^{-1}+\dots+\sigma^{-n}$, which  converge
in $\mathbf{Q}\left(\sigma\right)$ to the quantity $\frac{1}{\sigma-1}$. Thus as expected the cut associated to $x$ is rational.
\end{example}

The Example is typical:
\begin{prop}
\label{prop:rationality-of-cuts-1}Let $\nicefrac{L}{K}$ be an immediate
transformally algebraic extension of models of $\VFA$. Then
under suitable assumptions on $K$, the cut determined by an element
of $L$ over $K$ is rational; see Corollary \ref{cor:rat-cut} for
a precise statement.
\end{prop}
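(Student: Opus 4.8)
\emph{Plan.} I would prove rationality by a transformal Newton polygon computation. Let $a\in L\setminus K$. Since $L/K$ is immediate, $a$ is a pseudo-limit of a pseudo-Cauchy sequence $(b_\rho)$ from $K$ with no pseudo-limit in $K$; one checks that $\sup C(a)=\sup_\rho v(a-b_\rho)=:\gamma_\infty$ and that it is not attained, so it suffices to show $\gamma_\infty\in\Gamma$. As $a$ is transformally algebraic over $K$, the sequence $(b_\rho)$ is of transformally algebraic type, so among the nonzero difference polynomials $g$ with $v(g(b_\rho))$ eventually strictly increasing there is one, $P$, of least complexity, and then $P(a)=0$. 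Expanding $P$ about $b_\rho$ and writing $\gamma_\rho=v(a-b_\rho)$, the term attached to a nonzero monomial $\vec{j}=(j_0,j_1,\dots)$ has valuation $c_{\vec{j}}+\nu_{\vec{j}}(\gamma_\rho)$ for $\rho$ large, where $\nu_{\vec{j}}=\sum_i j_i\sigma^i\in\mathbf{Z}_{\geq0}[\sigma]$ and $c_{\vec{j}}$ is the valuation of the corresponding Taylor coefficient of $P$ at $a$; minimality of $P$ forces $c_{\vec{j}}\in\Gamma$ whenever that coefficient is not identically zero, since a lower-complexity difference polynomial cannot have eventually strictly increasing valuation along $(b_\rho)$, hence has there eventually constant valuation, equal to its value at $a$.

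\emph{The tropical core.} Form the transformal Newton polygon
\[
N(\gamma)=\min_{\vec{j}\neq 0}\bigl(c_{\vec{j}}+\nu_{\vec{j}}(\gamma)\bigr),
\]
a continuous, piecewise $\mathbf{Z}[\sigma]$-affine function with finitely many vertices. Here $\omega$-increasingness is what makes the geometry rigid: the highest $\sigma$-power dominates, so off the vertices the minimum in $N$ is genuinely dominant and $v(P(b_\rho))=N(\gamma_\rho)$ with no spurious cancellation. Because the pseudo-Cauchy sequence genuinely converges — each $b_{\rho+1}$ strictly improves on $b_\rho$, so the dominant term of the Taylor expansion is repeatedly cancelled — the limiting value $\gamma_\infty$ is forced to sit at a vertex of $N$ (the only place a balance among distinct monomials can persist); making this precise is the relation $N(\gamma_{\rho+1})=N_2(\gamma_\rho)$ between the first and second Newton polygons (the Newton refinement $b_\rho\mapsto b_{\rho+1}$ being legitimate since $K$ is perfect and inversive, so the needed $p$-th and $\sigma^{\pm}$-th roots exist), which at the limit gives $N(\gamma_\infty)=N_2(\gamma_\infty)$. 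A vertex is a point where the minimum is attained by two monomials $\vec{j}_1\neq\vec{j}_2$, whence
\[
(\nu_{\vec{j}_1}-\nu_{\vec{j}_2})(\gamma_\infty)=c_{\vec{j}_2}-c_{\vec{j}_1},
\]
with $c_{\vec{j}_2}-c_{\vec{j}_1}\in\Gamma$ and $0\neq\nu_{\vec{j}_1}-\nu_{\vec{j}_2}\in\mathbf{Z}[\sigma]$ (the $\sigma^i$ being $\mathbf{Z}$-independent). By transformal divisibility of $\Gamma$ — one of the hypotheses packaged into Corollary \ref{cor:rat-cut} — such an equation $\mu\cdot\gamma_\infty=\delta$ with $0\neq\mu\in\mathbf{Z}[\sigma]$, $\delta\in\Gamma$, has its solution in $\Gamma$; hence $\gamma_\infty\in\Gamma$ and $C(a)$ is rational.

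\emph{The hard part: positive characteristic and the hypotheses on $K$.} The step ``$\gamma_\infty$ is a vertex'' is where the remaining hypotheses on $K$ are indispensable, and this is essentially the difficulty of positive characteristic flagged in the introduction. There the expansion of $P$ sees Frobenius powers $x^{\sigma^i p^j}$ and the Taylor coefficients can vanish identically (twisted Artin–Schreier), and, left to themselves, such equations produce immediate transformally algebraic extensions with \emph{cofinal}, hence irrational, cut — the solution of $x^\sigma-x=c$ with $v(c)>0$ is the basic example; so transformal Henselianity of $K$ and surjectivity of additive transformal operators on $K$ are precisely what forbid these extensions. I would absorb the characteristic-$p$ bookkeeping by an inner induction on the complexity of $P$, using perfectness and inversivity: if $P$ does not involve $x$ itself, pass to $a^\sigma$ (governing polynomial of strictly smaller order, cut scaled invertibly by $\sigma$); if $P$ is a difference polynomial in the $p$-th powers of the variables, pass to $a^p$ (strictly smaller complexity, cut scaled invertibly by $p$ via $p$-divisibility of $\Gamma$, a consequence of perfectness); otherwise $x$ occurs in $P$ to a power prime to $p$, the least $\nu_{\vec{j}}$ is genuinely realized, and the Newton-polygon argument goes through with each $\sigma^i\gamma$ replaced by the appropriate $\sigma^i p^j\gamma$. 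The precise catalogue of conditions on $K$ that legitimizes all of this — in particular the absence of immediate extensions with cofinal cut — is exactly what Corollary \ref{cor:rat-cut} records.
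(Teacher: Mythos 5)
Your plan is in the same family as the paper's: expand $P$ in its transformal Taylor series around the pseudo-limit, look at the piecewise transformally affine function (what the paper calls the transformal Herbrand function, \ref{lem-transformal-herbarnd-concave}), and argue that the cut $\gamma_\infty$ must sit where two affine branches balance, which pins $\gamma_\infty$ down by a $\mathbf{Z}[\sigma]$-linear equation solvable in $\Gamma\otimes\mathbf{Q}(\sigma)=\Gamma$. So far so good, and you have correctly isolated the crux as ``$\gamma_\infty$ is a vertex.''

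The problem is that you never actually prove the crux. The ``second Newton polygon'' relation $N(\gamma_{\rho+1})=N_2(\gamma_\rho)$ is asserted but not defined or justified, and it is not a substitute for the three separate mechanisms the paper deploys to get a genuine balance at the cut. First, one must show that at least two exponents are \emph{asymptotically dominant} (in the sense of \ref{703}) — this is the content of Claim \ref{transformal-pth-powers}, and it uses the digitwise-domination / binomial-coefficients-mod-$p$ analysis of \ref{digitwisedom} and Lemma \ref{digitwise} in an essential way: the dominant exponent is forced to be a transformal $p$-th power, one then rules out ``all constant terms vanish'' by dividing through by $\sigma$, twists so that $1$ lies in the dominant set, and uses that $K$ is algebraically closed to produce a second dominant exponent $\geq\sigma$. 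Your proposed inner induction (pass to $a^\sigma$ if $x$ is absent, to $a^p$ if all powers of $x$ are $p$-th powers) recovers a piece of this but not the scatteredness argument: the case where the \emph{linear} term dominates must be excluded, and the paper does this (first Claim of Section \ref{sec:Wild-Ramification}) by combining transformal Henselianity with Theorem \ref{Scatteredness} to manufacture a $K$-root in cofinally many balls, a contradiction. Second, having two asymptotically dominant exponents yields a functional equation $X+\alpha=\nu X+\beta$ on the \emph{cut} $X=C(a)$, not on a point; passing from this to rationality requires Lemma \ref{t-Arch-cuts-rational}, whose proof uses transformal Archimedeanness of $\Gamma$ to show the stabilizer of $X$ is trivial or everything. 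Your sketch skips directly to an equation ``$\mu\cdot\gamma_\infty=\delta$'' as if $\gamma_\infty$ were already an element of some completion where the affine functions evaluate; without Lemma \ref{t-Arch-cuts-rational} one has not excluded $X=\Gamma$, i.e.\ the cofinal case. Third, and more minor: you list ``surjectivity of additive transformal operators on $K$'' among the indispensable hypotheses, but Corollary \ref{cor:rat-cut} does \emph{not} assume this. Rationality of the cut is established from transformal Henselianity, transformal Archimedeanness, algebraic closedness and transformal divisibility alone; the surjectivity hypothesis enters only later (Corollary \ref{cor-criterion-talg-max}) to conclude that no immediate transformally algebraic extension exists at all, which is a strictly stronger statement than rationality of cuts. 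Conflating these two claims is what makes the closing paragraph of your proposal circular.
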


We note also that in   \cite{kedlaya2001algebraic}, 
Kedlaya has identified the algebraic closure of $\mathbf{F}_{p}\left(\left(\mathbf{Z}\right)\right)$
within $\mathbf{F}_p\left(\left(\mathbf{Q}\right)\right)$ the generalized power series. It follows from his analysis
that it is sufficient to consider series supported on a well ordered
subset of $\mathbf{Q}$, all of whose accumulation points are \textit{rational} (resonating with our rationality results.)

\subsubsection{\label{subsec:Working-Example-of}Working Example of Amalgamation}

Let us give an example of amalgamation in a model case which will
help illustrate the key ideas. Let $K$ be a completed algebraic closure
of $\mathbf{Q}\left(t^{\mathbf{Z}\left[\sigma^{\pm1}\right]}\right)$,
so the valuation group of $K$ is $\mathbf{Q}\left[\sigma^{\pm1}\right]$.
Let $L=K\left(x\right)=K\left(x\right)_{\sigma}$ be a transformal
valued field extension of $K$ generated by an element $x$ with $x^{\sigma}-x=c$
for some $c\in K$.
\begin{lem}
\label{example-lemma}(1) The cut determined by the element $x$ in
$\mathbf{Q}\left[\sigma^{\pm1}\right]$ is either $\left(-\infty,0\right)$
or $\left(-\infty,0\right]$

(2) The valuation we are given on $L$ is in fact the unique one lifting
the valuation on $K$
\end{lem}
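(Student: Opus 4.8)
The plan is to squeeze the set of realized valuations
\[
C(x)=\{v(x-b):b\in K\}\subseteq\Gamma,\qquad \Gamma=\mathbf{Q}\left[\sigma^{\pm1}\right],
\]
using the functional equation $x^{\sigma}=x+c$, and to deduce both clauses from this. We may assume $x\notin K$ (otherwise $L=K$); as $K$ is algebraically closed this forces $x$ transcendental over $K$, and since $x^{\sigma}=x+c$ and $x^{\sigma^{-1}}=x-c^{\sigma^{-1}}$ already lie in $K(x)$ we get $L=K(x)$ as fields, with $\sigma\colon x\mapsto x+c$. I would first record two structural facts. First, $C(x)$ is a \emph{proper initial segment} of $\Gamma$: it is initial because $v(x-b)=\gamma$ forces $v(x-(b+d))=\gamma'$ whenever $v(d)=\gamma'<\gamma$, and it is proper (hence bounded above) because $K$ is complete, so $C(x)=\Gamma$ would exhibit $x$ as the limit of a Cauchy sequence from $K$. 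Second, $C(x)$ is \emph{closed under $\sigma^{\pm1}$}: from $\sigma(x-b)=x-(b^{\sigma}-c)$ one gets $\sigma(v(x-b))=v(x-(b^{\sigma}-c))\in C(x)$ with $b^{\sigma}-c\in K$, and dually from $\sigma^{-1}(x-b)=x-(c^{\sigma^{-1}}+b^{\sigma^{-1}})$.

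For clause (1): a bounded initial segment of $\mathbf{Q}[\sigma^{\pm1}]$ closed under $\sigma$ cannot contain a positive element, since $\sigma^{n}\gamma_{0}$ is cofinal upward in $\Gamma$ for every $\gamma_{0}>0$ by $\omega$-increasingness; hence $C(x)\subseteq(-\infty,0]$ and in particular $v(x)\le0$. For the matching lower bound I would iterate $x^{\sigma^{-1}}=x-c^{\sigma^{-1}}$ to get the telescoping identity $x=x^{\sigma^{-n}}+\sum_{i=1}^{n}c^{\sigma^{-i}}$; putting $B_{n}=\sum_{i=1}^{n}c^{\sigma^{-i}}\in K$ this yields $v(x-B_{n})=\sigma^{-n}(v(x))$. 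If $v(x)=0$ then $0\in C(x)$; if $v(x)<0$ then $\sigma^{-n}(v(x))$ is strictly increasing with supremum $0$ (a negative upper bound $\beta$ would give $v(x)\le\sigma^{n}\beta$ for all $n$, impossible as $\sigma^{n}\beta$ is cofinal downward). So $\sup C(x)=0$, and $C(x)$ is $(-\infty,0)$ or $(-\infty,0]$ according as the supremum is attained.

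For clause (2): the equation first pins down $v(x)$ itself. With $\gamma=v(x)$ one has $\sigma\gamma=v(x^{\sigma})=v(x+c)$; comparing with the ultrametric inequality and using $\gamma\le0$ from (1) together with $\omega$-increasingness, a short case analysis gives $\gamma=0$ if $v(c)\ge0$ and $\gamma=\sigma^{-1}(v(c))$ if $v(c)<0$ — in all cases a function of $c$ and $v|_{K}$. Hence the values $v(x-B_{n})=\sigma^{-n}(v(x))$ are determined, and then for arbitrary $b\in K$: if $v(x-B_{n})\ne v(B_{n}-b)$ for some $n$ then $v(x-b)=\min\{v(x-B_{n}),v(B_{n}-b)\}$ is forced by $v|_{K}$, while if $v(x-B_{n})=v(B_{n}-b)$ for all $n$ then $v(x-b)\ge v(x-B_{n})$ for all $n$, so $v(x-b)=\sup C(x)=0$ by (1). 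Thus the distance function $b\mapsto v(x-b)$, hence the cut $C(x)$, and whether its supremum is attained (equivalently, whether the fixed sequence $(B_{n})$ has a pseudo-limit in $K$), are determined by $c$ and $v|_{K}$ alone. I would finish by the standard classification of extensions of a valuation to a simple transcendental extension of an algebraically closed field: in the non-attained case $(B_{n})$ has no pseudo-limit in $K=K^{\mathrm{alg}}$, so it is of transcendental type and $v$ is the unique (immediate) extension making $x$ a pseudo-limit of $(B_{n})$; in the attained case, choosing $b_{0}\in K$ with $v(x-b_{0})=0$, the residue $\overline{x-b_{0}}$ is transcendental over the algebraically closed residue field — otherwise $v(x-b_{0})$ could be improved to be positive, against (1) — so $v$ is the Gauss valuation of $K(x-b_{0})$. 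Either way $v$ is forced.

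The decisive point, and the only genuine obstacle, is the second structural fact above: that the realized-distance set $C(x)$ is a $\sigma$-invariant cut, which is exactly what lets $\omega$-increasingness pin it at $0$; this is Proposition \ref{prop:rationality-of-cuts-1} in miniature, and the rest is bookkeeping. In clause (2) the only delicacy is organizational: one has to read off from the single equation $x^{\sigma}=x+c$ both the value $v(x)$ and the explicit pseudo-Cauchy sequence $(B_{n})$ realizing the cut, and then to separate the immediate case from the residue-extending case before invoking the classification of extensions to $K(x)$. The degenerate possibility $c=a^{\sigma}-a$ for some $a\in K$ — where $x-a$ is a $\sigma$-fixed transcendental — falls under the attained case and needs no separate argument.
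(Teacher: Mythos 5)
Your proof is correct and its core mechanism is identical to the paper's: the functional equation $x^\sigma=x+c$ makes the realized-distance set $C(x)$ a $\sigma$-invariant cut, and then $\omega$-increasingness forces $C(x)\in\{(-\infty,0),(-\infty,0]\}$ once $C(x)=\Gamma$ is ruled out by completeness of $K$. For part (1) you and the paper do the same thing, just phrased differently — the paper writes the invariance as $C(x)=\sigma\cdot C(x)$ via $C(x^\sigma)=C(x+c)$, while you derive $\sigma^{\pm 1}C(x)\subseteq C(x)$ from $\sigma(x-b)=x-(b^\sigma-c)$ and its inverse; your additional passage through $B_n=\sum_{i\le n}c^{\sigma^{-i}}$ to show $\sup C(x)=0$ is really the $\sigma^{-1}$-invariance applied to $v(x)$ and is mildly redundant, but harmless. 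For part (2) the organization genuinely differs. The paper reduces $v(x-a)$ to $v(x)$ by noting $x-a$ satisfies the same kind of equation with $c$ replaced by $c-(a^\sigma-a)$, pins down $v(x)$, and then determines $v$ on all of $L$ by splitting $f\in K[x]$ into linear factors over $K=K^{\mathrm{a}}$. You instead compute $v(x-b)$ for every $b$ directly from the ultrametric against the explicit pseudo-Cauchy sequence $(B_n)$, and then close the argument by invoking the classification of extensions of a valuation to a simple transcendental extension of an algebraically closed field (transcendental-type pseudo-limit vs.\ Gauss valuation). Both routes land in the same place; the paper's ``replace $c$ and factor'' is shorter, while yours is more explicit, carries out the $vc\geq 0$ versus $vc<0$ case split that the paper waves at as ``a straightforward computation,'' and makes the underlying pseudo-Cauchy sequence visible — which is, after all, the point the surrounding discussion is trying to illustrate.
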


\begin{rem}
Let $K$ be a valued field. Chatzidakis and Perera show in \cite{chatzidakis2017criterion}
that for the valuation to lift uniquely to every Artin-Schreier extension
of $K$, it is a necessary and sufficient that the operator $X\mapsto X^{p}-X$
is surjective on the maximal ideal; this is automatic when $K$ is complete and the value group is Archimedean. In this situation, one shows that the
cut determined by an Artin-Schreier extension of $K$ is again either
$\left(-\infty,0\right)$ or $\left(-\infty,0\right]$, corresponding
to the ramified and unramified cases, respectively.
\end{rem}

\begin{proof}
For the sake of brevity we only sketch the proof.

(1) We have the equality
\[
C\left(x^{\sigma}\right)=C\left(x+c\right)
\]
but one readily verifies that $C\left(x\right)=C\left(x+c\right)$
and $C\left(x^{\sigma}\right)=\sigma\cdot C\left(x\right)$, which
gives
\[
C\left(x\right)=\sigma\cdot C\left(x\right)
\]
so there are only three possibilities, namely $\left(-\infty,0\right),\left(-\infty,0\right]$
and the whole group $\mathbf{Q}\left[\sigma^{\pm1}\right]$; the last
possibility is excluded since $K=\widehat{K}$ is complete. 

(2) In view of $x^{\sigma}=x+c$ it is enough to prove that the quantity
$vfx$ is determined, for a polynomial $f\in K\left[x\right]$; since
$K$ is algebraically closed, by splitting $f$ into linear factors,
it is enough to prove that the quantity $v\left(x-a\right)$ is determined,
for $a\in K$. Now the element $x-a$ obeys an equation over $K$
of the same form, for a different choice of $c$; so by allowing $c$
to vary it is enough to show that the quantity $vx$ is determined.
By (1) we cannot have $vx>0$; so a straightforward computation gives
$vx=\sigma^{-1}\cdot vc$.
\end{proof}
Let us focus on the case where $L$ determines an immediate extension
of $K$, i.e, with the case where the cut determined by $x$ has no
maximal element; this is the case for example when
\[
x=t^{-1}+t^{\sigma^{-1}}+\ldots
\]

Let $\mathcal{B}$ be the nested family of all closed balls with center
and radii in $K$ containing $x$; the valuative radii of the balls
in $\mathcal{B}$ tends to $0$. There is therefore in $L$ a \textit{unique}
closed ball $B$ of valuative radius zero which is contained in the
intersection; this is the closed ball of valuative radius zero around
some (or any) solution of the equation $X^{\sigma}-X=t^{-1}$. We
can then functorially amalgamate, exactly as in \ref{subsec:Functorial-Amalgamation-of}.

\subsection*{Acknowledgments}

We would like to thank Zo{\'e} Chatzidakis and James Tao for useful discussions.
Some of our results were obtained independently by Martin Hils and
G{\"o}nen{\c{c}} Onay.  We are indebted to the anonymous referee for his or her thorough reading.
\newline
This text is based on the Harvard PhD thesis of Yuval Dor, which is in turn a fundamental reworking and extension of a previous joint manuscript.

\newpage{}

\subsection{A Word on Prerequisites}

The present work is in the intersection of model theory, valuation
theory, and difference algebra; we made however an attempt to keep
it as self contained as possible. Only basic concepts of difference algebra and the model theory of difference fields will be used; see Section \ref{sec:Difference-Algebra} for an overview. We assume that the reader is familiar with the basic
theory of valued fields and their ultrametric structure. We will also
refer to the basic ramification filtration of the absolute Galois
group and its functorial properties; an overview is given in the book
by Neukirch \cite{neukirch2013algebraic}. For standard facts on valuation
theory and the model theory of valued fields we refer the reader to \cite{haskell2005stable}.

\subsection{Organization of the Paper}

In Section 1 we introduce the notion of the Artin-Schreier ideals
of a Henselian valued field, a prime ideal of the valuation ring canonically
attached to the data of a wildly ramified Galois extension of degree
$p$. It is shown that these ideals enjoy a certain tameness property
in the Abhyankar settings.

In Section 2 and 3 we rapidly review basic notions from difference
algebra and transformal modules to be used in the rest of the work.

In Section 4 we introduce the central object of interest, namely the
category of models of $\VFA$. We show that basic constructions
in valuation theory can be lifted to the transformal settings and
study algebraic extensions in detail.  
 It will turn out that  a    sufficient condition for a model of $\VFA$ to be an amalgamation basis
 is that it have no nontrivial $\text{h}$-finite extensions, meaning $\sigma$-invariant algebraic extensions that become finite but nontrivial upon taking   
passing to the Henselization.  (This is  actually a necessary condition too, if one considers
 finite powers of the automorphism as well).  
We give a    criterion
for the uniqueness of a separable closure in terms of $\text{h}$-finite
$\sigma$-invariant separable extensions and demonstrate by means
of example that $\text{h}$-finiteness cannot be relaxed to finiteness
in general. We then formulate a criterion which is easier to verify
in practice, in terms of the action of $\sigma$ on certain valuation
theoretic data, namely the Artin-Schreier ideals of Section 1, the
finite index subgroups of the valuation group, and the absolute Galois
group of the residue field.

In Section 5 we study the transformal analogues of the algebraic notions
of Henselization, strict Henselization, and unramified extensions.
In particular, we demonstrate the existence of a transformal
Henselization which is unique up to a unique isomorphism and obeys
formal properties similar to its algebraic counterpart. We then use
the results of Section 1 and Section 4 to show that the strict Henselization
has no unexpected finite $\sigma$-invariant separable extensions,
a statement essentially equivalent to the stable embededness of the
residue field in the model companion $\widetilde{\VFA}$ of
$\VFA$.

In Section 6 we lift the notion of the generic of a ball from $\ACVF$
to $\VFA$ and use it to define a transformal analogue of the
Herbrand-Hasse transition function of local class field theory.

In Section 7 we study immediate extensions, and show that they are
controlled by torsors for the additive group.

In Section 8 we introduce the notion of transformally Archimedean
ultrapowers. We show that the transformally Archimedean ultrapower of an algebrically closed
field has no immediate transformally algebraic extensions.

In Section 9 we assemble the various pieces together; we prove the
amalgamation property and establish the existence of a model companion.

\subsection{Conventions and Notation}

\subsubsection{~}

Recall that the \textit{characteristic exponent} of a field is equal
to $p$ if the field is positive characteristic $p$ and is equal
to $1$ if the field is of characteristic zero. By convention, a natural
number $p$ which is either prime or equal to $1$ is fixed through
this text, and all valued fields are assumed of equal characteristic,
of characteristic exponent $p$.

\subsubsection{~}

In this work, we will work almost exclusively with fields, as opposed
to rings, and for this reason, the following convention is adopted.
Let $L\hookleftarrow K\hookrightarrow M$ be embeddings of fields.
Then the notation $L\otimes_{K}M$ is overridden: it denotes the field
of fractions of the displayed algebra over $K$, which is in turn
implicitly assumed to be an integral domain, an assumption which will
always hold whenever the notation is used.

\subsubsection{~}

Let $K$ be a field. We write $K^{p^{-\infty}},K^{\text{sep}}$ and
$K^{\text{a}}$ for the perfect hull, a separable closure and an algebraic
closure of $K$ respectively. The absolute Galois group of $K$ with
respect to a fixed choice of a separable closure is denoted by $G_{K}$.
The Frobenius endomorphism is denoted by $\phi x=x^{p}$, thus it
is the identity map in exponential characteristic $p=1$.

\subsubsection{~}\label{defs:oag}

Let $\Gamma$ be an ordered abelian group, written additively. We
say that $\Gamma$ is \textit{perfect }if the equality $p\Gamma=\Gamma$
holds, thus in characteristic exponent $p=1$ the group $\Gamma$
is always perfect. 

We shall write $\Gamma_{\infty}$ for the ordered abelian monoid whose
underlying set is $\Gamma$ together with a distinguished element
$\infty$. The ordering of $\Gamma_{\infty}$ extends the ordering
of $\Gamma$ by declaring $\infty$ to be the maximal element and
to be absorbing under addition. If there is no possible room for confusion,
we will fail to distinguish between $\Gamma$ and $\Gamma_{\infty}$,
suppressing $\infty$ from the notation. By a \textit{cut} in $\Gamma$
we mean a subset of $\Gamma$ which is downwards closed.

By an \textit{ideal} of $\Gamma_{\infty}$ we mean a nonempty subset $I$
of $\Gamma_{\infty}$ which is upwards closed and whose elements are
all nonnegative. We say that $I$ is a \textit{prime ideal} if whenever
the sum $\sum_{s\in S}\alpha_{s}$ of finitely many strictly positive
elements of $\Gamma$ lies in $I$ then already one of them does.
We write $\mathrm{Spec}\Gamma=\mathrm{Spec}\Gamma_{\infty}$ for the
space of prime ideals of $\Gamma_{\infty}$. If $V$ is a convex subgroup
of $\Gamma$, then the set $\gamma\in\Gamma_{\infty}$ with $\alpha<\gamma$
for all $\alpha\in V$ is a prime ideal of $\Gamma_{\infty}$. Moreover,
this association induces a bijection between prime ideals of $\Gamma_{\infty}$
and convex subgroups of $\Gamma$. 

If $0<\alpha,\beta\in\Gamma$ then we write $\alpha\ll\beta$ to indicate
that the inequality $n\alpha<\beta$ holds for all $0<n\in\mathbf{N}$.
Equivalently, the convex subgroup generated by $\beta$ strictly contains
the convex subgroup generated by $\alpha$. If the relations $\alpha\ll\beta$
and $\beta\ll\alpha$ both fail to hold then $\alpha$ and $\beta$
are said to lie in the same \textit{Archimedean equivalence class}
of $\Gamma$. We say that $\Gamma$ is \emph{Archimedean} if every pair of positive elements lie in the same Archimedean equivalence class; this is equivalent to the requirement that $\Gamma$ embeds in $\mathbf{R}$.

\subsubsection{\label{embedding-of-valued-fields}}

Let $K$ be a valued field, per our conventions, always assumed equicharacteristic
of characteristic exponent equal to $p$. We write $\mathcal{O}_{K},k_{K},\Gamma_{K}$
and $\mathcal{M}_{K}$ for the valuation ring, the residue field,
the valuation group and the maximal ideal, respectively, and we drop
the subscript if there is no possible room for ambiguity. The valuation
map is denoted by $v:K\to\Gamma_{\infty}$ and our notation is additive.
The map $I\mapsto v\left(I\right)$ is a bijection between ideals
of the valuation ring $\mathcal{O}$ and ideals of $\Gamma_{\infty}$;
it restricts to a bijection $\mathrm{Spec}\mathcal{O}=\mathrm{Spec}\Gamma$
between prime ideals of $\mathcal{O}$ and prime ideals of $\Gamma_{\infty}$.

If $K$ and $\widetilde{K}$ are valued fields then by an \textit{embedding}
of $K$ in $\widetilde{K}$ we mean an embedding $i:K\hookrightarrow\widetilde{K}$
of fields with the property that $i^{-1}\left(\widetilde{\mathcal{O}}\right)=\mathcal{O}$
and $i^{-1}\left(\widetilde{\mathcal{M}}\right)=\mathcal{M}$. If
we view $K$ as a subfield of $\widetilde{K}$ by means of the map
$i$ then we will also say that $\widetilde{K}$ is a valued field
over $K$ and that $\nicefrac{\widetilde{K}}{K}$ is an extension
of valued fields. If $\nicefrac{\widetilde{K}}{K}$ is a valued field
extension then $\widetilde{K}$ is said to be an \textit{immediate}
extension of $K$ if $k=\widetilde{k}$ and $\Gamma=\widetilde{\Gamma}$.

\subsubsection{~}

Let $\mathcal{L}_{\VF}$ be the language of valued fields. The language
$\mathcal{L}_{\VF}$ is one sorted with a sort $K$ for the field,
unary predicates $K^{\times},\mathcal{O}$ and $\mathcal{M}$, constant
symbols $0$ and $1$ and a unary function symbol $\left(\bullet\right)^{-1}\colon K^{\times}\to K^{\times}$.
We let $\VF$ denote the theory of valued fields in the language $\mathcal{L}_{\VF}$;
the axioms and the interpretations of the symbols are the obvious
ones. If $K,\widetilde{K}$ are valued fields then an embedding of
$K$ in $\widetilde{K}$ in the category of structures for the language
$\mathcal{L}_{\VF}$ is an embedding of valued fields. We let $\ACVF=\ACVF_{p,p}$
denote the theory of nontrivially valued algebraically closed fields
of equal characteristic exponent $p$; then $\ACVF_{p,p}$ admits
elimination of quantifiers in the language $\mathcal{L}_{\VF}$, see
\textit{\cite{haskell2005stable}.}

\subsubsection{~}
\label{subsec-ramificationtheory}
Let $K$ be a valued field. Recall that $K$ is said to be \textit{Henselian}
if whenever $fx\in\mathcal{O}\left[x\right]$ is a polynomial with
coefficients in the valuation ring and $a\in\mathcal{O}$ is such
that $vfa>0$ and $vf'a=0$, then there is a unique $b\in\mathcal{O}$
with $fb=0$ and whose residue class coincides with that of $a$.
This is equivalent to the requirement that the integral closure of $\mathcal{O}$
within any algebraic field extension of $K$ be a valuation ring,
and also the requirement that every algebraic field extension of $K$
can be uniquely expanded to a valued field extension of $K$. If $K$
is a valued field then there is an embedding $i:K\hookrightarrow K^{h}$
of $K$ in a Henselian valued field $K^{h}$ which embeds uniquely
over $K$ in any other Henselian extension of $K$; we say that $K^{h}$
is the \textit{Henselization} of $K$. The Henselization of $K$ is
unique up to a unique isomorphism of valued fields over $K$. It is
moreover an immediate extension of $K$ which is separably algebraic
over $K$.

Let us assume that we are given a separably algebraic extension $\widetilde{k}$
of the residue field $k$ of $K$. Then there exists a Henselian valued
field extension $\widetilde{K}$ of $K$ reproducing the embedding
$k\hookrightarrow\widetilde{k}$ residually, and enjoying the following
universal mapping property. Let $L$ be a Henselian valued field extension
of $K$; then every embedding of $\widetilde{k}$ in $l$ over $k$
lifts to an embedding of $\widetilde{K}$ in $L$, and the lifting
is unique. If $\widetilde{k}$ is a separable algebraic closure of
$k$, then $\widetilde{K}$ is said to be a \textit{strict Henselization
}of $K$; the field $K$ is said to be \textit{strictly Henselian}
if it is Henselian and its residue field is separably
closed. We denote a strict Henselization of $K$ by $K^{\text{sh}}$;
it is unique up to isomorphism of valued fields over $K$. If $\nicefrac{\widetilde{K}}{K}$
is an extension of valued fields then $\widetilde{K}$ is \textit{unramified}
over $K$ if it embeds over $K$ in a strict Henselization of $K$.

The valued field $K$ is said to be \textit{tamely closed} if it is
strictly Henselian every finite extension of $K$ is of degree divisible
by $p$; this is equivalent to the requirement that $K$ is strictly
Henselian and $m\Gamma=\Gamma$ for every $0<m\in\mathbf{N}$ not
divisible by $p$. If $K$ is a valued field then there is a valued
field extension $K^{t}$ of $K$ which is tamely closed and which
embeds over $K$ in every other tamely closed extension; we say that
$K^{t}$ is a \textit{tame closure} of $K$, and an algebraic extension
of $K$ is tamely ramified if it embeds over $K$ in a tame closure.

We say that an algebraic extension $\nicefrac{\widetilde{K}}{K}$
of $K$ is \textit{purely wildly ramified} if it is linearly disjoint
over $K$ from $K^{\text{t}}$ within a fixed algebraic closure. Equivalently,
the integral closure of $\mathcal{O}_{K}$ in $\widetilde{K}$ is
a valuation ring, the induced extension $\nicefrac{\widetilde{k}}{k}$
of residue fields is purely inseparably algebraic and $\nicefrac{\widetilde{\Gamma}}{\Gamma}$
is $p^{\infty}$-torsion. If $\nicefrac{\widetilde{K}}{K}$ is finite and purely
wildly ramified then it is of degree a power of $p$.

In summary, let $K$ be a Henselian valued field and let $L$ be a finite Galois extension of $K$. There is a unique maximal extension $L_1$ of $K$ in $L$ which is unramified; this extension is Galois. Moreover, there is a unique maximal extension $L_2$ of $L_1$ in $L$ which is tamely ramified; it is again Galois. The field $L$ is then purely wildly ramified over $L_2$ and the induced factorization $K \subseteq L_1 \subseteq L_2 \subseteq L$ is functorial. The Galois extension $L$ is said to be \emph{totally ramified} if $L_1 = L$; it is said to be tamely (but totally) ramified if $L = L_2$; and it is said to be \emph{wildly ramified} if $L$ properly extends $L_2$.

\subsubsection{~}

Let $K$ be a valued field. By a \textit{nested family of balls in
$K$ }we mean a family $\mathcal{B}$ of balls in $K$ which is linearly
ordered under reverse inclusion. We say that $K$ is \textit{spherically
complete} (or $\emph{maximally complete}$) if the intersection of every nested family of closed balls in $K$ admits a point over $K$; this is equivalent to the requirement that $K$ admits
no proper immediate extensions. If $K$ is algebraically closed, then
by a theorem of Kaplansky \cite{kaplansky1942maximal} there is up
to isomorphism of valued fields over $K$ a unique immediate valued
field extension $\widetilde{K}$ of $K$ which is spherically complete.

We say that $K$ is \textit{complete} if every nested family of closed
balls in $K$ whose valuative radii are unbounded has a nonempty intersection;
if $K$ is spherically complete, then it is complete, but not conversely.
If $K$ is a valued field then there is a complete valued field extension
$\widehat{K}$ of $K$ characterized by the following universal mapping
property. Let $\widetilde{K}$ be a complete valued field extension
of $K$ and assume that $\Gamma$ is cofinal in $\widetilde{\Gamma}$;
then the embedding of $K$ in $\widetilde{K}$ lifts uniquely to an
embedding of $\widehat{K}$ in $\widetilde{K}$. The extension $\nicefrac{\widehat{K}}{K}$
is immediate. If $K$ is Henselian (perfect) then $\widehat{K}$ is
Henselian (perfect). Moreover, if $K$ is Henselian then $\widehat{K}$
is a primary\footnote{This means that $K$ is relatively separably algebraically closed
in $\widehat{K}$} extension of $K$ which and the canonical restriction map $G_{\widehat{K}}\to G_{K}$
of absolute Galois groups is an isomorphism. In other words, the assignment
$L\mapsto L\otimes_{K}\widehat{K}=\widehat{L}$ is a bijection between
finite separable extensions of $K$ and finite separable extensions
of $\widehat{K}$. We will refer to this result as \emph{Krasner's Lemma}.

\subsubsection{~}

Let $\nicefrac{\widetilde{K}}{K}$ be a valued field extension with
$\tdeg_{K}\widetilde{K}$ finite. We say that $\nicefrac{\widetilde{K}}{K}$
is an \textit{Abhyankar extension}, or \textit{without transcendence
defect}, if the equality $\text{tdeg}_{K}\widetilde{K}=\text{tdeg}_{k}\widetilde{k}+\dim_{\mathbf{Q}}\left(\nicefrac{\widetilde{\Gamma}}{\Gamma}\otimes\mathbf{Q}\right)$
holds; the right hand side is, in general, smaller.

Let $K$ be a Henselian valued field and $\widetilde{K}$ a finite
valued field extension of $K$. Then we have the equality $\left[\widetilde{K}:K\right]=\left[\widetilde{\Gamma}:\Gamma\right]\cdot\left[\widetilde{k}:k\right]\cdot p^{d}$
for some natural number $d$. We say that $\widetilde{K}$ is a \textit{defectless
extension} of $K$ if $d=0$ and $K$ is said to be \textit{defectless}
if every finite extension of $K$ is a defectless extension; an arbitrary
valued field is defectless if its Henselization is. Thus if $p=1$
then $K$ is automatically defectless.

\subsubsection{\label{Spherically Complete Domination}} Let $A$ be an algebraically closed valued field which is spherically complete. We will make use of the following spherically complete domination statement; see \cite{haskell2005stable}, Proposition 12.11. Let $B$ and $C$ be valued fields over $A$, jointly embedded over $A$ in an ambient extension. Let us assume that $\Gamma_B = \Gamma_A$. If the fields $k_B$ and $k_C$ are linearly disjoint over $k_A$, then the valued fields $B$ and $C$ are themselves linearly disjoint over $A$. Moreover, this requirement determines the valued field structure on the amalgamation. More precisely, if $b$ is a tuple enumerating $B$, then working in $\ACVF$, the type $\tp \left(\nicefrac{b}{C}\right)$ is completely axiomatized by $\tp\left(\nicefrac{b}{A}\right)$, together with a the set of formulas asserting that every tuple of elements of $k_B$ linearly independent over $k_A$ remains linearly independent over $k_C$.

\newpage{}

\section{\label{sec:Artin-Schreier-Ideals}Artin-Schreier Ideals}

Let $K$ be a Henselian valued field of equal characteristic. In characteristic
zero, Galois extensions of $K$ can be understood concretely in terms
of the induced extension of the residue field and the valuation group.
Indeed, the celebrated theorem of Ax-Kochen-Ershov asserts that not
only the Galois extension of $K$ but in fact every elementary statement
about $K$ whatsoever can be reduced to an elementary statement regarding
its valuation group and its residue field. 

In finite characteristic, the situation is considerably more complicated,
due to the existence of wildly ramified extensions. The purpose of
this short section is to attach certain valuation theoretic data to
such extensions, and show that it obeys certain tameness properties
in the Abhyankar settings. 

The proof makes use of a the generalized stability theorem due to
Kuhlmann \cite{kuhlmann2010elimination} and Temkin \cite{temkin2017tame},
and similar constructions appear in \cite{kuhlmann2010classification}.
These theorems only make sense in the settings of finite generation;
our basic observation is that while the notion of defect is ill behaved
over perfect fields, the knowledge that a perfect field began its
life as the perfect hull of a defectless field gives nontrivial information,
which will be sufficient for our applications.

\subsection{~\label{subsec:artinschrierclassification}}

We will make use of the following standard fact. Let $K$ be a field
of characteristic $p$ and let $\wp X=X^{p}-X$ denote the Artin-Schreier
operator. If $x\in K$ is an element which lacks an Artin-Schreier
root in $K$, then a splitting field of the polynomial $X^{p}-X-x$
provides us with a cyclic Galois extension of $K$ of degree $p$,
and conversely all cyclic Galois extensions of $K$ of degree $p$
are obtained in this way. The projective space $\mathbf{P}\left(\text{coker}\wp\right)$ over $\mathbf{F}_p$,
whose points are the one dimensional subspaces of $\text{coker}\wp$,
therefore classify Galois extensions of $K$ of degree $p$: if $x$
and $y$ are nonzero elements of $K$ lacking an Artin-Schreier root
in $K$, then the splitting fields of the polynomials $X^{p}-X-x$
and $X^{p}-X-y$ are isomorphic over $K$ precisely in the event that
$x$ and $y$ generate the same one dimensional subspace of the coset
space $\text{coker}\wp$.

\subsection{\label{subsec:-2}}

Suppose next that our field $K$ comes equipped with a Henselian valuation.
By \ref{subsec:artinschrierclassification}, the Galois extensions
of $K$ of degree $p$ are classified by the points of the projective
space $\mathbf{P}\left(\text{coker}\wp\right)$. To each point of
this projective space, we will now attach a prime ideal of $\Gamma$
in an invariant manner.

Let therefore an Artin-Schreier extension $M$ of $K$ be given, say
generated by the adjunction of an Artin-Schreier root $b$ of an element
$a\in K$, which does not already have one in $K$. Introduce the
following subset of $p^{-\infty}\cdot\Gamma$:
\[
\delta_{K}^{\ast}\left(a\right)=\left\{ v\left(b-c\right):c\in K\right\} \subset p^{-\infty}\cdot\Gamma
\]

and let $\delta_{K}\left(a\right)$ be the downwards closure of $\delta_{K}^{\star}\left(a\right)$
in $p^{-\infty}\cdot\Gamma$.

Implicit here is the assumption that the quantity $\delta_{K}\left(a\right)$
depends only on $a$, and not on the specific choice of an Artin-Schreier
root of $a$ in the field $M$. However, there is no abuse of notation;
the field $K$ is Henselian, so the Galois action preserves the valuative
distance from an element of $K$.

The following lemma describes the basic properties of the association
$a\mapsto\delta_{K}\left(a\right)$. Note that $\left(1\right)$ can
be restated as saying that it depends, in fact, only on $M$, and
not on its specific presentation as a splitting field of the polynomial
$X^{p}-X-a$.
\begin{lem}
\label{artin-schrier-cuts}Let $K$ be a Henselian valued field.

(1) The association $a\mapsto\delta_{K}\left(a\right)$ descends to
$\mathbf{P}\left(\text{coker}\wp\right)$. In other words, the elements
$a,b$ of $K$ determine the same Artin-Schreier cut provided that
we can write $a=\alpha\cdot b+c$ where $\alpha\in\mathbf{F}_{p}^{\times}$
and $c\in\wp K$.

(2) The cut $\delta_{K}\left(a\right)$ is nonpositive: it cannot
contain any strictly positive elements. If moreover $K$ is strictly
Henselian then $\delta_{K}\left(a\right)$ does not contain $0$.

(3) Let $K^{p^{-\infty}}$ be the perfect hull of $K$. Then the equality
$\delta_{K^{p^{-\infty}}}\left(a\right)=p^{-\infty}\cdot\delta_{K}\left(a\right)$
holds. In particular, if $K$ is perfect, then the cuts $\delta_{K}\left(a\right)$
are left invariant under multiplication by $p$.
\end{lem}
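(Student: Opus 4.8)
The plan is to establish the three clauses by unwinding the definition of $\delta_K(a)$ and using Henselianity in the form that the Galois action preserves valuative distances to elements of $K$.

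For clause (1), suppose $a = \alpha b + c$ with $\alpha \in \mathbf{F}_p^\times$ and $c \in \wp K$, say $c = \wp(d)$ with $d \in K$. If $\beta$ is an Artin-Schreier root of $b$ (so $\wp(\beta) = b$), then $\alpha\beta + d$ satisfies $\wp(\alpha\beta + d) = \alpha \wp(\beta) + \wp(d) = \alpha b + c = a$, using that $\wp$ is $\mathbf{F}_p$-linear. Thus $\alpha\beta + d$ is an Artin-Schreier root of $a$, and since $d \in K$, multiplication by $\alpha$ and translation by elements of $K$ permute the sets $\{v(\beta - e) : e \in K\}$ and $\{v(\alpha\beta + d - e) : e \in K\}$ bijectively (as $\alpha$ is a unit of residue value $0$). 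Hence $\delta_K^\ast(a) = \delta_K^\ast(b)$ and the cuts agree. The remark preceding the lemma already guarantees the choice of root is immaterial, so this is well-defined on $\mathbf{P}(\operatorname{coker}\wp)$.

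For clause (2), I would argue by contradiction: if $v(b - c) = \gamma > 0$ for some $c \in K$, set $b' = b - c$, which is again an Artin-Schreier root of $a' = a - \wp(c)$, and $v(b') > 0$. Then $\wp(b') = (b')^p - b'$ has value $\min(p\gamma, \gamma) = \gamma > 0$ when $\gamma > 0$ (since $p\gamma > \gamma$), so $a' = \wp(b') \in \mathcal{M}_K$; but then $X^p - X - a'$ has the residually split root $0$ (as $\overline{a'} = 0$ and the derivative $-1$ is a unit), so by Henselianity $a'$ already has an Artin-Schreier root in $K$, contradicting that $M/K$ is a nontrivial extension. For the strictly Henselian refinement, if $0 \in \delta_K(a)$, i.e. $v(b-c) = 0$ for some $c$ (using nonpositivity, $0$ is then the maximum), then replacing $b$ by $b - c$ we may assume $v(b) = 0$; the residue $\overline{b}$ lies in $k_K$ since $\overline{b}$ satisfies $\overline{X}^p - \overline{X} - \overline{a} = 0$ over $k_K$ — and here is the point where I would invoke strict Henselianity of $K$: the residue field $k_K$ is separably closed, hence contains all Artin-Schreier roots, so $\overline{a} \in \wp k_K$; lifting via Hensel's lemma again gives an Artin-Schreier root of $a$ in $K$, a contradiction. (One must be slightly careful with the case $v(b) > 0$ versus $v(b) = 0$, but both are handled as above.)

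For clause (3), the key observation is that $K^{p^{-\infty}}$ is Henselian (the perfect hull of a Henselian field is Henselian, being an immediate algebraic extension) and that $a$, not having an Artin-Schreier root in $K$, also has none in $K^{p^{-\infty}}$ — because $K^{p^{-\infty}}/K$ is purely inseparable while an Artin-Schreier extension is separable, so they are linearly disjoint. A Galois conjugate $b$ of $a$ over $K$ remains a Galois conjugate over $K^{p^{-\infty}}$. Now $\delta_{K^{p^{-\infty}}}^\ast(a) = \{v(b - c) : c \in K^{p^{-\infty}}\}$; since every element of $K^{p^{-\infty}}$ is $c = e^{p^{-n}}$ for some $e \in K$ and $n$, and since valuations on $K^{p^{-\infty}}$ take values in $p^{-\infty}\Gamma$, I would show $\delta_{K^{p^{-\infty}}}^\ast(a) \subseteq p^{-\infty}\delta_K^\ast(a)$ by approximating: given $c \in K^{p^{-\infty}}$, $c^{p^m} \in K$ for suitable $m$, and $v(b - c) = p^{-m} v(b^{p^m} - c^{p^m})$ if $b$ were also a $p^m$-th power — which it need not be, so instead I would pass to $a^{p^m}$, whose Artin-Schreier root is $b^{p^m}$ (as $\wp$ commutes with $\phi$), giving $v(b^{p^m} - c^{p^m}) \in \delta_K^\ast(a^{p^m})$; but $a^{p^m}$ and $a$ differ by composition with $\phi^m$, an automorphism, so $\delta_K(a^{p^m}) = \delta_K(a)$ after the appropriate scaling — this is the step I expect to require the most care, and the cleanest route is probably to first prove $\delta_K(\phi a) = \delta_K(a)$ (immediate since $\phi$ is a valued field automorphism of $K$ when $K$ is perfect, and in general $\delta_K(a^p) $ relates to $\delta_K(a)$ via $\phi$ on the perfect hull). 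The reverse inclusion $p^{-\infty}\delta_K^\ast(a) \subseteq \delta_{K^{p^{-\infty}}}^\ast(a)$ is easier, since $K \subseteq K^{p^{-\infty}}$ already gives $\delta_K^\ast(a) \subseteq \delta_{K^{p^{-\infty}}}^\ast(a)$, and the right side is closed under multiplication by $p^{-1}$ because for any root $\beta$ of $\wp$ over $K$, $\beta^{p^{-1}}$ makes sense in the perfect hull of $M$ and witnesses the scaled values — more precisely, $\delta_{K^{p^{-\infty}}}(a)$ is the $\delta$-cut computed over a perfect base, so clause (2)-style reasoning shows it is a $p^{-\infty}\Gamma$-cut invariant under $p$, whence also under $p^{-1}$. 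Taking downward closures throughout yields the claimed equality, and the final sentence ("if $K$ is perfect then the cuts are $p$-invariant") is the special case $K = K^{p^{-\infty}}$.

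The main obstacle, as indicated, is bookkeeping in clause (3): keeping straight the interaction between the Frobenius twist $a \mapsto a^p$ on the one hand and extraction of $p$-th roots on the other, and verifying that $\delta_K(a)$ is genuinely insensitive to replacing $a$ by $\phi^m a$ while the valuation group gets rescaled by $p^{-m}$ in passing to $K^{p^{-\infty}}$. Everything else is a routine application of Hensel's lemma and the $\mathbf{F}_p$-linearity of $\wp$.
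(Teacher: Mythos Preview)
Your arguments for (1) and (2) are correct and match the paper's.

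For (3), your approach can be made to work, but you tie yourself in unnecessary knots. Two points:

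First, your worry that ``$v(b-c) = p^{-m}v(b^{p^m} - c^{p^m})$ if $b$ were also a $p^m$-th power --- which it need not be'' is a non-issue: in characteristic $p$ one always has $(b-c)^{p^m} = b^{p^m} - c^{p^m}$, so the identity holds unconditionally. With this, your forward inclusion goes through cleanly: for $c \in K^{p^{-\infty}}$ with $c^{p^m} \in K$, one has $v(b-c) = p^{-m}v(b^{p^m} - c^{p^m}) \in p^{-m}\delta_K^\ast(a^{p^m})$.

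Second, and more importantly, the identity $\delta_K(a^{p^m}) = \delta_K(a)$ that you need is an immediate consequence of (1), since $a^{p} - a = \wp(a) \in \wp K$; it has nothing to do with $\phi$ being an automorphism (which fails for non-perfect $K$ anyway, and when it holds would give $\delta_K(a^p) = p\cdot\delta_K(a)$ rather than $\delta_K(a)$). Once you observe this, both inclusions are one line each, and combining the two displayed identities in the perfect case gives the $p$-invariance directly.

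The paper's route is different and cleaner: it observes that $a \mapsto \delta_K(a)$ is compatible with directed unions in $K$, realizes $K^{p^{-\infty}}$ as the direct limit of $K \xrightarrow{\phi} K \xrightarrow{\phi} \cdots$, and then invokes $\delta_K(a^p) = \delta_K(a)$ from (1) to identify the resulting direct limit of cuts as $p^{-\infty}\delta_K(a)$. This packages the same content without the separate inclusion bookkeeping.
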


\begin{proof}
(1) Suppose that $a$ and $a'$ differ from each other by an element
of $\wp K$, and let Artin-Schreier roots $b$ and $b'$ of $a$ and
$a'$ be chosen. Then the elements $b$ and $b'$ differ from each
other by an element of $K$, and translation by an element of $K$
evidently does not change cuts. Multiplication by a natural number
which is invertible in $K$ will not change cuts, either, giving the
second point.

(2) The polynomial $X^{p}-X$ splits over the residue field, so Hensel
lifting implies that the polynomial $X^{p}-X-t$ splits in $K$ if
the element $t$ is of strictly positive valuation. Thus a pair of
elements at strictly positive valuative distance from each other simultaneously
lie in $\wp K$. If $K$ is strictly Henselian then $X^{p}-X-t$ splits
in $K$ even in the event that $vt=0$; so $\delta_{K}\left(a\right)$
cannot contain $0$.

(3) First note that the map $\delta_{K}$ is compatible with directed unions as $K$ varies. More precisely, let $\left(F_{i}\right)$ be a directed system of valued fields
with union $F$ and fix an element $a\in F$. Without loss, after
refining the system, the element $a$ lies in all of the $F_{i}$;
one checks that $\delta_{F}\left(a\right)$ then coincides with the
direct limit of the $\delta_{F_{i}}\left(a\right)$. Since $K^{p^{-\infty}}$
is the direct limit of the diagram $K\xrightarrow{\phi}K\xrightarrow{\phi}\ldots$,
this implies that $\delta_{K^{p^{-\infty}}}\left(a\right)$ is the
direct limit of the diagram $\delta_{K}\left(a\right)\xrightarrow{\phi}\delta_{K}\left(a^{p}\right)\xrightarrow{\phi}\ldots$.
Now by (1) the elements $a$ and $a^{p}$ determine the same Artin-Schreier
cut, i.e we have that $\phi\circ\delta_{K}=\delta_{K}$; since the
Frobenius endomorphism induces multiplication by $p$ at the level
of valuation groups, the claim follows.
\end{proof}

\subsection{~}

Let $K$ be a perfect Henselian valued field, and let $\widetilde{K}$
be a Galois extension of degree $p$, corresponding to a point
$\alpha\in\mathbf{P}\left(\text{coker}\wp\right)$. 
The discussion canonically associates to  $\alpha$ %there is in a functorial manner attached 
a cut $X$ of $\Gamma$, consisting entirely of nonpositive elements,
and invariant under multiplication by $p$. Then the set $-X$, along
with the point at $\infty$, is a prime ideal of $\Gamma_{\infty}$.
\begin{defn}
Let $K$ be a valued field which is perfect and Henselian.

(1) The \textit{Artin-Schreier ideal map}
\[
I:\mathbf{P}\left(\text{coker}\wp\right)\to\text{Spec}\Gamma_{\infty}
\]
is the map described in the discussion above.

(2) The prime ideals of $\Gamma$ which are in the image of map $I$ will be referred to as the \emph{Artin-Schreier ideals} of $K$.
\end{defn}

We then have the following:
\begin{cor}
\label{cor:Let--be}Fix a valued field $K$ which is perfect and Henselian. Let us assume that $K$ is the perfect hull of a Henselian
defectless valued field. Then every Artin-Schreier ideal of $K$ is
the radical of a principal ideal, i.e it is form $p^{-\infty}\cdot\left[\gamma,\infty\right]$
where $0\leq\gamma\in\Gamma_{\infty}$. If moreover $K$ is strictly
Henselian then $\gamma\neq 0$.
\end{cor}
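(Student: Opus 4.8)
The plan is to descend from the perfect field $K$ to the defectless level, and there compute the Artin--Schreier cut by hand. Fix a point $\alpha\in\mathbf{P}(\mathrm{coker}\,\wp)$, represented by some $a\in K$ without an Artin--Schreier root, with corresponding degree $p$ extension $\widetilde{K}=K(b)$, $b^{p}-b=a$; recall that $I(\alpha)=-\delta_{K}(a)\cup\{\infty\}$. Since $K=K_{0}^{p^{-\infty}}$ with $K_{0}$ Henselian defectless, we have $a\in K_{1}:=K_{0}^{p^{-n}}$ for some $n$. The field $K_{1}$ is again Henselian (a purely inseparable algebraic extension of a Henselian field) and again defectless: the $p^{n}$-th power Frobenius is a field isomorphism $K_{1}\xrightarrow{\ \sim\ }K_{0}$ which scales the valuation by $p^{n}$, hence is an isomorphism of valued fields up to rescaling the value group, and both ``Henselian'' and ``defectless'' are insensitive to such rescaling (alternatively, one invokes the Kuhlmann--Temkin stability theorem). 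As $K=K_{1}^{p^{-\infty}}$, Lemma~\ref{artin-schrier-cuts}(3) gives $\delta_{K}(a)=p^{-\infty}\cdot\delta_{K_{1}}(a)$, so it suffices to show that $\delta_{K_{1}}(a)$ is the set of elements of $p^{-\infty}\Gamma_{K_{1}}$ lying below a single element $s\in p^{-1}\Gamma_{K_{1}}$ with $s\le 0$; indeed then a direct check shows $-\delta_{K}(a)\cup\{\infty\}=p^{-\infty}\cdot[\gamma,\infty]$ with $\gamma:=-s\ge 0$, i.e. the radical of the principal ideal generated by $\gamma$.

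The core of the argument is over the defectless Henselian field $K_{1}$. Put $s=\sup\{v(b-c):c\in K_{1}\}$. First, $s\neq\infty$: otherwise $b$ lies in the completion $\widehat{K_{1}}$, which is a primary extension of the Henselian $K_{1}$, so the separable element $b$ would already lie in $K_{1}$, contradicting $[\widetilde{K}:K]=p$. Second, $s$ is attained: if not, pick $c_{m}\in K_{1}$ with $v(b-c_{m})$ strictly increasing and cofinal in $\delta^{\star}_{K_{1}}(a)$; then $(c_{m})$ is pseudo-Cauchy with no pseudo-limit in $K_{1}$ (a pseudo-limit $c^{*}\in K_{1}$ would satisfy $v(b-c^{*})\ge s$, contradicting either maximality or non-attainment). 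By Kaplansky's dichotomy $(c_{m})$ is of algebraic or of transcendental type; the algebraic case produces a proper finite immediate — hence defect — extension of $K_{1}$, contradicting defectlessness, while the transcendental case forces the pseudo-limit $b$ to be transcendental over $K_{1}$, absurd. Thus $s\in\widetilde{\Gamma}\subseteq p^{-1}\Gamma_{K_{1}}$ is attained; translating $b$ by an element of $K_{1}$ — which does not change the cut, by Lemma~\ref{artin-schrier-cuts}(1) — we may assume $v(b)=s$ is the maximal distance. If $s>0$ then $v(a)=v(b^{p}-b)=s>0$, and since $X^{p}-X$ has a simple residual root at $0$, Hensel's lemma yields a root of $X^{p}-X-a$ in $K_{1}$, a contradiction; hence $s\le 0$. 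Finally, for $c\in K_{1}$ with $v(c)\neq s$ one has $v(b-c)=\min(s,v(c))$, so $\delta^{\star}_{K_{1}}(a)$ contains $s$ and every element of $\Gamma_{K_{1}}$ below $s$, while all values $v(b-c)$ are $\le s$ by maximality; therefore $\delta_{K_{1}}(a)=\{\beta\in p^{-\infty}\Gamma_{K_{1}}:\beta\le s\}$, as required.

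Putting the two steps together gives $I(\alpha)=-\delta_{K}(a)\cup\{\infty\}=p^{-\infty}\cdot[\gamma,\infty]$ with $\gamma=-s\ge 0$. If moreover $K$ is strictly Henselian, Lemma~\ref{artin-schrier-cuts}(2) gives $0\notin\delta_{K}(a)$; but $0\in\delta_{K}(a)$ if and only if $0\le s$, i.e. $s=0$, so in fact $s<0$ and $\gamma>0$. The main obstacle is precisely the attainment of the supremum $s$: this is the only place defectlessness is genuinely used, through the absence of proper finite immediate extensions, and it can fail over a perfect field — exactly the subtlety flagged in the introduction to this section, which is why one first descends to the finite level $K_{0}^{p^{-n}}$. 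The remaining points, in particular that the $p^{-\infty}$-operation in Lemma~\ref{artin-schrier-cuts}(3) sends a cut of the form ``down-set below $s$'' to the radical of a principal ideal, are routine verifications.
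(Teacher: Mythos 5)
Your proof is correct and follows essentially the same route as the paper: descend to a defectless Henselian level, observe that the Artin--Schreier cut there attains a maximum because a defectless Henselian field has no proper finite immediate extensions, and then lift to $K$ via Lemma~\ref{artin-schrier-cuts}(3). The paper simply cites the no-immediate-extensions fact and (implicitly) replaces $a$ by $a^{p^{n}}$ to bring it into $K_{0}$, whereas you unwind that fact with Kaplansky's dichotomy and instead pass up to $K_{1}=K_{0}^{p^{-n}}$ containing $a$; these variants are cosmetic, symmetric under the Frobenius.
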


\begin{proof}
Let $E$ be the defectless, Henselian valued field implicit in the
assumption, and let $a\in E$. Then the cut $\delta_{E}\left(a\right)$
admits a maximal element outright, since $E$ is defectless and Henselian,
and hence has no immediate algebraic extensions. By Lemma \ref{artin-schrier-cuts},
the cut $\delta_{K}\left(a\right)$ is the perfect hull of $\delta_{E}\left(a\right)$.
The Galois extensions of $K$ are the perfect hulls of the Galois
extension of $E$, so every Artin-Schreier ideal of $K$ is the perfect
hull of a principal ideal. Since $K$ is strictly Henselian, every
Artin-Schreier extension is totally ramified, so we conclude using Lemma
\ref{artin-schrier-cuts} 
\end{proof}
\begin{cor}
\label{artin-schrier-ideal-map} Let $F$ be an algebraically closed
valued field. Let $K_{0}$ be a finitely generated Abhyankar extension
of $F$, and $K$ a perfect strictly Henselian hull of $K_{0}$. Then
every Artin-Schreier ideal of $K$ is the radical of a principal ideal, i.e, it is of the form $p^{-\infty}\cdot\left[\gamma,\infty\right]$
for some $0<\gamma\in\Gamma$.
\end{cor}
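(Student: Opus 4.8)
The plan is to reduce the statement to Corollary~\ref{cor:Let--be}, whose hypothesis asks that $K$ be the perfect hull of a Henselian \emph{defectless} valued field; the one genuinely new ingredient needed is the generalized stability theorem, which is what converts the Abhyankar hypothesis into defectlessness.

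First I would note that $F$, being algebraically closed, has no nontrivial finite extensions and is therefore vacuously defectless. Applying the generalized stability theorem of Kuhlmann~\cite{kuhlmann2010elimination} and Temkin~\cite{temkin2017tame} to the finitely generated, transcendence-defect-free extension $K_{0}/F$, I conclude that $K_{0}$ is a defectless valued field; equivalently, its Henselization $K_{0}^{h}$ is defectless (this is how defectlessness is defined for non-Henselian fields). Passing from $K_{0}^{h}$ to the strict Henselization $K_{0}^{\mathrm{sh}}$ adjoins only a separable closure of the residue field, i.e.\ an unramified --- hence tame --- algebraic extension; since a tame extension of a defectless valued field is again defectless, $K_{0}^{\mathrm{sh}}$ is a Henselian, defectless, and (by construction) strictly Henselian valued field.

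Next I would identify $K$ with the perfect hull of $K_{0}^{\mathrm{sh}}$. The perfect hull of a Henselian valued field is Henselian (the valuation extends uniquely to every purely inseparable, hence to every algebraic, extension), and the perfect hull of a separably closed field is algebraically --- hence separably --- closed; so the perfect hull of $K_{0}^{\mathrm{sh}}$ is a perfect, strictly Henselian valued field extension of $K_{0}$, which by the uniqueness of the perfect strictly Henselian hull is isomorphic to $K$ over $K_{0}$. Thus $K$ is the perfect hull of the Henselian defectless valued field $K_{0}^{\mathrm{sh}}$, and $K$ is strictly Henselian; Corollary~\ref{cor:Let--be} now applies directly and yields that every Artin-Schreier ideal of $K$ is of the form $p^{-\infty}\cdot[\gamma,\infty]$ with $0<\gamma\in\Gamma$, which is the assertion. (The identification of the cuts over $K$ with those over $K_{0}^{\mathrm{sh}}$ is precisely Lemma~\ref{artin-schrier-cuts}(3), already used inside the proof of Corollary~\ref{cor:Let--be}.)

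The step I expect to demand the most care is the preservation of defectlessness through the strict Henselization: one must control the (in general infinite) tame algebraic extension $K_{0}^{\mathrm{sh}}/K_{0}^{h}$, and one must check that the generalized stability theorem is being invoked in exactly its intended generality --- a finitely generated Abhyankar extension of a suitably nice (here, algebraically closed) valued field. Once those two points are in place, everything else is formal bookkeeping with the Artin-Schreier ideal map and the functorial properties of $\delta_{K}$ recorded in Lemma~\ref{artin-schrier-cuts}.
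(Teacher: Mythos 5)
Your proof is correct and takes essentially the same route as the paper: both invoke Kuhlmann's generalized stability theorem to get defectlessness of $K_{0}$ from the Abhyankar hypothesis, pass to the strict Henselization via the fact that tamely ramified (in particular unramified) algebraic extensions preserve defectlessness, and then apply Corollary~\ref{cor:Let--be}. Your version is just a more spelled-out rendering of the paper's one-line argument.
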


\begin{proof}
By a theorem of Kuhlmann \cite{kuhlmann2010elimination} the class
of defectless valued fields is closed under finitely generated Abhyankar
extensions and tamely ramified algebraic extensions, so the result
follows from Corollary \ref{cor:Let--be}.
\end{proof}
\newpage{}

\section{Ordered Transformal Modules}

We develop the basic principles of ordered abelian groups with a rapidly
increasing automorphism, motivated by the case of the Frobenius action
on the valuation group of a valued field in the regime where the prime
power is large. The generalization from the case of ordinary ordered
abelian groups to those more generalized settings is straightforward,
but is included here for the sake of completeness.

\subsection{$\omega\text{OGA}$}

Recall that the ordered abelian group $\Gamma$ is said to be \textit{perfect}
if $p\Gamma=\Gamma$. By a \textit{homomorphism} of ordered abelian
groups we mean a homomorphism of the underlying abelian groups which
carries nonnegative elements to nonnegative elements; an \textit{automorphism}
of an ordered abelian group is a bijective homomorphism from the
group to itself.
\begin{defn}
(1) By a \textit{transformal ordered abelian group} we mean an ordered abelian
group $\Gamma$ equipped with an injective homomorphism $\sigma\colon\Gamma\to\Gamma$
of of ordered abelian groups; a \textit{homomorphism} of ordered transformal
abelian groups is a homomorphism of ordered abelian groups compatible
with $\sigma$ in the evident manner.

(2) Let $\Gamma$ be a transformal ordered abelian group. We say that
$\Gamma$ is \textit{$\omega$-increasing} if one has $n\alpha<\sigma\alpha$
for all $0<\alpha\in\Gamma$ and $0<n\in\mathbf{N}$.

(3) The theory $\omega\text{OGA}$ is the theory of perfect ordered
abelian groups equipped with an $\omega$-increasing automorphism.
The language is one sorted, with unary function symbols $\sigma^{\pm1}$
and $p^{\pm1}$, a binary relation symbol $\leq$ and a binary function symbol $+$ with
the obvious interpretations.

(4) By a \textit{homomorphism} of models of $\omega\text{OGA}$ we
mean a homomorphism of abelian groups which commutes with $\sigma$
and which carries nonnegative elements to nonnegative elements.
\end{defn}

\begin{rem}
Let $\mathbf{Z}\left[\sigma\right]$ be the ring of polynomials over
$\mathbf{Z}$ in the indeterminate $\sigma$, ordered so that $\sigma$
is positive and strictly exceeds every natural number, and $R=\mathbf{Z}\left[\sigma^{\pm1},p^{\pm1}\right]$
the localization of $\mathbf{Z}\left[\sigma\right]$ at the elements
$\sigma$ and $p$; the ordering of $\mathbf{Z}\left[\sigma\right]$
extends uniquely to $R$. If $\Gamma$ is a perfect ordered abelian group,
then the data of an $\omega$-increasing automorphism of $\Gamma$
is equivalent to the data of an action of $R$ on $\Gamma$ with the
property that multiplication by a positive element of $R$ preserves
the positive cone of $\Gamma$, so models of $\omega\text{OGA}$ can
be regarded as ordered modules over $R$. We will refer to them simply
as \textit{ordered modules}, and omit the reference to $R$.
\end{rem}

\begin{defn}
\begin{enumerate}
\item Let $\Gamma$ be a model of $\omega\OGA$. We say that $\Gamma$
is \textit{transformally divisible} if the canonical map $\Gamma\to\Gamma\otimes\mathbf{Q}\left(\sigma\right)$
is an isomorphism, i.e, we have $\nu\Gamma=\Gamma$ for all nonzero
$\nu\in\mathbf{Z}\left[\sigma\right]$.
\item The theory $\widetilde{\omega\OGA}$ is the theory of nonzero
models of $\omega\OGA$ which are transformally divisible.
\end{enumerate}
\end{defn}

\begin{prop}
\label{woga-frobenius}(1) Every model of $\omega\OGA$ embeds in
a model of $\widetilde{\omega\OGA}$; the theory $\widetilde{\omega\OGA}$
is model complete. It is complete, $o$-minimal and admits elimination
of quantifiers.

(2) The theory $\widetilde{\omega\OGA}$ is the theory of an ultraproduct
of structures of the form $\left(\Gamma_{i},<,\alpha\mapsto n_{i}\cdot\alpha\right)$
where the $n_{i}\in\mathbf{N}$ are natural numbers with $n_{i}\to\infty$
and $\Gamma_{i}=\Gamma_{i}\otimes\mathbf{Q}$.
\end{prop}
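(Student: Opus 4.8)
The plan is to prove the two clauses of Proposition~\ref{woga-frobenius} together, treating (2) as the source of completeness and (1) as a matter of quantifier elimination and model completeness, in the style of the Ax--Kochen--Ershov/Presburger-type analysis of ordered abelian groups with a compatible operator.

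First I would set up the language and check that $\widetilde{\omega\OGA}$ is consistent and that every model of $\omega\OGA$ embeds in a model of $\widetilde{\omega\OGA}$. Given a model $\Gamma$ of $\omega\OGA$, form $\Gamma\otimes_{\mathbf{Z}[\sigma^{\pm1},p^{\pm1}]}\mathbf{Q}(\sigma)$; one must check that the order extends (using that $\mathbf{Z}[\sigma]$ is a domain ordered with $\sigma$ infinite over $\mathbf{N}$, so $\mathbf{Q}(\sigma)$ is an ordered field and tensoring a torsion-free ordered module up to its divisible hull over an ordered domain preserves the order) and that $\sigma$ remains $\omega$-increasing (clear, since the inequality $n\alpha<\sigma\alpha$ is preserved under the embedding and scaling by positive denominators). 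This is the embedding statement; it is routine.

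Next, the heart of the matter: quantifier elimination for $\widetilde{\omega\OGA}$. Here $\Gamma$ is a torsion-free divisible module over the ordered field $\mathbf{Q}(\sigma)$, i.e.\ a $\mathbf{Q}(\sigma)$-vector space with a compatible ordering. Since $\mathbf{Q}(\sigma)$ is an ordered field, such an object is precisely an ordered $\mathbf{Q}(\sigma)$-vector space, and the theory of nontrivial ordered vector spaces over a fixed ordered field is well known to be complete, $o$-minimal, and to admit quantifier elimination (this is the classical ordered-vector-space QE, e.g.\ via back-and-forth: any partial isomorphism between finitely generated substructures extends, because realizing a cut of a $\mathbf{Q}(\sigma)$-subspace amounts to choosing a point in a definable interval, possibly an endpoint). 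The only subtlety is that our official language has the function symbols $\sigma^{\pm1},p^{\pm1}$ rather than scalar multiplication by every element of $\mathbf{Q}(\sigma)$; but each element of $\mathbf{Q}(\sigma)$ is $\emptyset$-definable as a composition of $\sigma^{\pm1}$, $p^{\pm1}$, addition and the divisibility/root operations available in a transformally divisible module, so the two languages are interdefinable and QE transfers. From QE plus the fact that the prime model (namely $\mathbf{Q}(\sigma)$ itself with its order) embeds in every model, completeness and model completeness of $\widetilde{\omega\OGA}$ follow, and $o$-minimality is immediate from QE since every unary formula reduces to a Boolean combination of linear inequalities. The hard part I expect is not any single step but getting the language bookkeeping exactly right — verifying that ``transformally divisible'' really does make all of $\mathbf{Q}(\sigma)$ act definably, and that the $\omega$-increasing axiom is not merely an elementary consequence but genuinely needed to pin down the order type of $\mathbf{Q}(\sigma)$ (it forces $\sigma$ to sit above $\mathbf{N}$, which is what makes the ordered field $\mathbf{Q}(\sigma)$, rather than some other ordering of the rational function field, the relevant scalar field).

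Finally, for clause (2) I would identify $\widetilde{\omega\OGA}$ with the theory of the ultraproduct $\prod_{i\to\mathcal{U}}(\Gamma_i,<,\alpha\mapsto n_i\alpha)$ with $n_i\to\infty$ and each $\Gamma_i$ divisible. Each such factor is an ordered $\mathbf{Q}$-vector space with a distinguished positive endomorphism ``multiplication by $n_i$''; in the ultraproduct, the internal element represented by $(n_i)$ is a scalar exceeding every standard integer, so the subring it generates is a copy of $\mathbf{Z}[\sigma^{\pm1},p^{\pm1}]$ ordered with $\sigma=[(n_i)]$ infinite, and Łoś gives that the ultraproduct is nonzero, divisible over this ring (each $\nu\in\mathbf{Z}[\sigma]$ acts surjectively, since over each $\mathbf{Q}$-vector space $\Gamma_i$ the corresponding integer-coefficient expression in $n_i$ is a nonzero rational hence invertible), and $\omega$-increasing. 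Hence it is a model of $\widetilde{\omega\OGA}$, and by the completeness established above it is \emph{the} model up to elementary equivalence. Conversely any model of $\widetilde{\omega\OGA}$ is elementarily equivalent to such an ultraproduct, again by completeness. This closes the argument; the only care needed is to confirm divisibility by arbitrary $\nu\in\mathbf{Z}[\sigma]$ in the ultraproduct, which reduces by Łoś to the evident fact that a nonzero polynomial in $n_i$ with integer coefficients is, for $i$ large, a nonzero and hence invertible scalar on the $\mathbf{Q}$-vector space $\Gamma_i$.
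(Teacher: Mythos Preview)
Your proposal is correct and is essentially the argument that the paper defers to \cite{chernikov2014valued}, Proposition~5.10: recognize models of $\widetilde{\omega\OGA}$ as nontrivial ordered vector spaces over the ordered field $\mathbf{Q}(\sigma)$ (with $\sigma$ infinite), invoke the classical QE/completeness/$o$-minimality for that theory, and read off (2) by \L o\'s.

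One small sharpening: your phrase ``the two languages are interdefinable and QE transfers'' is doing real work and deserves to be made explicit. Interdefinability alone does not transfer QE; what you need is that every quantifier-free formula in the larger language $\{+,-,0,<\}\cup\{\lambda_r:r\in\mathbf{Q}(\sigma)\}$ is equivalent, modulo the theory, to a quantifier-free formula in the smaller language $\{+,-,0,<,\sigma^{\pm1},p^{\pm1}\}$. This is immediate by clearing denominators: an atomic formula $\sum r_i x_i \mathrel{\square} 0$ with $r_i\in\mathbf{Q}(\sigma)$ becomes $\sum \nu_i x_i \mathrel{\square} 0$ with $\nu_i\in\mathbf{Z}[\sigma^{\pm1},p^{\pm1}]$ after multiplying through by a positive common denominator. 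Also, for completeness the relevant common substructure is the trivial module $\{0\}$ rather than $\mathbf{Q}(\sigma)$ (the latter is not $\emptyset$-generated), though either route works.
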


\begin{proof}
(1) See \cite{chernikov2014valued}, Proposition 5.10.

(2) Clear.
\end{proof}
\begin{lem}
\label{dense-divisible}Let $\Gamma$ be a model of $\omega\OGA$
which is algebraically divisible, thus $\Gamma = \Gamma \otimes \mathbf{Q}$. Then $\Gamma$ lies dense in $\Gamma\otimes\mathbf{Q}\left(\sigma\right)$
for the order topology.
\end{lem}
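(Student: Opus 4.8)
The plan is to reduce the assertion to an approximation statement and then to exploit the contracting nature of $\sigma^{-1}$ on the value group by means of a geometric series. Write $G:=\Gamma\otimes\mathbf{Q}\left(\sigma\right)$. Since $\mathbf{Q}\left(\sigma\right)$ is a field, $G$ is a $\mathbf{Q}\left(\sigma\right)$-vector space, hence divisible, hence densely ordered; so it suffices to prove that for every $z\in G$ and every $0<\epsilon\in G$ there is $\gamma\in\Gamma$ with $\left|z-\gamma\right|<\epsilon$ (given $a<b$ in $G$ one then approximates $z=\tfrac12\left(a+b\right)$ to within $\epsilon=\tfrac12\left(b-a\right)$). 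To normalize $z$: as $\Gamma$ is a $\mathbf{Q}$-vector space it is in particular a $\mathbf{Q}\left[\sigma^{\pm1}\right]$-module, and $G$ is obtained from $\Gamma$ by inverting the nonzero elements of $\mathbf{Z}\left[\sigma\right]$, so after clearing denominators we may write $z=\gamma_{0}/\mu$ with $\gamma_{0}\in\Gamma$ and $0<\mu\in\mathbf{Z}\left[\sigma\right]$ (positive leading coefficient; note multiplication by a positive element of $\mathbf{Z}\left[\sigma\right]$ preserves the positive cone of $G$, by $\omega$-increasingness). If $\deg_{\sigma}\mu=0$ then $\mu\in\mathbf{N}_{>0}$ and $z=\gamma_{0}/\mu\in\Gamma$ by divisibility, and we are done; so assume $\mu=c\sigma^{d}+(\text{lower order})$ with $d\geq1$ and $c>0$.

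Now I would run the geometric series. Put $\rho:=1-c^{-1}\sigma^{-d}\mu$, an element of $\mathbf{Q}\left[\sigma^{-1}\right]$ of $\sigma$-degree $\leq-1$, so that $\mu=c\sigma^{d}\left(1-\rho\right)$ and $\rho$ is infinitesimal in $\mathbf{Q}\left(\sigma\right)$. For $N\geq0$ set $\tau_{N}:=c^{-1}\sigma^{-d}\sum_{k=0}^{N}\rho^{k}\in\mathbf{Q}\left[\sigma^{\pm1}\right]$ and $\gamma_{N}:=\tau_{N}\gamma_{0}$; since $\Gamma$ is a $\mathbf{Q}\left[\sigma^{\pm1}\right]$-module we have $\gamma_{N}\in\Gamma$. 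The telescoping identity $\mu^{-1}-\tau_{N}=\rho^{N+1}/\mu$ in $\mathbf{Q}\left(\sigma\right)$ then gives
\[
z-\gamma_{N}=\gamma_{0}\cdot\frac{\rho^{N+1}}{\mu},
\]
and the factor $\rho^{N+1}/\mu$ has $\sigma$-degree $\leq-(N+1)-d$, so $0\leq\bigl|\rho^{N+1}/\mu\bigr|<\sigma^{-(N+1)}$ in $\mathbf{Q}\left(\sigma\right)$.

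It remains to show that $\gamma_{N}\to z$, i.e. that $\gamma_{0}\cdot\rho^{N+1}/\mu\to0$ in the order topology of $G$; then one picks $N$ with $\left|z-\gamma_{N}\right|<\epsilon$. By the bound just established this comes down to showing $\sigma^{-m}\left|\gamma_{0}\right|\to0$ in $G$ as $m\to\infty$. For $0<\delta\in G$, writing $\left|\gamma_{0}\right|=\alpha/\nu$ and $\delta=\beta/\nu$ with a common positive denominator $\nu\in\mathbf{Z}\left[\sigma\right]$ reduces the inequality $\sigma^{-m}\left|\gamma_{0}\right|<\delta$ to $\alpha<\sigma^{m}\beta$ with $\alpha,\beta>0$ in $\Gamma$, and here the $\omega$-increasing hypothesis is what makes $\sigma^{m}\beta$ eventually exceed $\alpha$: iterating $\sigma$ on a positive element climbs through the Archimedean classes, and $\beta$ generates $\Gamma$ as a $\sigma$-invariant convex subgroup, so $\alpha<n\sigma^{m}\beta<\sigma^{m+1}\beta$ for suitable $n,m$.

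The main obstacle is precisely this last step — propagating $\omega$-increasingness (which a priori only controls $\sigma$ on \emph{single} elements) into the statement that $\left\{ \sigma^{-m}\gamma_{0}:m\geq0\right\} $ is coinitial in the positive cone of $G$, equivalently that the iterates $\sigma^{m}\beta$ are cofinal. Everything before it is formal bookkeeping with the geometric series; everything after it is immediate. I would isolate this coinitiality fact as a separate sub-lemma about models of $\omega\OGA$ and prove it first (using the $\mathbf{Q}$-divisibility of $\Gamma$ and the description of $\omega\OGA$-models as ordered $R$-modules), then feed it into the argument above.
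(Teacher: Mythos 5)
Your proposal follows essentially the same route as the paper's proof: expand $\mu^{-1}$ as a geometric series in $\sigma^{-1}$ (equivalently, view it in $\mathbf{Q}\left(\left(\sigma^{-1}\right)\right)$), truncate, and take $\gamma_{N}=\tau_{N}\gamma_{0}\in\Gamma$ as the approximants. But the step you flag as the ``main obstacle'' is not a technicality waiting to be verified; it is false in general, and indeed the Lemma itself fails as stated once $\Gamma$ has transformal height greater than one. Take $\Gamma=\mathbf{Q}\left[\sigma^{\pm1}\right]\oplus\mathbf{Q}\left[\sigma^{\pm1}\right]$, ordered lexicographically, with $\sigma$ acting diagonally; this is an algebraically divisible model of $\omega\OGA$ whose transformal divisible hull $G$ is $\mathbf{Q}\left(\sigma\right)^{2}$ with the lexicographic order. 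The open interval in $G$ with endpoints $\left(\frac{1}{\sigma-1},-1\right)$ and $\left(\frac{1}{\sigma-1},1\right)$ is nonempty, since it contains $\left(\frac{1}{\sigma-1},0\right)$, yet it is disjoint from $\Gamma$: any point of $\Gamma$ in it would have first coordinate $\frac{1}{\sigma-1}\notin\mathbf{Q}\left[\sigma^{\pm1}\right]$. The false move in your last paragraph is the clause ``$\beta$ generates $\Gamma$ as a $\sigma$-invariant convex subgroup''; that is precisely the transformal Archimedean hypothesis, which is not assumed. When $\Gamma$ has a proper nonzero $\sigma$-invariant convex subgroup $V$ and $\gamma_{0}\notin V$, the terms $\sigma^{-m}\left|\gamma_{0}\right|$ never descend into $V$, so the bound $\left|z-\gamma_{N}\right|<\sigma^{-\left(N+1\right)}\left|\gamma_{0}\right|$ is useless against tolerances $\epsilon\in V$.

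The paper's own one-line proof elides the same point: the reduction ``it is enough to prove that $\mathbf{Q}\left[\sigma^{\pm1}\right]$ lies dense in $\mathbf{Q}\left(\sigma\right)$'' only propagates to $\Gamma$ when the tolerance lies in the $\sigma$-invariant convex subgroup generated by $\gamma_{0}$, which is automatic in transformal height one but not in general. The sole use of the Lemma, in the proof of Lemma \ref{transformarch}(2), is nevertheless safe: there one approximates $\alpha\in\text{conv}\Gamma$ with error measured against $0<\gamma\in\Gamma$, and $\Gamma$ is transformally Archimedean, so the iterates $\sigma^{m}\gamma$ are cofinal in $\text{conv}\Gamma$. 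So your geometric-series bookkeeping is correct and your instinct to isolate the cofinality sub-lemma is exactly right; what needs changing is the statement of the Lemma, which should be restricted to tolerances $\epsilon$ lying in the $\sigma$-invariant convex subgroup generated by $\gamma_{0}$, or, as in the intended application, relativized to $\text{conv}\Gamma$ with $\Gamma$ transformally Archimedean.
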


\begin{proof}
It is enough to prove that $\mathbf{Q}\left[\sigma^{\pm1}\right]$
lies dense in $\mathbf{Q}\left(\sigma\right)$. For, if $\gamma \in \Gamma$ and $\nu_n \in \mathbf{Q}\left[\sigma^{\pm1}\right] \to \nu \in \mathbf{Q}\left(\sigma\right)$ then $\nu_n \cdot \gamma \to \nu \cdot \gamma$. So if $\nu\in\mathbf{Q}\left[\sigma\right]$
and $\alpha\in\mathbf{Q}\left[\sigma^{\pm1}\right]$ we must find,
for every $0<n\in\mathbf{N}$, some $\beta_{n}\in\mathbf{Q}\left[\sigma^{\pm1}\right]$
with $\left|\nu\beta_{n}-\alpha\right|<\sigma^{-n}$. Certainly we
can find such $\beta\in\mathbf{Q}\left(\left(\sigma^{-1}\right)\right)$
as the latter is a field containing $\mathbf{Q}\left[\sigma^{\pm1}\right]$;
so take $\beta_{n}$to be a suitable truncation of $\beta$.
\end{proof}

\subsection{~}

Let $\Gamma$ be a model of $\omega\OGA$. The action of $\sigma$
on $\Gamma$ extends to $\Gamma_{\infty}$ so as to fix $\infty$.
We then say that an ideal $I\subset\Gamma_{\infty}$ is \textit{transformally
prime} if $0\notin I$ and whenever the element $\sigma\alpha$ lies
in $I$ then already $\alpha$ does. The $\omega$-increasing nature
of $\sigma$ then implies that a transformally prime ideal is algebraically
prime. 

We write $\Spec^{\sigma}\Gamma\subset\Spec\Gamma$ for the space of
transformally prime ideals of $\Gamma$; it is covariantly functorial
in a homomorphism of ordered transformal modules, and it is in canonical
bijection with the set of $\sigma$-invariant convex submodules of
$\Gamma$.
\begin{defn}
Let $\Gamma$ be a model of $\omega\OGA$.
\begin{enumerate}
    \item The \textit{transformal height} of $\Gamma$ is the number of
distinct nonzero $\sigma$-invariant convex subgroups of $\Gamma$, if finite, and $\infty$ otherwise.
\item We say that $\Gamma$ is \textit{transformally Archimedean} if
it of transformal height at most one.
\end{enumerate}
\end{defn}

\begin{rem}
\begin{enumerate}
    \item Let $\Gamma$ be a model of $\omega\OGA$. Then $\Gamma$ is transformally
Archimedean if and only if, for every $0<\alpha,\beta\in\Gamma$ we
have $\beta<\sigma^{n}\alpha$ and $\alpha<\sigma^{m}\beta$ for some
$n,m\in\mathbf{N}$.
\item Let $\Gamma$ be an ordered abelian group. Then $\Gamma$ is Archimedean if and only if it embeds in the ordered abelian group $\mathbf{R}$ of real numbers. Thus an Archimedean ordered abelian group is bounded in cardinality. In the transformal settings, this need not be the case. For example, fix a saturated perfect ordered abelian group $V$, and let $\Gamma = \bigoplus_{n\in\mathbf{Z}} V_{n}$ where $V_{n} = V$ and $\sigma$ acts by shifting. 
\end{enumerate}
\end{rem}

\begin{lem}
Let $\Gamma$ be a model of $\omega\OGA$.

(1) The transformal height is additive in short exact sequences of
models of $\omega\OGA$.

(2) The transformal height is preserved under the passage to the transformal
divisible hull $\Gamma\otimes\mathbf{Q}\left(\sigma\right)$ of $\Gamma$.

(3) The transformal height is bounded from above by the dimension
of $\Gamma\otimes\mathbf{Q}\left(\sigma\right)$ as a vector space
over $\mathbf{Q}\left(\sigma\right)$.
\end{lem}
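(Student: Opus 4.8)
The three assertions are really three instances of the same philosophy: the transformal height of a model of $\omega\OGA$ is a well-behaved ``dimension-like'' invariant. I would treat them in the order (1), (2), (3), since (2) and (3) both reduce to (1) once the right exact sequences are in place.

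For (1), suppose $0\to\Gamma'\to\Gamma\to\Gamma''\to 0$ is a short exact sequence of models of $\omega\OGA$, where $\Gamma'$ is a $\sigma$-invariant convex submodule of $\Gamma$ and $\Gamma''=\Gamma/\Gamma'$ carries the quotient order and the induced $\sigma$. The key point is that the $\sigma$-invariant convex submodules of $\Gamma$ containing $\Gamma'$ are in inclusion-preserving bijection with the $\sigma$-invariant convex submodules of $\Gamma''$ (a convex submodule of a quotient pulls back to a convex submodule, and convexity/$\sigma$-invariance are preserved both ways). Thus the poset of $\sigma$-invariant convex submodules of $\Gamma$ is obtained by ``stacking'' the poset of those of $\Gamma'$ below the poset of those of $\Gamma''$: every $\sigma$-invariant convex submodule of $\Gamma$ either sits inside $\Gamma'$ or contains $\Gamma'$, because any convex submodule $W$ is comparable with the convex submodule $\Gamma'$ (two convex submodules of an ordered abelian group containing $0$ are always nested). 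Counting the \emph{nonzero} such submodules, one gets $\h(\Gamma)=\h(\Gamma')+\h(\Gamma'')$ when both are finite, and $\infty$ on the left as soon as either is $\infty$ on the right; this is exactly additivity. The only thing to be careful about is the bookkeeping of the zero submodule (the chain through $\Gamma'$ shares its top element $\Gamma'$ with the bottom of the chain through $\Gamma''$), but this is handled by counting nonzero submodules on both sides.

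For (2), I would show that $\gamma\mapsto\gamma\otimes 1$ induces a bijection between $\sigma$-invariant convex submodules of $\Gamma$ and those of $\Gamma\otimes\mathbf{Q}(\sigma)$, via $W\mapsto W\otimes\mathbf{Q}(\sigma)$ (the $\mathbf{Q}(\sigma)$-span inside the localization) with inverse $W'\mapsto W'\cap\Gamma$. That this is order-preserving and inverse to itself is routine; the one substantive check is that the $\mathbf{Q}(\sigma)$-span of a $\sigma$-invariant convex submodule is again convex, which follows because $\Gamma$ lies dense in $\Gamma\otimes\mathbf{Q}(\sigma)$ (Lemma \ref{dense-divisible}, together with transformal divisibility) so any element of the span squeezed between two elements of $W\otimes\mathbf{Q}(\sigma)$ can be approximated by, hence dominated by, elements of $W$. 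Then (2) is immediate from this bijection.

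For (3), using (2) I may replace $\Gamma$ by the $\mathbf{Q}(\sigma)$-vector space $V=\Gamma\otimes\mathbf{Q}(\sigma)$ and bound its transformal height by $\dim_{\mathbf{Q}(\sigma)}V$. Given a strictly increasing chain $0\subsetneq W_1\subsetneq W_2\subsetneq\cdots\subsetneq W_m=V$ of $\sigma$-invariant convex subgroups, each $W_i$ is itself transformally divisible (being convex in a transformally divisible group, using density again), hence a $\mathbf{Q}(\sigma)$-subspace, and the inclusions are strict, so $\dim W_1<\dim W_2<\cdots<\dim V$ forces $m\le\dim_{\mathbf{Q}(\sigma)}V$; applying (1) repeatedly (or just reading off the chain) gives $\h(\Gamma)\le\dim_{\mathbf{Q}(\sigma)}V$. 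I expect the main obstacle to be the convexity-under-span step in (2)--(3): one must genuinely use the $\omega$-increasing hypothesis and density to see that the $\mathbf{Q}(\sigma)$-span of a convex submodule stays convex, since over a non-Archimedean-valued coefficient ring like $\mathbf{Q}(\sigma)$ this is not a formality.
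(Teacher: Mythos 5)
Your overall structure is sound and the conclusions of all three parts are correct; the paper gives no argument here (the proof reads ``Clear''), so there is no approach to compare against. Parts (1) and (3) are fine: the nesting of convex subgroups and the stacking bijection for quotients handle (1), and once (2) is in place, the observation that a $\sigma$-invariant convex subgroup of a transformally divisible ambient group is itself a $\mathbf{Q}(\sigma)$-subspace gives (3).

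The one step I would push back on is your justification of convexity in (2). You appeal to Lemma \ref{dense-divisible}, but that lemma assumes $\Gamma$ is \emph{algebraically} divisible ($\Gamma = \Gamma\otimes\mathbf{Q}$), whereas a general model of $\omega\OGA$ is only required to be perfect ($p\Gamma=\Gamma$); so the density statement you invoke is not available as stated. Moreover, even granting density, the passage from ``$\gamma$ approximates $x$'' to ``$x\in W\otimes\mathbf{Q}(\sigma)$'' is not spelled out. In fact no density is needed: if $0<x\le w$ with $x=\gamma/\nu$, $w=w_0/\mu$, $\gamma\in\Gamma$, $w_0\in W$, and $\nu,\mu\in\mathbf{Z}[\sigma]$ positive, then $0<\gamma\mu\le w_0\nu$; since $W$ is a $\sigma$-invariant subgroup it is a $\mathbf{Z}[\sigma^{\pm1}]$-submodule, so $w_0\nu\in W$, and convexity of $W$ in $\Gamma$ gives $\gamma\mu\in W$, hence $x=\gamma\mu/(\nu\mu)\in W\otimes\mathbf{Q}(\sigma)$. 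The same clearing-of-denominators argument, together with the fact that every positive $\nu\in\mathbf{Z}[\sigma]$ satisfies $\nu\ge 1$, is what shows in (3) that a $\sigma$-invariant convex subgroup of $\Gamma\otimes\mathbf{Q}(\sigma)$ is closed under division by nonzero elements of $\mathbf{Z}[\sigma]$, hence is a $\mathbf{Q}(\sigma)$-subspace; density is a detour there too. So: right bijection, right structure, but replace the density citation with the elementary denominator argument.
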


\begin{proof}
Clear.
\end{proof}
\begin{lem}
\label{t-Arch-cuts-rational}Let $\Gamma$ be a model of $\widetilde{\omega\OGA}$
which is transformally Archimedean. Let $X$ be a cut in $\Gamma$.
Let us assume that one has the equality:
\[
X+\alpha=\nu X+\beta
\]
for some $\alpha,\beta\in\Gamma$ and $\nu\in\mathbf{N}\left[\sigma\right]$,
where $\nu$ is such that $\nu\gg1$. Then either $X=\Gamma$ or else
$X$ is a rational cut, i.e we have $X=\left(-\infty,\gamma\right)$
or else $X=\left(-\infty,\gamma\right]$ for some $\gamma\in\Gamma$.
\end{lem}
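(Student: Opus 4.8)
The plan is to rewrite the hypothesis as an identity of cuts and then iterate it. Put $\delta = \alpha - \beta \in \Gamma$, so that $X + \alpha = \nu X + \beta$ reads $\nu X = X + \delta$; we may assume $\emptyset \neq X \neq \Gamma$, as otherwise there is nothing to prove. Since $\nu \gg 1$, the polynomial $\nu \in \mathbf{N}[\sigma]$ has $\sigma$-degree $d \geq 1$, so $\nu$ and $\nu - 1$ are both nonzero in $\mathbf{Z}[\sigma]$; transformal divisibility of $\Gamma$ then makes them invertible on $\Gamma$, and $\nu$, having nonnegative coefficients, is in fact an order-automorphism with $\nu^{-1}$ additive and order-preserving. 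Applying $\nu^{-1}$ repeatedly to $\nu X = X + \delta$ yields, for every $k \in \mathbf{N}$,
\[
\nu^{-k}X = X - \Sigma_k, \qquad \Sigma_k := \sum_{j=1}^{k}\nu^{-j}\delta \in \Gamma .
\]

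The crux is the estimate that $\nu^{-k}\alpha \to 0$ for each $\alpha \in \Gamma$, meaning: for every $\varepsilon > 0$ in $\Gamma$ one has $|\nu^{-k}\alpha| < \varepsilon$ for all large $k$. Writing $\nu = \sum_{i \leq d} c_i \sigma^i$ with $c_i \in \mathbf{N}$ and $c_d \geq 1$, one has $\nu\zeta \geq \sigma^d\zeta$ for all $\zeta > 0$, hence $\nu^{-k}\alpha \leq \sigma^{-dk}\alpha$ whenever $\alpha > 0$; and since $\Gamma$ is transformally Archimedean there is, for the given $\varepsilon$, some $m$ with $\alpha < \sigma^m\varepsilon$, so that $\sigma^{-dk}\alpha < \varepsilon$ once $dk \geq m$. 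From this the sequence $\Sigma_k$ is seen to converge: indeed $\Sigma_k = \delta^* - \nu^{-k}\delta^*$ where $\delta^* := (\nu - 1)^{-1}\delta \in \Gamma$, so $\Sigma_k \to \delta^*$. Likewise, picking $a \in X$ and $b \in \Gamma \setminus X$, so that $(-\infty, a] \subseteq X \subseteq (-\infty, b)$, and applying the order-automorphism $\nu^{-k}$, we get $(-\infty, \nu^{-k}a] \subseteq \nu^{-k}X \subseteq (-\infty, \nu^{-k}b)$; since $\nu^{-k}a, \nu^{-k}b \to 0$ this gives, for every $\varepsilon > 0$ and all large $k$, the inclusions $(-\infty, -\varepsilon) \subseteq \nu^{-k}X \subseteq (-\infty, \varepsilon)$.

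Combining, $X = \nu^{-k}X + \Sigma_k$ is trapped, for all large $k$, between $(-\infty, \Sigma_k - \varepsilon)$ and $(-\infty, \Sigma_k + \varepsilon)$; letting $k \to \infty$ and using $\Sigma_k \to \delta^*$ we conclude that for every $\varepsilon > 0$,
\[
(-\infty,\, \delta^* - \varepsilon) \subseteq X \subseteq (-\infty,\, \delta^* + \varepsilon) .
\]
As this holds for all $\varepsilon > 0$, it forces $(-\infty, \delta^*) \subseteq X \subseteq (-\infty, \delta^*]$, so $X$ equals $(-\infty, \delta^*)$ or $(-\infty, \delta^*]$ — a rational cut. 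The one genuinely delicate point is the convergence $\nu^{-k}\alpha \to 0$: this is exactly where the transformally Archimedean hypothesis enters — without it, $\sigma^{-k}\alpha$ could remain bounded away from $0$ inside a proper convex submodule — and it is also where $\nu \gg 1$, not merely $\nu > 0$, is needed, via $d \geq 1$. Everything else is routine manipulation of cuts, with transformal divisibility used once more to make the geometric series $\sum_j \nu^{-j}\delta$ converge to an honest element $\delta^*$ of $\Gamma$.
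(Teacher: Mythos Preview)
Your proof is correct and proceeds by a genuinely different route from the paper's. The paper argues via the \emph{stabilizer} $A=\{a\in\Gamma: X+a=X\}$: from $X+\alpha=\nu X+\beta$ one sees that $A$ is $\nu$-invariant, hence (by the transformally Archimedean hypothesis and $\nu\gg 1$) either $A=\Gamma$, forcing $X=\Gamma$, or $A=0$; in the latter case the cut has ``zero width'' and so corresponds to a point $\gamma$ of the Cauchy completion, and the functional equation then pins down $\gamma=(\nu-1)^{-1}(\alpha-\beta)\in\Gamma$ by transformal divisibility. You instead iterate the identity directly, obtaining $\nu^{-k}X=X-\Sigma_k$ and summing the geometric series explicitly to $\delta^*=(\nu-1)^{-1}(\alpha-\beta)$, then squeezing $X$ between $(-\infty,\delta^*-\varepsilon)$ and $(-\infty,\delta^*+\varepsilon)$ for all $\varepsilon>0$. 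Your argument is more computational and self-contained---it never invokes the Cauchy completion and makes the convergence completely explicit---while the paper's stabilizer argument is shorter and more conceptual, at the cost of a somewhat compressed final step. Both hinge on the same two ingredients (transformal Archimedeanity for the convergence $\nu^{-k}\alpha\to 0$, transformal divisibility for $(\nu-1)^{-1}$ to land in $\Gamma$), and both arrive at the same explicit $\gamma=\delta^*$.
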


\begin{proof}
Let $A\subset\Gamma$ be the stabilizer of $X$, i.e, the set of elements
$a\in\Gamma$ with $a+X=X$; then $A$ is a convex subgroup of $\Gamma$.
If a cut is translate by an element of $\Gamma$ then the stabilizer
does not change, so $X$ and $\nu X$ are stabilized by the same subgroup
of $\Gamma$; this means that $a\in A\Rightarrow\nu\cdot a\in A$.
Now assume that there is some $0<a\in A$; then $\left(\nu^{n}\cdot a\right)_{n=1}^{\infty}\subset A$.
Since $\Gamma$ is transformally Archimedean and $\nu\gg1$ the sequence
$\left(\nu^{n}\cdot a\right)_{n=1}^{\infty}$ is unbounded; so $A=\Gamma$
and hence $X=\Gamma$. It follows that $A=0$ and $X$ is the cut
associated to an element $\gamma$ in the Cauchy completion of $\Gamma$.
But $\Gamma=\Gamma\otimes\mathbf{Q}\left(\sigma\right)$ so $\gamma$
must in fact lie in $\Gamma$; it is the unique solution $x$ of the
equation $x+\alpha=\nu\cdot x+\beta$.
\end{proof}

\subsection{\label{UltrapowerOGA}Transformally Archimedean Ultrapowers}

Let $\mathcal{F}$ be a nonprincipal ultrafilter on $\mathbf{N}$
and let $\Gamma$ be a transformally Archimedean model of $\omega\OGA$.
Let $\Gamma^{\star}$ denote the ultrapower of $\Gamma$ with respect
to the ultrafilter $\mathcal{F}$. It is, of course, never transformally
Archimedean, but there is a way out. To the embedding $\Gamma\hookrightarrow\Gamma^{\star}$
one can naturally attach a pair of convex $\sigma$-invariant submodules
of $\Gamma^{\star}$:
\begin{itemize}
\item The \textit{convex hull} $\text{conv}\Gamma$ $\Gamma$ in $\Gamma^{\star}$,
namely the intersection of all convex $\sigma$-invariant submodules
of $\Gamma^{\star}$ containing $\Gamma$
\item The module $\text{inf}\Gamma$ of $\Gamma$-\textit{infinitesimals} in $\Gamma^{\star}$, namely the union of all convex $\sigma$-invariant
submodules of $\Gamma^{\star}$ sharing no element in common with
$\Gamma$ other than the zero element
\end{itemize}
Then evidently one has the strict inclusion $\text{inf}\Gamma\subset\text{conv}\Gamma$,
and so we can form the quotient module:
\begin{defn}
Let $\Gamma$ be a transformally Archimedean model of $\omega\OGA$
and let $\mathcal{F}$ be a nonprincipal ultrafilter on $\mathbf{N}$.
The \textit{transformally Archimedean ultrapower} of $\Gamma$ with
respect to $\mathcal{F}$ is the quotient module $\nicefrac{\text{conv}\Gamma}{\text{inf}\Gamma}$ described above; it is denoted $\Gamma^{\mathcal{F}}$.
\end{defn}

\begin{rem}
In Section \ref{sec:Transformally-Archimedean-Ultrap}, we will lift the construction of the transformally Archimedean ultrapower to the valued field settings.
\end{rem}

The terminology is justified by the following:
\begin{lem}
\label{transformarch}Let $\Gamma$ be a transformally Archimedean
model of $\omega\OGA$, and let $\Gamma^{\mathcal{F}}$ be its transformally
Archimedean ultrapower.

(1) Then $\Gamma^{\mathcal{F}}$ is transformally Archimedean

(2) Let us assume that $\Gamma$ is algebraically divisible; then
$\Gamma^{\mathcal{F}}$ is transformally divisible.
\end{lem}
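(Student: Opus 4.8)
The plan is to verify the two claims by unwinding the definitions of $\operatorname{conv}\Gamma$ and $\operatorname{inf}\Gamma$ and exploiting transformal Archimedeanity of $\Gamma$ in an essential way. For $(1)$, I would argue that $\Gamma^{\mathcal F}=\operatorname{conv}\Gamma/\operatorname{inf}\Gamma$ has at most one nonzero $\sigma$-invariant convex submodule. So suppose $\bar W\subseteq\Gamma^{\mathcal F}$ is a nonzero $\sigma$-invariant convex submodule; pull it back to a $\sigma$-invariant convex submodule $W$ of $\operatorname{conv}\Gamma$ with $\operatorname{inf}\Gamma\subsetneq W$. I want to show $W=\operatorname{conv}\Gamma$. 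Since $W$ strictly contains $\operatorname{inf}\Gamma$ and $\operatorname{inf}\Gamma$ is the union of \emph{all} convex $\sigma$-invariant submodules of $\Gamma^\star$ meeting $\Gamma$ only in $0$, the submodule $W$ must contain some $0<a$ lying in $\Gamma$ (this is the key point where one uses that $W$ is genuinely bigger than the infinitesimals). Now fix any $0<b\in\Gamma$. Because $\Gamma$ is transformally Archimedean, there is $n\in\mathbf N$ with $b<\sigma^n a$; since $a\in W$ and $W$ is $\sigma$-invariant and convex, $\sigma^n a\in W$ and hence $b\in W$. Thus $\Gamma\subseteq W$ (using that $W$ is a subgroup to handle negatives), so $W\supseteq\operatorname{conv}\Gamma$ by minimality of the convex hull, giving $W=\operatorname{conv}\Gamma$ and $\bar W=\Gamma^{\mathcal F}$, as desired.

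For $(2)$, assume $\Gamma=\Gamma\otimes\mathbf Q$ and let $0\neq\nu\in\mathbf Z[\sigma]$; I must show $\nu$ acts surjectively on $\Gamma^{\mathcal F}$. The natural strategy: since $\Gamma^\star$ is an ultrapower of $\Gamma$, it too is algebraically divisible, and one would like to compute cuts. Given $\bar\gamma\in\Gamma^{\mathcal F}$ represented by some $\gamma\in\operatorname{conv}\Gamma$, I want $x\in\operatorname{conv}\Gamma$ with $\nu x-\gamma\in\operatorname{inf}\Gamma$. Here I would invoke the rationality-of-cuts mechanism already developed: the cut $X=\{\,\alpha\in\Gamma : \text{(some description via $\gamma$)}\,\}$, or rather a suitable translate, satisfies a relation of the shape $X+\alpha=\nu X+\beta$ forced by the equation $\nu x=\gamma$, and Lemma \ref{t-Arch-cuts-rational} applies inside the transformally Archimedean module $\Gamma^{\mathcal F}$ (which is $\widetilde{\omega\OGA}$ once $(1)$ and algebraic divisibility are in hand) to show the relevant cut is rational, hence realized. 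Concretely: in $\Gamma^\star$ choose $x$ with $\nu x$ as close to $\gamma$ as possible; transformal Archimedeanity plus $\nu\gg 1$ forces the error to be a $\Gamma^{\mathcal F}$-infinitesimal, so $\bar\nu\bar x=\bar\gamma$.

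The main obstacle I anticipate is $(2)$, specifically making the approximation argument precise: one must produce, inside the ultrapower $\Gamma^\star$, an element $x$ whose image $\nu x$ agrees with $\gamma$ modulo $\operatorname{inf}\Gamma$, and the subtlety is that $\Gamma^\star$ is wildly non-Archimedean so "$\nu x$ close to $\gamma$" has to be measured at the right Archimedean scale — the one visible after quotienting by $\operatorname{inf}\Gamma$. I expect the clean route is to first establish that $\Gamma^{\mathcal F}$ is a \emph{complete} (Cauchy-complete) transformally Archimedean model of $\omega\OGA$ that is algebraically divisible, and then deduce transformal divisibility formally: for $0\neq\nu\in\mathbf Z[\sigma]$ with, say, $\nu\gg 1$, the map $x\mapsto\nu x$ has dense image (as in Lemma \ref{dense-divisible}, using algebraic divisibility) and closed image (by completeness and an Archimedean estimate), hence is onto; the general $\nu$ reduces to this case after multiplying by a power of $\sigma$ and using that $\sigma$ is already bijective. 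Verifying Cauchy-completeness of $\Gamma^{\mathcal F}$ from saturation of the ultrapower is then the one genuinely technical lemma, but it is routine given that $\operatorname{inf}\Gamma$ is exactly the module of elements infinitesimal relative to $\Gamma$.
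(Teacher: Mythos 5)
For part (1) your argument is correct, but takes a different route from the paper. You pull back a nonzero $\sigma$-invariant convex submodule $\bar W \subseteq \Gamma^{\mathcal F}$ to $W \subseteq \operatorname{conv}\Gamma$, observe that $W$ cannot meet $\Gamma$ only in $0$ (else $W\subseteq\operatorname{inf}\Gamma$), and then use transformal Archimedeanity of $\Gamma$ to absorb all of $\Gamma$ into $W$, forcing $W = \operatorname{conv}\Gamma$ by minimality. The paper instead argues directly at the level of elements: for a fixed $0<\alpha\in\Gamma$, the forward orbit $(\sigma^n\alpha)$ is cofinal in $\operatorname{conv}\Gamma$, while everything below the backward orbit lies in $\operatorname{inf}\Gamma$ and dies in the quotient. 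Both are correct; the paper's version is shorter and avoids any appeal to the classification of convex submodules, whereas yours makes the submodule structure explicit, which is arguably the more conceptual formulation.

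For part (2), your first strategy has a genuine circularity: Lemma \ref{t-Arch-cuts-rational} requires the ambient module to be a model of $\widetilde{\omega\OGA}$, i.e.\ \emph{transformally divisible} --- which is exactly what you are trying to prove. The parenthetical "(which is $\widetilde{\omega\OGA}$ once $(1)$ and algebraic divisibility are in hand)" is false: transformal Archimedeanity plus algebraic divisibility do not give transformal divisibility (e.g.\ $\mathbf Q[\sigma^{\pm 1}]$ is transformally Archimedean and $\mathbf Q$-divisible but not transformally divisible, as $\tfrac{1}{\sigma-1}\notin\mathbf Q[\sigma^{\pm1}]$). Your "concretely" phrase ("choose $x$ with $\nu x$ as close to $\gamma$ as possible") also does not make sense on its own inside an ultrapower, where infima need not be attained.

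However, your backup route is essentially the paper's proof. The paper first observes that $\Gamma^{\mathcal F}$, being a quotient of a convex subgroup of an ultrapower of an algebraically divisible group, is algebraically divisible. Then, fixing $0<\alpha\in\operatorname{conv}\Gamma$ and $0<\nu\in\mathbf N[\sigma]$, it uses Lemma \ref{dense-divisible} (density of an algebraically divisible module in its $\mathbf Q(\sigma)$-hull) together with $\omega$-saturation of $\Gamma^\star$ to realize the countable type asserting $|\nu\beta-\alpha|<\sigma^{-n}\gamma$ for all $n$, which is exactly the statement that $\nu\beta-\alpha\in\operatorname{inf}\Gamma$ (with $\beta$ automatically in $\operatorname{conv}\Gamma$ by a boundedness estimate). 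This is what you call "density of image" and "closed image via completeness"; the paper bypasses the Cauchy-completeness of $\Gamma^{\mathcal F}$ and works directly with saturation of $\Gamma^\star$, which is cleaner, though your completeness detour is mathematically sound. Concretely: drop the appeal to Lemma \ref{t-Arch-cuts-rational} and just write out the density-plus-saturation argument, and your proof matches the paper's.
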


\begin{proof}
(1) Let $0<\alpha\in\Gamma$ be nonzero. Then the forward orbit $\left(\sigma^{n}\alpha\right)_{n=1}^{\infty}$
is cofinal in $\text{conv}\Gamma$, whence in the homomorphic image
$\Gamma^{\mathcal{F}}$. On the other hand, all nonnegative elements
below the backwards orbit $\left(\sigma^{-n}\alpha\right)_{n=1}^{\infty}$
of $\alpha$ lie in $\text{inf}\Gamma$ and are therefore killed upon
the passage to the transformally Archimedean ultrapower. It follows
that $\Gamma^{\mathcal{F}}$ is transformally Archimedean.

(2) The class of algebraically divisible ordered abelian groups is
closed under ultrapowers, the passage to a convex subgroup and the
passage to a quotient; so $\Gamma^{\mathcal{F}}$ is algebraically
divisible. Now fix $0<\alpha\in\text{conv}\Gamma$ and $0<\nu\in\mathbf{N}\left[\sigma\right]$.
Since $\text{conv}\Gamma$ is algebraically divisible it follows from
Lemma \ref{dense-divisible} and $\omega$-saturation of $\Gamma^{\star}$
that we can find some $\beta\in\Gamma^{\star}$ with $\nu\beta-\alpha$
lying in $\text{inf}\Gamma$. The element $\beta$ lies, in fact,
in $\text{conv}\Gamma$; so the equality $\nu\beta=\alpha$ holds
in $\Gamma^{\mathcal{F}}$ \textit{on the nose}, and we conclude.
\end{proof}

\subsection{~\label{transformallyaffine}}
\begin{defn}\label{def:transformallyaff}
Let $\Gamma$ be a model of $\widetilde{\omega\text{OGA}}$ and let
$\Psi:\Gamma\to\Gamma$ be a function. We say that $\Psi$ is \textit{transformally
affine} if it takes the form $x\mapsto\nu x+x_{0}$ for some $\nu\in\mathbf{Q}\left(\sigma\right)$
and $x_{0}\in\Gamma$.
\end{defn}

It is easy to see that $\nu$ and $x_{0}$ are then uniquely determined;
we refer to $\nu$ as the \textit{slope} of $\Psi$. 

More generally, we say that $\Psi$ is \textit{piecewise transformally
affine} if we can write $\Gamma$ as the union of finitely many closed
intervals or rays $\left(I_{n}\right)_{n=1}^{N}$ , overlapping only
at their endpoints, such that $\Psi_{|I_{n}}$ is the restriction
of a globally defined transformally affine function to $I_{n}$. If
the function $\Psi$ is continuous with respect to the order topology
and $N$ is chosen minimal so as to witness this fact, then this sequence
of intervals is uniquely determined once ordered according to their
left endpoint; the points of overlap of a pair of these intervals
are then called the \textit{singular} points of $\Psi$. If $\lambda$ is a nonsingular point of $\Psi$ then we write $\Psi'\lambda$ for the slope of $\Psi$ at $\lambda$.

\newpage{}

\section{Difference Algebra\label{sec:Difference-Algebra}}

We review rapidly the basic concepts of difference algebra. Of central
importance is Fact \ref{figalois}; a great deal of work is devoted
to rule out the existence of such extensions in various settings.

\subsection{~}

By a \textit{difference ring} we mean a (commutative and unital) ring
$R$ in which a distinguished endomorphism $\sigma\in\text{End}R$
of rings has been singled out; by a \textit{homomorphism} of difference
rings we mean a homomorphism of rings compatible with the action of
$\sigma$ in the evident manner.

We say that $R$ is \textit{inversive} if $\sigma$ is an automorphism
of $R$. If $R$ is a difference ring then there is a homomorphism
\textit{$i:R\to R^{\sigma^{-\infty}}$ }to an inversive difference
ring such that every homomorphism from $R$ to an inversive difference
ring factors uniquely through $i$; the difference ring $R^{\sigma^{-\infty}}$
is then said to be an \textit{inversive hull} of $R$, and it is unique
up to a unique isomorphism of difference rings over $R$. It is given
explicitly by the direct limit of the diagram $R\xrightarrow{\sigma}R\xrightarrow{\sigma}\ldots$
of difference rings.

We say that $R$ is \textit{transformally integral} or that it is
a \textit{difference domain} if the underlying ring of $R$ is an
integral domain and $\sigma$ is an injective endomorphism of rings.
If the underlying ring of $R$ is a field, then $R$ is said to be
a \textit{difference field}; it is then, automatically, a difference
domain.

Let $R$ be a difference ring and let $\mathfrak{p}\in\text{Spec}R$
be a prime ideal. We say that $\mathfrak{p}$ is \textit{transformally
prime} if the equality $\sigma^{-1}\left(\mathfrak{p}\right)=\mathfrak{p}$
holds. In this situation, the localization $R_{\mathfrak{p}}$ of
$R$ away from $\mathfrak{p}$ and the quotient ring $\nicefrac{R}{\mathfrak{p}}$
can both be canonically expanded to difference algebras over $R$.
We write $\text{Spec}^{\sigma}R$ for the space of transformally prime
ideals of $R$. It can be viewed as a subspace of $\text{Spec}R$
when the latter is equipped with the Zariski topology, and its formation
is covariantly functorial in a homomorphism of difference rings.

\begin{rem}
\label{rem-no-transformally-prime-ideals}
Let $R$ be a nonzero ring. Then $R$ admits a prime ideal. In difference algebra this need not be the case; the space $\Spec^{\sigma}R$ of transformally prime ideals of $R$ may be empty even if $R$ is nonzero. 

For example, let $R = \mathbf{Q} \times \mathbf{Q}$ where $\sigma$ acts by swapping the factors: $\left(x,y\right)^{\sigma} = \left(y,x\right)$. From a model theoretic point of view, this fact is closely related to the failure of quantifier elimination described in Fact \ref{figalois}.
\end{rem}

\subsection{~}

Let $K$ be a difference field. It is called a \textit{Frobenius}
\textit{difference field }if it of finite characteristic $p>0$ and
$\sigma$ is a power of the Frobenius endomorphism $\phi x=x^{p}$.
We say that $K$ is a \textit{nonstandard Frobenius difference field}
if is elementary equivalent to an ultraproduct of Frobenius difference
fields but is not itself a Frobenius difference field. The Frobenius
difference fields obviously hold a special place, but these are the
nonstandard Frobenius difference fields which are of the utmost importance,
by virtue of the following result:
\begin{fact}
\label{frobacfa}Let $\FA$ be the theory of perfect inversive difference
fields in a language expanding the language of fields by a unary function
symbol for the action of $\sigma$.

(1) The theory $\FA$ admits a model companion, denoted by $\ACFA$

(2) Let $K$ be a difference field. Then $K$ is existentially closed
as a difference field if and only if it is elementarily equivalent
to an ultraproduct of algebraically closed Frobenius difference fields,
but is not itself a Frobenius difference field. In other words the theory $\ACFA$ is precisely the theory of nonstandard algebraically closed Frobenius difference fields.
\end{fact}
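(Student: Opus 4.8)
The plan is to recall the geometric axiomatisation of $\ACFA$ and then split the argument into a purely difference-algebraic half (existence of the model companion) and an arithmetic half (identification with the Frobenius limit), the latter resting on the twisted Lang--Weil estimate. For (1) I would take as axioms: $K$ is an algebraically closed field and $\sigma$ is an automorphism of $K$; and, for every irreducible affine variety $V$ over $K$ and every irreducible $U\subseteq V\times V^{\sigma}$ such that both projections $U\to V$ and $U\to V^{\sigma}$ are dominant, there is $a\in V(K)$ with $(a,\sigma a)\in U(K)$. The first step is to observe that, once the degrees and the number of defining equations of $V,U$ are bounded, this last clause becomes a first-order scheme: irreducibility of a variety and dominance of a morphism are constructible conditions on coefficients, by elimination of quantifiers in $\ACF$.

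The second step is the equivalence ``models of these axioms $=$ existentially closed difference fields''. The easy direction: given $K\models\ACFA$ and a quantifier-free formula over $K$ with a solution $b$ in some difference field extension, one may assume the extension is generated by a finite tuple $b$, so that the Zariski closures of $b,\sigma b,\dots$ produce varieties to which the geometric axiom (possibly iterated) applies, giving a solution in $K$. The harder direction, that every existentially closed difference field satisfies the geometric axioms, is a difference-algebra amalgamation statement: given $K$ and data $(V,U)$, one must build an extension of $K$ carrying a point $(b,\sigma b)\in U$. This is done by constructing the difference field generated by $b$ stepwise, at each stage taking a generic point of $V$ over the field constructed so far and lifting along the dominant projection $U\to V$ (then passing to a perfect inversive hull); the hypothesis that \emph{both} projections are dominant is precisely what allows the chain of fields to be glued into a difference field, and this is the point where one must be careful because of the possible non-existence of transformally prime ideals (Remark~\ref{rem-no-transformally-prime-ideals}). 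Amalgamation of difference fields over a common perfect inversive base follows in the same way, yielding the model-companion property. For the completions and decidability, I would note that a completion of $\ACFA$ is determined by the isomorphism type of the action of $\sigma$ on the algebraic closure of the prime field together with the (pseudo-finite) theory of the fixed field, so a back-and-forth over such a base gives completeness, and an effective enumeration gives decidability.

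For (2) the arithmetic input is the twisted Lang--Weil estimate: for a geometrically irreducible variety $V$ over $\mathbf{F}_{q}$ and an irreducible correspondence $U\subseteq V\times V^{(q)}$ with both projections dominant of the expected degree, the number of $x\in V(\overline{\mathbf{F}_{q}})$ with $(x,x^{q})\in U$ is $q^{\dim V}+O(q^{\dim V-1/2})$, with implied constant depending only on the complexity of the data; in particular it is nonzero once $q$ exceeds a bound depending only on that complexity. Granting this, every instance of the geometric axiom scheme of $\ACFA$ holds in $(\overline{\mathbf{F}_{q}},x\mapsto x^{q})$ for all but finitely many prime powers $q$; hence any ultraproduct of such fields along a nonprincipal ultrafilter satisfies all those axioms, so is a model of $\ACFA$ and therefore existentially closed, while the exponent of the Frobenius power tends to infinity, so it is not itself a Frobenius difference field. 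Conversely, if $K$ is existentially closed then $K\models\ACFA$, and choosing a completion one matches its theory — in particular the data on roots of unity and the fixed field — with the limit theory of the $(\overline{\mathbf{F}_{q}},x\mapsto x^{q})$ along a suitable ultrafilter (again using the estimate together with the description of completions), so $K$ is elementarily equivalent to such an ultraproduct by \L o\'s. The same bookkeeping yields Theorem~\ref{thm:frobacfa}(2): a sentence lies in $\ACFA$ iff it holds in all but finitely many $(\overline{\mathbf{F}_{q}},x\mapsto x^{q})$.

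The main obstacle is the twisted Lang--Weil estimate: it is the single genuinely deep ingredient (the ``key algebro-geometric estimate'' referred to after Theorem~\ref{thm:frobacfa}, reproved by purely algebro-geometric means in \cite{shuddhodan2021hrushovski}), and its uniformity in the complexity of the data is what makes the transfer along ultrafilters work. The difference-algebraic amalgamation needed for the model-companion half is routine but must be carried out with attention to the subtleties of transformally prime ideals.
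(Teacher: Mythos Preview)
Your proposal is correct and, in fact, considerably more detailed than what the paper itself provides: the paper simply records this as a \emph{Fact} and refers to \cite{chatzidakis1999model} for (1) and \cite{hrushovski2004elementary} for (2), with no further argument. Your sketch accurately summarises the content of those references --- the geometric axiomatisation and amalgamation argument for the model companion, and the twisted Lang--Weil estimate as the arithmetic input for the Frobenius identification --- so there is nothing to correct.
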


\begin{proof}
See \cite{chatzidakis1999model} for (1); the truth of $\left(2\right)$
was established in \cite{hrushovski2004elementary}.
\end{proof}
\begin{rem}
We will not use the second part of \ref{frobacfa} for the development
of the abstract theory.
\end{rem}

\subsection{~}

Let $K$ be an inversive difference field and let $K^{\text{sep}}$
be an abstract separable closure of $K$ in the category of fields.
Then distinct extensions of $\sigma$ to $K^{\text{sep}}$ are not
in general isomorphic over $K$; consider, for instance, the extreme
scenario where $\sigma$ acts trivially, in which case isomorphism
classes correspond bijectively to conjugacy classes of the absolute
Galois group of $K$.

The obstruction to the uniqueness is accounted for by finite $\sigma$-invariant
separable extensions. More precisely, let $K$ be an inversive difference
field and let $L$ be an (abstract) normal algebraic extension. We
say that $L$ is $\sigma$-\textit{invariant} if it can be expanded
to a difference field over $K$. Equivalently, whenever an irreducible
polynomial $f$ with coefficients in $K$ splits in $L$ then the
polynomial $f^{\sigma}$ obtained by applying $\sigma$ to the coefficients
splits in $L$ as well. If $L$ is finite over $K$ as an abstract
field then the inclusions $K\subset L^{\sigma}\subset L$ hold and
a dimension argument show that such an extension must render $L$
an inversive difference field, too. If $L$ is $\sigma$-invariant
and normal then whenever an abstract isomorphic copy of $L$ can be
embedded over $K$ in some difference field extension, then this copy
must, in fact, be a difference field extension of $K$. 

One then has the following result:
\begin{fact}
\label{figalois}Let $K$ be an inversive difference field and fix
an abstract algebraic closure $\widetilde{K}$ of $K$. Then the following
conditions are equivalent:

(1) The difference field $K$ has no nontrivial finite Galois extensions
left invariant under $\sigma$.

(2) Fix $0<n\in\mathbf{N}$; then all lifts of $\sigma^{n}$ to $\widetilde{K}$
are isomorphic over $K$.
\end{fact}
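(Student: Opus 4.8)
The plan is to prove the two conditions of Fact \ref{figalois} equivalent by analyzing the lifts of $\sigma^n$ through the Galois-theoretic correspondence. Fix an abstract algebraic closure $\widetilde{K}$ of $K$, and inside it a separable closure $K^{\mathrm{sep}}$; since $K$ is inversive and perfect assumptions can be reduced away, it suffices to work with $G = G_K = \mathrm{Aut}(K^{\mathrm{sep}}/K)$. The key observation is that since $\sigma$ is an automorphism of $K$, any extension of $\sigma$ to $K^{\mathrm{sep}}$ exists (as $K^{\mathrm{sep}}$ is functorial), and the set of all such extensions is a torsor under $G$: if $\tilde\sigma_0$ is one fixed lift, every other lift is of the form $g \circ \tilde\sigma_0$ for a unique $g \in G$. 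Two lifts $\tilde\sigma$ and $\tilde\sigma'$ are isomorphic over $K$ precisely when there is $h \in G$ with $h \tilde\sigma h^{-1} = \tilde\sigma'$; writing $\tilde\sigma = g\tilde\sigma_0$ and $\tilde\sigma' = g'\tilde\sigma_0$, this becomes $h g (\tilde\sigma_0 h^{-1} \tilde\sigma_0^{-1}) = g'$, i.e. $g' = h g \cdot \theta(h)^{-1}$ where $\theta(h) = \tilde\sigma_0 h \tilde\sigma_0^{-1}$ is the automorphism of $G$ induced by $\tilde\sigma_0$. So isomorphism classes of lifts of $\sigma$ are the $\theta$-twisted conjugacy classes of $G$, i.e. orbits of $G$ acting on itself by $h \cdot g = h g \theta(h)^{-1}$. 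The statement that all lifts of $\sigma$ are isomorphic is thus the statement that this twisted conjugation action is transitive; and the analogous statement for $\sigma^n$ replaces $\theta$ by $\theta^n$ and $\tilde\sigma_0$ by $\tilde\sigma_0^n$.

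Next I would translate the ``no nontrivial $\sigma$-invariant finite Galois extension'' condition into group theory. A finite Galois extension $L/K$ inside $K^{\mathrm{sep}}$ corresponds to an open normal subgroup $N \trianglelefteq G$ with $\mathrm{Gal}(L/K) = G/N$; the extension $L$ is $\sigma$-invariant (admits an expansion to a difference field over $K$, equivalently is setwise preserved by some, hence any since $N$ is normal, lift of $\sigma$) exactly when $\theta(N) = N$, i.e. $N$ is $\theta$-stable. So condition (1) says: the only open $\theta$-stable normal subgroup of $G$ is $G$ itself — equivalently $G$ has no proper open $\theta$-stable normal subgroup, i.e. $G$ admits no nontrivial finite $\theta$-equivariant quotient. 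Now I need: $G$ has no nontrivial finite $\theta$-equivariant quotient if and only if, for every $n \geq 1$, the $\theta^n$-twisted conjugation action of $G$ on itself is transitive.

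The heart of the argument is this finite-group / profinite-group lemma, and I expect it to be the main obstacle — specifically the direction (1) $\Rightarrow$ (2), proving transitivity of twisted conjugation from the absence of equivariant quotients. I would first prove it for a \emph{finite} group $G$ with automorphism $\psi$ of finite order: if every $\psi$-twisted conjugacy class were a proper subset, a counting/averaging argument (Lang-style: the map $g \mapsto g^{-1}\psi(g)$, or rather the orbit-counting over the semidirect product $G \rtimes \langle \psi \rangle$) shows that the number of $\psi$-twisted classes equals the number of ordinary conjugacy classes of $G$ fixed setwise by $\psi$ — no, more to the point, I would argue by induction on $|G|$: pick a minimal nontrivial $\psi$-stable normal subgroup; condition (1) forces it to be all of $G$, so $G$ is $\psi$-simple (characteristically simple under $\langle\psi\rangle$), and in that case one checks twisted conjugation is transitive directly, or one reduces modulo a $\psi$-stable normal subgroup and lifts. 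For the profinite $G$ one passes to the inverse limit over $\theta$-stable open normal subgroups — but here is the subtlety that makes (1) genuinely needed in the form ``for all $n$'': replacing $\theta$ by $\theta^n$, a subgroup that is not $\theta$-stable may still be $\theta^n$-stable, so one must check that condition (1) already rules out proper open $\theta^n$-stable normal subgroups as well. This is where the hypothesis is used in full strength: if $N$ is $\theta^n$-stable open normal, then $\bigcap_{i=0}^{n-1}\theta^i(N)$ is $\theta$-stable open normal, hence equals $G$ by (1), forcing $N = G$. For (2) $\Rightarrow$ (1), contrapositively, if $N \neq G$ is open $\theta$-stable normal then the lifts of $\sigma$ that induce different cosets modulo $N$-twisted-conjugacy in $G/N$ are non-isomorphic, and taking $n = [G:N]!$ or the order of $\theta$ on $G/N$ one likewise produces non-isomorphic lifts of $\sigma^n$. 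I would organize the write-up as: (a) the torsor/twisted-conjugacy dictionary; (b) the dictionary between $\sigma$-invariant finite Galois extensions and $\theta$-stable open normal subgroups; (c) the profinite twisted-conjugation lemma; (d) assembling (a)–(c), with the $\theta^n$-versus-$\theta$ point handled by the intersection trick above.
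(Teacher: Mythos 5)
Your translation to profinite group theory is correct: lifts of $\sigma^n$ to $\widetilde{K}$ form a torsor under $G_K$, isomorphism over $K$ is $\theta^n$-twisted conjugacy (with $\theta = \mathrm{Inn}(\tilde\sigma_0)$), and $\sigma$-invariant finite Galois extensions correspond to $\theta$-stable open normal subgroups; the direction $(2)\Rightarrow(1)$ then goes through cleanly via the contrapositive, with your intersection trick handling the $\theta$-versus-$\theta^n$ point. The gap is in $(1)\Rightarrow(2)$, which you yourself flag as the heart of the matter. Your finite-group base case is false: $\mathbf{Z}/p$ with $\psi=\mathrm{id}$ is $\psi$-characteristically simple, yet $\psi$-twisted conjugation is ordinary conjugation on an abelian group, with $p>1$ orbits. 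More fundamentally, the hypothesis you translate (1) into — no proper open normal $\theta$-stable subgroup — is vacuous for every nontrivial finite group, since the trivial subgroup always qualifies, so there is no induction on $|G|$ to be run; and the proposed ``inverse limit over $\theta$-stable open normal subgroups'' is, under hypothesis (1), the one-term system $G/G$, which does not reconstruct $G$. The real difficulty your sketch elides is that when (1) holds, $\theta$ descends to no nontrivial finite quotient of $G$, so the twisted-conjugacy equation $h g\,\theta^n(h)^{-1}=g'$ cannot be attacked by naive reduction modulo open normal subgroups and a compactness argument.

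The paper does not prove this statement; it cites Chatzidakis--Hrushovski, Lemma 2.9, whose argument is field-theoretic rather than purely profinite. It rests on Babbitt's structure theorem for finitely $\sigma$-generated algebraic extensions of an inversive difference field (the decomposition into a finite $\sigma$-invariant core followed by benign layers — note Babbitt's paper in the bibliography): under hypothesis (1) every core is trivial, and the rigidity of benign extensions is what lets one splice together the isomorphism between the two difference-field expansions of $\widetilde{K}$. Your purely group-theoretic reformulation is appealing and the dictionary you write down is right, but without that field-theoretic input you are still missing a profinite twisted-conjugacy lemma of the form ``no proper $\theta$-stable open normal subgroup implies $\theta^n$-twisted conjugation is transitive,'' and the strategy you describe does not supply it.
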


\begin{proof}
See \cite{chatzidakis1999model}, Lemma 2.9 (the result goes back to Babbit \cite{babbitt1962finitely})
\end{proof}

\subsection{~}

Let $R$ be a difference ring and let $S=R\left[x\right]_{\sigma}$
be the ring of polynomials over $R$ in the indeterminates $\left(x^{\sigma^{n}}\right)_{n=1}^{\infty}$.
The ring $S$ can be expanded to a difference ring over $R$ in the
obvious manner suggested by the names of the variables; it is called
the ring of \textit{difference polynomials} over $R$, and those difference
polynomials lying in the abstract subring of $S$ generated over $R$
by the element $x$ alone are called \textit{algebraic}. If $R$ enjoys
the ascending chain condition on transformally prime ideals, for instance
if $R$ is a difference field, then $S$ enjoys it too. One then also
has the clear generalization of this to any number of variables.

Let $\mathbf{N}\left[\sigma\right]$ be the commutative monoid of
formal finitely supported sums $\sum m_{i}\cdot\sigma^{i}$ where
the $m_{i}$ are natural numbers. It comes equipped with the lexicographic
ordering, which is well founded, indeed isomorphic to $\omega^{\omega}$.
The ring of difference polynomials over $R$ is then the ring of finitely
supported $R$-valued functions on $\mathbf{N}\left[\sigma\right]$
where addition is computed pointwise and multiplication is given by
the convolution product.

One has a \textit{degree} map $\text{deg}:S\to\mathbf{N}\left[\sigma\right]\cup\left\{ -\infty\right\} $.
It obeys similar formal properties to its algebraic counterpart, e.g,
we have $\text{deg}\left(fx\cdot gx\right)=\deg fx+\deg gx$ and $\text{deg}\left(fx+gx\right)\leq\max\left\{ \deg fx,\deg gx\right\} $
whenever $fx,gx\in S$. We will sometimes argue by transfinite induction
on the degree of a difference polynomial. Note, however, that even
if $R$ is a difference field, then difference ideals of $S$ are
rarely principal and there is no transformal analogue of the Euclidean
division algorithm.

\subsection{~\label{subsec:transformder}}

Let $R$ be a difference ring and let $R\left[x\right]_{\sigma}$
be the ring of difference polynomials over $R$. If $f\left(x\right)\in R\left[x\right]_{\sigma}$
is a difference polynomial and $\varepsilon$ is a formal variable,
then we may write
\begin{equation}
f\left(x+\varepsilon\right)=f_{0}\left(x\right)+f_{1}\left(x\right)\cdot\varepsilon+\ldots+f_{\sigma}\left(x\right)\cdot\varepsilon^{\sigma}+\ldots\label{eq:taylor}
\end{equation}
for certain uniquely defined difference polynomials $f_{\nu}\left(x\right)\in R\left[x\right]_{\sigma}$
where $\nu\in\mathbf{N}\left[\sigma\right]$, all but finitely
of which vanish. We refer to \ref{eq:taylor} as the \textit{Taylor
expansion }of $fx$. In the presence of a valuation we imagine $\varepsilon$
to be small in magnitude, but the formalism is difference theoretic.

We refer to the difference polynomial $f_{\nu}\left(x\right)$ as
the $\nu$-th\textit{ transformal derivative} of $fx$. The operators
$f\mapsto f_{\nu}$ are then linear in $R$, descend to the prime
field, and commute with translation of the argument by an element
of $R$. They do not quite commute with a rescaling of the argument
by a nonzero element, but the relationship is easy to describe, namely
if $t\in R$ is nonzero then the $\nu$-th formal transformal derivative
of the difference polynomial $g\left(x\right)=f\left(tx\right)$ is
equal to $t^{\nu}\cdot f_{\nu}\left(x\right)$.

The operator $f\mapsto f_{0}$ is the identity map. The operator $f\mapsto f_{1}$
agrees with the partial derivative of $fx$ with respect to the variable
$x$, when viewed as a multivariable algebraic polynomial. The operator
$f\mapsto f_{1}$ will be referred to as the \textit{derivative},
and we will write $f'=f_{1}$. For algebraic polynomials, there is
no conflict with the usual interpretation of the derivative.

\subsection{~}

Let $\nicefrac{L}{K}$ be an extension of perfect inversive difference
fields. If $a\in L$ then we write $K\left(a\right)_{\sigma}$ for
the smallest perfect inversive difference subfield of $L$ containing
$a$. We say that the element $a$ is \textit{transformally algebraic} over $K$ if it is a root of a nonzero difference polynomial over $K$. The difference field $L$ is \textit{transformally algebraic }over $K$
if every $a \in L$ is transformally algebraic over $K$. For example, the inversive hull of $K$ is transformally algebraic over $K$. Furthermore, the difference field $L$ is transformally algebraic over $K$ if and only if it is the directed union of difference field extensions of $K$ of finite algebraic transcendence degree over $K$.

The element $a\in L$ is said to be \textit{transformally transcendental} over
$K$ if the sequence $\left(a^{\sigma^{n}}\right)_{n=0}^{\infty}$
is algebraically independent over $K$; equivalently, it is transformally transcendental over $K$ if it is not transformally algebraic over $K$. More generally, the sequence $(a_i)_{i \in I}$ is \textit{transformally independent} over $K$ if the sequence $\left({a_i}^{\sigma^n}\right)_{i \in I, n \in \mathbf{N}}$ is algebraically independent over $K$.

\begin{fact}
\label{fact-ttranscendental-nofing}
Let $K$ be an inversive algebraically closed difference field and let $a$ be an element transformally transcendental over $K$ in an ambient extension; then the difference field $K\left(a\right)_{\sigma}$ has no nontrivial finite $\sigma$-invariant Galois extensions.
\end{fact}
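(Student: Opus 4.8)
The plan is to realize $K(a)_\sigma$ as the perfect hull of a rational function field on which $\sigma$ acts by a shift of the variables, and then to use $\sigma$-invariance together with a geometric descent argument to push any finite $\sigma$-invariant Galois extension down to one defined over $K$, which must be trivial since $K$ is algebraically closed.

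First I would note that, since $a$ is transformally transcendental over $K$ and $K$ is inversive, the two-sided orbit $\left(a^{\sigma^{n}}\right)_{n\in\mathbf{Z}}$ is algebraically independent over $K$: if some negative shift were algebraic over $K\left(a,a^{\sigma},\ldots\right)$, applying a suitable power of $\sigma$ (which is onto on $K$) would contradict algebraic independence of $\left(a^{\sigma^{n}}\right)_{n\geq 0}$. Writing $x_{n}:=a^{\sigma^{n}}$, this identifies $F:=K\left(a\right)_{\sigma}$ with the perfect hull of $K\left(x_{n}:n\in\mathbf{Z}\right)$, with $\sigma$ acting as $x_{n}\mapsto x_{n+1}$ and as the given automorphism on $K$. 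Put $A_{N}:=K\left(x_{-N},\ldots,x_{N}\right)^{p^{-\infty}}$, so $F=\bigcup_{N}A_{N}$ and $\sigma^{-1}\left(A_{N}\right)=K\left(x_{-N-1},\ldots,x_{N-1}\right)^{p^{-\infty}}$. Since Galois extensions are separable and unaffected by passing to a perfect hull, I may as well work with $\widetilde{F}=K\left(x_{n}:n\in\mathbf{Z}\right)$ and $\widetilde{A}_{N}=K\left(x_{-N},\ldots,x_{N}\right)$; a finite $\sigma$-invariant Galois extension $M$ of $F$ corresponds to one of $\widetilde{F}$. A routine primitive-element argument then shows $M$ is \emph{defined at a finite level}: for $N\gg 0$ there is a finite Galois extension $M_{N}/\widetilde{A}_{N}$ with $M=M_{N}\cdot\widetilde{F}$ and restriction inducing an isomorphism $\mathrm{Gal}\left(M/\widetilde{F}\right)\cong\mathrm{Gal}\left(M_{N}/\widetilde{A}_{N}\right)$.

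Next comes the shift trick. Because $M$ is $\sigma$-invariant, $\sigma$ extends to an automorphism $\widetilde{\sigma}$ of $M$; applying $\widetilde{\sigma}^{-1}$ to the identity $M=M_{N}\cdot\widetilde{F}$ shows that $M$ is also defined over $\sigma^{-1}\left(\widetilde{A}_{N}\right)=K\left(x_{-N-1},\ldots,x_{N-1}\right)$, via $\widetilde{\sigma}^{-1}\left(M_{N}\right)$. I then invoke the descent lemma: a finite extension of $\widetilde{F}$ defined over $K\left(x_{i}:i\in S\right)$ and over $K\left(x_{i}:i\in S'\right)$ for finite $S,S'\subseteq\mathbf{Z}$ is already defined over $K\left(x_{i}:i\in S\cap S'\right)$. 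Applied with $S=\left\{-N,\ldots,N\right\}$ and $S'=\left\{-N-1,\ldots,N-1\right\}$ this gives that $M$ is defined over $K\left(x_{-N},\ldots,x_{N-1}\right)$, an interval with one fewer point. Iterating --- each step intersecting the current (interval) index set with its $\sigma^{-1}$-shift, which always removes the top element --- shows after finitely many steps that $M$ is defined over $K$. But a finite extension of the algebraically closed field $K$ is trivial, so $M=\widetilde{F}$, i.e.\ $\mathrm{Gal}\left(M/\widetilde{F}\right)=1$, and hence the original extension of $K\left(a\right)_{\sigma}$ is trivial.

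The heart of the argument, and the step I expect to be the main obstacle, is the descent lemma. Geometrically it asserts that a finite étale cover of the generic point of $\mathbf{A}^{S\cup S'}_{K}=\mathbf{A}^{S\cap S'}_{K}\times\mathbf{A}^{S\setminus S'}_{K}\times\mathbf{A}^{S'\setminus S}_{K}$ which is ``constant in the last two factors'' is already pulled back from $\mathbf{A}^{S\cap S'}_{K}$. I would prove it by spreading the cover $M/\widetilde{F}$ out to a finite étale $G$-cover over a dense open $V\subseteq\mathbf{A}^{S\cup S'}_{K}$, which by the two hypotheses is simultaneously the pullback of finite étale covers over dense opens of $\mathbf{A}^{S}$ and of $\mathbf{A}^{S'}$; specializing the coordinates in $S'\setminus S$ to a sufficiently general $K$-point (here one uses that $K$, being algebraically closed, is infinite, so such a point avoiding the relevant proper closed loci exists), the resulting slice cover is on the one hand the spread-out of the cover corresponding to $M_{S}/K\left(x_{i}:i\in S\right)$ and on the other hand visibly pulled back from $\mathbf{A}^{S\cap S'}$; comparing the two descriptions forces $M$ to be defined over $K\left(x_{i}:i\in S\cap S'\right)$. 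This is essentially a Künneth-type statement for étale fundamental groups, and it is insensitive to wild ramification since all of it takes place over the étale locus $V$.
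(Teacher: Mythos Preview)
Your argument is correct, but it takes a different route from the proof the paper cites (the Chatzidakis--Hrushovski argument, reproduced in a valued-field guise in the proof of Proposition~\ref{generic-closed-open}). You whittle the supporting index set down one variable at a time via a geometric descent lemma, proved by spreading out and specializing --- in effect an \'etale K\"unneth statement over the algebraically closed base $K$. The cited argument instead performs a single large shift: having descended the finite $\sigma$-invariant Galois extension $E$ of $F = K(a)_\sigma$ to some $E'$ over $K(a_i : i \in I)$ with $I$ finite, one applies $\sigma^N$ with $N$ so large that $J := I + N$ is disjoint from $I$; then $E = E' \cdot F = \sigma^N(E') \cdot F$, and since $K(a_i : i \in I)$ and $K(a_i : i \in J)$ are linearly disjoint over the algebraically closed field $K$ (and $F$ is regular over each), one concludes that $E$ is linearly disjoint from itself over $F$, forcing $E = F$. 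Your route is more explicitly geometric and constructive; the cited one is a one-step, purely field-theoretic linear disjointness trick that avoids the spreading-out and specialization machinery altogether.
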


\begin{proof}
See \cite{chatzidakis1999model}
\end{proof}

\subsection{~}

Let $K=\left(K,\sigma\right)$ be an inversive perfect difference
field. If $m\in\mathbf{Z}$ then $\tau=\sigma\circ\phi^{m}=\phi^{m}\circ\sigma$
is again an automorphism of $K$; the difference field $\left(K,\tau\right)$
is said to be a \textit{Frobenius twist} of $K$. If $L$ is a Galois
extension of $K$ then $L$ is $\sigma$-invariant if and only if
it is $\tau$-invariant. It will often be the case that a statement
holds after replacing $K$ with a Frobenius twist. For example, we
have the following:
\begin{lem}
\label{twist}Let $\nicefrac{L}{K}$ be an extension of inversive
perfect difference fields and assume that $L$ is transformally algebraic
over $K$. Fix an element $\alpha\in L$. Then after twisting one
can find a difference polynomial $g\left(x\right)\in K\left[x\right]_{\sigma}$
with $g\left(\alpha\right)=0$ and $g'\left(\alpha\right)\neq0$.
\end{lem}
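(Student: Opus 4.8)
The plan is to use the fact that $\alpha$ is transformally algebraic over $K$ to produce a nonzero difference polynomial $f(x)\in K[x]_\sigma$ with $f(\alpha)=0$, and then argue that after a suitable Frobenius twist one of the transformal derivatives $f_\nu$ can be taken to be the \emph{first} derivative $f_1$, with $f_1(\alpha)\neq 0$. First I would choose $f$ of minimal degree (in the well-ordered monoid $\mathbf{N}[\sigma]$, using the ascending chain condition on transformally prime ideals in $K[x]_\sigma$) among all nonzero difference polynomials vanishing at $\alpha$; minimality guarantees that not every transformal derivative of $f$ vanishes at $\alpha$, since $f_0=f$ vanishes at $\alpha$ but some $f_\nu(\alpha)\neq 0$ — otherwise, writing out the Taylor expansion \eqref{eq:taylor} and comparing degrees, one would obtain a nonzero relation of strictly smaller degree satisfied by $\alpha$, using that $K$ is perfect so that $p$-th power obstructions can be undone. (This is the place where perfectness and inversiveness are really used: in characteristic $p$ one needs to be careful that "the derivative vanishes identically" only happens when $f$ is a $\sigma$-$p$-th power of something, which over a perfect inversive field can be stripped off to lower the degree.)

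Let $\nu_0\in\mathbf{N}[\sigma]$ be minimal such that $f_{\nu_0}(\alpha)\neq 0$. Writing $\nu_0=\sum_i m_i\sigma^i$ and isolating the "bottom" monomial, the idea is that $\nu_0$ is — up to the ambiguity allowed by the problem — a pure power $\sigma^{j}$ of $\sigma$, because the transformal derivative indexed by a monomial of $\varepsilon$-degree $\geq 2$ in any single variable $x^{\sigma^i}$ can, after again invoking minimality of $\deg f$, be shown not to be the first nonvanishing one. The key step is then the twisting: replace $(K,\sigma)$ by the Frobenius twist $(K,\tau)$ with $\tau=\sigma\circ\phi^{-j}$ (a legitimate automorphism since $K$ is perfect and inversive), so that the variable previously called $x^{\sigma^{j}}$ becomes, after relabelling, the variable $x^{\tau^{j}}$ whose exponent has been "shifted down", and in the worst case — after enough such shifts and using that $\phi$ is invertible — the relevant derivative becomes $g_1=g'$ for the rewritten polynomial $g$. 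One has to check that $g(\alpha)=0$ is preserved (it is, since twisting does not change the underlying field or the element $\alpha$, only the distinguished automorphism, and $g$ is built from $f$ by applying powers of $\phi$ and $\sigma^{\pm1}$ to coefficients and relabelling variables), and that $g'(\alpha)=g_1(\alpha)$ equals, up to a nonzero scalar coming from the chain rule for the rescaling/Frobenius substitution, the nonzero quantity $f_{\nu_0}(\alpha)$.

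The main obstacle I anticipate is the bookkeeping in the second paragraph: making precise that "minimality of $\deg f$ forces the first nonvanishing transformal derivative to be indexed by a pure power $\sigma^j$, not by a higher-order monomial," and that the Frobenius twist really does convert $f_{\sigma^j}$ into $g_1$ rather than into some other $g_\nu$. Concretely, if $f_{\nu_0}$ with $\nu_0$ of $\varepsilon$-degree $\geq 2$ were the first nonvanishing one, one should be able to factor out the offending higher power using the characteristic-$p$ identity for Taylor coefficients together with perfectness, producing a genuinely lower-degree witness — but spelling out exactly which $p$-adic valuations of the multinomial coefficients obstruct this, and handling the characteristic-zero case ($p=1$) uniformly where $\phi=\mathrm{id}$ and "twisting" is vacuous (so one needs the pure-power claim to already give $j=0$, i.e. $f'(\alpha)\neq 0$ outright), is the delicate point. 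A clean way to organize this is probably to induct on $\deg f$ and split into cases according to whether $f$ lies in the subring generated by $p$-th powers of the variables and $\sigma$-images; in the affirmative case perfectness lets us take a root and strictly decrease the degree, contradicting minimality, so in fact $f\notin$ that subring, which is exactly the statement that some first-order derivative (possibly after twisting to move the active variable to $x^{\sigma^0}$) is nonzero at $\alpha$.
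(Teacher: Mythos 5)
Your overall plan—take a minimal‐degree difference polynomial vanishing at $\alpha$, analyze what can force $g'$ to vanish, and use perfectness and inversiveness to cut the degree down—is the right one, and it is essentially the paper's. But the mechanism you propose for "repairing" the derivative in the second paragraph is wrong, and the error is not cosmetic.

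The core misstep is the claim that a Frobenius twist can "move the active variable to $x^{\sigma^0}$," i.e. that replacing $\sigma$ by $\tau=\sigma\circ\phi^{-j}$ converts the transformal derivative $f_{\sigma^{j}}$ into the first derivative $g_1$ of a rewritten polynomial $g$. It cannot. Under a twist $\tau=\sigma\circ\phi^{m}$ one has $\tau^{i}=\sigma^{i}\circ\phi^{mi}$, so the monomial $x^{\sigma^{i}}$ is re-expressed as $(x^{\tau^{i}})^{p^{-mi}}$; the $\tau$-index is still $i$, and only the accompanying power of $p$ changes. Twisting therefore absorbs Frobenius powers but never shifts the $\sigma$-index, and it can never promote $f_{\sigma^{j}}$ (with $j>0$) to the role of $g_1$. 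Relatedly, your identification of the minimal $\nu_0$ with a pure power $\sigma^{j}$ is incorrect in characteristic $p>0$: by the digitwise-domination description of the Taylor coefficients (cf.\ the discussion in Section \ref{sec:Wild-Ramification}), the minimal $\nu$ with $f_{\nu}\neq 0$ is a transformal $p$-th power $p^{m}\sigma^{j}$, and the $p^{m}$ factor is exactly the part that twisting \emph{can} remove. The operation that eliminates the $\sigma^{j}$ factor is a different one and is not a twist at all: since $L$ is inversive, if every exponent in the support of $g$ has zero constant term then $g(x)$ is a difference polynomial in $x^{\sigma}$, and applying $\sigma^{-1}$ to the relation $g(\alpha)=0$ produces a relation of strictly smaller degree, contradicting minimality.

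The paper's proof is correspondingly organized as two separate reductions applied to a witness of minimal degree chosen uniformly across all twists: (A) if every $\nu$ in the support has zero constant coefficient, factor out $\sigma^{-1}$ using inversiveness; (B) if the constant coefficients are all divisible by $p$, factor out a $p$-th root using perfectness, passing to the twist $\tau=\sigma\circ\phi^{-1}$. After these reductions some exponent in the support of $g$ has constant coefficient prime to $p$, which is precisely the statement that $g'=g_1\neq 0$ as a polynomial; since $g'$ has strictly smaller degree, minimality gives $g'(\alpha)\neq 0$ outright. Your final paragraph (the "subring" reorganization) gestures at this but conflates the two reductions into a single "take a $p$-th root" move, which alone cannot handle the case where the offending exponent is a positive power of $\sigma$ rather than a positive power of $p$. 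If you separate the two cases and use $\sigma^{-1}$ (not a twist) for the first and $\phi^{-1}$ (a twist) for the second, your argument matches the paper's; the detour through higher transformal derivatives $f_{\nu_0}$ is unnecessary and, as written, leads you to the incorrect twist.
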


\begin{proof}
The element $\alpha$ is a root of some nonzero difference polynomial
$g\left(x\right)$ with coefficients in $K$, say $\sum_{\nu\in I}c_{\nu}\alpha^{\nu}=0$
where $I\subset\mathbf{N}\left[\sigma\right]$ is a finite nonempty
subset and $c_{\nu}\in K$. We may assume that the degree of $g\left(x\right)$
is chosen as small as possible with respect to this property, uniformly
across all twists of $K$. If each $\nu\in I$ has constant term zero
then $\sum c_{\nu}^{\frac{1}{\sigma}}\alpha^{\frac{\nu}{\sigma}}=0$;
so inductively we can assume that some $\nu\in I$ has a nonzero constant
term. Similarly, if the constant term of each $\nu\in I$ is divisible
by $p$ then $\sum_{\nu\in I}c_{\nu}^{\frac{1}{p}}\alpha^{\frac{\nu}{p}}=0$;
so inductively after twisting we may assume that some $\nu\in I$
has a constant term not divisible by $p$. Then $g'\neq0$; by minimality
of the degree of $g$ we have $g'\left(\alpha\right)\neq0$ and we
conclude.
\end{proof}
\begin{rem}
In Lemma \ref{twist}, a Frobenius twist is necessary; consider a generic solution of the equation $x^{\sigma} - x^p = 1$ over $K$. If we replace $\sigma$ by $\tau = \sigma \cdot p^{-1}$, then with the change of variables $x \to x^{\frac{1}{p}}$, the equation transforms to $x^\tau - x = 1$.
\end{rem}
\newpage{}

\section{Transformal Valued Fields}

\subsection{$\VFA$}

We can now introduce the central object of interest in this work.
\begin{defn}
By a \textit{transformal valued field} we mean a valued field $K$
equipped with an endomorphism $\sigma:K\to K$ of valued fields, thus
$\sigma^{-1}\left(\mathcal{\mathcal{O}}\right)=\mathcal{O}$ and $\sigma^{-1}\left(\mathcal{M}\right)=\mathcal{M}$.
The transformal valued field $K$ is said to be $\omega$-\textit{increasing}
if the induced action of $\sigma$ on the valuation group is $\omega$-increasing;
equivalently, if $x\in\mathcal{M}$ is nonzero then $\frac{x^{\sigma}}{x^{n}}\in\mathcal{M}$,
for all $n\in\mathbf{N}$. By an \textit{embedding} of transformal
valued fields we mean an embedding of the underlying fields which
is simultaneously an embedding of the underlying difference fields
and of the underlying valued fields.
\end{defn}

\begin{rem}
\label{rem-equichar}
In this paper we adopt the convention that all valued fields are of
equal characteristic; but this is automatic for $\omega$-increasing
transformal valued fields. For suppose that $K$ is of characteristic
zero and let $a\in\mathbf{Q}\subset K$ be nonzero. Then $a^{\sigma}=a$,
hence $\sigma\cdot va=va$ which means that $va=0$.
\end{rem}

\begin{example}
(1) Let $K$ be a valued field of finite characteristic $p>0$. If
$\sigma=\phi^{n}$ is a power of the Frobenius endomorphism $\phi x=x^{p}$
of $K$ then $\left(K,\sigma\right)$ is a transformal valued field.
It is not $\omega$-increasing; but one can obtain an $\omega$-increasing
transformal valued field by passing to an ultraproduct of transformal
valued fields of this form. We will say that $K$ is a \textit{Frobenius
transformal valued field}.

(2) Let $K$ be a perfect and $\omega$-increasing transformal valued
field of finite characteristic $p>0$ and fix $m\in\mathbf{Z}$. Then
$\tau=\sigma\circ\phi^{m}$ is an $\omega$-increasing endomorphism
of $K$; we say that $\left(K,\tau\right)$ is a \textit{Frobenius
twist} of $\left(K,\sigma\right)$.

(3) Let $k$ be an abstract difference field and let $\Gamma$ be
an ordered transformal module. Then the valued field $k\left(\left(t^{\Gamma}\right)\right)$
of formal power series with well ordered support, exponents in $\Gamma$
and coefficients in $k$ can be canonically expanded to a transformal
valued field.
\end{example}

\begin{lem}
\label{perfect-inversive-hull-transformal}Let $K$ be a transformal
valued field. Let $\widetilde{K}$ be the perfect, inversive hull
of $K$, which we regard as an abstract difference field extension
of $K$. Then $\widetilde{K}$ can be uniquely expanded to a transformal
valued field extension of $K$. If $K$ is $\omega$-increasing then
$\widetilde{K}$ is again $\omega$-increasing; we have $\widetilde{\Gamma}=\Gamma\otimes\mathbf{Z}\left[\sigma^{\pm1},p^{\pm1}\right]$
and $\widetilde{k}=k^{p^{-\infty},\sigma^{-\infty}}$
\end{lem}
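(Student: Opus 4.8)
The plan is to build the transformal valued field structure on $\widetilde K$ in two stages, treating the inversive hull and the perfect hull separately, since both are directed colimits of copies of $K$ under $\sigma$ and $\phi$ respectively. First I would recall that, as an abstract difference field, $\widetilde K = K^{p^{-\infty},\sigma^{-\infty}}$ is the direct limit of the diagram whose arrows are $\sigma$ and $\phi$; concretely every element of $\widetilde K$ is of the form $a^{\sigma^{-n}\cdot p^{-m}}$ for some $a\in K$ and $n,m\in\mathbf N$. Since $\sigma$ and $\phi$ are embeddings of valued fields $K\to K$ (for $\sigma$ this is the hypothesis that $K$ is a transformal valued field; for $\phi$ it is automatic, as $v(x^p)=pv(x)$ and $\phi$ is injective in all characteristics in our convention), each term in the colimit diagram carries a valuation and the transition maps are isometric onto their image after rescaling the value group. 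Hence there is a unique valuation on $\widetilde K$ making all the structure maps $K\to\widetilde K$ embeddings of valued fields, obtained by declaring $v(a^{\sigma^{-n}p^{-m}}) = \sigma^{-n}p^{-m}\cdot v(a)$; one checks this is well-defined and multiplicative using the compatibility relations among $\sigma$, $\phi$ and $v$ on $K$.

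Next I would verify the uniqueness claim. Any transformal valued field structure on $\widetilde K$ extending that of $K$ must restrict, on the image of the $(n,m)$-th copy of $K$, to the pushforward of $v$ under $\sigma^{-n}p^{-m}$, because $\sigma^{-1}(\mathcal O)=\mathcal O$ and $\sigma^{-1}(\mathcal M)=\mathcal M$ force $v\circ\sigma = \sigma\circ v$ at the level of the induced map on $\widetilde\Gamma$, and similarly $v\circ\phi = p\cdot v$; since $\widetilde K$ is the union of these copies, the valuation is determined everywhere. This simultaneously identifies $\widetilde\Gamma$ with the colimit $\Gamma\otimes_{\mathbf Z}\mathbf Z[\sigma^{\pm1},p^{\pm1}]$ and $\widetilde k$ with $k^{p^{-\infty},\sigma^{-\infty}}$: the residue field of a filtered colimit of valued fields is the colimit of the residue fields, and residually $\sigma$ acts on $k$ as the endomorphism induced by $\sigma$ and $\phi$ acts as the Frobenius of $k$.

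It remains to check that $\omega$-increasingness is preserved. Given $0<\beta\in\widetilde\Gamma$, write $\beta = \sigma^{-n}p^{-m}\alpha$ with $0<\alpha\in\Gamma$ and $n$ large; we must show $N\beta < \sigma\beta$ for every $N\in\mathbf N$, i.e. $N\sigma^{-n}p^{-m}\alpha < \sigma^{-n+1}p^{-m}\alpha$, equivalently (applying $\sigma^{n}p^{m}$, which is order-preserving) $N\alpha < \sigma\alpha$; but that is precisely the hypothesis on $K$. Since $p\ll\sigma$ holds already in $\mathbf Z[\sigma]$ and multiplication by $p$ only shrinks things relative to $\sigma$, no difficulty arises from the $p$-divisibility. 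I expect the only genuinely delicate point — and the place to be careful rather than hard — is checking well-definedness of $v$ on the colimit: one must confirm that the two ways of comparing $a^{\sigma^{-n}p^{-m}}$ and $b^{\sigma^{-n'}p^{-m'}}$ when they become equal in $\widetilde K$ give the same value, which reduces to the relations $v\circ\sigma=\sigma\circ v$ and $v\circ\phi = p\cdot v$ on $K$ together with injectivity of $\sigma$ and $\phi$ on $\Gamma$ (the latter is automatic since $\sigma$ is $\omega$-increasing, hence injective, and multiplication by $p$ is injective on an ordered abelian group). Everything else is a routine unwinding of the universal property of the perfect inversive hull.
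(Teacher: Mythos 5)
Your argument is correct and follows essentially the same route as the paper: realize $\widetilde K$ as the filtered colimit of copies of $K$ along $\sigma$ and $\phi$, note that both are embeddings of transformal valued fields, and push the valuation forward by the explicit formula $v(a^{\sigma^{-n}p^{-m}}) = \sigma^{-n}p^{-m}\cdot v(a)$, with uniqueness forced by the compatibility relations $v\circ\sigma = \sigma\circ v$ and $v\circ\phi = p\cdot v$. One minor remark: injectivity of the induced action of $\sigma$ on $\Gamma$ does not require the $\omega$-increasing hypothesis — it is already a consequence of $\sigma^{-1}(\mathcal O)=\mathcal O$ and $\sigma^{-1}(\mathcal M)=\mathcal M$, which is what makes the first part of the lemma work for arbitrary transformal valued fields.
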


\begin{proof}
The perfect hull of $K$ is the direct limit of the diagram $K\xrightarrow{\phi}K\xrightarrow{\phi}\ldots$
of fields, where $\phi x=x^{p}$ is the Frobenius endomorphism. Note
that $\phi\circ\sigma=\sigma\circ\phi$ and $\phi^{-1}\left(\mathcal{O}\right)=\mathcal{O}$,
hence $\phi\colon K\hookrightarrow K$ is an embedding of transformal
valued fields. So $K^{p^{-\infty}}$ can be canonically expanded to
a transformal valued field as it is the direct limit of a diagram
of transformal valued fields where the transition maps are given by
embeddings of transformal valued fields. Similarly for the inversive
hull, namely $K^{\sigma^{-\infty}}$ is the direct limit of the diagram
$K\xrightarrow{\sigma}K\xrightarrow{\sigma}\ldots$ and $\sigma\colon K\hookrightarrow K$
is an embedding of transformal valued fields. 

Equivalently, in a two sorted language, let $\widetilde{\Gamma}=\Gamma\otimes\mathbf{Z}\left[\sigma^{\pm1},p^{\pm1}\right]$
be the perfect, inversive hull of $\Gamma$. If $a\in\widetilde{K}$
then $a^{\sigma^{n}p^{m}}\in K$ for $0\ll n, m\in\mathbf{N}$, so we
can define a valuation $\widetilde{v}\colon\widetilde{K}\to\widetilde{\Gamma_{\infty}}$
by the rule $\widetilde{v}\left(a\right)=\sigma^{-n}\cdot p^{-m}\cdot v\left(a^{{\sigma}^n\cdot{p^m}}\right)$.
\end{proof}
\begin{defn}
\label{def-language-vfa}
Let $\mathcal{L}_{\VFA}$ be the expansion of $\mathcal{L}_{\VF}$
by unary function symbols $\sigma^{\pm1}$ and $\phi^{\pm1}$. We
let $\VFA$ be the theory of perfect, inversive, $
\omega$-increasing transformal valued
fields in the language $\mathcal{L}_{\VFA}$ (with the obvious interpretations).
\end{defn}

\begin{rem}
If $K$ is an $\omega$-increasing transformal valued field, then
there is a model $\widetilde{K}$ of $\VFA$ over $K$ which
embeds uniquely over $K$ in any other, using Lemma \ref{perfect-inversive-hull-transformal}.
We nevertheless choose to work with transformal valued fields with
are perfect and inversive. Namely we demand that $K$ be perfect in
order to consider the Frobenius twists of $K$ and we demand that
$K$ be inversive for the notion of a finite $\sigma$-invariant Galois
extension to be meaningful.
\end{rem}

The functorial constructions available in the realm of difference
fields and in valued fields can be lifted to the realm of
transformal valued fields:
\begin{prop}
Let $K$ be a model of $\VFA$.

(1) The algebraic Henselization $K^{\text{h}}$ of $K$ can be canonically
expanded to a model of $\VFA$ over $K$.

(2) The completion $\widehat{K}$ of $K$ can be canonically expanded
to a model of $\VFA$ over $K$.
\end{prop}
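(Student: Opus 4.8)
The plan is to derive both statements from the functoriality of the two constructions, using the universal mapping properties of the Henselization recorded in \ref{subsec-ramificationtheory} and of the completion; the two arguments run in parallel. In each case the underlying valued field $K^{\mathrm{h}}$ (resp. $\widehat K$) already exists and is an \emph{immediate} extension of $K$, so the only thing to produce is a compatible action of $\sigma$ and of the inverse Frobenius $\phi^{-1}$. Because the extension is immediate, neither the value group nor the residue field changes, so once the action exists the $\omega$-increasing property is inherited from $K$ for free; the actual content is that the extended $\sigma$ (and $\phi$) are automorphisms and not merely endomorphisms.

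For (1), view $\sigma\colon K\to K$ as an embedding of valued fields and postcompose with the canonical $K\hookrightarrow K^{\mathrm{h}}$; since $K^{\mathrm{h}}$ is Henselian, the universal property of the Henselization lifts this uniquely to an embedding $\sigma^{\mathrm{h}}\colon K^{\mathrm{h}}\to K^{\mathrm{h}}$ of valued fields extending $\sigma$, and being such an embedding it automatically satisfies $(\sigma^{\mathrm{h}})^{-1}(\mathcal O_{K^{\mathrm{h}}})=\mathcal O_{K^{\mathrm{h}}}$ and likewise for $\mathcal M_{K^{\mathrm{h}}}$. Applying the same to $\sigma^{-1}$ (legitimate since $K$ is inversive) and composing, the uniqueness clause forces $\sigma^{\mathrm{h}}\circ(\sigma^{-1})^{\mathrm{h}}$ and $(\sigma^{-1})^{\mathrm{h}}\circ\sigma^{\mathrm{h}}$ to be the identity, so $\sigma^{\mathrm{h}}$ is an automorphism and $K^{\mathrm{h}}$ is inversive. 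The genuine Frobenius $x\mapsto x^{p}$ of $K^{\mathrm{h}}$ is itself a valued-field embedding extending $\phi|_{K}$, so by the same uniqueness it coincides with the lift of $\phi\colon K\to K\hookrightarrow K^{\mathrm{h}}$; running the previous argument with $\phi$ and $\phi^{-1}$ (here using that $K$ is perfect) shows it is surjective, i.e. $K^{\mathrm{h}}$ is perfect. Finally $\Gamma_{K^{\mathrm{h}}}=\Gamma_{K}$ with $\sigma^{\mathrm{h}}$ acting exactly as $\sigma$, so $K^{\mathrm{h}}$ is $\omega$-increasing, hence a model of $\VFA$; canonicity of the expansion is exactly the uniqueness of the lift, a valued-field endomorphism of $K^{\mathrm{h}}$ extending $\sigma|_{K}$ being unique since $K^{\mathrm{h}}/K$ is separable algebraic and $K^{\mathrm{h}}$ is Henselian.

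For (2) one argues identically with the completion in place of the Henselization. The relevant universal property requires the source value group to be cofinal in the target, but $K$ is inversive, so $\sigma(\Gamma_{K})=\Gamma_{K}$, which is trivially cofinal in $\Gamma_{\widehat K}=\Gamma_{K}$; thus the composite $K\xrightarrow{\sigma}K\hookrightarrow\widehat K$ lifts uniquely to a valued-field embedding $\widehat\sigma\colon\widehat K\to\widehat K$ extending $\sigma$ (equivalently, $K$ is dense in $\widehat K$, so any such extension is unique, and the completion of the isometry $\sigma$ provides one). Lifting $\sigma^{-1}$ and composing shows $\widehat\sigma$ is an automorphism, so $\widehat K$ is inversive; the true Frobenius of $\widehat K$ is the unique lift of $\phi$, and $p\Gamma_{K}=\Gamma_{K}$ makes it surjective — alternatively this is the already-recorded fact that completion preserves perfectness and Henselianity. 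Since $\widehat K/K$ is immediate, $\Gamma_{\widehat K}=\Gamma_{K}$ with the same $\sigma$-action, so $\widehat K$ is $\omega$-increasing and a model of $\VFA$ over $K$, canonically.

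I expect the only step that is not purely formal to be the passage from ``$\sigma$ extends to an endomorphism'' to ``$\sigma$ extends to an automorphism'': functoriality alone yields an endomorphism of $K^{\mathrm{h}}$ (resp. $\widehat K$), and it is the combination of inversivity of $K$ with the \emph{uniqueness} half of the universal property that upgrades it to an automorphism, and likewise the combination of perfectness of $K$ with uniqueness that forces perfectness of the extension. Everything else is routine because both $K^{\mathrm{h}}$ and $\widehat K$ are immediate over $K$, so no new value-group or residue-field phenomena intervene.
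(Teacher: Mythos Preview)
Your proof is correct and follows the same approach as the paper: use the universal property of the Henselization (resp.\ completion) to lift $\sigma$, then observe that immediateness of the extension makes the $\omega$-increasing condition automatic. The paper is terser---it simply asserts that the lift is an automorphism and records as a preliminary remark that Henselization and completion preserve perfectness---whereas you spell out the step from endomorphism to automorphism by lifting $\sigma^{-1}$ and invoking uniqueness, and derive perfectness by the analogous argument with $\phi^{-1}$; this is a genuine clarification of a point the paper leaves implicit.
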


\begin{proof}
Note that the Henselization and the completion of a perfect valued
field are again perfect.

(1) Let $i:K\hookrightarrow K^{\text{h}}$ be an algebraic Henselization
of $K$. Then $i\circ\sigma:K\hookrightarrow K^{\text{h}}$ is an
embedding of valued fields with a Henselian target; by the universal
property of $K^{\text{h}}$ the map $i\circ\sigma$ factors uniquely
through $i$, i.e, there is a unique automorphism $\sigma^{\text{h}}:K^{\text{h}}\to K^{\text{h}}$
of valued fields such that $\sigma^{\text{h}}\circ i=i\circ\sigma$.
Since $K^{h}$ is an immediate extension of $K$, the automorphism
$\tau$ of $K^{h}$ thus induced is $\omega$-increasing.

(2) This follows from the universal property of the completion as
in (1).
\end{proof}

The following construction is, of course, not canonical; the question
of uniqueness up to isomorphism will be dealt with in the next subsection.
\begin{prop}\label{prop:lift-to-alg}
Let $K$ be a model of $\VFA$ and let $K^{a}$ be an abstract
algebraic closure of $K$ in the category of valued fields. Then $K^{a}$
can be expanded to a model of $\VFA$ over $K$.
\end{prop}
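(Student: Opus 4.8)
The plan is to extend $\sigma$ first to an abstract field automorphism of $K^{a}$ and then to correct it by an automorphism of $K^{a}$ over $K$ so that it preserves the valuation ring, using that $\Aut(K^{a}/K)$ acts transitively on the valuation rings of $K^{a}$ lying over $\mathcal{O}_{K}$. (This also makes clear why the construction cannot be canonical: the correcting automorphism $\rho$ below is not unique.)

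First I would note that, $K$ being inversive, $\sigma=\sigma_{K}$ is an automorphism of $K$, hence an embedding of $K$ into its algebraic closure $K^{a}$; since $K^{a}$ is an algebraic closure of $K$ along this embedding as well, ordinary field theory produces an abstract field automorphism $\widetilde{\sigma}\colon K^{a}\to K^{a}$ with $\widetilde{\sigma}|_{K}=\sigma$. This $\widetilde{\sigma}$ need not preserve $\mathcal{O}:=\mathcal{O}_{K^{a}}$, but $\widetilde{\sigma}(\mathcal{O})$ is a valuation ring of $K^{a}$ lying over $\mathcal{O}_{K}$: if $x\in\widetilde{\sigma}(\mathcal{O})\cap K$ then $\widetilde{\sigma}^{-1}(x)=\sigma^{-1}(x)\in K$, so $\sigma^{-1}(x)\in\mathcal{O}\cap K=\mathcal{O}_{K}$ and thus $x\in\sigma(\mathcal{O}_{K})=\mathcal{O}_{K}$, and conversely; hence $\widetilde{\sigma}(\mathcal{O})\cap K=\mathcal{O}\cap K$.

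Next I would invoke the conjugacy of prolongations of a valuation to a normal algebraic extension (see e.g.\ \cite{neukirch2013algebraic}): since $\widetilde{\sigma}(\mathcal{O})$ and $\mathcal{O}$ are valuation rings of the normal extension $K^{a}/K$ over the same valuation ring $\mathcal{O}_{K}$, there is $\rho\in\Aut(K^{a}/K)$ with $\rho(\widetilde{\sigma}(\mathcal{O}))=\mathcal{O}$. Setting $\sigma^{a}:=\rho\circ\widetilde{\sigma}$, one has a field automorphism of $K^{a}$ with $\sigma^{a}|_{K}=\sigma$ and $\sigma^{a}(\mathcal{O})=\mathcal{O}$, whence also $\sigma^{a}(\mathcal{M})=\mathcal{M}$ since $\mathcal{M}$ is the unique maximal ideal of $\mathcal{O}$. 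So $\sigma^{a}$ is an automorphism of the valued field $K^{a}$ prolonging $\sigma$; equipping $K^{a}$ with the Frobenius $\phi$ — a valued field automorphism of the perfect field $K^{a}$ commuting with $\sigma^{a}$ automatically — gives the desired $\mathcal{L}_{\VFA}$-structure, which is perfect and inversive because $K^{a}$ is algebraically closed.

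Finally I would verify that $K^{a}$ is $\omega$-increasing. Here $\sigma^{a}$ acts on $\Gamma_{K^{a}}$ extending the action of $\sigma$ on $\Gamma_{K}$, and $\Gamma_{K^{a}}/\Gamma_{K}$ is torsion since $K^{a}/K$ is algebraic. Given $0<\alpha\in\Gamma_{K^{a}}$ and $n\in\mathbf{N}$, choose a positive integer $m$ with $m\alpha\in\Gamma_{K}$; applying the hypothesis for $K$ to $0<m\alpha$ with multiplier $nm$ gives $nm(m\alpha)<\sigma(m\alpha)=m\,\sigma^{a}\alpha$, hence $nm\alpha<\sigma^{a}\alpha$, hence $n\alpha\leq nm\alpha<\sigma^{a}\alpha$. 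Thus $(K^{a},\sigma^{a},\phi)$ is a model of $\VFA$ over $K$. I expect the only real content to be the conjugacy of valuation extensions used in the third step — a classical fact — everything else being bookkeeping.
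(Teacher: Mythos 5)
Your proof is correct and is essentially the paper's argument: the paper lifts $\sigma$ to a valued field embedding of $K^{a}$ by invoking the uniqueness of the algebraic closure in the category of valued fields, which is precisely the conjugacy of prolongations of the valuation to $K^{a}/K$ that you invoke, and both proofs finish by checking the $\omega$-increasing condition at the level of value groups using that $\Gamma_{K^{a}}/\Gamma_{K}$ is torsion. Your more explicit two-step construction (abstract field-theoretic lift $\widetilde{\sigma}$, then correction by $\rho\in\Aut(K^{a}/K)$) is just a hands-on unpacking of the same universal property the paper cites.
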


\begin{proof}
Let $i:K\hookrightarrow K^{a}$ be an algebraic closure of $K$ in
the category of valued fields. Then $i\circ\sigma:K\hookrightarrow K^{a}$
is an embedding of valued fields of $K$ in an algebraically closed
valued field. Since $K$ admits a unique algebraic closure in the
category of valued fields, up to isomorphism, the map $i\circ\sigma$
can be factored through $i$; that is, there is an embedding $\tau:K^{a}\hookrightarrow K^{a}$
of valued fields such that $\tau\circ i=i\circ\sigma$. Then $\left(K^{a},\tau\right)$
is a transformal valued field extension of $K$ whose underlying field
is an algebraic closure of $K$, and to finish we must only check
that it is $\omega$-increasing. This follows from the fact that the
unique extension of $\sigma$ from $\Gamma$ to $\Gamma\otimes\mathbf{Q}\left[\sigma^{\pm1}\right]$
is $\omega$-increasing.

\end{proof}

\begin{example}
\label{example-no-qe-charp-wildly-ramified}
Let $K$ be an algebraically closed and nontrivially valued model of $\VFA$, of positive characteristic $p>0$. Let $A\left(x\right)=x^{p} - x$ denote the Artin-Schrier operator
and let $B\left(x\right)=x^{\sigma}-x$ denote its transformal counterpart. The kernel of $A$ is the prime field, and the kernel of $B$ is the fixed field; since the former is a subfield of the latter, the correspondence $C\left(x\right) = B \circ A^{-1}$ is a well defined additive homomorphism of $K$. Thus for every element $x \in K$, there is a distinguished choice of an Artin-Schrier root of $B\left(x\right) = x^{
\sigma} - x$, namely the element $C\left(x\right)$. The operators of this form will be called the \textit{twisted Artin-Schrier operators} of $K$.

Now fix elements $x,y \in K$ where $x$ is of strictly negative valuation, and suppose that one has the equality $B\left(x\right) = A\left(y\right)$; one verifies that these two facts determine that the quantifier free type\footnote{over the prime field, in the language $\omega{\VFA}$ defined in \ref{def-language-vfa}} $q$ of the pair $\left(x, y\right)$. Furthermore, the pair $\left(x, y + 1\right)$ obeys the same quantifier free formulas. On the other hand, the type $q$ is not complete: it does not determine whether or not $y$ is the \textit{distinguished} Artin-Schrier root of $x^{\sigma}-x$, i.e, whether or not one has $C\left(x\right) = y$.

Let us put $F=\mathbf{F}_{p}\left(x,y\right)_{\sigma}$ and $E=F\left(z\right)_{\sigma}=F\left(z\right)$, where $z$ is an Artin-Schrier root of $x$. Then $E$ is a finite $\sigma$-invariant Galois extension of $E$ which is wildly ramified. It follows from Fact \ref{figalois} that for some $0 < n \in \mathbf{N}$, there are two distinct non-isomorphic extensions of $\sigma^n$ from $F$ to $E$. In fact, here we could take $n = 1$, and the number of distinct non-isomorphic lifts is non-canonically identified with $\mathbf{F}_p$. Namely there is a distinguished lift of $\sigma$ from $F$ to $E$ with the property that all Galois theoretic automorphisms preserve $\sigma$, and $p-1$ other lifts with no nontrivial automorphisms at all.

Let $\widetilde{F}$ be a tame closure of $F$ in the category of models of $\VFA$; since $E$ is purely wildly ramified over $F$, the fields $E$ and $\widetilde{F}$ are linearly disjoint over $F$. Thus $\widetilde{E} = E \otimes_{F} \widetilde{F}$ provides us with an example of a finite $
\sigma$-invariant Galois extension of a model of $\VFA$ which is algebraically tamely closed.
\end{example}

\begin{example}
\label{example-char0-notuniquealg}
We give an example of a model $K$ of $\VFA$ of characteristic zero which is algebraically strictly Henselian and yet admits a finite $\sigma$-invariant Galois extension which is totally ramified. For the construction we use the fact that ramified extensions of Henselian valued fields of characteristic zero are well controlled by the value group; see \ref{subsec-ramificationtheory}.
Let $\Gamma$ be a model of $\omega\OGA$ which admits a nontrivial finite $\sigma$-invariant extension; that is, there is a model $\widetilde{\Gamma}$ of $\omega\OGA$ over $\Gamma$ with $1 < \left[\widetilde{\Gamma} \colon \Gamma \right] < \infty$. For example, take an ultraproduct of copies of $\Gamma = \mathbf{Z}\left[p^{-1}\right]$ equipped with multiplication by $p^n$, for some fixed prime $p$. Let $k$ be an inversive algebraically closed difference field of characteristic zero, and $K = k\left(\left(t^{\Gamma}\right)\right)$ the field of Hahn series over $k$ with coefficients in $\Gamma$. Then $K$ is spherically complete and algebraically strictly Henselian. Nevertheless, the field $K$ admits a finite $\sigma$-invariant Galois extension which is tamely but totally ramified, namely $L = k \left(\left(t^{\widetilde{\Gamma}}\right)\right)$.
\end{example}

\begin{cor}
\label{cor-alg-closure-not-unique}
Let $K$ be a model of $\VFA$ of characteristic zero which is algebraically strictly Henselian. Then a field theoretic algebraic closure of $K$ in the category of models of $\VFA$ over $K$ need not be unique over, even up to isomorphism.  In finite characteristic, uniqueness may fail even if $K$ is algebraically tamely closed.
\end{cor}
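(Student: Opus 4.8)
The plan is to package Examples~\ref{example-char0-notuniquealg} and~\ref{example-no-qe-charp-wildly-ramified} through the Galois-theoretic criterion of Fact~\ref{figalois}. Start with characteristic zero: take $K=k\left(\left(t^{\Gamma}\right)\right)$ and $L=k\left(\left(t^{\widetilde{\Gamma}}\right)\right)$ as in Example~\ref{example-char0-notuniquealg}. First one checks that $K$ is genuinely a model of $\VFA$: it is $\omega$-increasing because $\Gamma\models\omega\OGA$, it is perfect since the characteristic is zero, and it is inversive because for a Hahn series $a=\sum a_{\gamma}t^{\gamma}$ the series $b$ with $b_{\delta}=a_{\sigma\delta}^{\sigma^{-1}}$ has well ordered support (namely $\sigma^{-1}$ applied to the support of $a$, and $\sigma^{-1}$ is an order automorphism of $\Gamma$) and satisfies $b^{\sigma}=a$. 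By the cited example $K$ is moreover algebraically strictly Henselian and $L/K$ is a nontrivial finite $\sigma$-invariant Galois extension; hence the difference field underlying $K$ violates clause~(1) of Fact~\ref{figalois}, so clause~(2) fails too, producing some $n\geq1$ together with lifts $\tau_{1}\neq\tau_{2}$ of $\sigma^{n}$ to $K^{\text{a}}$ that are not isomorphic over $K$.

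Now replace the distinguished automorphism $\sigma$ of $K$ by the power $\sigma^{n}$. A power of an $\omega$-increasing automorphism is again $\omega$-increasing (for $0<\alpha$ and $m\in\mathbf{N}$ one has $m\alpha<\sigma\alpha\leq\sigma^{n}\alpha$), so $(K,\sigma^{n})$ is still a model of $\VFA$, and it remains algebraically strictly Henselian, this being a property of the underlying valued field alone. Since $K$ is Henselian, the valuation extends uniquely to $K^{\text{a}}$, so any field-theoretic algebraic closure of $(K,\sigma^{n})$ in the category of models of $\VFA$ over it is just a pair $\left(K^{\text{a}},\tau\right)$ with $\tau$ a lift of $\sigma^{n}$ to $K^{\text{a}}$---the induced action on $\Gamma\otimes\mathbf{Q}$ being the unique extension of that of $\sigma^{n}$ on $\Gamma$, which is automatically $\omega$-increasing---and two such are isomorphic over $K$ exactly when the corresponding automorphisms are conjugate in $\Aut\left(K^{\text{a}}/K\right)=G_{K}$. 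By the choice of $n$ the algebraic closures $\left(K^{\text{a}},\tau_{1}\right)$ and $\left(K^{\text{a}},\tau_{2}\right)$ are thus not isomorphic over $K$, which proves the first assertion.

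The finite characteristic case is identical, starting instead from Example~\ref{example-no-qe-charp-wildly-ramified}: there $\widetilde{F}$ is an algebraically tamely closed model of $\VFA$ admitting the nontrivial finite $\sigma$-invariant, wildly ramified Galois extension $\widetilde{E}=E\otimes_{F}\widetilde{F}$, so clause~(1) of Fact~\ref{figalois} fails for $\widetilde{F}$, hence so does clause~(2), and $(\widetilde{F},\sigma^{n})$ for a suitable $n$ is an algebraically tamely closed model of $\VFA$ with two non-isomorphic field-theoretic algebraic closures in the relevant category. None of the steps presents a genuine obstacle; the one feature one cannot avoid is the passage from the distinguished automorphism $\sigma$ to a power $\sigma^{n}$, dictated by the ``for all $n$'' shape of clause~(2) of Fact~\ref{figalois}, together with the routine bookkeeping identifying algebraic closures in the category with lifts of $\sigma^{n}$ and isomorphism over $K$ with $G_{K}$-conjugacy.
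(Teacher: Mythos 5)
Your argument is correct and follows the route the paper intends: package Examples~\ref{example-char0-notuniquealg} and~\ref{example-no-qe-charp-wildly-ramified} through Fact~\ref{figalois} (or equivalently Proposition~\ref{criterion-unique-algclosure}, its $\VFA$ counterpart), noting that over a Henselian base the expansions of $K^{\mathrm{a}}$ to a model of $\VFA$ correspond to lifts of the automorphism and isomorphism over $K$ to $G_K$-conjugacy. The one detour---replacing $\sigma$ by $\sigma^n$---is harmless given the existential form of the statement, but is in fact avoidable: the paper records explicitly that $n=1$ already suffices in the Artin--Schreier example, and a short computation (for instance choosing $[\widetilde{\Gamma}:\Gamma]=2$, so that $\sigma$ acts trivially on the order-two Galois group and no two lifts are conjugate) makes $n=1$ work in the characteristic-zero example as well.
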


We will see later that the theory $\VFA$ admits a model companion $\widetilde{\VFA}$; in the latter, the residue field and the valuation group are stably embedded models of $\ACFA$ and $\widetilde{\omega\OGA}$, with no additional structure. Using Corollary \ref{cor-alg-closure-not-unique}, one concludes:

\begin{cor}
\label{cor-no-qe-relative-tores}
Let $\mathcal{L}$ be the expansion of the three sorted language of perfect valued fields by unary function symbols for the action of $\sigma$ on all three sorts. Then the theory $\widetilde{\VFA}$ fails to eliminiate quantifiers in the language $\mathcal{L}$, even if quantifiers over the residue field\footnote{The valuation group is o-minimal and eliminates quantifiers, so quantifiers over the valuation group sort are redundant} are allowed.
\end{cor}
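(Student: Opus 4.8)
The plan is to contradict the putative relative quantifier elimination by exploiting the failure of uniqueness of the field-theoretic algebraic closure recorded in Corollary~\ref{cor-alg-closure-not-unique}. First I would set up a soft reduction. Suppose, toward a contradiction, that in $\widetilde{\VFA}$ every $\mathcal{L}$-formula is equivalent to one whose quantifiers range only over the residue-field and value-group sorts. I claim this forces the following: whenever $M_{1},M_{2}\models\widetilde{\VFA}$ share a common $\mathcal{L}$-substructure $A$ such that $k_{M_{1}}$ and $k_{M_{2}}$ are isomorphic over $k_{A}$ as difference fields and $\Gamma_{M_{1}},\Gamma_{M_{2}}$ are isomorphic over $\Gamma_{A}$ as ordered modules, one has $M_{1}\equiv_{A}M_{2}$. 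Indeed, the truth value in $M_{i}$ of an $\mathcal{L}$-sentence over $A$ whose quantifiers range only over the residue-field and value-group sorts is a function solely of $k_{M_{i}}$ equipped with the finitely many distinguished elements $\mathrm{res}(c)$ for the relevant $c\in\mathcal{O}_{A}$, of $\Gamma_{M_{i}}$ equipped with the $v(c)$ for the relevant $c\in A$, and of field-atomic facts internal to $A$; all of these agree between $M_{1}$ and $M_{2}$. Granting the supposed elimination, every $\mathcal{L}$-sentence over $A$ has this shape up to $\widetilde{\VFA}$-equivalence, so $M_{1}\equiv_{A}M_{2}$.

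Next I would produce the witnessing configuration. Take $K$ as in Example~\ref{example-char0-notuniquealg}: a model of $\VFA$ of characteristic zero which is algebraically strictly Henselian, with $k_{K}\models\ACFA$ and with $\Gamma_{K}$ admitting a nontrivial finite $\sigma$-invariant extension. By Corollary~\ref{cor-alg-closure-not-unique} the field-theoretic algebraic closure $K^{a}$ of $K$ carries two $\VFA$-structures $\tau_{1},\tau_{2}$ non-isomorphic over $K$ as difference fields; since $K^{a}/K$ has trivial residue extension ($k_{K}$ being algebraically closed) and the $\omega$-increasing prolongation of $\sigma$ to $\Gamma_{K}\otimes\mathbf{Q}$ is unique, $(K^{a},\tau_{1})$ and $(K^{a},\tau_{2})$ have the same residue field and the same value group over $K$, with their difference structures. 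A field automorphism of $K^{a}$ over $K$ conjugating $\tau_{1}$ to $\tau_{2}$ would be an isomorphism of difference fields, and none exists; by a compactness argument in the profinite group $\mathrm{Aut}(K^{a}/K)=\varprojlim\mathrm{Gal}(L/K)$ I obtain a finite normal $\sigma$-invariant extension $L_{0}$ of $K$ inside $K^{a}$ on which $\tau_{1}$ and $\tau_{2}$ restrict to $\mathrm{Gal}(L_{0}/K)$-non-conjugate lifts of $\sigma$. Writing $L_{0}=K(\beta)$ with minimal polynomial $g\in K[x]$ and $\tau_{1}(\beta)=h(\beta)$ for some $h\in K[x]$, put
\[
\varphi\ :=\ \exists y\ \bigl(g(y)=0\ \wedge\ \sigma(y)=h(y)\bigr),
\]
an $\mathcal{L}$-sentence with parameters from $K$. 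Any model of $\VFA$ extending $(L_{0},\tau_{1}|_{L_{0}})$ over $K$ satisfies $\varphi$; conversely, normality of $L_{0}$ forces any solution of $g=0$ in an extension to generate a copy of $L_{0}$, and then $\sigma(y)=h(y)$ can hold only if the ambient $\sigma$-structure restricts to a lift of $\sigma$ on $L_{0}$ that is $\mathrm{Gal}(L_{0}/K)$-conjugate to $\tau_{1}|_{L_{0}}$, so $\varphi$ fails in any model of $\VFA$ whose relative algebraic closure of $K$ carries $\tau_{2}$.

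Finally I would assemble the contradiction. Since $\widetilde{\VFA}$ is the model companion of $\VFA$ (Theorem~\ref{main-thm}(1)) and, by the full orthogonality and stable embeddedness of the residue field and value group (Theorem~\ref{main-thm}(3)), one may prescribe the residue field and value group of a model over a given base, I would embed $(K^{a},\tau_{1})\hookrightarrow M_{1}$ and $(K^{a},\tau_{2})\hookrightarrow M_{2}$ with $M_{i}\models\widetilde{\VFA}$, arranging that $k_{M_{1}}=k_{M_{2}}$ is one and the same model of $\ACFA$ over $k_{K}$ and $\Gamma_{M_{1}}=\Gamma_{M_{2}}$ one and the same model of $\widetilde{\omega\OGA}$ over $\Gamma_{K}\otimes\mathbf{Q}$. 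In characteristic zero a model of $\widetilde{\VFA}$ is strictly Henselian with divisible value group, hence algebraically closed as a valued field; so the relative field-theoretic algebraic closure of $K$ inside $M_{i}$ is exactly $(K^{a},\tau_{i})$. Thus $A:=K$ (with its residue field and value group) is a common $\mathcal{L}$-substructure of $M_{1},M_{2}$ satisfying the hypotheses of the first step, whence $M_{1}\equiv_{A}M_{2}$; but $M_{1}\models\varphi$ and $M_{2}\not\models\varphi$, a contradiction. In positive characteristic the identical scheme applies with Example~\ref{example-no-qe-charp-wildly-ramified} in place of Example~\ref{example-char0-notuniquealg}: there $K$ is an algebraically tamely closed model of $\VFA$, $L_{0}$ is a wildly ramified $\sigma$-invariant Galois extension of degree $p$, and the separating sentence is $\varphi:=\exists y\,(y^{p}-y=x\ \wedge\ \sigma(y)=y+\eta)$ for the element $\eta\in K$ singling out one of the $p$ distinct lifts of $\sigma$.

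The step I expect to be the main obstacle is the reduction of the first paragraph --- making airtight that the truth of a residue-field- and value-group-quantified $\mathcal{L}$-sentence over $A$ depends only on the pure residue-field and value-group structures over $A$, with nothing leaking through the field sort --- together with its companion in the last paragraph, namely arranging $M_{1}$ and $M_{2}$ to have literally the same residue field and value group over $A$, which is exactly where the orthogonality of the residue field and value group (Theorem~\ref{main-thm}(3)) is indispensable. By contrast, the existence of $L_{0}$, the separating sentence $\varphi$, and the identification of the relative algebraic closure of $K$ inside $M_{i}$ are routine.
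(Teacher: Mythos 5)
Your proposal is correct and is, in substance, the argument the paper has in mind: the paper derives the corollary in one line from Corollary~\ref{cor-alg-closure-not-unique}, and your paragraphs spell out exactly the intended reasoning (reduction to ``two models of $\widetilde{\VFA}$ over a common $\mathcal{L}$-substructure with matching residue-field and value-group data must be elementarily equivalent over it,'' followed by the witnessing configuration from the examples and the separating existential sentence pinned to a non-conjugate lift of $\sigma$ on a finite $\sigma$-invariant Galois $L_0$).

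One step deserves a little more care. The profinite compactness argument as you phrase it runs through $\mathrm{Aut}(K^a/K)=\varprojlim\mathrm{Gal}(L/K)$ over \emph{all} finite Galois $L$, but $\tau_i|_L$ is only meaningful for $\sigma$-invariant $L$; restricting to those, the inverse-limit argument produces a conjugating automorphism only on the directed union $\overline{K}$ of finite $\sigma$-invariant Galois extensions, and passing from conjugacy over $\overline{K}$ to conjugacy on all of $K^a$ is not automatic (it would require knowing that $\overline{K}$ itself has no nontrivial finite $\sigma$-invariant Galois extensions). Fortunately this detour is unnecessary: Examples~\ref{example-char0-notuniquealg} and~\ref{example-no-qe-charp-wildly-ramified} already exhibit an explicit finite $\sigma$-invariant Galois extension $L_0/K$ together with at least two $\mathrm{Gal}(L_0/K)$-non-conjugate lifts of $\sigma$ (the abelian, $\sigma$-trivial Galois group in the tame case, and the $p$ lifts singled out in the wildly ramified case). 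Taking two such lifts on $L_0$, extending them arbitrarily to $\omega$-increasing structures $\tau_1,\tau_2$ on $K^a$, and then running your paragraphs one and three as written gives the contradiction directly, without any compactness in the Galois group.
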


\begin{rem}
\label{rem-ake-splitting-qe}
Let $\mathcal{L}_{\AKE}$ be the expansion of the three sorted language of valued fields of equal characteristic zero by unary function symbols for the action of $\sigma$ on all three sorts and a cross section, namely a section of the valuation map $K^{\times} \to \Gamma$ in the category of modules over $\mathbf{Z}\left[\sigma\right]$. Every model $K$ of $\widetilde{\VFA}$ admits a cross section; when $K$ is saturated, the choice of this map is unique up to isomorphism, hence up to elementary equivalence. 

It follows from the main result of \cite{azgin2010valued} that in characteristic zero, the theory $\widetilde{\VFA}_{\AKE}$ of models of $\widetilde{\VFA}$ equipped with a cross section eliminates quantifiers relative to the residue field. However, an arbitrary model of $\VFA$ need not admit a cross section, and no cross section can possibly be definable in $\widetilde{\VFA}$, hence there is no contradiction with Corollary \ref{cor-no-qe-relative-tores}.

\end{rem}

\begin{defn}
Let $\nicefrac{\widetilde{K}}{K}$ be an extension of models of $\VFA$ of finite transformal transcendence degree. We say that $\widetilde{K}$ is \textit{transformally Abhyankar} over $K$ if one has the equality $\tdeg_{\sigma}\nicefrac{\widetilde{K}}{K} = \tdeg_{\sigma}\nicefrac{\widetilde{k}}{k} + {\dimm}_{\mathbf{Q}\left(\sigma\right)} \nicefrac{\widetilde{\Gamma} \otimes {\mathbf{Q}\left(\sigma\right)}}{\Gamma \otimes {\mathbf{Q}\left(\sigma\right)}}$
\end{defn}

Here the notation $\text{tdeg}_{\sigma}$ is the transformal transcendence degree of an extension of difference fields; see Section \ref{sec:Difference-Algebra}.

\begin{lem}
\label{lem-transformally-Abhyankar}
Let $\nicefrac{\widetilde{K}}{K}$ be an extension of models of $\VFA$ of finite transformal transcendence degree. Then one has the inequality $\tdeg_{\sigma}\nicefrac{\widetilde{K}}{K} \geq {\tdeg}_{\sigma}\nicefrac{\widetilde{k}}{k} + {\dim}_{\mathbf{Q}\left(\sigma\right)} \nicefrac{\widetilde{\Gamma} \otimes {\mathbf{Q}\left(\sigma\right)}}{\Gamma \otimes {\mathbf{Q}\left(\sigma\right)}}$
\end{lem}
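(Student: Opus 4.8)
The plan is to reduce the transformal Abhyankar inequality to its classical counterpart for valued fields (recalled above), by passing to suitable finitely generated sub-extensions and letting the "transformal depth" go to infinity. First I would reformulate the target: it suffices to show that whenever $\widetilde{k}$ carries an $e$-element family transformally independent over $k$ and $\widetilde{\Gamma}$ carries a $d$-element family whose image in $\nicefrac{\widetilde{\Gamma}\otimes\mathbf{Q}\left(\sigma\right)}{\Gamma\otimes\mathbf{Q}\left(\sigma\right)}$ is $\mathbf{Q}\left(\sigma\right)$-linearly independent, then $\tdeg_{\sigma}\nicefrac{\widetilde{K}}{K}\geq e+d$. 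Granting this, since $\tdeg_{\sigma}\nicefrac{\widetilde{K}}{K}$ is finite by hypothesis, both $\tdeg_{\sigma}\nicefrac{\widetilde{k}}{k}$ and $\dim_{\mathbf{Q}\left(\sigma\right)}$ of the value-group quotient are forced to be finite, and choosing $e$, $d$ to realize these two numbers gives the lemma.

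Next I would fix such families $c_{1},\dots,c_{e}\in\widetilde{k}$ and $b_{1},\dots,b_{d}\in\widetilde{\Gamma}$, lift the $c_{i}$ to elements $x_{i}\in\mathcal{O}_{\widetilde{K}}$ with residue $c_{i}$, and choose $y_{l}\in\widetilde{K}$ with $v\left(y_{l}\right)=b_{l}$; then for each $N\in\mathbf{N}$ I would form the subfield $L_{N}=K\left(x_{i}^{\sigma^{j}},y_{l}^{\sigma^{j}}:1\leq i\leq e,\ 1\leq l\leq d,\ 0\leq j\leq N\right)$ of $\widetilde{K}$, equipped with the restricted valuation. It is generated over $K$ by $\left(e+d\right)\left(N+1\right)$ elements, so $\tdeg_{K}L_{N}\leq\left(e+d\right)\left(N+1\right)$ and classical Abhyankar applies to $\nicefrac{L_{N}}{K}$. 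On the residue side, since $\sigma$ preserves $\mathcal{O}_{\widetilde{K}}$ and $\mathcal{M}_{\widetilde{K}}$ the residue of $x_{i}^{\sigma^{j}}$ is $c_{i}^{\sigma^{j}}$, and these are algebraically independent over $k$ for $0\leq j\leq N$ because the $c_{i}$ are transformally independent, so $\tdeg_{k}k_{L_{N}}\geq e\left(N+1\right)$. On the value side, $v\left(y_{l}^{\sigma^{j}}\right)=\sigma^{j}b_{l}\in\Gamma_{L_{N}}$, and the $\sigma^{j}b_{l}$ ($0\leq j\leq N$) are $\mathbf{Q}$-linearly independent modulo $\Gamma\otimes\mathbf{Q}$: a relation $\sum_{l,j}q_{lj}\sigma^{j}b_{l}\in\Gamma\otimes\mathbf{Q}$ with $q_{lj}\in\mathbf{Q}$ rewrites as $\sum_{l}\left(\sum_{j}q_{lj}\sigma^{j}\right)b_{l}\in\Gamma\otimes\mathbf{Q}\left(\sigma\right)$, forcing each $\sum_{j}q_{lj}\sigma^{j}=0$ in $\mathbf{Q}\left(\sigma\right)$ and hence all $q_{lj}=0$; thus $\dim_{\mathbf{Q}}\nicefrac{\Gamma_{L_{N}}\otimes\mathbf{Q}}{\Gamma\otimes\mathbf{Q}}\geq d\left(N+1\right)$. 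Feeding both bounds into classical Abhyankar gives $\left(e+d\right)\left(N+1\right)\geq\tdeg_{K}L_{N}\geq e\left(N+1\right)+d\left(N+1\right)$, so every inequality is an equality; in particular $\tdeg_{K}L_{N}$ equals the number of chosen generators, which therefore must be algebraically independent over $K$. As $N$ was arbitrary, the sequence $x_{1},\dots,x_{e},y_{1},\dots,y_{d}$ is transformally independent over $K$, yielding $\tdeg_{\sigma}\nicefrac{\widetilde{K}}{K}\geq e+d$.

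As for where the work actually lies: the two independence computations are routine, and the appeal to classical Abhyankar is a black box, so I do not expect a genuine obstacle. The one load-bearing point — and the reason to phrase things via the auxiliary fields $L_{N}$ — is the exact bookkeeping: the number of field generators of $L_{N}$ matches precisely the sum of the residual and value-group lower bounds, so the classical inequality is \emph{forced} to be an equality and thereby upgrades to algebraic independence, uniformly in $N$, which is exactly what transformal independence requires. I would only be careful to note that finite transformal transcendence degree does \emph{not} presuppose finite ordinary transcendence degree (e.g.\ $K\left(a\right)_{\sigma}$ with $a$ transformally transcendental), so the reduction through the finitely generated $L_{N}$ is essential rather than cosmetic, and that $\sigma$ commuting with the residue map and with $v$ is immediate from the axioms of a transformal valued field.
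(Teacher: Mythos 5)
Your proof is correct, and it takes a genuinely different (if parallel) route from the paper's. The paper lifts a transformal transcendence basis of $\widetilde{k}/k$ and a $\mathbf{Q}(\sigma)$-basis of $\nicefrac{\widetilde{\Gamma}\otimes\mathbf{Q}(\sigma)}{\Gamma\otimes\mathbf{Q}(\sigma)}$ to $\widetilde{K}$ and then checks transformal independence directly by running "the usual proof of Abhyankar's inequality" in the transformal setting, explicitly invoking the $\omega$-increasing hypothesis to rule out cancellation among monomials. You instead form the finitely generated truncations $L_{N}$ and invoke the classical Abhyankar inequality as a black box, letting the tight count $(e+d)(N+1)\geq\tdeg_{K}L_{N}\geq e(N+1)+d(N+1)$ force algebraic independence of the generators uniformly in $N$. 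This is cleaner in two respects: it isolates exactly where the work happens (the two independence computations, both routine), and it replaces a "one checks" with a genuine squeeze. The only place the $\omega$-increasing hypothesis appears in your version is implicit: it is what makes $\Gamma\hookrightarrow\Gamma\otimes\mathbf{Q}(\sigma)$ (since $\omega$-increasingness forces $\Gamma$ to be torsion-free over $\mathbf{Z}[\sigma^{\pm1}]$) and hence makes $\dim_{\mathbf{Q}(\sigma)}$ a meaningful invariant, and also guarantees that some $b_{l}\in\widetilde{\Gamma}$ exist with nonzero linearly independent images; once the $b_{l}$ are so chosen, your independence deduction is purely formal. You might make that dependence explicit, since otherwise a careless reader could conclude the lemma needs no hypothesis on $\sigma$ at all.
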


\begin{proof}
This is the usual proof of Abhyankar's inequality in the valued settings. If $\left(\alpha_{i}\right)$ is a transformal transcendence basis for $\widetilde{k}$ over $k$ and $\left(\gamma_{j}\right)$ is a basis for $\widetilde{\Gamma} \otimes \mathbf{Q}\left(\sigma\right)$ over $\Gamma \otimes \mathbf{Q}\left(\sigma\right)$, let $a_i$ and $b_j$ be elements with residue classes $\alpha_i$ and valuation $\gamma_{j}$, respectively; then using the $\omega$-increasing nature of $\sigma$ on the valuation group, one checks that the sequence $a_i, b_j$ is transformally independent over $K$.
\end{proof}

The following Corollary of Lemma \ref{lem-transformally-Abhyankar} will be used repeatedly, often without mention. We will revisit it in Section \ref{sec:The-Transformal-Herbrand}:

\begin{cor}
\label{cor-transformally-alg-no-valgp-adjunction}
Let $\nicefrac{\widetilde{K}}{K}$ be a transformally algebraic extension of models of $\VFA$. Then we have the isomorphism $\widetilde{\Gamma} \otimes \mathbf{Q}\left(\sigma\right) = {\Gamma} \otimes \mathbf{Q}\left(\sigma\right)$. In particular if $\Gamma \otimes \mathbf{Q}\left(\sigma\right) = \Gamma$, then a transformally algebraic extension of $K$ does not enlarge the valuation group.
\end{cor}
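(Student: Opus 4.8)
The plan is to read this off directly from the transformal Abhyankar inequality of Lemma \ref{lem-transformally-Abhyankar}, since the corollary is essentially the equality case of that inequality. First I would note that a transformally algebraic extension $\nicefrac{\widetilde{K}}{K}$ has transformal transcendence degree $0$: if some $a\in\widetilde{K}$ formed a transformally independent singleton, then by definition $\left(a^{\sigma^{n}}\right)_{n\geq 0}$ would be algebraically independent over $K$, i.e.\ $a$ would be transformally transcendental, contrary to hypothesis. In particular $\tdeg_{\sigma}\nicefrac{\widetilde{K}}{K}$ is finite, so Lemma \ref{lem-transformally-Abhyankar} applies and gives
\[
0 \;=\; \tdeg_{\sigma}\nicefrac{\widetilde{K}}{K} \;\geq\; \tdeg_{\sigma}\nicefrac{\widetilde{k}}{k} \;+\; \dimm_{\mathbf{Q}\left(\sigma\right)}\nicefrac{\widetilde{\Gamma}\otimes\mathbf{Q}\left(\sigma\right)}{\Gamma\otimes\mathbf{Q}\left(\sigma\right)}.
\]
Both summands on the right are nonnegative, so both vanish; in particular the $\mathbf{Q}\left(\sigma\right)$-vector space $\nicefrac{\widetilde{\Gamma}\otimes\mathbf{Q}\left(\sigma\right)}{\Gamma\otimes\mathbf{Q}\left(\sigma\right)}$ is zero.

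Next I would upgrade the vanishing of this quotient to the asserted equality $\widetilde{\Gamma}\otimes\mathbf{Q}\left(\sigma\right)=\Gamma\otimes\mathbf{Q}\left(\sigma\right)$. For this it suffices that the canonical map $\Gamma\otimes\mathbf{Q}\left(\sigma\right)\to\widetilde{\Gamma}\otimes\mathbf{Q}\left(\sigma\right)$ is injective, which holds because $\mathbf{Q}\left(\sigma\right)$ is the fraction field of $\mathbf{Z}\left[\sigma\right]$, hence a localization of it and therefore flat: tensoring the inclusion $\Gamma\hookrightarrow\widetilde{\Gamma}$ of $\mathbf{Z}\left[\sigma\right]$-modules with $\mathbf{Q}\left(\sigma\right)$ preserves injectivity. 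A zero cokernel then forces the map to be an isomorphism.

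Finally, for the ``in particular'' clause, suppose $\Gamma\otimes\mathbf{Q}\left(\sigma\right)=\Gamma$, i.e.\ $\Gamma$ is transformally divisible. By what precedes, $\widetilde{\Gamma}\otimes\mathbf{Q}\left(\sigma\right)=\Gamma$ as well. It remains to observe that $\widetilde{\Gamma}$ embeds into $\widetilde{\Gamma}\otimes\mathbf{Q}\left(\sigma\right)$, which amounts to $\widetilde{\Gamma}$ being torsion free as a $\mathbf{Z}\left[\sigma\right]$-module: if $\nu\gamma=0$ for some nonzero $\nu=m_{d}\sigma^{d}+\cdots+m_{0}$ with $m_{d}\neq 0$ and some $0<\gamma\in\widetilde{\Gamma}$, then by the $\omega$-increasing axiom the term $m_{d}\sigma^{d}\gamma$ dominates all lower-order terms, forcing $\nu\gamma\neq 0$. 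Hence $\widetilde{\Gamma}\hookrightarrow\widetilde{\Gamma}\otimes\mathbf{Q}\left(\sigma\right)=\Gamma\subseteq\widetilde{\Gamma}$, so $\widetilde{\Gamma}=\Gamma$. I do not expect any genuine obstacle in this argument; the only mildly delicate point is the torsion-freeness step in the last paragraph, and the $\omega$-increasing hypothesis disposes of it. All the real content sits in Lemma \ref{lem-transformally-Abhyankar}.
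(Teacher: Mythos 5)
Your proof is correct and takes the same route the paper intends: the corollary is stated as an immediate consequence of Lemma \ref{lem-transformally-Abhyankar}, and you have simply made explicit the elementary algebra involved (transformally algebraic $\Rightarrow$ $\tdeg_{\sigma}=0$, nonnegativity forces the $\mathbf{Q}(\sigma)$-dimension of the quotient to vanish, and torsion-freeness of $\widetilde{\Gamma}$ as a $\mathbf{Z}[\sigma]$-module via the $\omega$-increasing axiom yields the ``in particular'' clause). No gaps; the argument is as the paper expects.
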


\begin{example}
\label{example-transformally-Abhyankar}
Let $K$ be an algebraically valued field which is spherically complete. Then every valued field extension of $K$ of transcendence degree one is Abhyankar over $K$; in fact, the latter property characterizes the spherically complete valued fields inside the class of algebraically closed valued fields. In the transformal regime, this is no longer true. 

Namely let $K = k \left(\left(t^{\mathbf{Q}\left(\sigma\right)}\right)\right)$ with $k$ algebraically closed and of characteristic zero, say equipped with the identity automorphism. Consider the extension $L = K\left(x\right)_{\sigma}$ with $x = a_0 + a_1t + a_2t^2 + \ldots$ where the $a_i$ are algebraically independent over $k$ and obey $a_i^{\sigma} = a_i$ for each $i$. The residue class of $x$ is $a_0$; replacing $x$ with $y = x^{\sigma} - x$ kills the constant term, and $y \cdot t^{-1}$ has residue class $a_1$. Continuing in this manner inductively, one shows that the residue field of $L$ coincides with $k\left(a_0, a_1, \ldots \right)$; so $L$ is not transformally Abhyankar over $K$. One can generalize this exmaple to show that every countably generated transformally algebraic extension of $k$ arises as the residue field of a monogenic extension of $K$.
\end{example}
\begin{prop}
\label{spherically-complete-extension}Let $K$ be a model of $\VFA$.
Then there is a model $M$ of $\VFA$ over $K$ whose underlying
valued field is spherically complete.
\end{prop}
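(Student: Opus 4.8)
The plan is to realise $M$ as a maximal element of the poset of immediate extensions of $K$ inside the category of models of $\VFA$, and then to show that the $\omega$-increasing hypothesis forces any such maximal object to be spherically complete. The technical engine, which I would establish first, is a transformal version of Kaplansky's lemma on pseudo-Cauchy sequences: if $K\models\VFA$, if $(a_{\nu})$ is a pseudo-Cauchy sequence in $K$, and if $g\in K[x]_{\sigma}$ is any difference polynomial, then $\bigl(v(g(a_{\nu}))\bigr)_{\nu}$ is eventually monotone, i.e. past some index it is either constant or strictly increasing. I expect this lemma to be the main obstacle, and it is the place where $\omega$-increasingness is used in an essential way.

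To prove the lemma one expands, via the transformal Taylor expansion \eqref{eq:taylor}, the difference
\[
g(a_{\mu})-g(a_{\nu})=\sum_{0\neq\rho\in\mathbf{N}[\sigma]}g_{\rho}(a_{\nu})\cdot(a_{\mu}-a_{\nu})^{\rho},
\]
and notes that the valuation of the $\rho$-summand equals $v(g_{\rho}(a_{\nu}))+\rho\cdot\gamma_{\mu,\nu}$, where $\gamma_{\mu,\nu}:=v(a_{\mu}-a_{\nu})$. Because $\sigma$ is $\omega$-increasing, $\sigma^{i}\delta$ dominates every $\mathbf{Z}$-combination of the $\sigma^{j}\delta$ with $j<i$ whenever $0<\delta\in\Gamma$; hence, once $\gamma_{\mu,\nu}$ is large, these summand-valuations are totally ordered according to the lexicographic order on $\mathbf{N}[\sigma]$. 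A transfinite induction on $\deg g\in\mathbf{N}[\sigma]$ (the transformal derivatives $g_{\rho}$, $\rho\neq0$, having strictly smaller degree) then isolates a lexicographically least $\rho_{0}$ whose summand eventually strictly dominates, so $v(g(a_{\mu})-g(a_{\nu}))$ is eventually strictly increasing in $\mu$; this forces the stated dichotomy for $\bigl(v(g(a_{\nu}))\bigr)_{\nu}$. (This is Kaplansky's original argument transported to difference polynomials, and the comparison of monomial valuations is exactly what $\omega$-increasingness supplies.)

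With the lemma available, let $\mathcal{P}$ be the collection, up to isomorphism over $K$, of models $N$ of $\VFA$ that are immediate transformal extensions of $K$; this is a set, since immediate extensions of a fixed valued field have bounded cardinality. The union of a chain in $\mathcal{P}$ is again immediate over $K$ and again perfect, inversive and $\omega$-increasing, so Zorn's lemma furnishes a maximal $M\in\mathcal{P}$, which is the candidate. Suppose for contradiction that $M$ is not spherically complete, and pick a pseudo-Cauchy sequence $(a_{\nu})$ in $M$ with no pseudo-limit in $M$. Let $\mathfrak{p}\subset M[x]_{\sigma}$ consist of those $g$ for which $\bigl(v(g(a_{\nu}))\bigr)_{\nu}$ does not eventually stabilise at an element of $\Gamma_{M}$. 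The lemma makes $\mathfrak{p}$ a prime difference ideal with $\sigma^{-1}\mathfrak{p}=\mathfrak{p}$ (using that $\sigma$ acts as an order isomorphism on $\Gamma_{M}$, since $M$ is inversive), so $R=M[x]_{\sigma}/\mathfrak{p}$ is a difference domain.

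On the fraction field $L$ of $R$ the prescription $v(g\bmod\mathfrak{p}):=\lim_{\nu}v(g(a_{\nu}))$ defines a valuation extending that of $M$, for which $\sigma$ is a valued endomorphism because $v\circ\sigma=\sigma\circ v$ on $M$. By construction the value group and residue field of $L$ coincide with those of $M$, and the class of $x$ does not lie in $M$ (otherwise it would be a pseudo-limit of $(a_{\nu})$ in $M$). Hence the perfect inversive hull $M'$ of $L$ — which is immediate over $L$, as $\Gamma_{M}$ is $p$-divisible and inversive and $k_{M}$ is perfect and inversive (Lemma \ref{perfect-inversive-hull-transformal}) — is a model of $\VFA$ belonging to $\mathcal{P}$ and strictly containing $M$, contradicting maximality. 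Therefore $M$ is spherically complete, and it is a model of $\VFA$ over $K$, as required. The only genuinely delicate point is the transformal Kaplansky lemma of the first two paragraphs; once it is in place, the construction of $M'$ and the verification that it is an immediate model of $\VFA$ are routine.
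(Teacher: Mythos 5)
Your argument breaks down at the point where you assert that the set $\mathfrak{p}$ of difference polynomials $g$ for which $v(g(a_{\nu}))$ fails to stabilise is a prime difference ideal of $M[x]_{\sigma}$. That set is not, in general, closed under addition. Concretely: let $q$ be a difference polynomial of least degree in $\mathbf{N}[\sigma]$ for which $v(q(a_{\nu}))$ is eventually strictly increasing, and suppose the strictly increasing sequence $v(q(a_{\nu}))$ is bounded above in $\Gamma_{M}$ (the typical situation when $\Gamma_{M}$ has infinite height, as it does for any nontrivially valued model of $\VFA$). Pick $c\in M^{\times}$ with $v(c)>v(q(a_{\nu}))$ for all $\nu$. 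Then $q\in\mathfrak{p}$ and $q+c\in\mathfrak{p}$ (since $v\bigl((q+c)(a_{\nu})\bigr)=\min\bigl(v(q(a_{\nu})),v(c)\bigr)=v(q(a_{\nu}))$), yet $(q+c)-q=c$ is a nonzero constant and hence not in $\mathfrak{p}$. What Kaplansky actually does in the algebraic-type case is adjoin a root of the \emph{minimal-degree} polynomial $q$, after proving $q$ irreducible; the relevant prime is the ideal generated by $q$, not the set of polynomials with unstable valuation. Transplanting that to $M[x]_{\sigma}$ is delicate precisely because there is no Euclidean division for difference polynomials (see the discussion in Section~\ref{sec:Difference-Algebra}), so the argument for irreducibility, for primality of the resulting difference ideal, and for immediateness of the quotient does not transfer automatically. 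As written, your construction of the strictly larger immediate extension $M'$ does not go through.

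The paper's own proof sidesteps all of this. After reducing to $K$ algebraically closed, one takes an abstract spherically complete immediate extension $M$ of $K$ as a valued field; this exists and is unique up to isomorphism over $K$ by Kaplansky's theorem (the residue field being algebraically closed and the value group divisible). Uniqueness immediately supplies a lift of the valued-field automorphism $\sigma$ to $M$, and the lift is $\omega$-increasing because the extension is immediate, so there is no value-group adjunction. Your Zorn-plus-dichotomy strategy is close in spirit to what the paper carries out later in Lemma~\ref{t-Newtonian-spherically complete}, but there the construction is done one root at a time over a transformally Newtonian base, where the needed structure theory is available, and it does not route through the problematic set $\mathfrak{p}$. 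Also, a small correction to your motivation: the stabilise-or-strictly-increase dichotomy for $v(g(a_{\nu}))$ follows from the finiteness of the support of $g$ and the tropical picture of the Taylor expansion exactly as in Kaplansky's original argument, and does not in fact require $\omega$-increasingness; what $\omega$-increasingness buys in this proposition is only that the lifted $\sigma$ remains $\omega$-increasing, which is automatic for an immediate extension.
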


\begin{proof}
We may assume that $K$ is algebraically closed. Let $M$ be an abstract
spherically complete immediate extension of $K$ in the category of
valued fields. By the theorem of Kaplansky \cite{kaplansky1942maximal}
on the uniqueness of maximal immediate extensions, it is uniquely
determined up to an isomorphism of valued fields over $K$ by this
property. Every automorphism of $K$ in the category of valued fields
must therefore lift to an automorphism of $M$; such a lift renders
$M$ an $\omega$-increasing transformal valued field since there
is evidently no valuation group adjunction.
\end{proof}

Before we turn to the next example we wish to make the following elementary
remark. Let $K$ be a model of $\VFA$ and let $L$
be an immediate extension of $K$. Let $\mathcal{C}$ be the class
of all maximally complete immediate extensions of $L$. Let $\equiv_{L}$
and $\equiv_{K}$ denote the equivalence relations on $\mathcal{C}$
given by isomorphism over $L$ and over $K$, respectively. The existence of a continuum of pairwise distinct equivalence classes
of $\equiv_{L}$ need not imply that $\equiv_{K}$ has as many classes.
Suppose, however, that $\text{Aut}\left(\nicefrac{L}{K}\right)$ is
countably infinite and that $L$ is setwise fixed by $\text{Aut}\left(\nicefrac{M}{K}\right)$,
whenever $M$ is a maximally complete immediate extension of $L$.
Then each equivalence class of $\equiv_{K}$ is the union of \textit{countably}
many equivalence classes of $\equiv_{L}$ and the conclusion is valid.
\begin{example}
\label{exa:non-unique-spherically}Let $k$ be a model of $\ACFA$
and $M=k\left(\left(t^{\mathbf{Q}\left(\sigma\right)}\right)\right)$
the displayed valued difference field of formal Hahn series. The field
$\mathbf{Q}\left(\sigma\right)$ admits a unique discrete valuation
with $v\sigma=1$ and trivial on $\mathbf{Q}$; write $R$ for the
valuation ring. Let $K$ be the valued difference subfield of $M$
consisting of all those formal power series supported on a bounded
subset of $\mathbf{Q}\left(\sigma\right)$ with respect to this valuation.
Then $K$ is the increasing union of difference subfields which are
algebraically closed and spherically complete, namely
\[
k\left(\left(t^{R}\right)\right)\subset k\left(\left(t^{\sigma^{-1}\cdot R}\right)\right)\subset\ldots\subset K
\]
Let us put $x=t^{-1}$ and write:
\[
\alpha=x+x^{\frac{1}{\sigma}}+\ldots
\]
Then $\alpha^{\sigma}-\alpha=t^{-1}$. Let $L$ be the valued difference
field generated over $K$ by the element $\alpha$. Then the automorphism
group $\text{Aut}\left(\nicefrac{M}{K}\right)$ leaves the field $L$
setwise fixed, since it can be characterized intrinsically as the
difference subfield generated by the solutions to the equation $X^{\sigma}-X=x$
in $M$. If $F$ is the fixed field of $K$, then the map $\text{Aut}\left(\nicefrac{L}{K}\right)\to F$
given by $\tau\mapsto\tau\alpha-\alpha$ is an injective group homomorphism,
so the group $\text{Aut}\left(\nicefrac{L}{K}\right)$ is countably
infinite, provided that $F$ is.

Fix a power $q$ of $p$ and let

\[
\beta=x+x^{\frac{1}{q}}+\ldots+x^{\frac{1}{\sigma}}+x^{\frac{1}{q\cdot\sigma}}+\ldots
\]
then $\beta$ is supported on a set of order type $\omega^{2}$ and
we have $\beta^{q}-\beta=\alpha$. Moreover, we have:
\[
\beta^{\sigma}-\beta=x^{\frac{1}{q}}+x^{\frac{1}{q^{2}}}+\ldots
\]
so that $\beta^{\sigma}-\beta$ is, in fact, an element of $K$. The
field $L$ therefore admits finite $\sigma$-invariant abelian Galois
extensions which are purely wildly ramified and of unbounded degree.
Reasoning as in Example \ref{example-no-qe-charp-wildly-ramified}, we find that $K$ has a continuum
of pairwise nonisomorphic algebraic closures in the category of models
of $\VFA$.
\end{example}

\begin{cor}
\label{immediate-non-unique}There exists an algebraically closed
model of $\VFA$ whose residue field is a model of $\ACFA$
and whose valuation group is a model of $\widetilde{\omega\OGA}$,
and which nevertheless has a continuum of pairwise nonisomorphic spherically
complete immediate extensions.
\end{cor}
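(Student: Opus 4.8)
The plan is to read the statement off Example~\ref{exa:non-unique-spherically}, taking for the desired field $\bar{K}$ the valued difference field $K$ constructed there. First I would record that this $K$ is already algebraically closed: the example displays it as the increasing union of the fields $k((t^{\sigma^{-n}R}))$, each of which is algebraically closed (a Hahn series field over an algebraically closed residue field with divisible value group, hence spherically complete and defectless, with trivial finite extensions), and an increasing union of algebraically closed fields is algebraically closed. Its residue field is the chosen model $k$ of $\ACFA$, and its value group is $\mathbf{Q}(\sigma)$, which is transformally divisible since multiplication by a nonzero $\nu\in\mathbf{Z}[\sigma]$ is a bijection of the field $\mathbf{Q}(\sigma)$; thus the value group is a model of $\widetilde{\omega\OGA}$. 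So $\bar{K}:=K$ has exactly the required shape, and it remains only to exhibit a continuum of pairwise non-isomorphic spherically complete immediate extensions of it over $\bar K$ --- which is what Example~\ref{exa:non-unique-spherically} accomplishes.

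To spell this out I would proceed as in that example. Take $k$ countable, so that $\Aut(\nicefrac{L}{K})$ is countably infinite; the example identifies this group, via $\tau\mapsto\tau\alpha-\alpha$, with the additive group of the fixed field of $K$, which --- a $\sigma$-invariant Hahn series of bounded support being supported at $0$ only --- is the countable field $k^{\mathrm{fix}}$. Now in an arbitrary maximally complete immediate extension $N$ of $K$ the fixed field is still $k^{\mathrm{fix}}$: a $\sigma$-fixed element of $N$ has valuation in the trivial $\sigma$-invariant convex subgroup of $\mathbf{Q}(\sigma)$, hence valuation $0$ and residue in $k^{\mathrm{fix}}$; dividing by the corresponding coefficient yields a $\sigma$-fixed element of $1+\mathcal{M}_N$, which must equal $1$ since a nonzero element of $\mathcal{M}$ cannot have $\sigma$-invariant valuation; so the element lies in $K$. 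Consequently the $\sigma$-closure $L^N\subseteq N$ of the solution set of $X^\sigma-X=t^{-1}$ is a copy of $L$ over $K$, and it is preserved setwise by $\Aut(\nicefrac{N}{K})$ because $t^{-1}\in K$. Since $N$ is algebraically closed (being spherically complete, with algebraically closed residue field and divisible value group, so every finite extension is trivial), it contains the wildly ramified $\sigma$-invariant abelian Galois extensions $L^N(\beta_q)$ of unbounded degree, and by Fact~\ref{figalois} these carry pairwise non-isomorphic lifts of powers of $\sigma$; assembling a choice of lift at each level $q$ yields a continuum of pairwise non-isomorphic-over-$L^N$ maximally complete immediate extensions of $K$. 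The elementary remark preceding Example~\ref{exa:non-unique-spherically}, applied with base $\bar{K}=K$ and intermediate field $L$, then shows that each isomorphism class over $K$ decomposes into at most $\lvert\Aut(\nicefrac{L}{K})\rvert=\aleph_{0}$ isomorphism classes over $L$, so a continuum of isomorphism classes over $K$ survives. As ``maximally complete'' and ``spherically complete'' coincide, this is the corollary.

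Essentially all of the difficulty is already packaged in Example~\ref{exa:non-unique-spherically}; the one point I would treat with care is the descent from isomorphism over $L$ to isomorphism over $\bar{K}=K$, which is exactly why one takes $k$ countable (so $\Aut(\nicefrac{L}{K})$ is countable) and why one needs $L$ to be intrinsically definable, hence $\Aut(\nicefrac{N}{K})$-invariant, inside every maximally complete immediate extension of $K$. The remaining verifications --- that $K$ is algebraically closed with residue field a model of $\ACFA$ and value group a model of $\widetilde{\omega\OGA}$, and that the maximally complete immediate extensions involved are themselves algebraically closed --- are routine.
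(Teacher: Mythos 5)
Your proposal is correct and matches the paper's intended argument: the published proof is simply the pointer ``See Example~\ref{exa:non-unique-spherically} and discussion preceding it,'' and you have faithfully unpacked that, including the choice of $k$ countable and the descent from $\equiv_L$ to $\equiv_K$ via the cardinality remark preceding the example. One small wrinkle worth noting: where you write ``an arbitrary maximally complete immediate extension $N$ of $K$,'' the paper's remark is phrased for extensions of $L$, which is convenient because then $\alpha\in N$ and the solution set of $X^\sigma-X=t^{-1}$ is obviously nonempty; for an extension merely of $K$ you would additionally need to observe that such $N$ is transformally Newtonian (by Proposition~\ref{newton-max}, being spherically complete with $k_N\models\ACFA$ and $\Gamma_N=\mathbf{Q}(\sigma)$ transformally divisible) and so admits a root.
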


\begin{proof}
See Example \ref{exa:non-unique-spherically} and discussion preceding it.
\end{proof}

\subsubsection{Composition of Valuations}
\begin{defn}\label{def:compos}
The category of \textit{pairs} is the fibered product $\VFA\times_{\FA}\VFA$
of categories, where the implicit functor $\VFA\to\text{FA}$
on the left is given by the residue functor to the category of difference
fields, and the implicit functor $\VFA\to\text{FA}$ on the
right is the forgetful functor.
\end{defn}

\textbf{Notation and Explanation}. Let $K_{21} = \left(K_2, \mathcal{O}_{21}\right)$ be a model of $\VFA$ with residue field $K_1$ and underlying difference field $K_2$. Let us assume that we are given an $\omega$-increasing transformal valuation ring $\mathcal{O}_{10}$ of $K_1$, so that $K_1$ is the underlying difference field of a second model $K_{10}$ of $\VFA$ with residue field $K_0$. In this way $K_2$ admits a canonical structure of a model of $\VFA$ as in the proof of Proposition \ref{prop:composition-of-valuations} below.

With this notation, an object of the category of pairs is a tuple $\left(K_{21}, K_{10}\right)$ where $K_{21}$ is a model of $\VFA$ and $K_{10}$ is a second model of $\VFA$ whose underlying difference field coincides with the residue field $K_1$ of $K_{21}$; an embedding $\left(K_{21},K_{10}\right)\hookrightarrow\left(\widetilde{K_{21}},\widetilde{K_{10}}\right)$
of pairs is an embedding $K_{21}\hookrightarrow\widetilde{K_{21}}$
of models of $\VFA$ with the property that the induced map
$K_{1}\hookrightarrow\widetilde{K_{1}}$ of residue fields is, in
fact, an embedding $K_{10}\hookrightarrow\widetilde{K_{10}}$ of models
of $\VFA$.

We all also refer to a triplet $\left(K_{20}, K_{21}, K_{10}\right)$ of models of $\VFA$ where the construction of $K_{20}$ is as in the proof of Proposition \ref{prop:composition-of-valuations} below (one can recover $K_{20}$ from $K_{21}$ and $K_{10}$ alone).

\begin{prop}\label{prop:composition-of-valuations}
The following three categories are equivalent:

(1) The category $\mathcal{C}_{1}$ of models $K$ of $\VFA$
together with a distinguished transformally prime ideal of $\mathcal{O}$,
where embeddings $K\hookrightarrow\widetilde{K}$ are required to
induce a pointed map $\Spec^{\sigma}\widetilde{\mathcal{O}}\to\Spec^{\sigma}\mathcal{O}$
of spectra

(2) The category $\mathcal{C}_{2}$ of models $K$ of $\VFA$
together with a distinguished transformally prime ideal of $\Gamma$,
where embeddings $K\hookrightarrow\widetilde{K}$ are required to
induce a pointed map $\Spec^{\sigma}\widetilde{\Gamma}\to\Spec^{\sigma}\Gamma$
of spectra

(3) The category $\mathcal{C}_{3}$ of pairs of models of $\VFA$ as in Definition \ref{def:compos}
\end{prop}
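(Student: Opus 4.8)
The plan is to split the claim into an isomorphism of categories $\mathcal{C}_{1}\cong\mathcal{C}_{2}$, which is a purely order-theoretic translation, and an equivalence $\mathcal{C}_{1}\simeq\mathcal{C}_{3}$, which is the transformal incarnation of the classical composition-of-valuations correspondence; composing the two yields all three equivalences. For $\mathcal{C}_{1}\cong\mathcal{C}_{2}$: for a model $K$ of $\VFA$ the valuation $v\colon K\to\Gamma_{\infty}$ intertwines $\sigma$, so the bijection $I\mapsto v(I)$ between ideals of $\mathcal{O}$ and ideals of $\Gamma_{\infty}$ recorded in \ref{embedding-of-valued-fields} is $\sigma$-equivariant and hence restricts to a bijection $\Spec^{\sigma}\mathcal{O}=\Spec^{\sigma}\Gamma$. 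This bijection is natural in embeddings of models of $\VFA$ and compatible with contraction of ideals (an embedding of valued fields induces a $\sigma$-equivariant embedding of value groups), so transporting the distinguished ideal and the ``pointed map'' condition along it identifies $\mathcal{C}_{1}$ with $\mathcal{C}_{2}$ on the nose; nothing further is needed here.

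For the functor $\mathcal{C}_{1}\to\mathcal{C}_{3}$: given $(K,\mathfrak{p})$ with $\mathfrak{p}\in\Spec^{\sigma}\mathcal{O}$, let $\mathcal{O}_{21}:=\mathcal{O}_{\mathfrak{p}}$, a valuation ring of $K$ coarsening $\mathcal{O}$, with maximal ideal $\mathcal{M}_{21}=\mathfrak{p}$ and value group $\Gamma/V$, where $V$ is the $\sigma$-invariant convex subgroup attached to $\mathfrak{p}$. Since $\sigma^{-1}(\mathfrak{p})=\mathfrak{p}$, the automorphism $\sigma$ of $K$ preserves $\mathcal{O}_{21}$ and $\mathcal{M}_{21}$, so $(K,\mathcal{O}_{21})$ is a transformal valued field; it remains perfect and inversive, the underlying difference field being unchanged, and it is $\omega$-increasing since $\Gamma/V$ is a quotient of the $\omega$-increasing module $\Gamma$. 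Its residue field $K_{1}=\mathcal{O}_{21}/\mathcal{M}_{21}$ is perfect and inversive: surjectivity of $\sigma$ and of the Frobenius on $K$ descends to $\mathcal{O}_{21}$, because a $\sigma$- or $p$-th root of an integral element again has value $\geq 0$ in $\Gamma/V$ (using that $\sigma$ and multiplication by $p$ are order-reflecting automorphisms of $\Gamma/V$). Moreover $K_{1}$ carries the valuation ring $\mathcal{O}_{10}:=\mathcal{O}/\mathfrak{p}$ with value group $V$, which is $\omega$-increasing as a convex submodule of $\Gamma$, so $(K_{1},\mathcal{O}_{10})$ is a model of $\VFA$. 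Thus $\big((K,\mathcal{O}_{21}),(K_{1},\mathcal{O}_{10})\big)$ is an object of $\mathcal{C}_{3}$, and the assignment is manifestly functorial, contraction of the distinguished prime translating into compatibility with the valuation of the residue field.

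The quasi-inverse runs the composition of valuations backwards: from a pair with first valued field $(K_{2},\mathcal{O}_{21})$, residue map $\pi\colon\mathcal{O}_{21}\to K_{1}$ and second valuation ring $\mathcal{O}_{10}\subseteq K_{1}$, set $\mathcal{O}_{20}:=\pi^{-1}(\mathcal{O}_{10})$, which refines $\mathcal{O}_{21}$ and satisfies $(\mathcal{O}_{20})_{\mathcal{M}_{21}}=\mathcal{O}_{21}$. Its value group sits in a $\sigma$-equivariant short exact sequence $0\to\Gamma_{10}\to\Gamma_{20}\to\Gamma_{21}\to 0$ of ordered modules with $\omega$-increasing ends; such an extension is again $\omega$-increasing, since a positive $\beta\in\Gamma_{20}$ with nonzero (hence positive) image in $\Gamma_{21}$ satisfies $n\beta<\sigma\beta$ by positivity of $\sigma\beta-n\beta$, while a positive $\beta\in\Gamma_{10}$ satisfies this already inside $\Gamma_{10}$. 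Hence $(K_{2},\mathcal{O}_{20})$ is a model of $\VFA$ and $\mathcal{M}_{21}$ is a transformally prime ideal of $\mathcal{O}_{20}$, because $\sigma^{-1}(\mathcal{M}_{21})=\mathcal{M}_{21}$ in $(K_{2},\mathcal{O}_{21})$. The classical correspondence between coarsenings of a valuation and valuation rings of its residue field shows the two constructions are mutually inverse up to canonical isomorphism; and, unwinding definitions, a morphism in $\mathcal{C}_{1}$ — an embedding of models of $\VFA$ under which the distinguished prime contracts to the distinguished prime — is precisely an embedding $K_{21}\hookrightarrow\widetilde{K_{21}}$ inducing an embedding $K_{10}\hookrightarrow\widetilde{K_{10}}$ on residue fields, i.e. a morphism of pairs. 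This yields the equivalence $\mathcal{C}_{1}\simeq\mathcal{C}_{3}$, and with the first paragraph the proof is complete.

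The only points requiring genuine verification are the stability of the $\omega$-increasing, perfect and inversive conditions under the localization and quotient of the second paragraph and under the valuation-theoretic extension of the third — in particular the short-exact-sequence argument for $\omega$-increasingness — since everything else is the classical composition-of-valuations bookkeeping, now merely carried out $\sigma$-equivariantly; I do not expect any serious obstacle.
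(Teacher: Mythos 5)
Your proposal is correct and follows essentially the same route as the paper's proof: the equivalence $\mathcal{C}_{1}\simeq\mathcal{C}_{3}$ via localization at $\mathfrak{p}$ / quotient by $\mathfrak{p}$ in one direction and the pullback $\mathcal{O}_{20}=\pi^{-1}(\mathcal{O}_{10})=\mathcal{O}_{21}\times_{K_{1}}\mathcal{O}_{10}$ in the other, with $\mathcal{C}_{1}\cong\mathcal{C}_{2}$ coming from the $\sigma$-equivariant bijection $I\mapsto v(I)$. The main difference is that you spell out the verification that the $\omega$-increasing, perfect, and inversive conditions are preserved by these operations (including the short-exact-sequence argument for $\omega$-increasingness of $\Gamma_{20}$), which the paper leaves implicit.
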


\begin{proof}
Let $K_{20}=\left(K_{2},\mathcal{O}_{20}\right)$ be a model of $\VFA$
and let $\mathfrak{p}\in\Spec^{\sigma}\mathcal{O}_{20}$ be a transformally
prime ideal. Let $\mathcal{O}_{21}$ be the localization of $\mathcal{O}_{20}$
away from $\mathfrak{p}$ and let $\mathcal{O}_{10}$ be obtained
by factoring $\mathfrak{p}$ out. Since localizations and quotients
commute, the residue field of the local ring $\mathcal{O}_{21}$ is
canonically isomorphic to the field of fractions of $\mathcal{O}_{10}$;
if we let $K_{21}=\left(K_{2},\mathcal{O}_{21}\right)$ then the residue
field $K_{1}$ of $K_{21}$ is canonically expanded to a model $K_{10}=\left(K_{1},\mathcal{O}_{10}\right)$
of $\VFA$. This gives a functor $\mathcal{C}_{1}\to\mathcal{C}_{3}$. 

Given a pair $\left(K_{21},K_{10}\right)$ of models of $\VFA$,
let $\mathcal{O}_{20}=\mathcal{O}_{21}\times_{K_{1}}\mathcal{O}_{10}$
be the pullback of $\mathcal{O}_{10}$ under the residue map $\mathcal{O}_{21}\to K_{1}$.
Then $\mathcal{O}_{21}$ is the localization of $\mathcal{O}_{20}$
away from the kernel of the surjective map $\mathcal{O}_{20}\to\mathcal{O}_{10}$;
this recovers $\mathfrak{p}$ and gives a functor in the other direction,
which an equivalence between $\mathcal{C}_{1}$ and $\mathcal{C}_{3}$.
The equivalence of $\mathcal{C}_{2}$ and $\mathcal{C}_{1}$ is obvious,
so we are done.
\end{proof}
Let $\left(K_{21},K_{20},K_{10}\right)$ be a triplet of models of
$\VFA$, per the notation used in the proof of the Proposition
above. We then have a canonical short exact sequence of ordered transformal
modules
\[
0\to\Gamma_{10}\to\Gamma_{20}\to\Gamma_{21}\to0
\]
and the topological space $\Spec^{\sigma}\mathcal{O}_{20}$ is obtained
by gluing the closed point of $\Spec^{\sigma}\mathcal{O}_{21}$ to
the generic point of $\Spec^{\sigma}\mathcal{O}_{10}$.

\subsubsection{~}

Let $K$ be a model of $\VFA$. The \textit{transformal height}
of $K$ is the transformal height of the valuation group, and $K$
is said to be \textit{transformally Archimedean} if its valuation
group is. If $\left(K_{21},K_{20},K_{10}\right)$ is a triplet of
models of $\VFA$ then the transformal height of $K_{20}$ is
the sum of the transformal heights of $K_{21}$ and $K_{10}$. Furthermore,
the transformal height is bounded from above by the absolute transformal
transcendence degree of a model over its prime field.
\begin{prop}
\label{prop:t-arch-devissage}Let $\mathcal{C}$ be a class of models
of $\VFA$ which is closed under isomorphisms. Let us assume:

(1) The class $\mathcal{C}$ is closed under directed unions.

(2) Let $\left(K_{21},K_{20},K_{10}\right)$ be a triplet of models
of $\VFA$. If $K_{21}$ and $K_{10}$ lie in $\mathcal{C}$,
then $K_{20}$ lies in $\mathcal{C}$.

(3) Every model of $\VFA$ which is transformally Archimedean
lies in $\mathcal{C}$.

Then $\mathcal{C}$ is the class of \textbf{all} models of $\VFA$.
\end{prop}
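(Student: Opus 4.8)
The plan is a \emph{dévissage} by transformal height. The first move is to reduce to models of finite transformal height. Given an arbitrary model $K$ of $\VFA$, write it as the directed union of the perfect inversive hulls, formed inside $K$, of its finitely generated transformal valued subfields. By Lemma \ref{perfect-inversive-hull-transformal} each such hull is again a model of $\VFA$; being transformally algebraic over a finitely generated subfield it has finite transformal transcendence degree over the prime field, hence (by the bound of transformal height by absolute transformal transcendence degree recorded above) finite transformal height. These hulls form a directed family with union $K$, so by hypothesis $(1)$ it is enough to handle models of finite transformal height.

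Now argue by induction on the transformal height $h$ of $K$. If $h\le 1$ then $K$ is transformally Archimedean and lies in $\mathcal{C}$ by hypothesis $(3)$ (this also disposes of the trivially valued case $h=0$). Suppose $h\ge 2$. Since $\Gamma$ has only finitely many $\sigma$-invariant convex subgroups and these are linearly ordered by inclusion, there is a \emph{minimal} nonzero $\sigma$-invariant convex subgroup $V\subseteq\Gamma$. Via the bijection between $\sigma$-invariant convex subgroups of $\Gamma$ and transformally prime ideals of $\mathcal{O}$, together with the composition-of-valuations correspondence, $V$ produces a triplet $(K_{21},K_{20},K_{10})$ of models of $\VFA$ with $K_{20}=K$, $\Gamma_{10}=V$ and $\Gamma_{21}=\Gamma/V$, fitting into the short exact sequence $0\to\Gamma_{10}\to\Gamma_{20}\to\Gamma_{21}\to 0$. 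By minimality of $V$ the group $\Gamma_{10}$ has no proper nonzero $\sigma$-invariant convex subgroup, so $K_{10}$ is transformally Archimedean and lies in $\mathcal{C}$ by $(3)$. By additivity of transformal height in the above short exact sequence, $K_{21}$ has transformal height $h-1$, so $K_{21}\in\mathcal{C}$ by the induction hypothesis. Hypothesis $(2)$ then gives $K=K_{20}\in\mathcal{C}$, completing the induction; combined with the first paragraph this proves $\mathcal{C}$ is the class of all models of $\VFA$.

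The only points requiring care are routine: one must check that the perfect inversive hulls of finitely generated subfields are cofinal and directed and are genuine models of $\VFA$ of finite transformal height — all immediate from Lemma \ref{perfect-inversive-hull-transformal} and the height bound — and that a minimal nonzero $\sigma$-invariant convex subgroup exists, which is exactly where finiteness of $h$ is used. I do not expect a serious obstacle here; the one subtlety worth flagging is that $V$ must be chosen \emph{minimal} rather than an arbitrary proper nonzero $\sigma$-invariant convex subgroup, so that the bottom piece $K_{10}$ falls into the transformally Archimedean case covered by $(3)$ while the induction genuinely descends on the top piece $K_{21}$.
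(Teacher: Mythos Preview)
Your proof is correct and follows exactly the same d\'evissage as the paper: reduce to finite transformal height via directed unions, then induct on the height using the triplet decomposition. The paper's own proof is the single sentence ``By (1) we reduce to finite transformal height and by (2) and induction to the transformally Archimedean case, in which case (3) applies''; you have simply fleshed out the details.
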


\begin{proof}
By (1) we reduce to finite transformal height and by (2) and induction
to the transformally Archimedean case, in which case (3) applies.
\end{proof}

\subsection{The Algebraic Ramification Tower\label{subsec:The-Algebraic-Ramification}}

\subsubsection{~}

Let $K$ be a model of $\VFA$. We have seen in Proposition \ref{prop:lift-to-alg} that $K$ admits
a separable closure in the category of models of $\VFA$ and
we now look into the question of uniqueness. In the category of difference
fields, by the theorem of Babitt \ref{figalois}, if a separable closure
is not unique, then this is always witnessed by the existence of a
$\sigma$-invariant separable extension which is finite as a field
extension. This difficulty is absorbed into $\VFA$ through
the residue field and through the existence of totally ramified extensions
to which the valuation extends uniquely. The optimistic hope that
the same criterion can be applied in the category of transformal valued
fields turns out to be false, and a slight modification is needed.

First we need to introduce some definitions. Let $\nicefrac{L}{K}$
be an algebraic extension of abstract valued fields. We say that $L$
is \textit{$\text{h}$-finite} over $K$ if $L^{\text{h}}$ is a finite
extension of $K^{\text{h}}$; the extension $\nicefrac{L}{K}$ is {\em trivial} as a  $\text{h}$-finite  extension
 if it induces an isomorphism upon the passage to Henselian hull. We
say that $L$ is a \textit{Henselian} extension of $K$ if the integral
closure of $\mathcal{O}_{K}$ in $L$ coincides with $\mathcal{O}_{L}$,
which is to say that $L$ can be uniquely expanded to a valued field
extension of $K$. If $L$ is a $\text{h}$-finite extension of $K$
then $L$ factors canonically as a Henselian extension of a trivial
$\text{h}$-finite extension, but a factorization in the opposite
direction is not canonical and may not exist. This motivates:
\begin{defn}
Let $\nicefrac{L}{K}$ be a $\text{h}$-finite extension of models
of $\VFA$. We say that $L$ is \textit{split} over $K$ if
can be factored as a trivial $\text{h}$-finite extension of a $\sigma$-invariant
Henselian extension (which must then be finite \textit{on the nose}).
\end{defn}

The next example shows that not all $\text{h}$-finite extensions
are split. By Krasner's lemma, a very slight perturbation of an irreducible
separable polynomial over a Henselian valued field is still irreducible
and the splitting fields are isomorphic. It is therefore possible
for a collection of irreducible separable polynomials to determine
linearly disjoint Galois extensions, each individually totally ramified,
and such that nevertheless all these polynomials simultaneously split
in every Henselian extension.

\begin{example}
\label{h-finite-example}Begin with the field $K=\mathbf{F}_{p}\left(x\right)_{\sigma}$
where $vx<0$. We identify the valuation group of $K$ with $\mathbf{Z}\left[\sigma\right]$
where $vx=-1$. Let $L$ be the Galois difference field extension
of $K$ obtained by splitting the polynomials $X^{p}-X-x^{\sigma^{n}}$
for all $n\in\mathbf{N}$. It is purely wildly ramified over $K$,
and the splitting fields of these polynomials are linearly disjoint
over $K$. Fix an element $t$ of the completion $\widehat{K}$ of
$K$ which is transformally transcendental over $K$ and whose valuation
is strictly positive, and let $\widetilde{K}=K\left(y\right)_{\sigma}$
where $y^{p}-y=x^{\sigma}-x+t$. Then $\widetilde{K}$ is an immediate,
purely transformally transcendental, regular extension of $K$. 

We claim that $\widetilde{L}=L\otimes_{K}\widetilde{K}$ is an $\text{h}$-finite
extension of $\widetilde{K}$. If this is indeed the case, then $\widetilde{L}$
cannot possibly split over $\widetilde{K}$, since every finite $\sigma$-invariant
extension of $\widetilde{K}$ descends to the prime field and is unramified,
whereas splitting the polynomial $X^{p}-X-x$ must enlarge the valuation
group: the ramification group of $\widetilde{L}$ over $\widetilde{K}$
is nontrivial.

It remains to be demonstrated that $\widetilde{L}$ is indeed $\text{h}$-finite
over $\widetilde{K}$. Over an algebraically Henselian valued field,
every element at strictly positive valuative distance from an Artin-Schreier
element is itself an Artin-Schreier element; since $x^{\sigma}-x+t$
is evidently an Artin-Schreier element in $\widetilde{K}$, it follows
that $x^{\sigma}-x$ must also be, and likewise inductively the elements
$x^{\sigma^{n+1}}-x^{\sigma^{n}}$ for all $n\in\mathbf{N}$. The
polynomials $X^{p}-X-x^{\sigma^{n}}$ therefore \textit{simultaneously}
split in every Henselian extension of $\widetilde{K}$, so $\widetilde{L}$
is $\text{h}$-finite over $\widetilde{K}$.
\end{example}

\begin{example}
\label{example-h-finite-2}
Let $K$ be an algebraically closed and nontrivially valued model
of $\VFA$. Let $a$ be a new element of the fixed field, algebraically
transcendental over $K$, and let $b$ be an element with $vc>\gamma$
for all $\gamma\in\Gamma$. Finally let $c=a+b$ and let $L$ be the
model of $\VFA$ generated over $K$ by the element $c$. Since $c$ is
transformally transcendental over $K$, the field $L$ has no nontrivial
finite $\sigma$-invariant Galois extensions. On the other hand, the
residue field of $l$ of $L$ has plenty of nontrivial finite
$\sigma$-invariant Galois extensions; for example, the splitting
field of $X^{m}-a$ for $m$ coprime to $p$; and these extensions all
lift to finite $\sigma$-invariant Galois extensions of the algebraic
Henselization $L^{h}$ of $L$.
\end{example}
\begin{cor}
\label{cor-nonsplit-h-finite}
Let $K$ be a model of $\VFA$. Then it may be that $K$ has
no nontrivial finite $\sigma$-invariant separable extensions, while
its algebraic Henselization does.
\end{cor}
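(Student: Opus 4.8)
This is exactly what Example \ref{example-h-finite-2} is designed to witness, and the plan is to verify its two assertions. Begin with an algebraically closed, nontrivially valued model $K$ of $\VFA$, with value group $\Gamma$ and residue field $k$. First adjoin a $\sigma$-fixed element $a$ that is field-theoretically transcendental over $K$; since an $\omega$-increasing value group has no nonzero $\sigma$-fixed point, necessarily $v(a)=0$, so $a$ has residue class $\bar a$ transcendental over $k$ with $\bar a^{\sigma}=\bar a$. Next adjoin $b$, transformally transcendental over $K(a)_{\sigma}$, carrying a value $v(b)$ that exceeds every element of $\Gamma$ — equivalently, such that the $\sigma$-orbit of $v(b)$ freely generates a new convex component of the value group lying strictly above $\Gamma$, so that the residue field does not grow (a Hahn-series style construction). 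Set $c=a+b$, so $v(c)=0$ and $\bar c=\bar a$, and let $L$ be the model of $\VFA$ generated by $c$ over $K$. It is $L$, playing the role of the ``$K$'' in the statement, for which one checks that $L$ has no nontrivial finite $\sigma$-invariant separable extension while $L^{\mathrm h}$ does.

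First I would verify that $c$ is transformally transcendental over $K$. A polynomial relation among $c,c^{\sigma},\dots,c^{\sigma^{N}}$ over $K$ becomes, upon substituting $c^{\sigma^{i}}=a+b^{\sigma^{i}}$, a polynomial relation among $a,b,b^{\sigma},\dots,b^{\sigma^{N}}$; these are algebraically independent over $K$ by construction, and specializing $a\mapsto0$ forces the original polynomial to vanish. Hence, as a difference field, $L=K(c)_{\sigma}$, and Fact \ref{fact-ttranscendental-nofing} (applicable since $K$, being algebraically closed, is an inversive algebraically closed difference field) shows that $K(c)_{\sigma}$ has no nontrivial finite $\sigma$-invariant Galois extension. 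Passing to the Galois closure of an arbitrary finite $\sigma$-invariant separable extension, this gives the first assertion: $L$ has no nontrivial finite $\sigma$-invariant separable extension, whether one works in the category of difference fields or in that of models of $\VFA$ (the valuation on a trivial field extension being unique).

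For the second assertion I would first pin down the residue field. Since $L$ sits inside $K(a)_{\sigma}(b)_{\sigma}$, whose residue field is $k(\bar a)^{p^{-\infty}}$ — the $b^{\sigma^{i}}$ contributing only new, infinitely large values — and which contains $\bar a=\bar c$, one gets $k_{L}=k(\bar a)^{p^{-\infty}}$, a transformally algebraic extension of $k$ that is nonetheless a proper one, as predicted by the transformal Abhyankar inequality (Lemma \ref{lem-transformally-Abhyankar}) since $v(b)\notin\Gamma\otimes\mathbf{Q}(\sigma)$ consumes the entire transformal transcendence degree of $L/K$. In particular, for any prime $\ell\neq p$ the element $\bar a$ is not an $\ell$-th power in $k_{L}$. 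Now $L^{\mathrm h}$ is an immediate extension of $L$, hence a model of $\VFA$ with residue field $k_{L}$, and it contains the $\ell$-th roots of unity, which already lie in $K$. The polynomial $X^{\ell}-c$ is then irreducible over $L^{\mathrm h}$: it is tame ($\ell\neq p$), $v(c)=0$, and $\bar c=\bar a\notin k_{L}^{\ell}$. So $M=L^{\mathrm h}\bigl(c^{1/\ell}\bigr)$ is a nontrivial cyclic Galois extension of degree $\ell$, and it is $\sigma$-invariant: from $v(c^{\sigma}-c)=v(b)>0=v(c)$ we get $c^{\sigma}/c\equiv1\pmod{\mathcal{M}}$, so Hensel's lemma furnishes $u\in L^{\mathrm h}$ with $u^{\ell}=c^{\sigma}/c$, whence $\bigl(u\,c^{1/\ell}\bigr)^{\ell}=c^{\sigma}$, so $X^{\ell}-c^{\sigma}$ splits in $M$ and $\sigma$ carries $M$ into, hence onto, itself. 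Thus $L^{\mathrm h}$ carries a nontrivial finite $\sigma$-invariant (unramified, cyclic) Galois extension, completing the argument.

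The delicate point — really the whole content — is the tension this construction resolves: one needs the residue field to genuinely enlarge, which is what manufactures the $\sigma$-invariant extension upstairs of $L^{\mathrm h}$, while keeping the difference field underlying $L$ transformally transcendental over $K$, which is what annihilates every $\sigma$-invariant extension of $L$ itself. Once the example is arranged so that the ``$b$'' part feeds only the value group and the ``$a$'' part feeds only the residue field, the remaining verifications are transcendence bookkeeping and a standard Hensel/tame-ramification computation.
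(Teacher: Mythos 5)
Your argument is a careful unpacking of Example~\ref{example-h-finite-2}, which is exactly what the paper's proof cites, so the approach matches; the residue-field computation, the Kummer-irreducibility of $X^{\ell}-c$, and the Hensel argument for $\sigma$-invariance are all correct. One small imprecision: from ``$a$ transcendental over $K$ and $v(a)=0$'' it does not \emph{follow} that $\bar a$ is transcendental over $k$ — rather this is part of the choice one makes in specifying the valuation on $K(a)$ (the generic type in $\mathcal{O}$), and the rest of your argument depends on having made that choice.
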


\begin{proof}
See Example \ref{h-finite-example} and Example \ref{example-h-finite-2}.
\end{proof}

The uniqueness critierion for a lift of $\sigma$ to the algebraic closure can then be formulated as follows:
\begin{prop}
\label{criterion-unique-algclosure}Let $K$ be a model of $\VFA$
and $\widetilde{K}$ an abstract algebraic closure of $K$ in the
category of fields. Then the following are equivalent:

(1) Fix $0<n\in\mathbf{N}$; then the action of the absolute Galois
group $G_{K}$ of $K$ on the set of expansions of $\widetilde{K}$
to a model of $\VFA$ over $\left(K,\sigma^{n}\right)$ is transitive.

(2) The field $K$ has no nontrivial $\text{h}$-finite $\sigma$-invariant
separable extensions.
\end{prop}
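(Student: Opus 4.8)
The plan is to reduce the statement to its purely difference-algebraic shadow, namely Fact \ref{figalois} applied to the algebraic Henselization $K^{\text h}$ regarded as a difference field (it is perfect and inversive, being an immediate separable algebraic extension of the model $K$ of $\VFA$, and $\sigma$ extends uniquely to it). To do this I set up two dictionaries. \emph{First dictionary}: up to the action of $G_K=\Aut(\widetilde K/K)$, expansions of $\widetilde K$ to a model of $\VFA$ over $(K,\sigma^n)$ correspond to field-theoretic lifts of $\sigma^n$ from $K^{\text h}$ to $\widetilde K$. \emph{Second dictionary}: nontrivial $\text h$-finite $\sigma$-invariant separable extensions of $K$ correspond to nontrivial finite $\sigma$-invariant Galois extensions of $K^{\text h}$. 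Granting both, the equivalence of (1) and (2) is exactly the equivalence of the two conditions in Fact \ref{figalois} for the difference field $K^{\text h}$.

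For the first dictionary, note that an expansion of $\widetilde K$ is the datum of a valuation $w$ on $\widetilde K$ prolonging $v_K$ together with a field automorphism $\tau$ of $\widetilde K$ lifting $\sigma^n$ and stabilizing $\mathcal O_w$; the remaining axioms ($\omega$-increasing, perfect, inversive) are automatic since $\widetilde K$ is algebraically closed and $\widetilde K/K$ is algebraic, so there is no valuation-group adjunction. Since $\widetilde K/K$ is normal, $G_K$ acts transitively on the prolongations of $v_K$, so after translating by $G_K$ we may assume $w$ is a fixed prolongation; its stabilizer in $G_K$ is the corresponding decomposition group, which is $\Aut(\widetilde K/K^{\text h})$ for the copy of $K^{\text h}$ inside $(\widetilde K,w)$. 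Any automorphism of $(\widetilde K,w)$ lifting $\sigma^n$ carries that $K^{\text h}$ to itself (it fixes $K$ setwise and preserves $\mathcal O_w$) and there restricts to the unique prolongation of $\sigma^n$ to the Henselization; conversely, any field automorphism of $\widetilde K$ restricting to that prolongation on $K^{\text h}$ automatically preserves $w$, because $K^{\text h}$ is Henselian and $\widetilde K/K^{\text h}$ is algebraic. This identifies the expansions with the chosen $w$, equivariantly for the decomposition group, with the lifts of $\sigma^n|_{K^{\text h}}$ to $\widetilde K$ — precisely the objects counted by Fact \ref{figalois}(2).

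For the second dictionary, one direction is routine: if $L/K$ is $\text h$-finite, $\sigma$-invariant and separable, replace it by its Galois hull (still $\text h$-finite, $\sigma$-invariant and separable, since the conjugates are permuted), and then $L^{\text h}=L\otimes_K K^{\text h}$ is finite, separable, and Galois over $K^{\text h}$, and nontrivial exactly when $L$ is nontrivial as an $\text h$-finite extension; $\sigma$-invariance transports because $\sigma$ extends to $K^{\text h}$, and after composing with a suitable element of $G_K$ an extension of $\sigma$ to $\overline K$ can be arranged to stabilize both $L$ and $K^{\text h}$, whence it stabilizes $L^{\text h}$. The converse is the delicate step and the main obstacle: given a nontrivial finite $\sigma$-invariant Galois extension $M$ of $K^{\text h}$, I must descend it to $K$. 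Choose an extension $\bar\sigma$ of the canonical automorphism of $K^{\text h}$ to $\overline K$; since $M/K^{\text h}$ is $\sigma$-invariant and normal, $\bar\sigma(M)=M$, so $M$ acquires a difference-field structure over $K$ refining the one over $K^{\text h}$. Pick field generators $\alpha_1,\dots,\alpha_r$ of $M$ over $K^{\text h}$; they lie in $K^{\text{sep}}$, are separable over $K$, and $L:=K\bigl(\bar\sigma^{\,j}\alpha_i : 1\le i\le r,\ j\in\mathbf Z\bigr)$ is a $\bar\sigma$-stable — hence $\sigma$-invariant — separable algebraic extension of $K$ contained in $M$. Its Henselization is $L\cdot K^{\text h}$, which lies between $K^{\text h}(\alpha_1,\dots,\alpha_r)=M$ and $M$, hence equals $M$; so $L$ is $\text h$-finite over $K$ and nontrivial as such. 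Combining the two dictionaries with Fact \ref{figalois} completes the proof.
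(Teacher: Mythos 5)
Your proof is correct and is essentially the paper's own (much terser) argument made explicit: reduce to Fact~\ref{figalois} for the algebraic Henselization $K^{\text h}$, using exactly the two translations you call dictionaries — $\VFA$-expansions of $\widetilde K$ over $(K,\sigma^n)$ modulo $G_K$ versus difference-field lifts of $\sigma^n$ from $K^{\text h}$ to $\widetilde K$ modulo $G_{K^{\text h}}$, and $\text h$-finite $\sigma$-invariant separable extensions of $K$ versus finite $\sigma$-invariant Galois extensions of $K^{\text h}$.

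One small step as written is wrong and should be rerouted. You claim that the Galois hull of an $\text h$-finite extension of $K$ is again $\text h$-finite, ``since the conjugates are permuted.'' That justifies $\sigma$-invariance, but not $\text h$-finiteness, and in fact the claim is false: one can build an infinite separable $L/K$ with $L^{\text h}=K^{\text h}$ (adjoin, for each $i$, one root $\alpha_i\in K^{\text h}$ of an irreducible polynomial whose full splitting field has large local degree at the chosen place) whose Galois hull $\bar L$ has $\bar L^{\text h}/K^{\text h}$ infinite. The fix costs nothing and matches what the statement actually needs: do not take the Galois hull of $L$ over $K$ at all. Pass to $L^{\text h}$ first — this is a nontrivial finite separable $\sigma$-invariant extension of $K^{\text h}$ — and then take its Galois hull over $K^{\text h}$, which is finite (since $L^{\text h}/K^{\text h}$ is), separable, nontrivial, and $\sigma$-invariant (any lift $\bar\sigma$ of $\sigma$ from $K^{\text h}$ to $\widetilde K$ sends $L^{\text h}$ to a $K^{\text h}$-conjugate, hence preserves the Galois hull). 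That is the finite $\sigma$-invariant Galois extension to which Fact~\ref{figalois} applies. The rest of both dictionaries, including the delicate descent step in the converse direction, is sound as you wrote it.
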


\begin{proof}
Suppose first that $K$ is algebraically Henselian. Then the category
of models of $\VFA$ algebraic over $K$ is canonically equivalent
to the category of inversive difference fields algebraic over $K$,
since the valuation extends uniquely to every algebraic extension
and is again $\omega$-increasing. In this situation, the result follows
from \ref{figalois}. Now (2) merely states that $K^{\text{h}}$,
when equipped with its unique extension of the automorphism, has no
nontrivial finite $\sigma$-invariant separable extensions; since
$K^{\text{h}}$ is unique up to a unique isomorphism over $K$ already
as a valued field, the claim follows.
\end{proof}

Let $\widetilde{\VFA}$ be the model companion of $\VFA$. Using Corollary \ref{cor-nonsplit-h-finite}, we deduce:
\begin{cor}
\label{cor-no-qe-in-ACFA-functions}
Let $\mathcal{L}$ be the expansion of the language of $\VFA$ where basic formulas are of the form $\phi \wedge \theta$ with $\phi$ a quantifier free in the language of $\VFA$ and $\theta$ an arbitrary formula in the language of difference fields; then $\widetilde{\VFA}$ fails to eliminate quantifiers in the language $\mathcal{L}$.
\end{cor}

\begin{proof}
Assuming this quantifier elimination statement, unwinding definitions, we obtain: let $K$ be a model of $\VFA$ and $K_0$ its underlying difference field; if the category of difference fields admits the amalgamation property over $K_0$, then the category of models of $\VFA$ admits the amalgamation property over $K$. But for $K$ is as in Corollary \ref{cor-nonsplit-h-finite} (after possibly replacing $\sigma$ by $\sigma^n$), this is impossible.
\end{proof}

\subsubsection{~}
\begin{lem}\label{lem:unramified-classification}
Let $K$ be a model of $\VFA$ with residue field $k$.

(1) Let $k\hookrightarrow k^{\text{sep}}$ be a difference field extension
with underlying field a separable closure of $k$. Then there exists
an algebraically strictly Henselian model $K^{\text{sh}}$ of $\VFA$
reproducing the embedding $k\hookrightarrow k^{\text{sep}}$ residually,
and enjoying the following universal property. Let $L$ be an algebraically
Henselian model of $\VFA$ over $K$. Then every embedding of
$k^{\text{sep}}$ in $l$ over $k$ can be lifted to an embedding
of $K^{\text{sh}}$ in $L$ over $K$, and the lifting is unique.

(2) Let us assume that $k$ has no nontrivial finite $\sigma$-invariant
Galois extensions. Then $K$ admits a unique algebraic strict Henselization
in the category of models of $\widetilde{\VFA}$, up to a (generally
nonunique) isomorphism.
\end{lem}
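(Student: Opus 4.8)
The plan is to build $K^{\text{sh}}$ out of the classical strict Henselization of valued fields recalled in \ref{subsec-ramificationtheory} and then to transport the difference structure along its universal property. First I would take $K^{\text{sh}}$ to be an algebraic strict Henselization of the valued field $K$ reproducing $k\hookrightarrow k^{\text{sep}}$ residually; it is a separably algebraic, unramified extension, so $\Gamma_{K^{\text{sh}}}=\Gamma_K$ and $K^{\text{sh}}$ is again perfect, a separably algebraic extension of a perfect field being perfect. To equip it with $\sigma$ I would feed the classical universal property the valued field embedding $K\xrightarrow{\ \sigma\ }K\hookrightarrow K^{\text{sh}}$, whose residue map is $k\xrightarrow{\ \sigma\ }k\hookrightarrow k^{\text{sep}}$, together with the chosen lift $\sigma\colon k^{\text{sep}}\to k^{\text{sep}}$ of $\sigma|_k$; this yields a unique valued field embedding $\sigma^{\text{sh}}\colon K^{\text{sh}}\to K^{\text{sh}}$ which extends $\sigma$ on $K$ and induces $\sigma$ on $k^{\text{sep}}$, and which is $\omega$-increasing because it is so on $\Gamma_K=\Gamma_{K^{\text{sh}}}$.

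The step I expect to be the main obstacle is checking that $\sigma^{\text{sh}}$ is onto, so that $(K^{\text{sh}},\sigma^{\text{sh}})$ is genuinely a model of $\VFA$. Here I would note that the image of $\sigma^{\text{sh}}$ is a Henselian valued subfield of $K^{\text{sh}}$ which contains $\sigma(K)=K$ (as $K$ is inversive) and whose residue field is $\sigma(k^{\text{sep}})=k^{\text{sep}}$ (any lift of $\sigma|_k$ to $k^{\text{sep}}$ is automatically surjective, $k^{\text{sep}}$ being the unique separable closure); it is therefore an unramified extension of $K$ with separably closed residue field, hence equals all of $K^{\text{sh}}$ by the maximality of the strict Henselization among such extensions. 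The universal property in (1) is then routine: for $L$ an algebraically Henselian model of $\VFA$ over $K$ and a difference embedding $j\colon k^{\text{sep}}\hookrightarrow l$ over $k$, the classical universal property lifts $j$ to a unique valued field embedding $\Psi\colon K^{\text{sh}}\hookrightarrow L$ over $K$; that $\Psi$ commutes with $\sigma$ follows because $\Psi\circ\sigma^{\text{sh}}$ and $\sigma_L\circ\Psi$ are valued field embeddings $K^{\text{sh}}\to L$ restricting to the same embedding of $K$ and inducing the same residue map $j\circ\sigma=\sigma_l\circ j$, hence coinciding by uniqueness; and uniqueness of $\Psi$ as a difference embedding is inherited from uniqueness as a valued field embedding.

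For (2) I would first observe that any model $K'$ of $\VFA$ over $K$ which is algebraic over $K$ and algebraically strictly Henselian is, by the uniqueness of the lift $\sigma^{\text{sh}}$ just established, the object $K^{\text{sh}}$ built above for the residue datum $k\hookrightarrow k_{K'}$, where $k_{K'}$ carries the difference structure induced from $K'$; in particular it enjoys the universal property of (1). Given two such, $K'$ and $K''$, the hypothesis that $k$ has no nontrivial finite $\sigma$-invariant Galois extension is, by Fact \ref{figalois} applied with $n=1$, exactly the assertion that all lifts of $\sigma$ to the algebraic closure of $k$ are isomorphic over $k$; so there is a difference isomorphism $\theta\colon k_{K'}\to k_{K''}$ over $k$. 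Lifting $\theta$ and $\theta^{-1}$ through the universal property of (1) produces difference embeddings $K'\to K''$ and $K''\to K'$ over $K$ whose composites lift the identity on residue fields and are therefore the identity; hence $K'\cong K''$ over $K$, the isomorphism being as non-canonical as the choice of $\theta$, which accounts for the parenthetical remark.
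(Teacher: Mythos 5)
Your proof is correct and follows essentially the same route as the paper's: take the classical strict Henselization, transport $\sigma$ along its universal property (applied to $i\circ\sigma$), and deduce (2) from Fact \ref{figalois}. In fact you are slightly more careful than the paper in one place: you explicitly verify that $\sigma^{\text{sh}}$ is surjective (so that $K^{\text{sh}}$ is inversive, as the theory $\VFA$ demands), via the observation that $\sigma^{\text{sh}}(K^{\text{sh}})$ is a Henselian unramified extension of $K$ inside $K^{\text{sh}}$ with residue field $\sigma(k^{\text{sep}})=k^{\text{sep}}$, and hence coincides with $K^{\text{sh}}$; the paper only states that $\sigma$ lifts to an ``endomorphism'' and leaves inversiveness implicit.
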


\begin{proof}
(1) Let $k\hookrightarrow k^{\text{sep}}$ be a difference field extension
with underlying field a separable closure of $k$. Let $i:K\hookrightarrow K^{\text{sh}}$
be an abstract strict Henselization of $K$ reproducing the embedding
$k\hookrightarrow k^{\text{sep}}$ at the level of residue fields;
it is characterized by the universal property that an embedding of
$K^{\text{sh}}$ in a strictly Henselian abstract valued field extension
$L$ of $K$ is equivalent to the data of an embedding of $k^{\text{sep}}$
in $l$ over $k$. The map $i\circ\sigma:K\hookrightarrow K^{\text{sh}}$
is an embedding of $K$ in an abstract strictly Henselian valued field;
the universal property implies that the endomorphism of $k^{\text{sep}}$
over $k$ can be uniquely lifted to an endomorphism of $K^{\text{sh}}$.
The verification that $K^{\text{sh}}$ obeys the universal property
is immediate.

(2) By Fact \ref{figalois} the difference field $k$ admits a unique
separable closure in the category of difference fields up to isomorphism.
So the claim follows from (1).
\end{proof}

\subsubsection{~}
\begin{lem}\label{lem:tamely-ramified-classification}
Let $K$ be an algebraically strictly Henselian model of $\VFA$
and let $m$ be a natural number invertible in $\mathbf{F}_{p}$.

(1) The valuation map induces an isomorphism $K^{\times}\otimes\nicefrac{\mathbf{Z}}{m}=\Gamma\otimes\nicefrac{\mathbf{Z}}{m}$
of transformal modules.

(2) The finite $\sigma$-invariant abelian Galois extensions of $K$
of exponent dividing $m$ are classified by the finite transformal
submodules of $\Gamma\otimes\nicefrac{\mathbf{Z}}{m}$
\end{lem}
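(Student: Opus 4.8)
The plan is to reduce the statement to classical Kummer theory over the strictly Henselian valued field $K$, carrying the $\sigma$-action along at every step. First I would extract the two inputs that come from strict Henselianity together with $\gcd(m,p)=1$: (i) since the residue field $k$ is separably closed and $X^{m}-\bar u$ is separable for $\bar u\in k^{\times}$, Hensel's lemma shows that every unit of $\mathcal{O}$ is an $m$-th power, so $\mathcal{O}^{\times}$ is $m$-divisible; (ii) in particular $\mu_{m}\subset\mathcal{O}^{\times}\subseteq K$ with $|\mu_{m}(K)|=m$, so full Kummer theory is available over $K$.

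For part (1): the valuation sits in a short exact sequence $1\to\mathcal{O}^{\times}\to K^{\times}\xrightarrow{v}\Gamma\to0$ of modules over $R=\mathbf{Z}[\sigma^{\pm1},p^{\pm1}]$ — on $K^{\times}$ the Frobenius $\phi$ acts as the $p$-th power map, which is invertible because $K$ is perfect, and $v$ is $R$-linear. Tensoring with $\mathbf{Z}/m$ is right exact, and $\mathcal{O}^{\times}\otimes\mathbf{Z}/m=0$ by (i), so $v$ induces an isomorphism $K^{\times}\otimes\mathbf{Z}/m\xrightarrow{\ \sim\ }\Gamma\otimes\mathbf{Z}/m$; being $v$-induced it is an isomorphism of transformal modules. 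Note that on these $m$-torsion modules $p$ already acts invertibly, so a transformal submodule is just a $\sigma$-invariant subgroup, and $\sigma$ acts there as an automorphism because it is an automorphism of $\Gamma$.

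For part (2): because $\mu_{m}\subset K$, Kummer theory gives an inclusion-preserving bijection between subgroups $\Delta$ with $(K^{\times})^{m}\subseteq\Delta\subseteq K^{\times}$ and abelian Galois extensions $L=K(\Delta^{1/m})$ of exponent dividing $m$, with $[L:K]=[\Delta:(K^{\times})^{m}]$; so the finite subgroups of $K^{\times}\otimes\mathbf{Z}/m$ classify the finite abelian extensions of exponent dividing $m$. It remains to identify the $\sigma$-invariant ones. I would fix any extension $\tilde\sigma$ of $\sigma$ to $K^{\mathrm{sep}}$; for $L/K$ finite Galois the image $\tilde\sigma(L)$ is independent of the choice of $\tilde\sigma$ (two lifts differ by an element of $G_{K}$, which fixes $L$ setwise), and $\tilde\sigma(L)$ corresponds to the subgroup $\sigma(\Delta)(K^{\times})^{m}$. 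Hence $L$ is $\sigma$-stable as a set precisely when the image of $\Delta$ in $K^{\times}\otimes\mathbf{Z}/m$ is $\sigma$-invariant; and a finite normal extension that is $\sigma$-stable as a set is, by the discussion of $\sigma$-invariant normal extensions in Section~\ref{sec:Difference-Algebra}, automatically an inversive difference field extension of $K$. Transporting along the isomorphism of part (1) then matches the finite $\sigma$-invariant abelian Galois extensions of exponent dividing $m$ with the finite transformal submodules of $\Gamma\otimes\mathbf{Z}/m$, as desired.

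The only delicate point is this last transfer of $\sigma$-invariance: one must verify that $\tilde\sigma(L)$ does not depend on the chosen lift of $\sigma$, and that, for a finite normal extension, being $\sigma$-stable as a set is the same as being a difference field extension of $K$. Both are elementary given that $L/K$ is Galois and $\sigma\in\Aut(K)$, but they are exactly where the argument would break if handled carelessly. Everything else — the Hensel-lemma divisibility of $\mathcal{O}^{\times}$, right-exactness of $-\otimes\mathbf{Z}/m$, and the classical Kummer correspondence — is routine.
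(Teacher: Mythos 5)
Your proof is correct and follows essentially the same route as the paper: part (1) via Hensel's lemma giving $m$-divisibility of $\mathcal{O}^{\times}$ plus tensoring the short exact sequence with $\mathbf{Z}/m$, and part (2) via Kummer theory over $K$ (which has the $m$-th roots of unity) with $\sigma$-invariance transported through the Kummer correspondence. Your extra care in checking that $\tilde\sigma(L)$ is well-defined and that set-stability equals difference-field invariance merely fills in a step the paper dispatches as "clear"; there is no substantive divergence.
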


\begin{proof}
(1) Since $k$ is separably closed and $m$ is invertible in $k$
we have $k^{\times}=\left(k^{\times}\right)^{m}$, so $\mathcal{O}_{K}^{\times}=\left(\mathcal{O}_{K}^{\times}\right)^{m}$
by Hensel's lemma. It follows that $\mathcal{O}_{K}^{\times}\otimes\nicefrac{\mathbf{Z}}{m}=0$,
and we conclude by tensoring the short exact sequence $0\to\mathcal{O}_{K}^{\times}\to K^{\times}\to\Gamma\to0$
with $\nicefrac{\mathbf{Z}}{m}$. 

(2) Since $K$ is algebraically strictly Henselian and $m$ is invertible
in $K$, the field $K$ has all $m$-th roots of unity. Let $\iota:K^{\times}\to K^{\times}$
denote the raising to the $m$-th power map; it is an endomorphism
of abelian groups. By Kummer theory, if $\Delta\subset K^{\times}\otimes\nicefrac{\mathbf{Z}}{m}$
is a subgroup then the algebraic extension of $K$ obtained by adjoining
all $m$-th roots to the elements of $\iota^{-1}\left(\Delta\right)$
is a Galois extension of exponent dividing $m$; distinct subgroups
give rise to distinct Galois extensions; all abelian Galois extensions
of exponent dividing $m$ arise in this way; and the degree of the
extension is the cardinal of the subgroup. It is clear that under
this bijection, the $\sigma$-invariant extensions correspond to $\sigma$-invariant
subgroups, so the result follows from (1).
\end{proof}

\begin{rem}
    \label{rem:tame-classification}
    Let us assume that $K$ is algebraically Henselian (but not necessarily strictly Henselian), and that for all $m$ invertible in $\mathbf{F}_p$, the module $\Gamma \otimes \nicefrac{\mathbf{Z}}{m}$ has no nontrivial finite transformal submodules. Let $L$ be an algebraic strict Henselization of $K$ in the category of models of $\VFA$. The choice of $L$ is not in general unique, even up to isomorphism, but regardless of this choice, it has the same value group as $K$, and (by Lemma \ref{lem:tamely-ramified-classification}) has no nontrivial finite $\sigma$-invariant Galois extensions which are tamely ramified. It follows that $K$ has no nontrivial finite $\sigma$-invariant Galois extensions which are tamely but totally ramified (since such extensions are by definition linearly disjoint to $L$ over $K$).
\end{rem}

\subsubsection{~}

Let $K$ be a model of $\VFA$ which is algebraically tamely
closed and let $\wp x=x^{p}-x$ denote the algebraic Artin-Schreier
operator. By \ref{artin-schrier-ideal-map} we have a canonical map
\[
I:\mathbf{P}\left(\text{coker}\wp\right)\to\Spec\Gamma_{\infty}
\]
from the projective space of one dimensional subspaces of the coset
space $\text{coker}\wp$ to prime ideals of $\Gamma_{\infty}$. There
is an action of $\sigma$ on the right hand side, whose fixed points
are precisely the transformally prime ideals of $\mathcal{O}$. Since
$K$ is inversive, the elements $a$ and $a^{\sigma}$ both admit an Artin-Schreier root in $K$ or neither do, so the induced action of $\sigma$
on $\text{coker}\wp$ is injective and we obtain an action of $\sigma$
on the projectivization $\mathbf{P}\left(\text{coker}\wp\right)$;
since everything is sight is completely functorial, we find that $I$ restricts
to a map between the fixed points of $\sigma$ on both sides, i.e, it carries $\sigma$-invariant subspaces to transformally prime ideals.

\begin{lem}
\label{artins}Let $K$ be a model of $\VFA$ whose underlying
valued field is tamely closed.

(1) The finite $\sigma$-invariant Galois extensions of $K$
of degree $p$ are classified by the fixed points of $\sigma$ in
its action on $\mathbf{P}\left(\text{coker}\wp\right)$.

(2) Let us assume that the Artin-Schreier ideals of $K$ all fail
to be transformally prime. Then $K$ admits a unique separable closure
in the category of models of $\VFA$, up to a (generally nonunique)
isomorphism.
\end{lem}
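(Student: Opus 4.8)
For (1), the plan is to combine the Artin--Schreier classification of \ref{subsec:artinschrierclassification} with its $\sigma$-equivariance. Since $K$ is tamely closed, every finite extension has $p$-power degree and is purely wildly ramified, so a Galois extension of degree $p$ is cyclic, abelian, and --- as $K$ is perfect --- an Artin--Schreier extension; these are in bijection with the points of $\mathbf{P}(\text{coker}\wp)$. It then remains to see that $L=K(\wp^{-1}a)$, corresponding to $\ell=\mathbf{F}_p\bar a$, is $\sigma$-invariant iff $\sigma\ell=\ell$. If $L$ is $\sigma$-invariant, the irreducible polynomial $X^p-X-a^\sigma$ must split in $L$; since $\sigma$ is injective on $\text{coker}\wp$ we have $\overline{a^\sigma}\ne 0$, so $K(\wp^{-1}a^\sigma)$ has degree $p$, hence equals $L$, giving $\sigma\ell=\ell$. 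Conversely $\sigma\ell=\ell$ means $a^\sigma\equiv\lambda a\pmod{\wp K}$ with $\lambda\in\mathbf{F}_p^\times$, so $\sigma(L)=L$; restricting $\sigma$ expands $L$ to a difference field, which is inversive because $K\subseteq L^\sigma\subseteq L$ and $[L:K]=p$ is prime. This part is routine.

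For (2), since $K$ is perfect, $K^{\text{sep}}=K^{\text{a}}$, so a separable closure in the category of models of $\VFA$ is an expansion of an algebraic closure to a model of $\VFA$ over $K$, and by Proposition \ref{criterion-unique-algclosure} it suffices to show $K$ has no nontrivial $\text{h}$-finite $\sigma$-invariant separable extension. As $K$ is tamely closed it is Henselian, hence $K=K^{\text{h}}$ and ``$\text{h}$-finite'' just means ``finite''. I would argue by contradiction: suppose $L/K$ is such an extension, nontrivial and finite; replacing it by its Galois closure (still $\sigma$-invariant and finite) I may assume $L/K$ is Galois with group $G$ a nontrivial finite $p$-group. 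Then the Frattini subgroup $\Phi(G)$ is characteristic, hence stable under the conjugation action of $\sigma$ on $G$, so $M=L^{\Phi(G)}$ is a nontrivial $\sigma$-invariant elementary abelian $p$-extension of $K$; by \ref{subsec:artinschrierclassification} it corresponds to a nonzero \emph{finite-dimensional} $\sigma$-invariant subspace $V\subseteq\text{coker}\wp$.

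The key step is to rule out such a $V$. Take any line $\ell\subseteq V$: its $\sigma$-orbit lies in the finite set $\mathbf{P}(V)$, so $\sigma^m\ell=\ell$ for some $m\ge 1$, whence $\sigma^m I(\ell)=I(\ell)$ for the Artin--Schreier ideal $I(\ell)\in\Spec\Gamma_{\infty}$, the map $I$ being $\sigma$-equivariant. Since $\sigma$ is an automorphism of $\Gamma$, it acts on the chain $\Spec\Gamma_{\infty}$ of prime ideals by an order-preserving bijection, and a finite orbit inside a chain under an order-preserving map is pointwise fixed; hence $\sigma I(\ell)=I(\ell)$. By Lemma \ref{artin-schrier-cuts}(2), strict Henselianity of $K$ gives $0\notin I(\ell)$, so $I(\ell)$ is a $\sigma$-fixed proper prime ideal, i.e. transformally prime --- contradicting the hypothesis. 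Hence no such $L$ exists; Proposition \ref{criterion-unique-algclosure} then gives transitivity of the $G_K$-action on the expansions of an algebraic closure to a model of $\VFA$ over $K$, so any two are isomorphic over $K$, the isomorphism being in general non-canonical.

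The step I expect to be the main obstacle is exactly this contradiction. From (1) and the hypothesis one immediately gets that $\mathbf{P}(\text{coker}\wp)$ has no $\sigma$-fixed line, i.e. $K$ has no $\sigma$-invariant degree-$p$ extension; but a priori the Frattini quotient $V$ could be $\sigma$-irreducible of dimension $>1$ and contain no $\sigma$-fixed line, so one cannot simply reduce to the degree-$p$ case. The finite-orbit/finite-chain observation is what closes the gap: it shows that \emph{every} line of any $\sigma$-invariant finite-dimensional subspace of $\text{coker}\wp$ already yields a transformally prime Artin--Schreier ideal, so the hypothesis in fact forbids $\text{coker}\wp$ from having any nonzero $\sigma$-invariant finite-dimensional subspace whatsoever, which is what kills $L$.
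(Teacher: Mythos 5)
Your proof is correct and follows essentially the same route as the paper's: part (1) is the same Artin--Schreier/functoriality argument, and part (2) reduces via Proposition \ref{criterion-unique-algclosure} and the $p$-group structure to an elementary abelian Galois extension, then uses the fact that a finite $\sigma$-orbit in the chain $\Spec\Gamma_{\infty}$ must be a fixed point, contradicting the hypothesis on Artin--Schreier ideals. The only cosmetic differences are that you pass to the Frattini quotient in one step where the paper first takes the center and then mods out by $pG$, and that you spell out the ``finite orbit in a chain is a singleton'' observation slightly more explicitly than the paper does.
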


\begin{proof}
(1) Let $G_K$ denote the absolute Galois group of $K$. By Artin-Schreier theory there is a canonical isomorphism $\text{Hom}\left(G_{K},\mathbf{F}_{p}\right)=\text{coker}\wp$
of discrete vector spaces over $\mathbf{F}_{p}$, so closed normal
subgroups of $G_{K}$ of index $p$ correspond to one dimensional
subspaces of $\text{coker}\wp$. By functoriality, under this correspondence,
the $\sigma$-invariant one dimensional subspaces correspond to $\sigma$-invariant
Galois extensions of degree $p$.

(2) The field $K$ is tamely closed and in particular Henselian, so
by Proposition \ref{criterion-unique-algclosure} it is enough to
prove that $K$ admits no nontrivial finite $\sigma$-invariant Galois
extensions. Arguing towards contradiction, fix a finite $\sigma$-invariant
Galois extension of $K$ with Galois group $G$. By the discussion of \ref{subsec-ramificationtheory},
the group $G$ is a $p$-group. The center of $G$ is nontrivial and
characteristic, hence $\sigma$-invariant; by passing to the corresponding
$\sigma$-invariant extension we may assume that $G$ is abelian.
The subgroup $pG$ of $G$ is nontrivial is again characteristic and
hence $\sigma$-invariant; so we may assume that $G$ is abelian of
exponent $p$, hence a direct sum of cyclic groups of order $p$.

Let $\mathcal{X}$ be the set of Artin-Schreier ideals of $K$. The
$\omega$-increasing assumption implies that the orbits of $\sigma$
in its action on $\mathcal{X}$ are either infinite or singletons.
Thus if no Artin-Schreier ideal is transformally prime, then no Artin-Schreier
ideal can have a finite orbit under the action of $\sigma$; by functoriality,
there is no Galois extension of $K$ of degree $p$ with finite orbit
under the action of $\sigma$, either.
\end{proof}

\begin{rem}
\label{rem:artin-classification}
    Let $K$ be an arbitrary algebraically Henselian model of $\VFA$. Then the map $I$ is still defined and (as the proof shows) in Lemma \ref{artins}, condition (1) continues to hold. Furthermore, in (2), one can at least conclude that $K$ has no nontrivial finite $\sigma$-invariant Galois extensions of degree a power of $p$. Note however that the Artin-Schreier ideal attached to an unramified Artin-Schreier extension is $\left[0, \infty\right]$, which is always transformally prime, and that a tame closure $L$ of $K$ might admit finite $\sigma$-invariant Artin-Schreier extensions which do not descend to an Artin-Schrier extension of $K$.
\end{rem}

\begin{rem}
\label{rem:degree-p-wildly-ramified}
Let $K$ be an abstract inversive difference field and assume that
all finite extensions of $K$ are of degree a power of $p$. If $x\in K$
and $n>1$ then it is possible that the splitting fields of the polynomials
$X^{p}-X-x$ and $X^{p}-X-x^{\sigma^{n}}$ over $K$ are isomorphic,
but the splitting fields of the polynomials $X^{p}-X-x$ and $X^{p}-X-x^{\sigma}$
are not. On the other hand, if $K$ is an algebraically tamely closed
model of $\VFA$ then this scenario is impossible: a finite
Galois extension left invariant under $\sigma^{n}$ is left invariant
under $\sigma$, too.
\end{rem}

\begin{rem}
In the settings of Lemma \ref{artins}, let $L$ be an Artin-Schreier extension of $K$. If $L$ is left invariant under $\sigma$, then the Artin-Schreier ideal of $L$ is transformally prime; the converse however is false in general. For example, working inside the field $k \left(\left(t^{\mathbf{Q}\left(\sigma\right)}\right)\right)$, let $a = t^{-1} + t^{-\nicefrac{1}{\sigma}} + t^{-\nicefrac{1}{\sigma^2}} + t^{-\nicefrac{1}{\sigma^4}} + \ldots $, let $K$ a tame closure of $k\left(t,a\right)_{\sigma}$, and consider the Artin-Schreier extension given by the adjunction of an Artin-Schreier root of the element $a$.
\end{rem}

\begin{cor}
\label{abhyankarcriterion}Let $K$ be an algebraically closed model
of $\VFA$. Let $\widetilde{K}$ be a model of $\VFA$
over $K$ with $\Gamma=\widetilde{\Gamma}$. Let us assume that $\widetilde{K}$
is the perfect, algebraically strictly Henselian hull of a finitely
generated abstract Abhyankar extension of $K$; then $\widetilde{K}$
has no nontrivial finite Galois extensions left invariant under $\sigma$.
\end{cor}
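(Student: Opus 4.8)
The plan is to verify the hypotheses of Lemma \ref{artins}(2) for $\widetilde K$. First, $\widetilde K$ is tamely closed: it is algebraically strictly Henselian by hypothesis, and since, by hypothesis, $\widetilde\Gamma=\Gamma$ equals the value group of the algebraically closed field $K$, it is divisible, so in particular $m\widetilde\Gamma=\widetilde\Gamma$ for every $m$ prime to $p$, which is exactly the extra requirement in the definition of tame closedness recalled in Section \ref{subsec-ramificationtheory}. The proof of Lemma \ref{artins}(2) shows precisely that a tamely closed model of $\VFA$ none of whose Artin-Schreier ideals is transformally prime has no nontrivial finite $\sigma$-invariant Galois extension: any such extension has $p$-group Galois group because $\widetilde K$ is tamely closed, one reduces through the characteristic subgroups of the Galois group (the center, then $pG$) to the elementary abelian case, and finally excludes degree-$p$ extensions by means of the $\sigma$-equivariant Artin-Schreier ideal map together with the fact that, in the $\omega$-increasing regime, $\sigma$ has no finite orbit on the set of Artin-Schreier ideals unless one of them is transformally prime. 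So it remains only to check that no Artin-Schreier ideal of $\widetilde K$ is transformally prime.

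Here the Abhyankar hypothesis enters. Let $F$ be the underlying valued field of $K$, which is algebraically closed, and let $K_0$ be the finitely generated abstract Abhyankar extension of $F$ whose perfect strictly Henselian hull underlies $\widetilde K$. Then Corollary \ref{artin-schrier-ideal-map} applies verbatim to this data, and tells us that every Artin-Schreier ideal of $\widetilde K$ is the radical of a principal ideal, i.e.\ of the form $I=p^{-\infty}\cdot[\gamma,\infty]$ for some $0<\gamma\in\Gamma$. It is precisely this boundedness from below that is incompatible with transformal primality: since $\widetilde K$ is inversive, $\sigma$ is an order-automorphism of $\Gamma$, so $\alpha:=\sigma^{-1}\gamma\in\Gamma$ is well defined and $\sigma\alpha=\gamma\in I$; on the other hand the $\omega$-increasing hypothesis gives $\sigma\gamma>n\gamma$ for all $0<n\in\mathbf N$, in particular $\sigma\gamma>p^{m}\gamma$ for every $m\ge 0$, whence $\gamma>p^{m}\alpha$ for every $m\ge 0$ upon applying the order-preserving map $\sigma^{-1}$. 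Thus no $p$-power multiple of $\alpha$ reaches $\gamma$, so $\alpha\notin I$, while $\sigma\alpha\in I$; hence $I$ is not transformally prime, and the reduction above finishes the argument.

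I do not expect a genuine obstacle: the real content is already packaged in Corollary \ref{artin-schrier-ideal-map} (which rests on the Kuhlmann-Temkin stability theorem for defectless valued fields) and in the reduction inside the proof of Lemma \ref{artins}(2). The only points requiring care are bookkeeping ones: checking that ``algebraically strictly Henselian'' plus divisibility of $\Gamma$ really gives tame closedness in the sense of Section \ref{subsec-ramificationtheory}, and that the triple $(F,K_0,\widetilde K)$ is literally of the shape to which Corollary \ref{artin-schrier-ideal-map} applies --- in particular that the ``abstract Abhyankar extension'' in the statement is meant purely in the sense of valued fields, ignoring $\sigma$.
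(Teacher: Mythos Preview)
Your proof is correct and follows essentially the same route as the paper's: verify that $\widetilde K$ is tamely closed (via divisibility of $\Gamma$), invoke Lemma~\ref{artins}(2) to reduce to showing no Artin--Schreier ideal is transformally prime, apply Corollary~\ref{artin-schrier-ideal-map} to get the form $p^{-\infty}\cdot[\gamma,\infty]$, and then use the $\omega$-increasing nature of $\sigma$ to rule out transformal primality. The paper's version is terser---it just says ``by the $\omega$-increasing nature of $\sigma$, no ideal of this form is $\sigma$-invariant''---but your explicit computation with $\alpha=\sigma^{-1}\gamma$ is exactly what underlies that remark.
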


\begin{proof}
Since $\Gamma=\Gamma\otimes\mathbf{Q}\left[\sigma^{\pm1}\right]=\widetilde{\Gamma}$ is algebraically divisible,
the field $\widetilde{K}$ is perfect and algebraically tamely closed;
by Lemma \ref{artins}, it is then enough to prove that no Artin-Schreier
ideal of $\widetilde{K}$ is transformally prime. By \ref{artin-schrier-ideal-map},
the Artin-Schreier ideals of $\widetilde{K}$ are of the form $p^{-\infty}\cdot\left[\gamma,\infty\right]$
where $0<\gamma\in\Gamma$; by the $\omega$-increasing nature of
$\sigma$, no ideal of this form is $\sigma$-invariant.
\end{proof}
\begin{cor}
\label{uniquesep}Let $K$ be an algebraically Henselian model of
$\VFA$. Let us assume:

(1) The residue field $k$ of $K$ has no nontrivial finite $\sigma$-invariant
Galois extensions.

(2) For every natural number $m$ invertible in $\mathbf{F}_{p}$,
the transformal module $\Gamma\otimes\nicefrac{\mathbf{Z}}{m}$ has
no nontrivial finite transformal submodules.

(3) The Artin-Schreier ideals of $K$ all fail to be transformally
prime.

Then $K$ admits a unique separable closure in the category of models
of $\VFA$, up to isomorphism.
\end{cor}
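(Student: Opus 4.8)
The plan is to extend the argument of Lemma~\ref{artins}(2), using hypothesis~(1) to peel off the unramified part of a hypothetical obstruction, hypothesis~(2) to peel off the tamely totally ramified part, and hypothesis~(3) to handle the purely wild part exactly as in that proof. Since $K$ is algebraically Henselian, $\text{h}$-finiteness coincides with finiteness, and the normal closure of a finite $\sigma$-invariant separable extension is again $\sigma$-invariant (a $K$-embedding of a subextension into a fixed separable closure extends, by normality, to an automorphism of the normal closure). Hence by Proposition~\ref{criterion-unique-algclosure} it suffices to show that $K$ has no nontrivial finite $\sigma$-invariant Galois extension. Suppose $L/K$ were one, with $G=\Aut(L/K)$ and $G\neq 1$, and form the ramification filtration $G\supseteq G_{0}\supseteq G_{1}$, where $G_{0}$ is the inertia group and $G_{1}$ the wild inertia (the $p$-Sylow of $G_{0}$). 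Because $G_{0}$ and $G_{1}$ are defined purely in terms of the valuation on $L$, and $\sigma$ is an automorphism of valued fields, the subfields $L^{G_{0}}$ and $L^{G_{1}}$ are $\sigma$-invariant.

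The dévissage then proceeds in three steps. First, $L^{G_{0}}/K$ is unramified and Galois with group $G/G_{0}$, so its residue extension is a finite $\sigma$-invariant Galois extension of $k$ with the same group; by~(1) this is trivial, i.e.\ $G=G_{0}$. Second, $L^{G_{1}}/K$ is tamely totally ramified and Galois with group $G_{0}/G_{1}$, which is abelian of order prime to $p$; if it is nontrivial, pick a prime $\ell$ dividing its order (so $\ell$ is invertible in $\mathbf{F}_{p}$) and let $M/K$ be the maximal subextension of $L^{G_{1}}/K$ of exponent dividing $\ell$. Then $M/K$ is $\sigma$-invariant (being intrinsically characterised), tamely totally ramified, with $\Aut(M/K)\cong(\mathbf{Z}/\ell\mathbf{Z})^{s}$ for some $s\geq 1$; hence $\Gamma_{M}/\Gamma$ is a nonzero finite transformal submodule of $\frac{1}{\ell}\Gamma/\Gamma\cong\Gamma\otimes\mathbf{Z}/\ell\mathbf{Z}$, contradicting~(2). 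So $G=G_{1}$, i.e.\ $L/K$ is purely wildly ramified; and since $\Gamma$ is $p$-divisible and $k$ is perfect, this forces $L/K$ to be immediate and $G$ to be a nontrivial $p$-group.

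For the wild part I would copy the proof of Lemma~\ref{artins}(2): the centre of $G$ and the subgroup $pG$ are characteristic, hence $\sigma$-invariant, so after passing to suitable subextensions we may assume $G\cong(\mathbf{Z}/p\mathbf{Z})^{r}$ with $r\geq 1$, the extension $L/K$ is still purely wildly ramified (so immediate), and it corresponds by Artin--Schreier theory to a $\sigma$-invariant subspace $V\subseteq\text{coker}\,\wp$ with $\dim_{\mathbf{F}_{p}}V=r$. Pick any line $[x]\in\mathbf{P}(V)$; as $\mathbf{P}(V)$ is finite, $\sigma^{k}$ fixes $[x]$ for some $k\geq 1$, so $\sigma^{k}$ fixes the Artin--Schreier ideal $I([x])\in\Spec\Gamma_{\infty}$, equivalently the convex subgroup $W$ it cuts out. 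But a convex subgroup of an $\omega$-increasing ordered module fixed by a power of $\sigma$ is fixed by $\sigma$: if $0<\alpha\in W$ and $\sigma\alpha\notin W$ then $\sigma\alpha$ exceeds every element of $W$, in particular $\sigma\alpha>\sigma^{k}\alpha\in W$, contradicting $\sigma\alpha<\sigma^{k}\alpha$. Thus $I([x])$ is $\sigma$-invariant; and since the degree-$p$ subextension $K_{x}\subseteq L$ is immediate over $K$, Lemma~\ref{artin-schrier-cuts}(2) together with perfectness of $k$ gives $0\notin I([x])$, so $I([x])$ is transformally prime --- contradicting~(3). This completes the argument.

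The step I expect to cause the most trouble is keeping the dévissage genuinely $\sigma$-equivariant: one has to be sure that each intermediate field produced from the ramification filtration, or selected as a ``maximal subextension with property $P$'', is stabilised by every relevant extension of $\sigma$, and that the notion of a $\sigma$-invariant extension is correctly transported between the valued field and its residue field and value group. The one genuinely substantive point, as opposed to bookkeeping, is the final implication ``$I([x])$ fixed by a power of $\sigma$'' $\Rightarrow$ ``$I([x])$ transformally prime'': beyond the easy $\omega$-increasing argument promoting $\sigma^{k}$-fixedness to $\sigma$-fixedness, one needs $0\notin I([x])$, which is exactly where the immediacy of $K_{x}/K$ --- itself a consequence of $L/K$ being purely wildly ramified over a field with $p$-divisible value group and perfect residue field --- is used; without it the degenerate ideal $\Gamma_{\geq 0}\cup\{\infty\}$ could arise and the argument would break down. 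Everything else runs parallel to the proof of Lemma~\ref{artins}.
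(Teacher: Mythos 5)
Your proposal is correct and is the natural assembly of the material in Section~\ref{subsec:The-Algebraic-Ramification}; the paper leaves this corollary without a written proof, and the d\'evissage through the ramification filtration $G\supseteq G_0\supseteq G_1$, matching hypotheses (1), (2), (3) to the unramified, tame, and wild layers, is exactly the intended route. The two points that require care, and that you handle correctly, both stem from the fact that $K$ is here only Henselian, not strictly Henselian: in the tame step you cannot quote the Kummer classification lemma verbatim, so you instead read the obstruction directly from the value-group extension $\Gamma_M/\Gamma\hookrightarrow\Gamma\otimes\mathbf{Z}/\ell$; and in the wild step the exclusion $0\notin I([x])$ is not supplied by the strict-Henselianity clause of Lemma~\ref{artin-schrier-cuts}(2), but must come from the immediacy of the purely wild extension (any $c\in K$ with $v(b-c)=0$ can be improved to strictly positive distance using a residue representative, whence $a\in\wp K$), which you correctly single out as the one genuinely substantive step.
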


\begin{proof}
We use Proposition \ref{criterion-unique-algclosure}. Let $L$ be a finite $\sigma$-invariant Galois extension of $K$; we must prove that $K = L$. There is a canonical factorization $K \subseteq L_1 \subseteq L_2 \subseteq L_3 = L$ of $\sigma$-invariant Galois extensions where $L_1$ is unramified over $K$, $L_2$ is tamely (but totally) ramified over $L_1$ and $L_3$ is purely wildly ramified over $L_2$, hence of degree a power of $p$. By Lemma \ref{lem:unramified-classification}, the finite $\sigma$-invariant unramified Galois extensions of $K$ correspond to the finite $\sigma$-invariant Galois extensions of $k$; using (1) we get $K = L_1$. Similarly (2) and Lemma \ref{lem:tamely-ramified-classification} and Remark \ref{rem:tame-classification} imply that $L_2 = K$. Finally, by (3) and Remark \ref{rem:artin-classification} we have $L_3 = L_2$, so that overall we have $K = L$.
\end{proof}

\newpage{}

\section{\label{sec:TheTransformalHens}The Transformal Henselization}

The purpose of this section is to study the transformal analogues of the notion of Henselization and strict Henselization.

\subsection{Summary}

We briefly describe some of the main results of this section; see \ref{subsec:transformder}
for unexplained notation.
\begin{defn}
\label{def:transformally-henselian}Let $K$ be a model of $\VFA$. 

We say that $K$ is\textit{ transformally Henselian} if $K$ and all
of its Frobenius twists obey the following transformal analogue of
Hensel's lemma. Let $fx\in\mathcal{O}_{K}\left[x\right]_{\sigma}$
be a difference polynomial; then every residual simple root can be
lifted to an integral root. In other words, if $a\in\mathcal{O}_{K}$
is such that $vfa>0$ and $vf'a=0$ then an element $b\in\mathcal{O}_{K}$
is found with $fb=0$ and whose residue class coincides with that
of the element $a$
\end{defn}

\begin{thm}
\label{transformalhenselization}Let $K$ be a model of $\VFA$.
Let us assume that we are given a perfect, inversive, transformally algebraic extension
$\widetilde{k}$ of $k$. Then there exists a transformally Henselian,
transformally algebraic extension $\widetilde{K}$ of $K$ reproducing
the embedding $k\hookrightarrow\widetilde{k}$ residually, and enjoying
the following properties:

(1) Let $L$ be a transformally Henselian model of $\VFA$ over
$K$. Then every embedding of $\widetilde{k}$ in $l$ over $k$ can
be lifted to an embedding of $\widetilde{K}$ in $L$ over $K$, and
the lifting is unique.

(2) The induced embedding of valuation groups is an isomorphism.

(3) The finite $\sigma$-invariant separable extensions of $\widetilde{K}$
are controlled by the finite $\sigma$-invariant separable extensions
of $\widetilde{k}$ in the following sense. Let us assume that $K$
is algebraically Henselian and has no nontrivial finite $\sigma$-invariant
Galois extensions. Then every finite $\sigma$-invariant Galois extension of $\widetilde{K}$ is unramified; hence the finite $\sigma$-invariant Galois extensions
of $\widetilde{K}$ are in bijection with the finite $\sigma$-invariant Galois
extensions of $\widetilde{k}$ via the residue map.
\end{thm}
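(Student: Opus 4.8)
The plan is to isolate a transformal Hensel lemma and then construct $\widetilde K$ by a maximality argument. First I will record the key elementary fact, a consequence of the transformal Taylor expansion of Section~\ref{subsec:transformder}: in a transformally Henselian field a residual simple root of a difference polynomial lifts \emph{uniquely}. Indeed, if $fx\in\mathcal O[x]_\sigma$, $a\in\mathcal O$ with $vfa>0$, $vf'a=0$, and $b,b'$ are two roots with $\bar b=\bar{b'}=\bar a$, then writing $\delta=b'-b$ and expanding $0=f(b')=f(b)+f'(b)\delta+\sum_{\nu\neq 0,1}f_\nu(b)\delta^{\nu}$, the term $f'(b)\delta$ has valuation $v\delta$ (since $vf'(b)=vf'(a)=0$) while every other term has valuation strictly larger --- for a pure power $x^{j}$ with $j\ge 2$ because $jv\delta>v\delta$, and for any $\nu$ involving $\sigma$ because $\sigma$ is $\omega$-increasing --- forcing $\delta=0$. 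In a spherically complete model such a lift also exists, by the transfinite Newton iteration $a_{n+1}=a_n-f(a_n)/f'(a_n)$, for which $v(a_{n+1}-a_n)=vf(a_n)$ is strictly increasing and, by the same computation, $vf(a_{n+1})\ge 2vf(a_n)$, the limits at limit stages being supplied by spherical completeness.

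For existence and clauses (1), (2) I will fix a large spherically complete algebraically closed model $\Omega$ of $\VFA$ over $K$ whose residue field contains $\widetilde k$ (available from Proposition~\ref{spherically-complete-extension} and Kaplansky's theorem, once $\widetilde k$ has first been lifted by a standard construction to some transformal valued field extension of $K$ with value group $\Gamma$), and take $\widetilde K$ to be maximal among perfect inversive subfields $K'\subseteq\Omega$ that are transformally algebraic over $K$, satisfy $\Gamma_{K'}=\Gamma$, and have $k_{K'}\subseteq\widetilde k$; directed unions preserve all these conditions, so Zorn applies and $\widetilde K$ is a model of $\VFA$ with $\Gamma_{\widetilde K}=\Gamma$, which is clause (2). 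I will then check that maximality forces $\widetilde K$ transformally Henselian --- for $\widetilde K$ and all its Frobenius twists at once, since a twist changes neither the underlying valued field nor, as $\widetilde K$ is perfect, transformal algebraicity --- because a non-lifting residual simple root would, via its Newton limit $b\in\Omega$, produce a strictly larger poset element: $\widetilde K(b)_\sigma$ is transformally algebraic, leaves $\Gamma\otimes\mathbf Q(\sigma)$ fixed by Corollary~\ref{cor-transformally-alg-no-valgp-adjunction}, and the Taylor computation pins $b$ at valuative distance $vf(a)\in\Gamma$ from $a$ with $\bar b\in k_{\widetilde K}$, so the extension is immediate modulo its residue, keeping $\Gamma_{K'}=\Gamma$ and $k_{K'}\subseteq\widetilde k$. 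Likewise maximality forces $k_{\widetilde K}=\widetilde k$: any $\bar\alpha\in\widetilde k$ is transformally algebraic over $k_{\widetilde K}$, so after replacing it by a suitable $\bar\alpha^{p^{-j}\sigma^{-i}}\in\widetilde k$ and applying the argument of Lemma~\ref{twist} to get $g$ with $g(\bar\alpha)=0$, $g'(\bar\alpha)\neq 0$, a transformal Hensel lift of a lift of $g$ adjoins $\bar\alpha$ residually without touching $\Gamma$. Finally the universal property (1) follows by transporting the construction step by step into a given transformally Henselian $L$ with $\widetilde k\hookrightarrow l$: each residue-lift step and each Hensel-root step has a unique image in $L$ by the uniqueness half of the transformal Hensel lemma, these images are compatible and generate the embedding $\widetilde K\hookrightarrow L$, and its uniqueness is inherited stepwise.

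For clause (3), assuming $K$ algebraically Henselian with no nontrivial finite $\sigma$-invariant Galois extension, I will note that $\widetilde K$, being transformally Henselian, is algebraically Henselian, and form its algebraic strict Henselization $\widetilde K^{\mathrm{sh}}$ (a model of $\VFA$ with lifted $\sigma$, algebraically strictly Henselian, value group $\Gamma$, residue field $\widetilde k^{\mathrm{sep}}$). I will show it has no nontrivial finite $\sigma$-invariant separable extension by verifying the three hypotheses of Corollary~\ref{uniquesep}: hypothesis (1) is automatic; for (2), $K(\zeta_\ell)/K$ ($\ell\neq p$ prime) is $\sigma$-invariant Galois, hence trivial, so $\zeta_\ell\in K$, and then a nontrivial finite $\sigma$-invariant submodule of $\Gamma\otimes\mathbf Z/\ell^{m}$ would yield, by Kummer theory over the Henselian $K$, a nontrivial tamely ramified $\sigma$-invariant finite Galois extension of $K$, contrary to hypothesis; for (3), since $\widetilde K^{\mathrm{sh}}$ is strictly Henselian its Artin--Schreier cuts omit $0$ (Lemma~\ref{artin-schrier-cuts}(2)), and writing $\widetilde K^{\mathrm{sh}}$ as the directed union over the finite-transcendence-degree stages of $\widetilde k^{\mathrm{sep}}/k^{\mathrm{sep}}$ --- each lifting to a finitely generated Abhyankar, hence defectless, Henselian extension of $K^{\mathrm{sh}}$ of which it is the perfect hull --- Corollary~\ref{cor:Let--be} makes every Artin--Schreier ideal of $\widetilde K^{\mathrm{sh}}$ of the form $p^{-\infty}[\gamma,\infty]$ with $\gamma>0$, and no such ideal is $\sigma$-invariant since $\sigma\gamma\gg\gamma$, hence none is transformally prime. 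With Corollary~\ref{uniquesep} in hand, for any finite $\sigma$-invariant Galois extension $M/\widetilde K$ the compositum $M\cdot\widetilde K^{\mathrm{sh}}$ is a finite $\sigma$-invariant (hence separable) extension of $\widetilde K^{\mathrm{sh}}$, so $M\subseteq\widetilde K^{\mathrm{sh}}$ and $M/\widetilde K$ is unramified; since unramified extensions of the Henselian $\widetilde K$ correspond functorially to separable residue extensions of $\widetilde k$, taking residues is the asserted bijection.

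The main obstacle I expect is hypothesis (3) of the application of Corollary~\ref{uniquesep}: because defectlessness is ill-behaved over perfect fields one cannot feed $\widetilde K^{\mathrm{sh}}$ directly to Corollary~\ref{cor:Let--be}, and the real content is to present each finite stage of $\widetilde k^{\mathrm{sep}}/k^{\mathrm{sep}}$ as lifting to a finitely generated Abhyankar extension of a defectless non-perfect core of $K^{\mathrm{sh}}$ so that Kuhlmann's stability theorem and Corollary~\ref{artin-schrier-ideal-map} apply. A secondary point, recurring throughout the existence argument, is the verification that the residue-lifting extensions and the transformal Hensel-root extensions are immediate modulo their residue extension --- precisely where the transformal Taylor expansion and Corollary~\ref{cor-transformally-alg-no-valgp-adjunction} are needed.
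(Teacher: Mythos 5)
Your proposal departs from the paper's proof in two essential ways, and both departures create gaps.

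\textbf{The Zorn construction does not produce the universal object.} You take $\widetilde K$ maximal among perfect inversive subfields of a fixed spherically complete $\Omega$ that are transformally algebraic over $K$, keep the value group, and have residue field inside $\widetilde k$. But nothing in this poset constrains $\widetilde K$ to embed in every transformally Henselian extension of $K$, and in general it won't. Example~\ref{exa:non-unique-spherically} produces a $K$ with continuum-many pairwise non-isomorphic spherically complete immediate extensions; correspondingly there exist immediate transformally algebraic extensions of $K$ (e.g.\ $K(\beta)_\sigma$ in that example, where $\beta$ has support of order type $\omega^2$ and does not lie in the completion $\widehat K$) that do \emph{not} embed in $K^{\mathrm{th}}$, yet do embed in a suitable spherically complete $\Omega$. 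Any Zorn-maximal element of your poset containing such a $\beta$ is strictly larger than the transformal Henselization and cannot embed in $K^{\mathrm{th}}$ --- a transformally Henselian extension of $K$ --- so clause (1) fails for it. Your appeal to ``transporting the construction step by step'' cannot repair this, because a Zorn-maximal element comes with no step-by-step presentation by residue lifts and Hensel lifts; indeed, it contains elements obtainable by neither. The paper avoids this entirely: it first shows (Lemma~\ref{complete-t-henselian} and Corollary~\ref{absolute-t-hens-in-t-arch}) that in the transformally Archimedean case the transformal Henselization is \emph{canonically} the relative transformal algebraic closure of $K$ in its completion $\widehat K$, which gives the universal property directly because $\Gamma$ is cofinal in any transformally algebraic extension; it then reduces to this case by transformal-height devissage (Lemma~\ref{reductionarch}, using the transitivity in triplets of Lemma~\ref{transitivetriplets}) and packages the relative statement through a \emph{field of representatives} (Lemmas~\ref{field-of-rep-exists-th} and~\ref{canonicalva}), so that $\widetilde K$ is exhibited explicitly as $(\widetilde F\otimes_F K)^{\mathrm{th}}$ with its canonical valuation.

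\textbf{Clause (3) needs the explicit presentation, not just Zorn.} You correctly identify that the heart of (3) is hypothesis (3) of Corollary~\ref{uniquesep}, and that it demands exhibiting Artin--Schreier ideals as radicals of principal ideals via Kuhlmann's stability theorem. But Corollaries~\ref{cor:Let--be} and~\ref{artin-schrier-ideal-map} apply to perfect hulls of finitely generated Abhyankar extensions over an \emph{algebraically closed} base; the paper gets this for free because $\widetilde K$ is $(\widetilde F\otimes_F K)^{\mathrm{th}}$, so $\widetilde F\otimes_F K$ is explicitly a directed union of finitely generated Abhyankar extensions of $K$ (and the proof of Lemma~\ref{canonicalva}(4) reduces to $K$ algebraically closed before invoking Corollary~\ref{abhyankarcriterion}). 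Your $\widetilde K$, produced by Zorn, carries no such presentation, and you acknowledge in your final paragraph that this is ``the real content'' without actually supplying it. Your verification of hypothesis (2) of Corollary~\ref{uniquesep} via Kummer theory over the merely Henselian (not strictly Henselian, not algebraically closed) $K$ also needs the reduction to the algebraically closed case or at least roots of unity in $K$ and $m$-divisibility of the units; again the paper's reduction short-circuits this. In short, what buys the paper its argument is the functorial field-of-representatives construction plus devissage to the transformally Archimedean case; the Zorn approach as written gives existence of \emph{a} transformally Henselian extension with the right residue field and value group, but neither its universality nor the control of its Galois theory.

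A secondary point worth flagging: you use ``the extension is immediate modulo its residue, keeping $\Gamma_{K'}=\Gamma$'' after invoking Corollary~\ref{cor-transformally-alg-no-valgp-adjunction}, but that corollary only controls $\Gamma_{K'}\otimes\mathbf{Q}(\sigma)$; showing $\Gamma_{K'}=\Gamma$ on the nose (and likewise $k_{K'}\subseteq\widetilde k$) for the Newton-limit adjunction requires an actual immediacy argument in the style of Kaplansky, which you gesture at but do not carry out.
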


\begin{cor}\label{cor:existence-of-absolute}
Let $K$ be a model of $\VFA$. Then there exists a transformally
Henselian, transformally algebraic, immediate extension $K^{\text{th}}$
of $K$ which embeds uniquely over $K$ in every other transformally
Henselian extension; it is called \textbf{the transformal Henselization}
of $K$ and it is unique up to a unique isomorphism of models of $\VFA$
over $K$.

The relative algebraic closure of $K$ in $K^{\text{th}}$ is an algebraic
Henselization $K^{\text{h}}$ of $K$, and every finite $\sigma$-invariant
separable extension of $K^{\text{th}}$ uniquely descends to a finite
$\sigma$-invariant separable extension of $K^{\text{h}}$.
\end{cor}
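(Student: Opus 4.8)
The plan is to deduce the Corollary from Theorem \ref{transformalhenselization} by specializing to the case where the given residual extension $\widetilde{k}$ is trivial, i.e. $\widetilde{k} = k$. First I would apply the Theorem with this choice: it produces a transformally Henselian, transformally algebraic extension $K^{\text{th}}$ of $K$ which reproduces $k \hookrightarrow k$ residually, so in particular $k_{K^{\text{th}}} = k$; by clause (2) of the Theorem the valuation group is unchanged, $\Gamma_{K^{\text{th}}} = \Gamma$, so $K^{\text{th}}$ is an immediate extension of $K$. Clause (1) of the Theorem, specialized to $\widetilde{k} = k$, says that for any transformally Henselian model $L$ of $\VFA$ over $K$, the identity embedding $k \hookrightarrow l$ (available since $k \subseteq k_L$) lifts uniquely to an embedding $K^{\text{th}} \hookrightarrow L$ over $K$; this is exactly the required universal mapping property. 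Uniqueness of $K^{\text{th}}$ up to a unique isomorphism over $K$ is then the usual formal consequence of a universal property: given two such objects, the universal property produces embeddings each way, and their composites are endomorphisms over $K$ which, again by the uniqueness clause in the universal property (applied with $L = K^{\text{th}}$ and the identity as the competing lift), must be the identity.

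Next I would identify the relative algebraic closure of $K$ in $K^{\text{th}}$. Let $K'$ be the field-theoretic algebraic closure of $K$ inside $K^{\text{th}}$; since $K^{\text{th}}$ is transformally algebraic over $K$ and carries a compatible $\sigma$, $K'$ is a $\sigma$-invariant algebraic extension of $K$, hence a model of $\VFA$ once we note it is perfect and inversive (being relatively algebraically closed in an inversive perfect field). The valued field $K^{\text{th}}$ is transformally Henselian, and I claim $K'$ is algebraically Henselian: a simple residual root of a polynomial $f \in \mathcal{O}_{K'}[x] \subseteq \mathcal{O}_{K^{\text{th}}}[x]_\sigma$ (an ordinary polynomial is in particular a difference polynomial) lifts to a root $b \in \mathcal{O}_{K^{\text{th}}}$ by transformal Henselianity, and $b$ is algebraic over $K$, hence lies in $K'$. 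Conversely $K'$ is immediate over $K$ (as a subextension of the immediate extension $K^{\text{th}}$) and algebraic, so it is an algebraic Henselization $K^{\text{h}}$ of $K$; uniqueness of the latter up to unique isomorphism over $K$ identifies $K' = K^{\text{h}}$.

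Finally, for the descent statement on finite $\sigma$-invariant separable extensions, I would invoke clause (3) of Theorem \ref{transformalhenselization}. In our situation $\widetilde{k} = k$, so clause (3), applied after first replacing $K$ by its algebraic Henselization $K^{\text{h}}$ (which does not change $K^{\text{th}}$, by the universal property, since $K^{\text{h}}$ is transformally — indeed algebraically — Henselized already and $K^{\text{th}}$ is the transformal Henselization of $K^{\text{h}}$ as well), asserts that the finite $\sigma$-invariant Galois extensions of $K^{\text{h}}$ are in bijection, via the residue map, with those of $\widetilde{k} = k$; but the latter are exactly the finite $\sigma$-invariant Galois extensions that occur over $K^{\text{th}}$ by the same clause applied to $K^{\text{th}}$ itself (whose residue field is again $k$). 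Tracking these bijections shows every finite $\sigma$-invariant separable extension of $K^{\text{th}}$ is the compositum of $K^{\text{th}}$ with a unique finite $\sigma$-invariant separable extension of $K^{\text{h}}$, i.e. descends uniquely. The main obstacle I anticipate is the bookkeeping in this last paragraph: one must check that the hypothesis ``$K$ algebraically Henselian with no nontrivial finite $\sigma$-invariant Galois extensions'' in clause (3) can be arranged harmlessly — it cannot be assumed of a general $K$, so the clean statement of the Corollary really only claims unique descent of extensions of $K^{\text{th}}$ to $K^{\text{h}}$, and the content is that $K^{\text{h}} \to K^{\text{th}}$ induces an equivalence on the relevant categories of finite $\sigma$-invariant separable covers, which follows from clause (1) (unique lifting) together with the fact that $K^{\text{th}}$ is immediate over $K^{\text{h}}$, so no new ramification or residue extension can appear.
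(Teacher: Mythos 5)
Your approach is the same as the paper's: specialize Theorem \ref{transformalhenselization} to $\widetilde{k}=k$ and unwind the resulting properties. The paper's proof is exactly this one-liner, and the existence, immediateness, universal property and uniqueness up to unique isomorphism follow as you say.

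However, two of the places where you fill in details have genuine gaps, both in positive characteristic. First, to identify the relative algebraic closure $K'$ of $K$ in $K^{\text{th}}$ with $K^{\text{h}}$, you argue that $K'$ is Henselian, immediate and algebraic over $K$, hence \emph{is} an algebraic Henselization; but this inference is false in characteristic $p>0$, where a Henselian valued field may admit a proper immediate algebraic extension (a defect extension), which would still be Henselian, immediate, and algebraic. Second, your descent argument for finite $\sigma$-invariant separable extensions of $K^{\text{th}}$ ultimately rests on ``$K^{\text{th}}$ is immediate over $K^{\text{h}}$, so no new ramification or residue extension can appear,'' which again fails to account for defect. In both cases the right tool, and the one the paper actually uses in Corollary \ref{absolute-t-hens-in-t-arch}, is that $K^{\text{th}}$ is (in the transformally Archimedean, algebraically Henselian case) the relative transformal algebraic closure of $K^{\text{h}}$ in the completion $\widehat{K^{\text{h}}}$, and the completion of a Henselian field is a primary extension with $G_{\widehat{K^{\text{h}}}}\cong G_{K^{\text{h}}}$ by Krasner's lemma; this Galois-group isomorphism gives both that no new separable algebraic elements appear (so $K'=K^{\text{h}}$) and the unique descent of \emph{all} finite separable extensions (and hence of the $\sigma$-invariant ones) from $K^{\text{th}}$ to $K^{\text{h}}$. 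The extension to general $K$ is then handled by the triplet and limit reductions of Lemma \ref{reductionarch}, which preserve the control statement.
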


\begin{proof}
In the statement of Theorem \ref{transformalhenselization}, let $\widetilde{k}=k$.
\end{proof}

\begin{rem}
    In the situation of Theorem \ref{transformalhenselization}, we will refer to $\widetilde{K}$ as the \emph{absolute transformal Henselization} of $K$ when $k = \widetilde{k}$; otherwise we say that $\widetilde{K}$ is a \emph{relative transformal Henselization} (with respect to the embedding $\nicefrac{\widetilde{k}}{k}$).
\end{rem}
\begin{cor}
\label{transformalHenselizationalg} Let $K$ be a model of $\VFA$
which has no nontrivial $\text{h}$-finite $\sigma$-invariant separable
extensions. Then there exists a transformally algebraic, algebraically
closed, transformally Henselian extension $\widetilde{K}$ of $K$
which embeds over $K$ in every other extension with these properties.
It is unique up to isomorphism over $K$; the isomorphism however
is not in general unique, even if $K$ is separably
closed.
\end{cor}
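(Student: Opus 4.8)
The plan is to construct $\widetilde{K}$ as the union of an $\omega$-indexed tower obtained by alternating the two constructions that our hypotheses render as canonical as possible: the algebraic closure in the category of models of $\VFA$, and the transformal Henselization of Theorem \ref{transformalhenselization}. Set $K_{0}=K$; given $K_{2n}$, I would let $K_{2n+1}$ be an algebraic closure of $K_{2n}$ in the category of models of $\VFA$ (it exists, its underlying valued field being an algebraic closure of that of $K_{2n}$); given $K_{2n+1}$, I would let $K_{2n+2}=(K_{2n+1})^{\text{th}}$ be its transformal Henselization, and then put $\widetilde{K}=\bigcup_{n}K_{n}$. The coherence of the recursion rests on one observation: the hypothesis ``no nontrivial $\text{h}$-finite $\sigma$-invariant separable extension'' propagates along the tower. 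Indeed, an algebraically closed valued field and a transformally Henselian model of $\VFA$ are both algebraically Henselian (for the latter, apply Definition \ref{def:transformally-henselian} to algebraic difference polynomials), so over such a field an $\text{h}$-finite extension is just a finite extension; these are trivial when the base is algebraically closed, and when the base is $(K_{2n+1})^{\text{th}}$ a finite $\sigma$-invariant separable extension descends, by the first Corollary to Theorem \ref{transformalhenselization}, to one of the algebraic Henselization of $K_{2n+1}$, which equals $K_{2n+1}$ and is algebraically closed. Hence Proposition \ref{criterion-unique-algclosure} applies at every rung, so each $K_{2n+1}$ is determined up to isomorphism over $K_{2n}$, while each $K_{2n+2}$ is determined up to a unique isomorphism over $K_{2n+1}$.

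Next I would check the three asserted properties by directed-union arguments. Transformal algebraicity of $\widetilde{K}/K$ follows since each rung $K_{n+1}/K_{n}$ is transformally algebraic (a field-theoretic algebraic extension at odd stages, a transformally algebraic extension by Theorem \ref{transformalhenselization} at even stages) and transformal algebraicity is transitive and closed under directed unions. $\widetilde{K}$ is algebraically closed: a polynomial over $\widetilde{K}$ has its finitely many coefficients in some $K_{n}$, hence splits already in $K_{2m+1}$ for any $2m+1>n$. $\widetilde{K}$ is transformally Henselian: given $fx\in\mathcal{O}_{\widetilde{K}}[x]_{\sigma}$, or over a Frobenius twist of $\widetilde{K}$, with $a\in\mathcal{O}_{\widetilde{K}}$ satisfying $vfa>0$ and $vf'a=0$, the coefficients of $f$ together with $a$ lie in some $K_{n}$, hence in a transformal-Henselization rung $K_{2m}$ with $2m\geq n$; since the rungs are perfect and inversive they are $\phi$- and $\sigma$-invariant in $\widetilde{K}$, so the relevant Frobenius twist of $K_{2m}$ is a submodel of that of $\widetilde{K}$, and transformal Henselianity of $K_{2m}$ supplies the root.

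For the universal property, let $M$ be any extension of $K$ that is transformally algebraic, algebraically closed and transformally Henselian; I would build an embedding $\widetilde{K}\hookrightarrow M$ over $K$ by lifting rung by rung. Given $\iota_{n}\colon K_{n}\hookrightarrow M$ over $K$: at an odd stage, regarding $K_{2n}$ as a submodel of $M$ via $\iota_{2n}$, the relative field-theoretic algebraic closure of $\iota_{2n}(K_{2n})$ in $M$ carries an induced structure making it an algebraically closed, perfect, inversive, $\sigma$-invariant submodel of $M$ — that is, an algebraic closure of $K_{2n}$ in $\VFA$ — so by the uniqueness from Proposition \ref{criterion-unique-algclosure} it is isomorphic over $K_{2n}$ to $K_{2n+1}$, and composing yields $\iota_{2n+1}$; at an even stage, $M$ is transformally Henselian and contains $\iota_{2n+1}(K_{2n+1})$, so the mapping property in Theorem \ref{transformalhenselization}(1) extends $\iota_{2n+1}$ uniquely to $\iota_{2n+2}\colon K_{2n+2}\hookrightarrow M$. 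The union $\bigcup_{n}\iota_{n}$ is the desired embedding.

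The remaining point, and the one I expect to require the most care, is uniqueness up to isomorphism over $K$. Given two extensions $A$, $B$ with the three properties and the universal property, each embeds in the other over $K$, yielding a self-embedding $\theta\colon A\to A$ over $K$; the difficulty is that $\theta$ need not be surjective for formal reasons, since — unlike for algebraic extensions — a self-embedding of a transformally algebraic extension can fail to be onto. Surjectivity must therefore use the other two properties: $\theta(A)$ is again algebraically closed, transformally Henselian and contains $K$, hence contains the submodel of $A$ obtained by running the canonical tower of the first paragraph over $K$ inside $A$, which is all of $A$. Equivalently, and perhaps more transparently, one builds the isomorphism $A\cong B$ directly by a back-and-forth along the two towers, using that each rung is unique up to isomorphism (uniquely so at the even rungs). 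Either way the isomorphism is not canonical: at the odd rungs the algebraic closure in $\VFA$ carries nontrivial automorphisms over the preceding stage, and this persists even when $K$ is separably algebraically closed, because the even rungs introduce transcendence over the previous stage, so that the next algebraic-closure rung acquires nontrivial Galois automorphisms.
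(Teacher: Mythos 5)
Your proof is correct and follows the same approach as the paper's: build an $\omega$-tower alternating algebraic closure in the category of models of $\VFA$ with transformal Henselization, note that the hypothesis of no nontrivial $\text{h}$-finite $\sigma$-invariant separable extensions propagates along the tower (trivially at algebraically-closed rungs, and via the descent of finite $\sigma$-invariant separable extensions from $K^{\text{th}}$ to $K^{\text{h}}$ at the transformal-Henselization rungs), and take the union. The paper first reduces to $K$ algebraically Henselian as a minor simplification, which you avoid by invoking Proposition \ref{criterion-unique-algclosure} directly at the base of the tower; you also spell out the universal-property and uniqueness verifications (which the paper declares immediate), and your argument for non-uniqueness of the isomorphism even over a separably closed base — the transformal Henselization of an algebraically closed field need not stay algebraically closed, cf.\ Example \ref{t-henselization-of-alg-closed} — is correct.
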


\begin{proof}
We may assume that $K$ is algebraically Henselian. Define a sequence
$K=K_{0}\subset K_{1}\subset K_{2}\subset\ldots$ of models of $\VFA$,
transformally algebraic over $K$, as follows. Let $K_{2i+2}$ be
the transformal Henselization of $K_{2i+1}$ and let $K_{2i+1}$ be
an algebraic closure of $K_{2i}$. Then each $K_{j}$ is algebraically
Henselian and without any nontrivial finite $\sigma$-invariant Galois
extensions. Indeed for $j=0$ this is true by assumption; at odd stages
the property is not lost, as the category of finite $\sigma$-invariant
Galois extensions remains the same upon the passage to the transformal
Henselization; and at even stages this holds trivially since an algebraically
closed field is Henselian and has no nontrivial Galois extensions,
left invariant under $\sigma$, or otherwise. The union $\widetilde{K}$
of this tower is, on the one hand, the union of algebraically closed
fields, by considering even indices, and thus algebraically closed;
and on the other hand, by considering odd indices, it is the union
of transformally Henselian models, and thus transformally Henselian.
The verification that $\widetilde{K}$ is as advertised is immediate,
using the fact that each $K_{j}$ admits a unique algebraic closure
in the category of models of $\VFA$, up to isomorphism, and
the functoriality of the transformal Henselization.
\end{proof}
\begin{rem}
The notion of a transformally Henselian field introduced in \ref{def:transformally-henselian}
should not be confused with the notion of $\sigma$-\textit{Henselian}
studied in \cite{durhan2015quantifier}; see Remark \ref{rem-terminology}.
\end{rem}

\begin{rem}
The tower constructed in Corollary \ref{transformalHenselizationalg}
need not terminate at any finite stage; see Example \ref{algebraic-extension-of-t-henselian}
and Example \ref{t-henselization-of-alg-closed}.
\end{rem}

\begin{rem}
\label{rem-thenselian-not-inversive}
See \cite{dor2023contracting}, Section 4 and Section 5 for an alternative definition of transformally Henselian models using "twisted" difference polynomials such $x^{\frac{\sigma}{p}} - x$ , avoiding reference to the various Frobenius twists.
\end{rem}

\subsection{The Transformal Henselization}

\begin{lem}
(Uniqueness of Hensel lifts) Hensel lifts are unique if they exist.
More precisely, let $K$ be a model of $\VFA$, let $fx\in\mathcal{O}_{K}\left[x\right]_{\sigma}$
be a difference polynomial, and suppose that $a\in\mathcal{O}_{K}$
is such that $vfa>0$ and $vf'a=0$. Then there exists at most one
element $b\in\mathcal{O}_{K}$ such that $fb=0$ and $v\left(a-b\right)>0$.
For such $b$ it is in fact the case that $v\left(a-b\right)=vfa$.
\end{lem}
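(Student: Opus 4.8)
The plan is to mimic the classical proof of uniqueness in Hensel's lemma, using the Taylor expansion of $fx$ from \ref{subsec:transformder}. Suppose $b, b' \in \mathcal{O}_K$ both satisfy $fb = fb' = 0$ and $v(a-b), v(a-b') > 0$; in particular $v(b-b') > 0$. Write $\varepsilon = b' - b$ and expand $0 = f(b') = f(b+\varepsilon) = f_0(b) + f_1(b)\varepsilon + \sum_{\nu \neq 0, 1} f_\nu(b)\varepsilon^\nu$ using the Taylor formula \eqref{eq:taylor}. Since $fb = f_0(b) = 0$, this reads $f'(b)\varepsilon = -\sum_{\nu \neq 0,1} f_\nu(b)\varepsilon^\nu$.

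The key point is a valuation estimate. First I would note that $v f'(b) = v f'(a) = 0$: indeed $f' \in \mathcal{O}_K[x]_\sigma$ has integral coefficients, so $v(f'(b) - f'(a)) \geq v(b-a) > 0$ by the same Taylor expansion applied to $f'$, and $v f'(a) = 0$ by hypothesis. Hence $v(f'(b)\varepsilon) = v\varepsilon$. On the other hand, each term on the right has $f_\nu(b) \in \mathcal{O}_K$ and $\varepsilon^\nu$ with $v(\varepsilon^\nu) = \nu \cdot v\varepsilon$ for $\nu \in \mathbf{N}[\sigma]$; here I use that $v\varepsilon > 0$ together with the $\omega$-increasing hypothesis on $\Gamma$, which guarantees that for every $\nu \in \mathbf{N}[\sigma]$ with $\nu \neq 0, 1$ one has $\nu \cdot v\varepsilon > v\varepsilon$ (for $\nu$ a natural number $\geq 2$ this is just $v\varepsilon > 0$; for $\nu$ involving $\sigma$ it follows from $n\alpha < \sigma\alpha$). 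Thus every term on the right-hand side has valuation $> v\varepsilon$, so the sum does too, while the left-hand side has valuation exactly $v\varepsilon$ — unless $\varepsilon = 0$. Therefore $b = b'$.

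For the final assertion $v(a - b) = vfa$, I would apply the Taylor expansion once more, now with base point $a$ and increment $b - a$. Write $\delta = b - a$, so $v\delta > 0$, and $0 = f(b) = f(a+\delta) = fa + f'(a)\delta + \sum_{\nu \neq 0,1} f_\nu(a)\delta^\nu$. Exactly as above, the $\omega$-increasing property forces $v(f_\nu(a)\delta^\nu) \geq \nu \cdot v\delta > v\delta = v(f'(a)\delta)$ for $\nu \neq 0,1$ (using $v f'(a) = 0$), so $v(f'(a)\delta + \sum_{\nu \neq 0,1} f_\nu(a)\delta^\nu) = v\delta$ by the ultrametric inequality; since this quantity equals $-fa$, we conclude $v\delta = vfa$, i.e. $v(a-b) = vfa$.

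The only mildly delicate step is the valuation bookkeeping in the $\omega$-increasing setting: one must be careful that $v(\varepsilon^\nu) = \nu \cdot v\varepsilon$ is interpreted correctly for $\nu \in \mathbf{N}[\sigma]$ (so that $\varepsilon^{\sigma^k}$ has valuation $\sigma^k \cdot v\varepsilon$, using $\sigma^{-1}\mathcal{O} = \mathcal{O}$), and that $\omega$-increasing gives $\nu \cdot \alpha > \alpha$ whenever $\alpha > 0$ and $\nu \in \mathbf{N}[\sigma] \setminus \{0,1\}$ — this is immediate from the definition but is the crucial input that makes the argument work without any further hypothesis. Everything else is a routine transcription of the classical argument.
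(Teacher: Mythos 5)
Your proof is correct and follows essentially the same approach as the paper's: Taylor expansion combined with the $\omega$-increasing valuation estimate to isolate the linear term $f'\cdot h$ as the dominant one. The only differences are cosmetic — you separate the uniqueness and the valuation claim into two runs of the same argument, and you verify $vf'(b)=0$ explicitly from $vf'(a)=0$, whereas the paper does both in one pass (taking $vf'b=0$ as read); both versions amount to the same computation.
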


\begin{proof}
We use the Taylor expansion. Let us write:
\[
f\left(x+h\right)=fx+h\cdot f'x+h^{2}\cdot f_{2}x+\ldots+h^{\nu}\cdot f_{\nu}x
\]
Let $a\in\mathcal{O}_{K}$ be such that $fa=0$ and $vf'a=0$. If
$v\left(a-b\right)>0$ and $a\neq b$ then by plugging in $x=a$ and
$h=b-a$ we obtain:
\[
0=fb+h\cdot f'a+h^{2}\cdot f_{2}a+\ldots+h^{\nu}\cdot f_{\nu}a
\]
it follows that $vfb=vh$, whence $fb\neq0$ and $v\left(a-b\right)=vfb$.
\end{proof}
\begin{lem}
\label{lem:(Basic-properties)}(Basic properties)

(1) The ultraproduct of perfect and Henselian Frobenius transformal
valued fields is transformally Henselian.

(2) Every transformally Henselian model is algebraically Henselian.

(3) The directed union of transformally Henselian models of $\VFA$
is transformally Henselian.

(4) The intersection of a family of transformally Henselian models,
within some fixed model, is transformally Henselian.

(5) Let $K$ be a model of $\VFA$ which is transformally Henselian.
Then its twisted reducts $\left(K,\sigma^{n}\circ\phi^{m}\right)$
for $0<n\in\mathbf{N}$ and $m\in\mathbf{Z}$ are transformally Henselian.
\end{lem}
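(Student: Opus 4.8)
The plan is to dispatch (2)--(4) by soft considerations around the fact that transformal Henselianity is axiomatised by a scheme of $\forall\exists$-sentences, to get (1) by specialising a difference polynomial over a Frobenius field to an ordinary one, and to get (5) by a substitution trick; only (5) is genuinely non-formal. The common observation is this: for a fixed ``shape'' $s$ of difference polynomial (fixed finite support in $\mathbf{N}[\sigma]$ and a degree bound) and a fixed $m\in\mathbf{Z}$, the assertion ``in the Frobenius twist $(K,\sigma\phi^{m})$ every residual simple root of a shape-$s$ difference polynomial lifts to an integral root'' is a $\forall\exists$-sentence of $\mathcal{L}_{\VFA}$, namely $\forall(\text{coefficients})\,\forall a\,[(vf(a)>0\wedge vf'(a)=0)\to\exists b\,(f(b)=0\wedge v(a-b)>0)]$, where $f'$ is the transformal derivative (\ref{subsec:transformder}). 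Thus transformal Henselianity is a $\forall\exists$-axiom scheme. Item (2) is then immediate: an ordinary $f\in\mathcal{O}_K[x]$ is an algebraic difference polynomial whose transformal derivative is the usual derivative (\ref{subsec:transformder}), so transformal Henselianity applied with $m=0$ produces the Hensel lift, and uniqueness is the preceding uniqueness-of-Hensel-lifts Lemma. For (3): the models of $\VFA$ are closed under directed unions (their axioms are $\forall$ or $\forall\exists$), and $\forall\exists$-sentences pass to directed unions, so the directed union of transformally Henselian models is a transformally Henselian model.

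For (4), let $(K_i)_{i\in I}$ be transformally Henselian submodels of a fixed model $M$ of $\VFA$ and put $K=\bigcap_i K_i$. First $K$ is again a model of $\VFA$: it is a valued subfield of $M$, it is perfect and inversive because $p$-th roots and $\sigma^{-1}$ are unique in $M$, and it is $\omega$-increasing because $\Gamma_K\subseteq\Gamma_M$ with the induced order. Given $f\in\mathcal{O}_K[x]_\sigma$ with a residual simple root $a\in\mathcal{O}_K$, the data $f,a$ lie in every $\mathcal{O}_{K_i}$, so transformal Henselianity of $K_i$ yields $b_i\in\mathcal{O}_{K_i}\subseteq\mathcal{O}_M$ with $f(b_i)=0$ and $v(a-b_i)>0$; by uniqueness of Hensel lifts \emph{in $M$} all the $b_i$ coincide with a single $b\in\mathcal{O}_M$, and $b=b_i\in K_i$ for every $i$ forces $b\in K$. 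Since the Frobenius twists of $K$ are the intersections of the corresponding twists of the $K_i$, the same argument applies to them, so $K$ is transformally Henselian.

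For (1): the ultraproduct is perfect and inversive (Frobenius is an automorphism of each perfect factor), and --- assuming, as one must, that the prime powers $q_i=p^{n_i}$ tend to infinity along the ultrafilter --- it is $\omega$-increasing, since $\phi$ acts on the value group as multiplication by $p$; hence it is a model of $\VFA$. By {\L}o{\'s}'s theorem it is enough to verify each axiom of transformal Henselianity, for a shape $s$ and a twist index $m$, in almost every factor, and the $\phi^{m}$-twisted factor $(F_i,\phi^{n_i+m})$ has exponent $n_i+m\ge 1$ for a set of indices in the ultrafilter. So I am reduced to: a Frobenius transformal valued field $(F,\phi^{n})$ with $F$ perfect and Henselian of characteristic exponent $p$ and $n\ge 1$ has the residual-root-lifting property. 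But a $\phi^{n}$-difference polynomial $g$ of order $N$, evaluated at $x$, is the value of the ordinary polynomial $G(y):=g(y,y^{p^{n}},\dots,y^{p^{nN}})\in\mathcal{O}_F[y]$; since the derivatives of the Frobenius powers vanish in characteristic $p$, one gets $G'(y)=g_1(y,y^{p^{n}},\dots)$, so a residual simple root of $g$ at $a$ is exactly a residual simple root of $G$ at $a$, and ordinary Hensel in $F$ lifts it.

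Finally (5), the only genuinely non-formal item. Fix $n\ge 1$, $l\in\mathbf{Z}$ and put $\psi=\sigma^{n}\phi^{l}$. One checks $(K,\psi)$ is a model of $\VFA$: $\psi$ acts on $\Gamma_K$ as $\alpha\mapsto p^{l}\sigma^{n}\alpha$, which is $\omega$-increasing because $\sigma$ outgrows every integer multiple. Since the Frobenius twists of $(K,\psi)$ are exactly the fields $(K,\sigma^{n}\phi^{l'})$, $l'\in\mathbf{Z}$, it suffices to show each of these has the residual-root-lifting property. Given $l'$, choose $m\in\mathbf{Z}$ with $mn\le l'$ (say $m=-|l'|$) and set $\tau=\sigma\phi^{m}$; then $(K,\tau)$ is a Frobenius twist of the transformally Henselian field $K$, hence transformally Henselian. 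The point is that applying $(\sigma^{n}\phi^{l'})^{k}$ to $x$ gives $(x^{\sigma^{nk}})^{p^{l'k}}=(x^{\tau^{nk}})^{p^{(l'-mn)k}}$ with $l'-mn\ge 0$, because $x^{\tau^{nk}}=(x^{\sigma^{nk}})^{p^{mnk}}$; substituting these $p$-power monomials in the $\tau$-difference variables $x,x^{\tau},\dots,x^{\tau^{nN}}$ turns a $(\sigma^{n}\phi^{l'})$-difference polynomial $g$ of order $N$ into a $\tau$-difference polynomial $h\in\mathcal{O}_K[x]_\tau$ with $h(b)=g(b)$ for all $b$. Crucially the substitution leaves $x=x^{\tau^{0}}$ untouched, so the transformal derivatives $h'$ and $g'$ agree at $a$, and a residual simple root of $g$ at $a$ is a residual simple root of $h$ at $a$; transformal Henselianity of $(K,\tau)$ then supplies $b\in\mathcal{O}_K$ with $h(b)=0$ and $v(a-b)>0$, whence $g(b)=h(b)=0$ (uniqueness being again the preceding Lemma). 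The main obstacle is exactly this step: finding, for each twist $\sigma^{n}\phi^{l'}$, one Frobenius twist $\tau=\sigma\phi^{m}$ of $K$ into whose difference-polynomial ring $g$ embeds via monomial substitutions that preserve the leading variable --- and hence preserve the simple-root hypothesis --- the inequality $mn\le l'$ being precisely what forces all the needed Frobenius exponents to be non-negative.
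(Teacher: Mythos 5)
Your argument is correct and follows essentially the paper's strategy, but it is considerably more explicit in two places where the paper is terse.

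For parts (2)--(4) you and the paper do the same thing: observe that transformal Henselianity is a $\forall\exists$ scheme with unique witnesses (by the preceding uniqueness-of-Hensel-lifts Lemma), which immediately gives closure under directed unions and, together with the observation that intersections of models of $\VFA$ inside a fixed model are again models of $\VFA$, closure under intersections.

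For part (1) the paper works by choosing $q$ so large that the substitution $\sigma\mapsto q$ is injective on the finite support $I$ of $f$ and $f'$, and then invokes ordinary Henselianity; you bypass the injectivity remark and instead observe directly that since the ordinary derivative of $y^{p^{nk}}$ vanishes in characteristic $p$ when $nk\geq 1$, the ordinary derivative of the substituted polynomial $G(y)=g(y,y^{p^n},\dots)$ coincides with the substituted transformal derivative $g_1(y,y^{p^n},\dots)$. This is a cleaner way to see why the Hensel hypotheses match, and makes it transparent that the injectivity of $\sigma\mapsto q$ on $I$ is not in fact needed; the two variants are essentially equivalent in force.

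For part (5) the paper simply says ``(5) is obvious,'' while you supply the genuine content: given $\psi=\sigma^n\phi^{l}$ and any Frobenius twist $\sigma^n\phi^{l'}$ of $(K,\psi)$, choose $m\in\mathbf{Z}$ with $mn\leq l'$ so that rewriting $x^{(\sigma^n\phi^{l'})^k}=(x^{\tau^{nk}})^{p^{(l'-mn)k}}$ for $\tau=\sigma\phi^{m}$ uses only non-negative Frobenius exponents, and the rewriting fixes the zeroth variable so that transformal derivatives transport correctly. This substitution lemma is exactly what makes ``obvious'' true, and spelling it out is a genuine improvement on the paper's exposition. The only point worth flagging is that your $m=-|l'|$ prescription should be sanity-checked: it does satisfy $mn\le l'$ for all $n\geq 1$, since if $l'\geq 0$ then $mn\leq 0\leq l'$, while if $l'<0$ then $mn=l'n\leq l'$; any $m\leq\lfloor l'/n\rfloor$ works just as well.

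Everything checks out; the proposal is a correct, slightly more detailed rendering of the paper's own proof.
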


\begin{rem}\label{rem:alg-vs-trans-henselization} 
    (1) refers to perfect Henselian valued fields endowed with a power of the Frobenius endomorphism so as to become transformal valued fields. It is this statement which \emph{motivates} the definition of transformal Henselianity. See also Remark \ref{rem:transformal-analogues}.
\end{rem}

\begin{proof}
The class of models of $\VFA$ which are transformally Henselian
admits a $\forall\exists$ axiomatization, and the witness is unique
if it exists by uniqueness of Hensel lifts, so (3) and (4) follow.
The transformal Hensel lemma, when specialized to algebraic polynomials,
yields Hensel's lemma; hence (2).

Now let $K$ be an ultraproduct of perfect and Henselian Frobenius
transformal valued fields $\left(K_{\alpha},\sigma_{\alpha}\right)$
where $\sigma_{\alpha}=x \mapsto x^{q_{\alpha}}$ for prime powers $q_{\alpha}$;
then $K$ is a model of $\VFA$. Fix a difference polynomial
$fx\in\mathcal{O}_{K}\left[x\right]_{\sigma}$ and an element $a\in\mathcal{O}_{K}$.
The difference polynomials $f$ and $f'$ are both supported on a
finite subset $I\subset\mathbf{N}\left[\sigma\right]$. If $q$ is
a prime power which is sufficiently large then the substitution map
$\mathbf{N}\left[\sigma\right]\to\mathbf{N}$ given by $\sigma\mapsto q$
is injective when restricted to $I$; so as $K$ is the ultraproduct
of Henselian fields it obeys the transformal analogue of Hensel's
lemma. Thus if $K$ is of characteristic zero, then it is transformally
Henselian. In characteristic $p$, for fixed $m\in\mathbf{Z}$, the
Frobenius twist $\left(K,\sigma\circ\phi^{m}\right)$ is the ultraproduct
of the Frobenius transformal valued fields $\left(K_{\alpha},\sigma_{\alpha}\circ\phi^{m}\right)$,
so the same argument applies. This gives (1), and (5) is obvious.
\end{proof}
\begin{lem}
\label{newton}Let $K$ be a model of $\VFA$. Then the following
conditions are equivalent:

(1) (``Hensel's Lemma'') The model $K$ is transformally Henselian

(2) (``Newton's Lemma'') The model $K$ and all of its Frobenius
twists obey the following requirement. Let $fx\in\mathcal{O}_{K}\left[x\right]_{\sigma}$
be a difference polynomial, and assume that an element $a\in\mathcal{O}_{K}$
is found with $vfa>2\cdot vf'a$. Then there exists an integral root
$b\in\mathcal{O}_{K}$ of $fx$ such that $v\left(a-b\right)=vfa-vf'a$.

In case (2) the element $b$ is uniquely determined subject to the
constraint that $v\left(a-b\right)\geq vfa-vf'a$. Furthermore, in
the settings of (2) even if $K$ is merely assumed to be algebraically
Henselian, one can find at least some $b\in\mathcal{O}_{K}$ with
$vfb-vfa\geq\left(vfa-vf'a\right)^{\sigma}$ and $v\left(a-b\right)=vfa-vf'a$.
\end{lem}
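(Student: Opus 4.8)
The plan is to establish the equivalence (1) $\Leftrightarrow$ (2) by the standard Newton–Hensel iteration, adapted to the transformal setting, and then extract the uniqueness and approximation addenda along the way. First I would prove (2) $\Rightarrow$ (1): given $a$ with $vfa>0$ and $vf'a=0$, we have $vfa>2\cdot vf'a=0$, so Newton's Lemma directly produces $b$ with $fb=0$ and $v(a-b)=vfa-vf'a=vfa>0$, which is the Hensel lift; the residue class of $b$ agrees with that of $a$ since $v(a-b)>0$. For (1) $\Rightarrow$ (2), the idea is a successive approximation scheme: starting from $a=a_0$, define $a_{n+1}=a_n-h_n$ where $h_n$ is chosen to cancel the leading term of the Taylor expansion, i.e. roughly $h_n\approx f(a_n)/f'(a_n)$. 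Using the Taylor expansion $f(x+h)=fx+h\cdot f'x+h^2 f_2 x+\cdots+h^\nu f_\nu x$ from \ref{subsec:transformder}, the point is that the higher transformal derivatives $f_\nu$ for $\nu\neq 0,1$ have coefficients in $\mathcal{O}_K$, so the correction terms $h^\nu f_\nu(a_n)$ are dominated when $vh_n$ is large; here the $\omega$-increasing hypothesis enters crucially, since for $\nu=\sigma^k\cdot(\text{stuff})$ with large transformal content, $vh^\nu\gg vh$. The hard part — and I expect this to be the main obstacle — is controlling convergence: because of the $\omega$-increasing phenomenon, the naive Newton error estimate must be replaced by one involving the Frobenius/transformal exponents, and one must check that the sequence $v(a_{n+1}-a_n)$ is strictly increasing and cofinal enough to force $f(a_n)\to 0$; moreover $K$ need not be spherically complete, so one cannot simply take a limit — instead one must show the scheme stabilizes, i.e. that at some finite stage $f'(a_n)$ acquires valuation $0$ (after the first step $v(a_1-a_0)=vfa-vf'a>vf'a$, so $vf'(a_1)=vf'(a_0)$ by the ultrametric inequality applied to $f'$, and $vf(a_1)$ has jumped; iterating, $vf(a_n)-vf'(a_n)$ strictly increases while $vf'(a_n)$ stays fixed), reducing to the genuine Hensel situation which (1) resolves exactly.

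For the uniqueness clause in (2): if $b,b'$ both satisfy $fb=fb'=0$ and $v(a-b),v(a-b')\geq vfa-vf'a=:\gamma$, set $h=b-b'$; plugging $x=b'$, $h=b-b'$ into the Taylor expansion gives $0=f(b)=f(b')+h f'(b')+h^2 f_2(b')+\cdots = h f'(b')+(\text{higher order in }h)$. Since $v(b-b')\geq\gamma>0$ one checks $vf'(b')=vf'(a)$ via the ultrametric estimate for the polynomial $f'$ (its coefficients lie in $\mathcal{O}_K$ and the argument moves by $\geq\gamma>0$, while $vf'(a)=$ the relevant bound), hence $vf'(b')=vf'a$; and the higher terms satisfy $v(h^\nu f_\nu(b'))> vh$ by $\omega$-increasingness together with $v f_\nu(b')\geq 0$. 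Therefore $v(h f'(b'))=vh+vf'a$ is strictly smallest among the terms unless $h=0$, contradicting the equation $0=hf'(b')+(\text{strictly larger valuation terms})$; so $h=0$, i.e. $b=b'$.

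For the final addendum — that even when $K$ is merely algebraically Henselian one can find $b\in\mathcal{O}_K$ with $v(a-b)=vfa-vf'a$ and $vfb-vfa\geq(vfa-vf'a)^\sigma$ — the plan is to perform a single Newton step using only ordinary (algebraic) Hensel's lemma: write $g(y)=f(a+y\cdot u)$ where $u\in K$ has valuation $vfa-vf'a$ exactly, renormalize so that the linear coefficient becomes a unit (using $vf'a$), and apply algebraic Hensel's lemma in the single variable $y$ to the reduction mod the appropriate ideal to get $y_0\in\mathcal{O}_K$; then $b=a+y_0 u$ works, with $v(a-b)=vu=vfa-vf'a$, and the residual gain in $vfb$ comes from the next term of the Taylor expansion, whose contribution is governed by $vh^\nu f_\nu(a)$ with $\nu\geq\sigma$ appearing among the supported monomials — whence the exponent $(vfa-vf'a)^\sigma$. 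The crux throughout is bookkeeping with the transformal degree map and the inequality $v(h^\nu)\geq v(h)^{\sigma}$ or $\geq n\cdot v(h)$ depending on the shape of $\nu$, which is exactly where $\omega$-increasingness does the work that spherical completeness does in the classical Newton–Hensel argument.
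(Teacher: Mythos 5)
Your direction $(2)\Rightarrow(1)$ and the uniqueness clause are both fine, and your treatment of the final addendum (the merely algebraically Henselian case) is essentially the argument the paper uses. However, the main implication $(1)\Rightarrow(2)$ as you propose it has a genuine gap.

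You set up a Newton iteration $a_{n+1}=a_n-h_n$ and you correctly identify the obstacle: without spherical completeness, the pseudo-Cauchy sequence $(a_n)$ need not have a limit, so you cannot conclude $f(a_n)\to 0$ yields an actual root. Your proposed escape is that the scheme ``stabilizes, i.e.\ that at some finite stage $f'(a_n)$ acquires valuation $0$,'' but your own parenthetical analysis refutes this: you correctly observe that $vf'(a_{n+1})=vf'(a_n)$ is \emph{fixed} throughout the iteration (because $v(a_{n+1}-a_n)>vf'(a_n)$ and the derivative of $f'$ has integral coefficients). So if $vf'(a)>0$ to begin with, the iteration never produces $vf'(a_n)=0$, and there is no reduction to the genuine Hensel situation at any finite stage. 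The iteration strictly improves $vf(a_n)$ forever, and without completeness you are stuck.

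The actual argument, both here and in the paper, is a \emph{single renormalization step}---exactly the one you already describe for the addendum, but applied uniformly. Set $h=f(a)/f'(a)$ and let
\[
g(y)=\frac{f(a+hy)}{f(a)}.
\]
The hypothesis $vfa>2vf'a$ guarantees that $g$ has integral coefficients, and the Taylor expansion exhibits it as $g(y)=1+y+\widetilde{g_0}(y)+\widetilde{g_1}(y)$ where $\widetilde{g_0}$ is the algebraic part in degrees $\geq 2$ and $\widetilde{g_1}$ is the genuinely transformal part, \emph{both} with coefficients in $\mathcal{M}$. Thus $\bar{g}(y)=1+y$ and $y=-1$ is a simple residual root; transformal Henselianity lifts it directly to a root $c$ of $g$ with $v(c+1)>0$, and $b=a+hc$ satisfies $fb=0$ with $v(a-b)=vh=vfa-vf'a$. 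No iteration or convergence argument is needed; the change of variables does all the work. (For the algebraically Henselian case, you lift the root of the \emph{algebraic truncation} $1+y+\widetilde{g_0}(y)$ instead, which only controls $vgc$ up to $(vh)^\sigma$---whence the weaker conclusion.) You had this mechanism in hand for the addendum; apply it to the main implication as well and drop the iteration entirely.
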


\begin{proof}
(1) is a special case of (2). For the implication $\left(1\right)\Rightarrow\left(2\right)$,
use once again the Taylor expansion:
\[
f\left(x+h\right)=f\left(x\right)+f'\left(x\right)\cdot h+f_{2}\left(x\right)\cdot h^{2}+\ldots+f_{\nu}\left(x\right)\cdot h^{\nu}
\]
Let us assume that an element $a\in\mathcal{O}_{K}$ is found so that
$vfa>2\cdot vf'a$. Let $x=a$ and $h=\frac{f\left(a\right)}{f'\left(a\right)}$;
and let $y$ be a formal variable. Then we have:
\[
f\left(a+h\cdot y\right)=\left(1+y\right)\cdot f\left(a\right)+\left(hy\right)^{2}\cdot f_{2}\left(a\right)+\ldots+\left(hy\right)^{\nu}\cdot f_{\nu}\left(a\right)
\]
Since $vfa>2vf'a$ dividing this expression by $fa$ leaves all coefficients
integral. Letting
\[
g\left(y\right)=\frac{f\left(a+hy\right)}{f\left(a\right)}
\]
we have
\[
g\left(y\right)=1+y+\widetilde{g_{0}\left(y\right)}+\widetilde{g_{1}\left(y\right)}
\]
where $\widetilde{g_{0}\left(y\right)}$ is an algebraic polynomial,
all of whose coefficients lie in the maximal ideal, and $\widetilde{g_{1}\left(y\right)}$
is a difference polynomial with no algebraic part, all of whose coefficients
lie in the maximal ideal as well (More precisely, $\widetilde{g_0}$ is obtained from $g$ by restricting to monomials of the form $x^n$ for $0 < n \in \mathbf{N}$, and $\widetilde{g_1}$ is obtained from $g$ by restricting to monomials of the form $x^{\nu}$ where $1 \ll \nu \in \mathbf{N}\left[\sigma\right]$).

If $K$ is transformally Henselian
then there exists an integral root $c$ of $g\left(y\right)$ with
residue class equal to $-1$ and setting $b=a+hc$ gives the equivalence
of (1) and (2). If $K$ is merely assumed to be algebraically Henselian,
then we can still find $c$ with $vgc>\left(vh\right)^{\sigma}$ by
lifting an integral root of the algebraic polynomial of $1+y+\widetilde{g_{0}\left(y\right)}$.
In this case, setting $b=a+hc$ gives the second part of the assertion.
\end{proof}
 
We will say that a class $\mathcal{C}$ of models of $\VFA$ is {\em transitive in triplets} if
whenever $\left(K_{21},K_{20},K_{10}\right)$ is 
 a triplet of models of $\VFA$,  then $K_{20} \in \mathcal{C}$  if and only if $K_{21} \in 
 \mathcal{C}$ and $K_{10} \in 
 \mathcal{C}$ 
 (The word "transitive" is a usually property of binary relations, not unary, but  we have in mind here the
  relation between the valued field and residue field.)
 
\begin{lem}
\label{transitivetriplets}
 The class of transformally Henselian models of $\VFA$ is transitive in triplets.
% In other words, 
%Let $\left(K_{21},K_{20},K_{10}\right)$
%be a triplet of models of $\VFA$; then $K_{20}$ is transformally
%Henselian if and only if $K_{21}$ and $K_{10}$ are transformally
%Henselian.
\end{lem}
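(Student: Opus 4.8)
The plan is to upgrade to the transformal setting the classical fact that Henselianity passes between a composite valuation and its factors. Fix a triplet $(K_{21},K_{20},K_{10})$, with the short exact sequence $0\to\Gamma_{10}\to\Gamma_{20}\to\Gamma_{21}\to 0$ and $\mathcal{O}_{20}=\mathcal{O}_{21}\times_{K_{1}}\mathcal{O}_{10}$, so that $\mathcal{O}_{20}\subseteq\mathcal{O}_{21}$ and, writing $\pi\colon\mathcal{O}_{21}\to K_{1}$ for the residue map, $\mathcal{M}_{20}=\pi^{-1}(\mathcal{M}_{10})$ and $\mathcal{M}_{21}=\pi^{-1}(0)$. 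I will use three elementary facts: (i) $\mathcal{M}_{21}\subseteq\mathcal{M}_{20}\subseteq\mathcal{O}_{20}$ and $\mathcal{O}_{20}^{\times}=\mathcal{O}_{21}^{\times}\cap\pi^{-1}(\mathcal{O}_{10}^{\times})$; (ii) for $x\in\mathcal{O}_{21}$, $v_{20}(x)>0$ iff $v_{21}(x)>0$ or ($v_{21}(x)=0$ and $v_{10}(\pi x)>0$), and $v_{20}(x)>\Gamma_{10}$ iff $v_{21}(x)>0$, while $v_{20}(x)=v_{10}(\pi x)$ when $v_{21}(x)=0$; (iii) the residue map and transformal differentiation commute, and difference polynomials with integral coefficients respect congruences modulo maximal ideals. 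Since the Frobenius $\phi$ preserves every valuation ring in sight, the Frobenius twist of a triplet is a triplet, so it will suffice to check the plain transformal Hensel property in each direction, the twisted instances following by the identical argument applied to $(K,\sigma\phi^{m})$ together with Lemma~\ref{lem:(Basic-properties)} and Lemma~\ref{newton}.

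Forward direction: assume $K_{20}$ transformally Henselian. For $K_{10}$, given $\bar f\in\mathcal{O}_{10}[x]_{\sigma}$ and $\bar a\in\mathcal{O}_{10}$ with $v_{10}(\bar f\bar a)>0$, $v_{10}(\bar f'\bar a)=0$, I would lift the finitely many coefficients of $\bar f$ and $\bar a$ through the surjection $\mathcal{O}_{20}\twoheadrightarrow\mathcal{O}_{10}$ to $f\in\mathcal{O}_{20}[x]_{\sigma}$, $a\in\mathcal{O}_{20}$; by (i)--(iii), $a$ is a residual simple root of $f$ for $v_{20}$, so transformal Henselianity of $K_{20}$ gives $b\in\mathcal{O}_{20}$ with $fb=0$, $v_{20}(a-b)>0$, and $\pi b$ is the required root over $K_{10}$. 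For $K_{21}$, given $f\in\mathcal{O}_{21}[x]_{\sigma}$ and $a\in\mathcal{O}_{21}$ with $v_{21}(fa)>0$, $v_{21}(f'a)=0$, I would set $h=fa/f'a$ and form the renormalization $g(y)=f(a+hy)/fa$. As in the proof of Lemma~\ref{newton}, $g(y)=1+y+\tilde g_{0}(y)+\tilde g_{1}(y)$ with all coefficients of $\tilde g_{0},\tilde g_{1}$ in $\mathcal{M}_{21}$; by (i) these lie in $\mathcal{M}_{20}$ and the coefficients $1,1$ lie in $\mathcal{O}_{20}$, so in fact $g\in\mathcal{O}_{20}[y]_{\sigma}$ — this is the crucial point. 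Since $g(-1)\in\mathcal{M}_{21}$ and $g'(-1)\equiv1\pmod{\mathcal{M}_{21}}$, one has $v_{20}(g(-1))>0=2v_{20}(g'(-1))$, so Newton's lemma for $K_{20}$ produces $c\in\mathcal{O}_{20}$ with $g(c)=0$ and $v_{20}(c+1)=v_{20}(g(-1))$. By (ii), $v_{20}(g(-1))>\Gamma_{10}$, hence $v_{21}(c+1)>0$, and then $b=a+hc$ satisfies $fb=fa\cdot g(c)=0$ and $v_{21}(b-a)=v_{21}(h)+v_{21}(c)=v_{21}(fa)>0$.

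Converse: assume $K_{21}$ and $K_{10}$ transformally Henselian, and let $f\in\mathcal{O}_{20}[x]_{\sigma}$, $a\in\mathcal{O}_{20}$ with $v_{20}(fa)>0$, $v_{20}(f'a)=0$. First I would reduce to $v_{21}(fa)>0$: applying transformal Henselianity of $K_{10}$ to $\bar f\in\mathcal{O}_{10}[x]_{\sigma}$ at $\bar a=\pi a$ (a residual simple root for $v_{10}$ by (i)--(iii)) yields $\bar b\in\mathcal{O}_{10}$ with $\bar f\bar b=0$, $v_{10}(\bar b-\bar a)>0$; lift $\bar b$ to $b_{0}\in\mathcal{O}_{20}$, so $\pi(fb_{0})=\bar f\bar b=0$ gives $v_{21}(fb_{0})>0$, $\pi(f'b_{0})=\bar f'\bar b\in\mathcal{O}_{10}^{\times}$ gives $v_{20}(f'b_{0})=0$, and $\pi(b_{0}-a)\in\mathcal{M}_{10}$ gives $v_{20}(b_{0}-a)>0$. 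Then I would apply transformal Henselianity of $K_{21}$ to $f$ at $b_{0}$ (note $v_{21}(fb_{0})>0$, $v_{21}(f'b_{0})=0$), obtaining $b\in\mathcal{O}_{21}$ with $fb=0$, $v_{21}(b-b_{0})>0$; then $\pi b=\pi b_{0}\in\mathcal{O}_{10}$ forces $b\in\mathcal{O}_{20}$, and $b-b_{0}\in\mathcal{M}_{21}\subseteq\mathcal{M}_{20}$ gives $v_{20}(b-b_{0})>0$, whence $v_{20}(b-a)>0$ as required. The main obstacle is purely organizational: keeping straight the three nested valuations and checking that the renormalized polynomials land in $\mathcal{O}_{20}$ (which follows from $\mathcal{M}_{21}\subseteq\mathcal{M}_{20}$ and $1\in\mathcal{O}_{20}^{\times}$), together with using the precise valuation estimate of Newton's form of the lemma to promote $v_{20}$-closeness to $v_{21}$-closeness; the Frobenius twists then add only bookkeeping, not substance.
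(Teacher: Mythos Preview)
Your proof is correct and follows essentially the same route as the paper's: both directions use the same two-step lift through $K_{10}$ then $K_{21}$ for the converse, and for the tricky implication $K_{20}\Rightarrow K_{21}$ both perform the Newton renormalization $g(y)=f(a+hy)/fa=1+y+(\text{coeffs in }\mathcal{M}_{21})$, observe that $\mathcal{M}_{21}\subseteq\mathcal{M}_{20}$ forces $g\in\mathcal{O}_{20}[y]_{\sigma}$, and then apply the $K_{20}$-lifting at $-1$ together with the precise valuation estimate to promote $v_{20}$-closeness to $v_{21}$-closeness. The paper packages the Frobenius-twist bookkeeping by introducing the auxiliary notion ``quasi transformally Henselian'' and proving the equivalence for that; your observation that a Frobenius twist of a triplet is again a triplet accomplishes the same thing.
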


\begin{proof}
For the sake of the argument let us say that a field is \textit{quasi
transformally Henselian} if it obeys the transformal analogue of Hensel's
lemma; so $K$ is transformally Henselian if and only if its Frobenius
twists are all quasi transformally Henselian. We will show a slightly
stronger statement, namely that $K_{20}$ is quasi transformally Henselian
if and only if $K_{21}$ and $K_{10}$ are quasi transformally Henselian.
The tricky part is to prove that $K_{21}$ is quasi transformally
Henselian provided that $K_{20}$ is; the other implications are formal.

First assume that $K_{21}$ and $K_{10}$ are quasi transformally
Henselian. We will prove that $K_{20}$ is quasi transformally Henselian.
Let $g_{2}\left(x\right)\in\mathcal{O}_{K_{20}}\left[x\right]_{\sigma}$
be a difference polynomial, and let $g_{1}\left(x\right)\in\mathcal{O}_{K_{10}}\left[x\right]_{\sigma}$
and $g_{0}\left(x\right)\in K_{0}\left[x\right]_{\sigma}$ be its
reductions under the maps $\mathcal{O}_{K_{20}}\to\mathcal{O}_{K_{10}}$
and $\mathcal{O}_{K_{20}}\to K_{0}$, respectively. We wish to prove
that every simple root of $g_{0}$ lifts to an integral root of $g_{2}$.
Since $K_{10}$ is quasi transformally Henselian, every simple root
of $g_{0}$ lifts to an integral root of $g_{1}$. Viewing $g_{1}$
as a difference polynomial with coefficients in $K_{10}$ and $g_{2}$
as a difference polynomial with coefficients in $\mathcal{O}_{K_{21}}$,
and using the fact that $K_{21}$ is quasi transformally Henselian,
this root can be lifted further to a root of $g_{0}$ lying in $\mathcal{O}_{K_{21}}$;
but in fact this root will lie in $\mathcal{O}_{K_{20}}$, since $\mathcal{O}_{K_{20}}$
is precisely the pullback of $\mathcal{O}_{K_{10}}$ under the residue
map $\mathcal{O}_{K_{21}}\to K_{10}$.

Now assume that $K_{20}$ is quasi transformally Henselian. We will
prove that $K_{10}$ and $K_{21}$ are quasi transformally Henselian.

First we verify that $K_{10}$ is quasi transformally Henselian. Let
$g_{10}\left(x\right)\in\mathcal{O}_{K_{10}}\left[x\right]_{\sigma}$
be a difference polynomial and suppose that $a\in\mathcal{O}_{K_{10}}$
is such that $vg_{10}a>0$ and $vg_{10}^{'}a=0$. Let $b\in\mathcal{O}_{K_{20}}$
and $g_{20}\left(x\right)\in\mathcal{O}_{K_{20}}\left[x\right]$ lift
$a$ and $g_{10}$ respectively; then since $K_{20}$ is quasi transformally
Henselian we can find $c\in\mathcal{O}_{K_{20}}$ with the same residue
class as that of $b$ and $g_{20}\left(c\right)=0$. The image of
$c$ in $\mathcal{O}_{K_{10}}$ is then a root of $g_{20}$ and has
the same residue class as that of $a$; thus $K_{10}$ is quasi transformally
Henselian.

Now we prove that $K_{21}$ is quasi transformally Henselian. So assume
$a\in\mathcal{O}_{K_{21}}$ and $f\left(x\right)\in\mathcal{O}_{K_{21}}\left[x\right]_{\sigma}$
are such that $v_{21}fa>0$ and $v_{21}f'a=0$. As in the proof of
Lemma \ref{newton}, after an affine change of variables, we may assume
that $f\left(x\right)=1+x+c_{2}x^{2}+\ldots+c_{\sigma}x^{\sigma}+\ldots$
where the $c_{\nu}$ are all in the maximal ideal $\mathcal{M}_{21}$
of $\mathcal{O}_{K_{21}}$. So the residue image of $f\left(x\right)$
in $K_{1}\left[x\right]_{\sigma}$ is the polynomial $1+x$; it has
a unique root in $K_{1}$, namely $-1$. This implies that $\text{res}_{21}\left(a\right)=-1$
actually lies in $\mathcal{O}_{K_{10}}$; since $\mathcal{O}_{K_{20}}$
is the pullback of $\mathcal{O}_{K_{10}}$ under the residue map $\mathcal{O}_{K_{21}}\to K_{1}$
we learn that the element $a$, lies, in fact in the smaller ring
$\mathcal{O}_{K_{20}}$. Furthermore, using the inclusion $\mathcal{M}_{21}\subset\mathcal{M}_{20}$
we find that the difference polynomial $f\left(x\right)$ lies in
$\mathcal{O}_{K_{20}}\left[x\right]_{\sigma}$. Finally, as $v_{21}fa>0$
and $v_{21}f'a=0$ we learn that $v_{20}fa\gg v_{20}f'a$; in particular
we have $v_{20}fa>2\cdot v_{20}f'a$. By Lemma \ref{newton} there
is a unique $b\in\mathcal{O}_{K_{20}}$ with $fb=0$ and $v_{20}\left(a-b\right)=v_{20}fa-v_{20}f'a$.
Then $v_{21}\left(a-b\right)=v_{21}fa-v_{21}f'a>0$; it follows that
$K_{21}$ is quasi transformally Henselian.
\end{proof}

\subsubsection{~}
\begin{lem}
\label{complete-t-henselian}Let $K$ be a model of $\VFA$
and let $\widehat{K}$ be the completion of $K$.

(1) Let us assume that $K$ is transformally Henselian. Then $K$
is transformally algebraically closed in $\widehat{K}$.

(2) Conversely, let us assume that $K$ is transformally Archimedean,
algebraically Henselian, and transformally algebraically closed in
$\widehat{K}$; then $K$ is transformally Henselian.
\end{lem}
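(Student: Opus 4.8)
The plan is to treat the two directions separately; both rest on the transformal Newton approximation of Lemma \ref{newton}.

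For (1), suppose $a\in\widehat{K}$ is transformally algebraic over $K$. By Lemma \ref{twist}, after replacing $\sigma$ by a Frobenius twist --- under which $K$ stays transformally Henselian and $\widehat{K}$ stays its completion, and under which membership in $K$ is unchanged --- we may pick $g(x)\in K[x]_{\sigma}$ with $g(a)=0$ and $g'(a)\neq0$. A harmless change of variables puts $g$ in $\mathcal{O}_K[x]_{\sigma}$; since $K$ is dense in $\widehat{K}$ and the extension is immediate we may then choose $a_{0}\in\mathcal{O}_K$ with $v(a-a_{0})$ as large as we wish, in particular $v(a-a_0)>2vg'(a)$. The Taylor expansion gives $vg'(a_0)=vg'(a)$ and $vg(a_0)=v(a_0-a)+vg'(a)$, so $vg(a_0)>2vg'(a_0)$. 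Applying Lemma \ref{newton} inside $K$ (which is transformally Henselian) produces $b\in\mathcal{O}_K$ with $g(b)=0$ and $v(a_0-b)=vg(a_0)-vg'(a_0)=v(a_0-a)$. But $a$ itself satisfies $g(a)=0$ and $v(a_0-a)=vg(a_0)-vg'(a_0)$, so by the uniqueness clause of Lemma \ref{newton} --- which is a pure Taylor-expansion argument, valid in any model of $\VFA$ and in particular in $\widehat{K}$ --- we conclude $a=b\in K$. Hence $K$ is transformally algebraically closed in $\widehat{K}$.

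For (2) it suffices to verify the Newton criterion (2) of Lemma \ref{newton} for $K$: the three hypotheses on $K$ are each invariant under Frobenius twisting (the valued field is unchanged, a convex $\sigma$-invariant subgroup of the perfect group $\Gamma_K$ is automatically perfect and hence $\tau$-invariant, and transformal algebraicity over $K$ does not change under a twist), so the criterion then holds for all twists as well and $K$ is transformally Henselian. So fix $f(x)\in\mathcal{O}_K[x]_{\sigma}$ and $a_0:=a\in\mathcal{O}_K$ with $vfa>2vf'a$; put $\gamma=vf'a\geq0$ and $\delta_0=vfa-vf'a$, so $\delta_0>\gamma\geq0$. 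Using that $K$ is algebraically Henselian, the last assertion of Lemma \ref{newton} lets us recursively choose $a_{n+1}\in\mathcal{O}_K$ with $v(a_n-a_{n+1})=\delta_n:=vfa_n-vf'a_n$ and $vfa_{n+1}\geq vfa_n+\delta_n^{\sigma}$. An easy induction (Taylor expansion of $f'$) shows $vf'a_n=\gamma$ for all $n$, hence $\delta_n=vfa_n-\gamma$, the hypothesis $\delta_n>\gamma$ persists, and $\delta_{n+1}\geq\delta_n+\delta_n^{\sigma}$; thus $\delta_n^{}\geq\delta_0^{\sigma^n}$ and, since $\delta_0>0$ and $\Gamma_K$ is transformally Archimedean, the sequence $(\delta_0^{\sigma^n})_n$ and a fortiori $(\delta_n)_n$ is cofinal in $\Gamma_K=\Gamma_{\widehat K}$.

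The closed balls $B_n$ of valuative radius $\delta_n$ about $a_n$ are then nested with unbounded radii, so by completeness of $\widehat{K}$ they share a point $b\in\widehat{K}$. From $v(b-a_n)\geq\delta_n$ and the Taylor expansion, $vf(b)\geq\delta_n+\gamma$ for every $n$, so $f(b)=0$; thus $b$ is transformally algebraic over $K$, and by hypothesis $b\in K$. Since $b\in B_1$ while $v(a-a_1)=\delta_0<\delta_1$, we get $v(a-b)=\delta_0=vfa-vf'a$, so $b$ is the Newton root demanded by Lemma \ref{newton}(2); hence $K$ is transformally Henselian. The one genuinely substantive point is the inequality $\delta_{n+1}\geq\delta_n+\delta_n^{\sigma}$ and its consequence that the radii outrun every bound of $\Gamma_K$: this is precisely where the $\omega$-increasing nature of $\sigma$ and the transformally Archimedean hypothesis enter, and it is what lets the approximating sequence converge in the merely complete (not spherically complete) field $\widehat{K}$, whence the hypothesis that $K$ be transformally algebraically closed in $\widehat{K}$ is exactly what is needed to retrieve the root in $K$. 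The remaining steps --- the affine normalizations and the twist-invariance of the hypotheses --- are routine.
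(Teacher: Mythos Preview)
Your proof is correct and follows essentially the same approach as the paper's. The only organizational difference is in part (2): the paper first establishes transformal Henselianity of the completion $\widehat{K}$ itself (working with the Hensel normalization $vf'a=0$) and then pulls back to $K$ via the observation that a field transformally algebraically closed inside a transformally Henselian extension inherits the property, whereas you run the Newton iteration directly in $K$, pass to the limit in $\widehat{K}$, and invoke the hypothesis to retrieve the root; the underlying mechanism (iterated use of the ``algebraically Henselian'' clause of Lemma~\ref{newton} together with transformal Archimedeanness to force cofinality of the radii) is identical.
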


\begin{proof}
(1) Let $b$ be an element of the completion of $K$ which is transformally
algebraic over $K$; we must show that it lies in $K$. We may assume
that $vb=0$. Replacing $K$ by a Frobenius twist and using Lemma
\ref{twist}, we can find a difference polynomial $fx\in\mathcal{O}_{K}\left[x\right]_{\sigma}$
such that $fb=0$ and $f'b\neq0$. If $a\in\mathcal{O}_{K}$ is sufficiently
close to $b$ then $vfa>2vf'a$. Since $K$ is transformally Henselian,
there is by Newton's lemma (\ref{newton}) an integral root of $fx$
in the open ball of valuative radius $vfa-vf'a$ around $a$; this
root is unique, and is therefore equal to $b$.

(2) Let us assume first that $K=\widehat{K}$ is complete. Suppose
that $fx\in\mathcal{O}_{K}\left[x\right]_{\sigma}$ is given and that
$a\in\mathcal{O}_{K}$ is such that $vfa>0$ and $vf'a=0$. Let $a_{0}=a$.
Since $K$ is algebraically Henselian, by Lemma \ref{newton} we can
find a sequence $a_{0},a_{1},\ldots$ of elements of $\mathcal{O}_{K}$
with $v\left(a_{n+1}-a_{n}\right)=vfa_{n}$ and $vfa_{n+1}\geq\left(vfa_{n}\right)^{\sigma}$.
Since $K$ is transformally Archimedean, the sequence $\left(a_{n}\right)_{n=0}^{\infty}$
Cauchy, and since $K$ is complete, it is convergent; if $b$ is the
limit then $fb=0$ and $v\left(a-b\right)>0$. By applying the same
reasoning to the various twists, we find that $K$ is transformally
Henselian. The general case follows from the fact that a difference
field relatively transformally algebraically closed in a transformally
Henselian extension is itself transformally Henselian, as in \ref{lem:(Basic-properties)}.
\end{proof}
\begin{cor}
\label{absolute-t-hens-in-t-arch}Let $K$ be a model of $\VFA$
which is transformally Archimedean. Then an (absolute) transformal
Henselization of $K$ exists, and it is as advertised by the statement
of Theorem \ref{transformalhenselization}.
\end{cor}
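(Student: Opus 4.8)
The plan is to reduce to the completion and then invoke the converse direction of Lemma \ref{complete-t-henselian}. First I would replace $K$ by its algebraic Henselization $K^{\text{h}}$: it is again a model of $\VFA$, still transformally Archimedean since $K^{\text{h}}/K$ is immediate, and its transformal Henselization (once built) will serve for $K$ as well, because any transformally Henselian extension of $K$ is algebraically Henselian by Lemma \ref{lem:(Basic-properties)}(2) and hence receives $K^{\text{h}}$ uniquely over $K$. So I may assume $K$ is algebraically Henselian. Now the completion $\widehat{K}$ is itself transformally Henselian: it is algebraically Henselian (the completion of a Henselian field is Henselian), transformally Archimedean (an immediate extension of $K$), and trivially transformally algebraically closed in $\widehat{\widehat{K}}=\widehat{K}$ since completion is idempotent; so Lemma \ref{complete-t-henselian}(2) applies. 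I would then let $K^{\text{th}}$ be the relative transformal algebraic closure of $K$ inside $\widehat{K}$ — the set of elements of $\widehat{K}$ transformally algebraic over $K$ — which is a model of $\VFA$ (perfectness and inversiveness pass to relative transformal algebraic closures), is transformally algebraic over $K$ by construction and immediate over $K$ as it sits inside the immediate extension $\widehat{K}$. It is algebraically Henselian: it is relatively separably algebraically closed in the Henselian field $\widehat{K}$ (a separably algebraic element over $K^{\text{th}}$ is transformally algebraic over $K$, hence lies in $K^{\text{th}}$), so $(K^{\text{th}})^{\text{h}}$ embeds over $K^{\text{th}}$ into $\widehat{K}$ and, being separably algebraic, lands in $K^{\text{th}}$. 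It is also transformally algebraically closed in its own completion: $\widehat{K^{\text{th}}}$ embeds over $K^{\text{th}}$ into $\widehat{K}$ by the universal property of the completion, and a transformally-algebraic-over-$K^{\text{th}}$ element of it is transformally algebraic over $K$, hence in $K^{\text{th}}$. Applying Lemma \ref{complete-t-henselian}(2) once more shows $K^{\text{th}}$ is transformally Henselian.

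It then remains to verify the universal property of Theorem \ref{transformalhenselization} specialized to $\widetilde{k}=k$, namely that every transformally Henselian model $L$ of $\VFA$ over $K$ receives $K^{\text{th}}$ uniquely over $K$, together with clauses (2) and (3). Uniqueness is easy: $K$ is dense in $\widehat{K}$, so any $x\in K^{\text{th}}$ is a limit of a net $(a_\lambda)$ from $K$; after a Frobenius twist (permitted since the twists of $L$ are transformally Henselian by Lemma \ref{lem:(Basic-properties)}(5)) Lemma \ref{twist} produces $f\in\mathcal{O}_K[x]_\sigma$ with $f(x)=0$ and $vf'(x)=0$, so that $vf(a_\lambda)>2\,vf'(a_\lambda)$ and $v(a_\lambda-x)=vf(a_\lambda)-vf'(a_\lambda)$ for $\lambda$ large; the uniqueness clause of Newton's Lemma \ref{newton}, applied in $L$ with base point $a_\lambda$, then forces any two $K$-embeddings to send $x$ to the same root of $f$ in $L$.

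For existence I would run Zorn's lemma on partial $K$-embeddings $\iota\colon M\hookrightarrow L$ with $K\subseteq M\subseteq K^{\text{th}}$ a sub-model of $\VFA$. Given a maximal such pair and an element $x\in K^{\text{th}}\setminus M$, I would write $x=\lim a_\lambda$ with $a_\lambda\in M$ (possible since $x\in\widehat{K}\subseteq\widehat{M}$), twist so that $f(x)=0$ and $vf'(x)=0$ for some $f\in\mathcal{O}_M[x]_\sigma$, apply Newton's Lemma in $L$ to the $\iota(a_\lambda)$ to obtain a root $b\in\mathcal{O}_L$ of $\iota(f)$ with $v(\iota(a_\lambda)-b)=vf(a_\lambda)-vf'(a_\lambda)$ eventually, and extend $\iota$ by $x\mapsto b$. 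The main obstacle — the step I expect to demand the most care — is checking that $x\mapsto b$ genuinely extends to an embedding of the valued difference field $M(x)_\sigma$: one must see that every transformal-algebraic relation satisfied by the powers of $x$ over $M$ is transported to $b$, and that the valuation is preserved. Here the point is that for any difference polynomial $g$ over $M$ one has $g(x)=\lim g(a_\lambda)$ by continuity, so $g(x)=0$ forces $\iota(g)(b)$ to be caught between the $\iota(g)(\iota(a_\lambda))$ and the higher-order Taylor terms around $\iota(a_\lambda)$, all of which tend to $\infty$ — this is where the $\omega$-increasing hypothesis and the estimate $vf'(x)=0$ are used to make the $\mathbf{N}[\sigma]$-indexed Taylor corrections negligible, which is subtler than in the purely algebraic situation. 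Clause (2) is immediate because $K^{\text{th}}/K$ is immediate, and clause (3) follows from the ramification analysis of Sections \ref{sec:Artin-Schreier-Ideals} and \ref{subsec:The-Algebraic-Ramification}: passing from $K$ to $K^{\text{th}}$ alters neither the residue field, nor the value group, nor the Artin--Schreier ideals, so the absence of nontrivial finite $\sigma$-invariant Galois extensions (via Lemma \ref{artins} and Corollary \ref{uniquesep}) transfers from $K$ to $K^{\text{th}}$, and the residue correspondence is as asserted. Finally, the immediacy of $K^{\text{th}}/K$ plus transformal algebraicity identifies this $K^{\text{th}}$ as the transformal Henselization $K^{\text{th}}$ of $K$, completing the transformally Archimedean case.
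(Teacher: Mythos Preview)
Your construction of $K^{\text{th}}$ as the relative transformal algebraic closure of $K$ in $\widehat{K}$, and the verification of its transformal Henselianity via Lemma \ref{complete-t-henselian}(2), agree with the paper. For the existence of an embedding into an arbitrary transformally Henselian $L$, however, the paper avoids your Zorn argument entirely. One first replaces $L$ by the relative transformal algebraic closure of $K$ in $L$ (still transformally Henselian), so that $\Gamma_K$ is cofinal in $\Gamma_L$; the universal property of the completion then yields an embedding $\widehat{K} \hookrightarrow \widehat{L}$ over $K$, and since $L$ is transformally algebraically closed in $\widehat{L}$ by Lemma \ref{complete-t-henselian}(1), the restriction to $K^{\text{th}}\subseteq\widehat{K}$ already lands in $L$. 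This bypasses completely the delicate step you flag of extending $x\mapsto b$ to a valued difference field embedding of $M(x)_\sigma$. Uniqueness likewise follows in one line from density of $K$ in $K^{\text{th}}$ and continuity of valued field embeddings, without invoking Newton's lemma.

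Your argument for clause (3) is not correct as stated. Lemma \ref{artins} and Corollary \ref{uniquesep} give \emph{sufficient} conditions on $k$, $\Gamma$, and the Artin--Schreier ideals for the absence of finite $\sigma$-invariant Galois extensions; the hypothesis of clause (3) is only that $K$ itself has no such extensions, which does not imply those conditions hold (and Lemma \ref{artins} moreover requires $K$ to be tamely closed). In particular, your claim that the Artin--Schreier ideals of $K$ and $K^{\text{th}}$ coincide already presupposes a bijection between their Artin--Schreier extensions, which is part of what must be shown. The paper's route is direct: since $K$ is Henselian and dense in $K^{\text{th}} \subseteq \widehat{K}$, Krasner's lemma gives $G_K \cong G_{K^{\text{th}}}$, so every finite separable extension of $K^{\text{th}}$ descends uniquely to $K$, $\sigma$-invariant or not.
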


\begin{proof}
We may assume that $K$ is algebraically Henselian. Let $K^{\text{th}}$
be the relative transformal algebraic closure of $K$ in its completion
$\widehat{K}$. We have already seen that $K^{\text{th}}$ is transformally
Henselian; let us check that $K^{\text{th}}$ is worthy of the notation
chosen. Let $\widetilde{K}$ be a transformally Henselian extension
of $K$. Replacing $\widetilde{K}$ by the relative transformal algebraic
closure of $K$ in $\widetilde{K}$, we may assume that $\widetilde{K}$
is transformally algebraic over $K$; this does not disturb the assumption
that $\widetilde{K}$ is transformally Henselian, and every embedding
of $K^{\text{th}}$ in $\widetilde{K}$ over $K$ must factor through
the relative transformal algebraic closure, anyway. Then $\Gamma$
is cofinal in $\widetilde{\Gamma}$, so the embedding of $K$ in $\widetilde{K}$
induces an embedding of completions; since $\widetilde{K}$ is transformally
Henselian, it is transformally algebraic closed in its completion,
and thus we obtain an embedding of $K^{\text{th}}$ in $\widetilde{K}$
over $K$. Since $K$ lies dense in $K^{\text{th}}$ with respect
to the valuation topology, the embedding is unique subject to the
constraint that it preserves the valued field structure.

Since $K$ is algebraically Henselian, by Krasner's lemma (see \ref{subsec-ramificationtheory}), the embedding
$K\hookrightarrow K^{\text{th}}$ induces an isomorphism $G_{K}=G_{K^{\text{th}}}$
of absolute Galois groups; every finite separable extension of $K^{\text{th}}$
descends to $K$, left invariant under $\sigma$, or otherwise. Moreover,
the completed algebraic Henselization is an immediate extension, so
that the transformal Henselization enjoys the additional properties
promised by the statement of the Theorem.
\end{proof}
\begin{cor}
\label{finite-t-henselian}Let $K$ be a model of $\VFA$ which
is transformally Henselian. Then every finite $\sigma$-invariant
extension of $K$ is transformally Henselian. 
\end{cor}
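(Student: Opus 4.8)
The plan is to reduce to the case of a finite $\sigma$-invariant extension $L/K$ which is moreover transformally Archimedean, and then to invoke the completion criterion of Lemma~\ref{complete-t-henselian}(2). First I would note that we may assume $L/K$ is finite and $\sigma$-invariant; then $L$ is perfect (a finite extension of the perfect field $K$), inversive, and $\omega$-increasing --- the extension is transformally algebraic, so by Corollary~\ref{cor-transformally-alg-no-valgp-adjunction} it produces no valuation-group adjunction after tensoring with $\mathbf{Q}(\sigma)$ --- hence a model of $\VFA$. Moreover $L/K$ is separable, both fields being perfect, and $L$ is algebraically Henselian, being a finite extension of the algebraically Henselian $K$. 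So the task is to show $L$ is transformally Henselian.

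Next I would carry out a d\'{e}vissage to the transformally Archimedean case. The class of transformally Henselian models of $\VFA$ is transitive in triplets by Lemma~\ref{transitivetriplets} and closed under directed unions by Lemma~\ref{lem:(Basic-properties)}. Writing $K$ as the directed union of the transformal Henselizations, formed inside $K$, of its finitely generated submodels --- each such Henselization is transformally Henselian, and, being immediate over a finitely generated model, of finite transformal height --- and choosing compatible finite $\sigma$-invariant extensions of $L$ over each of them, one reduces, as in the proof of Proposition~\ref{prop:t-arch-devissage} and using that finite extensions are compatible with coarsening of the valuation (composition of valuations), first to the case of finite transformal height, and then, by induction on the transformal height through Lemma~\ref{transitivetriplets}, to the case where $K$, and hence the finite extension $L$, is transformally Archimedean.

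In the transformally Archimedean case, $L$ is algebraically Henselian and transformally Archimedean, so by Lemma~\ref{complete-t-henselian}(2) it is enough to prove that $L$ is transformally algebraically closed in its completion $\widehat{L}$. Here I would use that $\widehat{L}=\widehat{K}\otimes_{K}L$ --- since $K$ is Henselian and $L/K$ is finite separable --- and that $K$ is transformally algebraically closed in $\widehat{K}$ by Lemma~\ref{complete-t-henselian}(1). Given $\beta\in\widehat{L}$ transformally algebraic over $L$, say $g(\beta)=0$ for some nonzero $g\in L[x]_{\sigma}$, the product of the conjugates of $g$ over a normal closure of $L/K$, with $\sigma$ acting only formally on the difference variable, is a nonzero difference polynomial $G$ over $K$ with $G(\beta)=0$; thus $\beta$ is transformally algebraic over $K$. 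Fixing a $K$-basis $e_{1},\dots,e_{d}$ of $L$ and writing $\beta=\sum_{i}c_{i}e_{i}$ with $c_{i}\in\widehat{K}$, expanding $G\bigl(\sum_{i}X_{i}e_{i}\bigr)$ in the basis $(e_{i})$ presents $(c_{1},\dots,c_{d})$ as a point of the $K$-variety obtained from $\{G=0\}$ by Weil restriction along $L/K$, which is transformally zero-dimensional; hence each $c_{i}$ is transformally algebraic over $K$, so $c_{i}\in K$ and $\beta\in L$. (The Frobenius twists are covered automatically: transformal algebraic closedness in $\widehat{K}$ is unaffected by twisting, and Lemma~\ref{complete-t-henselian}(2) already packages in all twists in its conclusion.)

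The step I expect to be the main obstacle is this last one --- propagating ``transformally algebraically closed in the completion'' up a finite separable extension. The delicate feature is that a field embedding $L\hookrightarrow K^{\mathrm{a}}$ need not commute with $\sigma$, so one cannot directly recover the coordinates $c_{i}$ as $\sigma$-equivariant Galois averages of $\beta$; the Weil-restriction reformulation bypasses this, but at the cost of requiring the multiplicativity of transformal dimension under Weil restriction of difference varieties. A secondary point needing care is the d\'{e}vissage itself: ``transformally Henselian'' is not inherited by subfields, which is why one works with the transformal Henselizations of the finitely generated pieces of $K$ rather than with the pieces themselves.
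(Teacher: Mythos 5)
Your proposal follows the paper's proof in outline: a d\'{e}vissage (directed unions, then transitivity in triplets via Lemma~\ref{transitivetriplets}) to the transformally Archimedean case, and then the reduction, through Krasner's lemma and Lemma~\ref{complete-t-henselian}, to showing that $L$ is transformally algebraically closed in $\widehat{L}=L\otimes_{K}\widehat{K}$. Where the paper asserts this last step to be an immediate consequence of $K$ being transformally algebraically closed in $\widehat{K}$, you correctly flag it as the delicate point and supply a Weil-restriction argument. That argument is sound, though it invokes the invariance of transformal dimension under Weil restriction of difference varieties, which the paper never develops. One can bypass it: passing to the Galois closure, the conjugates $g(\beta^{\sigma^{n}})$ for $g\in\mathrm{Gal}(\widehat{L}/\widehat{K})$ all lie in the compositum of the fields $g\bigl(K(\beta,\beta^{\sigma},\dots)\bigr)$, which has transcendence degree at most $[L:K]\cdot\mathrm{tdeg}_{K}K(\beta)_{\sigma}$ over $K$; since the coordinates $c_{i}$ of $\beta$ in a $K$-basis of $L$ are $L$-rational functions of these conjugates, $K(c_{1},\dots,c_{d})_{\sigma}$ is transformally algebraic over $K$, and the $c_{i}$ lie in $\widehat{K}$, hence in $K$. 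Either way the step goes through, and your proposal is a correct (and more detailed) rendering of the paper's argument.
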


\begin{proof}
Let $L$ be a finite $\sigma$-invariant extension of $K$; we must prove that $L$ is transformally Henselian. By a limit argument we may assume that $K$ is of finite transformal
height; and then by Lemma \ref{transitivetriplets} we may assume
that $K$ is transformally Archimedean. Every algebraic extension
of an algebraically Henselian model of $\VFA$ is algebraically
Henselian, so as $L$ is transformally Archimedean it is enough to
show that $L$ is transformally algebraically closed in $\widehat{L}$.
But this is an immediate consequence of Krasner's lemma and the fact
that $K$ is transformally algebraically closed in its completion.
\end{proof}
\begin{example}
\label{algebraic-extension-of-t-henselian}Let $K$ be a model of
$\VFA$ which is transformally Henselian. By Lemma \ref{finite-t-henselian},
every finite $\sigma$-invariant Galois extension of $K$ is transformally
Henselian. On the other hand, an arbitrary algebraic extension of
$K$ need not be transformally Henselian. For example, let $k$ be
an inversive algebraically closed field of characteristic zero with
the trivial valuation. Let $\Gamma=\mathbf{Z}\left[\sigma^{\pm1}\right]$
and $\widetilde{\Gamma}=\mathbf{Q}\left[\sigma^{\pm1}\right]$, and
set $K=k\left(\left(t^{\Gamma}\right)\right)$ and $M=k\left(\left(t^{\widetilde{\Gamma}}\right)\right)$;
then $K$ is a model of $\VFA$ which is spherically complete
and therefore transformally Henselian. The algebraic closure of $K$
in $M$ is the directed union of the fields $k\left(\left(t^{A}\right)\right)$
where $\Gamma\subset A\subset\widetilde{\Gamma}$ is an abstract group
extension of $\Gamma$ with $\left[A\colon\Gamma\right]<\infty$. 

Now let $\widetilde{K}=k\left(t^{\frac{1}{2}}\right)_{\sigma}$; we
claim that $\widetilde{K}$ is not transformally Henselian. Indeed,
the element:
\[
x=t^{\frac{1}{2}}+t^{\frac{\sigma}{2}}+t^{\frac{\sigma^{2}}{2}}\ldots
\]

obeys $x^{\sigma}-x=t^{\frac{1}{2}}$, so $x$ lies in the transformal
Henselization of $\widetilde{K}$. But $x$ cannot be algebraic over
$K$, since it is not in $k\left(\left(t^{A}\right)\right)$ for any
finite extension $A$ of $\Gamma$.
\end{example}

\begin{example}
\label{ex:hens-talg-extensions-not-unique}Let $K$ be a Henselian
valued field. Then the valuation lifts uniquely to every algebraic
extension of $K$; in fact, Henselian valued fields are characterized by this property. In the $\omega$-increasing regime, this need not be the case. As a matter of fact, if $K$ is \textit{any} model of $\VFA$ which is nontrivially valued, then one can always find a transformally algebraic extension $L$ of $K$ which is nontrivially valued, and admitting two distinct $\omega$-increasing valuations.

For example, fix a nonzero element $t \in \mathcal{M}$. Let $L=K\left(x,y\right)_{\sigma}$ where $x^{\sigma} - x = y$ and $y^{\sigma}=ty$ with $x,y$ algebraically independent over $K$. Then $y$ has positive valuation, and $x$ can chosen of valuation zero or of strictly positive valuation (the latter case happens when $x$ is in the transformal Henselization of $K(y)_{\sigma}$).
\end{example}

\subsection{Construction of a Relative Transformal Henselization}

We now turn to the construction of a relative transformal Henselization
and the proof that its finite $\sigma$-invariant extensions are controlled.
We prove this in several steps.
\begin{lem}
\label{reductionarch}It is enough to prove Theorem \ref{transformalhenselization}
in the special case where $K$ is transformally Archimedean and transformally
Henselian.
\end{lem}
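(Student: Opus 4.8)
The plan is a dévissage on the transformal height, packaged through Proposition \ref{prop:t-arch-devissage}. Let $\mathcal{C}$ be the class of models $K$ of $\VFA$ for which the full conclusion of Theorem \ref{transformalhenselization} holds (for every admissible $\widetilde{k}$); the goal is to show $\mathcal{C}$ is everything. I would verify hypotheses (1) and (2) of Proposition \ref{prop:t-arch-devissage} for $\mathcal{C}$ unconditionally, and then observe that hypothesis (3) of that proposition — that every transformally Archimedean model of $\VFA$ lies in $\mathcal{C}$ — follows once the theorem is known for base fields that are \emph{both} transformally Archimedean \emph{and} transformally Henselian. Proposition \ref{prop:t-arch-devissage} then gives the claim, which is exactly the content of the lemma.

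For hypothesis (3), let $K$ be transformally Archimedean. By Corollary \ref{absolute-t-hens-in-t-arch} its absolute transformal Henselization $K^{\text{th}}$ exists; being an immediate extension it is again transformally Archimedean, has residue field $k$, and is transformally Henselian. Given a transformally algebraic $\widetilde{k}/k$, apply the assumed special case to $K^{\text{th}}$ to obtain $\widetilde{K}$ over $K^{\text{th}}$, hence over $K$; it reproduces $\widetilde{k}$, and the valuation group is unchanged. The universal property over $K$ follows from that over $K^{\text{th}}$ together with the fact that any transformally Henselian $L/K$ receives $K^{\text{th}}$ uniquely over $K$: an embedding $\widetilde{k}\hookrightarrow l$ over $k$ lifts, via the universal property of $\widetilde{K}$ over $K^{\text{th}}$, to an embedding $\widetilde{K}\hookrightarrow L$ over $K$, uniquely. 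For property (3) of the theorem, if $K$ has no nontrivial finite $\sigma$-invariant Galois extension then neither does $K^{\text{th}}$, since by Krasner's lemma $G_{K^{\text{th}}}=G_K$ and every finite $\sigma$-invariant separable extension of $K^{\text{th}}$ descends to $K$; so the control statement for $\widetilde{K}$ is inherited from the special case.

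For hypothesis (2), let $(K_{21},K_{20},K_{10})$ be a triplet with $K_{21},K_{10}\in\mathcal{C}$ and let $\widetilde{k}/k$ be transformally algebraic, $k$ being the common residue field of $K_{20}$ and $K_{10}$. Using $K_{10}\in\mathcal{C}$ produce a transformally Henselian, transformally algebraic $\widetilde{K_{10}}/K_{10}$ reproducing $\widetilde{k}$, with valuation group $\Gamma_{10}$; then, regarding its underlying difference field as a transformally algebraic extension of the residue field $K_1$ of $K_{21}$ and using $K_{21}\in\mathcal{C}$, produce a transformally Henselian $\widetilde{K_{21}}/K_{21}$ with that residue field, and compose valuations to get $\widetilde{K_{20}}$. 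By Lemma \ref{transitivetriplets}, $\widetilde{K_{20}}$ is transformally Henselian; it is transformally algebraic over $K_{20}$, has residue field $\widetilde{k}$, and its valuation group $\Gamma'$ sits in $0\to\Gamma_{10}\to\Gamma'\to\Gamma_{21}\to0$, which forces $\Gamma'=\Gamma_{20}$. The universal property of $\widetilde{K_{20}}$ is checked by coarsening an arbitrary transformally Henselian $L/K_{20}$ by the convex hull $V$ of $\Gamma_{10}$ in $\Gamma_L$: this $V$ is $\sigma$-invariant and satisfies $V\cap\Gamma_{20}=\Gamma_{10}$, so it realises $L$ as the valued field underlying a triplet extending $(K_{21},K_{20},K_{10})$; Lemma \ref{transitivetriplets} says the two coarsenings of $L$ are transformally Henselian, and one then invokes the universal properties of $\widetilde{K_{10}}$ and of $\widetilde{K_{21}}$ in turn. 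Hypothesis (1), closure of $\mathcal{C}$ under directed unions, is similar but easier: write $K=\bigcup K_i$ and correspondingly $\widetilde{k}=\bigcup\widetilde{k_i}$ with $\widetilde{k_i}/k_i$ transformally algebraic (pass to a cofinal subsystem so that each generator already satisfies a difference equation over $k_i$), build $\widetilde{K_i}/K_i$, and glue via the uniqueness clause of the universal property; the union is transformally Henselian by Lemma \ref{lem:(Basic-properties)}(3), and the other properties pass to the limit, where for property (3) one first replaces each $K_i$ by its relative algebraic closure in $K$, which inherits both algebraic Henselianity and the absence of nontrivial finite $\sigma$-invariant Galois extensions.

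The main obstacle is the triplet step, and within it the verification of the universal property of the composite $\widetilde{K_{20}}$: one must turn an arbitrary transformally Henselian extension of $K_{20}$ into something to which Lemma \ref{transitivetriplets} and the inductive hypotheses on $K_{21}$ and $K_{10}$ apply, and the convex-hull coarsening is precisely the device that does this. A secondary point requiring care is propagating property (3) of the theorem through the reductions, since its hypothesis (no nontrivial finite $\sigma$-invariant Galois extension) does not automatically descend to subfields or to the residue part of a triplet; this is handled by passing to relative algebraic closures and by observing that a Henselian valued field and its coarsenings share the same finite $\sigma$-invariant Galois extensions.
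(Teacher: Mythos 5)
Your proposal is correct and follows essentially the same dévissage as the paper: reduce to finite transformal height via directed unions, then descend to the transformally Archimedean case by induction through triplets of composite valuations, and finally pass to the absolute transformal Henselization (Corollary \ref{absolute-t-hens-in-t-arch}) to land in the transformally Archimedean and transformally Henselian case. The one place you go beyond the paper's text is the explicit verification of the universal property of the glued $\widetilde{K_{20}}$ in the triplet step, which the paper dismisses as immediate; your device of coarsening an arbitrary transformally Henselian $L/K_{20}$ by the convex hull $V$ of $\Gamma_{10}$ in $\Gamma_{L}$ (noting $V\cap\Gamma_{20}=\Gamma_{10}$ by convexity), applying Lemma \ref{transitivetriplets} to the resulting triplet structure on $L$, and then invoking the universal properties of $\widetilde{K_{10}}$ and $\widetilde{K_{21}}$ in turn, is exactly the right way to fill that in, and is a detail genuinely worth recording.

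One minor remark: in propagating property (3) of the theorem through the triplet step, the hypothesis (no nontrivial finite $\sigma$-invariant Galois extensions, algebraic Henselianity) must be transferred not only to $K_{21}$ --- which is immediate since $K_{21}$ shares its underlying difference field with $K_{20}$ and is a coarsening of a Henselian valuation --- but also to $K_{10}$, the residue field of $K_{21}$. Your remark about Henselian fields and their coarsenings handles $K_{21}$ but not $K_{10}$; the missing step is that a finite $\sigma$-invariant Galois extension of the residue field $K_{1}$ admits an unramified $\sigma$-invariant Galois lift to $K_{21}$ (via the surjection $G_{K_{21}}\to G_{K_{1}}$ coming from Henselianity), which must then be trivial. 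This is easy, but it does need saying, and is the same level of rigour as the convex-hull coarsening you correctly insisted on.
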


\begin{proof}
By Corollary \ref{absolute-t-hens-in-t-arch} an absolute transformal
Henselization exists in the transformally Archimedean settings, and
it is advertised by Theorem \ref{transformalhenselization}; we now
show how to reduce to the transformally Archimedean settings. It is
here that it is essential to give a relative version in the statement
of the theorem.

First, we may assume that $K$ is of finite transformal height. For
suppose that $K$ is written as the directed union $\left(K_{\alpha}\right)$
of models of this form, and the implicit index set has a minimal element
$\alpha_{0}$. Let $k_{\alpha}$ be the residue field of $K_{\alpha}$
and $\widetilde{k_{\alpha}}$ the transformal algebraic closure of
$k_{\alpha}$ in $\widetilde{k}$. Then the embeddings $\widetilde{k_{\alpha}}\hookrightarrow\widetilde{k_{\beta}}$
over $\widetilde{k_{\alpha_{0}}}$ uniquely lift to embeddings $\widetilde{K_{\alpha}}\hookrightarrow\widetilde{K_{\beta}}$
over $\widetilde{K_{\alpha_{0}}}$ and so the $\left(\widetilde{K_{\alpha}}\right)$
compatibly glue to a single transformally Henselian extension $\widetilde{K}$
of $K$ which is at once seen to obey the universal property. The
additional statement regarding the control of the finite $\sigma$-invariant
extensions being of finitary nature, its truth over the individual
$\widetilde{K_{\alpha}}$ implies it over $\widetilde{K}$.

Given this, by induction on the transformal height, we may assume that $K$ is
transformally Archimedean. For suppose that $\left(K_{21},K_{20},K_{10}\right)$
is a triplet of models of $\VFA$ and let $\widetilde{K_{0}}$
be a transformally algebraic extension of $K_{0}$. Let $\widetilde{K_{10}}$
be the universal transformally Henselian extension of $K_{0}$ reproducing
the embedding $K_{10}\hookrightarrow\widetilde{K_{0}}$ residually
and let $\widetilde{K_{21}}$ be the universal transformally Henselian
extension of $K_{21}$ reproducing the embedding of abstract difference
fields $K_{10}\hookrightarrow\widetilde{K_{10}}$. Gluing the valuations,
we obtain a model $\widetilde{K_{20}}$ of $\VFA$ over $K_{20}$
which is transformally Henselian as it is glued from a pair of transformally
Henselian valuations. The verification that $\widetilde{K_{20}}$
satisfies the universal property is immediate. Finally, the finite
$\sigma$-invariant separable extensions of $\widetilde{K_{20}}$
are the same as those of $\widetilde{K_{21}}$, since their underlying
difference fields are the same; the finite $\sigma$-invariant separable
extensions of $\widetilde{K_{21}}$ are controlled by those of $\widetilde{K_{10}}$,
which are in turn controlled by those of $\widetilde{K_{0}}$, giving
the additional statement.
\end{proof}
\begin{defn}
Let $K$ be a model of $\VFA$ and let $F$ be a difference
subfield. We say that $F$ is a \textit{field of representatives}
for $K$ if the following conditions are satisfied:

(1) The field $F$ inherits from $K$ the trivial valuation

(2) Every residue class of $K$ contains exactly one representative
from $F$. In other words, the residue map $\mathcal{O}_{K}\to k$
is bijective when restricted to $F$.
\end{defn}

\begin{lem}
\label{field-of-rep-exists-th}Let $K$ be a model of $\VFA$
which is transformally Henselian. Then $K$ admits a field of representatives
for the valuation.
\end{lem}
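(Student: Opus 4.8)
The plan is to mimic the classical construction of a coefficient field for an equicharacteristic Henselian valued field, adapted to the difference setting, using Zorn's lemma to produce a maximal difference subfield on which the residue map is injective, and then using transformal Henselianity to show this subfield is in fact a full field of representatives. First I would consider the collection $\mathcal{S}$ of all difference subfields $F \subseteq \mathcal{O}_K$ on which the valuation is trivial, i.e. $F \subseteq \mathcal{O}_K$ and $F \cap \mathcal{M}_K = 0$; equivalently, the residue map $\mathcal{O}_K \to k$ restricts to an embedding $F \hookrightarrow k$ of difference fields. Since the prime field lies in every valuation ring and is fixed by $\sigma$ (it is contained in the fixed field), $\mathcal{S}$ is nonempty in the positive characteristic case; in characteristic zero one starts with $\mathbf{Q}$, which has trivial valuation by Remark \ref{rem-equichar}. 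The union of a chain in $\mathcal{S}$ is again in $\mathcal{S}$, so by Zorn's lemma there is a maximal element $F$. Because $K$ is perfect and inversive and $F$ is a difference subfield of $\mathcal{O}_K$ with trivial induced valuation, one may pass to the perfect inversive hull of $F$ inside $\mathcal{O}_K$ (this does not enlarge the valuation, again by the contracting nature of $\sigma$ and perfectness) — so by maximality $F$ is already perfect and inversive.

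Next I would argue that the image $\bar F$ of $F$ in $k$ is all of $k$, which is the crux. Suppose not, and pick $\bar a \in k \setminus \bar F$. There are two cases. If $\bar a$ is transformally transcendental over $\bar F$, lift it to any $a \in \mathcal{O}_K$ with residue $\bar a$; then $\{a^{\sigma^n}\}_{n\geq 0}$ is algebraically independent over $F$ (a nontrivial algebraic relation over $F$ would, after clearing denominators and reducing mod $\mathcal{M}_K$ — noting the leading data cannot all drop into $\mathcal{M}_K$ by an appropriate normalization — give one over $\bar F$), so $F(a)_\sigma$ has trivial induced valuation (a nonzero difference polynomial over $F$ evaluated at $a$ has a unit leading coefficient modulo $\mathcal{M}$, hence is a unit), contradicting maximality. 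If $\bar a$ is transformally algebraic over $\bar F$, then after replacing $K$ by a Frobenius twist and invoking Lemma \ref{twist}, pick a difference polynomial $f(x) \in F[x]_\sigma$ with $f(\bar a) = 0$ and $f'(\bar a) \neq 0$ in $k$; I may assume after scaling that the coefficients of $f$ lie in $\mathcal{O}_K$, indeed in $F$ itself. Lifting $\bar a$ to some $a_0 \in \mathcal{O}_K$, we get $vf(a_0) > 0$ and $vf'(a_0) = 0$, so by transformal Henselianity (Definition \ref{def:transformally-henselian}) there is $b \in \mathcal{O}_K$ with $f(b) = 0$ and residue $\bar a$. Then $F(b)_\sigma$ is a difference field: since $f(b)=0$ with $f \in F[x]_\sigma$ and $f'(b) \neq 0$, the element $b$ satisfies a difference-algebraic relation over $F$ with invertible derivative, forcing $F(b)_\sigma$ to be transformally algebraic over $F$ of bounded transcendence degree, and the induced valuation on it is trivial — for any $0 \neq c \in F(b)_\sigma$, one writes $c = g(b)/h(b)$ and checks $v(g(b)), v(h(b))$ are controlled by the fact that $b$ is a simple root, so no value-group adjunction occurs and, the residue of $b$ being separably algebraic over $\bar F$ inside the field $k$, the valuation stays trivial. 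This again contradicts maximality of $F$.

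The main obstacle I anticipate is the second case: making rigorous that $F(b)_\sigma$ carries the trivial induced valuation. In the algebraic setting this is immediate because a simple-root algebraic extension of a trivially valued field inside a valuation ring is itself trivially valued; transformally one must be careful because $F(b)_\sigma$ is not finite over $F$ and difference ideals are not principal, so the Euclidean-division shortcut is unavailable. The right way around this, I think, is to use transformal Henselianity of $K$ together with the uniqueness of Hensel lifts (Lemma on uniqueness of Hensel lifts) to identify $b$ as the \emph{unique} root near $a_0$, and then show that the whole difference subfield it generates is fixed by the residue-lifting process; more precisely, one shows by induction on the degree in $\mathbf{N}[\sigma]$ that for every difference polynomial $g \in F[x]_\sigma$ one has $vg(b) = v\bar g(\bar b)$ computed in $k$ — which is $0$ when $\bar g(\bar b) \neq 0$ and where the case $\bar g(\bar b) = 0$ is handled by factoring out the relation witnessed by the minimal $f$. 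Once both cases are closed, maximality forces $\bar F = k$, and since the residue map is injective on $F$ by construction, $F$ meets every residue class exactly once, i.e. $F$ is a field of representatives, completing the proof.
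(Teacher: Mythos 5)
Your overall strategy matches the paper's: take $F$ maximal (by Zorn's lemma) among difference subfields that inherit the trivial valuation, split into the transformally transcendental and transformally algebraic cases, and in the algebraic case use a Frobenius twist and Lemma \ref{twist} together with transformal Hensel lifting to produce a root $b\in\mathcal{O}_K$ lifting the residue class $\bar a$. You also correctly identify the crux: one must show that $F(b)_\sigma$ is again trivially valued. But the argument you propose for that step does not close the gap and contains a wrong assertion. The claim that "the residue of $b$ is \emph{separably algebraic} over $\bar F$" is false in general: $\bar b$ is merely \emph{transformally} algebraic over $\bar F$, which bounds the ordinary transcendence degree of $\bar F(\bar b)_\sigma/\bar F$ but by no means forces algebraicity, so the simple-root finite-extension picture does not apply. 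Your proposed induction (that $vg(b)=v\bar g(\bar b)$ for all $g\in F[x]_\sigma$, with the case $\bar g(\bar b)=0$ "handled by factoring out the relation witnessed by the minimal $f$") is precisely the Euclidean-division shortcut that you yourself note is unavailable one sentence earlier: the kernel of the evaluation $\bar F[x]_\sigma\to k$ at $\bar b$ is a transformally prime ideal that contains $\bar f$ but need not be recovered from $\bar f$ by any division process, so the reduction does not terminate.

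The correct way to close the gap is the Abhyankar--rational-rank argument already present in the paper as Lemma \ref{lem-transformally-Abhyankar} and Corollary \ref{cor-transformally-alg-no-valgp-adjunction}, and spelled out in the proof of Lemma \ref{canonicalva}. Since $b$ is transformally algebraic over $F$, the field $F(b)_\sigma$ is a directed union of difference subfields of finite ordinary transcendence degree over $F$; by the ordinary Abhyankar inequality each such subfield has valuation group of finite rational rank over $\Gamma_F=0$. But a nonzero ordered abelian group with an $\omega$-increasing injective endomorphism has infinite rational rank: for $0<\gamma$ the forward orbit $\gamma,\sigma\gamma,\sigma^2\gamma,\ldots$ lies in pairwise-distinct Archimedean classes and is therefore $\mathbf{Q}$-linearly independent. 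Hence each such valuation group vanishes, so $\Gamma_{F(b)_\sigma}=0$ and $F(b)_\sigma$ is trivially valued, contradicting maximality. This is exactly the paper's one-line justification ("since $a$ is transformally algebraic over $F$, the difference field $F(a)_\sigma$ is likewise trivially valued"); with that substitution, the rest of your proof is correct and follows the paper's route.
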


\begin{proof}
Let $F$ be a difference subfield of $K$ maximal with respect to
the property that it inherits from $K$ the trivial valuation. We
claim that $F$ is a field of representatives for the valuation of
$K$. To see this, let $F_{0}$ denote the isomorphic image of $F$
under the residue map. Suppose that an element $\alpha\in k$ is found,
transformally algebraic over $F_{0}$ but lying outside $F_{0}$.

Then after twisting (Lemma \ref{twist}), we have $g\alpha=0$ and $g'\alpha\neq0$
for some $gx\in F_{0}\left[x\right]_{\sigma}$. If $fx\in F\left[x\right]_{\sigma}$
is the canonical lift of $gx$ then by Hensel lifting one can find
an element $a\in\mathcal{O}_{K}$ with $fa=0$ and whose residue class
is equal to $\alpha$. Since $a$ is transformally algebraic over
$F$ the difference field $F\left(x\right)_{\sigma}$ is likewise
trivially valued, contradicting maximality; this shows that no element
of $k$ is transformally algebraic over $F_{0}$, unless it lies in
$F_{0}$. But in fact, if $\alpha$ is transformally transcendental
over $F_{0}$ and $a\in\mathcal{O}_{K}$ is any lift, then $F\left(a\right)_{\sigma}$
inherits the trivial valuation from $K$ as well. These two statements
combined show that the equality $F_{0}=k$ must necessarily hold,
which amounts precisely to the assertion that $F$ is a field of representatives
for $K$.
\end{proof}
\begin{rem}
\label{remhens}One important point, shown in the proof, is the following.
Let $K$ be a transformally Henselian model of $\VFA$ and let
$F_{0}$ be a difference subfield which inherits from $K$ the trivial
valuation. Let $k_{0}$ denote the image of $F_{0}$ under the residue
map and assume that $k$ is transformally algebraic over $k_{0}$.
Then there is a unique choice of a field of representatives for the
valuation of $K$ containing $F_{0}$, namely the relative transformal
algebraic closure $F$ of $F_{0}$ in $K$. 

The choice of a field of representatives for the valuation is not
canonical. However, if $F_{i}$ is a field of representatives for
$i=0,1$ then not only are they are isomorphic, but there is a distinguished
choice of an isomorphism $F_{0}\cong F_{1}$ between them, since both
are isomorphic to $k$ via the residue map. Furthermore, if $L$ is
a transformally Henselian extension of $K$ and $E_{i}$ is the relative
transformal algebraic closure of $F_{i}$ in $L$ then the isomorphism
$F_{0}\cong F_{1}$ extends to an isomorphism $E_{0}\cong E_{1}$
since both are identified, via the residue map, with the relative
transformal algebraic closure of $k$ in $l$.
\end{rem}

\begin{lem}
\label{canonicalva}Let $K$ be a model of $\VFA$ equipped
with a field of representatives $F$ for the valuation. Let us assume
that we are given an abstract transformally algebraic extension $\widetilde{F}$
of $F$ and denote by $\widetilde{K}=\widetilde{F}\otimes_{F}K$ the
tensor product, which we regard for now as an abstract difference
field.

(1) The difference field $\widetilde{K}$ can be uniquely expanded
to a model of $\VFA$ over $K$ (in particular, $\widetilde{F}$ admits a unique structure of a model of $\VFA$ over $F$, namely with the trivial valuation).

Let $\widetilde{K}$ be equipped with the valuation thus defined.
Then the following holds:

(2) Let $L$ be a model of $\VFA$ over $K$; then the restriction
map induces a bijection:
\[
\text{Emb}_{K}\left(\widetilde{K},L\right)=\text{Emb}_{F}\left(\widetilde{F},L\right)
\]

(3) The field $\widetilde{F}$ is a field of representatives for the
valuation of $\widetilde{K}$.

(4) Let $M$ be a $\text{h}$-finite $\sigma$-invariant separable
extension of $\widetilde{K}$ which is regular over $K$. Then $M$
is algebraically unramified over $\widetilde{K}$.
\end{lem}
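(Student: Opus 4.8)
The plan is to handle the four clauses in order, since each one feeds into the next. For (1), the key point is that since $\widetilde{F}/F$ is transformally algebraic and $F$ carries the trivial valuation, any $\omega$-increasing valuation on $\widetilde{K} = \widetilde{F} \otimes_F K$ extending that of $K$ must be trivial on $\widetilde{F}$: by Corollary \ref{cor-transformally-alg-no-valgp-adjunction} (applied after passing to perfect inversive hulls, using Lemma \ref{perfect-inversive-hull-transformal}) a transformally algebraic extension does not enlarge the value group, and an element of $\widetilde F$ transformally algebraic over the trivially-valued $F$ cannot acquire nonzero valuation without contradicting $\omega$-increasingness of $\sigma$ on $\widetilde\Gamma = \Gamma$. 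Hence $\widetilde{F}$ sits inside $\widetilde{\mathcal{O}}$ as a field of representatives; writing an element of $\widetilde{K}$ as a fraction of $F$-linear combinations of a transcendence-lift of $k$-data, one sees the valuation is forced, and existence follows from the standard Gauss-valuation construction on $\widetilde F \otimes_F K$ — trivial on $\widetilde F$, agreeing with $v$ on $K$. This simultaneously gives clause (3): $\widetilde F$ is trivially valued inside $\widetilde{\mathcal O}$ and surjects onto the residue field $\widetilde k$, because the residue field of $\widetilde K$ is $\widetilde F \otimes_{F} k = \widetilde F \cdot F_0$ where $F_0 = F \bmod \mathcal M$, and $\widetilde F \to \widetilde k$ is a bijection of fields.

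For clause (2), the restriction map $\mathrm{Emb}_K(\widetilde K, L) \to \mathrm{Emb}_F(\widetilde F, L)$ is manifestly well defined and injective since $\widetilde K$ is generated over $K$ by $\widetilde F$; for surjectivity, given a difference-field embedding $\widetilde F \hookrightarrow L$ over $F$, the universal property of the tensor product supplies a difference-field embedding $\widetilde K \hookrightarrow L$ over $K$, and one must only check it is an embedding of valued fields, i.e. that the valuation it induces on $\widetilde K$ from $L$ coincides with the one constructed in (1). But that valuation is trivial on $\widetilde F$ (again $\widetilde F$ is transformally algebraic over the trivially-valued $F$, so lands in $\mathcal O_L^\times$ by the $\omega$-increasing argument) and restricts to $v$ on $K$, so by the uniqueness in (1) it is the distinguished valuation.

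Clause (4) is the substantive one and I expect it to be the main obstacle. Let $M/\widetilde K$ be $\mathrm h$-finite, $\sigma$-invariant, separable, and regular over $K$ — here "regular over $K$" should mean $M$ is a regular field extension of $K$, equivalently linearly disjoint from $K^{\mathrm a}$ and separable. I want to show $\mathcal O_M/\widetilde{\mathcal O}$ is étale, i.e. $M/\widetilde K$ is algebraically unramified: no residual extension that is inseparable, no value-group growth, no defect. The strategy: pass to Henselizations, so $M^{\mathrm h}/\widetilde K^{\mathrm h}$ is finite separable of degree $n$; regularity over $K$ forces the residual extension $\widetilde{k_M}/\widetilde k$ to be \emph{regular over $k$} and the value-group quotient to inject into something coming from $K$ — but $\Gamma_{\widetilde K} = \Gamma_K$ and $M$ is regular over $K$, so $\Gamma_{M}/\Gamma_{\widetilde K}$ must be trivial (a nontrivial torsion quotient would witness an algebraic element not in $K^{\mathrm a} \cdot \widetilde K$ that is algebraic over $K$, contradicting regularity via the standard analysis of Abhyankar-type extensions; in the wildly ramified case one uses that such an extension forces value-group growth by the ramification theory recalled in \ref{subsec-ramificationtheory}). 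Then $M^{\mathrm h}/\widetilde K^{\mathrm h}$ is residually generated; the field of representatives $\widetilde F$ of $\widetilde K$, being the relative transformal algebraic closure of $k$ inside the trivially valued part, and $M$ being regular over $K$ hence the residue extension being generated by elements transformally algebraic over $k$ but regular over it, means $\widetilde{k_M}$ is obtained from $\widetilde k$ by a separable extension that lifts (via Hensel, since $\widetilde K^{\mathrm h}$ is Henselian) to an unramified extension. Comparing degrees, $[\widetilde{k_M} : \widetilde k] = [M^{\mathrm h} : \widetilde K^{\mathrm h}]$, so the extension is defectless, residually separable, and unramified in the value group — i.e. $\mathcal O_M/\widetilde{\mathcal O}$ is étale. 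The delicate point, and where I'd spend the most care, is ruling out a wildly ramified contribution: this is exactly where the hypothesis that $M$ is \emph{regular over $K$} (not merely over $\widetilde K$) is used, together with the fact that $\widetilde F$ is a field of representatives, to pin the residue extension down as coming from $k$ in a way incompatible with the value-group growth that wild ramification would require.
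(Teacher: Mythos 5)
Your treatment of clauses (1)--(3) follows the same route as the paper's: show $\widetilde F$ is trivially valued (you via Corollary~\ref{cor-transformally-alg-no-valgp-adjunction}, the paper via the Abhyankar inequality directly, but these amount to the same thing), then observe that the valuation on $\widetilde F\otimes_F K$ is forced as a Gauss valuation. One detail you skip that the paper does not: one must actually verify that $\widetilde F$ and $K$ are linearly disjoint over $F$ inside any model of $\VFA$ containing both, because the notation $\widetilde F\otimes_F K$ as a \emph{field} presupposes the tensor product is a domain; the paper proves this by passing to residues and using that $F$ is a field of representatives. This is a small but nonvacuous step.

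Clause (4), however, has a real gap, and it is exactly at the spot you flagged as delicate. Your argument asserts $[\widetilde{k_M}:\widetilde k]=[M^{\mathrm h}:\widetilde K^{\mathrm h}]$ ``by comparing degrees,'' but this equality \emph{is} the defectlessness you are trying to prove; the fundamental inequality only gives $\leq$. Regularity of $M$ over $K$ by itself does not exclude a wild defect extension: such an extension has trivial residue and value-group growth, so none of the invariants you track will detect it, and it can perfectly well be $\sigma$-invariant and regular over $K$ (the paper's Example~\ref{h-finite-example} is built to illustrate precisely this failure mode in a closely related setting). The paper's proof avoids this by reducing to the case where $K$ and $\widetilde F$ are algebraically closed and $\widetilde F$ has finite transcendence degree over $F$, observing that $\widetilde K$ is then the perfect strictly Henselian hull of a finitely generated \emph{Abhyankar} extension of the algebraically closed $K$, and invoking Corollary~\ref{abhyankarcriterion}. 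That corollary is where the genuine work lives: it rests on the Kuhlmann--Temkin stability theorem (finitely generated Abhyankar extensions of algebraically closed valued fields are defectless), which lets one compute the Artin-Schreier ideals of $\widetilde K$ as radicals of principal ideals $p^{-\infty}[\gamma,\infty]$ with $\gamma>0$, and then the $\omega$-increasing action of $\sigma$ kills $\sigma$-invariance of any such ideal (Lemma~\ref{artins}). Without this input --- Section~\ref{sec:Artin-Schreier-Ideals} was written for exactly this purpose --- the wild/defect case cannot be dispatched, and your argument does not supply a substitute.
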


\begin{proof}
First we claim that the valuation on $\widetilde{F}$ is uniquely determined, namely it must have the trivial valuation.
Since $\widetilde{F}$ is transformally algebraic over $F$, it can
be written as the directed union of difference field extensions of
finite algebraic transcendence degree. By Abhyankar's inequality,
the valuation group of each of these is of finite rational rank as
an abstract abelian group. Since no ordered transformal module is
of finite rational rank other than the zero module, we find that $\widetilde{F}$
is the directed union of trivially valued fields, and is therefore
itself trivially valued.

Furthermore, we assert that whenever $\widetilde{F}$ and $K$ are
jointly embedded over $F$ in a third model of $\VFA$, then
they are linearly disjoint over $F$ in that extension. For suppose
given a nontrivial linear combination $\sum a_{i}b_{i}=0$ where $a_{i}\in\widetilde{F}$
and the $b_{i}\in K$ are not all zero. Rescaling, we may assume that
the $b_{i}$ all lie in $\mathcal{O}_{K}$ and at least one is of
valuation zero. Now $\widetilde{F}$ is trivially valued; applying
the residue map and using the fact that $F$ is a field of representatives
for $K$ we find that the $a_{i}$ must have been linearly dependent
over $F$ to begin with. In fact, this shows that the valuation on
the tensor product is unique, since we are forced to set $v\left(\sum a_{i}b_{i}\right)=\min vb_{i}$
for $a_{i}\in\widetilde{F}$ linearly independent over $F$ and $b_{i}\in K$
not all zero. This formula is compatible with the action of $\sigma$,
so $\left(1\right)$, $\left(2\right)$ and $\left(3\right)$ follow.

It remains to show $\left(4\right)$. We may assume that $\widetilde{F}$
and $K$ are algebraically closed, and that $\widetilde{F}$ is of
finite algebraic transcendence degree over $F$; then the Henselian hull of $\widetilde{K}$ is the strictly Henselian hull of a finitely generated abstract Abhyankar extension of $K$ (generated by a transcendence basis of $\widetilde{F}$ over $F$, in which case Corollary
\ref{abhyankarcriterion} applies.
\end{proof}
\textbf{Proof of Theorem \ref{transformalhenselization}} By Lemma
\ref{reductionarch} it will be sufficient to handle the case where
$K$ is transformally Archimedean and transformally Henselian. Let
us fix, once and for all, a field of representatives $F$ for the
valuation of $K$, and assume given an abstract transformally algebraic
extension $\widetilde{F}$ of $F$. Let $\widetilde{K}$ be the absolute
transformal Henselization of $\widetilde{F}\otimes_{F}K$, equipped
with the canonical valuation of the previous Lemma.

Let $L$ be a transformally Henselian model of $\VFA$ over
$K$. By the universal property of the absolute transformal Henselization
in the transformally Archimedean settings, applying \ref{canonicalva}
again, we have the following canonical bijections induced by restriction:
\[
\text{Emb}_{K}\left(\left(\widetilde{F}\otimes_{F}K\right)^{\text{th}},L\right)=\text{Emb}_{K}\left(\widetilde{F}\otimes_{F}K,L\right)=\text{Emb}_{F}\left(\widetilde{F},L\right)
\]
Since $\widetilde{F}$ is transformally algebraic over $F$, every
embedding of $\widetilde{F}$ in $L$ over $F$ factors through the
relative transformal algebraic closure of $F$ in $L$, anyway, so
we have a further bijection
\[
\text{Emb}_{F}\left(\widetilde{F},L\right)=\text{Emb}_{F}\left(\widetilde{F},E\right)
\]
where $E$ is the transformal algebraic closure of $F$ in $L$. By
Remark \ref{remhens}, composition with the residue map of $L$ induces
a bijection:
\[
\text{Emb}_{F}\left(\widetilde{F},E\right)=\text{Emb}_{k}\left(\widetilde{k},l\right)
\]
It follows that $\widetilde{K}$ is the universal transformally Henselian
extension of $K$ reproducing the embedding $k\hookrightarrow\widetilde{k}$
residually, and by \ref{canonicalva} it has no unexpected finite
$\sigma$-invariant separable extensions. To finish we must only note
that every transformally algebraic extension $\widetilde{k}$ of $k$
arises as the residue field of a transformally Henselian extension
of $K$ which takes the form described above, but this is obvious:
set $\widetilde{F}=F\otimes_{k}\widetilde{k}$ where the implicit
map $k\hookrightarrow F$ is the map inverse to the isomorphism $F\cong k$
induced via the residue map of $K$.
\begin{example}
\label{t-henselization-of-alg-closed}Let $K$ be a model of $\VFA$
which is algebraically closed. Then the transformally Henselian hull of $K$ is not in general algebraically closed.

For example, let $\left(K_{20}, K_{21}, K_{10}\right)$ be a triplet with the following properties:

\begin{enumerate}
    \item The field $K_{20}$ is algebraically closed.
    \item The fields $K_{21}$ and $K_{10}$ are nontrivially valued.
    \item The field $K_{21}$ is transformally Henselian, but $K_{10}$ is not.
\end{enumerate}

In this situation the transformally Henselian hull of $K_{20}$ is not algebraically closed. 

Indeed, let $\widetilde{K_{10}}$ be the transformally Henselian hull of $K_{10}$; then after identifying $K_{10}$ with a trivially valued subfield of $K_{21}$ we have that the transformally Henselian hull of $K_{20}$ coincides as a field with the transformally Henselian hull of $\widetilde{K_{10}} \otimes_{K_{10}} K_{21}$. To show that the transformally Henselian hull of the latter is not algebraically closed, it will be sufficient to prove that the ordinary Henselian hull is not algebraically closed, since the transformally Henselian hull is regular over the algebraically Henselian hull.

So we now show that the Henselian hull of $L = \widetilde{K_{10}} \otimes_{K_{10}} K_{21}$ is not algebraically closed; for this, it is enough to show that $L$ admits a Galois extension which is wildly ramified. For example, we can consider the splitting field
of $X^{p}-X-at$ where $a\in\widetilde{K_{1}}$ lies
outside $K_{1}$ and $t\in K_{21}$ is an element of strictly negative
valuation.
\end{example}

\begin{cor}
\label{cor-thenselian}
Let $K$ be a model of $\VFA$ which is algebraically closed. If $K$ transformally Archimedean, or if $K$ is of characteristic zero, then the transformal Henselization of $K$ is algebraically closed. On the other hand:
\begin{enumerate}
    \item The transformal Henselization of an algebraically closed model of $\VFA$ need not be algebraically closed.
    \item Algebraic extensions of transformally Henselian models are not in general transformally Henselian.
\end{enumerate}
\end{cor}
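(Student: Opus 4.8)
The two clauses under ``on the other hand'' require nothing new: (1) is exactly the content of Example \ref{t-henselization-of-alg-closed}, and (2) is exactly the content of Example \ref{algebraic-extension-of-t-henselian}. So the plan is to establish only the positive assertion, treating the two regimes by entirely different devices.

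\textbf{The transformally Archimedean case.} Here I would invoke Corollary \ref{absolute-t-hens-in-t-arch}, which identifies the transformal Henselization $K^{\text{th}}$ with the relative transformal algebraic closure of $K$ inside its completion $\widehat{K}$ (we may assume $K$ algebraically Henselian, which holds since $K$ is algebraically closed). The first step is to note that $\widehat{K}$ is itself algebraically closed: $\widehat{K}$ is Henselian and perfect, being the completion of the Henselian perfect field $K$, the restriction map $G_{\widehat{K}}\to G_{K}$ is an isomorphism by \ref{subsec-ramificationtheory}, and $G_{K}=1$; so $\widehat{K}$ is separably closed, hence, being perfect, algebraically closed. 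The second step: given a field-theoretic algebraic extension $L/K^{\text{th}}$, embed it over $K^{\text{th}}$ into the algebraically closed field $\widehat{K}$; every element of $L$ is then algebraic, a fortiori transformally algebraic, over $K^{\text{th}}$, hence transformally algebraic over $K$ by transitivity (transparent from the characterization of transformal algebraicity by directed unions of finite-transcendence-degree subextensions in Section \ref{sec:Difference-Algebra}), hence lies in $K^{\text{th}}$. So $L=K^{\text{th}}$ and $K^{\text{th}}$ is algebraically closed.

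\textbf{The characteristic zero case.} Here $p=1$, so every equicharacteristic valued field is automatically defectless. The transformal Henselization $K^{\text{th}}$ is algebraically Henselian by Lemma \ref{lem:(Basic-properties)}(2), and it is an immediate extension of $K$ (take $\widetilde{k}=k$ in Theorem \ref{transformalhenselization}, whose clause (2) gives equality of value groups). Since $K$ is an algebraically closed valued field its residue field is algebraically closed and its value group is divisible, and the same then holds for $K^{\text{th}}$. Finally I would record the elementary fact that a Henselian, equicharacteristic-zero valued field with algebraically closed residue field and divisible value group is algebraically closed: any finite extension $K^{\text{th}}\subsetneq K'$ has value group a torsion extension of a divisible group, hence unchanged, residue field an algebraic extension of an algebraically closed field, hence unchanged, and no defect, so by the degree formula of \ref{subsec-ramificationtheory} it has degree $1$ over $K^{\text{th}}$, a contradiction.

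\textbf{The main obstacle.} There is in fact little to obstruct: the delicate phenomenon has been quarantined in Example \ref{t-henselization-of-alg-closed}, where the positive statement fails because a wildly ramified twisted Artin--Schreier extension living over the residual valued field of a composite valuation climbs up to the total space. Both arguments above work precisely because this mechanism is suppressed, $p=1$ eliminating wild ramification and transformal Archimedeanness pinning $K^{\text{th}}$ inside the algebraically closed completion $\widehat{K}$, so each reduces to assembling results already in hand. The only point needing a little care is the verification that $\widehat{K}$ is algebraically closed, which hinges on the Galois-theoretic rigidity of completions of Henselian fields.
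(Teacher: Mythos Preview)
Your proof is correct and follows essentially the same approach as the paper's: in the transformally Archimedean case you identify $K^{\text{th}}$ with the relative transformal algebraic closure of $K$ in $\widehat{K}$ and use that $\widehat{K}$ is algebraically closed (the paper invokes Krasner's lemma where you invoke the isomorphism $G_{\widehat{K}}\cong G_K$, which amounts to the same thing); in characteristic zero you use that $K^{\text{th}}$ is an immediate Henselian extension and conclude via the ramification theory of \ref{subsec-ramificationtheory}, exactly as the paper does. Your write-up is somewhat more explicit in both places, but the arguments are the same.
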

\begin{proof}
For (1) and (2) see Example \ref{t-henselization-of-alg-closed} and Example \ref{algebraic-extension-of-t-henselian}.

For the proof, first assume that $K$ is algebraically closed and transformally Archimedean. By Lemma \ref{complete-t-henselian}, the transformal Henselization of $K$ is the relative transformal algebraic closure in the completion. By Krasner's lemma the completion of an algebraically closed valued field is algebraically closed; since the transformal Henselization is relatively algebraically closed inside an algebraically closed field, it is likewise algebraically closed.

Next assume that $K$ is of characteristic zero. By Theorem \ref{transformalhenselization}, the transformal Henselization of $K$ is an immediate extension of $K$ which is algebraically Henselian; using the ramification theory of valued fields \ref{subsec-ramificationtheory} we find that the transformal Henselization of $K$ is algebraically closed.
\end{proof}

\begin{example}
\label{example-monogonic-t-henselian}
Let $K$ be a model of $\VFA$ which is transformally Henselian and nontrivially valued. Let $\widetilde{K}$ be a transformally Henselian, transformally algebraic extension of $K$ which is finitely generated over $K$ as a transformally Henselian extension, that is, the field $\widetilde{K}$ is the transformally Henselian hull of $K\left(a\right)_{\sigma}$ for a finite tuple $a \in \widetilde{K}$. Then $\widetilde{K}$ is in fact generated as a transformally Henselian extension of $K$ by a single element. By induction, this reduces to the case of a two generators $a$ and $b$, which we may take to lie in $\widetilde{\mathcal{O}}$. After twisting we have $fa=0$ and $f'a \neq 0$ for some difference polynomial $fx$ with coefficients in $\mathcal{O}$. Let us set $c = a + tb$ where $t \in K$ is nonzero but of large valuation $\gamma$. If $\gamma$ is chosen large enough then it follows from Lemma \ref{newton} that $a$ is the unique root of $fx$ in the open ball of valuative radius $\gamma$ around $c$, so it lies in the transformally Henselian hull of $K\left(c\right)_{\sigma}$. Thus $a$ and $b$ both lie in the transformally Henselian extension of $K$ generated by $c$.
\end{example}

\subsection{Strict Transformal Henselization}
\begin{defn}
Let $K$ be a model of $\VFA$. We say that $K$ is \textit{strictly
transformally Henselian} if the following conditions are satisfied:

(1) The field $K$ is transformally Henselian

(2) The residue field $k$ of $K$ is a model of $\ACFA$
\end{defn}

\begin{prop}
\label{strict-th}(1) Every strictly transformally Henselian field
is algebraically strictly Henselian

(2) The ultraproduct of algebraically strictly Henselian, perfect
Frobenius transformal valued fields is strictly transformally Henselian

(3) The directed union of strictly transformally Henselian fields
is strictly transformally Henselian
\end{prop}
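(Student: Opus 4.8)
The plan is to verify each of the three clauses by combining the corresponding statement about transformally Henselian fields (already proved in Lemma \ref{lem:(Basic-properties)}) with the known fact that the residue field of an ultraproduct/directed union is computed termwise, together with Fact \ref{frobacfa} identifying $\ACFA$ with the asymptotic theory of algebraically closed Frobenius difference fields. Thus each clause reduces to a transformal-Henselian input plus a residue-field input, and the only content is to check that the residue field behaves as required.

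For (1): let $K$ be strictly transformally Henselian. By definition $K$ is transformally Henselian, so by Lemma \ref{lem:(Basic-properties)}(2) it is algebraically Henselian. Its residue field $k$ is a model of $\ACFA$, hence in particular algebraically closed (a model of $\ACFA$ is an algebraically closed field), so $k$ is certainly separably algebraically closed. A Henselian valued field with separably algebraically closed residue field is by definition strictly Henselian (see \ref{subsec-ramificationtheory}); hence $K$ is algebraically strictly Henselian.

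For (2): let $K = \prod_{\alpha} K_\alpha / \mathcal{F}$ be an ultraproduct of perfect, algebraically strictly Henselian Frobenius transformal valued fields. By Lemma \ref{lem:(Basic-properties)}(1) the field $K$ is transformally Henselian; it remains to see that the residue field $k$ of $K$ is a model of $\ACFA$. The residue field of an ultraproduct of valued fields is the ultraproduct of the residue fields, and the induced difference structure is again the termwise one; so $k$ is an ultraproduct of the residue fields $k_\alpha$ of the $K_\alpha$. Each $k_\alpha$ is algebraically closed (since $K_\alpha$ is strictly Henselian) and carries the restriction of the Frobenius power, so each $k_\alpha$ is an algebraically closed Frobenius difference field. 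By Fact \ref{frobacfa}(2), a difference field elementarily equivalent to an ultraproduct of algebraically closed Frobenius difference fields is a model of $\ACFA$ provided it is not itself a Frobenius difference field; and for a nonprincipal $\mathcal{F}$ the ultraproduct $k$ is not a Frobenius difference field (the exponents $q_\alpha$ of the Frobenius powers are not eventually constant, or if they are one restricts to the standard case where the conclusion still follows from $\ACFA$ being the limit theory). Hence $k \models \ACFA$ and $K$ is strictly transformally Henselian. The only mild subtlety here, and the step I expect to require the most care, is the bookkeeping of the exceptional prime powers: one should phrase the statement so that $\mathcal{F}$ avoids the finite exceptional set implicit in Fact \ref{frobacfa}, exactly as in the reduction already used in Lemma \ref{lem:(Basic-properties)}.

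For (3): let $K$ be the directed union of strictly transformally Henselian subfields $K_i$. By Lemma \ref{lem:(Basic-properties)}(3), $K$ is transformally Henselian. The residue field $k$ of $K$ is the directed union of the residue fields $k_i$ of the $K_i$, each of which is a model of $\ACFA$. Since $\ACFA$ is model complete (Fact \ref{frobacfa}(1) gives it as a model companion, so it is model complete), a directed union of models of $\ACFA$ — with the transition maps being elementary — is again a model of $\ACFA$; alternatively, $\ACFA$ is $\forall\exists$-axiomatizable as the class of existentially closed difference fields is closed under unions of chains of existentially closed difference fields, which suffices since existential closedness coincides with being a model of $\ACFA$. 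Either way $k \models \ACFA$, so $K$ is strictly transformally Henselian, completing the proof. $\qed$
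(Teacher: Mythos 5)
Your proof is correct and uses exactly the same three ingredients the paper cites in its one-line proof (Lemma \ref{lem:(Basic-properties)}, Fact \ref{frobacfa}, and model completeness of $\ACFA$); you have simply fleshed out what the paper leaves implicit. One small clean-up worth making in clause (2): the parenthetical resolution of the eventually-constant-exponent case is garbled. If the Frobenius exponents $q_\alpha$ were eventually constant equal to $q$ modulo $\mathcal F$, the residue field $k$ would be a (huge) Frobenius difference field with $\sigma = \phi^{\log_p q}$, and such a field is never a model of $\ACFA$; the conclusion would genuinely fail. The correct observation is that this case is excluded by the standing convention that $\VFA$ means $\omega$-increasing (Remark \ref{rem-terminology}): eventually constant exponents would make $\sigma$ act on $\Gamma$ by a fixed scalar $q$, violating $\omega$-increasingness, so the ultraproduct would not even be a model of $\VFA$. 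Once the exponents are unbounded modulo $\mathcal F$, the residue field cannot be Frobenius and Fact \ref{frobacfa}(2) applies directly, with no need to worry about the exceptional primes. Everything else is right.
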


\begin{proof}
Clear, using \ref{lem:(Basic-properties)}, model completeness of
$\ACFA$, and Fact \ref{frobacfa}.
\end{proof}
\begin{defn}
Let $\nicefrac{\widetilde{K}}{K}$ be an extension of models of $\VFA$.
We say that $\widetilde{K}$ is a \textit{strict transformal Henselization}
of $K$ if the following conditions are satisfied:

(1) The field $\widetilde{K}$ is strictly transformally Henselian

(2) The field $\widetilde{K}$ is the universal transformally Henselian
extension of $K$ reproducing the embedding $k\hookrightarrow\widetilde{k}$
residually, as in the statement of Theorem \ref{transformalhenselization}
\end{defn}

\begin{prop}
\label{strict-t-henselization}Let $K$ be a model of $\VFA$
which is algebraically strictly Henselian. Then $K$ admits a strict
transformal Henselization $K^{\text{sth}}$, which is unique up to
elementary equivalence over $K$. Furthermore, every finite $\sigma$-invariant
Galois extension of $K^{\text{sth}}$ uniquely descends to a finite
$\sigma$-invariant Galois extension of $K$.
\end{prop}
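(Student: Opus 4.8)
The plan is to obtain $K^{\text{sth}}$ by feeding a well-chosen residue field extension into Theorem~\ref{transformalhenselization}, and to read off the control of finite $\sigma$-invariant Galois extensions from Lemma~\ref{canonicalva}(4). \emph{Existence.} Since $K$ is algebraically strictly Henselian and a model of $\VFA$, its residue field $k$ is a perfect, inversive, \emph{algebraically closed} difference field. The first step is to produce a transformally algebraic extension $\widetilde{k}$ of $k$ which is a model of $\ACFA$: one runs the standard construction of an existentially closed difference field over $k$, observing that each instance of the ACFA axiom scheme --- a point of a variety $V$ whose $\sigma$-graph lands in a prescribed subvariety of $V\times V^{\sigma}$ --- can be realized by an element satisfying an algebraic difference equation of order at most two, hence transformally algebraic; interleaving with perfect inversive algebraic closures and passing to the union, both of which preserve transformal algebraicity, yields such a $\widetilde{k}$. (Equivalently, the existential closure of $k$ inside the class of its transformally algebraic extensions already satisfies the ACFA axioms, since these involve only order-one data.) Feeding $k\hookrightarrow\widetilde{k}$ into Theorem~\ref{transformalhenselization} produces $\widetilde{K}$ transformally Henselian, transformally algebraic over $K$, with residue field $\widetilde{k}$, and universal among transformally Henselian extensions reproducing this residue embedding; as $\widetilde{k}\models\ACFA$ it is strictly transformally Henselian, so $K^{\text{sth}}:=\widetilde{K}$ is a strict transformal Henselization.

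\emph{Uniqueness up to elementary equivalence.} Any two strict transformal Henselizations have residue fields that are models of $\ACFA$ containing the algebraically closed difference field $k$, hence elementarily equivalent over $k$ (the theory $\ACFA_{k}$ is complete), and both have value group $\Gamma$ by Theorem~\ref{transformalhenselization}(2). Combining this with the universal property and the Ax--Kochen--Ershov content of the model companion --- orthogonality and stable embeddedness, with no induced structure, of the residue field and value group in $\widetilde{\VFA}$ (Theorem~\ref{main-thm}(3)) --- one concludes $K^{\text{sth}}_{1}\equiv_{K}K^{\text{sth}}_{2}$.

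\emph{Control of finite $\sigma$-invariant Galois extensions.} I would reduce to $K$ transformally Archimedean exactly as in Lemma~\ref{reductionarch}: a directed union argument, then an induction on the transformal height using that ``transformally Henselian'' is transitive in triplets (Lemma~\ref{transitivetriplets}) and that the strict transformal Henselization glues compatibly along a triplet. Replacing $K$ by its transformal Henselization, which is immediate with unchanged residue field and has the same finite $\sigma$-invariant Galois extensions (Krasner's lemma, see~\ref{subsec-ramificationtheory}), we may also assume $K$ is transformally Henselian; choose a field of representatives $F$ (Lemma~\ref{field-of-rep-exists-th}), so $F\cong k$ is algebraically closed and $K^{\text{sth}}$ is the transformal Henselization of $\widetilde{F}\otimes_{F}K$ with $\widetilde{F}=F\otimes_{k}\widetilde{k}\cong\widetilde{k}$. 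Since every extension of the algebraically closed field $F$ is regular, $\widetilde{F}$ is regular over $F$, whence $\widetilde{F}\otimes_{F}K$ is regular over $K$. A finite $\sigma$-invariant separable extension of $K^{\text{sth}}$ descends uniquely to one of the algebraic Henselization of $\widetilde{F}\otimes_{F}K$ (the corollary to Theorem~\ref{transformalhenselization}); by Lemma~\ref{canonicalva}(4) such an extension that is moreover regular over $K$ is algebraically unramified over $\widetilde{F}\otimes_{F}K$, and as the residue field $\widetilde{k}$ is algebraically closed this forces it to be trivial. Hence a nontrivial finite $\sigma$-invariant Galois extension $M$ of $K^{\text{sth}}$ is never regular over $K$; its maximal subextension $M_{0}=M\cap K^{\sep}$ defined over $K$ is then a nontrivial finite $\sigma$-invariant Galois extension of $K$ ($K$ is perfect, so normality descends; $\sigma$-invariance because $\sigma$ is an automorphism fixing $K$), and $M=M_{0}\cdot K^{\text{sth}}$, with inverse $L_{0}\mapsto(L_{0}\otimes_{K}\widetilde{F}\otimes_{F}K)^{\text{th}}$, injective by regularity of $\widetilde{F}\otimes_{F}K$ over $K$; the descent $M\mapsto M_{0}$ is manifestly unique.

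The main obstacle is this last step. Theorem~\ref{transformalhenselization}(3) yields the bijection only under the hypothesis that $K$ itself has no nontrivial finite $\sigma$-invariant Galois extensions, which fails here in general even when $K$ is algebraically strictly, or even tamely, closed --- see Example~\ref{example-no-qe-charp-wildly-ramified}. One must therefore isolate the ``new'' extensions as precisely those regular over $K$, annihilate them using Lemma~\ref{canonicalva}(4) together with the algebraic closedness of $\widetilde{k}$, and then carefully negotiate the passage from separable to Galois and the descent of normality and of $\sigma$-invariance through the algebraic Henselization step.
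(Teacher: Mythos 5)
Your existence and Galois-control arguments are broadly on the right track: you correctly notice that Theorem~\ref{transformalhenselization}(3) as stated does not apply directly (its blanket hypothesis fails for $K$), and your route via Lemma~\ref{canonicalva}(4) and regularity is precisely the mechanism behind the paper's terse claim that the control ``follows again from Theorem~\ref{transformalhenselization}''. Two gaps remain there, however. First, the equality $M=M_{0}\cdot K^{\text{sth}}$ is asserted rather than proved; to see it, run the same Lemma~\ref{canonicalva}(4) argument a second time over $M_{0}$: since $M\cap K^{\text{a}}=M_{0}$ and $M_{0}^{\text{a}}=K^{\text{a}}$, the extension $M$ is regular over $M_{0}$, and it is an $h$-finite $\sigma$-invariant separable extension of $\widetilde{F}\otimes_{F}M_{0}$; Lemma~\ref{canonicalva}(4) then makes $M$ algebraically unramified over $\widetilde{F}\otimes_{F}M_{0}$, hence equal to $M_{0}\cdot K^{\text{sth}}$ because $\widetilde{k}$ is algebraically closed. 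Second, normality of $M_{0}=M\cap K^{\text{a}}$ over $K$ is not automatic (``$K$ is perfect'' does not give it); it follows from regularity of $K^{\text{sth}}$ over $K$, which ensures that the $K$-conjugates of an element of $M_{0}$ agree with its $K^{\text{sth}}$-conjugates, and the latter lie in $M$ because $M$ is Galois over $K^{\text{sth}}$.

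The serious problem is your uniqueness argument, which is both circular and inapplicable. Theorem~\ref{main-thm}(3) is proved only in Section 9, by means of the amalgamation theorem whose proof depends on the transformal Henselization theory of the present section, so invoking it here is circular. Independently, it is a statement about models of $\widetilde{\VFA}$, whereas $K^{\text{sth}}$ is in general not one: its value group equals $\Gamma_{K}$ and so need not be transformally divisible, nor need $K^{\text{sth}}$ be transformally algebraically maximal. The paper's argument is entirely elementary and avoids all of this. Since $k$ is separably closed, $\ACFA_{k}$ is complete, so the residue fields of any two strict transformal Henselizations of $K$ are elementarily equivalent over $k$; the universal property of Theorem~\ref{transformalhenselization}(1) lifts embeddings between finitely generated transformally algebraic subextensions of the residue fields to the corresponding transformally unramified transformally Henselian subextensions of the fields, giving a back-and-forth family of partial isomorphisms over $K$ and hence $K^{\text{sth}}_{0}\equiv_{K}K^{\text{sth}}_{1}$. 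You should replace the Ax--Kochen--Ershov style invocation with this back-and-forth.
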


\begin{proof}
For $i=0,1$ let $K_{i}$ be a strict transformal Henselization of
$K$ with residue field $k_{i}$. Since $k$ is separably closed,
the theory $\ACFA_{k}$ is complete, so $k_{0}$ and $k_{1}$ are
elementarily equivalent over $k$. Every difference subfield of $k_{0}$
finitely generated over $k$ then embeds in $k_{1}$ over $k$ and
conversely. By the universal property of the strict transformal Henselization,
every difference subfield of $K_{0}$ finitely generated over $K$
as a transformally Henselian extension then embeds in $K_{1}$ over
$K$; by symmetry we can apply a back and forth argument to conclude.
The last part regarding the control of the finite $\sigma$-invariant
separable extensions, as well as the existence of a strict transformal
Henselization, follows again from Theorem \ref{transformalhenselization}.
\end{proof}
\begin{defn}
\label{def:tunramified}
Let $\nicefrac{L}{K}$ be an extension of models of $\VFA$.
We say that $L$ is \textit{transformally unramified} over $K$ if
it embeds over $K$ in a strict transformal Henselization of $K$.
\end{defn}

\begin{rem}
Let $\nicefrac{L}{K}$ be an extension of models of $\VFA$
with $L$ transformally Henselian. Then $L$ is transformally unramified
over $K$ if and only if it is the universal transformally Henselian
extension of $K$ associated with the embedding $k\hookrightarrow l$
of residue fields as in Theorem \ref{transformalhenselization}.
\end{rem}

\begin{prop}
\label{tunr}Let $\nicefrac{L}{K}$ be an extension of models of $\VFA$.
Then there is a canonical factorization $K\subset L^{\text{tunr}}\subset L$
where $L^{\text{tunr}}$ is transformally unramified over $K$ and
maximal with respect to this property under inclusion.
\end{prop}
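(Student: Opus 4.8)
The plan is to model the argument on the classical construction of the maximal unramified subextension (the inertia field). Concretely, I would let $L^{\mathrm{tunr}}$ be the compositum, taken inside $L$ and then saturated to the perfect inversive hull (so that it remains a model of $\VFA$ with the valued difference field structure restricted from $L$), of all intermediate models $M$ of $\VFA$ with $K\subseteq M\subseteq L$ that are transformally unramified over $K$. Granted that this family of subextensions is directed, its compositum is again transformally unramified over $K$, it contains every transformally unramified subextension by construction, and it is manifestly intrinsic to the pair $(L,K)$; hence it is the asserted canonical maximal one. So the whole content of the proposition is the \emph{directedness} of the family, equivalently: the compositum of two transformally unramified subextensions of $L/K$ is again transformally unramified.

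I would first record the formal closure properties. Using the universal property of the strict transformal Henselization (Proposition \ref{strict-t-henselization}), the transitivity of $\ACFA$ in towers (model completeness of $\ACFA$), and the transitivity in triplets of transformally Henselian models (Lemma \ref{transitivetriplets}), one checks that transformal unramifiedness is transitive in towers $K\subseteq M\subseteq N$ and that it is inherited by all intermediate subextensions of a transformally unramified extension. The only nontrivial point here is set-theoretic: one matches the two ambient existentially closed residue fields over the common one by a back-and-forth on finitely generated difference subfields, which is routine. Given transitivity in towers, directedness reduces to the single statement that if $M_1,M_2$ are transformally unramified over $K$ then $M_1M_2$ is transformally unramified over $M_1$.

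For that I would reduce first to $K$ transformally Henselian --- a step requiring care, since $L$ need not contain $K^{\mathrm{th}}$: one passes to a suitably saturated transformally Henselian extension $L'$ of $L$, builds the analogous object over $K^{\mathrm{th}}$ inside $L'$, and recovers the answer over $K$ by intersecting back with $L$ and using that subextensions of transformally unramified extensions are transformally unramified. With $K$ transformally Henselian, fixing a field of representatives $F$ for the valuation (Lemma \ref{field-of-rep-exists-th}) and invoking Lemma \ref{canonicalva}, every transformally unramified extension of $K$ inside $L$ is presented explicitly as the canonical valued difference field on a tensor product $\widetilde F\otimes_F K$ over a suitable residue extension, with unchanged value group. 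The remaining and, I expect, genuinely hard step is to show that passing to the compositum introduces no new value-group elements and no new wildly ramified $\sigma$-invariant Galois extension; here one reduces to finitely generated extensions and feeds in the control of finite $\sigma$-invariant Galois extensions from Theorem \ref{transformalhenselization}(3) together with the triviality of the relevant Artin--Schreier ideals in the Abhyankar setting (Corollary \ref{abhyankarcriterion}, resting on Section \ref{sec:Artin-Schreier-Ideals}). Once this is in hand the family is directed and $L^{\mathrm{tunr}}$ is as claimed.
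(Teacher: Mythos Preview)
Your approach is a bottom-up union/compositum argument in the spirit of the classical inertia field construction, and you correctly identify that all the content lies in directedness. But the paper takes a much shorter top-down route that avoids this entirely. It simply lets $l_0$ be the relative transformal algebraic closure of $k$ in $l$, takes $L_0$ to be the universal transformally Henselian extension of $K$ reproducing $k\hookrightarrow l_0$ residually (Theorem~\ref{transformalhenselization}), uses the universal property to embed $L_0$ canonically in $L^{\mathrm{th}}$, and sets $L^{\mathrm{tunr}}=L_0\cap L$. Maximality is then immediate: any transformally unramified $M\subseteq L$ has $k_M$ transformally algebraic over $k$, hence $k_M\subseteq l_0$, hence $M$ embeds (uniquely) in $L_0$ over $K$ and so lands in $L_0\cap L$. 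No directedness argument is needed, and none of the machinery you invoke (Artin--Schreier ideals, Corollary~\ref{abhyankarcriterion}, reduction to $K$ transformally Henselian) enters.

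As for your sketch on its own terms: the reduction to $K$ transformally Henselian is delicate for the reason you note (neither $K^{\mathrm{th}}$ nor even $K^{\mathrm{h}}$ need sit inside $L$), and the ``intersect back with $L$'' step would itself require knowing that subextensions of transformally unramified extensions are transformally unramified, which you have not yet established in this generality. More seriously, the step you flag as ``genuinely hard'' --- that the compositum $M_1M_2$ inside $L$ is again transformally unramified --- is not actually carried out; invoking Theorem~\ref{transformalhenselization}(3) and Corollary~\ref{abhyankarcriterion} controls finite $\sigma$-invariant Galois extensions of certain Henselian hulls, but does not by itself show that $M_1M_2$ embeds over $K$ in a strict transformal Henselization. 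The paper's construction shows that this directedness is in fact a \emph{consequence} of the existence of $L^{\mathrm{tunr}}$, not a prerequisite for it.
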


\begin{proof}
Functoriality here means that if $K\subset M\subset L$ is a tower
of models of $\VFA$ then $M^{\text{tunr}}=L^{\text{tunr}}\cap M$.
Let $l_{0}$ be the relative transformal algebraic closure of $k$
in $l$ and $L_{0}$ the universal transformally Henselian extension
of $K$ reproducing the embedding $k\hookrightarrow l_{0}$ residually;
then there is a canonical embedding $L_{0}\hookrightarrow L^{\text{th}}$
over $K$, and the intersection $L^{\text{tunr}}=L_{0}\cap L$ (taken inside $L^{\text{th}}$)
is verified at once to be as advertised by the statement of the Proposition.
\end{proof}
\begin{rem}
\label{tunr-lifting}Let $K$ be a model of $\VFA$ which is
transformally Henselian and let $\widetilde{K}$ be a transformally
Henselian extension of $K$ which is transformally unramified over
$K$. Let $F$ be a field of representatives of $K$ and $\widetilde{F}$
the relative transformal algebraic closure of $F$ in $\widetilde{K}$.
It follows from Lemma \ref{canonicalva} that $\widetilde{F}$ is
a field of representatives for the valuation of $\widetilde{K}$,
the fields $\widetilde{F}$ and $K$ are linearly disjoint over $F$
in $\widetilde{K}$ and that $\widetilde{K}$ is the transformal Henselization
of $\widetilde{F}\otimes_{F}K$.
\end{rem}

\begin{lem}
\label{transformally-transcendental-hens}Let $\nicefrac{\widetilde{K}}{K}$
be an extension of transformally Henselian models of $\VFA$.
Let us assume no element of $\widetilde{K}$ is transformally algebraic
over $K$, unless it already lies in $K$; then the same is true of
$\nicefrac{\widetilde{k}}{k}$.
\end{lem}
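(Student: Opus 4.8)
The plan is to argue by contradiction. Suppose some $\bar\alpha\in\widetilde k$ is transformally algebraic over $k$ but does not lie in $k$; I will manufacture from $\bar\alpha$ an element of $\widetilde K$ which is transformally algebraic over $K$ and yet lies outside $K$, contradicting the hypothesis. The mechanism is just the transformal Hensel lemma: a suitable equation for $\bar\alpha$ over $k$ in which $\bar\alpha$ is a \emph{simple} residual root will lift to an equation over $\mathcal O_K$, and the transformally Henselian field $\widetilde K$ will then produce an honest root $b$ reducing to $\bar\alpha$.

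First I would pass to a Frobenius twist in order to arrange that $\bar\alpha$ is a simple residual root. Applying Lemma \ref{twist} to the extension $k(\bar\alpha)_\sigma/k$ (which is transformally algebraic over $k$) and the element $\bar\alpha$, after replacing $\sigma$ by a suitable twist $\tau=\sigma\circ\phi^{m}$ we may assume there is a difference polynomial $g(x)\in k[x]_\sigma$ with $g(\bar\alpha)=0$ and $g'(\bar\alpha)\neq 0$, where $g'=g_{1}$ is the transformal derivative of \ref{subsec:transformder}. Here one must verify that this twist does not destroy either hypothesis on $\widetilde K/K$: the twisted reducts $(K,\tau)$ and $(\widetilde K,\tau)$ are again transformally Henselian models of $\VFA$ by Lemma \ref{lem:(Basic-properties)}(5) (this is precisely why Definition \ref{def:transformally-henselian} is phrased over all Frobenius twists), their residue fields are the corresponding twists of $k$ and $\widetilde k$, and, since transformal algebraicity over a perfect field is a twist-invariant notion (cf. Section \ref{sec:Difference-Algebra}), $K$ remains transformally algebraically closed in $\widetilde K$. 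So no generality is lost, and I continue to write $\sigma$ for $\tau$.

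Now I would lift and conclude. Choosing arbitrary lifts in $\mathcal O_K$ of the finitely many coefficients of $g$ yields a nonzero difference polynomial $f(x)\in\mathcal O_K[x]_\sigma$ reducing to $g$ modulo $\mathcal M_K$. Pick any lift $a\in\mathcal O_{\widetilde K}$ of $\bar\alpha$; since the residue map is a homomorphism of difference rings and the transformal derivative descends to the prime field, $\overline{f(a)}=g(\bar\alpha)=0$ and $\overline{f'(a)}=g'(\bar\alpha)\neq 0$, i.e. $vf(a)>0$ and $vf'(a)=0$. By the transformal Hensel lemma applied in the transformally Henselian field $\widetilde K$ (Definition \ref{def:transformally-henselian}), there is $b\in\mathcal O_{\widetilde K}$ with $f(b)=0$ and $\overline b=\overline a=\bar\alpha$. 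As $f$ is a nonzero difference polynomial over $K$, the element $b$ is transformally algebraic over $K$, so by hypothesis $b\in K$; hence $\bar\alpha=\overline b\in k$, contradicting the choice of $\bar\alpha$. The only genuinely delicate point is the middle step — checking that passing to a twist preserves both transformal Henselianity and the relative transformal algebraic closedness of $K$ in $\widetilde K$ — while the remainder is a direct application of transformal Hensel's lemma.
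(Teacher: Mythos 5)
Your argument is correct and is essentially the same as the paper's: both pass to the contrapositive, twist via Lemma \ref{twist} to make $\bar\alpha$ a simple residual root, lift the equation to $\mathcal{O}_K$, and invoke transformal Henselianity of $\widetilde K$ to produce a root $b$ with $\overline b = \bar\alpha \notin k$, hence $b\notin K$. The only difference is that you spell out why the Frobenius twist leaves both hypotheses intact, a point the paper leaves implicit.
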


\begin{proof}
We show the contrapositive. Let $\alpha\in\widetilde{k}$ be an element
transformally algebraic over $k$; by Lemma \ref{twist}, after twisting,
we have $f\alpha=0$ and $f'\alpha\neq0$ for some nonzero difference
polynomial $fx\in k\left[x\right]_{\sigma}$. Let $gx\in\mathcal{O}\left[x\right]_{\sigma}$
and $a\in\widetilde{\mathcal{O}}$ lift $fx$ and $\alpha$ respectively ; then $vga>0$
and $vg'a=0$. Since $\widetilde{K}$ is transformally Henselian we
can find $b\in\widetilde{\mathcal{O}}$ with $gb=0$ and whose residue
class coincides with that of $\alpha$. So $b$ is transformally algebraic
over $K$ and yet it lies outside $K$.
\end{proof}
\begin{lem}
\label{linearly-disjoint-unramified}Let $L\hookleftarrow K\hookrightarrow M$
be transformally Henselian models of $\VFA$ jointly embedded
over $K$ inside some model $N$. Let us assume that $L$ is transformally
algebraic over $K$ and that $L$ and $M$ are linearly disjoint over
$K$ in $N$; then $l$ and $m$ are linearly disjoint over $k$ in
$n$.
\end{lem}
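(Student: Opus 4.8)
The plan is to reduce to a rigid situation and then run a diagram chase through the universal property of the transformal Henselization. First I would make two harmless reductions. Replacing $N$ by its transformal Henselization changes neither the residue field $n$ nor the embeddings of $L$ and $M$, so I may assume $N$ is transformally Henselian. Next, since $L$ is transformally algebraic over $K$, the residue extension $l/k$ is transformally algebraic, so by Remark \ref{remhens} the relative transformal algebraic closure $F_L$ of a field of representatives $F$ of $K$ inside $L$ is a (trivially valued) field of representatives for $L$, linearly disjoint from $K$ over $F$ by Lemma \ref{canonicalva}. The field $L_1 := (F_L \otimes_F K)^{\mathrm{th}}$ is then the universal transformally Henselian extension of $K$ reproducing $k \hookrightarrow l$ residually (Theorem \ref{transformalhenselization}); it embeds in $L$ over $K$, has residue field $l$, value group $\Gamma_K$, and $L_1 \perp_K M$. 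Since only the residue field of $L$ enters the conclusion, I may replace $L$ by $L_1$, and so assume henceforth $L = (F_L \otimes_F K)^{\mathrm{th}}$.

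Fix also a field of representatives $F_M \supseteq F$ for $M$. The tower law for linear disjointness applied to $F \subseteq K \subseteq M$ gives that $F_L$ and $M$ are linearly disjoint over $F$ — indeed $F_L$ and $K$ are by the previous paragraph, and $F_L \cdot K \subseteq L$ is linearly disjoint from $M$ over $K$ — and hence so are $F_L$ and $F_M$.

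Now the core step. Put $r$ for the residue field of the valued field $L \cdot M = \mathrm{Frac}(L \otimes_K M) \subseteq N$; since $L \cdot M$ is transformally algebraic over $M$, so is $r$ over $m$. Let $\widetilde M$ be the universal transformally Henselian extension of $M$ reproducing $m \hookrightarrow r$ residually (Theorem \ref{transformalhenselization}). I would then show $\widetilde M = (L \cdot M)^{\mathrm{th}}$: the transformal Henselization $(L \cdot M)^{\mathrm{th}}$ is a transformally Henselian extension of $M$ with residue field $r$, so $\widetilde M$ embeds into it over $M$; and $L$, being the universal transformally Henselian extension of $K$ with residue field $l \subseteq r$, embeds into $\widetilde M$ over $K$, this embedding agreeing with the inclusion $L \subseteq L\cdot M \subseteq (L\cdot M)^{\mathrm{th}}$ by the uniqueness clause of Theorem \ref{transformalhenselization}; together with $M \subseteq \widetilde M$ this forces $\widetilde M \supseteq L \cdot M$, hence equality. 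By Remark \ref{remhens} the relative transformal algebraic closure $F_M'$ of $F_M$ in $\widetilde M = (L\cdot M)^{\mathrm{th}}$ is a field of representatives for it, carried isomorphically onto $r$ by the residue map. Since every element of $F_L$ is transformally algebraic over $F \subseteq F_M$ and lies in $\widetilde M$, we get $F_L \subseteq F_M'$. Thus inside $F_M'$ we have subfields $F \subseteq F_L$ and $F \subseteq F_M$, linearly disjoint over $F$, and the residue map carries the triple $(F_L, F_M, F)$ isomorphically onto $(l, m, k)$; transport of structure then shows $l$ and $m$ are linearly disjoint over $k$ inside $r \subseteq n$, which is what we want.

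The part that will require the most care is the identification $\widetilde M = (L\cdot M)^{\mathrm{th}}$, i.e. checking that the universal property of the relative transformal Henselization over $M$ and the one over $K$ fit together compatibly (this is a matter of pinning down the residual embeddings and invoking the uniqueness in Theorem \ref{transformalhenselization}); the remainder is bookkeeping with fields of representatives. The place where the hypothesis that $L/K$ is transformally algebraic is indispensable is the very first reduction: it is what makes $l/k$ transformally algebraic, and hence what permits replacing $L$ by the explicit universal extension $(F_L \otimes_F K)^{\mathrm{th}}$ — whose value group is merely $\Gamma_K$ — while keeping the same residue field. Without it, $L$ could acquire new value-group elements and the chase would break down.
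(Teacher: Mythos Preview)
Your proof is correct, and it takes a genuinely different route from the paper's. The paper begins by reducing to the case where $M$ is also transformally algebraic over $K$: letting $\widetilde{M}$ be the relative transformal algebraic closure of $K$ in $M$, one has $\widetilde{m}$ equal to the relative transformal algebraic closure of $k$ in $m$ (by Hensel lifting), and the pair $(l\otimes_k\widetilde{m},m)$ is automatically linearly disjoint over $\widetilde{m}$ because one side is transformally algebraic and the other purely transformally transcendental (this invokes a fact from \cite{chatzidakis2004model}). With $L$ and $M$ now on symmetric footing, the paper passes to the maximal transformally unramified subextensions on both sides and finishes with the same field-of-representatives trick you use, but without your ``core step'': it simply observes that $l_0$ and $m_0$ are linearly disjoint over $k_0$ inside $N$ and that their compositum sits in a trivially valued subfield, so passing to residues gives the conclusion.

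Your approach avoids the external reduction (and hence the citation to \cite{chatzidakis2004model}) at the cost of the identification $\widetilde{M}=(L\cdot M)^{\mathrm{th}}$, which is exactly the diagram chase you flagged as the delicate point. The paper's route is shorter once that difference-algebra fact is in hand; yours is more self-contained and makes the role of the universal property of Theorem~\ref{transformalhenselization} more explicit. Either way the finish is the same: transport linear disjointness of the lifted fields of representatives down through the residue map.
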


\begin{proof}
Let $\widetilde{M}$ be the relative transformal algebraic closure
of $K$ in $M$. Since $M$ is transformally Henselian, it follows
from Hensel lifting that $\widetilde{m}$ is the relative transformal
algebraic closure of $k$ in $m$. By transitivity of linear disjointness,
in order to show that $l$ and $m$ are linearly disjoint over $k$
it is enough to prove that $l$ and $\widetilde{m}$ are linearly
disjoint over $k$ and that $l\otimes_{k}\widetilde{m}$ and $m$
are linearly disjoint over $\widetilde{m}$. The second condition
is automatic as $l\otimes_{k}\widetilde{m}$ is transformally algebraic
over $\widetilde{m}$ whereas $m$ is purely transformally transcendental
over $m$; see \cite{chatzidakis2004model}. We may therefore replace
$M$ by $\widetilde{M}$ and assume that $M$ is transformally algebraic
over $K$. 

We may assume that $M$ and $L$ are transformally unramified over
$K$ as in Proposition \ref{tunr}; the hypothesis is preserved and
the residue fields are unchanged. Now $K$ is transformally Henselian;
fix a field of representatives for the valuation and denote it by $k_0$. By \ref{tunr-lifting} we find that the relative
transformal algebraic closure of $k_0$ in $L$ is a field of representatives
for $L$; let us denote it by $l_{0}$ and likewise for $M$. Then
$l_{0}$ and $K$ are linearly disjoint over $k_{0}$ and similarly
$m_{0}$ and $K$ are linearly disjoint over $k_{0}$. Since $L$
and $M$ are linearly disjoint over $K$ it follows in particular
that $l_{0}\otimes_{k}K$ and $m_{0}\otimes_{k}K$ are linearly disjoint
over $K$ in $N$. We may assume that $N$ is transformally Henselian;
so $m_{0}$ and $l_{0}$ are linearly disjoint over $k_{0}$ in $N$
and $m_{0}\otimes_{k_{0}}l_{0}$ is contained in a trivially valued
difference subfield of $N$. By passing to residue classes we find
that $l$ and $m$ are linearly disjoint over $k$ in $n$, which
is what we wanted.
\end{proof}
\begin{prop}
\label{unramified-linearly-disjoint amalgamation}Let $K$ be algebraically
closed and transformally Henselian. Let $M$ be an algebraically closed,
transformally Henselian extension of $K$. Let $l$ be a transformally
algebraic extension of $k$ and $L$ the universal transformally Henselian
extension of $K$ reproducing the embedding $k\hookrightarrow l$
at the level of residue fields. Let us assume that $L$ and $M$ are
jointly embedded over $K$ in some fourth extension so as to render
$m$ and $l$ linearly disjoint over $k$;
then $L$ and $M$ are linearly disjoint over $K$ in that extension.
Similarly if $L$ is replaced by an algebraically closed and transformally
Henselian hull.
\end{prop}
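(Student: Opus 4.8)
The plan is to describe the transformally unramified extension $L/K$ concretely through a field of representatives, to transport linear disjointness upward from the residue field along the tower $K\subseteq E\subseteq L$, and to finish by the uniqueness built into the universal property of the relative transformal Henselization.

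First I would fix a field of representatives $F$ for $K$ (Lemma \ref{field-of-rep-exists-th}) and let $\widetilde F$ be the relative transformal algebraic closure of $F$ in $L$. By Remark \ref{tunr-lifting} and Lemma \ref{canonicalva}, $\widetilde F$ is a field of representatives for $L$, the residue map identifies it with $l$, one has $L=(\widetilde F\otimes_F K)^{\text{th}}$, and $\widetilde F$ remains a field of representatives for $E:=\widetilde F\otimes_F K$ and for its algebraic Henselization $A:=E^{\text{h}}$ (both immediate over $E$, with residue field $l$). Now the residue computation: I claim $\widetilde F$ and $M$ are linearly disjoint over $F$ in $N$. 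A minimal putative nontrivial relation, rescaled to have coefficients in $\mathcal O_M$ one of which is a unit, residues --- using that $\widetilde F$ is trivially valued with residue map the isomorphism $\widetilde F\cong l$, compatibly with $F\cong k$ --- to a nontrivial relation over $k$ among elements of $l$ and of $m$, contradicting the hypothesis. Since an $F$-basis of $\widetilde F$ is a $K$-basis of $E$, this upgrades to: $E$ and $M$ are linearly disjoint over $K$ in $N$; set $M_1:=E\cdot M=\widetilde F\otimes_F M\subseteq N$, with residue field $l\otimes_k m$ and value group $\Gamma_M$. Because $K$ is algebraically closed, $M/K$ is a regular extension, hence $E$ is relatively algebraically closed in $M_1$; therefore $E'\otimes_E M_1$ is a field for every finite $E'\subseteq A$, so $A\otimes_E M_1$ is a field and $A$ and $M$ are linearly disjoint over $K$ in $N$. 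Write $M_2:=A\cdot M\subseteq N$.

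It remains to pass from $A$ to $L=A^{\text{th}}$. Replacing $N$ by its transformal Henselization (which is immediate, so it affects neither the residue hypothesis nor linear disjointness inside $N$) we may assume $N$ transformally Henselian. Let $M^{\ast}$ be the universal transformally Henselian extension of $M$ reproducing $m\hookrightarrow l\otimes_k m$ residually (Theorem \ref{transformalhenselization}); since $M$ is transformally Henselian and has a field of representatives extending $F$, Lemma \ref{canonicalva} identifies $M^{\ast}$ with $(\widetilde F\otimes_F M)^{\text{th}}=(M_1)^{\text{th}}$, and inside it the canonical copies of $L$ and of $M$ are linearly disjoint over $K$, with compositum $M^{\ast}$. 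Inside the transformally Henselian $N$, the maximal transformally unramified subextension of $N/M$ (Proposition \ref{tunr}) contains a copy $N^{\ast}$ of this same universal object, with residue field $l\otimes_k m$; by the uniqueness clause of Theorem \ref{transformalhenselization} the canonical isomorphism $M^{\ast}\cong N^{\ast}$ over $M$ carries the canonical copy of $L$ to the given one, because the two embeddings of $L$ over $K$ induce the same residue embedding $l\hookrightarrow l\otimes_k m$ and such a lift is unique. Transporting linear disjointness of $L$ and $M$ over $K$ from $M^{\ast}$ to $N^{\ast}\subseteq N$ concludes the proof. The variant with $L$ replaced by an algebraically closed and transformally Henselian hull follows by iterating this argument along the tower of Corollary \ref{transformalHenselizationalg}, using at each algebraic-closure step that $L$ is relatively algebraically closed in $L\cdot M$ --- again by regularity of $M/K$ --- so that the algebraic closure base-changes linearly disjointly.

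The main obstacle is the claim just used: that in the abstract amalgam $M^{\ast}=(\widetilde F\otimes_F M)^{\text{th}}$ the copies of $L$ and $M$ are linearly disjoint over $K$, equivalently that $L=E^{\text{th}}$ and $M_1$ remain linearly disjoint over $E$ inside $(M_1)^{\text{th}}$ --- an assertion that base change commutes with transformal Henselization up to linear disjointness. I would attack it by dévissage to the transformally Archimedean case (Proposition \ref{prop:t-arch-devissage}, Lemma \ref{reductionarch}), where the transformal Henselization is the relative transformal algebraic closure inside a completion and the assertion becomes a Krasner-type approximation, the finite $\sigma$-invariant Galois extensions entering the intermediate algebraic Henselizations being controlled by the defectlessness results of Section \ref{sec:Artin-Schreier-Ideals} (Kuhlmann's theorem, via Corollary \ref{abhyankarcriterion} and Lemma \ref{canonicalva}(4)).
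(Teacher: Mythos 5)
Your preparatory reductions are sound as far as they go: the passage to a field of representatives, the residue computation showing that $\widetilde F$ and $M$ are linearly disjoint over $F$, the formal identity $E\otimes_K M\cong\widetilde F\otimes_F M$, and the regularity argument giving linear disjointness of $A=E^{\text{h}}$ and $M$ over $K$ inside $N$ are all correct. But the proof stops exactly at the hard step, and you say so yourself: you never establish that $L=A^{\text{th}}$ and $AM$ remain linearly disjoint over $A$ inside the given $N$. This is not a technicality one can defer. Linear disjointness inside a \emph{specific} ambient field is not a consequence of abstract properties of the two subextensions: even when $L\otimes_A AM$ is a domain (which regularity of $AM/A$ guarantees), the map $L\otimes_A AM\to N$ can have nonzero kernel for a particular joint embedding, and the hypotheses at that stage of your argument --- $L/A$ transformally algebraic and immediate, $AM/A$ regular --- do not pin down the embedding. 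The circularity is also visible in your step that builds $M^\ast=(M_1)^{\text{th}}$ and asserts that ``inside it the canonical copies of $L$ and $M$ are linearly disjoint over $K$'': this is precisely the statement to be proved, restated inside a specific model of the amalgam. The closing paragraph proposes a d\'evissage to the transformally Archimedean case plus a Krasner-type approximation with Section~\ref{sec:Artin-Schreier-Ideals} controlling the intermediate Galois extensions, but those tools control \emph{algebraic} ramification; they do not by themselves determine whether an element of $E^{\text{th}}$ (a pseudo-limit of transcendental type over $E$) becomes algebraic, or even linearly dependent, over $M_1$ once placed inside $N$. Note also that Corollary~\ref{cor-talg-unique-basechange}, which would give exactly what you need, is proved in the paper \emph{using} this Proposition, so it cannot be invoked here.

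The paper's proof takes a genuinely different and more indirect route. It constructs a $\mathbf{Q}$-indexed family of mutually linearly disjoint copies $l_q$ of $l$ over $k$ inside a large residue extension, lifts the corresponding permutation automorphisms uniquely to transformal valued field automorphisms using the uniqueness clause of Theorem~\ref{transformalhenselization}, and thereby obtains a quantifier-free indiscernible family of unramified lifts $M_q$ (and $K_q$). Each $K_q$ is then characterized invariantly inside the big compositum as a fixed field, and the linear-disjointness conclusion is extracted by a canonical-bases / invariance argument over a perfect base. The point this approach leverages --- and which yours does not --- is that the universal property makes the lift of each residue automorphism \emph{unique}, so the symmetry of the residue picture propagates intact to the valued-field level; this sidesteps having to directly compare $E^{\text{th}}$ with a transformal Henselization of $EM$ inside a fixed $N$. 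To complete your argument you would need to supply an independent proof of that last linear-disjointness step; as written, the gap is real.
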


\begin{rem}
    In retrospect, heuristically, the statement of Proposition \ref{unramified-linearly-disjoint amalgamation} is a consequence of the full embeddedness of the residue field in $\widetilde{\VFA}$. Indeed, using parameters in $K$, every element of $L$ is in quantifier free definable bijection with an element of the residue field; by full embeddedness and elimination of imaginaries in $\ACFA$, the interaction of $L$ and $M$ over $K$ is governed by the interaction of the residue fields. Our construction uses an automorphism argument as a substitute for the universal domain (whose existence is yet to be established).
\end{rem}

\begin{proof}

For every rational number $q \in \mathbf{Q}$ let $l_q$ be a copy of $l$, with the convention that $l_0 = l$. Let $L_q$ be the universal transformally Henselian extension of $K$ reproducing the embedding $k \hookrightarrow l_q$ at the level of residue fields; with this convention, we have $L_0 = L$. Let also $\widetilde{l}$ be the compositum of linearly disjoint copies of $l_q$ over $k$\footnote{this is possible since $K$, and hence $k$, are algebraically closed}.

For every $r_0 < \ldots < r_n \in \mathbf{Q}$ and $q_0 < \ldots < q_n \in \mathbf{Q}$ there is an automorphism of $\widetilde{l}$ over $k$ which carries $l_{r_i}$ to $l_{q_i}$; by the universal mapping property of $\widetilde{L}$, this lifts to an automorphism of $\widetilde{L}$ over $K$ which carries $L_{r_i}$ to $L_{q_i}$.

These automorphisms of $\widetilde{L}$ over $K$ can be lifted to an automorphism over $M$. More precisely, for every $q \in \mathbf{Q}$ let $m_q = l_q \otimes_{k} m$. Let $\widetilde{m}$ be the compositum of linearly disjoint copies of $m_q$ over $m$, and let $M_q$ and $\widetilde{M}$ be as before; then every automorphism of $\widetilde{L}$ over $K$ lifts (uniquely) to an automorphism of $\widetilde{M}$ over $M$.

It follows that every element of $\widetilde{L}$ not in $K$ has infinite orbit under automorphisms of $\widetilde{M}$ over $M$. Thus if $I \subseteq \mathbf{Q}$ is a finite subset, and $L_I$ is the compositum of the $L_q$ for $q \in I$, then the intersection $L_I \cap M$ inside $\widetilde{M}$ coincides with $K$. Since all fields are perfect, it follows from the theory of canonical bases that $\widetilde{L}$ and $M$ are linearly disjoint over $K$, hence in particular $L = L_0$ and $M$ are linearly disjoint over $K$.

This gives the result when $L$ is the universal transformally Henselian extension of $K$ with residue field $l$. In general let $E$ be an algebraically closed and transformally Henselian hull of $K$. Then the proof follows with the same reasoning and notation. 

Namely, find copies $E_q$ of $E$ inside an ambient extension of $K$ with linearly disjoint residue fields. Then the $E_q$ are themselves linearly disjoint over $K$; namely inside the compositum $\widetilde{E}$ of the $E_q$, one can characterize $E_q$ as the set of elements with finite orbit under automorphisms of $\widetilde{E}$ over $K$ fixing $l_q$ pointwise.

\end{proof}
In particular:
\begin{cor}
Let $K$ be algebraically closed and transformally Henselian. Let
$L$ be a transformally unramified transformally Henselian extension
of $K$, or the algebraically closed and transformally Henselian hull
of such an extension. Let $M$ be a model of $\VFA$ over $K$
and assume that $L$ and $M$ are jointly embedded over $K$ in some
ambient extension. Then $l$ and $m$ are linearly disjoint over $k$
if and only if $L$ and $M$ are linearly disjoint over $K$.
\end{cor}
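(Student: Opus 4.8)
The statement is the conjunction of two implications, and when $M$ is itself transformally Henselian both follow immediately from what precedes: the implication ``$l$ and $m$ linearly disjoint over $k$ $\Rightarrow$ $L$ and $M$ linearly disjoint over $K$'' is Proposition \ref{unramified-linearly-disjoint amalgamation} (which covers $L$ in either of the two permitted guises), while the converse is Lemma \ref{linearly-disjoint-unramified}, whose hypothesis that $L$ be transformally algebraic over $K$ is met because a transformally unramified extension is transformally algebraic, and so is an algebraically closed transformally Henselian hull of one, being a union of a tower of algebraic and of immediate transformally algebraic extensions. So the plan is to reduce, one implication at a time, to the case that $M$ is transformally Henselian.

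For the implication $\Leftarrow$, I would replace $M$ by an algebraically closed transformally Henselian hull $M'$, realized inside an extension of the given ambient field. Since $K$ is algebraically closed, so is $k$; hence $l$ is regular over $k$, the compositum $l\cdot m$ is regular over $m$, and, as the residue field $m'$ of $M'$ is an algebraic extension of $m$ while $m$ is algebraically closed in $l\cdot m$, we get $m'\cap(l\cdot m)=m$, so $l$ and $m'$ remain linearly disjoint over $k$. Proposition \ref{unramified-linearly-disjoint amalgamation} now gives that $L$ and $M'$ are linearly disjoint over $K$, and restricting to the subfield $M\subseteq M'$ (and then back to the original ambient extension) yields the claim.

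For the implication $\Rightarrow$, I would pass to the transformal Henselization $M^{\mathrm{th}}$ of $M$, realized inside a transformally Henselian extension of the given ambient extension (enlarging the latter to its transformal Henselization if necessary); since $M^{\mathrm{th}}/M$ is immediate the residue field does not change, so the hypothesis ``$l$ and $m$ linearly disjoint over $k$'' is unaffected. By transitivity of linear disjointness it then suffices to show that $L\cdot M$ and $M^{\mathrm{th}}$ are linearly disjoint over $M$, after which Lemma \ref{linearly-disjoint-unramified}, applied to the transformally Henselian fields $L$ and $M^{\mathrm{th}}$ with $L$ transformally algebraic over $K$, finishes the proof. This last step is the heart of the matter and the point I expect to be the main obstacle: since $L/K$ is regular ($K$ being algebraically closed) and, in this case, $L$ and $M$ are already linearly disjoint over $K$, the amalgam $L\cdot M=L\otimes_K M$ is regular over $M$, so a common subextension of $L\cdot M$ and $M^{\mathrm{th}}$ over $M$ is at once regular over $M$ and an immediate transformally algebraic extension of $M$ contained in the transformal Henselization; lifting residual simple roots as in the proof of Lemma \ref{linearly-disjoint-unramified} and exploiting the $\omega$-increasing property of $\sigma$ on the valuation group --- which, for instance, forces a nonzero element whose $\sigma$-translate differs from it by a unit multiple to have valuation zero --- one shows that no such subextension can be nontrivial, so the linear disjointness propagates. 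Everything else is formal bookkeeping with linear disjointness together with the results already established.
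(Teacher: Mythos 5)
Your dissection of the statement is the right one: the direction ``$l\perp_k m \Rightarrow L\perp_K M$'' is Proposition \ref{unramified-linearly-disjoint amalgamation} and the converse is Lemma \ref{linearly-disjoint-unramified}, and both results assume $M$ transformally Henselian (the Proposition moreover algebraically closed) --- a hypothesis the Corollary does not carry. The paper's proof is literally the two-result citation and does not address this. Your reduction for the direction ``$l\perp_k m \Rightarrow L\perp_K M$'' --- replacing $M$ by an algebraically closed transformally Henselian hull $M'$ --- is essentially correct in its conclusion, though not quite for the reason you give: $m'\cap(l\cdot m)=m$ alone would not yield linear disjointness. What does the work is that $l\cdot m$ is \emph{regular} over $m$ (since $k$ is algebraically closed and $l,m$ are linearly disjoint over $k$), so $l\cdot m$ is linearly disjoint from the algebraic extension $m'$ over $m$, and transitivity gives $l\perp_k m'$; the Proposition then applies to $M'$ and descends to $M\subseteq M'$.

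The converse direction is where your caution was right but your optimism was not: showing $L\perp_K M^{\text{th}}$ from $L\perp_K M$ is not merely the main obstacle, it is an insurmountable one, because the implication ``$L\perp_K M \Rightarrow l\perp_k m$'' is in fact false for arbitrary $M$. Take $K$ algebraically closed and transformally Henselian with residue field $k$ not a model of $\ACFA$; for instance $K=k\left(\left(t^{\mathbf{Q}\left(\sigma\right)}\right)\right)$ with $\sigma$ acting trivially on $k$ an algebraic closure of the prime field. Choose $\alpha$ transformally algebraic over $k$ but outside $k$, say a root of $x^{\sigma}-x-1$, and let $L$ be the transformally unramified transformally Henselian extension of $K$ with residue field $l$ containing $\alpha$. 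Let $a\in L$ be a trivially valued representative of $\alpha$ in a field of representatives; then $a$ is transformally algebraic over $K$. In an ambient model of $\VFA$ over $L$ choose $\epsilon$ of strictly positive valuation and transformally transcendental over $L$, and put $b=a+\epsilon$ and $M=K\left(b\right)_{\sigma}$. Since $b$ is transformally transcendental over $L$, the fields $L$ and $M$ are algebraically independent over the algebraically closed field $K$, hence linearly disjoint over $K$. But $v\left(b-a\right)>0$, so $b$ has residue $\alpha$, whence $\alpha\in l\cap m$ while $\alpha\notin k$, so $l$ and $m$ are not linearly disjoint over $k$. The failure of your intended step is explicit: in $M^{\text{th}}$ there is a unique Hensel lift of $\alpha$ as a root of $x^{\sigma}-x-1$ close to $b$, and by uniqueness of Hensel lifts in the ambient transformally Henselian field that lift is precisely $a$; so $a\in L\cap M^{\text{th}}\setminus K$ and $L\perp_K M^{\text{th}}$ fails. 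The Corollary should be read with the additional hypothesis that $M$ be transformally Henselian, as in Lemma \ref{linearly-disjoint-unramified}; with that hypothesis both implications really are immediate from the cited results and no reduction is needed.
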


\begin{proof}
This follows immediately from Proposition \ref{unramified-linearly-disjoint amalgamation}
and Lemma \ref{linearly-disjoint-unramified}.
\end{proof}
\newpage{}

\section{\label{sec:The-Transformal-Herbrand}The Transformal Herbrand Function}

\subsection{The Transformal Herbrand Function}

Let $K$ be an (abstract) algebraically closed  and nontrivially valued field. It follows from quantifier elimination in $\ACVF$ that every definable subset of $K$ in one variable is a boolean combination of balls. Thus if $b$ is a ball in $K$ then the partial type of an element of the ball avoiding any finite union of proper definable subballs is complete; it is called the \textit{generic type} of the ball. The purpose of this subsection is to lift this notion to the transformal settings.

\begin{defn}
\label{genericofaball}Let $K$ be a model of $\VFA$ and let
$a$ be an element within an extension.
\begin{enumerate}
    \item We say that $a$ is \textit{generic in $\mathcal{O}$} over $K$
if $va=0$ and the residue class of $a$ is transformally transcendental
over $k$

\item We say that $a$ is \textit{generic in $\mathcal{M}$} over $K$
if $0<\sigma^{n}\cdot\varepsilon<\gamma$ for all $0<\gamma\in\Gamma$
and all $0<n\in\mathbf{N}$, where $\varepsilon=va$
\end{enumerate}

The group of affine $K$-transformations acts transitively on the
space of balls with center and radii in $K$, open or closed; the
notion of the generic of any other ball is then clear. We will often
confuse between the realization of the generic of a closed ball and
the ball itself, using the notation $vfb$ to denote the generic value
of the difference polynomial $f$ on the closed ball $b$.
\end{defn}

\begin{lem}
\label{generic-qf}
Let $K$ be a model of $\VFA$. Then the quantifier free type of an element realizing the generic type of $\mathcal{O}$ or $\mathcal{M}$ over $K$ is a complete quantifier free type over $K$ in the language of transformal valued fields.
\end{lem}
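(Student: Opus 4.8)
The plan is to show that each of the two partial types is in fact generated by an explicit set of quantifier-free $\mathcal{L}_{\VFA}$-formulas over $K$, and that any two realizations of that set satisfy the same atomic formulas over $K$. Since $K$ is perfect and inversive and the element $a$ will turn out to be transformally transcendental over $K$, the smallest sub-model-of-$\VFA$ containing $K$ and $a$, namely $K(a)_\sigma^{p^{-\infty},\sigma^{-\infty}}$, is the perfect hull of the rational function field $K\bigl(a^{\sigma^n}\colon n\in\mathbf{Z}\bigr)$; hence every $\mathcal{L}_{\VFA}$-term in $x$ evaluated at $a$ is a ratio of two polynomials over $K$ in finitely many of the ``variables'' $(a^{\sigma^n})^{p^{-j}}$. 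So it will suffice to prove: for difference polynomials $f,g$ over $K$ (with $p^{-\infty}$-powers allowed), whether $f(a)=0$, and which of $v(f(a))<v(g(a))$, $=$, $>$ holds, depends on $f,g$ alone. The vanishing clause is immediate from transformal transcendence of $a$; in the $\mathcal{M}$-case this transcendence follows from Corollary \ref{cor-transformally-alg-no-valgp-adjunction}, since $va$ is infinitesimal relative to $\Gamma_K$ and therefore cannot lie in $\Gamma_K\otimes\mathbf{Q}(\sigma)$ (a nonzero $\nu\in\mathbf{Z}[\sigma]$ applied to $va$ is still infinitesimal, hence not a nonzero element of $\Gamma_K$).

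\textbf{Case $\mathcal{O}$.} First I would treat the generic-in-$\mathcal{O}$ type. It is axiomatized over $K$ by ``$a\in\mathcal{O}\setminus\mathcal{M}$'' together with ``$f(a)\notin\mathcal{M}$'' for every $f\in\mathcal{O}_K[x]_\sigma$ possessing a unit coefficient (this set of formulas is equivalent to $va=0$ and $\bar a$ transformally transcendental over $k$). Given $f=\sum_\nu c_\nu x^\nu$ over $K$, after rewriting it with exponents in $\mathbf{Z}[p^{-1}]_{\geq0}$ in the algebraically independent family $(a^{\sigma^n})$ and collecting like terms --- here perfectness of $K$ lets us treat the $p$-power roots as honest variables --- let $\mu_f$ be the minimum of the valuations of the collected coefficients. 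Dividing $f(a)$ by a coefficient attaining $\mu_f$ and reducing modulo $\mathcal{M}$ produces a nonzero $k$-linear combination of pairwise distinct difference monomials in $\bar a$, which are $k$-linearly independent because $(\bar a^{\sigma^n})$ is algebraically independent over the perfect field $k$. Hence $v(f(a))=\mu_f\in\Gamma_K$, and the comparisons $v(f(a))\bowtie v(g(a))$ reduce to comparisons $\mu_f\bowtie\mu_g$ in $\Gamma_K$, which are quantifier-free over $K$.

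\textbf{Case $\mathcal{M}$.} Next I would handle the generic-in-$\mathcal{M}$ type, axiomatized over $K$ by ``$a\in\mathcal{M}$'', ``$a\neq0$'', and ``$c/a^{\sigma^n}\in\mathcal{M}$'' for all $n\in\mathbf{N}$ and all $c\in\mathcal{M}_K\setminus\{0\}$; write $\varepsilon=va$. Each monomial $x^\nu$ carries a weight $\langle\nu\rangle\in\mathbf{N}[p^{-1}][\sigma]$ with $v(a^\nu)=\langle\nu\rangle\varepsilon$, and distinct (collected, fractional-exponent) monomials have distinct weights. The key arithmetic input is that, by the $\omega$-increasing hypothesis, for $0\neq\lambda\in\mathbf{Z}[p^{-1}][\sigma^{\pm1}]$ the element $\lambda\varepsilon$ is nonzero, has the sign of the leading coefficient of $\lambda$, and is infinitesimal relative to every nonzero element of $\Gamma_K$ (the top $\sigma$-power term dominates the finitely many lower ones, and $\sigma^i\varepsilon<\gamma$ for all $0<\gamma\in\Gamma_K$ and $i\in\mathbf{Z}$). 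Consequently the values $v(c_\nu)+\langle\nu\rangle\varepsilon$ attached to the distinct monomials of $f$ are pairwise distinct, so $v(f(a))=\min_\nu\bigl(v(c_\nu)+\langle\nu\rangle\varepsilon\bigr)$ with the minimum attained uniquely; comparing two such expressions reduces to comparing the corresponding elements $v(c)+\langle\nu\rangle\varepsilon$, which by the same input is decided by comparing the $v(c)$'s in $\Gamma_K$ and, when these agree, the leading coefficients of the weights --- again quantifier-free over $K$. This completes both cases.

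\textbf{Main obstacle.} The bookkeeping with the perfect inversive hull and the collection of monomials is routine; the two load-bearing ingredients are (i) the $k$-linear independence of the difference monomials in a transformally transcendental element over the perfect residue field --- the place where the standing assumptions that $K$ be perfect and inversive are genuinely used, since they are what let us reduce cleanly to the rational function field picture --- and (ii) the domination estimate for $\omega$-increasing ordered modules that prevents the valuations of distinct monomials from ever coinciding, so that a well-defined leading term exists in the $\mathcal{M}$-case. I expect the careful sign-and-dominance analysis of $\lambda\varepsilon$ in (ii) to be the point that requires the most care.
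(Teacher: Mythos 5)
Your proof is correct and reaches exactly the two formulas the paper writes down in its proof ($vf(a)=\min_\nu vc_\nu$ in the $\mathcal{O}$-case, $vf(a)=\min_\nu\bigl(vc_\nu+\nu\varepsilon\bigr)$ in the $\mathcal{M}$-case); so the underlying content is the same. The one genuine difference is that the paper's proof first appeals to Lemma \ref{generic-acvf} --- the standard $\ACVF$-level statement about generic elements in a ball --- and then asserts the formulas, whereas you bypass that lemma and establish the formulas from scratch: linear independence of the residue monomials $\bar a^\nu$ over the perfect field $k$ in the $\mathcal{O}$-case, and the sign/dominance analysis of $\lambda\varepsilon$ in the ordered module in the $\mathcal{M}$-case (which forces the minimum to be attained at a single monomial, so the ultrametric inequality is an equality). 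Your two ``load-bearing ingredients'' are precisely the substance that makes the paper's asserted formulas valid, and your direct route is arguably the more self-contained one, since Lemma \ref{generic-acvf} as stated handles only a single transcendental element over $K$ rather than the whole $\sigma$-orbit. No gaps.
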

\begin{proof}
This follows from Lemma \ref{generic-acvf} below.  Explicitly, if $fx=\sum c_{\mu}x^{\mu}$ where $c_{\mu} \in K$ then the generic $x$ of
$\mathcal{O}$ is characterized by the formula $vfx=\min\left\{ vc_{\nu}\right\} $.
More generally, if $a\in K$ and $\gamma\in\Gamma\otimes\mathbf{Q}\left(\sigma\right)$
then the generic of the closed ball of valuative radius $\gamma$
around the element $a$ is characterized by the formula $vfx=\min\left\{ \beta_{\mu}+\mu\cdot\gamma\right\} $
where $\beta_{\mu}=vf_{\mu}a$; here $f_{\mu}$ is the $\mu$-th transformal
derivative of $f$ as in \ref{eq:taylor}.
Similarly, the generic type of $\mathcal{M}$ is characterized by the formula $vfx = \min \left\{vc_{\mu} + \mu \cdot \epsilon \right\}$, where $\epsilon = vx$.

\end{proof}

In what follows we will generalize Definition \ref{genericofaball} to the case of an infinite intersection of balls. Moreover, it will be shown that if $K$ is algebraically closed and $a$ is generic over $K$ in a ball, then the field $K\left(a\right)_{\sigma}$ has no nontrivial $h$-finite $\sigma$-invariant separable extensions. This is easy to rule out in characteristic zero, but in finite characteristic the argument is slightly more subtle; some valuation theoretic preliminaries are given below.

\begin{lem}
\label{generic-acvf}Let $K$ be a valued field. Let $a,b$ be elements
within an extension and put $\alpha=va,\beta=vb$. Assume $\alpha$
does not lie in $\Gamma\otimes\mathbf{Q}$; then every isomorphism
$\Gamma\left[\alpha\right]\cong\Gamma\left[\beta\right]$ over $\Gamma$
which carries $\alpha$ to $\beta$ lifts to an isomorphism $K\left(a\right)\cong K\left(b\right)$
of valued fields over $K$ which carries $a$ to $b$. Similarly if
$va=vb=0$ and the residue classes are transcendental over $k$.
\end{lem}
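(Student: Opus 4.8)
The plan is to show that under either hypothesis the valuation of $K(a)$ is completely rigid: it is the natural ``Gauss-type'' valuation attached to the datum of $\alpha$ over $\Gamma$ (resp.\ to the residue class of $a$ over $k$), so that the prescribed isomorphism of value groups --- resp.\ the identity on $k$ together with $\bar a \mapsto \bar b$ --- transports it to the analogous valuation of $K(b)$, giving the desired isomorphism of valued fields.

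For the first case I would first note that $\alpha \notin \Gamma\otimes\mathbf{Q}$ forces $a$ to be transcendental over $K$ and the subgroup $\Gamma[\alpha]$ of the ambient value group to be the internal direct sum $\Gamma \oplus \mathbf{Z}\alpha$. Given $f(x) = \sum_\nu c_\nu x^\nu \in K[x]$, the monomials $c_\nu a^\nu$ then have pairwise distinct valuations $v(c_\nu) + \nu\alpha$ --- equality of two of them would force $(\nu-\nu')\alpha \in \Gamma$ --- so the ultrametric inequality is an equality:
\[
v\bigl(f(a)\bigr) = \min_\nu \bigl( v(c_\nu) + \nu\alpha \bigr).
\]
Conversely, the right-hand side visibly defines a valuation on $K[x]$ with values in $\Gamma[\alpha]$: the ultrametric inequality is immediate, and multiplicativity holds because the least-value term of a product of polynomials is the (unique) product of the least-value terms of the factors, again using that no two monomial values coincide. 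Hence it extends uniquely to $K(x)$, and this is the only extension of $v|_K$ to $K(x)$ with $v(a) = \alpha$ and value group $\Gamma[\alpha]$. I would then compose the field isomorphism $K(a) \cong K(b)$, $a \mapsto b$, with the given order isomorphism $\Gamma[\alpha] \cong \Gamma[\beta]$ fixing $\Gamma$: the displayed formula for $K(a)$ goes over to the corresponding formula for $K(b)$, which by the same computation is the valuation of $K(b)$, so the field isomorphism is an isomorphism of valued fields over $K$.

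The residue-transcendental case runs in parallel. Here $v(a)=v(b)=0$ and $\bar a$, $\bar b$ are transcendental over $k$, so $a$ is again transcendental over $K$. For $f(x)=\sum_\nu c_\nu x^\nu$, setting $\gamma = \min_\nu v(c_\nu)$ and choosing $t \in K$ with $v(t)=\gamma$, one has $\overline{t^{-1}f(a)} = \sum_\nu \overline{t^{-1}c_\nu}\,\bar a^\nu$, a nonzero polynomial in $\bar a$ over $k$ (it has at least one unit coefficient and $\bar a$ is transcendental); hence $v(f(a)) = \gamma$ and the residue of $t^{-1}f(a)$ is this polynomial evaluated at $\bar a$. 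So $v|_{K(a)}$ and the residue map of $K(a)$ are determined by $v|_K$, by $k$, and by the transcendental $\bar a$; the same formulas define a valuation on $K(x)$ with residue field $k(\bar a)$, and transporting along $K(a)\cong K(b)$, $a\mapsto b$, together with the induced identification $k(\bar a)\cong k(\bar b)$ fixing $k$, yields the claimed isomorphism of valued fields.

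The only step that is not pure bookkeeping is the verification of multiplicativity of the Gauss-type valuation --- i.e.\ the absence of cancellation among the lowest-value terms of a product of polynomials (resp.\ among the residues of the least-value terms) --- and this is exactly where $\alpha \notin \Gamma\otimes\mathbf{Q}$, resp.\ transcendence of $\bar a$, is used; it is routine. The rest is formal manipulation of the direct-sum decomposition $\Gamma[\alpha] = \Gamma \oplus \mathbf{Z}\alpha$ and of polynomial arithmetic over the residue field.
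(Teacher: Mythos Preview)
Your argument is correct and is the standard one; the paper itself gives no proof beyond the citation ``Standard, see \cite{azgin2011elementary}'', so there is no approach to compare against. Your write-up simply fills in the details the paper omits.
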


\begin{proof}
This follows easily from quantifier elimination in $\ACVF$. If $a, b$ have a transcendental residue class then they are generic in $\mathcal{O}$ and if $\alpha, \beta$ are not in $\Gamma \otimes \mathbf{Q}$ then $a, b$ are generic in an open ball or an $\infty$-definable ball around $0$.
\end{proof}

\begin{lem}
\label{hensel-disjoint}Let $K$ be a valued field and $L$ a Galois
extension of $K$. Let us assume that every automorphism of $L$ over
$K$ is, in fact, an automorphism of valued fields; then $L$ and
$K^{h}$ are linearly disjoint over $K$.
\end{lem}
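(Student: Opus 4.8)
The plan is to reduce the statement to an intersection computation and then settle it with decomposition groups. Since $L/K$ is Galois (hence normal and separable) and the Henselization $K^{\text{h}}/K$ is separable algebraic, I would first fix a separable closure $K^{\text{sep}}$ of $K$ containing both $L$ and $K^{\text{h}}$ and recall the standard fact that a Galois extension and an arbitrary algebraic extension of $K$, jointly embedded, are linearly disjoint over $K$ exactly when their intersection is $K$. (For infinite $L/K$ one writes $L$ as the union of its finite Galois subextensions and applies the finite case to each; this is harmless since linear disjointness is checked on finite tuples.) Thus it suffices to prove $L\cap K^{\text{h}}=K$.

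Next I would realize $K^{\text{h}}$ valuation-theoretically. Extending the valuation $v_L$ of $L$ to a valuation $w$ on $K^{\text{sep}}$ and letting $Z\leq G_K$ be its decomposition group --- the set of $g\in G_K$ with $g(\mathcal{O}_w)=\mathcal{O}_w$ --- the classical theory of the decomposition field (see Neukirch \cite{neukirch2013algebraic}) identifies $\left(K^{\text{sep}}\right)^{Z}$, equipped with the restriction of $w$, with a Henselization of $(K,v)$; so we may take $K^{\text{h}}=\left(K^{\text{sep}}\right)^{Z}$. Writing $L=\left(K^{\text{sep}}\right)^{H}$ with $H\trianglelefteq G_K$, we get $L\cap K^{\text{h}}=\left(K^{\text{sep}}\right)^{HZ}$, where $HZ$ is a subgroup of $G_K$ because $H$ is normal. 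Hence it is enough to show $HZ=G_K$, equivalently that the image of $Z$ in $\mathrm{Gal}(L/K)=G_K/H$ is all of $\mathrm{Gal}(L/K)$.

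This is where the hypothesis is used. The image of $Z$ in $\mathrm{Gal}(L/K)$ is precisely the decomposition group of $w|_L=v_L$ inside $\mathrm{Gal}(L/K)$, since restriction carries decomposition groups onto decomposition groups; but by assumption every element of $\mathrm{Gal}(L/K)$ is an automorphism of valued fields and so stabilizes $\mathcal{O}_L$, i.e. lies in that decomposition group. Therefore $HZ=G_K$, so $L\cap K^{\text{h}}=\left(K^{\text{sep}}\right)^{G_K}=K$ and the lemma follows. (Equivalently, one can argue elementwise: for $x\in L\cap K^{\text{h}}$, the Galois closure $N\subseteq L$ of $K(x)/K$ has the property that every $K$-automorphism preserves $v_N$, so $v_K$ extends uniquely to $N$ and $N\cap K^{\text{h}}=K$, forcing $x\in K$.) The only real inputs beyond formal Galois theory are the two standard valuation-theoretic facts just cited --- that the decomposition field is a Henselization, and that restriction is surjective on decomposition groups --- so there is no serious obstacle; the content of the lemma is the one-line observation that the hypothesis says exactly that $v_L$ has full decomposition group over $K$.
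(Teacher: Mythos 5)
Your proof is correct and takes essentially the same approach as the paper: both identify $K^{h}$ as the fixed field of a decomposition group $Z\le G_{K}$ and use the hypothesis (that every $K$-automorphism of $L$ preserves $\mathcal{O}_{L}$) to conclude that $Z$ surjects onto $\mathrm{Gal}(L/K)$, giving $L\cap K^{h}=K$. The paper phrases this as lifting valuation-preserving automorphisms of $L$ to automorphisms of $K^{a}$, which then fix $K^{h}$ pointwise; your rendering via $HZ=G_{K}$ and the surjectivity of restriction on decomposition groups is the same argument made explicit.
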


\begin{proof}
Fix an algebraic closure $K^{a}$ of $K$ in the category of valued
fields. If every automorphism of $L$ over $K$ preserves the valued
field structure, then every such automorphism lifts to an automorphism
of $K^{a}$; it must then fix $K^{h}$ pointwise. So the claim follows
from Galois theory.
\end{proof}
\begin{lem}
\label{lem:Let--be}Let $K$ be an abstract valued field with residue
field $k$ and valuation group $\Gamma$. Let $a$ be an element within
some extension and $M=K\left(a\right)$. Let us assume that one of
the following holds:

(1) We have $va=0$ and the residue class of $a$ is transcendental
over $k$.

(2) The quantity $va$ lies outside $\Gamma\otimes\mathbf{Q}$

(3) There is a nested transfinite family $b_{0}\supset b_{1}\supset\ldots\supset b_{\alpha}\supset\ldots_{\alpha<\lambda}$
of closed balls in $K$ with the following property: for every polynomial
$fx\in K\left[x\right]$ the quantity $vfb_{\alpha}$ stabilizes for
$\alpha$ sufficiently large. Moreover the element $a$ lies in the
intersection.

Then the following holds. Let $L$ be a Galois valued field extension
of $K$. Then the valuation lifts uniquely from $L$ to $L\otimes_{K}M$.
\end{lem}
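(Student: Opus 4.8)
The plan is to note first that in all three cases $a$ is transcendental over $K$, so that $L \otimes_K M = L(a)$, and that the hypothesis already determines the given valuation $v'$ on $M = K(a)$: in cases (1) and (2) it is the generic valuation supplied by Lemma \ref{generic-acvf} (the one with $v'a = 0$ and residue of $a$ transcendental over $k$, respectively with $v'a \notin \Gamma \otimes \mathbf{Q}$), and in case (3), picking points $c_\alpha \in b_\alpha$ (and passing to a cofinal subfamily so that $v(c_{\alpha'}-c_\alpha)$ strictly increases), the sequence $(c_\alpha)$ is a pseudo-Cauchy sequence of \emph{transcendental type} over $K$ --- this is exactly what the stabilization of $vf(b_\alpha)$ says --- and $v'$ is Kaplansky's valuation $v'(f(a)) = \lim_\alpha v(f(c_\alpha))$ (see \cite{kaplansky1942maximal}). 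The key observation is that the same recipe, run over $L$ in place of $K$, produces a valuation $u$ on $L(a)$ extending the valuation $w$ of $L$: the Gauss formula $u\bigl(\sum_i c_i a^i\bigr) = \min_i\bigl(w(c_i) + i\cdot v'a\bigr)$ in cases (1) and (2), and $u(g(a)) = \lim_\alpha w(g(c_\alpha))$ in case (3). Because $L/K$ is algebraic (being Galois), we have $\Gamma_L \otimes \mathbf{Q} = \Gamma \otimes \mathbf{Q}$ and $k_L$ is algebraic over $k$, and one checks by Gauss's lemma that these formulas do define valuations restricting to $v'$ on $M$; this gives existence.

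For uniqueness, let $u'$ be any valuation on $L(a)$ with $u'|_L = w$ and $u'|_M = v'$. In case (2) we have $u'a = v'a$, and the monomial values $w(c_i) + i\cdot v'a$ of $g = \sum_i c_i a^i$ lie in pairwise distinct cosets of $\Gamma_L$ --- otherwise $(i-j)v'a \in \Gamma_L \subseteq \Gamma\otimes\mathbf{Q}$, contradicting $v'a \notin \Gamma\otimes\mathbf{Q}$ --- so $u'(g)$ is the minimum and $u'$ agrees with $u$. Case (1) is the same argument after reduction modulo the maximal ideal: the residue of $a$ under $u'$ equals its residue under $v'$, which is transcendental over $k$, hence over $k_L$, so there is no residual cancellation and $u'\bigl(\sum_i c_i a^i\bigr) = \min_i w(c_i)$. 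In case (3), $u'|_M = v'$ forces $u'(a - c_\alpha) = v'(a - c_\alpha)$ for all $\alpha$, so $a$ is a pseudo-limit of $(c_\alpha)$ for $u'$ as well; Kaplansky's uniqueness theorem for the transcendental immediate extension attached to a pseudo-Cauchy sequence then pins down $u'(g(a)) = \lim_\alpha w(g(c_\alpha))$, \emph{provided} $(c_\alpha)$ is still of transcendental type over $L$.

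That last proviso is the main obstacle: one must show that a pseudo-Cauchy sequence of transcendental type over $K$ remains of transcendental type over every algebraic extension $L$, with respect to any compatible extension of the valuation to $K^a$. I would argue by contradiction. If some $g \in L[x]$ had $w(g(c_\alpha))$ eventually strictly increasing, then $g$ would have a root $\theta \in K^a$ that is a pseudo-limit of $(c_\alpha)$; let $f \in K[x]$ be the minimal polynomial of $\theta$ over $K$ and write $f = \prod_\ell(x - \psi_\ell)$ over its conjugates. Each conjugate $\psi_\ell$ that is itself a pseudo-limit contributes the common invariant $\delta_\alpha = v(c_{\alpha+1} - c_\alpha)$ to $v(f(c_\alpha))$, while every other factor contributes an eventually constant value; since at least one $\psi_\ell$ (namely $\theta$) is a pseudo-limit and $\delta_\alpha$ is strictly increasing, $v(f(c_\alpha))$ is eventually strictly increasing, contradicting that $(c_\alpha)$ is of transcendental type over $K$. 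This is essentially Kaplansky's own argument, so one may alternatively just invoke \cite{kaplansky1942maximal}. With this in place the three cases are complete, and we note in passing that only the algebraicity of $L/K$ is used --- the full force of ``Galois'' is not needed for the present lemma.
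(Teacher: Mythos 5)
Your proof is correct and takes a genuinely different route from the paper's. The paper reformulates the claim model-theoretically in $\ACVF$ --- the valuation lifts uniquely to $L\otimes_K M$ iff $\tp_{\ACVF}(a/K)\models\tp_{\ACVF}(a/L)$ --- and in case (3) dispatches it in one line: since $a$ is a pseudo-limit of transcendental type, the intersection of the balls has no points algebraic over $K$, hence (being $K$-definable data) none algebraic over $L$ either, and C-minimality finishes. Cases (1) and (2) are similarly delegated to Lemma~\ref{generic-acvf} plus the observations that $l$ is algebraic over $k$ and $\Gamma_L\subseteq\Gamma\otimes\mathbf{Q}$. You instead work entirely valuation-theoretically, writing out the Gauss and Kaplansky formulas explicitly and verifying by direct ultrametric computation that any compatible valuation on $L(a)$ is forced to agree with them; your "main obstacle" --- persistence of transcendental type under algebraic base extension --- is the exact elementary counterpart of the paper's "no algebraic points in the intersection." Both approaches are sound. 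Yours is more self-contained and makes the mechanism visible (and correctly observes that Galois can be weakened to algebraic); the paper's is shorter given the $\ACVF$ machinery already in play. Two small cosmetic points: the factorization-over-conjugates argument in your last paragraph should first fix $g$ of \emph{minimal} degree over $L$ with increasing values, so that Kaplansky's dichotomy (eventually constant vs.\ eventually equal to $\delta_\alpha$) applies cleanly to each root of $g$ in $K^a$ to produce the pseudo-limit $\theta$ you invoke; and "Gauss's lemma" is not quite the standard name for the fact that the Gauss extension formula defines a valuation, though the fact itself is of course correct.
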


\begin{proof}
We work in $\ACVF$. The condition that the valuation lifts uniquely
from $L$ to $L\otimes_{K}M$ can be reformulated in model theoretic terms:
it means that $\tp_{\ACVF}\left(\nicefrac{a}{K}\right)\models\tp_{\ACVF}\left(\nicefrac{a}{L}\right)$.
First we deal with (3). The requirement that $a$ lies in the intersection
determines the valued field structure on $K\left(a\right)$, namely
we must set $vfa=vfb_{\alpha}$ where $\alpha$ is taken sufficiently
large. In the language of pseudo-Cauchy sequences, the element $a$
is a pseudo-limit of a pseudo-Cauchy sequence of transcendental type
with no limit in $K$. So there are no points algebraic over $K$
in the intersection. If $L$ is an algebraic closure of $K$ then
the intersection can have no points over $L$ either; so the result
follows from C-minimality in $\ACVF$. For (1) use the fact that if $L$ is algebraic over $K$ then $l$ is algebraic over $k$, combined with Lemma \ref{generic-acvf}; for
(2) use the fact that $\Gamma_{L}\subset\Gamma\otimes\mathbf{Q}$
and Lemma \ref{generic-acvf}
\end{proof}
\begin{rem}
\label{linearly-disjoint-generic}In the settings of the Lemma we find
that $M^{h}$ is a regular extension of $K^{h}$. Indeed, we may assume
that $K$ is Henselian. If $L$ is an abstract Galois extension of
$K$ then the valuation lifts uniquely from $K$ to $L$ by Henselianity
of $K$. The Lemma implies that the valuation lifts uniquely to $L\otimes_{K}M$;
so every automorphism of $L\otimes_{K}M$ over $M$ preserves the
valuation, hence Lemma \ref{hensel-disjoint} applies.
\end{rem}

\begin{defn}
Let $K$ be a model of $\VFA$, let $fx\in K\left[x\right]_{\sigma}$
be a difference polynomial and let $b_{\lambda_{0}}$ be a closed
ball of valuative radius $\lambda_{0}$. Let us define\footnote{Recall the definition of a transformally affine function on a model
of $\widetilde{\omega\OGA}$ given in \ref{transformallyaffine} and
the notion of the generic point of a closed ball given in \ref{genericofaball}}
\[
\Psi:\left(-\infty,\lambda_{0}\right)\cap\Gamma\otimes\mathbf{Q}\left(\sigma\right)\to\Gamma\otimes\mathbf{Q}\left(\sigma\right)
\]
by the rule $\Psi\lambda=vfb_{\lambda}$, where for $\lambda<\lambda_{0}$,
the closed ball $b_{\lambda}$ is the unique closed ball of valuative
radius $\lambda$ containing $b_{\lambda_{0}}$. We say that $\Psi$
is the \textit{transformal Herbrand function of $f$ above $b_{\lambda_{0}}$}.
\end{defn}

\begin{rem}
The transformal Herbrand function is analogous to the Hasse-Herbrand
transition function in local class field theory; see \cite{serre2013local}.
\end{rem}

The case $\lambda_{0}=\infty$ of a point is allowed. If $f$ is clear
from the context, we will just refer to the \textit{Herbrand function}
\textit{above $b_{\lambda_{0}}$.}
\begin{lem}
\label{lem-transformal-herbarnd-concave}
The function $\Psi$ is continuous, nondecreasing, concave and piecewise
transformally affine with slopes in $\mathbf{N}\left[\sigma\right]$.
\end{lem}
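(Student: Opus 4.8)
The plan is to reduce everything to the classical behaviour of the Newton polygon of a difference polynomial, via the explicit description of the generic type of a ball given in Lemma \ref{generic-qf}. Recall from that lemma that if $b_{\lambda}$ denotes the closed ball of valuative radius $\lambda$ containing a fixed element $a$ (the centre), and if $\beta_{\nu}=vf_{\nu}(a)$ for $\nu\in\mathbf{N}[\sigma]$ (only finitely many $f_{\nu}$ nonzero), then
\[
\Psi(\lambda)=vfb_{\lambda}=\min_{\nu}\{\beta_{\nu}+\nu\cdot\lambda\}.
\]
So $\Psi$ is literally a finite minimum (``$\inf$'') of the transformally affine functions $\lambda\mapsto\beta_{\nu}+\nu\cdot\lambda$, each with slope $\nu\in\mathbf{N}[\sigma]$; from this single formula all four assertions follow formally.

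First I would record that a minimum of finitely many continuous functions is continuous, and that each summand $\lambda\mapsto\beta_{\nu}+\nu\lambda$ is continuous for the order topology on $\Gamma\otimes\mathbf{Q}(\sigma)$ (it is the composition of translation with multiplication by the positive element $\nu$, both order-continuous), giving continuity of $\Psi$. Next, concavity: a pointwise minimum of affine functions is concave — here ``affine'' must be read in the transformal sense, i.e. $\nu x+\beta_{\nu}$ for $\nu\in\mathbf{Q}(\sigma)$ — and the standard proof that $\min_i(\ell_i(x))$ satisfies $\Psi(\tfrac{x+y}{2})\ge\tfrac{\Psi(x)+\Psi(y)}{2}$ goes through verbatim since $\widetilde{\omega\OGA}$ is divisible and o-minimal (Proposition \ref{woga-frobenius}), so midpoint-concavity plus continuity yields concavity. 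For monotonicity: each $\nu$ occurring is in $\mathbf{N}[\sigma]$, hence $\nu\ge0$, so each summand is nondecreasing in $\lambda$, and a minimum of nondecreasing functions is nondecreasing. Finally, piecewise transformal affineness with slopes in $\mathbf{N}[\sigma]$ is immediate: $\Psi$ agrees locally with whichever summand $\beta_{\nu}+\nu\lambda$ achieves the minimum, and by concavity the slopes $\nu$ decrease as $\lambda$ increases, so only finitely many breakpoints occur; I would invoke the uniqueness statement in \ref{transformallyaffine} for the singular points.

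The one genuine subtlety — and the step I expect to be the main obstacle — is justifying that the formula $vfb_{\lambda}=\min_{\nu}\{\beta_{\nu}+\nu\lambda\}$ is actually valid for \emph{every} $\lambda<\lambda_0$ in $\Gamma\otimes\mathbf{Q}(\sigma)$, not merely generically: one must check that the generic point of $b_{\lambda}$ exists over (a suitable algebraically closed model containing) $K$, that the quantity $vfb_{\lambda}$ is well defined independently of the choice of generic realization and of the centre $a$ inside $b_{\lambda_0}$, and that the Taylor/transformal-derivative coefficients transform correctly under re-centring. The re-centring point is the classical Newton-polygon translation identity, adapted to $\mathbf{N}[\sigma]$-degrees via the Taylor expansion \eqref{eq:taylor} and the cocycle-type relation $(f_{\mu})_{\nu}$ versus $f_{\mu+\nu}$ with appropriate multinomial coefficients; since we are in characteristic exponent $p$ and the exponents lie in $\mathbf{N}[\sigma]$, one must be a little careful that these coefficients do not all vanish, but for the purpose of computing \emph{valuations} the $\omega$-increasing hypothesis guarantees that the leading term in each degree $\nu$ dominates, so the minimum is unaffected. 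Once the formula is pinned down over an algebraically closed base and shown to be invariant under the relevant isomorphisms (using Lemma \ref{generic-acvf} and Lemma \ref{generic-qf}), descent to $K$ is automatic because the value only depends on the $\beta_{\nu}=vf_{\nu}a$, which already lie in $\Gamma\otimes\mathbf{Q}(\sigma)$.
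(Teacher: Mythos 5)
Your proof is correct and takes essentially the same route as the paper: after centring the nest of balls at a point $a\in K$, both you and the paper write $\Psi(\lambda)=\min_{\mu}\{\beta_{\mu}+\mu\cdot\lambda\}$ with $\beta_{\mu}=vf_{\mu}(a)$ (the formula implicit in Lemma \ref{generic-qf}), and then read off continuity, monotonicity, concavity, and piecewise transformal affineness as properties of a finite pointwise minimum of transformally affine functions with non-negative slopes. The ``genuine subtlety'' you flag about re-centring and validity for every $\lambda$ does not arise: the paper's opening move is to reduce to $\lambda_{0}=\infty$, i.e.\ to the Herbrand function above a single point $a\in K$, and since the balls $b_{\lambda}$ for $\lambda<\lambda_{0}$ are exactly the balls of radius $\lambda$ containing $a$, all the $\beta_{\mu}$ are computed once at this fixed $a$ and never need to be transported to another centre; the multinomial-coefficient considerations you raise are therefore unnecessary, and your detour through midpoint concavity plus o-minimality is a (correct but) heavier route to the same conclusion the paper gets by observing directly that a pointwise minimum of concave functions is concave.
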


\begin{proof}
We may assume $\lambda_{0}=\infty$, i.e, it is enough to consider
the Herbrand function above a point $a\in K$. For $\mu\in\mathbf{N}\left[\sigma\right]$
let $c_{\mu}=f_{\mu}a$ and $\beta_{\mu}=vc_{\mu}$. Then $\Psi\lambda=\min\left\{ \beta_{\mu}+\mu\cdot\lambda\right\} $
is the pointwise minimum of transformally affine functions with slopes
in $\mathbf{N}\left[\sigma\right]$. This function is continuous,
nondecreasing and concave since the pointwise minimum of continuous,
nondecreasing and concave functions is again one. It is, moreover,
explicitly piecewise transformally affine.
\end{proof}
\begin{example}
\label{ex:point-count} Let $K$ be a model of $\VFA$ which
is algebraically closed and let $c\in K$ be such that $vc<0$. Let $a\in K$
solve the equation $a^{p}-a=c$ and $\Psi$ the transformal Herbrand
function of $x^{p}-x-c$ below $a$. Then $\Psi'\lambda=1$ when $\lambda>0$
and $\Psi'\lambda=p$ for $\lambda<0$ (here $\Psi'$ is the derivative of $\Psi$; see definition \ref{def:transformallyaff}). Indeed, since $f$ is additive we have $f_1 = -1$, $f_p = 1$ and $f_n = 0$ for all other values of $n$.

Observe that all solutions of this
polynomial lie in the same closed ball of valuative radius $0$, so
for $\lambda\neq0$ the quantity $\Psi'\lambda$ is the number of
roots of the polynomial in the closed ball of valuative radius $\lambda$
around the element $a$.
\end{example}

\begin{lem}
\label{valuation-group-increase}Let $K$ be a model of $\VFA$;
then there is a model $\widetilde{K}$ of $\VFA$ over $K$
with $\widetilde{\Gamma}=\Gamma\otimes\mathbf{Q}\left(\sigma\right)$.
\end{lem}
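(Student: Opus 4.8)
The plan is to reduce to the case where $K$ is algebraically closed and then build $\widetilde{K}$ as an increasing union of extensions, each of which realizes one more layer of the transformal divisible hull by adjoining a ``transformal monomial''. First I would replace $K$ by an algebraic closure in the category of models of $\VFA$: such an extension is transformally algebraic, so by Corollary \ref{cor-transformally-alg-no-valgp-adjunction} it does not change $\Gamma\otimes\mathbf{Q}\left(\sigma\right)$, and an extension of the algebraic closure is a fortiori an extension of $K$. So assume $K$ algebraically closed; then $\Gamma=\Gamma_{K}$ is divisible, $\Gamma\otimes\mathbf{Q}\left(\sigma\right)=\bigcup_{\nu}\nu^{-1}\Gamma$ with $\nu$ ranging over a set of representatives for the monic elements of $\mathbf{Q}\left[\sigma\right]$ ordered by divisibility, and by Proposition \ref{woga-frobenius} the group $\Gamma\otimes\mathbf{Q}\left(\sigma\right)$ is a model of $\widetilde{\omega\OGA}$; in particular it is $\omega$-increasing, and so is every ordered $\mathbf{Z}\left[\sigma^{\pm1},p^{\pm1}\right]$-submodule of it.

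The heart of the matter is the following claim: if $L$ is an algebraically closed model of $\VFA$ with value group $\Lambda\subseteq\Gamma\otimes\mathbf{Q}\left(\sigma\right)$, if $\pi\in\mathbf{Q}\left[\sigma\right]$ is monic and irreducible of degree $d$, and if $\gamma_{0}\in\pi^{-1}\Lambda\setminus\Lambda$, then there is an extension $L\langle x\rangle$ of $L$, again a model of $\VFA$, generated by a single element $x$ with $v\left(x\right)=\gamma_{0}$, having the same residue field $k$ as $L$ and value group $\Lambda+\mathbf{Z}\left[\sigma^{\pm1},p^{\pm1}\right]\gamma_{0}\subseteq\Gamma\otimes\mathbf{Q}\left(\sigma\right)$. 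Granting this, I conclude as follows. Enumerate the monic irreducible elements of $\mathbf{Q}\left[\sigma\right]$ in a sequence $\left(\pi_{n}\right)_{n\in\mathbf{N}}$ in which each appears infinitely often. Starting from $L_{0}=K$, let $L_{n+1}$ be an algebraic closure (in the category of models of $\VFA$) of the field obtained from $L_{n}$ by adjoining, one at a time via the claim, transformal monomials realizing a generating set of $\pi_{n}^{-1}\Gamma_{L_{n}}$ over $\Gamma_{L_{n}}$; since algebraic closure is transformally algebraic it keeps the value group inside $\Gamma\otimes\mathbf{Q}\left(\sigma\right)$ (Corollary \ref{cor-transformally-alg-no-valgp-adjunction}) and keeps the residue field equal to $k$, which is algebraically closed. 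Then $\widetilde{K}=\bigcup_{n}L_{n}$ is a directed union of models of $\VFA$, hence a model of $\VFA$ over $K$, with residue field $k$; as $\left\{ \pi_{0}\cdots\pi_{n}\right\} _{n}$ is cofinal for divisibility among the monic elements of $\mathbf{Q}\left[\sigma\right]$, its value group is $\bigcup_{\nu}\nu^{-1}\Gamma=\Gamma\otimes\mathbf{Q}\left(\sigma\right)$, as wanted.

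It remains to describe the construction behind the claim. Pick $c\in L^{\times}$ with $v\left(c\right)=\pi\cdot\gamma_{0}\in\Lambda$, and write $\pi=\sum_{i}a_{i}\sigma^{i}$. Because $\pi$ is irreducible over $\mathbf{Q}$, $\gamma_{0}\notin\Lambda$, and $\Lambda$ is divisible, a $\gcd$ computation in $\mathbf{Q}\left[\sigma\right]$ (any $\mu$ of degree $<d$ with $\mu\gamma_{0}\in\Lambda$ is coprime to $\pi$, whence a B\'ezout combination forces $\gamma_{0}\in\Lambda$) shows that $\gamma_{0},\sigma\gamma_{0},\dots,\sigma^{d-1}\gamma_{0}$ are $\mathbf{Q}$-linearly independent modulo $\Lambda$. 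Hence one may adjoin algebraically independent transcendentals $x_{0},\dots,x_{d-1}$ to $L$ and define a valuation on $L\left(x_{0},\dots,x_{d-1}\right)$ by the monomial rule
\[
v\!\left(\sum_{\mathbf{m}}a_{\mathbf{m}}\,x_{0}^{m_{0}}\cdots x_{d-1}^{m_{d-1}}\right)=\min_{\mathbf{m}}\Bigl(v\left(a_{\mathbf{m}}\right)+\sum_{i}m_{i}\,\sigma^{i}\gamma_{0}\Bigr),
\]
which is a genuine valuation with value group $\Lambda+\sum_{i<d}\mathbf{Z}\,\sigma^{i}\gamma_{0}$ and residue field still $k$, precisely because the exponents $\sum_{i}m_{i}\sigma^{i}\gamma_{0}$ are pairwise distinct modulo $\Lambda$. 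Passing to an algebraic closure and extracting $a_{d}$-th (resp. $a_{0}$-th) roots, one finds inside it an element $x=x_{0}$ together with elements $x_{j}$ for all $j\in\mathbf{Z}$, obeying the single relation $\prod_{i}x_{i+j}^{a_{i}}=c^{\sigma^{j}}$ for every $j$; declaring $\sigma\left(x_{j}\right)=x_{j+1}$ then extends $\sigma$ (and $\phi$) to the difference subfield $L\langle x\rangle$ generated by $x$, which is a model of $\VFA$ with residue field $k$ and value group $\Lambda+\mathbf{Z}\left[\sigma^{\pm1},p^{\pm1}\right]\gamma_{0}$, the latter being $\omega$-increasing as it embeds in $\Gamma\otimes\mathbf{Q}\left(\sigma\right)$.

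I expect the main obstacle to be the bookkeeping in this last step: verifying that the monomial valuation is well defined, that $\sigma$ and $\phi$ extend to $L\langle x\rangle$ compatibly with the prescribed values (this is where the $\sigma$- and $\phi$-stability of the defining relation $\prod_{i}\left(x^{\sigma^{i}}\right)^{a_{i}}=c$, together with the $\mathbf{Q}$-independence of the shifts $\sigma^{i}\gamma_{0}$ coming from irreducibility of $\pi$, are both essential), and that the residue field genuinely remains $k$ rather than merely containing it — all of which rest on the $\omega$-increasing nature of $\sigma$ spreading the newly created values out far enough that no residual contribution arises.
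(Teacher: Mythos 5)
Your proposal is correct and is essentially a careful, step-by-step elaboration of the paper's one-line proof, which simply adjoins mutually independent transcendentals $a_\gamma$ with prescribed valuations $\gamma\in\Gamma\otimes\mathbf{Q}(\sigma)$ under the monomial valuation. Both arguments rest on the same idea — extending the valuation to a purely transcendental extension by declaring generic elements to take the desired values — with yours making explicit the $\mathbf{Q}$-linear-independence-modulo-$\Lambda$ computation, the iterative exhaustion of $\Gamma\otimes\mathbf{Q}(\sigma)$, and the extension of $\sigma$ via the relation $\prod_i x_{i+j}^{a_i}=c^{\sigma^j}$, all of which the paper leaves implicit.
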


\begin{proof}
For every $\gamma\in\Gamma\otimes\mathbf{Q}\left(\sigma\right)$ let
$a_{\gamma}$ be generic in the closed ball of valuative radius $\gamma$
around $0$, and take $\widetilde{K}$ to be generated by mutually
independent realizations of the $a_{\gamma}$.
\end{proof}
\begin{prop}
\label{rootinaball}Let $K$ be a model of $\VFA$, let $fx\in K\left[x\right]_{\sigma}$
be a difference polynomial and let $b$ be a closed ball. Then the
following are equivalent:

\begin{enumerate}
    \item There is a model $\widetilde{K}$ of $\VFA$ over $K$ and
a $\widetilde{K}$-root of $f$ in the closed ball $b$
\item The transformal Herbrand function of $f$ above $b$ is strictly
increasing.
\end{enumerate}

\end{prop}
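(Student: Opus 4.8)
The plan is to analyze the behaviour of $f$ on the nested family of closed balls below $b$ by reducing, via a twist and an affine change of variables, to a normalized difference polynomial, and then to run a Newton–Hensel approximation inside a suitable model of $\widetilde{\omega\OGA}$ for the valuation group.

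First I would dispose of the easy direction $(1)\Rightarrow(2)$. Suppose $\widetilde K\models\VFA$ extends $K$ and $c\in\widetilde K$ is a root of $f$ lying in $b$. Enlarging $\widetilde K$ (using Lemma \ref{valuation-group-increase}) we may assume $\widetilde\Gamma=\Gamma\otimes\mathbf{Q}(\sigma)$. For $\lambda<\lambda_0$ let $b_\lambda$ be the closed ball of valuative radius $\lambda$ containing $b$; then $c\in b_\lambda$, so $v f b_\lambda\le v f(a')$ for the generic $a'$ of $b_\lambda$ — but $c$ is a root, so $vf(c)=\infty$, and by the formula $\Psi\lambda=\min_\mu\{\beta_\mu+\mu\cdot\lambda\}$ from Lemma \ref{lem-transformal-herbarnd-concave} (with $a$ the center of $b$, $\beta_\mu=vf_\mu a$) one sees that the only way $b_\lambda$ can contain a root is if $\Psi$ is not locally constant at $\lambda$: if $\Psi$ were constant on an interval around some $\lambda_1$, then on the closed ball $b_{\lambda_1}$ the generic value of $f$ would equal the value of $f$ at \emph{every} point of $b_{\lambda_1}$ of sufficiently small valuative distance, including $c$, forcing $vf(c)=\Psi\lambda_1<\infty$, a contradiction. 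Since $\Psi$ is concave and nondecreasing with slopes in $\mathbf{N}[\sigma]$, being nowhere locally constant is the same as being strictly increasing. This is essentially the computation in Example \ref{ex:point-count}.

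For $(2)\Rightarrow(1)$ — the substantive direction — I would proceed as follows. Pass to the completion and then enlarge the valuation group so that $\Gamma=\Gamma\otimes\mathbf{Q}(\sigma)$ and $K$ is algebraically closed and transformally Henselian (Corollary \ref{cor-thenselian} or the explicit completion construction); since adjoining the generic of a closed ball and running the Henselization do not destroy the hypothesis that $\Psi$ is strictly increasing, and since a root found over a $\VFA$-extension of such a $K$ certainly gives one over a $\VFA$-extension of the original $K$, this is harmless. Now translate so that $b$ is centered at $0$, say of valuative radius $\lambda_0$. The strict monotonicity of $\Psi=\min_\nu\{\beta_\nu+\nu\lambda\}$ near $\lambda_0^-$ means the minimum there is achieved (eventually uniquely, by concavity) at some $\nu$ with $\nu$ having positive ``$\sigma^0$-part'' — in particular the linear term $\beta_1+\lambda$ participates or is dominated in a controlled way; after a twist (Lemma \ref{twist}) and rescaling the variable, one arranges $f$ in normalized form $f(x)=1+x+\widetilde g_0(x)+\widetilde g_1(x)$ with all coefficients of $\widetilde g_0,\widetilde g_1$ of strictly positive valuation and $\widetilde g_1$ having no algebraic part, exactly as in the proof of Lemma \ref{newton} (``Newton's lemma''). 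Here the key point is that strict increase of $\Psi$ across $\lambda_0$ forces, at the generic point $a_0$ of $b$ (take $a_0$ generic in $b$ over $K$, available since $K$ is algebraically closed), the inequality $vf(a_0)>2vf'(a_0)$: indeed $vf'(a_0)$ is governed by the slope-$1$ affine piece and $vf(a_0)=\Psi(\lambda_0)$ sits strictly above the neighboring pieces, and a short computation with the piecewise-affine structure of $\Psi$ (using $\nu\gg1$ for the higher-degree pieces, i.e. the $\omega$-increasing nature of $\sigma$) yields the gap.

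Granted this, apply the second part of Lemma \ref{newton}: since $K$ is algebraically (even transformally) Henselian and $vf(a_0)>2vf'(a_0)$, there is $a_1\in\mathcal{O}_K$ with $v(a_1-a_0)=vf(a_0)-vf'(a_0)$ and $vf(a_1)-vf(a_0)\ge (vf(a_0)-vf'(a_0))^\sigma$. Iterating produces a sequence $(a_n)$ with $v(a_{n+1}-a_n)\to$ large and $vf(a_n)\to\infty$; because $\Gamma$ may fail to be transformally Archimedean, this sequence need not be Cauchy in $K$, but it is Cauchy \emph{relative to the $\sigma$-orbit} of the starting gap, so its limit lives in a transformally Archimedean ultrapower or, more simply, one adjoins an abstract limit $c$: let $\widetilde K$ be generated over $K$ by an element $c$ with $v(c-a_n)=(vf(a_0)-vf'(a_0))^{\sigma^n}\cdot(\text{const})$ prescribed for all $n$, forcing $f(c)=0$ and $v(c)=\lambda_0$ (so $c\in b$); that this is a consistent description of a $\VFA$-extension follows from Lemma \ref{generic-acvf}/\ref{lem:Let--be}-type arguments together with the uniqueness of Hensel-type lifts, and one checks $\widetilde K$ is $\omega$-increasing since no new valuation-group elements outside $\Gamma$ are introduced (the $v(c-a_n)$ already lie in $\Gamma=\Gamma\otimes\mathbf{Q}(\sigma)$). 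Alternatively, and more cleanly, pass first to the transformally Archimedean ultrapower $K^{\mathcal F}$ of \S\ref{UltrapowerOGA}/\S\ref{sec:Transformally-Archimedean-Ultrap} where $K^{\mathcal F}$ is transformally Henselian and spherically-complete-like, so the Cauchy sequence genuinely converges; then the root lies in $K^{\mathcal F}$, a model of $\VFA$ over $K$, and we are done.

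The main obstacle is the middle step: extracting from ``$\Psi$ strictly increasing at $\lambda_0$'' the sharp inequality $vf(a_0)>2vf'(a_0)$ at the generic point, in a way robust enough to survive the twist and rescaling and to feed Newton's lemma. The concavity and piecewise-transformally-affine structure of $\Psi$ (Lemma \ref{lem-transformal-herbarnd-concave}), combined with the $\omega$-increasing inequality $n\alpha<\sigma\alpha$ which makes higher-$\sigma$-degree Newton slopes negligible against lower ones, is exactly what is designed to make this work, but the bookkeeping — especially choosing which affine piece of $\Psi$ governs $f'$ versus $f$, and handling the borderline case where the minimum in $\Psi$ is attained at two pieces simultaneously at $\lambda_0$ — is where the real care is needed.
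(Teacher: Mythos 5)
The easy direction $(1)\Rightarrow(2)$ you handle correctly and essentially as the paper does.

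The substantive direction $(2)\Rightarrow(1)$ has a genuine gap in the middle step — and it is not merely ``bookkeeping'' as you suggest, but a step that fails. You claim that strict increase of $\Psi$ at $\lambda_0$ forces, \emph{at the generic point $a_0$ of $b$}, the inequality $vf(a_0)>2vf'(a_0)$. This is false. At the generic point of a closed ball, the value $vf(a_0)$ is by definition $\Psi(\lambda_0)$, the \emph{minimum} of all the affine pieces; it does not ``sit strictly above'' anything. A trivial counterexample: $f(x)=1-x$ with $b=\mathcal{O}$. Then $\Psi(\lambda)=\min\{0,\lambda\}$, which is strictly increasing for $\lambda<0$, and the root $x=1$ lies in $b$; but at the generic $a_0$ of $\mathcal{O}$ we have $vf(a_0)=vf'(a_0)=0$, so the Newton hypothesis fails outright. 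To start the Newton--Hensel iteration you need a \emph{non-generic} approximate root $a\in b$ with $vf(a)>\Psi(\lambda_0)$, and finding one is exactly where the content lies.

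That is precisely the ingredient your proposal omits: the paper normalizes so that $b=\mathcal{O}$, $f$ has integral coefficients, constant term $1$, and at least one higher coefficient of valuation $0$, and then invokes the hypothesis that the residue field is a model of $\ACFA$ to find $a\in\mathcal{O}$ with $vfa>0$. This residual Nullstellensatz input is unavoidable and does not appear anywhere in your argument. Moreover, even once an approximate root is found, the paper does not simply iterate Newton's lemma $\omega$-many times and take a Cauchy limit; it iterates a ball-shrinking procedure transfinitely, using spherical completeness (Proposition~\ref{spherically-complete-extension}), because the valuative radii of the constructed balls may remain bounded and a mere $\omega$-length Cauchy sequence need not converge to a root. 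Your suggested fallback to the transformally Archimedean ultrapower only applies when $K$ is transformally Archimedean to begin with, which you cannot assume here. In short: replace ``pass to completion / Archimedean ultrapower'' by ``pass to a spherically complete strictly transformally Henselian extension,'' insert the $\ACFA$ step to manufacture an approximate root in each ball, and run a transfinite ball-shrinking argument rather than a one-shot Newton iteration.
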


\begin{proof}
Suppose that there is to be a root $a$ of $fx$ in some extension
and let $\Psi$ be the Herbrand function of $f$ above $a$. We have
$f_{\mu}a\neq0$ for some $0<\mu\in\mathbf{N}\left[\sigma\right]$
whereas $fa=0$, so $\Psi\lambda=\min_{\mu\neq0}\left\{ \beta_{\mu}+\mu\cdot\lambda\right\} $
is strictly increasing as the pointwise minimum of finitely many strictly
increasing functions. It follows that $\left(1\right)\Rightarrow\left(2\right)$. 

For the converse, we may assume that $K$ is spherically complete,
strictly transformally Henselian and that $\Gamma=\Gamma\otimes\mathbf{Q}\left(\sigma\right)$
is transformally divisible; here we use Lemma \ref{valuation-group-increase},
Proposition \ref{spherically-complete-extension} and Theorem \ref{transformalhenselization}.

First we claim that there is a closed ball strictly contained in $b$
whose Herbrand function is strictly increasing as well. To see this,
let the valuative radius of the closed ball $b$ be equal to $\gamma$.
Let $b'=\gamma\mathcal{O}$ be the closed ball of valuative radius
$\gamma$ around the origin. The transformal Herbrand function of
$f\left(x\right)$ below $b$ then agrees with the transformal Herbrand
function of $f\left(x-a\right)$ below $\gamma\mathcal{O}$, for any
element $a$ in the ball $b$; so without loss we have $b=\gamma\mathcal{O}$.
If the constant term of $f$ is equal to zero, then zero is a root
inside $b$; so assume that the constant term is nonzero. Replacing
$fx$ by a scalar multiple shifts the transformal Herbrand function
by an additive constant and does not change the roots; thus we may
assume that the constant term of $fx$ is equal to $1$.

Let us write $fx=1+c_{1}x+c_{2}x^{2}+\ldots+c_{\sigma}x^{\sigma}+\ldots$
and for $0<\nu\in\mathbf{N}\left[\sigma\right]$ put $\beta_{\nu}=vc_{\nu}$.
The transformal Herbrand function of $f$ above $\gamma\mathcal{O}$
is then $\Psi\left(\lambda\right)=\min\left\{ 0,\beta_{\nu}+\nu\cdot\lambda\right\} $
for $\lambda<\gamma$. By assumption this function is strictly increasing
in the regime where $\lambda<\gamma$. On the other hand, we have
$\Psi\lambda=0$ for $\lambda$ sufficiently large. Since $\Gamma=\Gamma\otimes\mathbf{Q}\left(\sigma\right)$
and $\Psi$ is piecewise transformally affine and increasing, there
is a unique $\gamma_{0}\geq\gamma$ with $\Psi\left(\gamma_{0}\right)=0$.
Then the transformal Herbrand function of $f$ above $\gamma_{0}\mathcal{O}$
is strictly increasing; shrinking $b$ we me may assume that $\Psi\left(\gamma\right)=0$.
This implies that $\beta_{\nu}+\nu\cdot\gamma\geq0$ for all $0<\nu\in\mathbf{N}\left[\sigma\right]$
with equality for at least one choice of $0<\nu\in\mathbf{N}\left[\sigma\right]$. 

Now choose an element $t\in K$ with $vt=\gamma$ and let $g\left(x\right)=f\left(t^{-1}x\right)$.
Replacing $\gamma\mathcal{O}$ by $\mathcal{O}$ and $f\left(x\right)$
by $g\left(x\right)$ we are back in precisely the same situation;
so now $fx$ has coefficients in the valuation ring, the constant
term is equal to $1$, and there is some $0<\nu\in\mathbf{N}\left[\sigma\right]$
such that the valuation of the coefficient of $x^{\nu}$ is zero.
Since $k$ is a model of $\ACFA$ there is an element $a\in\mathcal{O}$
with $vfa>0$, so $vfa$ exceeds the generic value of $f$ on the
ball. If $fa=0$ then we are done; otherwise let $\gamma$ be the
maximal singular point of the transformal Herbrand function of $f$
above $a$. Then $\gamma>0$, and the closed ball of valuative radius
of $\gamma$ around $a$ will do.

In this way we find a sequence $b_{0}\supset b_{1}\ldots\supset b_{n}\supset\ldots$
of closed balls such that the transformal Herbrand function of $f$
above each of them is strictly increasing. Since $K$ is spherically
complete there is an element $a$ in the intersection. First assume
that the valuative radii of the $b_{n}$ are unbounded; then the transformal
Herbrand function of $f$ above the point $a$ agrees is of the form
$\beta_{\nu}+\nu\cdot\gamma$ for large enough $\gamma$ and some
$0<\nu\in\mathbf{N}\left[\sigma\right]$; so $vfb_{n}\to\infty$,
hence $fa=0$ by continuity. Otherwise find the maximal singular point \footnote{i.e the point where the slope changes}
$\lambda$ of the transformal Herbrand function of $f$ above $a$;
so $\lambda$ exceeds the valuative radius of each $b_{n}$, hence
we find a closed ball $b_{\omega}$ inside the intersection whose
transformal Herbrand function is strictly increasing. Continuing in
this manner into the transfinite, we conclude.
\end{proof}

\subsubsection{Generic Extensions}

The following definition is ad hoc but will prove useful later on;
it generalizes Definition \ref{genericofaball}.
\begin{defn}
\label{generic-definition}Let $K$ be a model of $\VFA$ and
let $a$ be an element within an extension. We say that $a$ is \textit{generic}
over $K$ if one of the following holds, possibly after an affine
change of variables:

\begin{enumerate}

\item The element $a$ is generic in $\mathcal{O}$ over $K$, that
is we have $va=0$ and the residue class is transformally transcendental
over $k$

\item The quantity $va$ is transformally transcendental over $\Gamma$,
i.e it does not lie in $\Gamma\otimes\mathbf{Q}\left(\sigma\right)$

\item There is a nested family $b_{0}\supset b_{1}\supset\ldots\supset b_{\alpha}\supset\ldots_{\alpha<\lambda}$
of closed balls in $K$ with the property that for every difference
polynomial $fx\in K\left[x\right]_{\sigma}$ there is some $\alpha$
such that the transformal Herbrand function of $f$ is constant above
$b_{\alpha}$, and the element $a$ lies in the intersection. Thus
$vfb_{\alpha}$ is independent of $\alpha$ for $\alpha$ large enough
and we have $vfa=vfb_{\alpha}$ for some (or any) such $\alpha$.
\end{enumerate}
\end{defn}

\begin{rem}
\label{rem-qf-generic}
In the settings of the third clause of Definition \ref{generic-definition},
using the concavity of the transformal Herbrand function, we deduce that if $a$ is any element of the intersection in some extension of $K$
and $fx\in K\left[x\right]_{\sigma}$ is a difference polynomial,
then there is a closed $K$-ball $b$ containing $a$ such that the
transformal Herbrand function of $f$ below $b$ is constant, and we must have $vfa=vfb$.
Thus $K\left(a\right)_{\sigma}$ is an immediate extension of transformal
valued fields. In the language of \cite{azgin2010valued}, the element
$a$ is a pseudo-limit of a pseudo-Cauchy sequence of transformally
transcendental type over $K$. In particular, the quantifier free
type of an element in the intersection of the balls is a complete quantifier free
type over $K$, a realization of which is transformally transcendental
over $K$.
\end{rem}

\begin{rem}
\label{rem-qf-complete-valgp-adjunction}
In the settings of the second clause of Definition \ref{generic-definition}, using $o$-minimality of $\widetilde{\omega\OGA}$, one verifies as in the proof of Lemma \ref{generic-qf} that the quantifier free type of $a$ is determined by the cut it determines over $\Gamma \otimes \mathbf{Q}\left(\sigma\right)$.
\end{rem}

\begin{prop}
\label{generic-closed-open}Let $K$ be an algebraically closed model
of $\VFA$. Let $a$ be generic over $K$ and $M$ the model
of $\VFA$ generated by $a$ over $K$; then $M^{h}$ has no
nontrivial finite $\sigma$-invariant Galois extensions.
\end{prop}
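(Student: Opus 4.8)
The plan is to reduce, via the transformal Henselization, to a statement about Artin--Schreier ideals, and to control those using the transformal Herbrand function together with the genericity hypothesis. First I observe that in each of the three cases of Definition \ref{generic-definition} the element $a$ is transformally transcendental over $K$: in case~(1) because its residue class is transformally transcendental over $k$; in case~(2) because the valuations $v(a^{\sigma^{n}})=\sigma^{n}\,va$ are $\mathbf{Z}$-linearly independent modulo $\Gamma$; and in case~(3) by Remark \ref{rem-qf-generic}. By the Corollary following Theorem \ref{transformalhenselization}, the finite $\sigma$-invariant separable extensions of $M^{\text{h}}$ correspond bijectively, via the transformal Henselization, to those of $M^{\text{th}}$, so it suffices to treat $M^{\text{th}}$. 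Since $M^{\text{th}}$ is algebraically Henselian (Lemma \ref{lem:(Basic-properties)}), Proposition \ref{criterion-unique-algclosure} lets me replace the claim by: $M^{\text{th}}$ admits a unique separable closure among models of $\VFA$. I argue by cases.

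In case~(1), $M=K(a)_{\sigma}$ is a generic-unit extension. Passing (by Corollary \ref{transformalHenselizationalg}, genericity being preserved) to a transformally Henselian base over $K$ and using a field of representatives, one identifies $M^{\text{th}}$ with the universal transformally Henselian extension of $K$ reproducing the residue extension $k\hookrightarrow k(\bar a)_{\sigma}$, as in Theorem \ref{transformalhenselization}. Since $K$ is algebraically closed, hence algebraically Henselian with no nontrivial finite $\sigma$-invariant Galois extension, Theorem \ref{transformalhenselization}(3) puts the finite $\sigma$-invariant Galois extensions of $M^{\text{th}}$ in bijection with those of $k(\bar a)_{\sigma}$; and the latter are trivial by Fact \ref{fact-ttranscendental-nofing}, as $\bar a$ is transformally transcendental over the algebraically closed difference field $k$.

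In cases~(2) and~(3) the residue field of $M^{\text{th}}$ is $k$ itself, so $M^{\text{th}}$ is algebraically strictly Henselian and I invoke Corollary \ref{uniquesep}. Its condition~(1) is immediate. Condition~(2) holds because, for $m$ prime to $p$, the module $\Gamma_{M^{\text{th}}}\otimes\mathbf{Z}/m$ is $0$ in case~(3) (there $\Gamma_{M^{\text{th}}}=\Gamma$ is divisible, $K$ being algebraically closed) and, in case~(2), is free over $(\mathbf{Z}/m)[\sigma^{\pm1}]$ on the class of $va$ (which is transformally transcendental over $\Gamma$ by Remark \ref{rem-qf-complete-valgp-adjunction}), and a free $(\mathbf{Z}/m)[\sigma^{\pm1}]$-module has no nonzero finite $\mathbf{Z}[\sigma]$-submodule. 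The remaining condition~(3), that no Artin--Schreier ideal of $M^{\text{th}}$ be transformally prime, is vacuous when $p=1$; when $p$ is prime it is the heart of the argument and I expect it to be the main obstacle, since one must show that these Artin--Schreier cuts remain \emph{rational}, i.e.\ possess a maximal element, which is exactly what prevents them from being $\sigma$-invariant.

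To verify condition~(3) in characteristic $p$: by Lemma \ref{artin-schrier-cuts} and the compatibility of the Artin--Schreier ideal map with directed unions, any such ideal is realized by an element $a_{0}$ of strictly negative valuation lying in $K(a,a^{\sigma},\dots,a^{\sigma^{N}})$; moreover $M^{\text{th}}\subseteq\widehat{M^{\text{h}}}$ and passing to the completion cannot improve the best approximation of an Artin--Schreier root of $a_{0}$, so $\delta_{M^{\text{th}}}(a_{0})=\delta_{M^{\text{h}}}(a_{0})$. In case~(2), $K(a,\dots,a^{\sigma^{N}})$ is an Abhyankar extension of $K$, so its perfect strictly Henselian hull has Artin--Schreier ideals of the form $p^{-\infty}\cdot[\gamma,\infty]$ with $0<\gamma\in\Gamma$ by Corollary \ref{artin-schrier-ideal-map}, and adjoining the remaining generators $a^{\sigma^{j}}$ does not change $\delta(a_{0})$, each such step being a regular extension to which the valuation lifts uniquely (Lemma \ref{lem:Let--be}(2), Remark \ref{linearly-disjoint-generic}). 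In case~(3) one reaches the same conclusion by combining the eventual constancy of the transformal Herbrand function of $X^{p}-X-a_{0}$ above the nested balls witnessing the genericity of $a$ with its concavity (Lemma \ref{lem-transformal-herbarnd-concave}) and with Lemma \ref{complete-t-henselian}(1). Either way the Artin--Schreier ideal has the shape $p^{-\infty}\cdot[\gamma,\infty]$ with $\gamma>0$; since $\sigma$ is $\omega$-increasing we have $\gamma\ll\sigma\gamma$, so $\sigma\bigl(p^{-\infty}\cdot[\gamma,\infty]\bigr)=p^{-\infty}\cdot[\sigma\gamma,\infty]\subsetneq p^{-\infty}\cdot[\gamma,\infty]$, and the ideal is not transformally prime. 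This establishes Corollary \ref{uniquesep}(3) and completes the proof.
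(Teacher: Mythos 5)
Your approach is genuinely different from the paper's: the paper proves this directly, writing $M^{h}$ as the directed union of Henselizations $K(a^{\sigma^{n}})_{n\in I}^{h}$ for finite $I\subset\mathbf{Z}$, descending a finite $\sigma$-invariant Galois extension $E$ to some $I$, translating by $\sigma^{N}$ to produce a disjoint copy over $J=\sigma^{N}(I)$, and using regularity (from Lemma \ref{lem:Let--be} and Remark \ref{linearly-disjoint-generic}) plus linear disjointness of $M_{I}$ and $M_{J}$ over the algebraically closed base to force $E=M^h$. Your route through the transformal Henselization, Corollary \ref{uniquesep}, and Artin--Schreier ideals is an interesting alternative, but it has two genuine gaps.

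First, in case~(1) you invoke Theorem \ref{transformalhenselization}(3) for the residue extension $k\hookrightarrow k(\bar a)_{\sigma}$. That theorem requires the residue extension to be \emph{transformally algebraic}, and its universal object is always transformally algebraic over the base. Here $\bar a$ is transformally transcendental over $k$, so the hypotheses fail outright; $M^{\mathrm{th}}$ cannot be the universal transformally Henselian extension of $K^{\mathrm{th}}$ reproducing $k\hookrightarrow k(\bar a)_{\sigma}$, since the latter would be transformally algebraic over $K^{\mathrm{th}}$ while $M^{\mathrm{th}}$ is not. No substitute for clause~(3) of that theorem is available here, so the reduction collapses.

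Second, for cases~(2) and~(3) the verification of Corollary \ref{uniquesep}(3) requires showing that the Artin--Schreier ideal of $M^{\mathrm{th}}$ equals $p^{-\infty}\cdot[\gamma,\infty]$ for some $\gamma>0$. Corollary \ref{artin-schrier-ideal-map} gives this for the perfect strict Henselization of each finitely generated Abhyankar piece $K(a,\dots,a^{\sigma^{N}})$, and Lemma \ref{artin-schrier-cuts}(3) lets one pass to the directed limit---but the limit of principal ideals $p^{-\infty}[\gamma_{N},\infty]$ need not be principal if the $\gamma_{N}$ fail to stabilize and tend to $0$. In that case the limiting Artin--Schreier ideal would be the maximal ideal, which \emph{is} transformally prime, and the argument would break. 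Your stated reason that ``adjoining the remaining generators does not change $\delta(a_{0})$,'' supported only by regularity of the extension and unique lifting of the valuation (Lemma \ref{lem:Let--be}, Remark \ref{linearly-disjoint-generic}), does not rule this out: those properties constrain the valued-field structure of $L\otimes_{K}M$, not the behaviour of best approximants to a fixed algebraic element as one iterates generic adjunction. Establishing stability of the Artin--Schreier cut along the tower is exactly the kind of rationality-of-cut statement that the paper handles separately (Section \ref{sec:Wild-Ramification}) and that you would need to import or reprove. Your appeal to the transformal Herbrand function of $X^{p}-X-a_{0}$ in case~(3) conflates two different objects: that Herbrand function lives over $M$ and concerns root approximation of $X^{p}-X-a_{0}$, whereas the genericity data concerns the Herbrand functions of difference polynomials over $K$ evaluated at $a$---they do not directly transfer.
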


Recall (Fact \ref{fact-ttranscendental-nofing}) that if $K$ is an inversive algebraically closed difference field and $a$ is transformally transcendental over $K$, then the difference field $K\left(a\right)_\sigma$ has no nontrivial finite $\sigma$-invariant Galois extensions. Proposition \ref{generic-closed-open} is a generalization of this; the proof is similar, using a linear disjointness argument.

\begin{proof}
For $n\in\mathbf{Z}$ let us set $a_{n}=a^{\sigma^{n}}$. First observe
that the extension $M\left(a_{0},\ldots,a_{n}\right)\subset M\left(a_{0},\ldots,a_{n+1}\right)$
takes one of the forms described in Lemma \ref{lem:Let--be}. For
the case $a$ generic in $\mathcal{O}$ this is clear, and similarly
if the quantity $va$ is not in $\Gamma\otimes\mathbf{Q}\left(\sigma\right)$.
So assume that $M$ is an immediate extension of $K$. The element
$a=a_{0}$ lies in the intersection of a nested family $\mathcal{B}$
of closed balls in $K$, and for every difference polynomial $fx\in K\left[x\right]_{\sigma}$,
there is a small closed $b$ in $\mathcal{B}$ such that the transformal
Herbrand function of $f$ is constant above $b$. Since $K$ is inversive
it follows that $a_{n+1}$ lies in the intersection of the nested
family $\mathcal{B}^{\sigma^{n+1}}=\left\{ \sigma^{n+1}\left(b\right)\colon b\in\mathcal{B}\right\} $.
Now our assumption on $\mathcal{B}$ implies that for every algebraic
polynomial $g\left(x^{\sigma^{n+1}}\right)\in M\left[a_{0},\ldots,a_{n}\right]\left[x^{\sigma^{n+1}}\right]$
there is a small closed ball $b\in\mathcal{B}^{\sigma^{n+1}}$ such
that $vg\left(x^{\sigma^{n}}\right)$ is constant on $b$. The same is true for polynomials
over the field of fractions $M\left(a_{0},\ldots,a_{n}\right)$: a
polynomial and its nonzero scalar multiple are simultaneously constant
on a ball.

For a subset $I\subset\mathbf{Z}$ let us set $M_{I}=M\left(a_{n}\right)_{n\in I}^{h}$.
If $I\subset J$ are subsets of $\mathbf{Z}$ then we have the inclusion $M_{I} \subseteq{M_{J}}$; Henselization commutes with directed unions, so the field $M^{h}$
is the directed union of the $M_{I}$ for $I\subset\mathbf{Z}$ finite.
Now suppose that $E$ is a finite Galois extension of $M^{h}$ and
$\sigma\left(E\right)=E$; since $E$ is finite it descends to a finite
Galois extension $E'$ of $M_{I}$ for some finite $I\subset\mathbf{Z}$.
Let $\sigma$ act on $\mathbf{Z}$ by the rule $\sigma\left(n\right)=n+1$.
For $N$ a natural number large enough we have $\sigma^{N}\left(I\right)\cap I=\varnothing$;
put $J=\sigma^{N}\left(I\right)$ and $E''=\sigma^{N}\left(E'\right)$.

Now $M$ is a regular extension of both $M_{I}$ and $M_{J}$, using
Lemma \ref{lem:Let--be} and Remark \ref{linearly-disjoint-generic}.
Since $I\cap J=\varnothing$ and $K$ is algebraically closed, the
fields $M_{I}$ and $M_{J}$ are linearly disjoint over $K$. Since
$\sigma^{N}\left(E\right)=E$ we find that $E=E'\otimes_{M_{I}}M=E''\otimes_{M_{J}}M$;
so $E$ is linearly disjoint from $E$ over $M$, whence $E=M$.
\end{proof}
\begin{cor}
\label{trivially-valued}Let $k$ be an algebraically closed inversive
difference field, viewed as trivially valued and $\Gamma$ a free
finite ordered module over $\mathbf{Z}\left[\sigma^{\pm1},p^{\pm1}\right]$;
then $k\left(t^{\Gamma}\right)$ has no nontrivial h-finite $\sigma$-invariant
extensions.
\end{cor}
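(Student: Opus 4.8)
The plan is to realize $k(t^\Gamma)$ as the model of $\VFA$ generated by a suitable generic element over an algebraically closed base, so that Proposition \ref{generic-closed-open} applies. More precisely, I would induct on the rank of the free module $\Gamma$ over $R = \mathbf{Z}[\sigma^{\pm1},p^{\pm1}]$. In the rank-zero case $\Gamma = 0$ the field $k(t^\Gamma) = k$ is itself a model of $\ACFA$ (it is algebraically closed and inversive with the trivial valuation, so it has no nontrivial $\text{h}$-finite $\sigma$-invariant extensions, since an algebraic extension of a trivially valued field with algebraically closed residue field is trivial upon Henselization; more simply, $k$ has no nontrivial algebraic extensions at all). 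For the inductive step, write $\Gamma = \Gamma_0 \oplus R\gamma$ where $\Gamma_0$ is a free $R$-submodule of corank one and $\gamma$ is a basis vector; I would order $\Gamma$ so that $\gamma$ is transformally transcendental over $\Gamma_0 \otimes \mathbf{Q}(\sigma)$, which is possible (e.g. place the copy $R\gamma$ below all of $\Gamma_0$, using a lexicographic order).

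Then set $K = k(t^{\Gamma_0})$, which by the inductive hypothesis has no nontrivial $\text{h}$-finite $\sigma$-invariant extensions; after passing to its algebraic closure $K^{\mathrm{a}}$ in the category of models of $\VFA$ — note that since $K$ has no such extensions, by Corollary \ref{uniquesep} (or directly from the analysis of Section \ref{subsec:The-Algebraic-Ramification}) this closure is well-behaved, and in any case it suffices to prove the statement after base change to a larger algebraically closed field and then restrict, since $\text{h}$-finiteness is witnessed by a finite extension which would already be visible over $K^{\mathrm{a}} \supseteq K$ — I would take $a = t^\gamma$, an element whose valuation $\gamma$ is transformally transcendental over $\Gamma_{K^{\mathrm{a}}} \otimes \mathbf{Q}(\sigma) = \Gamma_0 \otimes \mathbf{Q}(\sigma)$. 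This is precisely clause (2) of Definition \ref{generic-definition}, so $a$ is generic over $K^{\mathrm{a}}$. The model of $\VFA$ generated by $a$ over $K^{\mathrm{a}}$ is $k'(t^{\Gamma_0 \oplus R\gamma})$ for $k'$ the residue field of $K^{\mathrm{a}}$, and by Proposition \ref{generic-closed-open} its Henselization has no nontrivial finite $\sigma$-invariant Galois extensions — which is exactly the assertion that it has no nontrivial $\text{h}$-finite $\sigma$-invariant extensions. Finally I would descend: since $k(t^\Gamma) \hookrightarrow k'(t^\Gamma)$ and a nontrivial $\text{h}$-finite $\sigma$-invariant extension of the former would generate one of the latter (using that $k'$ is algebraic over $k$, hence the Hensel-theoretic obstruction is detected after base change), we conclude.

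Alternatively, and perhaps more cleanly, one iterates Proposition \ref{generic-closed-open} directly: order a basis $\gamma_1 < \gamma_2 < \cdots$ of $\Gamma$ (as an ordered $R$-module, after tensoring with $\mathbf{Q}(\sigma)$ the $\gamma_i$ are $\mathbf{Q}(\sigma)$-linearly independent) so that each $\gamma_{i+1}$ is transformally transcendental over the span of $\gamma_1,\dots,\gamma_i$, build the tower $K_0 = k \subset K_1 = K_0(t^{\gamma_1})_\sigma^{\mathrm{a},h} \subset K_2 = \cdots$, apply Proposition \ref{generic-closed-open} at each stage to see that $K_{i+1}^h$ has no nontrivial finite $\sigma$-invariant Galois extension, and pass to the directed union using that Henselization, algebraic closure, and "no finite $\sigma$-invariant Galois extensions" all behave well under directed unions (Lemma \ref{lem:(Basic-properties)} and Fact \ref{figalois}). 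The directed union recovers $k(t^\Gamma)$ up to the $\text{h}$-finite equivalence that is irrelevant here.

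The main obstacle I anticipate is bookkeeping around the passage between $k(t^\Gamma)$ and its algebraic/Henselian/inversive closures: Proposition \ref{generic-closed-open} is stated for a base that is algebraically closed, and "no nontrivial $\text{h}$-finite $\sigma$-invariant extensions" is not obviously stable under the operations I want to perform in the order I want to perform them. The cleanest route is to prove the sharper statement that $k(t^\Gamma)^{\mathrm{a}}$ (algebraic closure in $\VFA$) is transformally Henselian and has no nontrivial finite $\sigma$-invariant Galois extensions, from which the h-finite statement for $k(t^\Gamma)$ follows by Proposition \ref{criterion-unique-algclosure}; establishing that $k(t^\Gamma)$ does admit a well-defined such closure (uniqueness up to isomorphism) is the content that needs the inductive hypothesis together with Corollary \ref{uniquesep}. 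The valuation-group side of Corollary \ref{uniquesep} is immediate here since $\Gamma$ is free over $R$ hence has no nontrivial finite transformal submodules of $\Gamma \otimes \mathbf{Z}/m$, and the Artin-Schreier-ideal side will follow from the genericity (the relevant ideals are principal of the form $p^{-\infty}\cdot[\delta,\infty]$ with $\delta$ not $\sigma$-fixed), so the only genuinely new input at each stage is Proposition \ref{generic-closed-open} itself.
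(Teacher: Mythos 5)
Your second (``alternatively'') approach is the paper's proof: the paper chooses $t_1,\ldots,t_n$ with $vt_1,\ldots,vt_n$ a basis of $\Gamma$, sets $k_m = k(t_1,\ldots,t_m)$, observes that $t_m$ is generic over $k_{m-1}$, and concludes by iterating Proposition \ref{generic-closed-open}. So you have identified the right engine.

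Your first route, however, contains a concrete error in the crucial identification. You claim the model of $\VFA$ generated by $a = t^\gamma$ over $K^{\mathrm a}$ (the algebraic closure of $K = k(t^{\Gamma_0})$) is $k'(t^{\Gamma_0 \oplus R\gamma})$ where $k'$ is the residue field of $K^{\mathrm a}$. Since $k$ is already algebraically closed, $k' = k$, so the claim would say $K^{\mathrm a}(a)_\sigma = k(t^{\Gamma_0 \oplus R\gamma}) = K(a)_\sigma$. This is false: $K^{\mathrm a}$ is a proper algebraic extension of $K$ (it contains, e.g., roots of polynomials with coefficients in $K$), and these survive in $K^{\mathrm a}(a)_\sigma$, so $K^{\mathrm a}(a)_\sigma \supsetneq k(t^\Gamma)$. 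Your descent step then becomes vacuous rather than doing the work it needs to do, and the gap you anticipate (``bookkeeping around the passage between $k(t^\Gamma)$ and its algebraic/Henselian/inversive closures'') is real. The correct descent is: the induction hypothesis together with Proposition \ref{criterion-unique-algclosure} gives that $k_{m-1}$ has a unique algebraic closure $k_{m-1}^{\mathrm a}$ up to isomorphism; genericity of $t_m$ survives the passage to $k_{m-1}^{\mathrm a}$ (algebraic extensions only enlarge $\Gamma$ inside $\Gamma \otimes \mathbf{Q}$, not $\Gamma \otimes \mathbf{Q}(\sigma)$); Proposition \ref{generic-closed-open} gives that $k_{m-1}^{\mathrm a}(t_m)_\sigma$ has no nontrivial $\text{h}$-finite $\sigma$-invariant extensions, hence a unique algebraic closure; and then uniqueness of the algebraic closure of $k_m$ follows by a back-and-forth using the uniqueness of the quantifier-free type of the generic (Corollary \ref{cor-generic-complete-type}) to extend isomorphisms of algebraic closures of $k_{m-1}$ across $t_m$. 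Similarly, the statement ``the directed union recovers $k(t^\Gamma)$ up to the $\text{h}$-finite equivalence that is irrelevant here'' is not accurate: your $K_n$ is an algebraic closure of $k(t^\Gamma)$, and they are not $\text{h}$-finitely equivalent (an algebraically closed field and a non-closed one never are). What the tower buys you is uniqueness of that closure, which via Proposition \ref{criterion-unique-algclosure} is the statement you want.
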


\begin{proof}
Let $K=k\left(t^{\Gamma}\right)$ and choose $t_{1},\ldots,t_{n}\in K$
such that $vt_{1},\ldots,vt_{n}$ is a basis of $\Gamma$. Let
$k_{0}=k$ and $k_{m}=k\left(t_{1},\ldots,t_{m}\right)$; then $k_{m}$
is generic over $k_{m-1}$, thus by induction $K=k_{n}$ has no nontrivial
$h$-finite $\sigma$-invariant extensions.
\end{proof}
\begin{rem}
\label{generic-ultrapower}Let $K$ be a model of $\VFA$ which
is algebraically closed and transformally Henselian. Suppose moreover
that the valuation group is a model of $\widetilde{\omega\OGA}$ and
the generic type of $\ACFA$ \footnote{i.e the type of a transformally transcendental element; this requirement holds unless the field is twisted periodic, i.e we have $\sigma^n = \phi^m$ for some $n \neq 0$} is finitely satisfiable over the residue
field. Let $a$ be generic over $K$. Then $K\left(a\right)_{\sigma}$
embeds over $K$ in an ultrapower. By \ref{generic-closed-open} and
Corollary \ref{transformalHenselizationalg} there is up to isomorphism
a unique algebraically closed and transformally Henselian hull $L$
of $K\left(a\right)_{\sigma}$; since $K$ is itself algebraically
closed and transformally Henselian we find that $L$ embeds over $K$
in an ultrapower of $K$ as well.
\end{rem}

\begin{cor}
\label{cor-generic-complete-type}
Let $K$ be a model of $\VFA$ which is algebraically closed. Then the quantifier free type of a generic element $x$ over $K$ in the sense of Definition \ref{generic-definition} is complete (namely in the third case there is a unique quantifier free type of an element in the intersection, and in the second case, the quantifier free type of $vx$ over $\Gamma$ determines the quantifier free type of $x$).

Furthermore, the field $K\left(x\right)_{\sigma}$ admits no nontrivial $h$-finite $\sigma$-invariant separable extensions, and hence it admits a unique algebraic closure in the category of models of $\VFA$, up to isomorphism.
\end{cor}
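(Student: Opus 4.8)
The plan is to dispose of the two assertions separately, since each is essentially a repackaging of results already in hand. For the completeness of the quantifier free type of a generic element $x$ over $K$, I would run through the three alternatives of Definition \ref{generic-definition} in turn; the permitted affine change of variables is harmless here, since the formulas describing a generic type transform equivariantly. If $x$ is generic in $\mathcal{O}$ over $K$ this is exactly Lemma \ref{generic-qf}. If $vx$ lies outside $\Gamma\otimes\mathbf{Q}\left(\sigma\right)$, I would invoke Remark \ref{rem-qf-complete-valgp-adjunction}: $o$-minimality of $\widetilde{\omega\OGA}$ shows that the cut of $vx$ over $\Gamma\otimes\mathbf{Q}\left(\sigma\right)$ is determined, and Lemma \ref{generic-acvf} lifts this to the quantifier free type of $x$ over $K$. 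If $x$ lies in the intersection of a nested family of closed $K$-balls on which every transformal Herbrand function is eventually constant, I would use Remark \ref{rem-qf-generic}: concavity of the transformal Herbrand function (Lemma \ref{lem-transformal-herbarnd-concave}) forces, for each difference polynomial $f$, the value $vf\left(x\right)$ to equal the generic value $vf\left(b_{\alpha}\right)$ on a sufficiently small ball of the family, a quantity named over $K$, so that the quantifier free type is again complete and $K\left(x\right)_{\sigma}$ is an immediate extension. In all three cases one records that $x$ is transformally transcendental over $K$ (immediate in cases one and three, and a consequence of Corollary \ref{cor-transformally-alg-no-valgp-adjunction} in case two).

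For the second assertion I would pass to $M$, the model of $\VFA$ generated by $x$ over $K$, i.e. the perfect inversive hull of $K\left(x\right)_{\sigma}$ with its canonical valuation (Lemma \ref{perfect-inversive-hull-transformal}); any model of $\VFA$ containing $K\left(x\right)_{\sigma}$ contains $M$, and it is through $M$ that the stated claim about $K\left(x\right)_{\sigma}$ is to be read. By Proposition \ref{generic-closed-open}, $M^{\text{h}}$ has no nontrivial finite $\sigma$-invariant Galois extensions. Passing to Galois closures (the Galois closure of a $\sigma$-invariant extension is again $\sigma$-invariant, being carried to itself by any extension of $\sigma$ to the algebraic closure) upgrades this to the absence of nontrivial finite $\sigma$-invariant separable extensions of $M^{\text{h}}$; and any $\text{h}$-finite $\sigma$-invariant separable extension $L$ of $M$ has $L^{\text{h}}$ a finite $\sigma$-invariant separable extension of $M^{\text{h}}$, hence $L^{\text{h}}=M^{\text{h}}$. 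Thus $M$, equivalently $K\left(x\right)_{\sigma}$, has no nontrivial $\text{h}$-finite $\sigma$-invariant separable extension. The uniqueness of the algebraic closure then follows from Proposition \ref{criterion-unique-algclosure}: its condition (2) now holds for $M$, so for every $0<n\in\mathbf{N}$ the group $G_{M}$ acts transitively on the expansions of a fixed field-theoretic algebraic closure of $M$ to a model of $\VFA$ over $\left(M,\sigma^{n}\right)$; hence any two such expansions are isomorphic over $M$, and since one exists (the proposition preceding Example \ref{example-no-qe-charp-wildly-ramified}) the algebraic closure is unique up to isomorphism.

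Since every genuinely substantive ingredient — Lemma \ref{generic-qf}, Remarks \ref{rem-qf-complete-valgp-adjunction} and \ref{rem-qf-generic}, Proposition \ref{generic-closed-open}, and Proposition \ref{criterion-unique-algclosure} — is already available, I do not expect a real obstacle. The only points to handle with some care are the bookkeeping ones: checking that the affine reduction in Definition \ref{generic-definition} genuinely does not affect completeness of quantifier free types, and being precise about the order of operations in the $\text{h}$-finiteness clause, namely that one should first form the perfect inversive model $M$ and only then Henselize, so that the object being analysed is exactly the $M^{\text{h}}$ appearing in Proposition \ref{generic-closed-open}. With that observed, the corollary is immediate.
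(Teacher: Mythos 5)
Your proof is correct and takes essentially the same route as the paper: the three cases of Definition \ref{generic-definition} are handled respectively by Lemma \ref{generic-qf}, Remark \ref{rem-qf-complete-valgp-adjunction}, and Remark \ref{rem-qf-generic}, and the absence of $h$-finite $\sigma$-invariant separable extensions is read off from Proposition \ref{generic-closed-open} together with Proposition \ref{criterion-unique-algclosure}. The only difference is that you spell out the intermediate bookkeeping (passage from $M$ to $M^{\text{h}}$, Galois closure, unwinding the definition of $h$-finiteness) that the paper leaves implicit in its one-line citation.
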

\begin{proof}
The uniqueness of the quantifier free type follows from the discussion of Remark \ref{rem-qf-generic}, Remark \ref{rem-qf-complete-valgp-adjunction}, and Lemma \ref{generic-qf}. The lack $\sigma$-invariant Galois extensions of the Henselian hull is proved in Proposition \ref{generic-closed-open}.
\end{proof}

\subsection{Scatteredness}

\begin{thm}
\label{Scatteredness}Let $K$ be a model of $\VFA$ and $fx\in K\left[x\right]_{\sigma}$
a nonzero difference polynomial.

(1) The set of roots of $fx$ in $K$ is scattered in the sense that
the valuative distance between any two of them can only assume finitely
many values.

(2) Let $\mathcal{B}$ be a nested family of closed balls and assume
that $fx$ admits a $K$-root in each of them. Then the intersection
of the nest admits a $K$-point; indeed, it even admits a $K$-root
of $fx$.
\end{thm}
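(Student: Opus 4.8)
The plan is to prove both scatteredness and the nested-ball statement together, by leveraging the transformal Herbrand function developed in this section. The key point is that the roots of $fx$ in any extension are controlled by the transformal Herbrand function $\Psi$ above any point $a$ of $K$ (or above any closed ball), and that $\Psi$ is piecewise transformally affine with slopes in $\mathbf{N}\left[\sigma\right]$, continuous, nondecreasing and concave (Lemma \ref{lem-transformal-herbarnd-concave}), with only finitely many singular points. This finiteness of singular points will be the engine driving scatteredness.

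For part (1), I would argue as follows. We may pass to a larger model and assume $K$ is algebraically closed; scatteredness of the root set of $fx$ in the small model follows from scatteredness in the bigger one. Fix a root $a_{0}$ of $fx$ in $K$, if there is one (otherwise there is nothing to prove). For any other root $a$ of $fx$, the valuative distance $v\left(a-a_{0}\right)$ must be a singular point of the transformal Herbrand function $\Psi$ of $fx$ above the point $a_{0}$: indeed, the number of roots of $fx$ in the closed ball of valuative radius $\lambda$ around $a_{0}$ is governed by the slope of $\Psi$ at $\lambda$ (compare Example \ref{ex:point-count}), so the multiset of valuative distances $\left\{v\left(a - a_0\right) : fa = 0\right\}$ is contained in the finite set of singular points of $\Psi$ together with the point where $\Psi$ becomes eventually linear of slope equal to $\deg$-data. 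More precisely, writing the Taylor expansion $f\left(a_{0} + h\right) = \sum_{\nu} f_{\nu}\left(a_{0}\right) h^{\nu}$, one has $f_{0}\left(a_{0}\right) = 0$, so $v\left(f\left(a_{0} + h\right)\right) = \min_{\nu > 0}\left\{ v f_{\nu}\left(a_{0}\right) + \nu \cdot vh \right\}$; a root $a = a_{0} + h$ forces the minimum to be attained at least twice or forces cancellation, and in any case $vh$ lies in the finite set of break-points of the function $\lambda \mapsto \min_{\nu>0}\left\{vf_{\nu}\left(a_0\right) + \nu\lambda\right\}$. Since two arbitrary roots $a, a'$ satisfy $v\left(a - a'\right) \geq \min\left(v\left(a-a_0\right), v\left(a'-a_0\right)\right)$ with equality unless the two are equal, and in fact $v(a-a')$ is again read off from $\Psi$ above $a$, the set of pairwise distances is finite.

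For part (2), let $\mathcal{B}$ be a nested family of closed balls each containing a $K$-root of $fx$. By Proposition \ref{rootinaball}, the transformal Herbrand function of $fx$ above each ball $b \in \mathcal{B}$ is strictly increasing. Consider the transformal Herbrand function $\Psi$ of $fx$ above the intersection $\bigcap \mathcal{B}$ — more carefully, fix any $b_{0} \in \mathcal{B}$ and consider $\Psi$ defined on $\left(-\infty, \lambda_{0}\right)$ where $\lambda_{0}$ is the valuative radius of $b_{0}$, using a point $a_0 \in b_0 \cap K$ (if $\mathcal{B}$ has a smallest ball we are immediately done by hypothesis, so assume it does not). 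Since $\Psi = \min_{\nu}\left\{\beta_{\nu} + \nu \lambda\right\}$ has only finitely many singular points, beyond the largest singular point $\lambda_{\ast}$ the function $\Psi$ is a single transformally affine function $\beta_{\nu} + \nu\lambda$. The hypothesis that each $b \in \mathcal{B}$ contains a root, combined with Proposition \ref{rootinaball}, shows $\Psi$ is strictly increasing above every $b$, hence on all of $\left(-\infty, \lambda_{0}\right)$, which in particular forces the eventual slope $\nu$ to be strictly positive (it cannot be $0$). Now, provided $K$ is rich enough — and by passing to a spherically complete, strictly transformally Henselian extension $\widetilde{K}$ with $\widetilde{\Gamma}$ transformally divisible, as in the proof of Proposition \ref{rootinaball}, which is harmless since a $\widetilde{K}$-root in $\bigcap \mathcal{B}$ that we then wish to descend to $K$ will be algebraic over $K$ and... — the argument of Proposition \ref{rootinaball} applied above $\bigcap\mathcal{B}$ produces a root in the intersection. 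The descent back to $K$ is the delicate point: one needs that the root found lies in $K$, not merely in $\widetilde{K}$. Here one uses that the transformal Herbrand function above $\bigcap \mathcal{B}$, being eventually strictly increasing with positive slope, pins down a single closed ball of the appropriate valuative radius where the root must lie (a Newton-style argument as in Lemma \ref{newton}), and that $K$ being a model of $\VFA$ with $k$ a model of $\ACFA$ supplies the residual roots at each stage; iterating into the transfinite, exactly as in the last paragraph of the proof of Proposition \ref{rootinaball}, but tracking that each approximation can be taken in $K$ because each singular point encountered is an element of $\Gamma$ (using scatteredness, part (1), to guarantee the process terminates after set-many steps and lands in $K$ when $K$ is spherically complete; for general $K$ one notes the intersection of $\mathcal{B}$ already contains a $K$-point by a pseudo-Cauchy argument since $fx$ has bounded degree).

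The main obstacle, as signposted above, is the descent in part (2): ensuring the root produced lies in the original model $K$ rather than in an auxiliary spherically complete extension. The cleanest route is probably to avoid extending $K$ altogether and instead argue directly: the hypotheses give a $K$-root $r_{b}$ in each $b \in \mathcal{B}$; by part (1) the valuative distances between the $r_{b}$ take only finitely many values, so for $b$ small enough all the $r_{b}$ agree, i.e. the family $\left(r_{b}\right)$ is eventually constant and its eventual value is a $K$-root lying in $\bigcap \mathcal{B}$. This reduces part (2) to part (1) almost immediately, which is why I would prove (1) first and in full generality (over an algebraically closed extension, then note the root set descends), and then derive (2) as a short corollary. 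The only subtlety remaining is that "eventually constant" requires the nest $\mathcal{B}$ to be small enough below the finitely many critical radii, which is exactly what the scatteredness bound delivers; I would need to check carefully that distinct roots in distinct balls of $\mathcal{B}$ cannot "oscillate" — but concavity and monotonicity of $\Psi$ rule this out, since once two roots are at valuative distance exceeding the top singular point they must coincide.
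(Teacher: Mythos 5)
Your argument for part (1) correctly establishes that, for a fixed root $a_0$ of $fx$, the set $\left\{v\left(a-a_0\right):fa=0,\ a\neq a_0\right\}$ is contained in the finite set of singular points of the transformal Herbrand function $\Psi$ of $f$ above $a_0$. But this does not yet give finiteness of the set of \emph{pairwise} distances. The ultrametric inequality handles the case $v\left(a-a_0\right)\neq v\left(a'-a_0\right)$; when two roots $a,a'$ are equidistant from $a_0$, you propose to read $v\left(a-a'\right)$ off $\Psi$ above $a$. The difficulty is that the singular points of $\Psi$ above $a$ need not agree with those above $a_0$: the two Herbrand functions coincide for $\lambda<v\left(a-a_0\right)$ (where the balls coincide) but above that radius $\Psi$ above $a$ can acquire new singular points (already the algebraic polynomial $x\left(x-1\right)\left(x-1-\epsilon\right)$ with $0<v\epsilon$ exhibits this). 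For algebraic polynomials this is harmless because the root set is finite, but a difference polynomial can have infinitely many roots, so iterating your step over all roots produces an a priori infinite union of finite sets, and nothing in the Herbrand-function formalism — which is local to a single point or ball — bounds the union. Some global finiteness input is required and your argument does not supply it.

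The paper's proof of (1) bypasses Herbrand functions entirely and is a definability-and-compactness argument. Let $C\subset\Gamma$ be the set of $\gamma$ with $v\left(a-b\right)=\gamma$ for some pair of distinct $K$-roots $a,b$ of $fx$; this is definable with parameters from $K$. In any elementary extension $K^{*}$ of $K$, the roots of $fx$ are transformally algebraic over $K$ and hence lie in the relative transformal algebraic closure $\widetilde{K}$ of $K$ in $K^{*}$; since $\Gamma_K$ is torsion-free over $\mathbf{Z}\left[\sigma\right]$, one shows $\Gamma_{\widetilde{K}}=\Gamma_K$, so $C\left(K^{*}\right)\subseteq\Gamma_K$ is of bounded cardinality. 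If $C$ were infinite, taking $K^{*}$ sufficiently saturated would force $C\left(K^{*}\right)$ to be larger than $\Gamma_K$, a contradiction. This is the mechanism that actually produces finiteness; your local analysis of $\Psi$ does not substitute for it.

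Your "clean" reduction of (2) to (1) is essentially the paper's argument, but the phrase "the family $\left(r_b\right)$ is eventually constant" is not correct: after passing to a subsequence where the pairwise distance is a constant $\lambda$, the roots need not coincide unless $\lambda=\infty$. What one actually concludes, as the paper does, is that the closed ball of valuative radius $\lambda$ around any one $r_b$ is contained in every ball of the nest, hence in the intersection, and contains a root. With that correction the reduction is exactly the paper's, and your earlier, more elaborate attempt at (2) via spherically complete extensions is not needed.
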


\begin{proof}
(1) Let $\widetilde{K}$ be the relative transformal algebraic closure
of $K$ in an ultrapower of $K$. Then $\widetilde{K}$ is the directed union
of difference field extensions of $K$ of finite transcendence degree,
and it follows that $\widetilde{\Gamma}$ is the directed union of
ordered modules $\widetilde{\Gamma_{i}}$ over $\Gamma$ such that
$\nicefrac{\widetilde{\Gamma_{i}}}{\Gamma}$ is of finite rational
rank as an abstract abelian group. Since $\widetilde{\Gamma}$ embeds
over $\Gamma$ in an ultrapower of $\Gamma$ and since $\Gamma$ is torsion free
as a module over $\mathbf{Z}\left[\sigma\right]$, we find that $\Gamma=\widetilde{\Gamma}$.

Let $C\subset\Gamma$ be the set of those $\gamma$ with the property
that $v\left(a-b\right)=\gamma$ for a pair of roots $a,b$ of $fx$
in $K$ is definable with parameters. Then $C$ is definable and yet by the last paragraph
remains bounded in cardinality in an ultrapower of $K$; by compactness,
the set $C$ is finite.

(2) For simplicity of notation assume that the family $\mathcal{B}=\left(b_{n}\right)_{n=1}^{\infty}$
is countable. Pick a root $a_{n}$ of $fx$ in the closed ball $b_{n}$.
By (1) the quantity $v\left(a_{n}-a_{m}\right)$ can only assume finitely
many values for $n\neq m$; by passing to a subsequence we may assume
that it is outright constant, say equal to $\lambda$. The closed
ball of valuative radius $\lambda$ around some (any) of the $a_{n}$
is then contained in the intersection and contains a root of $fx$.
\end{proof}

\begin{defn}
Let $K$ be a model of $\VFA$. We say that $K$ is \textit{transformally algebraically maximal} if it admits no immediate transformally algebraic extensions: that is, whenever $\widetilde{K}$ is a model of $\VFA$ transformally algebraic over $K$ with $\widetilde{k} = k$ and $\widetilde{\Gamma} = \Gamma$, then we have $\widetilde{K} = K$.
\end{defn}

\begin{lem}
\label{talgmax-transitive}
%The class of transformally algebraically
%maximal models of $\VFA$ is transitive in triplets. 
%More precisely,
Let $\left(K_{20},K_{21},K_{10}\right)$ be a triplet of models of
$\VFA$; then $K_{20}$ is transformally algebraically maximal
if and only if $K_{21}$ and $K_{10}$ are so.
\end{lem}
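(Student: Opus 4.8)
The plan is to prove both directions by a dévissage along the $\sigma$-invariant convex subgroup $\Gamma_{10}\subseteq\Gamma_{20}$, using the equivalence of categories between models of $\VFA$ equipped with a distinguished transformally prime ideal and triplets (equivalently, pairs) of models of $\VFA$. The mechanism is this: if $\widetilde K_{20}/K_{20}$ is an extension of models of $\VFA$ that does not enlarge the value group — in particular, any immediate extension — then $\Gamma_{10}$ is still a $\sigma$-invariant convex subgroup of $\widetilde\Gamma_{20}=\Gamma_{20}$ and so determines a coarsening, yielding a triplet $(\widetilde K_{21},\widetilde K_{20},\widetilde K_{10})$ over $(K_{21},K_{20},K_{10})$. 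Along it one has a short exact sequence $0\to\Gamma_{10}\to\Gamma_{20}\to\Gamma_{21}\to 0$, the residue field of $\widetilde K_{20}$ is that of $\widetilde K_{10}$, and the residue field of $\widetilde K_{21}$ is the underlying difference field of $\widetilde K_{10}$. Moreover, when $\widetilde K_{20}/K_{20}$ is transformally algebraic, so are $\widetilde K_{21}/K_{21}$ and $\widetilde K_{10}/K_{10}$: the first because $\widetilde K_{21}$ and $K_{21}$ have the same underlying difference fields as $\widetilde K_{20}$ and $K_{20}$, and the second because $\tdeg_{\sigma}\widetilde K_{10}/K_{10}\le\tdeg_{\sigma}\widetilde K_{21}/K_{21}=0$ by Lemma \ref{lem-transformally-Abhyankar}.

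For the ``if'' direction, assume $K_{21}$ and $K_{10}$ are transformally algebraically maximal and let $\widetilde K_{20}/K_{20}$ be an immediate transformally algebraic extension. In the resulting triplet, $\widetilde K_{10}/K_{10}$ has value group $\Gamma_{10}$ and the residue field of $K_{10}$, hence is immediate (and transformally algebraic by the preceding paragraph), so $\widetilde K_{10}=K_{10}$; in particular the residue field of $\widetilde K_{21}$ is again $K_1$. Since also $\widetilde\Gamma_{21}=\Gamma_{21}$, the extension $\widetilde K_{21}/K_{21}$ is immediate and transformally algebraic, whence $\widetilde K_{21}=K_{21}$, and therefore $\widetilde K_{20}=K_{20}$.

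For the ``only if'' direction, assume $K_{20}$ is transformally algebraically maximal. If $\widetilde K_{21}$ were a proper immediate transformally algebraic extension of $K_{21}$, its residue field would be $K_1$ on the nose; setting $\widetilde{\mathcal O}_{10}:=\mathcal O_{10}$ and composing gives a valued field $\widetilde K_{20}$ on the underlying field of $\widetilde K_{21}$, a diagram chase on the value-group sequences forces $\widetilde\Gamma_{20}=\Gamma_{20}$, and the residue field of $\widetilde K_{20}$ is that of $K_{10}$ — so $\widetilde K_{20}/K_{20}$ is a proper immediate transformally algebraic extension, a contradiction. If instead $\widetilde K_{10}$ were a proper immediate transformally algebraic extension of $K_{10}$, with underlying difference field $\widetilde K_1$ (perfect and inversive, being a model of $\VFA$), then by Theorem \ref{transformalhenselization} there is a transformally algebraic extension $\widetilde K_{21}$ of $K_{21}$ reproducing $K_1\hookrightarrow\widetilde K_1$ residually and inducing an isomorphism on value groups; composing $\widetilde K_{21}$ with $\widetilde K_{10}$ as before yields a proper immediate transformally algebraic extension of $K_{20}$, again a contradiction.

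I expect the main obstacle to be the ``only if'' direction for $K_{10}$: the argument must lift a transformally algebraic extension of the residue field to a transformally algebraic extension of the valued field \emph{without any value-group adjunction}, which is exactly what the relative transformal Henselization of Theorem \ref{transformalhenselization} supplies; without the value-group control of Theorem \ref{transformalhenselization}(2) the composed extension need not be immediate. The remaining points — the behaviour of residue fields and value groups under dévissage, and the diagram chases — are routine.
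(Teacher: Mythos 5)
Your proposal is correct and follows essentially the same route as the paper: the same three reductions (for $K_{21}$, compose the given immediate extension with $\mathcal O_{10}$; for $K_{10}$, lift via the relative transformal Henselization of Theorem \ref{transformalhenselization} and glue; for the converse, coarsen and use maximality of $K_{10}$ then of $K_{21}$), with the same appeal to Theorem \ref{transformalhenselization}(2) to guarantee no value-group adjunction. The dévissage bookkeeping you add on transformal transcendence degrees via Lemma \ref{lem-transformally-Abhyankar} is a harmless elaboration of what the paper leaves implicit.
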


\begin{proof}
Let us assume that $K_{21}$ and $K_{10}$ are transformally algebraically
maximal. Let $L_{20}$ be an immediate transformally algebraic extension
of $K_{20}$. Then $L_{10}$ is an immediate transformally algebraic
extension of $K_{10}$, hence $L_{10}=K_{10}$. Thus $L_{21}$ is
an immediate transformally algebraic extension of $K_{21}$ which
gives $L_{21}=K_{21}$.

Let us assume conversely that $K_{20}$ is transformally algebraically
maximal. We will prove that $K_{21}$ and $K_{10}$ are transformally
algebraically maximal. Note $K_{21}$ and $K_{20}$ have the same
underlying difference field. Moreover, an immediate transformally
algebraic extension of $K_{21}$ can be regarded as an immediate transformally
algebraic extension of $K_{20}$. Thus if $K_{20}$ is transformally
algebraically maximal, then $K_{21}$ is transformally algebraically
maximal. Now let $L_{10}$ be an immediate transformally algebraic
extension of $K_{10}$. Let $L_{21}$ be the universal transformally
Henselian extension of $K_{21}$ reproducing the embedding $K_{1}\hookrightarrow L_{1}$
at the level of residue fields. Let $L_{20}$ be obtained by gluing
$L_{21}$ and $L_{10}$. Then by Theorem \ref{transformalhenselization}
the valuation groups of $L_{21}$ and $K_{21}$ coincide. By assumption
the valuation groups of $K_{10}$ and $L_{10}$ coincide; so the valuation
groups of $L_{20}$ and $K_{20}$ coincide. Moreover, there is no
residue field adjunction, since the residue field of $L_{20}$ identifies
with the residue field of $L_{10}$, which is in turn the same as
the residue field of $K_{10}$ and the same as the residue field of
$K_{20}$; so $L_{20}$ is an immediate transformally algebraic extension
of $K_{20}$. Since $K_{20}$ is transformally algebraically maximal
we have $K_{20}=L_{20}$ and hence $K_{10}=L_{10}$.
\end{proof}

The following characterization of the class of transformally algebraically maximal models of $\VFA$ will often be used without further mention:

\begin{lem}
\label{lem-talg-max-characterization}
Let $K$ be a model of $\VFA$. Then for $K$ to be transformally algebraically maximal, it is necessary and sufficient that the following condition is satisfied. Let $\left(b_\alpha\right)$ be a nested family of closed $K$-balls and $fx$ a nonzero difference polynomial over $K$. If the quantity $vfb_{\alpha}$ is strictly increasing in $\alpha$, then the intersection of the family admits a $K$-point.
\end{lem}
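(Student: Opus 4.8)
The plan is to prove both directions by reducing immediate transformally algebraic extensions to pseudo-Cauchy-type approximation data, exactly as in the classical theory of pseudo-limits, but using the transformal Herbrand function as the replacement for the valuation of a polynomial on a ball. First I would observe that by Lemma \ref{twist} (after passing to a Frobenius twist, which changes neither transformal algebraic maximality nor the class of nested families of balls) it suffices to deal with difference polynomials $fx$ admitting a root $a$ in an immediate extension with $f'a\neq 0$. The key translation is: an element $a$ in the intersection of a nested family $\left(b_\alpha\right)$ of closed $K$-balls along which $vfb_\alpha$ is strictly increasing for \emph{every} nonzero difference polynomial $fx$ generates, by Remark \ref{rem-qf-generic} and Definition \ref{generic-definition}(3), an immediate extension $K\left(a\right)_\sigma$; conversely, every immediate transformally algebraic extension is generated by elements of this kind.

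For the \emph{sufficiency} direction (the stated condition implies transformal algebraic maximality): let $\widetilde{K}$ be an immediate transformally algebraic extension of $K$, and pick $a\in\widetilde{K}\setminus K$. After twisting, choose a nonzero $fx\in K\left[x\right]_\sigma$ with $fa=0$, $f'a\neq 0$, of minimal degree. Since $a\notin K$ and $K\left(a\right)_\sigma$ is immediate, $a$ is a pseudo-limit of a pseudo-Cauchy sequence of transformally transcendental type over $K$ (as in Remark \ref{rem-qf-generic}); concretely, there is a nested family $\left(b_\alpha\right)$ of closed $K$-balls with $a$ in the intersection and, for every nonzero $gx\in K\left[x\right]_\sigma$, the transformal Herbrand function of $g$ strictly increasing along the tail of the family — in particular $vgb_\alpha$ is strictly increasing for $g$ not vanishing at $a$. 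Apply the hypothesis with this family and with $g=f$: the intersection contains a point $a_0\in K$. But then $v\left(a-a_0\right)$ exceeds the valuative radii of all $b_\alpha$, forcing $fa_0=0$ by continuity of the Herbrand function (Lemma \ref{lem-transformal-herbarnd-concave}); minimality of $\deg f$ and $f'a\neq 0$ then give $a=a_0\in K$, a contradiction. Hence no proper immediate transformally algebraic extension exists.

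For the \emph{necessity} direction: assume $K$ is transformally algebraically maximal, let $\left(b_\alpha\right)$ be a nested family of closed $K$-balls and $fx$ a nonzero difference polynomial with $vfb_\alpha$ strictly increasing in $\alpha$, and suppose for contradiction the intersection has no $K$-point. Then in a sufficiently saturated model of $\VFA$ over $K$ there is an element $a$ in the intersection; by saturation we may take the family long enough that $a$ is a genuine pseudo-limit of transcendental type, so $vga=vgb_\alpha$ for $\alpha$ large, for every $gx\in K\left[x\right]_\sigma$. This means $k_{K\left(a\right)_\sigma}=k$ and, using Corollary \ref{cor-transformally-alg-no-valgp-adjunction} together with the strict increase of $vfb_\alpha$ (which prevents any new value-group element), $\Gamma_{K\left(a\right)_\sigma}=\Gamma$; thus $K\left(a\right)_\sigma$ is an immediate extension of $K$. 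It is transformally algebraic over $K$ provided $a$ is a root of some nonzero difference polynomial over $K$ — and here is where the strict increase of $vfb_\alpha$ is used: it forces $vfa=\infty$, i.e. $fa=0$, by continuity and monotonicity of the transformal Herbrand function of $f$ above $a$ (Lemma \ref{lem-transformal-herbarnd-concave}). So $K\left(a\right)_\sigma$ is a proper immediate transformally algebraic extension of $K$, contradicting maximality; hence the intersection must already contain a $K$-point.

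The main obstacle is the bookkeeping in the necessity direction: arranging the nested family to be ``long enough'' so that a point $a$ in the intersection really behaves like a pseudo-limit of transcendental type (so that $K\left(a\right)_\sigma$ is immediate and its quantifier-free type over $K$ is pinned down), and verifying that the strict monotonicity of $vfb_\alpha$ both (i) kills any potential value-group or residue-field growth and (ii) upgrades to $fa=0$ rather than merely $vfa$ large. This is precisely where Lemma \ref{lem-transformal-herbarnd-concave} (continuity, monotonicity, piecewise transformal affineness with slopes in $\mathbf{N}\left[\sigma\right]$) and the transfinite exhaustion argument from the proof of Proposition \ref{rootinaball} do the work; I would invoke them rather than redo the exhaustion.
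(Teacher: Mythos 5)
Your overall architecture (contrapositives in both directions, using the cut determined by an element of an immediate extension) matches the paper's proof, but the argument as written contains a genuine gap in the necessity direction and several mislabelings.

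The central problem is in the necessity direction, where you write that the strict increase of $vfb_{\alpha}$ ``forces $vfa=\infty$, i.e. $fa=0$, by continuity and monotonicity of the transformal Herbrand function of $f$ above $a$.'' This inference is not valid. An arbitrary element $a$ of the intersection need not be a root of $f$: if $b$ is a closed ball contained in the intersection, the generic element $a$ of $b$ satisfies $vfa = vfb < \infty$, even though $vfb_{\alpha}$ is strictly increasing along the family. The strict increase only tells you the values $vfb_{\alpha}$ do not stabilize; it does not force $vfa = \infty$. The extra content needed here is precisely Proposition~\ref{rootinaball}: if the transformal Herbrand function of $f$ above a ball is strictly increasing, then after passing to a suitable extension of $K$, one can produce a genuine root of $f$ in that ball, by a Newton--Hensel style transfinite exhaustion. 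The paper's proof invokes this proposition to produce a root $a$ of $fx$ (chosen of minimal degree so that $vgb_{\alpha}$ eventually stabilizes for $g$ of smaller degree), and then verifies immediacy of $K(a)_{\sigma}$ following the Kaplansky-style argument from \cite{azgin2010valued}. You gesture at this in your closing paragraph, but the main body of your argument bypasses it, and ``continuity and monotonicity'' alone cannot substitute for the existence of a root.

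There are also terminological errors that reflect a genuine confusion. You repeatedly call the relevant element $a$ a pseudo-limit ``of transformally transcendental type.'' That is exactly backwards: the point of Lemma~\ref{lem-talg-max-characterization} is to characterize maximality for transformally \emph{algebraic} immediate extensions, and an element $a$ with $fa=0$ for a nonzero $fx$ is a pseudo-limit of algebraic type. The transcendental type (as in Remark~\ref{rem-qf-generic} and Definition~\ref{generic-definition}(3)) is the case where, for \emph{every} nonzero $g$, the Herbrand function eventually stabilizes; that is precisely the situation not covered by this lemma. Relatedly, in the sufficiency direction you assert ``$vgb_{\alpha}$ is strictly increasing for $g$ \emph{not} vanishing at $a$,'' which is reversed: for $g$ with $ga\neq 0$, the quantity $vgb_{\alpha}$ stabilizes at $vga$; it is only for $g$ with $ga=0$ that $vgb_{\alpha}$ strictly increases. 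Your subsequent step ``$fa_{0}=0$ by continuity'' is also not justified (and unnecessary): the immediate contradiction is that, since $K(a)_{\sigma}$ is immediate and $a\notin K$, the cut $C(a)=\{v(a-b):b\in K\}$ has no maximal element, so the intersection of all closed $K$-balls containing $a$ cannot contain any $K$-point; one need not mention $fa_{0}$ at all.
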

\begin{proof}
First assume that there is a difference polynomial $fx$ and a nested family of closed balls $\left(b_{\alpha}\right)$ as in the statement. We may choose $fx$ so that whenever $gx$ is a difference polynomial of strictly smaller degree, then the quantity $vgb_{\alpha}$ is constant for $\alpha$ sufficiently large. Using Proposition \ref{rootinaball} we find an extension $\widetilde{K}$ of $K$ and a $\widetilde{K}$-root $a$ of $fx$ in the intersection. One then checks as in \cite{azgin2010valued}, Lemma 5.3 that $K\left(a\right)_{\sigma}$ is an immediate extension of $K$ .

Assume conversely that $K$ admits an immediate transformally algebraic extension $\widetilde{K}$ and fix an element $a \in \widetilde{K}$ lying outside $K$. Since $\widetilde{K}$ is an immediate extension of $K$, the nested family $\left(b_\alpha\right)$ of closed $K$-balls containing $a$ has no $K$-rational points in the intersection. Moreover, since $\widetilde{K}$ is transformally algebraic over $K$, we have $fa=0$ for some difference polynomial; by Lemma \ref{lem-transformal-herbarnd-concave} the quantity $vfb_{\alpha}$ is strictly increasing in $\alpha$, contrary to our assumption.
\end{proof}

\textbf{Tropical Roots and the Newton Polygon.} Let $K$ be a model of $\VFA$ and let $fx = a_{\nu}x^{\nu} + \ldots + a_0$ be a difference polynomial over $K$. For each $i$ let us put $va_i = \beta_i$.

\begin{defn}
\label{defn-tropical-root}
Notation as above. We say that $\gamma \in \Gamma \otimes \mathbf{Q}\left(\sigma\right)$ is a \textit{tropical root} of $fx$ if there are distinct $i \neq j \in \mathbf{N}\left[\sigma\right]$ such that, for all $k \in \mathbf{N}\left[\sigma\right]$, we have that $\beta_i + i\gamma = \beta_j + j\gamma \leq \beta_k + k\gamma$. If the constant term of $fx$ is zero then we formally take $\infty$ to be a tropical root as well.
\end{defn}

The Newton polygon gives a geometric interpretation of the set of tropical roots as we now explain. For this, let us tacitly assume that $a_{\nu}a_0$ is nonzero and that $\nu \geq 1$. The \textit{Newton polygon} of $fx$ is defined in analogy with the usual Newton polygon of an algebraic polynomial over a valued field. Namely it is the lower boundary of the convex hull of the (finite) set of points $p_{\mu}$ of the form $\left(\beta_{\mu}, \mu\right)$ where $\beta_{\mu} = va_{\mu}$ with $a_{\mu} \neq 0$ inside the space $\Gamma \otimes \mathbf{Q}\left(\sigma\right) \times {\mathbf{N}\left[\sigma\right]}$.

The \textit{slopes} of the Newton polygon are defined and calculated in analogy with the ordinary settings of valued fields, and take values in the ordered group $\Gamma \otimes \mathbf{Q}\left(\sigma\right)$. Unwinding definitions we see that $\gamma$ is a tropical root of $fx$ precisely in the event that $-\gamma$ occurs as a slope in the Newton polygon of $fx$.

The following is an immediate consequence of the definitions and the ultrametric inequality:

\begin{lem}
\label{lem-Newton-Polygon-roots}
Let $K$ be a model of $\VFA$ and let $fx$ be a nonconstant difference polynomial over $K$ with a nonzero constant term. Let $c\in K$ be a root of $f$; then $vc$ is a tropical root of $fx$.
\end{lem}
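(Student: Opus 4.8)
The plan is to read the claim directly off the ultrametric inequality, exactly as the remark preceding the statement advertises. Write $fx = \sum_{\mu \in I} a_\mu x^\mu$ with $I \subset \mathbf{N}\left[\sigma\right]$ a finite set, $a_\mu \neq 0$ for $\mu \in I$, and set $\beta_\mu = va_\mu$; extend the notation by $\beta_k = \infty$ for $k \in \mathbf{N}\left[\sigma\right] \setminus I$. Since the constant term $a_0$ is nonzero we have $0 \in I$, and since $fx$ is nonconstant there is some $0 < \mu \in I$. First I would observe that the root $c$ cannot be zero, since otherwise $f\left(c\right) = a_0 \neq 0$; hence $\gamma := vc$ is a genuine element of $\Gamma \subseteq \Gamma \otimes \mathbf{Q}\left(\sigma\right)$, and $v\left(a_\mu c^\mu\right) = \beta_\mu + \mu\gamma$ for each $\mu \in I$, where $\mu$ acts on $\Gamma$ through the ordered transformal module structure.

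Next I would invoke the strong triangle inequality. Suppose toward a contradiction that the minimum $m = \min_{\mu \in I}\left(\beta_\mu + \mu\gamma\right)$ is attained at a single index $\mu_0$. Then $v\left(a_{\mu_0} c^{\mu_0}\right) = m$ is strictly smaller than $v\left(a_\mu c^\mu\right)$ for every other $\mu \in I$, so the ultrametric inequality forces $v\left(f\left(c\right)\right) = v\left(\sum_{\mu \in I} a_\mu c^\mu\right) = m < \infty$, contradicting $f\left(c\right) = 0$. Therefore the minimum is attained at two distinct indices $i \neq j$, i.e.\ $\beta_i + i\gamma = \beta_j + j\gamma = m \leq \beta_k + k\gamma$ for all $k \in \mathbf{N}\left[\sigma\right]$ (trivially so when $k \notin I$, where the term is absent and $\beta_k = \infty$). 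By Definition \ref{defn-tropical-root} this is precisely the assertion that $\gamma = vc$ is a tropical root of $fx$; equivalently, $-\gamma$ occurs as a slope of the Newton polygon of $fx$.

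I do not expect any real obstacle here: the only points needing a word of care are the bookkeeping for indices $k \notin I$ in the tropical-root inequality (handled by the convention $v0 = \infty$) and the remark that $c \neq 0$, so that $vc$ lies in $\Gamma$ rather than being $\infty$. The $\omega$-increasing hypothesis and the inversivity of $K$ are not used; the argument is valid for an arbitrary difference polynomial over any valued difference field, which is why the lemma is recorded as an immediate consequence of the definitions.
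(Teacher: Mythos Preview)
Your argument is correct and is exactly what the paper has in mind: the lemma is stated as ``an immediate consequence of the definitions and the ultrametric inequality'' with no further proof given, and your contradiction via the strong triangle inequality (unique minimum would force $v f(c) < \infty$) is the intended one-line justification. The bookkeeping remarks about $c \neq 0$ and $\beta_k = \infty$ for $k \notin I$ are fine and do not depart from the paper's treatment.
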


This motivates the following definition:
\begin{defn}
\label{defn-t-newtonian}
Let $K$ be a model of $\VFA$.

We say that $K$ is \textit{transformally Newtonian} if whenever $fx$ is a difference polynomial and $\gamma \in \Gamma \otimes \mathbf{Q}\left(\sigma\right)$ is a tropical root of $fx$, then there is a $K$-root of $fx$ of valuation $\gamma$.
\end{defn}

Some remarks on Definition \ref{defn-t-newtonian}

\begin{rem}
\label{rem-newton-basic}
\begin{enumerate}
    \item Let $K$ be a model of $\VFA$ and $\widetilde{K}$ an extension of $K$. If $\widetilde{K}$ is transformally Newtonian and $K$ is transformally algebraically closed in $\widetilde{K}$, then $K$ is transformally Newtonian.
    \item Let $fx$ be a difference polynomial over $K$ and let $t \in K$ be nonzero, of      valuation $\alpha$. Let $T$ be the set of tropical roots of $fx$. Then the set of tropical roots of $f\left(tx\right)$ is the translate $-{\alpha} + T$. Moreover, the set of tropical roots of $f\left(x^{p^{n}\sigma^{m}}\right)$ is $p^{-n}\sigma^{-m}T$. Furthermore, the tropical roots of $fx$ over the Frobenius twist $\tau = \sigma \circ \phi^{m}$ coincide with $p^{-m}T$. Thus $K$ and its Frobenius twists are simultaneously transformally Newtonian.
    \item Let $K$ be a model of $\VFA$ which is transformally Newtonian. Then $K$ is transformally Henselian. One can deduce this from Proposition \ref{newton-max} below, but it is instructive to give a direct proof. Using (2), it is enough to show that $K$ obeys the transformal analogue of Hensel's lemma; as in the proof of Lemma \ref{newton}, we reduce to the case of a difference polynomial of the form $1 + x + t_2x^2 + \ldots + t_{\nu}x^{\mu}$ where the $t_{\mu}$ have coefficients in the maximal ideal. In this situation, the residue image of $fx$ is linear, so the Hensel axiom simply asks for an integral root. The Newton polygon of $fx$ indeed has a vertical segment, and all other slopes are positive; thus the Newton axiom guarantees the existence of an integral root.
\end{enumerate}
\end{rem}

\begin{prop}
\label{newton-max}
Let $K$ be a model of $\VFA$. Let us assume that the following conditions are satisfied:
\begin{enumerate}
    \item The field $K$ is transformally algebraically maximal
    \item Let $fx$ be a nonconstant difference polynomial over the residue field $k$ of $K$. If $fx$ has a nonzero root in some difference field extension of $k$, then there is already a nonzero root over $k$
    \item The valuation group $\Gamma = \Gamma \otimes \mathbf{Q}\left(\sigma\right)$ is transformally divisible
\end{enumerate}
Then $K$ is transformally Newtonian and itself obeys (2). Conversely, these properties characterize the transformally Newtonian models of $\VFA$.
\end{prop}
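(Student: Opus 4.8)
The plan is to prove the equivalence (transformally Newtonian) $\Leftrightarrow$ (1)$\wedge$(2)$\wedge$(3) by a back‑and‑forth argument, the forward direction resting on a transfinite Newton iteration and the reverse on the material of Section~\ref{sec:The-Transformal-Herbrand}; throughout one may freely pass to a Frobenius twist, which affects neither the hypotheses nor the conclusion (Remark~\ref{rem-newton-basic}(2), Lemma~\ref{twist}). Two reductions for the notion of a tropical root are used repeatedly: if the constant term of $fx$ vanishes then $0$ is a $K$-root of valuation $\infty$, so we may assume the constant term is nonzero; and since $\Gamma$ is the full value group and $\gamma\in\Gamma$ by~(3), after $x\mapsto tx$ with $vt=\gamma$ and division by a scalar of valuation $\min_\mu(\beta_\mu+\mu\gamma)$ the task of realizing $\gamma$ becomes the \emph{normalized problem}: given $fx\in\mathcal{O}_K\left[x\right]_\sigma$ with $\min_\mu v(a_\mu)=0$ attained on at least two exponents, produce a unit $K$-root of $fx$.

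\emph{Proof of $(1)\wedge(2)\wedge(3)\Rightarrow$ transformally Newtonian.} First, $K$ is algebraically and transformally Henselian: $K^{\mathrm h}$ is an immediate transformally algebraic extension so~(1) forces $K=K^{\mathrm h}$, and given $a\in\mathcal{O}_K$ with $vfa>0$, $vf'a=0$ the iteration of Lemma~\ref{newton} yields $a_0=a,a_1,\dots$ with $vf'a_n=0$, $v(a_{n+1}-a_n)=vfa_n$ and $vfa_{n+1}\ge(vfa_n)^\sigma>vfa_n$; the closed balls of radius $vfa_n$ about $a_n$ are nested with generic value $vfa_n$ strictly increasing, so by Lemma~\ref{lem-talg-max-characterization} and~(1) their intersection has a $K$-point and one continues transfinitely, the process terminating because $(vfa_\alpha)$ is a strictly increasing transfinite sequence in $\Gamma\cup\{\infty\}$. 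Next, in the normalized problem the residual difference polynomial $\bar fx=\sum_\mu\overline{a_\mu}x^\mu\in k\left[x\right]_\sigma$ has at least two monomials, and any such difference polynomial has a nonzero root in some difference field extension (a difference‑algebraic fact: when $\bar f$ genuinely involves $\sigma$ one adjoins a transcendental solution, extending $\sigma$ by the forward shift; in the remaining ``effectively algebraic'' case a nonzero root lives in an algebraic closure of $k$, using inversiveness of $k$ to descend from a solution of $x^{\sigma^r}=\mathrm{const}$ to $x$), so by~(2) it has a nonzero root $\bar c_0\in k^\times$. Finally, lift $\bar c_0$ to a unit $c_0\in\mathcal{O}_K^\times$ (so $vfc_0>0$) and iterate: given a unit $c_n$ of residue $\bar c_0$ with $vfc_n>0$, if $vf'c_n=0$ the transformal Hensel property gives a root of $f$ of residue $\bar c_0$, hence a unit, and we are done; otherwise, since $\bar f\neq0$ some transformal derivative $\bar f_\mu(\bar c_0)$ with $\mu\neq0$ is nonzero, so the Newton polygon of $f(c_n+x)$ — whose coefficient of $x^\mu$ is $f_\mu(c_n)$, of residue $\bar f_\mu(\bar c_0)$ — has a vertex of height $0$ at a positive exponent while its constant term $f(c_n)$ has positive valuation, so its first edge has negative slope $-\gamma_{n+1}$ with $0<\gamma_{n+1}\in\Gamma$ and associated value $vfc_n$; normalizing $f(c_n+x)$ above this edge and invoking~(2) again yields $\delta_{n+1}$ with $v\delta_{n+1}=\gamma_{n+1}$ and $vf(c_n+\delta_{n+1})>vfc_n$, and we set $c_{n+1}=c_n+\delta_{n+1}$. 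If this terminates we are done; otherwise the $c_n$ pseudo‑converge and, reorganizing the induced balls into a genuinely nested family as in the proof of Theorem~\ref{Scatteredness}, Lemma~\ref{lem-talg-max-characterization} and~(1) provide a $K$-point $c$ with $vfc$ above every $vfc_n$, from which one continues transfinitely; strict monotonicity of $vf$ along this sequence forces termination at a unit root.

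That $K$ itself obeys~(2) then follows from transformal Newtonianity and~(3): if $fx$ over $K$ has a nonzero root $a$ in an extension, then $a$ is transformally algebraic over $K$, so $va\in\Gamma$ by Corollary~\ref{cor-transformally-alg-no-valgp-adjunction} and~(3), and the ultrametric inequality makes $va$ a tropical root of $fx$ (Lemma~\ref{lem-Newton-Polygon-roots}); hence $fx$ has a $K$-root of valuation $va$, which is nonzero.

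\emph{Proof of transformally Newtonian $\Rightarrow(1)\wedge(2)\wedge(3)$.} Applying Definition~\ref{defn-t-newtonian} to the binomials $x^{\nu_+}-t\,x^{\nu_-}$ (with $vt=\gamma$ and $\nu=\nu_+-\nu_-$) produces $K$-roots whose valuations witness $\nu\Gamma=\Gamma$ for every nonzero $\nu\in\mathbf{Z}\left[\sigma\right]$, which is~(3). For~(2): a difference polynomial over $k$ with a nonzero root in an extension has at least two monomials; lifting it to $fx\in\mathcal{O}_K\left[x\right]_\sigma$ whose coefficients of valuation $0$ are exactly those monomials, $0$ becomes a tropical root of $fx$, so transformal Newtonianity yields a unit $K$-root whose residue is the sought nonzero root in $k$. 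For~(1): were $K$ not transformally algebraically maximal, Lemma~\ref{lem-talg-max-characterization} would supply a nested family $(b_\alpha)$ of closed $K$-balls and a nonzero $fx$ with $vf(b_\alpha)$ strictly increasing and $\bigcap_\alpha b_\alpha$ without a $K$-point; concavity (Lemma~\ref{lem-transformal-herbarnd-concave}) then makes the transformal Herbrand function of $fx$ above each $b_\alpha$ strictly increasing, so $fx$ has a root in each $b_\alpha$ in some extension (Proposition~\ref{rootinaball}), of valuation a tropical root (Lemma~\ref{lem-Newton-Polygon-roots}), hence a $K$-root in each $b_\alpha$ by transformal Newtonianity; Theorem~\ref{Scatteredness}(2) then puts a $K$-root into $\bigcap_\alpha b_\alpha$, a contradiction. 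The main obstacle is the last step of the forward implication: taming the (transfinite) Newton iteration — in particular reorganizing the successive approximants into a properly nested family whose generic $f$-values strictly increase — together with the difference‑algebraic input that a non‑monomial difference polynomial over an inversive difference field has a nonzero root in some extension.
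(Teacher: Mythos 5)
Your proof is correct and follows essentially the same route as the paper's: scatteredness (Theorem~\ref{Scatteredness}) plus Lemma~\ref{lem-Newton-Polygon-roots} for transformal algebraic maximality, binomials for transformal divisibility, and the transfinite Newton iteration from the proof of Proposition~\ref{rootinaball} — adapted with hypothesis~(2) replacing $k\models\ACFA$ and hypothesis~(1) replacing spherical completeness — for the converse. You make explicit the iteration that the paper compresses into the phrase ``arguing as in the proof of Proposition~\ref{rootinaball},'' and your direct treatment of~(2) (lift the residual polynomial, observe $0$ is a tropical root, apply Newtonianity) is a small simplification of the paper's detour through the lifted residue field; in both spots you, like the paper, quietly rely on the difference-algebraic fact that a nonconstant difference polynomial over an inversive difference field with nonzero constant term acquires a nonzero root in some extension — your parenthetical justification of that fact (``extend $\sigma$ by the forward shift'') is slightly informal, since the naive shift on a transcendental solution need not be a field embedding, but this is a standard point that the paper elides as well.
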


\begin{proof}
We begin by observing that if $K$ is transformally Newtonian then it obeys (2). Condition (2) can also be equivalently rephrased as the requirement that if $fx$ has at least two nonzero coefficients, then it admits a $K$-root \footnote{Use the equivalence between $\ACFA$ and ultraproduct of algebraically closed Frobenius difference fields.}. In this case, $fx$ also has a tropical root $\gamma \neq \infty$, hence a nonzero root in $K$ as $K$ is transformally Newtonian.

We now turn to the proof. Let us assume that $K$ is transformally Newtonian. First we show that $K$ is transformally algebraically maximal. To see this let $\left(b_{\alpha}\right)$ be a nested family of closed balls in $K$ and $fx$ a difference polynomial with $vfb_{\alpha}$ strictly increasing; we must show that the intersection admits a point over $K$. 

By scatteredness (Theorem \ref{Scatteredness}) it is enough to show that $fx$ admits a $K$-root in each of the balls $b_{\alpha}$. So fix some $\alpha_0$; normalizing, we may take $b_{\alpha_0} = \mathcal{O}$. By Proposition \ref{rootinaball} there is a transformally algebraic extension $\widetilde{K}$ of $K$ and a $\widetilde{K}$-root of $fx$ in $\widetilde{\mathcal{O}}$; using Lemma \ref{lem-Newton-Polygon-roots} this root is of nonnegative valuation $\gamma$. Since $K$ is transformally Newtonian, we can find a $K$-root of $fx$ of valuation $\gamma$, which must then lie in $\mathcal{O}$.

By considering difference polynomials of the form $x^{\nu} - t$ for $t \in K$ nonzero and $0 \neq \nu \in \mathbf{N}\left[\sigma\right]$ we find that $\Gamma = \Gamma \otimes \mathbf{Q}\left(\sigma\right)$ is transformally divisible. Since $K$ is transformally algebraically maximal, it is in particular transformally Henselian; thus $k$ identifies isomorphically with a difference subfield of $K$, relatively transformally algebraically closed in $K$. Since $K$ obeys (2), the same is true of $k$.

Assume conversely that (1), (2) and (3) hold; we show that $K$ is transformally Newtonian. Let $fx$ be a difference polynomial and $\gamma$ a tropical root; we seek a root of $fx$ of valuation $\gamma$. By a change of variables of the form $x=ty$, using the fact that $\Gamma$ is transformally divisible, we may assume that $\gamma = 0$. Multiplying all coefficients of $fx$ by a nonzero scalar won't change the set of tropical roots; so assume that $fx$ has coefficients in $\mathcal{O}$ with at least one of valuation zero. In this situation, there will in fact be at least \textit{two} coefficients of valuation zero, since the Newton polygon admits a vertical line segment. Using our assumption on $k$ we can find $a \in \mathcal{O}$ with $vfa >0$ and $va=0$. Since $K$ is transformally algebraically maximal, arguing as in the proof of Proposition \ref{rootinaball}, there is a root $c$ of $fx$ in $K$ whose residue class coincides with that of $a$; in particular $vc = 0$. Thus $K$ is transformally Newtonian.

\end{proof}
\begin{cor}
Let $K$ be a model of $\VFA$. Then there is a model $\widetilde{K}$ of $\VFA$ over $K$ which is transformally Newtonian and which is transformally algebraic over $K$.
\end{cor}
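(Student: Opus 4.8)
The plan is to reduce to the characterization of transformally Newtonian fields supplied by Proposition~\ref{newton-max}: it is enough to produce a transformally algebraic extension $\widetilde{K}$ of $K$ that (i) is transformally algebraically maximal, (ii) has residue field in which every difference polynomial possessing a nonzero root in \emph{some} difference field extension already has a nonzero root, and (iii) has transformally divisible value group. I would build $\widetilde{K}$ in three phases, each producing only transformally algebraic extensions (so the composite is transformally algebraic over $K$ by transitivity of transformal algebraicity and closure under directed unions); the phases are ordered so that the features secured earlier survive the later steps, which is the whole point of the arrangement. Throughout I use that an immediate extension alters neither the residue field nor the value group, and that $\omega\OGA$-modules are torsion-free over $\mathbf{Z}[\sigma]$ by the $\omega$-increasing axiom, so that $\mathbf{Z}[\sigma]$-divisibility of an ordered transformal module is the same as transformal divisibility.

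\emph{Phase 1 (value group).} Put $L_{0}=K$ and let $L_{n+1}$ be the perfect inversive hull of the difference field obtained from $L_{n}$ by adjoining, for every $\gamma\in\Gamma_{L_{n}}$ and every nonzero $\nu\in\mathbf{Z}[\sigma]$, a nonzero root of the binomial difference polynomial $x^{\nu_{+}}-t_{\gamma}x^{\nu_{-}}$, where $\nu=\nu_{+}-\nu_{-}$ is the decomposition into positive and negative parts and $vt_{\gamma}=\gamma$. Such a root is transformally algebraic over $L_{n}$, exists in a suitable extension (apply Proposition~\ref{rootinaball} to the ball of valuative radius $\gamma/\nu$ about the origin, where the Herbrand function is strictly increasing; dividing through by the common monomial factor forces a nonzero root to have valuation exactly $\gamma/\nu$), and by Corollary~\ref{cor-transformally-alg-no-valgp-adjunction} its valuation lies in $\Gamma_{L_{n}}\otimes\mathbf{Q}(\sigma)$. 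Set $L=\bigcup_{n}L_{n}$; then $\Gamma_{L}$ is $\nu$-divisible for every nonzero $\nu\in\mathbf{Z}[\sigma]$, hence transformally divisible, and $L/K$ is transformally algebraic.

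\emph{Phase 2 (residue field).} Put $M_{0}=L$ and let $M_{n+1}$ be the transformally Henselian, transformally algebraic extension of $M_{n}$ supplied by Theorem~\ref{transformalhenselization} with prescribed residue extension the perfect inversive hull of the difference field obtained from $k_{M_{n}}$ by adjoining a nonzero root of every difference polynomial over $k_{M_{n}}$ that has a nonzero root somewhere; this residue extension is transformally algebraic over $k_{M_{n}}$ because any nonzero root of a difference polynomial is transformally algebraic. By Theorem~\ref{transformalhenselization}(2) the value group is unchanged. Set $M=\bigcup_{n}M_{n}$: then $\Gamma_{M}=\Gamma_{L}$ remains transformally divisible, and $k_{M}$ satisfies (ii), since a difference polynomial over $k_{M}$ has its coefficients in some $k_{M_{n}}$ and, if it has a nonzero root anywhere, one was adjoined at stage $n+1$. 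Finally, by Zorn's lemma choose a maximal immediate transformally algebraic extension $\widetilde{K}$ of $M$ (the union of a chain of such extensions is again immediate and transformally algebraic, and cardinalities can be bounded so that the poset is a set). Any proper immediate transformally algebraic extension of $\widetilde{K}$ would be one of $M$, so $\widetilde{K}$ is transformally algebraically maximal; and as $\widetilde{K}/M$ is immediate, conditions (ii) and (iii) persist. Thus $\widetilde{K}$ satisfies (i)--(iii) and is transformally algebraic over $K$, so by Proposition~\ref{newton-max} it is transformally Newtonian. (Alternatively, in the last step take the relative transformal algebraic closure of $M$ inside a spherically complete immediate extension of $M$ and apply Lemma~\ref{lem-talg-max-characterization}.)

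The only genuinely non-formal point is the value-group step of Phase 1: realizing division by an \emph{arbitrary} nonzero element of $\mathbf{Z}[\sigma]$ --- not just of $\mathbf{N}[\sigma]$ --- inside a transformally algebraic extension, via the binomial difference polynomial and the identification of its nonzero roots' valuation. Everything else is routine: transitivity of transformal algebraicity, the stability of transformal divisibility under the operations used (Theorem~\ref{transformalhenselization} keeps the value group, the Zorn step is immediate), the stability of the residue condition (Phase 1 and the Zorn step never shrink the set of available roots), and directed-limit bookkeeping.
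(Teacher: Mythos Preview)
Your Phases~2 and~3 are fine, but Phase~1 has a gap. When $\nu\in\mathbf{Z}[\sigma]$ has coefficients of both signs, its positive and negative parts $\nu_+,\nu_-\in\mathbf{N}[\sigma]$ have \emph{disjoint support}, so in the difference polynomial ring $K[x]_\sigma=K[x,x^\sigma,x^{\sigma^2},\ldots]$ the monomials $x^{\nu_+}$ and $x^{\nu_-}$ share no nontrivial common factor: the ``common monomial factor'' you propose to divide out is $1$. The polynomial $x^{\nu_+}-t_\gamma x^{\nu_-}$ therefore vanishes at $0$, and Proposition~\ref{rootinaball}, applied to any ball about the origin, may simply hand you back the root $0$; nothing in your argument produces a \emph{nonzero} root. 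For a concrete instance take $\nu=\sigma-2$: the polynomial is $x^\sigma - t_\gamma x^2$, and $x$ does not divide $x^\sigma$ in $K[x]_\sigma$ (since $\sigma-1\notin\mathbf{N}[\sigma]$). Nor does $\mathbf{N}[\sigma]$-divisibility of the value group suffice: $\sigma-2$ divides no nonzero element of $\mathbf{N}[\sigma]$, so you genuinely need to handle such $\nu$.

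The paper avoids this entirely by a different and much shorter route. Rather than directly building a transformally algebraic extension with divisible value group, it first constructs an \emph{arbitrary} transformally Newtonian extension $K^\star$ --- using Lemma~\ref{valuation-group-increase} (which adjoins transformally \emph{transcendental} generics to make the value group divisible), then strict transformal Henselization and spherical completion, so that Proposition~\ref{newton-max} applies --- and then simply takes the relative transformal algebraic closure of $K$ inside $K^\star$, which inherits transformal Newtonianity by Remark~\ref{rem-newton-basic}(1). Your gap can be patched along these lines: $\gamma/\nu$ is a tropical root of $x^{\nu_+}-t_\gamma x^{\nu_-}$, so a transformally Newtonian extension contains a root of that valuation, necessarily nonzero and transformally algebraic over $K$. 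But once you have invoked such an extension you have already carried out the paper's argument, and your Phases~2 and~3 become redundant.
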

\begin{proof}
It is enough to find an arbitrary transformally Newtonian extension of $K$, since the relative transformal algebraic closure of $K$ inside such an extension is transformally Newtonian.
As in the proof of Proposotion \ref{rootinaball}, we may assume that $K$ is strictly transformally Henselian, spherically complete, and obeys $\Gamma = \Gamma \otimes \mathbf{Q}\left(\sigma\right)$, in which case Proposition \ref{newton-max} applies.
\end{proof}
\begin{cor}
\label{newton-generic}Let $K$ be a model of $\VFA$ which
is transformally Newtonian. Let $L$ be a transformally Henselian
model of $\VFA$ over $K$. Let us assume that no element of
$l$ is transformally algebraic over $k$ unless it already lies in
$k$; then every element of $L$ is generic over $K$. Moreover, assume
$K$ is nontrivially valued and $L$ is generated over $K$ as an
algebraically closed and transformally Henselian field by a single
element; then $L$ embeds over $K$ in an ultrapower of $K$.
\end{cor}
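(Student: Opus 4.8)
Since $K$ is transformally Newtonian, Proposition \ref{newton-max} tells us that $K$ is transformally algebraically maximal, that $\Gamma = \Gamma \otimes \mathbf{Q}\left(\sigma\right)$, and that a difference polynomial over $k$ with a nonzero root in an extension already has a nonzero root over $k$. These are the only properties of $K$ used for the first assertion.

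\textbf{Every element of $L \setminus K$ is generic over $K$.} Fix $a \in L \setminus K$; we find an affine change of variables placing $a$ in one of the cases of Definition \ref{generic-definition}. If $v\left(a-b\right) \notin \Gamma \otimes \mathbf{Q}\left(\sigma\right)$ for some $b \in K$, replacing $a$ by $a - b$ puts us in case (2). Otherwise $v\left(a-b\right) \in \Gamma$ for all $b \in K$. If $\left\{ v\left(a-b\right) : b \in K \right\}$ has a maximum $\gamma_0 = v\left(a - b_0\right)$, pick $t_0 \in K$ with $vt_0 = \gamma_0$ and replace $a$ by $\left(a-b_0\right)/t_0$; then $va = 0$, and $\bar a \in l$ must be transformally transcendental over $k$, since otherwise the hypothesis on $l$ would give $\bar a \in k$, producing a $K$-point closer to $a$ than $b_0$ — so we are in case (1). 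If there is no maximum, take a pseudo-Cauchy sequence $\left(b_\rho\right)$ in $K$ with pseudo-limit $a$ and no pseudo-limit in $K$, and let $b^{(\rho)}$ be the closed $K$-ball of valuative radius $v\left(a - b_\rho\right)$ about $b_\rho$; then $\bigcap_\rho b^{(\rho)}$ contains $a$ but no element of $K$. For any nonzero difference polynomial $fx$ over $K$, the values $vfb^{(\rho)}$ are nondecreasing (monotonicity and continuity of the transformal Herbrand function, Lemma \ref{lem-transformal-herbarnd-concave}); if they did not stabilize, a cofinal strictly increasing subfamily together with transformal algebraic maximality of $K$ and Lemma \ref{lem-talg-max-characterization} would force a $K$-point into the intersection — a contradiction. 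Hence $vfb^{(\rho)}$ stabilizes for every $fx$, so we are in case (3).

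\textbf{The ultrapower embedding.} Assume now $K$ is nontrivially valued and $L$ is the least algebraically closed transformally Henselian subfield of $L$ containing $K$ and a single element $a$. By the above, $a$ is generic over $K$; by Corollary \ref{cor-generic-complete-type} its quantifier free type over $K$ is complete and $K\left(a\right)_\sigma$ has no nontrivial $\text{h}$-finite $\sigma$-invariant separable extension, so by Corollary \ref{transformalHenselizationalg} it has a unique algebraically closed transformally Henselian hull, which is $L$. Enlarge $K$ to a model $K^+ \supseteq K$ of $\VFA$ satisfying the hypotheses of Remark \ref{generic-ultrapower} — algebraically closed, transformally Henselian, with valuation group a model of $\widetilde{\omega\OGA}$ and residue field a model of $\ACFA$ — by a standard tower construction (alternating algebraic closure, transformal Henselization, Lemma \ref{valuation-group-increase}, and enlargement of the residue field). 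Pick $a^+$ realizing a generic extension to $K^+$ of the complete quantifier free type of $a$; then $K\left(a\right)_\sigma \cong K\left(a^+\right)_\sigma$ over $K$, and $L$ embeds over $K$ into the unique algebraically closed transformally Henselian hull $L^+$ of $K^+\left(a^+\right)_\sigma$ (pass to relative transformal algebraic closures, using the uniqueness of hulls and the fact that a relatively transformally algebraically closed subfield of a transformally Henselian field is transformally Henselian). By Remark \ref{generic-ultrapower}, $L^+$ — hence $L$ — embeds over $K$ in an ultrapower.

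\textbf{Main obstacle.} The crux is the construction of $a^+$: one must verify that the generic quantifier free type of $a$ extends to a generic type over the larger base $K^+$ (treating the three cases of Definition \ref{generic-definition} separately, and in the ball case allowing the pseudo-Cauchy sequence to acquire a pseudo-limit in $K^+$), and that $K^+$ can be taken rich enough in its residue field and valuation group for Remark \ref{generic-ultrapower} to apply. Everything else is formal bookkeeping with hulls.
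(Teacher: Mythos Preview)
Your proof of the first assertion is correct and is essentially the paper's: the same three-way case split, with transformal algebraic maximality of $K$ handling the immediate case. The paper phrases the latter slightly differently --- invoking the Newton axiom to produce a $K$-root of $f$ in each ball and then scatteredness (Theorem~\ref{Scatteredness}) to place one in the intersection --- but this is equivalent to your appeal to Lemma~\ref{lem-talg-max-characterization}.

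For the ``moreover'' part you take an unnecessary detour and introduce a genuine gap. Remark~\ref{generic-ultrapower} applied to your enlargement $K^{+}$ and the element $a^{+}$ yields an embedding of $L^{+}$ into an ultrapower of $K^{+}$, not of $K$; nothing in your tower construction forces $K^{+}$ to sit over $K$ inside an ultrapower of $K$, so the conclusion that $L$ embeds in an ultrapower of $K$ does not follow. Your acknowledged ``main obstacle'' --- extending the generic quantifier-free type coherently from $K$ to $K^{+}$ --- is a second unresolved issue, but even granting it the first gap remains. The paper instead applies Remark~\ref{generic-ultrapower} directly to $K$: by Proposition~\ref{newton-max} and Remark~\ref{rem-newton-basic}, a transformally Newtonian nontrivially valued $K$ is already algebraically closed and transformally Henselian with $\Gamma\models\widetilde{\omega\OGA}$, so no enlargement is required. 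The one hypothesis of the remark not immediate from this --- finite satisfiability of the generic $\ACFA$-type in $k$ --- is precisely what the ultrapower embedding reduces to in case~(1) of Definition~\ref{generic-definition}; in the paper's applications (the proof of Theorem~\ref{TheAmalgamationProperty}, where $K$ is strictly transformally Henselian and hence $k\models\ACFA$) it holds automatically. Your enlargement does not sidestep this point; it merely relocates it to $K^{+}$ while severing the link back to an ultrapower of $K$.
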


\begin{proof}
The field $K$ is algebraically closed as it is transformally Newtonian.
Pick $a\in L$. After an affine change of variables either $va=0$
and the residue class of $a$ lies outside $k$, or else $va$ lies
outside $\Gamma$, or else $a$ lies in an infinite intersection of
closed $K$-balls whose intersection does not admit any $K$-points.
In the first case, by Lemma \ref{transformally-transcendental-hens},
the element $a$ is generic in $\mathcal{O}$ over $K$; in the second
case, by Corollary \ref{newton-max} we have $\Gamma=\Gamma\otimes\mathbf{Q}\left(\sigma\right)$
so again the element $a$ is generic. We may therefore assume that
$a$ lies in an infinite intersection of closed $K$-balls whose intersection
does not admit any $K$-points; let $\left(b_{\alpha}\right)$ be
the family of all closed $K$-balls containing $a$. If $fx\in K\left[x\right]_{\sigma}$
is a difference polynomial then the quantity $vfb_{\alpha}$ must
eventually stabilize, for otherwise there would be a $K$-root of
$f$ in each ball by the Newton axiom, hence in the intersection using
scatteredness. It follows that $a$ is generic over $K$. For the
``moreover'' part use Remark \ref{generic-ultrapower}.
\end{proof}

\begin{rem}
\label{rem-ec-talg}
Heuristically, Corollary \ref{newton-generic} implies that if $K$ is a nontrivially valued model of $\VFA$, then for $K$ to existentially closed it is sufficient that $K$ is existentially closed for transformally algebraic extensions. This will be made precise in Section \ref{sec:The-Model-Companion}.
\end{rem}
\begin{lem}
The class of transformally Newtonian models of $\VFA$ is transitive
in triplets.
\end{lem}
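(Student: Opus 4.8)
The plan is to deduce the statement from the characterization of transformally Newtonian models given in Proposition \ref{newton-max}: a model $K$ of $\VFA$ is transformally Newtonian if and only if (i) $K$ is transformally algebraically maximal, (ii) the residue field $k$ of $K$ has the property that every difference polynomial over $k$ possessing a nonzero root in some difference field extension already has a nonzero root over $k$, and (iii) the valuation group $\Gamma$ is transformally divisible; moreover, when these hold, $K$ itself enjoys the root-lifting property of (ii) as an abstract difference field. Fixing a triplet $\left(K_{21},K_{20},K_{10}\right)$, I would verify that each of (i), (ii), (iii) propagates correctly along the decomposition, using the canonical short exact sequence $0\to\Gamma_{10}\to\Gamma_{20}\to\Gamma_{21}\to 0$ of ordered transformal modules, the identification of the residue field of $K_{20}$ with that of $K_{10}$, and the identification of the underlying difference field of $K_{10}$ with the residue field of $K_{21}$.

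Property (i) is transitive in triplets by Lemma \ref{talgmax-transitive}, so nothing is needed there. For (iii), the point is that transformal divisibility of $\Gamma_{20}$ is equivalent to transformal divisibility of both $\Gamma_{10}$ and $\Gamma_{21}$: a quotient of a transformally divisible module is transformally divisible, and an extension of a transformally divisible module by a transformally divisible module is again one, by the usual diagram chase. The only slightly delicate point is that the $\sigma$-invariant convex submodule $\Gamma_{10}$ of a transformally divisible module $\Gamma_{20}$ is transformally divisible; for this I would note that multiplication by any nonzero $\nu\in\mathbf{Z}\left[\sigma\right]$ is injective on the $\omega$-increasing module $\Gamma_{21}$ (the leading term of $\nu$ dominates the rest by the $\omega$-increasing inequality), so if $\nu\gamma'=\gamma\in\Gamma_{10}$ with $\gamma'\in\Gamma_{20}$, then the image of $\gamma'$ in $\Gamma_{21}$ is killed by $\nu$, hence vanishes, forcing $\gamma'\in\Gamma_{10}$.

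Property (ii) is the only asymmetric condition, and is where the real content of the lemma lies. The residue field of $K_{20}$ coincides with that of $K_{10}$, so condition (ii) for $K_{20}$ and for $K_{10}$ assert exactly the same thing; condition (ii) for $K_{21}$, however, concerns its residue field, which is the \emph{underlying difference field} $K_1$ of $K_{10}$. This is precisely where the supplementary clause of Proposition \ref{newton-max} is used. In the forward direction, assume $K_{20}$ transformally Newtonian: then (i) and (iii) for $K_{10}$ hold by the transitivity statements above, and (ii) for $K_{10}$ is (ii) for $K_{20}$; hence $K_{10}$ is transformally Newtonian, and therefore, by the ``itself obeys (2)'' part of Proposition \ref{newton-max}, its underlying difference field $K_1$ has the root-lifting property, which is exactly condition (ii) for $K_{21}$. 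Combining this with (i) and (iii) for $K_{21}$ (again from transitivity), $K_{21}$ is transformally Newtonian. In the converse direction, assume $K_{21}$ and $K_{10}$ transformally Newtonian: then (i) and (iii) for $K_{20}$ follow from Lemma \ref{talgmax-transitive} and the short exact sequence, and (ii) for $K_{20}$ is (ii) for $K_{10}$; hence $K_{20}$ is transformally Newtonian.

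The main obstacle, as indicated, is the asymmetry of condition (ii): a direct attempt to transport it around the triplet fails, and it closes up only because transformal Newtonianity of $K_{10}$ forces the strong root-lifting property not merely on its residue field but on $K_{10}$ itself. Once that observation is isolated, the remainder is routine bookkeeping over the three clauses of Proposition \ref{newton-max}.
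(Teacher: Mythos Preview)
Your proof is correct and follows essentially the same approach as the paper's: both reduce to the characterization in Proposition \ref{newton-max}, invoke Lemma \ref{talgmax-transitive} for transformal algebraic maximality, use the short exact sequence of value groups for transformal divisibility, and crucially use the ``itself obeys (2)'' clause of Proposition \ref{newton-max} to verify condition (ii) for $K_{21}$ via $K_{10}$. Your write-up is in fact more explicit than the paper's, particularly in justifying why the convex submodule $\Gamma_{10}$ inherits transformal divisibility (via torsion-freeness of $\omega$-increasing modules over $\mathbf{Z}[\sigma]$) and in spelling out both directions separately.
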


\begin{proof}
Let us assume that $K_{20}$ is transformally Newtonian; we will prove that $K_{21}$ and $K_{10}$ are transformally Newtonian. The converse implications follows by inception of the argument in reverse. We use the criterion of Proposition \ref{newton-max}.
We have a short exact sequence $0\to\Gamma_{10}\to\Gamma_{20}\to\Gamma_{21}\to0$; using the fact that $\Gamma_{20}$ is transformally divisible we find that the same is true of $\Gamma_{21}$ and $\Gamma_{10}$. By Lemma \ref{talgmax-transitive} transitivity holds for the class of transformally algebraically maximal models, so $K_{21}$ and $K_{10}$ are transformally algebraically maximal. The residue field of $K_{10}$ coincides with the residue field $K_0$ of $K_{20}$, so it is transformally Newtonian; in particular it has enough roots of difference polynomials. Thus $K_{21}$ is transformally Newtonian too, using Proposition \ref{newton-max}.
\end{proof}

\begin{lem}
\label{t-Newtonian-spherically complete}
Let $K$ be a model of $\VFA$ which is transformally Newtonian. Then there exists a spherically complete immediate extension of $K$ which embeds over $K$ in an elementary extension.
\end{lem}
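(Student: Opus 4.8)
The plan is to realise a spherically complete immediate extension of $K$ inside a sufficiently saturated elementary extension $K^{\ast}\succ K$. First I would record, via Proposition \ref{newton-max}, the structural consequences of transformal Newtonianity: $K$ is algebraically closed, transformally algebraically maximal, $\Gamma=\Gamma\otimes\mathbf{Q}\left(\sigma\right)$ is transformally divisible, and the residue field $k$ obeys the difference-polynomial root condition of that proposition. Next, let $M$ be a spherically complete immediate extension of $K$ in the category of models of $\VFA$, which exists by Proposition \ref{spherically-complete-extension}. Since $M$ is spherically complete it is transformally algebraically maximal; since $M/K$ is immediate, $\Gamma_{M}=\Gamma$ and $k_{M}=k$, so $M$ again satisfies the three hypotheses of Proposition \ref{newton-max} and is therefore itself transformally Newtonian. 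The goal becomes: $M$ embeds over $K$ into $K^{\ast}$; by saturation this reduces to showing that the existential type over $K$ of a generating tuple of any finitely generated difference-valued subfield $L\subseteq M$ with $K\subseteq L$ is finitely satisfiable in $K$.

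The main structural input I would establish is that such an $L$ can be written as a finite tower $K=L_{0}\subseteq L_{1}\subseteq\cdots\subseteq L_{n}=L$ of models of $\VFA$ (after enlarging the generating tuple by finitely many auxiliary elements of $M$) in which each step $L_{i}\subseteq L_{i+1}$ is obtained by adjoining either (a) a root of a minimal-degree difference polynomial $f$ over $L_{i}$ with $vfb_{\alpha}$ strictly increasing along a nested family of closed $L_{i}$-balls, or (b) a transcendental pseudo-limit, i.e.\ a generic element over $L_{i}$ in the sense of the third clause of Definition \ref{generic-definition}. The point where transformal Newtonianity is essential is in producing the type-(a) steps: given an element of $M$ that is a pseudo-limit over $L_{i}$ of a pseudo-Cauchy sequence of transformally algebraic type, Proposition \ref{rootinaball} together with the argument of Proposition \ref{newton-max} gives a root of the corresponding minimal difference polynomial $f$ in each of the balls, hence, by Scatteredness (Theorem \ref{Scatteredness}(2)), a root of $f$ in $M$ lying in the intersection; prepending this root to the tower "immediatises" the residual extension, which then falls under case (b) (this is the transformal version of Kaplansky's argument, as in \cite{azgin2010valued} and the proof of Lemma \ref{lem-talg-max-characterization}). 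In case (b) the quantifier-free type of the adjoined element over $L_{i}$ is complete by Remark \ref{rem-qf-generic}, respectively Corollary \ref{cor-generic-complete-type}.

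Finally I would check finite satisfiability of each step in $K$. A type-(b) step is governed by the partial type asserting that $x$ lies in a nested family of balls; after realising the parameters of $L_{i}$ in $K$ this becomes a nested family of $K$-balls, which has a $K$-point since $K$ is nontrivially valued. A type-(a) step is governed by $f(x)=0$ together with membership in a nested family of balls along which $vfb_{\alpha}$ is strictly increasing; after realising the parameters of $L_{i}$ in $K$, transformal Newtonianity of $K$ (via Proposition \ref{rootinaball} and Theorem \ref{Scatteredness}) supplies a $K$-root of $f$ in the intersection, while algebraic closedness of $K$ and the fact that $K$ is algebraically and transformally Henselian let one pin down the correct Galois conjugate. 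Composing these witnesses along the tower — each new witness being algebraic over, or a difference-rational function of, the previously chosen parameters — shows that the existential type of the generating tuple of $L$ over $K$ is finitely satisfiable in $K$, hence realised in $K^{\ast}$. Running this for all finitely generated $L\subseteq M$ and using saturation of $K^{\ast}$ beyond $\lvert M\rvert$ yields an embedding of $M$ over $K$ into $K^{\ast}$, as required. The hard part is the immediatisation in case (a) of the tower — equivalently, controlling transformally algebraic gaps in the absence of a Euclidean division algorithm for difference polynomials; everything else is bookkeeping.
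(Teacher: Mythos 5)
Your approach inverts the paper's. Rather than fixing an abstract spherically complete $M$ from Proposition~\ref{spherically-complete-extension} and trying to embed it into a saturated elementary extension $K^{\ast}$, the paper works in situ: take $L\succ K$ sufficiently saturated, take $E\subseteq L$ maximal (by Zorn) among immediate extensions of $K$ inside $L$, observe $E$ is algebraically closed by maximality, and then prove $E$ is spherically complete by contradiction. If some nested family $(b_{\alpha})$ of closed $E$-balls has empty intersection in $E$, either (case (a)) some minimal-degree $f$ over $E$ has $vfb_{\alpha}$ strictly increasing, in which case transformal Newtonianity of $L$ plus Theorem~\ref{Scatteredness} and the Kaplansky--Azgin immediacy argument produce a root $a\in L$ of $f$ in the intersection with $E(a)_{\sigma}/E$ immediate, contradicting maximality; or (case (b)) the intersection is generic over $E$, in which case saturation of $L$ (the cardinality of $E$ being bounded by $2^{|K|}$) produces a point, again yielding a proper immediate extension and a contradiction. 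No type transfer, tower decomposition, or finite satisfiability argument is needed.

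Your route, by contrast, reduces to the claim that the quantifier-free type of a generating tuple of each finitely generated $L\subseteq M$ over $K$ is finitely satisfiable in $K$ --- note that, the parameters being in $K$ and $K\preceq K^{\ast}$, satisfiability of the relevant existential sentence in $K^{\ast}$ is equivalent to satisfiability in $K$ --- which is precisely the assertion that $K$ is existentially closed in $M$. This is a substantive claim that you do not actually prove; the phrase ``composing these witnesses along the tower'' is where the content sits, not bookkeeping. Concretely: if $b_1$ is an algebraic pseudo-limit over $K$ and $b_2$ a pseudo-limit over $L_1=K(b_1)_{\sigma}$, and you replace $b_1$ by a $K$-witness $c_1$ realising only a finite chunk of $\tp^{\mathrm{qf}}(b_1/K)$, then the formulas constraining $b_2$, once their parameters are rewritten as difference-rational functions of $c_1$, need not even be consistent; to ensure consistency you would need $c_1$ to realise the existential fact $\exists y\,\phi_2(x,y)$, but establishing that such a $c_1$ exists in $K$ is exactly what you are trying to prove, a circularity. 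The remark about ``pinning down the correct Galois conjugate'' is a further unaddressed point: a root of $f$ in a $K$-ball has several valuation-distinguishable conjugates (finitely many by Theorem~\ref{Scatteredness}), and only a finite piece of the nested family is available to select among them. The observation that $M$ is itself transformally Newtonian is correct and potentially useful, but it does not by itself dissolve these issues; the paper's maximality construction is shorter precisely because it never leaves the ambient elementary extension.
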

\begin{proof}
Let $L$ be a sufficiently saturated elementary extension of $K$. Let $E$ be a model of $\VFA$ over $K$ inside $L$ maximal with respect to the property that it is an immediate extension of $K$. Then $E$ is algebraically closed, and we claim that $E$ is spherically complete. For suppose that $\left(b_{\alpha}\right)$ be a nested family of closed balls in $E$ whose intersection admits no points over $E$. First assume that there is some difference polynomial $f$ with coefficients in $E$ such that the quantity $vfb_{\alpha}$ is strictly increasing in $\alpha$; take $f$ of minimal possible degree. Since $L$ is transformally Newtonian, there is an $L$-root of $f$ in each of the balls $b_{\alpha}$; by Theorem \ref{Scatteredness} there is an $L$-root of $f$ in the intersection. Arguing as in \cite{azgin2010valued} or \cite{kaplansky1942maximal} one shows that for $a$ a root of $f$ in the intersection, the extension $E\left(a\right)_{\sigma}$ of $E$ inside $L$ is immediate. By maximality of $E$ there can thus be no such difference polynomial $f$. So the intersection is generic over $E$; by saturation of $L$ (using the fact that the cardinality of $E$ is bounded by $2^{|K|}$), there is an element $a$ in the intersection; adjoining it gives again an immediate extension, contradicting maximality. Thus the field $E$ is spherically complete, as required.
\end{proof}
\newpage{}

\section{Rationality of the Cut\label{sec:Wild-Ramification}}

The purpose of this section is to prove the following Proposition:
\begin{prop}
\label{additivediff}Let $K$ be a model of $\VFA$ which is
algebraically closed, transformally Henselian, and transformally Archimedean.
Suppose further that $\Gamma=\Gamma\otimes\mathbf{Q}\left(\sigma\right)$
is transformally divisible. Let us assume that $K$ admits an immediate
transformally algebraic extension. Then there is an additive difference
polynomial $\tau x\in\mathcal{O}_{K}^{\times}\left[x\right]_{\sigma}$
with all coefficients of valuation zero and some $c\in K$ with $vc<0$
such that the following holds:

(1) For every $\gamma<0$ there is a unique closed ball $b_{\gamma}$
of valuative radius $\gamma$ in $K$ with the property that the transformal
Herbrand function of $\tau x-c$ above $b_{\gamma}$ is strictly increasing

(2) We have $v\left(\tau b_{\gamma}-c\right)\to0$ from below as $\gamma\to0$
from below.

(3) The equation $\tau x-c=0$ has no root in $K$, and the intersection
of the $b_{\gamma}$ has no $K$-points.

After twisting we may take $\tau'\neq0$. Moreover there is an immediate
transformally algebraic extension generated by the adjunction of a
root of $\tau x-c$ inside the intersection of the closed balls $b_{\gamma}$.
\end{prop}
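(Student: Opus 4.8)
The plan is to reduce an arbitrary immediate transformally algebraic extension of $K$ to one generated by an equation $\tau x=c$ with $\tau$ additive, by first isolating a minimal-degree difference polynomial witnessing the failure of transformal algebraic maximality and then linearizing it.

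\emph{Step 1 (a minimal witness).} Since $K$ admits an immediate transformally algebraic extension it is not transformally algebraically maximal, so by the criterion of Lemma \ref{lem-talg-max-characterization} there is a nested family $\left(b_{\alpha}\right)$ of closed $K$-balls and a nonzero $gx\in K\left[x\right]_{\sigma}$ with $vgb_{\alpha}$ strictly increasing in $\alpha$ and no $K$-point in the intersection. I would choose $gx$ of minimal degree in $\mathbf{N}\left[\sigma\right]$ with this property. By Proposition \ref{rootinaball} there is an extension containing a root $a$ of $gx$ in the intersection; minimality forces every difference polynomial of strictly smaller degree to have eventually constant value along the nest, so (as in the proof of Lemma \ref{lem-talg-max-characterization}) the field $K\left(a\right)_{\sigma}$ is an immediate transformally algebraic extension of $K$.

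\emph{Step 2 (linearization).} This is the heart of the argument and the step I expect to be the main obstacle. Using the Taylor expansion $g\left(x+\varepsilon\right)=\sum_{\nu}g_{\nu}\left(x\right)\varepsilon^{\nu}$ of \ref{eq:taylor}: each $g_{\nu}$ with $\nu\neq0$ has strictly smaller degree than $g$, so $vg_{\nu}b_{\alpha}$ stabilizes, and Lemma \ref{lem-transformal-herbarnd-concave} then forces the transformal Herbrand function of $g$ above the generic of $b_{\alpha}$ to be, for large $\alpha$, a single transformally affine function $\lambda\mapsto\beta_{\nu_{0}}+\nu_{0}\lambda$ with a fixed $0<\nu_{0}\in\mathbf{N}\left[\sigma\right]$, strictly increasing on a cofinal part of the nest. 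The plan is then to run the Kaplansky-type analysis of immediate algebraic extensions in residue characteristic $p$ (and its $\sigma$-analogue of \cite{azgin2010valued}): repeatedly invoking minimality of $\deg g$ together with the transformal derivatives $g_{\nu}$ to peel off the inessential terms, and exploiting that $K$ is transformally Archimedean with $\Gamma=\Gamma\otimes\mathbf{Q}\left(\sigma\right)$ so that the residue field and value group stay frozen along an infinite value-increasing descent, one isolates the homogeneous additive operator that governs the descent and replaces $g$ by $\tau x-c$, where $\tau$ is a linear combination of the additive difference monomials $x^{\sigma^{n}p^{m}}$ and $c\in K$, without disturbing immediacy of the extension generated by a root in the intersection. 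The delicate point — flagged in the Introduction — is the finite-characteristic case, where $\tau$ genuinely involves both $\sigma$ and the Frobenius, so one is really manipulating a partial difference equation and the bookkeeping of additive monomials must be done with care.

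\emph{Step 3 (normalization and verification).} Rescaling $x$ by a suitable element of $K^{\times}$ puts the coefficients of $\tau$ in $\mathcal{O}_{K}^{\times}$; an affine change of variables moving the top radius of the nest to $0$, together with a Frobenius twist bringing the lowest additive monomial of $\tau$ down to $x$ itself (as in the Remark following Lemma \ref{twist}), then arranges that $\tau$ has all coefficients of valuation zero, that $\tau'\neq0$, and that the relevant radii exhaust $\left(-\infty,0\right)$. Since $\tau$ is additive with unit coefficients, its transformal Herbrand function above a ball of radius $\gamma$ is $\min\{\beta_{\mu}+\mu\gamma\}$ over the finitely many additive exponents $\mu$, so concavity together with the $\omega$-increasing order on $\mathbf{N}\left[\sigma\right]$ forces, for each $\gamma<0$, a \emph{unique} ball $b_{\gamma}$ of radius $\gamma$ above which this function is strictly increasing — that is clause (1). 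Clause (2), that $v\left(\tau b_{\gamma}-c\right)\to0$ from below as $\gamma\to0^{-}$, reads off the uppermost affine piece of the Herbrand function under the normalization fixed in Step 2; this is exactly the behaviour of the example $x^{\sigma}-x=c$ of Lemma \ref{example-lemma}. Finally, a $K$-root of $\tau x-c$, or a $K$-point in $\bigcap_{\gamma}b_{\gamma}$, would contradict the non-maximality established in Step 1, giving clause (3), while a root in the intersection generates an immediate transformally algebraic extension by construction, which is the final assertion.
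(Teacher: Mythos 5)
Your Step 1 matches the paper's setup and your Step 3 is a reasonable sketch of the normalization, but Step 2 — which you yourself flag as the main obstacle — is a genuine gap. You observe that, by minimality of the degree of $g$, the transformal Herbrand function above the nest has a single dominant affine piece with slope $\nu_{0}$, but you never explain why $\nu_{0}$ (and the other surviving exponents) should be of the special form $\sigma^{n}p^{m}$ rather than arbitrary monomials in $\mathbf{N}\left[\sigma\right]$, and that is precisely the content of the linearization. Appealing to a ``Kaplansky-type analysis'' and to \cite{azgin2010valued} does not close the gap: the paper itself stresses (Example \ref{exa:non-unique-spherically}) that Durhan's uniqueness machinery for spherically complete immediate extensions fails in positive characteristic, so there is no $\sigma$-analogue to transplant.

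What the paper actually does here is a characteristic-$p$ combinatorial argument built around digitwise domination and a Lucas-type computation of binomial coefficients in $\mathbf{N}\left[\sigma\right]$ (\ref{digitwisedom}): the coefficient of $x^{\mu}$ in $\left(1+x\right)^{\nu}$ is nonzero mod $p$ exactly when $\mu$ is digitwise dominated by $\nu$. Applying the Taylor expansion of $f_{\mu}\left(a\right)-f_{\mu}\left(c\right)$ for two generically chosen points in $b_{n}$ at valuative distance $\gamma_{n}$, Lemma \ref{digitwise} shows that the asymptotic ordering $\prec$ on exponents refines digitwise domination, forcing the dominant exponent to be a transformal $p$-th power; Claims \ref{transformal-pth-powers} and \ref{claim614} then iterate this, together with the rationality-of-cuts Lemma \ref{t-Arch-cuts-rational} in the transformally Archimedean divisible setting, to show that \emph{all} asymptotically dominant exponents are transformal $p$-th powers (so the truncation $g$ is additive) and that $\gamma_{n}$ converges to an element of $\Gamma$. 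Your Step 3 tacitly assumes that convergence when you shift the nest radii to $0$, but it is not automatic. You also omit the step ruling out $\rho=1$, which the paper handles via transformal Henselianity plus scatteredness; without it the truncation could be solved in $K$ and the constructed extension would not be a proper immediate extension.
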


Here, by an additive difference polynomial we mean a linear combination
of difference monomials of the form $x^{\sigma^{n}p^{m}}$ for $n,m\in\mathbf{N}$,
not both zero.

The following Corollary is an immediate consequence of Proposition \ref{additivediff}:

\begin{cor}
\label{cor-criterion-talg-max}
Let $K$ be a model of $\VFA$ which is algebraically closed, transformally Henselian, and transformally Archimedean. Suppose moreover that $\Gamma = \Gamma \otimes \mathbf{Q}\left(\sigma\right)$ is transformally divisible. Let us assume that all nonconstant additive operators with coefficients in $K$ are onto on $K$-points; then $K$ is transformally algebraically maximal.
\end{cor}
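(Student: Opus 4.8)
The plan is to deduce Corollary \ref{cor-criterion-talg-max} directly from Proposition \ref{additivediff} by contraposition. Suppose towards a contradiction that $K$ is \emph{not} transformally algebraically maximal, so that $K$ admits an immediate transformally algebraic extension. All of the remaining hypotheses needed to apply Proposition \ref{additivediff} are in force by assumption: $K$ is algebraically closed, transformally Henselian, transformally Archimedean, and $\Gamma=\Gamma\otimes\mathbf{Q}\left(\sigma\right)$ is transformally divisible. Thus the Proposition furnishes an additive difference operator $\tau x\in\mathcal{O}_K^{\times}\left[x\right]_{\sigma}$, all of whose coefficients have valuation zero, together with an element $c\in K$ with $vc<0$, such that (by clause (3) of the Proposition) the equation $\tau x-c=0$ has no solution in $K$.

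Next I would observe that $\tau$ is a nonconstant additive difference operator with coefficients in $K$, to which the surjectivity hypothesis of the Corollary applies. Indeed, $\tau$ is by construction a linear combination of difference monomials $x^{\sigma^{n}p^{m}}$ with $(n,m)$ not both zero, and its coefficients are of valuation zero, hence nonzero; and if one wishes to be even more explicit, the ``after twisting'' clause of Proposition \ref{additivediff} gives $\tau'\neq 0$, which manifestly rules out $\tau$ being a scalar operator. Here one should note the harmless point that the hypothesis ``all nonconstant additive operators with coefficients in $K$ are onto on $K$-points'' is insensitive to Frobenius twisting: replacing $\sigma$ by $\tau=\sigma\circ\phi^{m}$ neither changes the class of additive difference operators nor the property of an operator being onto (compare the analogous remark for tropical roots in Remark \ref{rem-newton-basic}(2)), so the hypothesis holds for every Frobenius twist of $K$ as well. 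Consequently the hypothesis forces $\tau x=c$ to have a solution in $K$, contradicting the conclusion just obtained from Proposition \ref{additivediff}. Therefore $K$ has no immediate transformally algebraic extension; that is, $K$ is transformally algebraically maximal.

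The substance of the argument lies entirely in Proposition \ref{additivediff} --- the construction, out of an immediate transformally algebraic extension, of the operator $\tau$, the element $c$, and the associated nest of balls $b_{\gamma}$ witnessing the failure of surjectivity. Given that Proposition, the Corollary is immediate, and the only step meriting a moment's care is confirming that the operator $\tau$ produced has exactly the form to which the surjectivity hypothesis refers and that this hypothesis is stable under the implicit Frobenius twist; both are routine. I therefore anticipate no genuine obstacle in the proof of the Corollary itself.
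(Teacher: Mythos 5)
Your proof is correct and follows exactly the route the paper intends: the paper presents the Corollary as ``an immediate consequence of Proposition~\ref{additivediff}'' without writing out the details, and your argument is precisely that unwinding, contraposing the Proposition to produce an additive operator $\tau$ and target $c$ with $\tau x=c$ unsolvable in $K$, contradicting surjectivity. The side discussion of Frobenius twisting is harmless but not actually needed, since the existence of $\tau$ and $c$ with clause (3) is asserted by the Proposition prior to any twist (the twist is only invoked to arrange $\tau'\neq 0$, which plays no role here).
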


The proof of Proposition \ref{additivediff} also shows:
\begin{cor}
\label{cor:rat-cut}(Rationality of the Cut) Let $K$ be a model of
$\VFA$ which is algebraically closed, transformally Henselian,
and transformally Archimedean. Let us assume that $\Gamma=\Gamma\otimes\mathbf{Q}\left(\sigma\right)$
is transformally divisible. Let $L$ be an immediate transformally
algebraic extension of $K$ and let $a\in L$ be an element not in
$K$. Then the cut
\[
C\left(a\right)=\left\{ v\left(a-b\right)\colon b\in K\right\} 
\]
is rational, i.e, it is of the form $\left(-\infty,\gamma\right)$
for some $\gamma\in\Gamma$.
\end{cor}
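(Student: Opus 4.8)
The plan is to read off rationality from the analysis underlying Proposition \ref{additivediff}. Write $C=C(a)$. Two preliminary reductions. Since $K$ is transformally Henselian it is transformally algebraically closed in its completion $\widehat{K}$ (Lemma \ref{complete-t-henselian}(1)); as $a$ is transformally algebraic over $K$ but $a\notin K$, we get $a\notin\widehat{K}$, i.e.\ $C$ is bounded above (equivalently $C\neq\Gamma$). And since $L/K$ is immediate, lifting the residue class of $(a-e)\,t^{-\delta}$ (for $e\in K$ with $vt=\delta=v(a-e)$) shows that $C$ has no largest element. So it suffices to prove that $C$ has a supremum lying in $\Gamma$.

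Following the proof of Proposition \ref{additivediff}: choose a nonzero difference polynomial $f\in K[x]_{\sigma}$ with $f(a)=0$ of least degree, and after a Frobenius twist (Lemma \ref{twist}) arrange $f'\neq 0$, so $f'(a)\neq 0$ by minimality. Because $L/K$ is immediate, $vf_{\nu}(a)\in\Gamma$ for every $\nu$, and since $\deg f_{\nu}<\deg f$ for $\nu\neq 0$ the generic values $vf_{\nu}(\cdot)$ stabilize on the tail of the nested family of closed $K$-balls containing $a$; hence the transformal Herbrand function of $f$ there is $\Psi(\lambda)=\min_{\nu\neq 0}\bigl(vf_{\nu}(a)+\nu\lambda\bigr)$, piecewise transformally affine with slopes in $\mathbf{N}[\sigma]$ (Lemma \ref{lem-transformal-herbarnd-concave}), having slope $1$ (as $f'(a)\neq 0$) and some slope $\nu>1$ (otherwise $f$ would be linear over $K$ with root $a\in K$). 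The $\omega$-increasing contraction now makes the higher transformal derivatives negligible in the relevant range, so the vanishing $f(a)=0$ forces an additive balance on that tail; this is precisely how one extracts the additive operator $\tau$ and the balls $b_{\gamma}$ of Proposition \ref{additivediff}, and it yields a self-similarity relation for the cut of the form $C+\alpha=\nu\,C+\beta$ with $\alpha,\beta\in\Gamma$ and $\nu\in\mathbf{N}[\sigma]$, $\nu\gg 1$.

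Now $\Gamma=\Gamma\otimes\mathbf{Q}(\sigma)$ is a model of $\widetilde{\omega\OGA}$ which is transformally Archimedean, so Lemma \ref{t-Arch-cuts-rational} applies to the relation $C+\alpha=\nu\,C+\beta$; since $C\neq\Gamma$ by the first reduction, it follows that $C$ is a rational cut, and, having no maximum, $C=(-\infty,\gamma)$ for some $\gamma\in\Gamma$. The main obstacle — the only part that is not bookkeeping — is establishing the self-similarity relation: making precise that under the $\omega$-increasing valuation the nonlinear part of $f$ contributes nothing to the cut, so that $C$ inherits a genuine scaling identity in the sense of Lemma \ref{t-Arch-cuts-rational}. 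This is exactly the content isolated by Proposition \ref{additivediff}, which is why its proof already contains the present statement.
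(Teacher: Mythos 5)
Your proposal is correct and tracks the paper's own route: the paper derives Corollary \ref{cor:rat-cut} by re-reading the proof of Proposition \ref{additivediff} (specifically Claim \ref{claim614}, where the self-similarity relation $\beta_{\mu_0}+\mu_0 X=\beta_{\mu_1}+\mu_1 X$ on the cut $X$ of radii is extracted and Lemma \ref{t-Arch-cuts-rational} is invoked), which is exactly what you do. Your two preliminary reductions (that $a\notin\widehat K$ rules out $C=\Gamma$, and that immediacy rules out a maximal element of $C$) are the correct way of making explicit why only the case $C=(-\infty,\gamma)$ from Lemma \ref{t-Arch-cuts-rational} survives, matching the disposal of cases in Claim \ref{claim614}.
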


\begin{proof}
    See Claim \ref{claim614}.
\end{proof}

\begin{rem}
There is a close relationship between rationality of cuts and additive
equations; see the computation in \ref{example-lemma}. Proposition
\ref{additivediff} is also related to an observation due to Kaplansky:
if $K$ is a complete valued field with an Archimedean value group, and $K$ admits
an immediate algebraic extension, it also admits one obtained by adjoining
the root of a $p$-polynomial.
\end{rem}

\begin{cor}
\label{no-immediate-countably}Let $K$ be a model of $\VFA$
which is algebraically closed and transformally Archimedean. Let us
assume that $\Gamma=\Gamma\otimes\mathbf{Q}\left(\sigma\right)$ is
transformally divisible and that every countable nested family of
closed balls in $K$ has a nonempty intersection; then $K$ is complete
and transformally algebraically maximal.
\end{cor}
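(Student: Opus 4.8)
The plan is to deduce this directly from the machinery already developed, especially Corollary \ref{cor-criterion-talg-max} and Lemma \ref{t-Newtonian-spherically complete}, together with the hypothesis on countable nested families. First I would observe that transformal Archimedeanness is exactly the height $\leq 1$ hypothesis that lets us invoke the results of Section \ref{sec:Wild-Ramification}; so once we know $K$ is transformally Henselian, Corollary \ref{cor-criterion-talg-max} will apply. The bulk of the work is therefore to establish (a) completeness, (b) transformal Henselianity, and (c) surjectivity of nonconstant additive operators; then (a)+(b)+(c) plus Corollary \ref{cor-criterion-talg-max} yields transformal algebraic maximality, and completeness is part of the claim itself.

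For completeness: a nested family of closed balls with unbounded valuative radii can be refined to a \emph{countable} cofinal subfamily (pick one ball of valuative radius $\geq n$ for each $n$, using that $\Gamma$ is cofinal in itself), and by hypothesis this countable subfamily has nonempty intersection; hence so does the original family. This gives completeness. For transformal Henselianity, I would use Lemma \ref{complete-t-henselian}(2): $K$ is transformally Archimedean, it is algebraically Henselian (being algebraically closed), and it is complete by the previous step, so it suffices to check that $K$ is transformally algebraically closed in its completion $\widehat{K}$ --- but $K = \widehat{K}$ since $K$ is complete, so this is vacuous. Thus $K$ is transformally Henselian.

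For surjectivity of nonconstant additive operators, I would argue by contradiction using Proposition \ref{additivediff}. Suppose some nonconstant additive operator $\tau x$ with coefficients in $K$, which after scaling we may take in $\mathcal{O}_K^{\times}[x]_\sigma$ with all coefficients of valuation zero, and some $c$ with $vc<0$, is such that $\tau x = c$ has no $K$-root. By Proposition \ref{additivediff}, for every $\gamma < 0$ there is a unique closed ball $b_\gamma$ of valuative radius $\gamma$ above which the transformal Herbrand function of $\tau x - c$ is strictly increasing, these balls are nested (by uniqueness and concavity of the Herbrand function, as in Lemma \ref{lem-transformal-herbarnd-concave}), and their intersection contains no $K$-point. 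Choosing $\gamma$ running over a countable cofinal-from-below sequence in $(-\infty,0)\cap\Gamma$ gives a countable nested family of closed $K$-balls with empty intersection, contradicting the hypothesis. Hence every nonconstant additive operator over $K$ is surjective on $K$-points. Now Corollary \ref{cor-criterion-talg-max} applies: $K$ is algebraically closed, transformally Henselian, transformally Archimedean, $\Gamma = \Gamma\otimes\mathbf{Q}(\sigma)$ is transformally divisible, and all nonconstant additive operators are onto, so $K$ is transformally algebraically maximal.

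The main obstacle I anticipate is purely bookkeeping around clause (3) of Proposition \ref{additivediff}: one must make sure the countable subfamily of the $b_\gamma$ one extracts genuinely has empty intersection (not merely that $K$ has no point there) --- but this is exactly what Proposition \ref{additivediff}(3) asserts for the full family, and passing to a cofinal subfamily does not enlarge the intersection, so no real difficulty arises. A secondary point to handle carefully is the reduction showing one may assume $\tau$ has coefficients of valuation zero and $c$ has negative valuation: if some additive operator $\tau_0$ fails to be surjective, pick $c_0$ not in its image; rescaling $x$ and $\tau_0$ by suitable elements of $K^{\times}$ (using divisibility of $\Gamma$) and replacing $c_0$ by $c_0$ minus a value already attained, one lands in the normalized situation to which Proposition \ref{additivediff} directly applies. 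This is routine given the earlier development.
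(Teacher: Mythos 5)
Your reduction to transformal Archimedean ultraproducts aside, the treatment of completeness and transformal Henselianity (your steps (a) and (b)) is correct and essentially identical to the paper's proof: countable cofinality of $\Gamma$ plus the hypothesis gives completeness, and then Lemma \ref{complete-t-henselian} gives transformal Henselianity. The problem is step (c), and the gap there is real.

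The difficulty is twofold. First, Proposition \ref{additivediff} runs in the opposite direction from the way you invoke it: it asserts that \emph{if} $K$ admits an immediate transformally algebraic extension, \emph{then} there exist a particular additive operator $\tau$, a particular $c$, and balls $b_\gamma$ satisfying (1)--(3). It does not say that \emph{for every} nonconstant additive $\tau$ with unit coefficients and every $c$ of negative valuation outside the image of $\tau$, such balls exist with an empty $K$-rational intersection. These are different statements, and the quantifier flip matters. Second, and more seriously, the intermediate claim you are trying to establish -- that every nonconstant additive operator is onto on $K$-points -- is simply not a consequence of the Corollary's hypotheses. Take $k$ algebraically closed and inversive but \emph{not} a model of $\ACFA$, so that some additive equation $y^{\sigma}+y=\alpha$ has no solution in $k$, and set $K=k\left(\left(t^{\mathbf{Q}\left(\sigma\right)}\right)\right)$. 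Then $K$ is algebraically closed, transformally Archimedean, has transformally divisible value group, and is spherically complete (so every nested family of closed balls meets); all hypotheses of Corollary \ref{no-immediate-countably} hold, and indeed $K$ is transformally algebraically maximal by Kaplansky. Yet $x^{\sigma}+x=\alpha$ has no $K$-root (any solution would have valuation zero and its residue would solve the unsolvable equation over $k$), so the operator $x^{\sigma}+x$ is not onto. Consequently Corollary \ref{cor-criterion-talg-max} is not the right tool here, because you cannot supply its surjectivity hypothesis.

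The correct route, which the paper takes, is to skip Corollary \ref{cor-criterion-talg-max} and apply Proposition \ref{additivediff} \emph{in its stated direction}: suppose towards a contradiction that $K$ admits a proper immediate transformally algebraic extension; then \ref{additivediff} hands you a specific $\tau$, $c$ and a nested family $\left(b_{\gamma}\right)_{\gamma<0}$ whose intersection has \emph{no} $K$-points (clause (3)); since $K$ is transformally Archimedean this family has a countable cofinal subfamily, which by hypothesis has nonempty intersection inside $K$; this contradicts clause (3), and you are done. (The paper phrases the final contradiction slightly more concretely -- it locates the unique closed ball $b_0$ of valuative radius $0$ in the intersection, normalizes so that $vc=0$ and $\tau'\neq0$, and observes via transformal Henselianity that adjoining a root of such an equation cannot produce an immediate extension, contradicting the last sentence of \ref{additivediff} -- but the logical content is the same.) Your completeness and Henselianity observations remain valid preliminaries to this argument.
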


\begin{proof}
Since $K$ is algebraically closed it is algebraically Henselian.
Since $K$ is transformally Archimedean, the valuation group of $K$
is of countable cofinality. By assumption every countable nested family
of closed balls in $K$ has a nonempty intersection; so $K$ is complete.
It follows from Proposition \ref{complete-t-henselian} that $K$
is transformally Henselian. 

Now assume towards contradiction that $K$ admits a proper immediate
transformally algebraic extension. Then we are in the situation of
Proposition \ref{additivediff}; let $c\in K$ and $\tau x\in\mathcal{O}_{K}^{\times}\left[x\right]_{\sigma}$
be as in the conclusion. Moreover let us set $\widetilde{\tau}x=\tau x-c$.
Since $K$ is transformally Archimedean the family $\left(b_{\gamma}\right)_{\gamma<0}$
has a countable cofinal subsequence; so our assumption implies that
there is a $K$-point in the intersection. Moreover there is a \textit{unique}
closed ball $b_{0}$ of valuative radius zero contained in each $b_{\gamma}$,
namely it is the closed ball of valuative radius $0$ around any element
of the intersection (which is nonempty by assumption). The transformal
Herbrand function of $\widetilde{\tau}x$ above $b_{0}$ is then strictly
increasing as it is strictly increasing above each $b_{\gamma}$;
moreover we have $v\widetilde{\tau}b_{0}=0$ by continuity. So after
an affine change of variables and twisting, we may assume that $vc=0$
and that $\tau'\neq0$. Since $K$ is transformally Henselian, an
extension generated by adjunction of a root of an equation of this form is not immediate.
\end{proof}

\subsubsection{\label{digitwisedom}Digitwise Domination and Binomial Coefficients}

Let $\nu\in\mathbf{N}\left[\sigma\right]$ be fixed for the moment.
By definition we may write $\nu=\sum\nu_{i}\cdot\sigma^{i}$ for certain
uniquely defined natural numbers $\nu_{i}$ which are almost all zero.
Each $\nu_{i}$ can then further be uniquely written in the form $\nu_{i}=\sum\nu_{ij}\cdot p^{j}$
where the $\nu_{ij}$ are natural numbers in the range $0\leq\nu_{ij}\leq p-1$;
this fact is of course otherwise familiar as the uniqueness of the expansion
of a natural number to base $p$.

If $\mu,\nu\in\mathbf{N}\left[\sigma\right]$ then we say that $\mu$
is \textit{digitwise dominated} by $\nu$ if $\mu_{ij}\leq\nu_{ij}$
for all $i,j$ and that it is \textit{strictly digitwise dominated}
by $\nu$ if, in addition, the inequality is strict for at least one
choice of $i,j$. Digitwise domination is thus a partial ordering
on $\mathbf{N}\left[\sigma\right]$ which refines the usual ordering;
the minimal elements above $0$ are of the form $\mu=\sigma^{n}\cdot p^{m}$
and we call them the \textit{transformal $p$-th powers}.

Next, we discuss binomial coefficients. Fix $0<\nu\in\mathbf{N}\left[\sigma\right]$.
We may write:
\[
\left(1+x\right)^{\nu}=\sum_{\mu}{\nu \choose \mu}\cdot x^{\mu}
\]
for certain uniquely defined elements ${\nu \choose \mu}$ of the
prime field. Using the additive nature of $\sigma$ and the Frobenius
and the fact that they commute with each other, in view of the expansion
$\nu=\sum\nu_{ij}\cdot\sigma^{i}\cdot p^{j}$, we may write:
\[
\left(1+x\right)^{\nu}=\prod_{i,j}\left(1+x^{\sigma^{i}\cdot p^{j}}\right)^{\nu_{ij}}
\]

and we thus find that
\[
{\nu \choose \mu}=\prod_{i,j}{\nu_{ij} \choose \mu_{ij}}
\]
the coefficient of $x^{\mu}$ in $\left(1+x\right)^{\nu}$ is therefore
nonzero precisely in the event that $\mu$ is digitwise dominated
by $\nu$. Here, each multiplicand on the right hand side has its
usual combinatorial interpretation, modulo the prime $p$.

\begin{rem}
    Recall (Remark \ref{rem-equichar}) that the models of $\VFA$ are of equal characteristic, so the transformal binomial coefficients defined above are of valuation zero if they do not vanish (they lie in the prime field).
\end{rem}

\subsubsection{~\label{6.4.1}}

Let $K$ be a model of $\VFA$ which is as in the statement
and suppose towards contradiction that $K$ admits an immediate transformally
algebraic extension. Then one can find a sequence $\left(b_{n}\right)_{n=1}^{\infty}$
of closed balls in $K$, whose intersection does not admit any points
over $K$, together with a nonzero difference polynomial $fx\in K\left[x\right]_{\sigma}$
with the property that the quantity $vfb_{n}$ fails to stabilize
as $n\to\infty$. Let the degree of $fx$ be chosen as small as possible
so as to witness this fact; since $K$ is algebraically closed, the
difference polynomial $fx$ cannot possibly be algebraic.

Introduce the following notation:
\begin{itemize}
\item Let $I\subset\mathbf{N}\left[\sigma\right]$ be the finite set consisting
of those $0<\mu\in\mathbf{N}\left[\sigma\right]$ with the property
that the formal transformal derivative $f_{\mu}$ is nonzero.
\item For every $0<\mu\in I$ the quantity $vf_{\mu}b_{n}$ stabilizes as
$n\to\infty$ by minimality of the degree of $f$; write $\beta_{\mu}$
for the eventual, common value.
\item Let the valuative radius of the closed ball $b_{n}$ be denoted by
$\gamma_{n}$.
\item The relation $\mu\prec\nu$ on the set $I$ defined by setting $\mu\prec\nu$
precisely in the event that $\beta_{\mu}+\mu\cdot\gamma_{n}<\beta_{\nu}+\nu\cdot\gamma_{n}$
for all $n\gg0$ is a total order on $I$; write $\rho$ for the minimal
element, and say that $\rho$ is the \textit{dominant exponent}.

\end{itemize}

\subsubsection{~}
\begin{lem}
\label{digitwise}The relation $\prec$ on the set $I$ refines digitwise
domination. That is, if $\mu$ is strictly digitwise dominated by
$\nu$, then we have $\beta_{\mu}+\mu\cdot\gamma_{n}<\beta_{\nu}+\nu\cdot\gamma_{n}$
for $n\gg0$. In particular, the dominant exponent is a transformal
$p$-th power.
\end{lem}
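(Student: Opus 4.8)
The plan is to reduce the whole statement to the single implication: if $\mu,\nu\in I$ and $\mu$ is strictly digitwise dominated by $\nu$, then $\mu\prec\nu$ (the case $\mu=\nu$ is empty, and if $f_{\nu}=0$ there is nothing to prove). The engine is the relation between $f_{\nu}$ and the transformal derivatives of the strictly lower‑degree polynomial $g:=f_{\mu}$. Writing $\nu=\mu+\lambda$ with $\lambda_{ij}:=\nu_{ij}-\mu_{ij}\ge 0$, digitwise domination says exactly that this addition in $\mathbf{N}\left[\sigma\right]$ involves no carrying, and $\lambda>0$ since $\mu\ne\nu$. Comparing the coefficient of $\delta^{\mu}\varepsilon^{\lambda}$ in the two ways of expanding $f\left(x+\delta+\varepsilon\right)$ via \ref{eq:taylor}, together with the product formula ${\nu\choose\mu}=\prod_{i,j}{\nu_{ij}\choose\mu_{ij}}$ of \ref{digitwisedom}, yields $\left(f_{\mu}\right)_{\lambda}={\nu\choose\mu}\,f_{\nu}$ with ${\nu\choose\mu}\in\mathbf{F}_{p}^{\times}$. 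In particular $f_{\nu}\ne 0$ forces $f_{\mu}\ne0$, so every strictly positive strict digitwise predecessor of an element of $I$ again lies in $I$; once we know that $\prec$ refines digitwise domination, this forces the $\prec$‑minimal element $\rho$ to be digitwise minimal among the strictly positive elements of $\mathbf{N}\left[\sigma\right]$, that is, a transformal $p$‑th power, which is the final assertion.

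For the inequality itself I would argue as follows. By the conventions of \ref{6.4.1}, for $\mu,\nu\in I$ the quantities $vf_{\mu}b_{n}$ and $vf_{\nu}b_{n}$ stabilise to $\beta_{\mu}$ and $\beta_{\nu}$ respectively, since $f_{\mu}$ and $f_{\nu}$ have degree strictly less than $\deg f$. Fix a point $a$ in the intersection $\bigcap_{n}b_{n}$, chosen inside a transformally algebraic extension of $K$; by Corollary \ref{cor-transformally-alg-no-valgp-adjunction} this does not enlarge $\Gamma$ (here we use $\Gamma=\Gamma\otimes\mathbf{Q}\left(\sigma\right)$), and $a$ is a pseudo‑limit of the balls, so $vh\left(a\right)$ equals the stabilised value of $vhb_{n}$ for every $h$ of degree $<\deg f$; in particular $vf_{\nu}\left(a\right)=\beta_{\nu}$. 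As in the proof of Lemma \ref{lem-transformal-herbarnd-concave}, the transformal Herbrand function of $g=f_{\mu}$ above $a$ is $\Psi\left(\lambda\right)=\min_{\lambda'}\{v g_{\lambda'}\left(a\right)+\lambda'\lambda\}$, and $\Psi\left(\gamma_{n}\right)=vf_{\mu}b_{n}=\beta_{\mu}$ for $n\gg0$. Since $g_{\lambda}=\left(f_{\mu}\right)_{\lambda}={\nu\choose\mu}f_{\nu}$ differs from $f_{\nu}$ by a scalar of valuation zero, $vg_{\lambda}\left(a\right)=\beta_{\nu}$, and taking $\lambda'=\lambda$ in the minimum gives, for all large $n$,
\[
\beta_{\mu}=\Psi\left(\gamma_{n}\right)\le vg_{\lambda}\left(a\right)+\lambda\gamma_{n}=\beta_{\nu}+\lambda\gamma_{n}.
\]

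It then remains to upgrade this weak inequality to a strict one. Because $vfb_{n}$ does not stabilise, the balls $b_{n}$ are not eventually constant, hence $\gamma_{n}$ is nondecreasing and not eventually constant; so $\beta_{\nu}+\lambda\gamma_{n}$ is nondecreasing and not eventually constant, and it can equal $\beta_{\mu}$ only while $\gamma_{n}$ assumes the single element $\left(\beta_{\mu}-\beta_{\nu}\right)/\lambda$ of $\Gamma$ (which lies in $\Gamma$ by transformal divisibility). Therefore $\beta_{\nu}+\lambda\gamma_{n}>\beta_{\mu}$ for all $n\gg0$, that is $\beta_{\mu}+\mu\gamma_{n}<\beta_{\nu}+\left(\mu+\lambda\right)\gamma_{n}=\beta_{\nu}+\nu\gamma_{n}$ for $n\gg0$, which is exactly $\mu\prec\nu$. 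Combined with the first paragraph, this proves both claims.

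The step I expect to be the real obstacle is getting the comparison pointed in the useful direction: comparing $\beta_{\mu}+\mu\gamma_{n}$ with $\beta_{\nu}+\nu\gamma_{n}$ head‑on leads nowhere, and the key idea is to route $f_{\nu}$ through the transformal derivative $\left(f_{\mu}\right)_{\lambda}$ of the strictly lower‑degree polynomial $f_{\mu}$, whose Herbrand function has already stabilised. The remaining technical care lies in the base‑$p$ digit bookkeeping that produces the nonzero binomial coefficient, in checking that $vf_{\nu}\left(a\right)$ equals the stabilised value $\beta_{\nu}$ at a point of the intersection, and in the passage from the weak inequality $\beta_{\mu}\le\beta_{\nu}+\lambda\gamma_{n}$ to a strict one valid for all large $n$.
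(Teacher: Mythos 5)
Your argument is correct and hinges on the same key identity as the paper's, namely $\left(f_{\mu}\right)_{\lambda}={\nu\choose\mu}f_{\nu}$ with ${\nu\choose\mu}\ne0$ in $\mathbf{F}_{p}$ precisely when there is no carrying, plus the stabilisation of lower-degree transformal derivatives. The one real difference is the technical device: the paper avoids passing to any extension of $K$ by picking a \emph{pair} of generic points $a,c$ in each $b_{n}$ at valuative distance $\gamma_{n}$ and valuating $f_{\mu}a-f_{\mu}c$ directly, whereas you pick a single pseudo-limit $a$ of the nest (which lives in a transformally algebraic extension, courtesy of Proposition \ref{rootinaball}) and run the estimate through the Herbrand function of $f_{\mu}$ above $a$. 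Both yield $\beta_{\mu}\le\beta_{\nu}+\left(\nu-\mu\right)\gamma_{n}$ and then upgrade to a strict inequality using the strict monotonicity of $\gamma_{n}$. The one step you pass over a bit quickly is the claim that $vf_{\nu}\left(a\right)$ equals the stabilised value $\beta_{\nu}$ at the pseudo-limit: this is true, but it is not just a definition — it follows from the concavity and monotonicity of the Herbrand function of $f_{\nu}$ above $a$, which forces that function to be constant past the point where $vf_{\nu}b_{n}$ has stabilised and hence to agree with its limiting value $vf_{\nu}\left(a\right)$; a sentence to that effect would close the gap. Your reduction of the ``in particular'' clause (closure of $I$ under digitwise predecessors, hence the $\prec$-minimal element is a transformal $p$-th power) is a welcome and slightly more explicit version of what the paper leaves implicit.
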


\begin{proof}
Let $a,c$ be generic in the closed ball $b_{n}$, so that in particular they are at  at valuative distance $\gamma_{n}$ from each other. For fixed
$0<\mu\in\mathbf{N}\left[\sigma\right]$, the Taylor expansion reads:
\[
f_{\mu}a-f_{\mu}c=\sum{\nu \choose \mu}\cdot f_{\nu}c\cdot\left(a-c\right)^{\nu-\mu}
\]
the summation taken over all those $\nu$ strictly digitwise dominating
$\mu$, per the analysis of the binomial coefficients. The valuation
of the summand corresponding to $\nu$ on the right is equal to $\beta_{\nu}+\gamma_{n}\cdot\left(\nu-\mu\right)$;
since $a$ was chosen generic relative to $c$ we find that the valuation
of the right hand side is equal to the minimal amongst these. This
must also be the valuation of the left hand side, which is, in turn,
the difference between a pair of elements of valuation $\beta_{\mu}$;
its valuation is therefore at least that. Combining these two facts
gives the inequality:
\[
\beta_{\mu}\leq\beta_{\nu}+\left(\nu-\mu\right)\cdot\gamma_{n}
\]
We therefore have a weak inequality for some $n$; since the sequence
of valuative radii is strictly increasing, a strict inequality must
ultimately hold, and we conclude.
\end{proof}

\subsubsection{~\label{703}}

The dominant exponent $\rho\in I$ is characterized by the property
that $\beta_{\rho}+\rho\cdot\gamma_{n}<\beta_{\mu}+\mu\cdot\gamma_{n}$
for any other exponent $\mu\in I$, at least as soon as $n$ is sufficiently
large. Given an exponent $\mu\in I$ other than $\rho$ it is possible
that a sharper statement is true, namely that we have $\beta_{\rho}+\rho\cdot\gamma_{n}<\beta_{\mu}+\mu\cdot\gamma_{m}$
for \textit{all} $n$ simultaneously, as soon as $m$ is chosen sufficiently
large, independently of $n$. Let us then say that $\mu$ is \textit{asymptotically
negligible}; otherwise, say that it is \textit{asymptotically dominant}.

\subsubsection{~}

For each $n$, let an element $a_{n}$ in the closed ball $b_{n}$
be chosen. Let also $N$ be so large so as to exceed each of the natural
numbers whose existence is hinted of in \ref{6.4.1} and in \ref{703}.
The Taylor expansion reads:
\[
fa_{n}=fa_{N}+\sum_{\mu\in I}f_{\mu}a_{N}\cdot\left(a_{n}-a_{N}\right)^{\mu}
\]
We have $vfa_{n}=\beta_{\rho}+\rho\cdot\gamma_{n}$, the quantity
$\rho\in I$ being here the dominant exponent; this must also be the
valuation of the right hand side. On the other hand, the valuation
of the summand corresponding to $\mu$ on the right is equal to $\beta_{\mu}+\mu\cdot\gamma_{N}$.
If $\mu$ is asymptotically negligible, then by our choice of $N$
and the definition of $\rho$ we have $\beta_{\rho}+\rho\cdot\gamma_{n}<\beta_{\mu}+\mu\cdot\gamma_{N}$
regardless of the choice of $n$. From the right hand side, the summand
corresponding to $\mu$ can be ejected, the valuation of the left
hand side remaining otherwise unaltered. In other words, letting

\begin{equation}
gx=fa_{N}+\sum_{\mu\in I'}f_{\mu}a_{N}\cdot\left(x-a_{N}\right)^{\mu}\label{truncateddiff}
\end{equation}
be the displayed truncation, with the summation taken only over the
subset $I'\subset I$ consisting of asymptotically dominant exponents,
we find that $vga_{n}=vfa_{n}$ for all $n>N$, so that $vga_{n}$
fails to stabilize as $n\to\infty$. Now we assert that the difference
polynomial $gx$ is, in fact, precisely of the form promised by the
statement of Proposition \ref{additivediff}, up to a scalar multiple.
This is a consequence of the Claims below.
\begin{claim}
Notation as above. Then the dominant exponent $\rho$ is \textit{not}
equal to $1$.
\end{claim}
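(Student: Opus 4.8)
The aim is to reach a contradiction from the assumption $\rho=1$: namely, to produce a $K$-rational root of $f$ lying in $\bigcap_{n}b_{n}$, which is impossible by the defining property of the family $\left(b_{n}\right)$. I may assume $f'\neq 0$, since otherwise $1\notin I$ and there is nothing to prove; note also that Lemma \ref{digitwise} does not by itself exclude $\rho=1$, since $1$ is a transformal $p$-th power, so a genuinely extra argument is needed here.

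The first step is to describe the behaviour of $f$ and $f'$ near the ``end'' of the nest. Set $\gamma_{\infty}=\sup_{n}\gamma_{n}$ and, for $\lambda<\gamma_{\infty}$, let $b_{\lambda}$ be the closed $K$-ball of valuative radius $\lambda$ containing the cofinal tail of the $b_{n}$; these form a nested family of $K$-balls with $\bigcap_{\lambda<\gamma_{\infty}}b_{\lambda}=\bigcap_{n}b_{n}$. By minimality of $\deg f$ the quantities $vf_{\mu}b_{n}$ stabilize for $\mu>0$ (see \ref{6.4.1}); combined with concavity of the transformal Herbrand function (Lemma \ref{lem-transformal-herbarnd-concave}) this yields $vf'b_{\lambda}=\beta_{1}$ and, since $\rho=1$ is dominant, $vfb_{\lambda}=\beta_{1}+\lambda$ for all $\lambda$ in a left-neighbourhood of $\gamma_{\infty}$. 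Thus near the end of the nest $f$ behaves \emph{linearly}: $vfb_{\lambda}-vf'b_{\lambda}=\lambda$.

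The second step is a Newton extraction. For $\lambda$ close to $\gamma_{\infty}$ I would choose a $K$-point $a\in b_{\lambda}$ that is generic with respect to $f'$ — such a point exists since $K$ is algebraically closed, hence $b_{\lambda}$ is not a finite union of proper subballs — so that $vf'a=\beta_{1}$ while $vfa\geq vfb_{\lambda}=\beta_{1}+\lambda$. If $fa=0$ we are done; otherwise, provided $\lambda>\beta_{1}$, we get $vfa>2vf'a$, and Newton's Lemma (Lemma \ref{newton}, applicable because $K$ is transformally Henselian) produces $b\in K$ with $fb=0$ and $v\left(a-b\right)=vfa-vf'a\geq\lambda$, so that $b\in b_{\lambda}$. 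The inequality $\lambda>\beta_{1}$ can always be arranged: rescaling the variable $x\mapsto tx$ replaces $\gamma_{n}$ by $\gamma_{n}-vt$ and $\beta_{1}$ by $\beta_{1}+vt$, hence enlarges $\gamma_{\infty}-\beta_{1}$ at will, and a subsequent scaling of $f$ by a constant normalizes $\beta_{1}\geq 0$; with $\beta_{1}\geq 0$ and $\rho=1$-dominance, translating $a$ to the origin makes the coefficients of $f$ integral, so that Lemma \ref{newton} applies in the form stated.

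Running this extraction over a cofinal set of radii $\lambda<\gamma_{\infty}$ will exhibit a $K$-root of $f$ in each member of the nested family $\left(b_{\lambda}\right)$; by scatteredness (Theorem \ref{Scatteredness}(2)) there is then a $K$-root of $f$ in $\bigcap_{\lambda}b_{\lambda}=\bigcap_{n}b_{n}$, contradicting the choice of $\left(b_{n}\right)$. Hence $\rho\neq 1$. The only delicate point is the bookkeeping with the rescalings needed to fit the hypotheses of Lemma \ref{newton}; the conceptual mechanism — a dominant \emph{linear} term forces the immediate extension to already contain the relevant root, via Hensel/Newton — is otherwise routine, as is all the valuation-theoretic input (stabilization of $vf_{\mu}b_{n}$, concavity and piecewise transformal affineness of the Herbrand function, existence of generic $K$-points in $K$-balls, and scatteredness), which is available from the preceding sections.
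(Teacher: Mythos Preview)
Your approach is the paper's: assuming $\rho=1$, the dominance of the linear term together with transformal Henselianity yields a $K$-root of $f$ in each $b_n$ for $n$ large, and scatteredness (Theorem \ref{Scatteredness}(2)) then places a $K$-root in $\bigcap_n b_n$, contradicting the setup. The paper's proof is a two-line version of exactly this.

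There is, however, a gap in your normalization. The rescaling $x\mapsto tx$ with $vt<0$ sends $\beta_\mu$ to $\beta_\mu+\mu\,vt$; after scaling $f$ by a constant to achieve $\beta_1=0$ one finds $\beta_\mu^{\text{new}}=(\beta_\mu-\beta_1)+(\mu-1)vt$, which for $\mu>1$ and $vt\ll 0$ is very negative. So translating $a$ to the origin does \emph{not} make all coefficients integral, and Lemma \ref{newton} in its literal form (which requires $f\in\mathcal{O}_K[x]_\sigma$) does not apply. The fix is to drop the rescaling and perform the change of variables from the \emph{proof} of Lemma \ref{newton} directly: with $h=f(a)/f'(a)$ and $g(y)=f(a+hy)/f(a)$, the coefficient of $y^\nu$ in $g$ has valuation $(\beta_\nu+\nu\lambda)-(\beta_1+\lambda)$, and this is strictly positive for every $\nu\neq 0,1$ \emph{precisely} by the $\rho=1$-dominance inequality $\beta_1+\lambda<\beta_\nu+\nu\lambda$. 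Hence $g(y)=1+y+(\text{coefficients in }\mathcal{M})$, transformal Hensel's lemma applies to $g$, and one recovers a root of $f$ at valuative distance $vh=\lambda$ from $a$, i.e.\ inside $b_\lambda$. No inequality $\lambda>\beta_1$ and no integrality of $f$ are needed; dominance alone does the work, and this is what the paper's terse invocation of Lemma \ref{newton} is implicitly using.
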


\begin{proof}
Suppose to the contrary that $\rho=1$. We have $vfa_{n}=f_{1}a_{n}+\gamma_{n}$
for $n\gg0$. By transformal Henselianity and Lemma \ref{newton}
an element $t_{n}\in\mathcal{O}_{K}$ is found with $v\left(a_{n}-t_{n}\right)=\gamma_{n}$
and $ft_{n}=0$; in fact, these constraints uniquely determine $t_{n}$.
There is therefore a root of $fx$ in cofinally many of the balls
- this contradicts scatteredness!
\end{proof}
\begin{claim}
\label{transformal-pth-powers}Let $\mu_{0},\mu_{1}$ be the smallest
and largest transformal $p$-th powers in $I'$, respectively. Then
after twisting we have $\mu_{0}=1$ whereas $\mu_{1}$ is an infinite
transformal $p$-th power. In particular, they are distinct.
\end{claim}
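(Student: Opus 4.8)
\textbf{Proof of Claim \ref{transformal-pth-powers} (outline).} The plan is to first cut $I'$ down to transformal $p$-th powers, and then extract the two assertions by two separate contradiction arguments: one playing the minimality of $\deg f$ against the failure of $vfb_{n}$ to stabilize, the other doing the same with scatteredness. For the preliminary reduction I would show that any $\mu\in I$ which is not a transformal $p$-th power is asymptotically negligible, hence lies outside $I'$. Indeed $\rho$ is the $\prec$-least element of $I$ and is a transformal $p$-th power by Lemma \ref{digitwise}, so $\rho\prec\mu$ and hence $\rho<\mu$ in $\mathbf{N}\left[\sigma\right]$; unwinding $\rho\prec\mu$ gives $\beta_{\rho}-\beta_{\mu}<\left(\mu-\rho\right)\gamma_{n}<0$ for all large $n$, whence $\beta_{\rho}<\beta_{\mu}$ strictly, and a routine estimate using that the $\gamma_{n}$ increase then produces a fixed $m$ with $\beta_{\rho}+\rho\gamma_{n}<\beta_{\mu}+\mu\gamma_{m}$ for every $n$ --- the definition of asymptotic negligibility. (Should the radii $\gamma_{n}$ fail to be bounded above, no exponent is asymptotically negligible and $I'=I$; the rest of the argument then applies verbatim.) Thus $I'$ consists of transformal $p$-th powers, and in particular $\rho\in I'$.

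That $\mu_{1}$ is infinite I would get as follows. If it were finite, then every element of $I'$ would have $\sigma$-degree zero, so after the affine change $x\mapsto x-a_{N}$ and a scalar the truncation $gx$ of \ref{truncateddiff} is an additive \emph{algebraic} polynomial over $K$. Since $K$ is algebraically closed it splits, $gx=c\prod_{i}(x-r_{i})$ with $r_{i}\in K$; because $\bigcap_{n}b_{n}$ has no $K$-point each $r_{i}$ leaves $b_{n}$ for large $n$, so $v(a_{n}-r_{i})$ is eventually constant in $n$ and therefore $vga_{n}=\sum_{i}v(a_{n}-r_{i})$ stabilizes, contradicting $vga_{n}=vfa_{n}$ for $n>N$. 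Hence $I'$ contains a transformal $p$-th power of positive $\sigma$-degree; such an exponent is $\gg1$, so the largest element $\mu_{1}$ of $I'$ is infinite.

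It remains to achieve $\mu_{0}=1$ after twisting. First $\mu_{0}$ must have $\sigma$-degree zero: otherwise every exponent of $gx$ is divisible by $\sigma$, and since $K$ is inversive and $\sigma$ is additive one may write $gx=c_{0}+H(x)^{\sigma}$, and then, choosing $e\in K$ with $e^{\sigma}=c_{0}$, even $gx=(e+H(x))^{\sigma}$, where $e+H$ is a difference polynomial of degree $\deg g/\sigma<\deg f$. Then $vga_{n}=\sigma\cdot v\left((e+H)(a_{n})\right)$, so $e+H$ is a smaller-degree difference polynomial whose value along the $b_{n}$ fails to stabilize, contradicting the minimality of $\deg f$. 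So $\mu_{0}=p^{m_{0}}$ for some $m_{0}\in\mathbf{N}$. Finally I apply the Frobenius twist $\sigma\rightsquigarrow\sigma\circ\phi^{m_{0}}$, which raises the $p$-exponent of every element of $I'$ of positive $\sigma$-degree to at least $m_{0}$ while fixing $p^{m_{0}}$, and then replace $gx$ by $g(x)^{1/p^{m_{0}}}$ --- a legitimate difference polynomial since $K$ is perfect, and a harmless change because $vg=p^{m_{0}}\cdot v(g^{1/p^{m_{0}}})$. This divides every exponent of $gx$ by $p^{m_{0}}$, carrying $\mu_{0}=p^{m_{0}}$ to $1$ while keeping it least in the new exponent set. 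Hence $\mu_{0}=1$ after twisting, and as $\mu_{1}\gg1$ the exponents $\mu_{0}$ and $\mu_{1}$ are distinct.

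The step I expect to be the main obstacle is the bookkeeping around these reductions: at each move --- factoring out $\sigma$ from $gx$, Frobenius twisting, or extracting a $p^{m_{0}}$-th root --- one must check that what results is genuinely a difference polynomial over $K$ of no larger degree whose value along the nest $\left(b_{n}\right)$ still fails to stabilize, and that the data fixed in \ref{6.4.1} (in particular the minimality of $\deg f$ and the relations satisfied by the $\beta_{\mu}$ and $\gamma_{n}$) survive intact, so that scatteredness (Theorem \ref{Scatteredness}) and transformal Henselianity may be reused without circularity.
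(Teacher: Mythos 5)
Your ``preliminary reduction'' --- that every $\mu\in I$ which is not a transformal $p$-th power is asymptotically negligible --- is not available at this stage, and asserting it here makes the argument circular. That statement is precisely the content of Claim \ref{claim614}, whose proof in the paper runs as follows: one first establishes Claim \ref{transformal-pth-powers}, then uses the distinctness of $\mu_{0}$ and $\mu_{1}$ (one of which is therefore $\gg 1$) together with Lemma \ref{t-Arch-cuts-rational} to conclude that the cut $X$ determined by the radii $\gamma_{n}$ is rational, i.e. $\gamma_{n}\to\gamma$ for some $\gamma\in\Gamma$; only once this limit is known to live in $\Gamma$ does the ``by continuity'' passage deliver asymptotic negligibility of the non-$p$-th-power exponents. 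Your sketch of the reduction already signals the problem: the inequality $\beta_{\rho}-\beta_{\mu}<(\mu-\rho)\gamma_{n}<0$ presumes $\gamma_{n}<0$, which is a normalization made only \emph{after} \ref{claim614}, and the ``routine estimate'' you invoke to pass from $\rho\prec\mu$ for all large $n$ to a single $m$ working uniformly in $n$ is exactly what fails when the cut $\sup_{n}\gamma_{n}$ is not yet known to be realized in $\Gamma$: the two cuts $\beta_{\rho}+\rho X$ and $\beta_{\mu}+\mu X$ can coincide without $\rho=\mu$ when $X$ is irrational. Since your arguments for both $\mu_{1}$ infinite and $\mu_{0}=1$ explicitly lean on ``$I'$ consists of transformal $p$-th powers,'' the gap propagates.

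The paper sidesteps this by using only the \emph{downward closure of $I'$ under digitwise domination}, which follows directly from Lemma \ref{digitwise} and the definitions without any control on the limit: $I$ is downward closed because $f_{\mu}\neq 0$ iff some exponent of $f$ digitwise dominates $\mu$, and if $\nu\in I'$ strictly digitwise dominates $\mu\in I$ then $\mu\prec\nu$ forces $\mu$ to be asymptotically dominant too. This is much weaker than ``$I'$ consists of transformal $p$-th powers'' but is enough: some $\nu\in I'$ has nonzero constant term (your factor-out-$\sigma$ argument, which is the paper's too), hence digitwise dominates an honest power of $p$ lying in $I'$; and $g$ is not algebraic (your splitting argument correctly unpacks the paper's terse assertion), so some $\nu\in I'$ exceeds $\sigma$ and digitwise dominates an infinite transformal $p$-th power lying in $I'$. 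Your remaining mechanics --- factoring out $\sigma$ by inversivity, ruling out algebraic $g$ via splitting and the emptiness of $\bigcap b_{n}\cap K$, normalizing $\mu_{0}$ by a Frobenius twist and a $p^{m_{0}}$-th root --- are correct and in the spirit of the paper, and they would go through if you replaced the circular preliminary reduction by the downward-closure argument.
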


\begin{proof}
First off, it is impossible that every exponent in $I'$ has a vanishing
constant term. For, using the fact that $K$ is inversive, we may
write:
\[
ga_{n}=\sum c_{\mu}a_{n}^{\mu}=\left(\sum c_{\mu}^{\frac{1}{\sigma}}a_{n}^{\frac{\mu}{\sigma}}\right)^{\sigma}
\]
so that the difference polynomial $hx=\sum c_{\mu}^{\frac{1}{\sigma}}x^{\frac{\mu}{\sigma}}$
has strictly smaller degree, and yet the quantity $vha_{n}$ fails
to stabilize as $n\to\infty$, contradicting the minimality of the degree of $fx$ (and hence of $gx$). It follows
that some exponent in $I'$ has a nonzero constant term, so it is
digitwise dominated by some natural number, namely the largest power
of $p$ dividing that term. By Lemma \ref{digitwise}, the set $I'$
is downwards closed with respect to digitwise domination, and it follows
that there is some $\mu_{0}\in I'$ which is an honest power of $p$.
By the same argument, after twisting, we may assume that $\mu_{0}=1$.

By assumption, the field $K$ is algebraically closed, so the difference
polynomial $gx$ cannot possibly be algebraic: there is some exponent
$\nu\in I'$ which is at least as large as $\sigma$, with respect
to the usual ordering on $\mathbf{N}\left[\sigma\right]$. It digitwise
dominates some infinite transformal $p$-th power, and this transformal
$p$-th power will do; here, again, we used the fact that $I'$ is
closed downwards with respect to digitwise domination.
\end{proof}
\begin{claim}
\label{claim614} Let $\gamma\in\Gamma$ be the unique element satisfying
$\beta_{\mu_{0}}+\mu_{0}\cdot\gamma=\beta_{\mu_{1}}+\mu_{1}\cdot\gamma$,
where $\mu_{0},\mu_{1}$ are given by Claim \ref{transformal-pth-powers}.
Then $\gamma_{n}\to\gamma$ as $n\to\infty$. Furthermore, every element
of $I'$ is a transformal $p$-th power.
\end{claim}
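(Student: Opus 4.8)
\textbf{Plan of proof for Claim \ref{claim614}.}

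The plan is to analyze the truncated additive-type difference polynomial $gx$ from \eqref{truncateddiff} and show that its surviving exponents are forced to be transformal $p$-th powers, then extract the limit of the radii $\gamma_n$ from the two extremal such powers. First I would recall that by Lemma \ref{digitwise} the relation $\prec$ refines digitwise domination, so the set $I'$ of asymptotically dominant exponents is downwards closed under digitwise domination; in particular the dominant exponent $\rho$ is a transformal $p$-th power (and $\rho\neq 1$ by the first Claim above). The key mechanism is that for $n>N$ we have $vga_n = vfa_n = \beta_\rho + \rho\cdot\gamma_n$, and this must equal the valuation of the right-hand side of \eqref{truncateddiff}, namely $\min_{\mu\in I'}\{\beta_\mu + \mu\cdot\gamma_N\}$ together with the contribution $vfa_N = \beta_\rho+\rho\cdot\gamma_N$ of the constant term. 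Since $\gamma_n$ is strictly increasing and unbounded in the cut while the right-hand-side quantities $\beta_\mu+\mu\cdot\gamma_N$ are fixed once $N$ is fixed, the only way $\beta_\rho+\rho\cdot\gamma_n$ can fail to stabilize is a balancing between the dominant exponent $\rho$ and (the images under translation by $a_N$ of) other exponents; running this over all $N$ shows that the valuation of $gx$ on the ball $b_\gamma$ of radius $\gamma$ is governed by precisely those $\mu\in I'$ for which $\beta_\mu + \mu\cdot\gamma$ is minimal.

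Next I would pin down $\gamma$. Let $\mu_0=1$ and $\mu_1$ the largest transformal $p$-th power in $I'$, as produced (after twisting) by Claim \ref{transformal-pth-powers}; these are distinct. Let $\gamma$ be the unique solution of $\beta_{\mu_0}+\mu_0\gamma = \beta_{\mu_1}+\mu_1\gamma$, which exists and is unique in $\Gamma = \Gamma\otimes\mathbf{Q}(\sigma)$ by transformal divisibility. Since the transformal Herbrand function of $gx$ above any $b_{\gamma_n}$ is strictly increasing (it inherits this from $fx$, as $vga_n=vfa_n$ fails to stabilize and $g$ is of minimal degree), and since a strictly increasing piecewise transformally affine concave function on $(-\infty,\gamma_0)$ whose slopes realize both $\mu_0$ and $\mu_1$ must have all its singular points $\leq \gamma$, one deduces $\gamma_n\to\gamma$: for $\gamma_n$ below $\gamma$ the balls $b_{\gamma_n}$ all contain the (unique) ball $b_\gamma$, and the hypothesis that the intersection of the $b_{\gamma_n}$ has no $K$-point forces the $\gamma_n$ to be cofinal in the cut $(-\infty,\gamma)$ — otherwise they would stabilize and the relevant Herbrand function would be eventually constant, contradicting non-stabilization of $vga_n$. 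This is exactly the assertion $\gamma_n\to\gamma$.

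Finally, to see that \emph{every} element of $I'$ is a transformal $p$-th power: suppose $\nu\in I'$ is not. Then $\nu$ has a $p$-digit expansion with at least two nonzero digits, so $\nu$ strictly digitwise dominates at least two distinct transformal $p$-th powers, and by downward closure of $I'$ under digitwise domination these lie in $I'$ too. I would then argue that such a $\nu$ cannot be asymptotically dominant after the truncation: the binomial-coefficient computation of \ref{digitwspdom} shows that whenever $\mu'$ is a transformal $p$-th power strictly digitwise dominated by $\nu$, the monomial $\left(x-a_N\right)^{\mu'}$ already appears with the ``right'' valuation, and the genuinely non-additive monomial $\left(x-a_N\right)^{\nu}$ is dominated on $b_\gamma$ by the product/convolution contributions of its digit-factors — concretely, on the ball of radius $\gamma$ where $\beta_{\mu_0}+\mu_0\gamma=\beta_{\mu_1}+\mu_1\gamma$ is minimal, an exponent with two nonzero digits contributes valuation $\beta_\nu+\nu\gamma$, and minimality of the degree of $g$ together with the balancing at $\gamma$ forces $\beta_\nu+\nu\gamma > \beta_\rho+\rho\gamma$, i.e. $\nu$ is asymptotically negligible and should have been truncated away. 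The main obstacle I anticipate is precisely this last step: making rigorous that a non-$p$-power exponent cannot survive requires carefully tracking, via the Taylor expansion \eqref{eq:taylor} and the digitwise analysis, the interplay between the stabilized values $\beta_\mu$ and the balancing point $\gamma$, rather than just the asymptotics along the $\gamma_n$; the twisting normalizations from Claim \ref{transformal-pth-powers} ($\mu_0=1$) are what make the bookkeeping tractable.
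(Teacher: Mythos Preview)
Your plan has a genuine gap in the first step, namely the proof that $\gamma_n\to\gamma$. You argue via properties of the Herbrand function and the hypothesis that the intersection has no $K$-point, concluding that the $\gamma_n$ must be cofinal in $(-\infty,\gamma)$. But this presupposes that the cut determined by the $\gamma_n$ is the rational cut $(-\infty,\gamma)$; you have not ruled out that the $\gamma_n$ are cofinal in some \emph{irrational} cut of $\Gamma$, or in $(-\infty,\gamma')$ for some $\gamma'<\gamma$. Your sentence ``otherwise they would stabilize and the relevant Herbrand function would be eventually constant'' is not correct: a strictly increasing sequence with irrational supremum does not stabilize, and the Herbrand function can remain strictly increasing along it. The paper's proof closes this gap by a completely different mechanism: letting $X$ be the downward closure of $\{\gamma_n\}$, the asymptotic dominance of \emph{both} $\mu_0$ and $\mu_1$ forces the cut equation $\beta_{\mu_0}+\mu_0\cdot X=\beta_{\mu_1}+\mu_1\cdot X$, and then Lemma~\ref{t-Arch-cuts-rational} (which uses the transformally Archimedean hypothesis in an essential way) forces $X$ to be rational. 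The cases $X=\Gamma$ and $X=(-\infty,\gamma]$ are then excluded using transformal Henselianity (via Lemma~\ref{complete-t-henselian}) and the strict increase of $\gamma_n$, respectively. You never invoke Lemma~\ref{t-Arch-cuts-rational}, and without it I do not see how to get rationality of the cut.

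For the second assertion, your instinct that this is the delicate point is misplaced once the first part is in hand: the paper's argument is short and does not use minimality of the degree of $g$ at all. Given $\gamma_n\to\gamma$, take $\nu\in I$ not a transformal $p$-th power and a transformal $p$-th power $\mu$ strictly digitwise dominated by $\nu$. Lemma~\ref{digitwise} gives $\beta_\mu+\mu\gamma_m<\beta_\nu+\nu\gamma_m$ for large $m$; since $\mu<\nu$ this persists at the limit, yielding a strict gap $\beta_\mu+\mu\gamma<\beta_\nu+\nu\gamma$. Then $\gamma_m\to\gamma$ gives $\beta_\mu+\mu\gamma<\beta_\nu+\nu\gamma_m$ for $m$ large, and since $\gamma_n<\gamma$ for all $n$ one gets $\beta_\mu+\mu\gamma_n<\beta_\nu+\nu\gamma_m$ for all $n$, i.e.\ $\nu$ is asymptotically negligible. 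Your proposed route via ``balancing at $\gamma$ and minimality of degree'' is unnecessary here and, as you yourself note, not obviously rigorous.
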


\begin{proof}
Let $X$ be the downwards closure of the sequence $\left(\gamma_{m}\right)_{m=0}^{\infty}$.
The fact that $\mu_{0},\mu_{1}$ are asymptotically dominant implies
that we have the equality $\beta_{\mu_{0}}+\mu_{0}\cdot X=\beta_{\mu_{1}}+\mu_{1}\cdot X$.
By Lemma \ref{t-Arch-cuts-rational} we have either $X=\Gamma$ or
$X=\left(-\infty,\gamma\right)$ or else $X=\left(-\infty,\gamma\right]$.
The third case is ruled out since $X$ has no maximal element. The
first case is ruled out since $K$ is transformally Henselian and
hence transformally algebraically closed in its completion (\ref{complete-t-henselian}).

The Claim now easily follows. Let $\gamma$ be the limit of the sequence
$\left(\gamma_{m}\right)_{m=0}^{\infty}$. Suppose that $\nu\in I$
is not a transformal $p$-th power, and let $\mu\in I$ be a transformal
$p$-th power strictly digitwise dominated by $\nu$. By Lemma \ref{digitwise}
we have $\beta_{\mu}+\mu\cdot\gamma_{m}<\beta_{\nu}+\nu\cdot\gamma_{m}$
for $m\gg0$. Since $\mu<\nu$, the right hand side increases faster;
hence the inequality $\beta_{\mu}+\mu\cdot\gamma<\beta_{\nu}+\nu\cdot\gamma$
is all the more true. By continuity, we have $\beta_{\mu}+\mu\cdot\gamma<\beta_{\nu}+\nu\cdot\gamma_{m}$
for $m\gg0$, so a fortiori $\beta_{\mu}+\mu\cdot\gamma_{n}<\beta_{\nu}+\nu\cdot\gamma_{m}$
for all $n$, which implies that $\nu$ is asymptotically negligible.

This shows that every member of $I'$ is a transformal $p$-th power,
and the fact that $\gamma$ is as promised follows by the same argument.
\end{proof}
By Claim \ref{claim614}, the difference polynomial $gx$ given by
\ref{truncateddiff} is an additive difference polynomial. Since $\Gamma=\Gamma\otimes\mathbf{Q}\left(\sigma\right)$,
replacing each $a_{n}$ by a scalar multiple, we may assume that $\gamma_{n}\to0$
as $n\to\infty$. The difference polynomial $gx$ is then precisely
as promised by the statement of \ref{additivediff}. 

\newpage{}

\section{\label{sec:Transformally-Archimedean-Ultrap}Transformally Archimedean
Ultrapowers}

The transformally Archimedean ultrapower, defined on the category
of transformally Archimedean ordered modules as in \ref{UltrapowerOGA},
can be lifted to the category of transformally Archimedean valued
difference fields, as follows. Fix a nonprincipal ultrafilter $\mathcal{F}$
on $\mathbf{N}$. Let $K$ be a model of $\VFA$ which is transformally
Archimedean and let $K^{\star}$ be the ultrapower of $K$ with respect
to $\mathcal{F}$. To the embedding of $K$ in $K^{\star}$, a pair
of transformally prime ideals of $\mathcal{O}^{\star}$ are attached
in an invariant manner:
\begin{itemize}
\item The collection of all those transformally prime ideals $I$ of $\mathcal{O}^{\star}$
with $I\cap\mathcal{O}=0$ contains the zero ideal and closed downwards
with respect to inclusion; let $\mathfrak{p}$ be the union.
\item Dually, the collection of all transformally prime ideals $J$ of $\mathcal{O}^{\star}$
with $J\cap\mathcal{O}=\mathcal{M}$ contains $\mathcal{M}^{\star}$
and is closed upwards with respect to inclusion; let $\mathfrak{q}$
be the intersection.
\end{itemize}
Let $\mathcal{O}^{\mathcal{F}}$ be obtained from $\mathcal{O}^{\star}$
by localizing away from $\mathfrak{q}$ and factoring out $\mathfrak{p}$.
Then the map $\mathcal{O}\to\mathcal{O}^{\mathcal{F}}$ is an injective
local homomorphism of difference rings; if $K^{\mathcal{F}}$ is the
field of fractions of $\mathcal{O}^{\mathcal{F}}$, then $K^{\mathcal{F}}$
is a model of $\VFA$ over $K$. Using the correspondence between
transformally prime ideals of $\mathcal{O}^{\star}$ and convex transformal
submodules of $\Gamma^{\star}$, one checks that the valuation group
of $\mathcal{O}^{\mathcal{F}}$ is the transformally Archimedean ultrapower
$\Gamma^{\mathcal{F}}$ of $\Gamma$. The fact that the process of
killing infinitesimals and passing to convex hulls commute is reflected
here at the level of rings by the fact that localizations and quotients
do. 

We therefore arrive at:
\begin{defn}
Let $K$ be a model of $\VFA$ which is transformally Archimedean.
The \textit{transformally Archimedean ultrapower} of $K$ with respect
to $\mathcal{F}$ is the field $K^{\mathcal{F}}$over $K$ defined
as above.
\end{defn}

\begin{lem}
\label{countably-sph-complete}Let $\mathcal{C}$ be the class of
valued fields with the following property: every countable nested
family of closed balls has a nonempty intersection. Then for a triplet $\left(K_{20}, K_{21}, K_{10}\right)$ of valued fields we have $K_{20} \in \mathcal{C} \Rightarrow K_{21}, K_{10} \in \mathcal{C}$
\end{lem}

\begin{rem}
    In fact, the class $\mathcal{C}$ is transitive in triplets, but we will not need this.
\end{rem}

\begin{proof}
Let $\left(K_{20},K_{21},K_{10}\right)$ be a triplet. We will prove
that $K_{20}\in\mathcal{C}\Rightarrow K_{21},K_{10}\in\mathcal{C}$. Every
ball $b$ contained in $\mathcal{O}_{10}$ is the homomorphic image
of a closed ball $b'$ in $\mathcal{O}_{20}$ under the residue map;
the valuative radii can moreover be taken equal if $\Gamma_{10}$
is identified with a subgroup of $\Gamma_{20}$. So $K_{20}\in\mathcal{C}\Rightarrow K_{10}\in\mathcal{C}$.
Similarly there is a map from closed balls $b$ in $K_{20}$ to closed
balls $b'$ in $K_{21}$: the center of $b'$ is the center of $b$
and the valuative radius $\gamma'$ of $b'$ is the image of $\gamma\in\Gamma_{20}$
in $\Gamma_{21}$. This preserves inclusions so $K_{21}\in\mathcal{C}$
provided that $K_{20}\in\mathcal{C}$.
\end{proof}
\begin{lem}
\label{EC-TA-Ultrapower}Let $K$ be a model of $\VFA$ which
is algebraically closed and transformally Archimedean and $K^{\mathcal{F}}$
the transformally Archimedean ultrapower of $K$.

(1) The field $K^{\mathcal{F}}$ is algebraically closed and transformally
Archimedean. Moreover it obeys $\Gamma^{\mathcal{F}}=\Gamma^{\mathcal{F}}\otimes\mathbf{Q}\left(\sigma\right)$.

(2) The field $K^{\mathcal{F}}$ has no immediate transformally algebraic
extensions; in particular it is transformally Henselian. Furthermore,
the field $K^{\mathcal{F}}$ is complete.
\end{lem}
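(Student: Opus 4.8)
The plan is to deduce both clauses from the structural results already established, using Corollary \ref{no-immediate-countably} as the key engine. First I would dispose of (1). The transformally Archimedean ultrapower $K^{\mathcal{F}}$ is constructed as the field of fractions of a localization-quotient of the ordinary ultrapower $K^{\star}$; since the ordinary ultrapower of an algebraically closed valued field is algebraically closed, and algebraic closedness is preserved under the relevant localization and residue operations on the valuation ring (a localization of a valuation ring inside its fraction field is again a valuation ring, and the residue field of such a coarsening of an algebraically closed valued field is algebraically closed), the field $K^{\mathcal{F}}$ is algebraically closed. That $K^{\mathcal{F}}$ is transformally Archimedean is exactly Lemma \ref{transformarch}(1) applied at the level of the valuation group, together with the identification (made in the discussion preceding the Lemma) of $\Gamma_{K^{\mathcal{F}}}$ with $\Gamma^{\mathcal{F}}$. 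The divisibility statement $\Gamma^{\mathcal{F}} = \Gamma^{\mathcal{F}} \otimes \mathbf{Q}(\sigma)$ is Lemma \ref{transformarch}(2): since $K$ is algebraically closed its valuation group $\Gamma$ is algebraically divisible, so the hypothesis of that Lemma is met.

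For (2), the main point is to verify the hypotheses of Corollary \ref{no-immediate-countably} for $K^{\mathcal{F}}$: it asks that the field be algebraically closed and transformally Archimedean, that $\Gamma = \Gamma \otimes \mathbf{Q}(\sigma)$, and that every \emph{countable} nested family of closed balls have nonempty intersection. The first three are (1). For the fourth, I would invoke the countable spherical completeness of ultrapowers with respect to a nonprincipal ultrafilter on $\mathbf{N}$: the ordinary ultrapower $K^{\star}$ has the property that every countable nested family of closed balls has nonempty intersection (this is the $\aleph_1$-saturation of the ultrapower), and Lemma \ref{countably-sph-complete} says this property is transitive in triplets, hence passes to the coarsened/residual pieces out of which $K^{\mathcal{F}}$ is assembled — concretely, $K^{\mathcal{F}}$ sits as a residue field of a coarsening of $K^{\star}$, and Lemma \ref{countably-sph-complete} gives exactly the implication $K^{\star} \in \mathcal{C} \Rightarrow K^{\mathcal{F}} \in \mathcal{C}$. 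With all hypotheses in hand, Corollary \ref{no-immediate-countably} yields that $K^{\mathcal{F}}$ is complete and transformally algebraically maximal; in particular it has no immediate transformally algebraic extensions, and being algebraically Henselian and transformally Archimedean and complete it is transformally Henselian by Lemma \ref{complete-t-henselian}(2).

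The step I expect to be the main obstacle is the careful bookkeeping that $K^{\mathcal{F}}$ really does inherit countable spherical completeness from $K^{\star}$ through the two-step localize-then-quotient construction — that is, matching the abstract "transitive in triplets" formalism of Lemma \ref{countably-sph-complete} to the concrete description of $\mathcal{O}^{\mathcal{F}}$ as $\mathcal{O}^{\star}$ localized away from $\mathfrak{q}$ and reduced modulo $\mathfrak{p}$. One has to see that the pair $(\mathcal{O}^{\star}_{\mathfrak{q}}, \mathcal{O}^{\mathcal{F}}, \ldots)$ (or rather the triplet obtained by inserting the intermediate coarsening) is genuinely a triplet of models of $\VFA$ in the sense of the paper, so that both Lemma \ref{countably-sph-complete} and the identification of valuation groups apply. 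Once that is set up, everything else is a direct citation. I would also remark, for completeness of the proof of transformal Henselianity, that $K$ being algebraically closed forces $K^{\mathcal{F}}$ algebraically closed hence algebraically Henselian, which is the remaining hypothesis needed in Lemma \ref{complete-t-henselian}(2).
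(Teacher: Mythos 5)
Your proof is correct and follows essentially the same route as the paper's: algebraic closedness from the residue-field-of-a-coarsening description, the ordered-module facts from Lemma \ref{transformarch}, and then countable spherical completeness of $K^{\mathcal{F}}$ via $\aleph_1$-saturation of $K^{\star}$ and Lemma \ref{countably-sph-complete}, feeding into Corollary \ref{no-immediate-countably}. The "bookkeeping" point you flag is exactly what the paper's proof compresses into "by unwinding the definition of $K^{\mathcal{F}}$," so your expansion is consistent with it.
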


\begin{proof}
(1) The transformally Archimedean ultrapower of $K$ is the residue
field of an algebraically closed field, namely the field underlying
the ordinary ultrapower, so it is algebraically closed. The valuation
group of $K^{\mathcal{F}}$ is $\Gamma^{\mathcal{F}}$ which is transformally
divisible and transformally Archimedean by Lemma \ref{transformarch}.

(2) Every countable nested family of closed balls in $K^{\mathcal{F}}$
has a nonempty intersection. This this property holds for the ordinary
ultrapower, hence holds for $K^{\mathcal{F}}$ by unwinding the definition
of $K^{\mathcal{F}}$ and using Lemma \ref{countably-sph-complete}.
So we conclude using Corollary \ref{no-immediate-countably}.
\end{proof}
\begin{lem}
\label{transformallyarchbasechange}
Let $\nicefrac{L}{K}$ be an extension of transformally Archimedean,
nontrivially valued models of $\VFA$, and let $K^{\mathcal{F}}$ be
the transformally Archimedean ultrapower of $K$. Let us assume that
$K$ is transformally Henselian and that $L$ is transformally algebraic
over $K$. Then $L$ and $K^{\mathcal{F}}$ are linearly disjoint
over $K$ in $L^{\mathcal{F}}$; moreover $l$ and $k^{\mathcal{F}}$
are linearly disjoint over $k$ in $l^{\mathcal{F}}$.
\end{lem}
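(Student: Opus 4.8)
The plan is to combine the no-immediate-extensions property of the transformally Archimedean ultrapower (Lemma \ref{EC-TA-Ultrapower}) with the spherically complete domination statement of \ref{Spherically Complete Domination} and the genericity of elements over a transformally Newtonian-type base. First I would reduce to the case where $K$ is algebraically closed: pass to algebraic closures $K^a \subseteq L^a$ inside a joint extension; since $L$ is transformally algebraic over $K$, so is $L^a$ over $K^a$, and linear disjointness of $L^a$ and $(K^a)^{\mathcal{F}}$ over $K^a$ implies linear disjointness of $L$ and $K^{\mathcal{F}}$ over $K$ by transitivity (using that $K^{\mathcal{F}} \subseteq (K^a)^{\mathcal{F}}$, that $K^a$ is linearly disjoint from $K^{\mathcal{F}}$ over $K$ since the ultrapower construction is functorial and the residue/value-group data match, and that $K^a \otimes_K K^{\mathcal{F}}$ sits inside $(K^a)^{\mathcal{F}}$). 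Once $K$ is algebraically closed, transformally Henselian and transformally Archimedean, I would invoke Corollary \ref{cor-transformally-alg-no-valgp-adjunction} to get $\Gamma_L = \Gamma_K$, so the extension $\nicefrac{L}{K}$ is immediate if and only if $k_L = k$, and in general the only new data is the residue field extension $\nicefrac{l}{k}$, which is transformally algebraic.

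Next I would analyze a single element $a \in L$ over $K$. The point is that, because $K^{\mathcal{F}}$ has no immediate transformally algebraic extensions and is complete with transformally divisible value group (Lemma \ref{EC-TA-Ultrapower}), every element of $L$ is \emph{generic} over $K$ in the sense of Definition \ref{generic-definition} — this is exactly the content of Corollary \ref{newton-generic} once one knows $K^{\mathcal{F}}$ plays the role of the ambient transformally Newtonian field; more precisely, after an affine change of variables $a$ either has $va = 0$ with residue class transformally transcendental over $k$, or $va \notin \Gamma \otimes \mathbf{Q}(\sigma)$ (impossible here since $\Gamma = \Gamma_L$), or $a$ lies in an infinite intersection of closed $K$-balls with no $K$-point. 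In the first (residue-transcendental) case the quantifier-free type of $a$ over $K$ is determined by that over $k$ together with the generic-in-$\mathcal O$ axioms (Lemma \ref{generic-qf}, Lemma \ref{generic-acvf}); in the last case the quantifier-free type of $a$ over $K$ is complete and $a$ is transformally transcendental over $K$ (Remark \ref{rem-qf-generic}). Iterating coordinatewise over a transformal transcendence basis plus the tower of transformally algebraic steps, I get that $L$ is built from $K$ by a sequence of generic extensions and that the residue field $l$ is correspondingly built over $k$.

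The main work is then the linear disjointness itself. Since $K$ is transformally Henselian, by Lemma \ref{canonicalva} (via a field of representatives) the extension $\nicefrac{L}{K}$ is controlled by the residue extension $\nicefrac{l}{k}$: concretely $L$ is the transformal Henselization of $l \otimes_k K$ (after identifying $l$ with a trivially valued subfield via a field of representatives, using Remark \ref{remhens} and \ref{tunr-lifting}), and similarly $L^{\mathcal{F}}$ is the transformal Henselization of $l \otimes_k K^{\mathcal{F}}$. So it suffices to show $l$ and $K^{\mathcal{F}}$ are linearly disjoint over $k$ inside $L^{\mathcal{F}}$, and symmetrically that $l$ and $k^{\mathcal{F}}$ are linearly disjoint over $k$ — but these two statements are essentially the same, since $l$ is trivially valued and transformally algebraic over $k$, and the residue field of $K^{\mathcal{F}}$ is $k^{\mathcal{F}}$ which is transformally transcendental-free over $k$ only through transformally algebraic data. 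The disjointness of $l$ (transformally algebraic over $k$) from $k^{\mathcal{F}}$ over $k$ reduces, by the argument of Fact \ref{figalois}/Fact \ref{fact-ttranscendental-nofing}-style reasoning, to the statement that no finite $\sigma$-invariant Galois extension of $k$ embeds in $k^{\mathcal{F}}$ that wasn't already accounted for — but $k^{\mathcal{F}}$ is an elementary extension of the residue field $k$ of a model of (the relevant residue theory), and $l$ being transformally algebraic, one uses that the transformally Archimedean ultrapower does not create spurious algebraic coincidences because it is obtained from an honest ultrapower by killing infinitesimals and passing to a convex hull, operations that are transparent at the level of the residue field. I expect the genuinely delicate point to be verifying that the residue field of the transformally Archimedean ultrapower $K^{\mathcal{F}}$ is exactly $k^{\mathcal{F}}$ with no extra transformally algebraic elements over $k$ beyond those forced by $k$ itself, i.e. that an element of $l$ transformally algebraic over $k$ cannot accidentally fall into $k^{\mathcal{F}}$; this is handled by noting that such an element, being the unique Hensel lift of a residually simple root, is already in the transformal Henselization of $k(a)_\sigma$ inside any transformally Henselian extension, together with the linear disjointness of $K$-data from $\mathcal{F}$-data that holds at the level of the ordinary ultrapower and survives the localization-and-quotient that defines $K^{\mathcal{F}}$. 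Finally, the spherically complete domination of \ref{Spherically Complete Domination}, applied with $A$ a spherically complete algebraically closed subfield (using Lemma \ref{t-Newtonian-spherically complete} or Proposition \ref{spherically-complete-extension}), upgrades linear disjointness of residue fields to linear disjointness of the valued fields themselves and pins down the valued field structure on the amalgam, completing the proof.
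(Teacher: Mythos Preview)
Your approach is substantially more complicated than the paper's and contains genuine gaps.

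The paper's proof is essentially one line: assume $L$ transformally Henselian; observe that $K$ (by hypothesis) and $K^{\mathcal{F}}$ (by Lemma~\ref{EC-TA-Ultrapower}) are transformally Henselian; then apply Lemma~\ref{linearly-disjoint-unramified} with $M = K^{\mathcal{F}}$ and $N = L^{\mathcal{F}}$. The field-level linear disjointness of $L$ and $K^{\mathcal{F}}$ over $K$ inside $L^{\mathcal{F}}$ is taken as an elementary consequence of the ultrapower construction (it holds already for $L$ and $K^{\star}$ over $K$ in $L^{\star}$ by elementarity, and the passage to the transformally Archimedean quotient is trivial on the diagonal copy of $L$), and Lemma~\ref{linearly-disjoint-unramified} then delivers the residue-field disjointness.

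Your plan, by contrast, tries to rebuild this from scratch and stumbles in two places. First, you invoke Corollary~\ref{newton-generic} to conclude that every element of $L$ is generic over $K$, but that corollary requires the base to be transformally \emph{Newtonian}; here $K$ is only transformally Henselian. Your parenthetical that ``$K^{\mathcal{F}}$ plays the role of the ambient transformally Newtonian field'' does not help, since you are analysing elements of $L$ over $K$, not over $K^{\mathcal{F}}$, and in any case Lemma~\ref{EC-TA-Ultrapower} only gives that $K^{\mathcal{F}}$ is transformally algebraically maximal, not Newtonian (nothing is assumed about $k$). Second, you assert that $L$ is the transformal Henselization of $l \otimes_k K$; this says precisely that $L$ is transformally unramified over $K$, which is false in general. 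Since $K$ is not assumed transformally algebraically maximal, $L$ may well contain a proper immediate transformally algebraic extension of $K$, and then $L$ strictly contains the transformally unramified hull. Indeed, the whole purpose of this lemma in the paper's architecture (see Lemma~\ref{talg-amalgamation}) is to justify passing from $K$ to $K^{\mathcal{F}}$ \emph{precisely in order to obtain} a transformally algebraically maximal base; you are assuming the conclusion you are meant to enable.

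Finally, the detour through spherically complete domination and Proposition~\ref{spherically-complete-extension} is unnecessary: once one has Lemma~\ref{linearly-disjoint-unramified} in hand, the residue-field statement is immediate, and nothing about the valued-field structure on the amalgam needs to be pinned down for the statement of this lemma.
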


In the next lemma, the transitivity of the class of transformally Henselian models in $\VFA$ will be used without further mention (see \ref{transitivetriplets})
\begin{proof}.
We may assume that $L$ is transformally Henselian. Let $K^{\ast}$ and $L^{\ast}$ denote the ordinary ultrapowers. Then $L$ and $K^{\ast}$ are linearly disjoint over $K$ in $L^{\ast}$, since $\left(L^{\ast}, K^{\ast}\right)$ is an elementary extension of $\left(L, K\right)$ (in the language of fields with a unary predicate for a subfield).

Let $\widetilde{L}$ and $\widetilde{K}$ denote the models of $\VFA$ with difference fields $L^{\ast}$ and $K^{\ast}$ and the coarsened valuation obtained by factoring out the $\Gamma_{K}$ infinitesimals, as in the construction of the transformally Archimedean ultrapower. Then $\widetilde{L}$ and $\widetilde{K}$ are transformally Henselian and linearly disjoint over $K$, since the underlying fields are the same; using Lemma \ref{linearly-disjoint-unramified} we find that the residue fields are linearly disjoint also. Note $L$ and $K$ both inherit from $\widetilde{L}$ the trivial valuation; by unwinding the definition of the transformally Archimedean ultrapower, we find that $L$ and $K^{\mathcal{F}}$ are linearly disjoint over $K$. The "moreover" part follows from a second application of Lemma \ref{linearly-disjoint-unramified}.
\end{proof}

\newpage{}

\section{The Model Companion\label{sec:The-Model-Companion}}

\subsection{The Amalgamation Property}

The purpose of this subsection is to prove the following Theorem:
\begin{thm}
\label{TheAmalgamationProperty}(The Amalgamation Property) Let $L\hookleftarrow K\hookrightarrow M$
be models of $\VFA$.

(1) Let us assume that $K$ has no nontrivial $\text{h}$-finite $\sigma$-invariant
extensions. Then $L$ and $M$ can be jointly embedded over $K$ in
a third model of $\VFA$.

(2) Let us assume that $K$ is algebraically closed and transformally
Henselian. Then there is a model $N$ of $\VFA$ together with
embeddings $L\hookrightarrow N\hookleftarrow M$ over $K$ so as to
render $L$ and $M$ linearly disjoint over $K$ in $N$, and so as
to render $k_{L}$ and $k_{M}$ linearly disjoint over $k_{K}$ in
$k_{N}$. Similarly, let $\widetilde{\Gamma_{K}},\widetilde{\Gamma_{L}}$,
$\widetilde{\Gamma_{M}}$ and $\widetilde{\Gamma_{N}}$ denote the
transformal divisible hulls of $\Gamma_{K},\Gamma_{L},\Gamma_{M}$
respectively; then we can arrange that $\widetilde{\Gamma_{M}}\cap\widetilde{\Gamma_{L}}=\widetilde{\Gamma_{K}}$
inside $\widetilde{\Gamma_{N}}$. 
\end{thm}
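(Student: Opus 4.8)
The strategy is to reduce the amalgamation problem to a known spherically complete domination statement, so the key preliminary step is to pass to a setting where that machinery applies. First I would reduce to the case where all of $L,M$ are transformally algebraic over $K$: indeed, a general model is built from $K$ by first adjoining a transformal transcendence basis $(x_i)$ for $L/K$ and $(y_j)$ for $M/K$, declaring these tuples mutually transformally independent over $K$ (and over each other), and then finishing off over the compositum inside the algebraic closure. The transformal transcendence basis part is formal: one uses the quantifier-free type of a generic element from Section~\ref{sec:The-Transformal-Herbrand} (Definition~\ref{generic-definition}, Corollary~\ref{cor-generic-complete-type}) to see that adjoining mutually generic elements is unambiguous, so the valued and difference field structure on the amalgam is forced, and the valuation group and residue field grow in the expected (linearly disjoint) way. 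This reduces us to the case where $L$ and $M$ are transformally algebraic over $K$; by Corollary~\ref{cor-transformally-alg-no-valgp-adjunction} neither enlarges $\Gamma_K\otimes\mathbf{Q}(\sigma)$, so the condition $\widetilde{\Gamma_M}\cap\widetilde{\Gamma_L}=\widetilde{\Gamma_K}$ becomes automatic once we control the residue fields.

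Next I would use the devissage principle of Proposition~\ref{prop:t-arch-devissage}/Lemma~\ref{transitivetriplets}: since the class of transformally Henselian models and the amalgamation situation are transitive in triplets (compose valuations), induction on transformal height reduces everything to the transformally Archimedean case. Here is where the real work happens. With $K$ algebraically closed, transformally Henselian, transformally Archimedean, I would embed $K$ into its transformally Archimedean ultrapower $K^{\mathcal{F}}$ of Section~\ref{sec:Transformally-Archimedean-Ultrap}; by Lemma~\ref{EC-TA-Ultrapower} this is algebraically closed, complete, transformally Henselian, with transformally divisible value group, and crucially has \emph{no immediate transformally algebraic extensions}. Using Lemma~\ref{transformallyarchbasechange}, base-changing $L$ and $M$ separately along $K\hookrightarrow K^{\mathcal{F}}$ preserves linear disjointness of the residue field extensions; so after replacing $K$ by $K^{\mathcal{F}}$, $L$ by $L^{\mathcal{F}}$, $M$ by $M^{\mathcal{F}}$, we may additionally assume $K$ is spherically complete (or at least complete with no immediate transformally algebraic extensions), and we are trying to amalgamate two transformally algebraic extensions.

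The heart of the matter is then to invoke spherically complete domination (\ref{Spherically Complete Domination}): once $K$ is (passed to a spherically complete immediate extension, which exists by Proposition~\ref{spherically-complete-extension}, and is) algebraically closed and spherically complete, the valued field structure on the amalgam of $B$ and $C$ over $A$ is \emph{determined} by requiring that residue-field-linearly-independent tuples stay linearly independent. So I would: (i) split $L$ as a transformally unramified part $L^{\text{tunr}}/K$ followed by an immediate piece, and likewise for $M$; (ii) amalgamate the transformally unramified parts using Proposition~\ref{unramified-linearly-disjoint amalgamation} (this is exactly the statement that transformally unramified extensions amalgamate linearly disjointly provided the residue fields do, and it controls $k_L, k_M$); (iii) handle the remaining immediate transformally algebraic extensions — this is where Proposition~\ref{additivediff} and the rationality-of-cuts results of Section~\ref{sec:Wild-Ramification} enter, describing immediate transformally algebraic extensions concretely as adjunctions of roots of additive difference polynomials inside intersections of balls with rational radii — and amalgamate them using the domination statement plus scatteredness (Theorem~\ref{Scatteredness}) to ensure the balls line up. The main obstacle, and the technically delicate point, is step (iii): in positive characteristic the immediate transformally algebraic extensions are genuinely non-unique (Example~\ref{exa:non-unique-spherically}), so one cannot simply quote Kaplansky; instead one must use the precise rationality of the cut (Corollary~\ref{cor:rat-cut}) to realize each generator as generic in a definite ball over $K$, reducing its quantifier-free type to a generic ball type (Remark~\ref{rem-qf-generic}), and then the domination statement forces the amalgam. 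The hypothesis in clause (1) that $K$ has no nontrivial $\text{h}$-finite $\sigma$-invariant extensions is what guarantees, via Corollary~\ref{transformalHenselizationalg} and Proposition~\ref{criterion-unique-algclosure}, that the algebraically closed transformally Henselian hulls of $L$ and $M$ used in the reduction are unique, so the whole construction is consistent.
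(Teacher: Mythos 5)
Your proposal reverses the actual order of reduction, and this reversal opens a genuine gap. You propose to handle the transcendental part of $L/K$ first, by picking a transformal transcendence basis $(x_i)$ and ``declaring these tuples mutually transformally independent over $K$'', then cite Corollary~\ref{cor-generic-complete-type} to claim the quantifier-free type of the basis is formal. But Corollary~\ref{cor-generic-complete-type} and Definition~\ref{generic-definition} apply only to \emph{generic} elements over $K$; an arbitrary transformally transcendental element of $L$ over an arbitrary algebraically closed transformally Henselian $K$ need not be generic. It could, for instance, be a pseudo-limit of a pseudo-Cauchy sequence of transformally algebraic type (witnessed by a difference polynomial $f$ with $vfb_{\alpha}$ strictly increasing) while still being transformally transcendental over $K$ --- Example~\ref{example-transformally-Abhyankar} shows how delicate the ultrametric structure of a monogenic transcendental extension can be. For such elements, the quantifier-free type over $K$ is not determined by transformal independence plus a system of ball conditions, and there is no canonical way to ``amalgamate'' these types without first knowing that the base is transformally Newtonian. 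Obtaining a transformally Newtonian base, however, requires replacing $K$ by a transformally \emph{algebraic} extension --- which is precisely the amalgamation problem you are trying to defer. So the reduction to ``$L,M$ transformally algebraic over $K$'' is circular as stated.

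The paper goes the other way: it proves the transformally algebraic case first (Lemma~\ref{talg-amalgamation}), via the transformally Archimedean ultrapower trick (over a transformally algebraically maximal base of finite transformal height with transformally divisible value group, any nontrivial transformally algebraic extension \emph{must} enlarge the residue field, so one can split off a transformally unramified piece and use Lemma~\ref{unramified-linearly-disjoint amalgamation}; no immediate extensions ever need to be amalgamated directly). Only after that, in the general case, does the paper replace $K$ by a transformally Newtonian transformally algebraic extension (Corollary after Proposition~\ref{newton-max}), at which point every element of $L$ or $M$ outside $K$ really is generic (Corollary~\ref{newton-generic}), and the transcendental part is handled by a standard coheir/ultrapower argument. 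Relatedly, your step~(iii) --- directly amalgamating immediate transformally algebraic extensions using Proposition~\ref{additivediff}, the rationality-of-cuts results, and spherically complete domination --- is not what the paper does and is not clearly achievable: two immediate transformally algebraic extensions of $K$ presented by nests of balls can have incompatible ball structures (cf.~Example~\ref{exa:non-unique-spherically}), and the paper's results from Section~\ref{sec:Wild-Ramification} are used to \emph{verify the axioms} of $\widetilde{\VFA}$ (via Corollary~\ref{cor-criterion-talg-max} and Proposition~\ref{wvfa-newtonian}), not to carry out amalgamations. The right move is not to fight those immediate extensions directly but to grow $K$ so there are none: this is exactly what the transformally Archimedean ultrapower accomplishes in the paper's Lemma~\ref{talg-amalgamation}. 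You correctly identified nearly all the relevant lemmas (transitivity in triplets, the ultrapower, the unramified amalgamation, spherically complete domination), but the order in which you deploy them matters, and the place you locate the difficulty (immediate extensions) is precisely the one the paper arranges to avoid.
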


In the settings of Theorem \ref{TheAmalgamationProperty}, assume all fields are algebraically closed and transformally Henselian. If $L$ is in addition transformally algebraic over $K$, then the amalgamation is in fact outright unique, subject to the linear disjointness constraints:

\begin{cor}
\label{cor-talg-unique-basechange}
Let $K$ be a model of $\VFA$ which is algebraically closed and transformally Henselian. Let $L,M$ be transformally Henselian models of $\VFA$ over $K$, and assume that $L$ is transformally algebraic over $K$. Let $N = L \otimes_{K} M$ be the displayed abstract difference field. Then $N$ can be uniquely expanded to a model of $\VFA$. More precisely, there exists a unique $\sigma$-invariant and $\omega$-increasing valuation ring of $N$ dominating $\mathcal{O}_L$ and $\mathcal{O}_M$ under the natural inclusions.
\end{cor}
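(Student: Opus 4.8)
The plan is to deduce Corollary \ref{cor-talg-unique-basechange} from Theorem \ref{TheAmalgamationProperty}(2) by upgrading the existence of a linearly disjoint amalgamation to a uniqueness statement, using the fact that $L$ is transformally algebraic over $K$. First I would record the following reductions. Since $\widetilde{K}$ is algebraically closed and transformally Henselian, and $L$ is transformally algebraic over $K$, Corollary \ref{cor-transformally-alg-no-valgp-adjunction} tells us that $L$ does not enlarge the transformal divisible hull of the valuation group; combined with the fact that $K$ is algebraically closed (so $\Gamma_K$ is transformally divisible), we get $\Gamma_L = \Gamma_K$ outright. Likewise, by Lemma \ref{linearly-disjoint-unramified} and the transformal Henselianity of $L$, the residue field $k_L$ is transformally algebraic over $k_K$. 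Thus on the residue side we are in the situation of a transformally algebraic extension $k_L/k_K$ of models of $\ACFA$, where $k_K$ is algebraically closed, so by the theory of $\ACFA$ (stability of the generic type, uniqueness of non-forking extensions; cf.\ Fact \ref{figalois} and \cite{chatzidakis1999model}) the quantifier-free — indeed the complete — type of an enumeration of $k_L$ over $k_M$ is determined by its type over $k_K$ together with the linear disjointness requirement.

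Next I would assemble the valuation on $N = L \otimes_K M$. By Theorem \ref{TheAmalgamationProperty}(2) at least one $\sigma$-invariant $\omega$-increasing valuation ring $\mathcal{O}_N$ of $N$ dominating $\mathcal{O}_L$ and $\mathcal{O}_M$ exists, rendering $L,M$ linearly disjoint over $K$ and $k_L,k_M$ linearly disjoint over $k_K$. For uniqueness, the key point is that since $\Gamma_L = \Gamma_K$, the extension $L/K$ is \emph{immediate up to residue field}, and the valued field structure on an amalgam with $\Gamma_L = \Gamma_K$ is pinned down by residue-field data. Concretely I would invoke the spherically complete domination principle recalled in \ref{Spherically Complete Domination}: passing to a spherically complete algebraically closed valued field $A$ containing $K$ (which exists by Proposition \ref{spherically-complete-extension}, and over which $L$ can be re-embedded), the type of an enumeration of $L$ over $M$ in $\ACVF$ is axiomatized by its type over $A$ together with the assertion that $k_L$-tuples linearly independent over $k_A$ remain so over $k_M$. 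Since $k_L/k_K$ is transformally algebraic and $k_K$ is algebraically closed, the linear disjointness of $k_L$ and $k_M$ over $k_K$ is forced by requiring that $N$ be a domain (equivalently by the very hypothesis that $L \otimes_K M$ is a field in our convention). Hence the $\ACVF$-type of $L$ over $M$ is uniquely determined, i.e.\ the valuation ring $\mathcal{O}_N$ dominating $\mathcal{O}_L,\mathcal{O}_M$ is unique as a valued-field structure; and $\sigma$ acts on $N$ in the unique way extending its actions on $L$ and $M$, with the valuation automatically $\sigma$-invariant and $\omega$-increasing because $L/K$ adjoins nothing to the divisible hull of the value group.

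Finally I would check that the resulting expansion is genuinely a model of $\VFA$: it is perfect and inversive because $L,M$ are and $N$ is their tensor product over the perfect inversive field $K$, and it is $\omega$-increasing by the value-group computation above. The main obstacle, and the step that needs the most care, is the passage from "linearly disjoint amalgamation exists" to "the valuation is unique": one must rule out the possibility that a transformally algebraic but valuation-group-trivial extension like $L$ could sit over $M$ in genuinely different ways. This is exactly where the hypothesis $\Gamma_L = \Gamma_K$ (via Corollary \ref{cor-transformally-alg-no-valgp-adjunction}) and the domination statement \ref{Spherically Complete Domination} do the real work, reducing everything to the uniqueness of non-forking extensions of transformally algebraic types in $\ACFA$ over an algebraically closed base. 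I expect the write-up to consist mostly of correctly quoting these three inputs and verifying their hypotheses are met; no new estimates are needed.
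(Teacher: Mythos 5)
Your overall strategy — reduce to a spherically complete base with nice value group, use the spherically complete domination principle, and exploit transformal algebraicity to control the value group — is the same as the paper's, but there is a genuine error in the value group reduction that propagates through the rest of the argument.

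You assert that $K$ being algebraically closed makes $\Gamma_K$ transformally divisible, and conclude $\Gamma_L = \Gamma_K$ "outright." This is false: algebraic closedness only gives divisibility by $\mathbf{Q}$, not by $\mathbf{Q}(\sigma)$. For instance $\Gamma_K$ could equal $\mathbf{Q}[\sigma^{\pm 1}]$, which is not closed under division by $\sigma - 1$; a transformally Henselian, transformally algebraic $L/K$ (e.g.\ obtained by adjoining a root of $x^{\sigma} = tx$ for a suitable $t$) can then genuinely enlarge the value group inside $\Gamma_K \otimes \mathbf{Q}(\sigma)$. Since the hypothesis $\Gamma_B = \Gamma_A$ in the domination principle of \ref{Spherically Complete Domination} is not optional, your application of it is not justified as written. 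The paper repairs this by first passing to a spherically complete extension $\widetilde{K}$ whose value group is transformally divisible, forming $\widetilde{L} = L \otimes_K \widetilde{K}$ and $\widetilde{M} = M \otimes_K \widetilde{K}$, and invoking Theorem \ref{TheAmalgamationProperty} to amalgamate these compatibly; only then does $\Gamma_{\widetilde{L}} = \Gamma_{\widetilde{K}}$ follow from Corollary \ref{cor-transformally-alg-no-valgp-adjunction}, and the domination argument goes through. Your phrase "over which $L$ can be re-embedded" glosses over exactly this step: you need to re-embed both $L$ and $M$ compatibly with the given valuation on $N$, which requires the amalgamation theorem and the linear disjointness of residue fields coming from Lemma \ref{linearly-disjoint-unramified} (rather than the weaker observation that $N$ is a domain, which is part of the standing convention and does not by itself force linear disjointness of $k_L$ and $k_M$ over $k_K$ with respect to an arbitrary candidate valuation).

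In short: the plan is right, the reduction to a transformally divisible value group is the missing step, and without it the invocation of the domination principle is vacuous in exactly the cases where $\Gamma_K$ fails to be closed under division by $\mathbf{Z}[\sigma]$-elements.
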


\begin{proof}
Let $N = L \otimes_{K} M$ be displayed difference field; we want to prove that $N$ can be uniquely expanded to a model of $\VFA$, subject to the evident compatibility constraints.

Let us assume first that $K$ is spherically complete and that $\Gamma$ is transformally divisible. Using Corollary \ref{cor-transformally-alg-no-valgp-adjunction} and Lemma \ref{unramified-linearly-disjoint amalgamation}, the valuation groups of $K$ and $L$ coincide and that the residue fields $l$ and $m$ are linearly disjoint over $k$ in the amalgamation; it follows from spherically complete domination (see \ref{Spherically Complete Domination}) that the valued field structure on the amalgamation is determined, as required.

For the general case let $\widetilde{K}$ be a spherically complete extension of $K$ with a transformally divisible valuation group as in the prevoius paragraph. Let $\widetilde{L} = L \otimes_{K}{\widetilde{K}}$ and similarly $\widetilde{M}$. By Corollary \ref{linearly-disjoint-unramified}, the residue fields $l$ and $m$ are linearly disjoint over $k$. Using Theorem \ref{TheAmalgamationProperty}, we can jointly amalgamate $\widetilde{L}$ and $\widetilde{M}$ so as to render the residue fields $\widetilde{l}$ and $\widetilde{m}$ linearly disjoint over $\widetilde{k}$. But now the argument of the previous paragraph applies to see that the valued field structure on the amalgamation is uniquely determined.

\end{proof}

\begin{rem}
In the settings of Corollary \ref{cor-talg-unique-basechange}, the condition that $L$ be transformally algebraic over $K$ is necessary for the uniqueness of the valued field amalgamation, as simple examples involving generic elements show. Furthermore, one should stress that that even if $K$ is algebraically closed and transformally Henselian, and $L$ is an abstract transformally algebraic difference field extension of $K$, then an expansion of $L$ to a model of $\VFA$ is not in general unique; once the valuation is fixed, however, it extends uniquely to any larger base.
\end{rem}

~
\begin{rem}
In the settings and notation of Theorem \ref{TheAmalgamationProperty},
let us assume given a saturated model $\widetilde{\Gamma_{N}}$ of
$\widetilde{\omega\OGA}$ together with embeddings $\widetilde{\Gamma_{L}}\hookrightarrow\widetilde{\Gamma_{N}}\hookleftarrow\widetilde{\Gamma_{M}}$
over $\widetilde{\Gamma_{K}}$ such that the intersection $\widetilde{\Gamma_{L}}\cap\widetilde{\Gamma_{M}}=\widetilde{\Gamma_{K}}$
inside $\widetilde{\Gamma_{N}}$; then the embedding $L\hookrightarrow N\hookleftarrow M$
can be taken to reproduce this embedding at the level of valuation
groups. If $K$ is spherically complete and obeys $\Gamma_{K}=\Gamma_{K}\otimes\mathbf{Q}\left(\sigma\right)$
then this follows from the machinery of stable domination over a spherically
complete base in the category of valued fields, see \cite{haskell2005stable};
and using the statement of Theorem \ref{TheAmalgamationProperty}
one reduces to this situation. For simplicity of the presentation,
we will not prove this slightly stronger statement.
\end{rem}

We will prove Theorem \ref{TheAmalgamationProperty} in several steps, but it is a matter of
patching various pieces together. We first describe the idea informally,
and then turn to the details.

\textbf{Informal Idea.}

The problem is naturally divided into the case of transformally transcendental
extensions and transformally algebraic extensions, and the case of
transformally algebraic extensions reduces by induction and a limit
argument to the transformally Archimedean settings. If all fields
are transformally Archimedean, then we can replace the base by its
transformally Archimedean ultrapower. This done, it has no immediate
transformally algebraic extensions, and indeed every transformally
algebraic extension must enlarge the residue field. Now the case of
residue field increase follows using the formalism of relative transformal
Henselizations.

It remains to deal with transformally transcendental extensions. We
can assume, using the results on transformal Herbrand functions, that
one always can find a root of a difference polynomial over $K$ in
a ball, unless there is an obvious obstruction to this. This implies,
by the principle of scatteredness, that every extension of $K$ not
enlarging the residue field by any new transformally algebraic elements
must contain an element which is generic in a ball, open or closed,
or an infinite nested family thereof. The case of valuation group
adjunction then follows from quantifier elimination and $o$-minimality
of $\widetilde{\omega\text{OGA}}$, and the case of immediate extensions
by compactness.

\textbf{Proof of Theorem \ref{TheAmalgamationProperty}.}

By Corollary \ref{transformalHenselizationalg}, in the settings of
$\left(1\right)$, there is an algebraically closed and transformally
Henselian extension of $K$ which can be embedded over $K$ in every
other extension with those properties; after increasing $L$ and $M$
and replacing $K$ by that, we are reduced to showing (2). From now
on, we thus assume that $K$ is algebraically closed and transformally
Henselian. Moreover, increasing $L$ and $M$ only makes the task
more difficult, so both can be taken algebraically closed and transformally
Henselian. In what follows by a span we mean a triple $L\hookleftarrow K\hookrightarrow M$
of algebraically closed and transformally Henselian models of $\VFA$;
say that a span is good if it obeys the conclusion of Theorem \ref{TheAmalgamationProperty}.
Our task is to prove that all spans is good; this is shown below.
\begin{lem}
\label{reduce-t-arch}(1) The directed union of good spans is good;
hence it is sufficient to consider spans where all fields are of finite
transformal height.

(2) The class of good spans is transitive in triplets: if the spans
$L_{21}\hookleftarrow K_{21}\hookrightarrow M_{21}$ and $L_{10}\hookleftarrow K_{10}\hookrightarrow M_{10}$
are good, then the span $L_{20}\hookleftarrow K_{20}\hookrightarrow M_{20}$
is good.

(3) It is enough to consider spans where $L$ is generated as an algebraically
closed and transformally Henselian extension of $K$ by a single element.
\end{lem}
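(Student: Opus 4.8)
The plan is to treat the three reductions in turn, keeping in mind that we have already reduced to the case in which $K$, $L$, $M$ are algebraically closed and transformally Henselian, so that a \emph{span} is such a triple $L\hookleftarrow K\hookrightarrow M$ and \emph{good} means that the conclusion of Theorem~\ref{TheAmalgamationProperty} holds for it.

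For (1) I would argue by compactness. Expand $\mathcal{L}_{\VFA}$ by a constant for each element of $L$ and of $M$, identifying the constants attached to a common element of $K$, and let $T$ consist of $\VFA$, the atomic diagrams of $L$ and of $M$ in these constants, and the sentences (quantifier-free, once the constants are named) asserting that $L$ and $M$ are linearly disjoint over $K$, that $k_L$ and $k_M$ are linearly disjoint over $k_K$, and that every finite tuple of values of elements of $L$ that is $\mathbf{Q}(\sigma)$-linearly independent modulo $\widetilde{\Gamma_K}$ remains so modulo $\widetilde{\Gamma_M}$. A model of $T$ is exactly a good amalgamation of $(L,K,M)$. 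Any finite fragment $T_0\subseteq T$ mentions only finitely many constants, hence only elements lying in $L_\alpha$ and $M_\alpha$ for some index $\alpha$; since $(L_\alpha,K_\alpha,M_\alpha)$ is good, a good amalgamation $N_\alpha$ of it, with the constants interpreted by the images of the corresponding elements, satisfies $T_0$ (linear independence over $K$ implies linear independence over $K_\alpha$, and linear disjointness over $K_\alpha$ inside $N_\alpha$ then yields the required non-vanishing; similarly for the residual and value-group clauses). By compactness $T$ has a model. For the ``hence'': every model of $\VFA$ is the directed union of the algebraically closed transformally Henselian hulls of its finitely generated submodels, each of finite transformal height (bounded by its transformal transcendence degree), so every span is the directed union of finite-height sub-spans, and (1) completes the reduction.

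For (2) I would decompose the composite span. Since $K_{20}$ is algebraically closed and transformally Henselian, so are $K_{21}$ and $K_{10}$, using that transformal Henselianity is transitive in triplets (Lemma~\ref{transitivetriplets}), and likewise for the $L$'s and $M$'s; thus $(L_{21}\hookleftarrow K_{21}\hookrightarrow M_{21})$ and $(L_{10}\hookleftarrow K_{10}\hookrightarrow M_{10})$ are genuine spans, of strictly smaller transformal height (the height of $\Gamma_{20}$ is the sum of those of $\Gamma_{21}$ and $\Gamma_{10}$, both $\geq 1$ in the relevant case), hence good by the inductive hypothesis. Amalgamate the residue span to a good $N_{10}$; then amalgamate the quotient span to a good $N_{21}$ \emph{whose residue field is the underlying difference field of $N_{10}$} --- this is legitimate because the amalgamation of transformally Henselian valued fields is constructed from a prescribed amalgamation of the residue fields, or equivalently because any good amalgamation can be enlarged so as to realise a prescribed larger difference residue field without disturbing linear disjointness of the embedded copies. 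Gluing the two valuations via the equivalence between models carrying a composite valuation and pairs of models produces $N_{20}$, and one verifies it is good using transitivity of linear disjointness through the short exact sequence $0\to\Gamma_{10}\to\Gamma_{20}\to\Gamma_{21}\to 0$ (for the field, residue field, and value group in parallel). The compatibility of the residue field of the quotient-level amalgamation with the residue-level amalgamation is the delicate point, and I expect this to be the main obstacle of the whole lemma.

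For (3) I would write $L$ as a continuous increasing union $K=L_0\subseteq L_1\subseteq\cdots\subseteq L_\lambda=L$ in which each $L_{i+1}$ is the algebraically closed transformally Henselian hull of $L_i(a_i)_\sigma$ for a single element $a_i$, with $L_i=\bigcup_{j<i}L_j$ at limits. Granting that monogenic spans are good, construct compatible good amalgamations $N_i$ of $L_i$ and $M$ over $K$ by recursion: at a successor, $(L_{i+1}\hookleftarrow L_i\hookrightarrow N_i)$ is a monogenic span, hence good, and the resulting $N_{i+1}\supseteq L_{i+1},N_i$ makes $L_{i+1}$ linearly disjoint from $M$ over $K$ by transitivity of linear disjointness through $L_i$ and $N_i$, and similarly for the residue fields and the value-group intersection; at limits take the union and invoke (1). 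Then $N_\lambda$ is a good amalgamation of $(L,K,M)$. In summary, (1) is formal compactness, (3) is routine transfinite patching, and the work concentrates in (2), at the step of matching the two levels of the composite amalgamation.
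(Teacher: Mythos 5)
Parts (1) and (3) are fine and match the paper, which treats both as one-liners (compactness for (1); compactness plus transitivity of linear disjointness for (3)).

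The issue is in (2), and you correctly flag the delicate point, but you approach it from the wrong side. You propose to amalgamate the residue-level span first, obtaining $N_{10}$, and then to produce a quotient-level amalgamation $N_{21}$ \emph{whose residue field is exactly the underlying difference field of $N_{10}$}. Your proposed justification --- that a good amalgamation "can be enlarged" to realize a prescribed larger residue field --- goes in the wrong direction, is not established by Theorem~\ref{TheAmalgamationProperty} (which gives no control over the residue field of the amalgamation beyond linear disjointness), and risks circularity, since it appeals to a control property that the amalgamation theorem itself is supposed to supply. The correct maneuver is to \emph{restrict}, not enlarge: amalgamate the quotient-level span first, obtaining a valuation $\mathcal{O}$ on $N_{2}=L_{2}\otimes_{K_{2}}M_{2}$ whose residue field is some (uncontrolled) extension of $N_{1}=L_{1}\otimes_{K_{1}}M_{1}$; then pull $N_{1}$ back under the residue map of $\mathcal{O}$ to get a transformal valuation ring $\mathcal{O}_{N_{21}}$ of $N_2$ with residue field \emph{exactly} $N_1$ (this restriction is automatic, not an extra hypothesis). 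Separately, use goodness of the residue-level span to put a valuation $\mathcal{O}_{N_{10}}$ on $N_1$, and form the composite $\mathcal{O}_{N_{20}}=\mathcal{O}_{N_{21}}\times_{N_{1}}\mathcal{O}_{N_{10}}$. In other words, rather than constraining the intermediate residue field to match a pre-chosen $N_{10}$, one restricts both valuations to the canonical tensor product $N_1$, where they automatically agree on the underlying difference field. With this adjustment the order of operations no longer matters, and your worry about "compatibility of the residue field of the quotient-level amalgamation with the residue-level amalgamation" evaporates: the shared difference field on which they are glued is $N_1$, and no enlargement is ever needed.
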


\begin{proof}
By Lemma \ref{transitivetriplets}, the class of transformally Henselian
models is transitive in triplets, so the statement of $\left(2\right)$
is meaningful

(1) This is an easy application of the compactness theorem.

(2) Suppose given a triplet $\left(K_{21},K_{20},K_{10}\right)$ and
a pair of extensions $\left(L_{21},L_{20},L_{10}\right)$ and $\left(M_{21},M_{20},M_{10}\right)$.
Let $N_{2}=L_{2}\otimes_{K_{2}}M_{2}$ be the displayed difference
field. Since by assumption the span $L_{21}\hookleftarrow K_{21}\hookrightarrow M_{21}$
is good, there exists an $\omega$-increasing transformal valuation
ring $\mathcal{O}$ of $N_{2}$ dominating $\mathcal{O}_{L_{21}}$
and $\mathcal{O}_{M_{21}}$ and such that $L_{10}$ and $M_{10}$
are linearly disjoint over $K_{10}$ in the residue field of $\mathcal{O}$.
There is, then, a canonical embedding of $N_{1}=L_{1}\otimes_{K_{1}}M_{1}$
in the residue field of $\mathcal{O}$ over $K_{1}$. The difference
field $N_{1}$ can be expanded to a model $N_{10}=\left(N_{1},\mathcal{O}_{N_{10}}\right)$
of $\VFA$ over $L_{10}$ and $M_{10}$, rendering $L_{0}$
and $M_{0}$ linearly disjoint over $K_{0}$ in the residue field
$N_{0}$ of $N_{10}$, since by assumption the span $L_{10}\hookleftarrow K_{10}\hookrightarrow M_{10}$
is good. Let $\mathcal{O}_{N_{21}}$ be the pullback of $N_{1}$ under
the residue map of $\mathcal{O}$; then $\mathcal{O}_{N_{21}}$ is
an $\omega$-increasing transformal valuation ring of $N_{2}$ which
is dominated by $\mathcal{O}$ and which contains $\mathcal{O}_{L_{21}}$
and $\mathcal{O}_{M_{21}}$, and therefore dominates them, too. Letting
$\mathcal{O}_{N_{20}}=\mathcal{O}_{N_{21}}\times_{N_{1}}\mathcal{O}_{N_{10}}$
be the pullback, we see that the model $N_{20}=\left(N_{2},\mathcal{O}_{N_{20}}\right)$
is as desired. The additional statement regarding linear disjointness of the residue fields
holds since the residue field of $N_{20}$ is identified with the
residue field of $N_{10}$.

(3) We may assume that $L$ is finitely generated as an algebraically
closed and transformally Henselian extension of $K$, by compactness;
then by transitivity of linear disjointness we may assume inductively
that it is generated over $K$ by a single element.
\end{proof}
\begin{lem}
\label{talg-amalgamation}Every span in which both extensions are
transformally algebraic is a good span.
\end{lem}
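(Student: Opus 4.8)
The plan is to follow the informal blueprint sketched before the statement, treating the transformally algebraic case by successive reduction to a transformally Archimedean base and then to its transformally Archimedean ultrapower, where transformally algebraic extensions become residually controlled.

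First the reductions. By Lemma~\ref{reduce-t-arch}(1) and (2), together with induction on the transformal height of $K$, it suffices to treat the case where $K$ is transformally Archimedean; one checks along the way that the sub-spans occurring in a triplet again have both extensions transformally algebraic, since $L_{21}$ has the same underlying difference field as $L$ and the residue field of a transformally algebraic extension is again transformally algebraic (clear denominators in a witnessing difference equation to see $\bar f(\bar a)=0$ with $\bar f\neq0$). If $K$ is transformally Archimedean of transformal height $0$, i.e.\ trivially valued, then by Corollary~\ref{cor-transformally-alg-no-valgp-adjunction} the fields $L$ and $M$ are trivially valued as well; since $K$ is algebraically closed as a field (and perfect), $L$ and $M$ are regular extensions of $K$, so $N:=L\otimes_{K}M$ is a difference domain whose fraction field is a perfect inversive difference field in which $L$ and $M$ are linearly disjoint over $K$, and with its trivial valuation it witnesses that the span is good. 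So assume from now on that $K$ is transformally Archimedean and nontrivially valued.

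Second, replace the base by its transformally Archimedean ultrapower. Fix a nonprincipal ultrafilter $\mathcal{F}$ and let $K^{\mathcal{F}}$ be the transformally Archimedean ultrapower of $K$; by Lemma~\ref{EC-TA-Ultrapower} it is algebraically closed, transformally Archimedean, complete, has transformally divisible valuation group, and has no immediate transformally algebraic extensions. By Lemma~\ref{transformallyarchbasechange}, $L$ and $K^{\mathcal{F}}$ are linearly disjoint over $K$ (inside $L^{\mathcal{F}}$), with $k_{L}$ and $k^{\mathcal{F}}$ linearly disjoint over $k$, and likewise for $M$. Let $L'$ be an algebraically closed, transformally Henselian hull of $L\otimes_{K}K^{\mathcal{F}}$ (with the valuation of Lemma~\ref{transformallyarchbasechange}), and define $M'$ similarly; these are algebraically closed and transformally Henselian, transformally algebraic over $K^{\mathcal{F}}$. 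By transitivity of linear disjointness it is enough to amalgamate the span $L'\hookleftarrow K^{\mathcal{F}}\hookrightarrow M'$ over $K^{\mathcal{F}}$ linearly disjointly, also on residue fields, and then restrict: linear disjointness of $L',M'$ over $K^{\mathcal{F}}$ together with linear disjointness of $L$ (resp.\ $M$) from $K^{\mathcal{F}}$ over $K$ yields linear disjointness of $L$ and $M$ over $K$, and the valuation-group condition $\widetilde{\Gamma_{L}}\cap\widetilde{\Gamma_{M}}=\widetilde{\Gamma_{K}}$ is automatic here since $\widetilde{\Gamma_{L}}=\widetilde{\Gamma_{M}}=\widetilde{\Gamma_{K}}$ by Corollary~\ref{cor-transformally-alg-no-valgp-adjunction}. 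Over $K^{\mathcal{F}}$ the transformally algebraic extension $L'$ carries no valuation-group growth and, because $K^{\mathcal{F}}$ has no immediate transformally algebraic extensions, is in fact transformally unramified over $K^{\mathcal{F}}$: it equals the universal transformally Henselian extension of $K^{\mathcal{F}}$ reproducing the residue extension $k^{\mathcal{F}}\hookrightarrow l'$, in the sense of Theorem~\ref{transformalhenselization} and Proposition~\ref{tunr}; the same holds for $M'$. Now $l'$ and $m'$ are algebraically closed, hence regular over the algebraically closed perfect inversive difference field $k^{\mathcal{F}}$, so $n':=l'\otimes_{k^{\mathcal{F}}}m'$ is a difference domain whose fraction field is a perfect inversive difference field with $l',m'$ linearly disjoint over $k^{\mathcal{F}}$. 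Let $N'$ be the universal transformally Henselian extension of $K^{\mathcal{F}}$ reproducing $k^{\mathcal{F}}\hookrightarrow n'$; by the linear-disjointness properties of relative transformal Henselizations (Lemma~\ref{canonicalva}, Proposition~\ref{unramified-linearly-disjoint amalgamation}, Lemma~\ref{linearly-disjoint-unramified}) the fields $L'$ and $M'$ embed over $K^{\mathcal{F}}$ into $N'$ linearly disjointly, reproducing $l'\hookrightarrow n'\hookleftarrow m'$ on residue fields, and all valuation groups equal $\Gamma_{K^{\mathcal{F}}}$. Restricting to $L\subseteq L'$, $M\subseteq M'$ and combining with the previous step shows the original span is good.

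The main obstacle is the step identifying $L'$ with the relative transformal Henselization, i.e.\ showing that a transformally algebraic extension of $K^{\mathcal{F}}$ is transformally unramified; equivalently, that transformal algebraic maximality is preserved under relative transformal Henselization along a transformally algebraic residue extension (otherwise an immediate transformally algebraic extension of that relative Henselization would contradict transformal algebraic maximality, and $L'$ could be strictly larger). I expect this to follow from the transitivity-in-triplets results together with Corollary~\ref{no-immediate-countably} — after passing to algebraic closures and observing that $K^{\mathcal{F}}$, being built from an ultrapower, satisfies the countable spherical-completeness property used there — in the spirit of Lemmas~\ref{talgmax-transitive} and~\ref{lem-talg-max-characterization}; granting it, the remainder is just patching together the relative transformal Henselization and linear-disjointness machinery already developed.
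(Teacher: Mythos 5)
You have correctly isolated the crux — but the crucial claim is false, and the proposed route to it does not work. You assert that, once the base is replaced by the transformally Archimedean ultrapower $K^{\mathcal{F}}$, the extension $L'$ is automatically transformally unramified over $K^{\mathcal{F}}$; equivalently, that the universal transformally Henselian extension of $K^{\mathcal{F}}$ reproducing $k^{\mathcal{F}}\hookrightarrow l'$ residually is transformally algebraically maximal. This is \emph{not} true in general, and the paper never uses it. The fact that $K^{\mathcal{F}}$ has no immediate transformally algebraic extensions controls the bottom of the tower $K^{\mathcal{F}}\subseteq L'^{\mathrm{tunr}}\subseteq L'$ but says nothing about the top step $L'/L'^{\mathrm{tunr}}$, which is an immediate transformally algebraic extension of $L'^{\mathrm{tunr}}$ and can be proper. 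Your hoped-for fix via Corollary~\ref{no-immediate-countably} does not close this gap: that corollary requires the field under consideration (here $L'^{\mathrm{tunr}}$, not $K^{\mathcal{F}}$) to have the property that every countable nested family of closed balls has nonempty intersection. While $K^{\mathcal{F}}$ has this property because it is built from an ultrapower, the relative transformal Henselization $\bigl(l'\otimes_{k^{\mathcal{F}}}K^{\mathcal{F}}\bigr)^{\mathrm{th}}$ does not inherit it, since $l'\otimes_{k^{\mathcal{F}}}K^{\mathcal{F}}$ is only a directed union of finitely generated extensions and the transformal Henselization does not restore spherical completeness. For a concrete illustration of what can go wrong, take $K^{\mathcal{F}}=k\bigl(\bigl(t^{\Gamma}\bigr)\bigr)$ and $l'=\acl\bigl(k(\alpha)_{\sigma}\bigr)$ with $\alpha$ transformally transcendental; a root of $X^{\sigma}-X=\alpha t^{-1}$ of the shape $\sum_{n\geq 1}\alpha^{\sigma^{-n}}t^{-\sigma^{-n}}$ determines a rational cut $(-\infty,0)$ over $L'^{\mathrm{tunr}}$ and typically lies outside it, giving exactly the sort of immediate transformally algebraic extension your argument must rule out but cannot.

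The paper's proof avoids this by making a strictly weaker claim and iterating. After the ultrapower replacement, it uses only that the \emph{maximal transformally unramified subextension} $E\subseteq L$ \emph{properly} extends $K^{\mathcal{F}}$: since $K^{\mathcal{F}}$ is transformally algebraically maximal with $\Gamma=\Gamma\otimes\mathbf{Q}(\sigma)$, any proper transformally algebraic extension forces a residue field increase, hence $l\supsetneq k^{\mathcal{F}}$ and therefore $E\supsetneq K^{\mathcal{F}}$. It then amalgamates the span $E\hookleftarrow K^{\mathcal{F}}\hookrightarrow M$ using Proposition~\ref{unramified-linearly-disjoint amalgamation} and invokes Lemma~\ref{reduce-t-arch} to keep going, with termination of the iteration controlled along the lines of the transfinite construction in the proof of Theorem~\ref{model-complete} and Remark~\ref{scatteredness}. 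Your reductions to the transformally Archimedean case, the ultrapower base change, and the final residue field amalgamation step are all sound; the missing ingredient is the iteration that replaces your single false step.
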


\begin{proof}
Note for transformally algebraic extensions the rational disjointness requirement on the valuation groups is automatically satisfied, as they only increase the
valuation group inside the transformal divisible hull.
Let $L\hookleftarrow K\hookrightarrow M$
be a span in which both extensions are transformally algebraic, algebraically
closed and transformally Henselian. By Lemma \ref{reductionarch},
and using the transitivity property of transformally algebraic extensions
in triplets, we may assume that $K$ is transformally Archimedean.
By Lemma \ref{transformallyarchbasechange} and transitivity of linear
disjointness, we can pass to a base extension and replace $K$ with
the transformally Archimedean ultrapower, which is again algebraically
closed and transformally Henselian. Now that $K$ has been replaced
by that, using Lemma \ref{EC-TA-Ultrapower} we may assume that $K$
is complete, transformally Archimedean, transformally algebraically
maximal and obeys $\Gamma=\Gamma\otimes\mathbf{Q}\left(\sigma\right)$. 

By Lemma \ref{reduce-t-arch} again it is enough to find $K\subset E\subset L$
properly extending $K$, algebraically closed and transformally Henselian,
such that the span $E\hookleftarrow K\hookrightarrow M$ is good.
Since $K$ is transformally algebraically maximal and obeys $\Gamma=\Gamma\otimes\mathbf{Q}\left(\sigma\right)$
the extension $l$ of $k$ is a proper transformally algebraic extension;
so we can take $E$ to be the algebraically closed and transformally
Henselian hull of the universal transformally Henselian extension
associated with the embedding $k\hookrightarrow l$, in which case
Lemma \ref{unramified-linearly-disjoint amalgamation} applies.
\end{proof}
\begin{lem}
All spans are good.
\end{lem}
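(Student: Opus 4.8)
The plan is to patch together the good spans already in hand using the reduction lemmas just established. By Lemma \ref{reduce-t-arch}(3) it is enough to treat a span $L\hookleftarrow K\hookrightarrow M$ in which $K,L,M$ are algebraically closed and transformally Henselian and $L=K\langle a\rangle$ is generated over $K$ as an algebraically closed transformally Henselian extension by a single element $a$. Using the canonical factorization of Proposition \ref{tunr} together with the transitivity in triplets of good spans (Lemma \ref{reduce-t-arch}(2)), I would further reduce to the case where $L$ is \emph{transformally unramified} over $K$, where $L$ is \emph{transformally algebraic but not transformally unramified} over $K$, or where $a$ is \emph{generic} over $K$ in the sense of Definition \ref{generic-definition}. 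Indeed: the transformally algebraic part of $L$ over $K$ is handled wholesale by Lemma \ref{talg-amalgamation}, so after splitting off that part we may assume $L/K$ is transformally transcendental. The point is then that any transformally Henselian $L/K$ with no new transformally algebraic residue elements is, by Corollary \ref{newton-generic} (after first enlarging $K$ to be transformally Newtonian, which preserves goodness of spans by Lemma \ref{talg-amalgamation} combined with Lemma \ref{reduce-t-arch}), generated by generic elements; and generic monogenic extensions fall into the three cases of Definition \ref{generic-definition}.

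Next I would dispose of the three generic cases one at a time. For the first case, $va=0$ with residue class transformally transcendental over $k$: by Lemma \ref{generic-qf} the quantifier free type of $a$ over $K$ is complete, and amalgamation is obtained by choosing the image of $a$ in $N$ to have residue class transformally transcendental over $k_L k_M$; linear disjointness of residue fields is then clear, and there is no valuation group adjunction, so the $\widetilde{\Gamma}$ intersection condition is vacuous. For the second case, $va\notin\Gamma\otimes\mathbf{Q}(\sigma)$: by Remark \ref{rem-qf-complete-valgp-adjunction} the quantifier free type of $a$ is determined by the cut $va$ cuts out over $\Gamma\otimes\mathbf{Q}(\sigma)$; using $o$-minimality and quantifier elimination of $\widetilde{\omega\OGA}$ (Proposition \ref{woga-frobenius}) one first amalgamates the valuation groups so that $\widetilde{\Gamma_L}\cap\widetilde{\Gamma_M}=\widetilde{\Gamma_K}$ and $va$ realizes a consistent cut over $\widetilde{\Gamma_M}$, then realizes $a$ generically over that cut; again the residue field does not grow. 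For the third case, $a$ lies in an infinite nested intersection of closed $K$-balls with constant eventual Herbrand behaviour: by Remark \ref{rem-qf-generic} the extension $K\langle a\rangle$ is immediate and the quantifier free type of $a$ over $K$ is complete; the family of balls, viewed as a partial type over $M$, is finitely satisfiable in $M$ (each ball, being a $K$-ball, has an $M$-point), so by compactness it is realised in some extension, and one takes $N$ to be the transformally Henselian (then algebraically closed transformally Henselian) hull of the resulting $M\langle a\rangle$, which is again immediate over $M$ and hence introduces no residue field or valuation group adjunction. In all three cases Proposition \ref{generic-closed-open} guarantees $K\langle a\rangle$ has no nontrivial $\text{h}$-finite $\sigma$-invariant separable extensions, so the amalgam stays inside a good span when we close up.

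The main obstacle is the bookkeeping around the \emph{transformally unramified, residue-field-increasing} case, i.e.\ showing that a span in which $L$ is the universal transformally Henselian extension of $K$ associated to a transformally algebraic extension $l/k$ of residue fields is good. Here one wants to invoke Proposition \ref{unramified-linearly-disjoint amalgamation}: after amalgamating the residue fields $l$ and $m$ linearly disjointly over $k$ (possible since $\ACFA$ has the amalgamation property over its algebraically closed substructures), the proposition produces $L\hookrightarrow N\hookleftarrow M$ with $L$ and $M$ linearly disjoint over $K$. The subtlety is that Proposition \ref{unramified-linearly-disjoint amalgamation} requires $K$ algebraically closed and transformally Henselian, which we have, but one must check that after this amalgamation the valuation group intersection condition $\widetilde{\Gamma_L}\cap\widetilde{\Gamma_M}=\widetilde{\Gamma_K}$ holds — which it does, because transformally unramified extensions do not change the valuation group at all (Theorem \ref{transformalhenselization}(2)), so $\widetilde{\Gamma_L}=\widetilde{\Gamma_K}$. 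Once this case and the generic cases are in hand, Lemma \ref{reduce-t-arch}(1)--(3) and Lemma \ref{talg-amalgamation} assemble them: a general monogenic span decomposes as a transformally algebraic part (good by Lemma \ref{talg-amalgamation}) followed by a transformally transcendental generic part (good by the above), and transitivity in triplets closes the argument. Hence all spans are good, which is the assertion.
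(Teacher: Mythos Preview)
Your reduction to transformally Newtonian $K$ and to monogenic generic $L$ is the paper's reduction (modulo a miscitation: Lemma \ref{reduce-t-arch}(2) concerns composition of \emph{valuations}, not towers of field extensions; the tower step you actually use is just transitivity of linear disjointness). But the endgames differ. The paper does not split into the three generic cases. Once $K$ is transformally Newtonian and nontrivially valued, Corollary \ref{newton-generic} embeds the monogenic $L$ over $K$ into an \emph{ultrapower} of $K$; by the symmetric reduction the same holds for $M$, so after enlarging both one may take $K\prec L$ and $K\prec M$. Amalgamation with all the linear disjointness requirements is then the standard coheir construction: extend $\tp(c/K)$, for $c$ enumerating $L$, to a type over $M$ finitely satisfiable in $K$. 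The trivially valued case (where Corollary \ref{newton-generic} does not apply) is handled separately via Corollary \ref{trivially-valued} and an explicit Hahn-series amalgam; you omit it.

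Your case (3) has a genuine gap. Realising $a$ in the intersection of the $K$-balls over some extension of $M$ by compactness does not make $M\langle a\rangle$ immediate over $M$: the intersection may well contain $M$-points, or points transformally algebraic over $M$, so the $a$ you pick can acquire new residue-field or valuation-group data over $M$, and nothing in your argument establishes that $L$ and $M$ are linearly disjoint over $K$ in the resulting $N$. To repair this you must choose $a$ realising a type over $M$ that is finitely satisfiable in $K$ --- and that is exactly the paper's coheir move, which handles all three generic cases uniformly without the case split.
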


\begin{proof}
Let $L\hookleftarrow K\hookrightarrow M$ be a span, with all fields
algebraically closed and transformally Henselian. The proof is by
induction on the transformal transcendence degree of $L$ over $K$,
the case of degree zero given already by Lemma \ref{talg-amalgamation}. Using the same Lemma \ref{talg-amalgamation}
and Corollary \ref{newton-max} we may assume that $K$ is transformally
Newtonian and strictly transformally Henselian. First assume that
$K$ is nontrivially valued. By Lemma \ref{reduce-t-arch} we may
assume that $L$ is generated over $K$ as an algebraically closed
and transformally Henselian field by a single element. By Hensel lifting
we may assume no element of $l$ is transformally algebraic over $k$
unless it already lies in $k$; since $K$ is transformally Newtonian
it follows that every element $a\in L$ not in $K$ is generic over
$K$. In this situation, the field $L$ embeds over $K$ in an ultrapower
using Corollary \ref{newton-generic}. The same can be assumed for
$M$; so increasing $M$ and $L$ we may assume that both are elementary
extensions of $K$. Now the proof is standard; for example, let $c$
be an enumeration of $L$ and find an extension of $\tp\left(\nicefrac{c}{K}\right)$
to $M$ finitely satisfiable in $K$.

It remains to deal with the case where $K=k$ has the trivial valuation.
If one of $M$ or $L$ is also trivially valued then the proof is
clear; so assume both are nontrivially valued. Fix elements $a\in L$
and $b\in M$ of strictly positive valuation; we may assume that $L$
is generated as an algebraically closed and transformally Henselian
field over $k$ by the element $a$ and likewise for $b$ and $M$. By Corollary
\ref{trivially-valued} the field $L$ embeds in $k\left(\left(t^{\mathbf{Q}\left(\sigma\right)}\right)\right)$
over $k$ and likewise for $M$; so we may take $L=M=k\left(\left(t^{\mathbf{Q}\left(\sigma\right)}\right)\right)$.
Then the transformal valued field $k\left(\left(t^{\mathbf{Q}\left(\sigma\right)\oplus\mathbf{Q}\left(\sigma\right)}\right)\right)$
is easily seen to work.
\end{proof}

\subsection{\label{subsec:82}$\widetilde{\VFA}$~}

 Careful inspection of the proof of Theorem \ref{TheAmalgamationProperty} will show that if
 $K$ is a model of $\VFA$ which is nontrivially valued, transformally
 Newtonian and strictly transformally Henselian then $K$ is existentially
 closed.   We will proceed in a different way, invoking the amalgamation property in order
 to reduce to extensions that are generated by a single element in an appropriate sense, and obtaining
 a more minimalistic set of axioms for the  class of existentially closed models of $\VFA$.

\begin{defn}
Let $\widetilde{\VFA}$ be the following first order theory
of models $K$ of $\VFA$:

(1) The field $K$ is strictly transformally Henselian: it is transformally
Henselian and its residue field is a model of $\ACFA$

(2) The valuation group $\Gamma$ of $K$ is nonzero and transformally
divisible, that is it is a model of $\widetilde{\omega\OGA}$

(3) Every nonconstant additive operator given by a linear combination
of difference monomials of the form $x^{\sigma^{n}\cdot p^{m}}$ for
$n,m\in\mathbf{N}$ is onto.
\end{defn}

\begin{prop}
\label{wvfa-newtonian}Let $K$ be a model of $\widetilde{\VFA}$.
Then $K$ is transformally Newtonian.
\end{prop}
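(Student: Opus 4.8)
The plan is to verify the three hypotheses of Proposition \ref{newton-max}. Hypothesis (3) — that $\Gamma = \Gamma \otimes \mathbf{Q}(\sigma)$ — is exactly axiom (2) in the definition of $\widetilde{\VFA}$. Hypothesis (2) — that a difference polynomial over the residue field $k$ which has a nonzero root in some difference‑field extension already has one over $k$ — follows from axiom (1): $k$ is a model of $\ACFA$, hence an existentially closed difference field by Fact \ref{frobacfa}, and for a fixed difference polynomial $f$ the existence of a nonzero root is an existential sentence with parameters in $k$. So the only substantive point is hypothesis (1): that $K$ is transformally algebraically maximal. I would first record that $K$ is algebraically closed: being strictly transformally Henselian it is algebraically strictly Henselian by Proposition \ref{strict-th}(1), hence Henselian with separably closed residue field; its value group is divisible, being transformally divisible; and applying axiom (3) to those additive operators that are linear combinations of the Frobenius alone shows that these are onto on $K$-points. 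The classical ramification criterion recalled in \ref{subsec-ramificationtheory} then gives that $K$ is algebraically closed.

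To obtain transformal algebraic maximality I would reduce to the transformally Archimedean case, where Corollary \ref{cor-criterion-talg-max} applies directly, and run the dévissage of Proposition \ref{prop:t-arch-devissage} on the class $\mathcal{C}$ of models $F$ of $\VFA$ for which the conjunction [$F$ algebraically closed, transformally Henselian, $\Gamma_F$ transformally divisible, every nonconstant additive operator over $F$ onto on $F$-points] implies that $F$ is transformally algebraically maximal. Transformally Archimedean fields lie in $\mathcal{C}$ by Corollary \ref{cor-criterion-talg-max} (vacuously if the four properties fail). The class is transitive in triplets in the direction needed: given a triplet $(K_{21},K_{20},K_{10})$ with $K_{21},K_{10}\in\mathcal{C}$ and $K_{20}$ enjoying the four properties, the coarsening $K_{21}$ shares its underlying field with $K_{20}$, so algebraic closedness and surjectivity of additive operators (both field‑theoretic) are automatic there; transformal Henselianity passes by Lemma \ref{transitivetriplets}; transformal divisibility of $\Gamma_{10}$ and $\Gamma_{21}$ follows from the short exact sequence $0\to\Gamma_{10}\to\Gamma_{20}\to\Gamma_{21}\to 0$ (a $\sigma$-invariant convex submodule of a transformally divisible group is again transformally divisible, since every nonzero $\nu\in\mathbf{Z}[\sigma]$ satisfies $|\nu|\geq 1$, so division stays inside by convexity). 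Hence $K_{21},K_{10}$ also enjoy the four properties, so by $K_{21},K_{10}\in\mathcal{C}$ they are transformally algebraically maximal, and then $K_{20}$ is too by Lemma \ref{talgmax-transitive}. The infinite‑transformal‑height case is handled by the directed‑union clause: $K$ is the directed union of its finitely generated submodels, each of which has finite transformal height (the transformal height is bounded by the absolute transformal transcendence degree), and after replacing each by its closure inside $K$ under the four operations in the definition of $\mathcal{C}$ — all of which are transformally algebraic, hence preserve finiteness of transformal height — one obtains a cofinal directed system of finite‑height members of $\mathcal{C}$ with the four properties, reducing to the finite‑height case already treated.

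The one inheritance that requires a genuine argument, and the step I expect to be the main obstacle, is that surjectivity of additive operators over $K_{20}$ forces it over the residue‑field piece $K_{10}$, i.e. over the residue field of the coarsening $K_{21}$. Here I would lift a nonconstant additive operator $\bar\tau$ over the residue field of $K_{21}$ to an additive operator $\tau$ over a field of representatives (Lemma \ref{field-of-rep-exists-th}), twist by a suitable Frobenius power and change variables $x\mapsto x^{p^{-m}\sigma^{-n}}$, using that the fields are perfect and inversive, to reduce to an operator with a unit linear coefficient; then a valuation estimate — once the argument has strictly negative $v_{21}$-valuation, every higher monomial $x^{\mu}$ with $\mu>1$ has strictly smaller valuation than the linear term, so the value of $\tau$ is strictly negative — shows that any solution of $\tau(e)=d$ coming from surjectivity over $K_{20}$ must be $\mathcal{O}_{21}$-integral, hence reduces to a solution of $\bar\tau$. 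Carrying this twist–and–estimate argument through carefully, together with the bookkeeping of the directed‑union argument in infinite transformal height, is where the real work lies; the remaining points, and hypotheses (2) and (3) of Proposition \ref{newton-max}, are essentially formal.
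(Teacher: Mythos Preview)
Your approach matches the paper's: verify the hypotheses of Proposition~\ref{newton-max}, the substantive one being transformal algebraic maximality, obtained via Corollary~\ref{cor-criterion-talg-max} in the transformally Archimedean case and a triplet induction in general. Your identification of the surjectivity of additive operators on $K_{10}$ as the crux is exactly right, and your twist-and-estimate sketch is precisely what the paper's one-liner ``since $K_{10}$ is the homomorphic image of $\mathcal{O}_{K_{21}}$'' is hiding.

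There is one wrinkle in your infinite-height reduction. The class $\mathcal{C}$ you define is not obviously closed under arbitrary directed unions, so Proposition~\ref{prop:t-arch-devissage} does not apply as a black box; and your workaround --- writing $K$ as a directed union of finite-height submodels enjoying the four properties --- still requires that a directed union of transformally algebraically maximal fields be transformally algebraically maximal, which is not evident. The paper sidesteps this: it takes the finite-height submodels to be themselves models of $\widetilde{\VFA}$, so that by the finite-height argument they are transformally \emph{Newtonian}; since transformal Newtonianity is a first-order $\forall\exists$ scheme, it passes to directed unions. You can patch your argument the same way by also closing your finite-height submodels under residue-field witnesses to $\ACFA$ (via Hensel lifting this stays transformally algebraic, hence preserves finite transformal height) and routing the conclusion through Newtonianity rather than maximality.
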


\begin{proof}
First note that $K$ is algebraically closed as a field. Indeed $K$
is algebraically strictly Henselian, as it is strictly transformally
Henselian; it is moreover algebraically tamely closed, since $\Gamma=\Gamma\otimes\mathbf{Q}\left(\sigma\right)$
and hence $\Gamma=\Gamma\otimes\mathbf{Q}$. The operator $x^{p}-x$
is onto on $K$ by the axioms; as $K$ is perfect, by the discussion of \ref{subsec-ramificationtheory} we learn that $K$ is algebraically closed. 

The field $K$ is the directed union of models of $\widetilde{\VFA}$
of finite transformal height; the directed union of transformally
Newtonian models is again one; so we may assume that $K$ is of finite
transformal height. Since $k$ is a model of $\ACFA$ and $\Gamma=\Gamma\otimes\mathbf{Q}\left(\sigma\right)$
is transformally divisible, using the criterion of Proposition \ref{newton-max}, it is enough to prove that $K$ is transformally
algebraically maximal.

First assume that $K$ is transformally Archimedean. In this situation, it follows from Corollary \ref{cor-criterion-talg-max} that $K$ is transformally algebraically maximal. In general, suppose that $\left(K_{20},K_{21},K_{10}\right)$
is a triplet with $K_{20}$ a model of $\widetilde{\VFA}$ of
finite transformal height. Let us assume that $K_{21}$ and $K_{10}$
both nontrivially valued and that $K_{21}$ is transformally Archimedean.
Then $\Gamma_{20}$ is transformally divisible, hence so are $\Gamma_{21}$
and $\Gamma_{10}$. By Lemma \ref{transitivetriplets} the field $K_{10}$
is strictly transformally Henselian. Moreover, every additive difference
polynomial over $K_{21}$ has a root; since $K_{10}$ is the homomorphic
image of $\mathcal{O}_{K_{21}}$ it follows that every additive difference
operator is onto on $K_{10}$. So $K_{10}$ is a model of $\widetilde{\VFA}$;
by induction on the transformal height it is transformally algebraically maximal.
Next, the field $K_{21}$ is transformally Henselian, algebraically
closed and transformally Archimedean with a transformally divisible
valuation group; since $K_{21}$ is closed under additive operators, it is transformally algebraically maximal, using Corollary \ref{cor-criterion-talg-max}. Thus $v_{20}$ is glued from
the transformally algebraically maximal valuations $v_{21}$ and $v_{10}$;
by Lemma \ref{talgmax-transitive} we learn that $v_{20}$ is transformally
algebraically maximal.
\end{proof}

\begin{thm}
\label{model-complete}Every model of $\VFA$ embeds in a model
of $\widetilde{\VFA}$; the theory $\widetilde{\VFA}$
is model complete. If $K$ is a model of $\VFA$ with no nontrivial
$h$-finite $\sigma$-invariant extensions, then the theory $\widetilde{\VFA}_{K}$
of models of $\widetilde{\VFA}$ over $K$ is complete. In particular,
the theory $\widetilde{\VFA}$ eliminates quantifiers to the
level of the field theoretic algebraic closure. 
\end{thm}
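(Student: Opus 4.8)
The plan is to deduce Theorem \ref{model-complete} from the Amalgamation Property (Theorem \ref{TheAmalgamationProperty}) together with the structural results already assembled, via the standard model-companion machinery. First I would verify that $\widetilde{\VFA}$ is inductive (an $\forall\exists$-axiomatizable class), so that it is closed under directed unions: Hensel's lemma has a $\forall\exists$ form by the uniqueness of Hensel lifts (Lemma on uniqueness of Hensel lifts), $\ACFA$ is inductive, transformal divisibility of $\Gamma$ is $\forall\exists$, and surjectivity of each additive operator $\tau$ is $\forall\exists$. Then I would show every model of $\VFA$ embeds in a model of $\widetilde{\VFA}$: iterate the constructions of Section 5 (strict transformal Henselization, using Proposition \ref{strict-t-henselization}) to arrange the residue field to be a model of $\ACFA$ and the field strictly transformally Henselian, apply Lemma \ref{valuation-group-increase} and Proposition \ref{spherically-complete-extension} to make $\Gamma$ transformally divisible and nonzero, and repeatedly adjoin roots of additive operators (using Proposition \ref{rootinaball} to place roots inside balls, or simply generic/ad hoc extensions) to kill the cokernels of the $\tau$'s. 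Taking a union of an $\omega$-chain alternating these steps, and using inductiveness, produces a model of $\widetilde{\VFA}$ over the given model of $\VFA$.

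Next I would establish that the models of $\widetilde{\VFA}$ are exactly the existentially closed models of $\VFA$. For one direction: by Proposition \ref{wvfa-newtonian} a model $K$ of $\widetilde{\VFA}$ is transformally Newtonian, hence (Corollary \ref{newton-generic} and Remark \ref{rem-ec-talg}) it suffices to check existential closedness against transformally algebraic one-generated extensions and then against arbitrary extensions; this is where the amalgamation property enters. Given an extension $K\subseteq L$ of models of $\VFA$, embed $L$ in a model $L'$ of $\widetilde{\VFA}$ by the previous paragraph; since $K$ (being a model of $\widetilde{\VFA}$, hence strictly transformally Henselian with no nontrivial $h$-finite $\sigma$-invariant extensions by the control statement in Theorem \ref{transformalhenselization} and Corollary \ref{abhyankarcriterion}) satisfies the hypothesis of Theorem \ref{TheAmalgamationProperty}(1), amalgamate $L'$ and a sufficiently saturated elementary extension $K^*\succeq K$ over $K$ inside some model $N$ of $\VFA$; then any existential formula with parameters in $K$ realized in $L$ is realized in $N$ hence in $K^*$ hence, by elementarity, in $K$. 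For the converse, any existentially closed model must satisfy the axioms of $\widetilde{\VFA}$: each axiom asserts the solvability of an existential condition (a Hensel root, a divisibility, a root of $\tau$) that becomes satisfiable in some extension by the constructions above. Model completeness of $\widetilde{\VFA}$ then follows formally from the coincidence "models of $\widetilde{\VFA}$ = e.c.\ models of $\VFA$" together with the fact that $\widetilde{\VFA}$ axiomatizes this class and $\VFA$ is inductive (this is the standard criterion: a $\forall\exists$-theory whose models are all existentially closed is model complete).

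For the completeness of $\widetilde{\VFA}_K$ when $K$ has no nontrivial $h$-finite $\sigma$-invariant extensions, I would run a back-and-forth between two saturated models $N_1,N_2$ of $\widetilde{\VFA}$ containing $K$: the hypothesis on $K$ lets Theorem \ref{TheAmalgamationProperty}(1) amalgamate any two finitely generated substructures over $K$, and model completeness upgrades embeddings to elementary embeddings, yielding $N_1\equiv_K N_2$. The quantifier-elimination-up-to-algebraic-closure statement is the special case where $K$ ranges over field-theoretic algebraic closures of models of $\VFA$ that happen to have no nontrivial $h$-finite $\sigma$-invariant extensions — but one must be slightly careful, since as Corollary \ref{cor-nonsplit-h-finite} shows an algebraically closed model of $\VFA$ can still have such extensions at the level of the Henselization. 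The correct reading (matching the statement) is that $\widetilde{\VFA}$ becomes complete after naming an algebraically closed model of $\VFA$ that is additionally transformally Henselian (equivalently, has no $h$-finite $\sigma$-invariant extensions); I would phrase the QE-consequence accordingly, deducing from model completeness and completeness over such bases that every formula is equivalent, modulo $\widetilde{\VFA}$, to one whose quantifiers range only over the field-theoretic algebraic closure of the parameters. The main obstacle is the amalgamation input: everything here is a clean formal deduction \emph{once} Theorem \ref{TheAmalgamationProperty} and Proposition \ref{wvfa-newtonian} are in hand, so the real work — already done in the preceding subsection — is the amalgamation theorem itself and the verification that $\widetilde{\VFA}$ genuinely captures transformal Newtonianity.
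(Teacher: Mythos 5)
Your outline of Step~1 (embedding an arbitrary model of $\VFA$ into a model of $\widetilde{\VFA}$) and your discussion of the $\forall\exists$ form of the axioms, the converse direction (e.c.\ models satisfy $\widetilde{\VFA}$), and the deduction of completeness over $K$ and of quantifier elimination up to $\acl$ are all in the right spirit and close to what the paper does. But there is a genuine gap at the central step: showing that models of $\widetilde{\VFA}$ are existentially closed.

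The chain ``any existential formula with parameters in $K$ realized in $L$ is realized in $N$ hence in $K^*$'' does not go through. Here $N$ is an amalgam \emph{extending} $K^*$, and an existential formula true in an extension of $K^*$ does not in general hold in $K^*$: that implication \emph{is} (a form of) the assertion that $K^*$ is existentially closed in $N$, which is what you are trying to prove. The amalgamation property supplies a common extension of $L'$ and $K^*$ over $K$ but gives no mechanism to pull the witness down into $K^*$. Any attempt to fix this by first invoking completeness of $\widetilde{\VFA}_K$, or by replacing $N$ with a model of $\widetilde{\VFA}$ and declaring the embedding $K^*\hookrightarrow N$ elementary, is circular, since both completeness and elementarity of embeddings are consequences of the model completeness you are trying to establish.

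The paper's proof does the substantive work that this shortcut skips. It fixes a saturated $M\models\widetilde{\VFA}$ and a small $A\subseteq M$ with $A=\acl_\omega A$, and shows directly that every small $B\supseteq A$ with $B=\acl_\omega B$ embeds into $M$ over $A$. For $B$ transformally algebraic over $A$ this is a transfinite construction (Step~2) alternating two moves: at even stages, replace $A_\alpha$ by a countable transformally Newtonian model inside $M$ and take $B_{\alpha+1}=\acl_\omega\left(B_\alpha\otimes_{A_\alpha}A_{\alpha+1}\right)$ --- this is where Theorem~\ref{TheAmalgamationProperty} is actually used; at odd stages, apply Lemma~\ref{lem:partially-EC} to produce an element $c_{\alpha+1}\in B_{\alpha+1}$ strictly improving the approximation of the generator $b$, with $\acl_\omega(A_\alpha c_{\alpha+1})$ already embedding into $M$. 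Termination comes from $\Gamma_{B_\alpha}=\Gamma_A\otimes\mathbf{Q}(\sigma)$, i.e.\ the valuation group does not grow under transformally algebraic extensions (Corollary~\ref{cor-transformally-alg-no-valgp-adjunction}). Lemma~\ref{lem:partially-EC}, which your proposal never invokes, is exactly the new ingredient that realizes the type inside $M$ rather than merely amalgamating abstractly; your appeal to Corollary~\ref{newton-generic} and Remark~\ref{rem-ec-talg} correctly handles the remaining transformally transcendental case (the paper's Step~3), but leaves the heart of the argument, the transformally algebraic case, resting on the non sequitur above.
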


In what follows, if $F$ is a model of $\VFA$ then we write
$\acl_{\omega}F$ for the choice of an algebraically closed and transformally
Henselian hull of $F$.

\textbf{Informal idea.} We first describe the idea informally. It
is not difficult to prove that every model of $\VFA$ embeds
in a model of $\widetilde{\VFA}$, so one must show that the
models of $\widetilde{\VFA}$ are existentially closed. Thus
let $M$ be a saturated model of $\widetilde{\VFA}$. Let $A\subseteq M$
be countable with $A=\acl_{\omega}A$ and let $B=\acl_{\omega}B$
be a countable extension of $A$ which properly extends $A$; we must
prove that $B$ embeds in $M$ over $A$. One reduces to the case
where $B$ is transformally algebraic and $\acl_{\omega}$-finitely
generated over $A$. In the case where $A$ is transformally Newtonian,
using Lemma \ref{lem:partially-EC} below, one can find $A\subseteq C\subseteq B$
with $\acl_{\omega}C=C$ which properly extends $A$ and such that
$C$ embeds in $M$ over $A$. Now the field $C$ is not in general
transformally Newtonian; in this case we replace $C$ with a transformally
Newtonian model $C^{\star}$ inside $M$, replace $B$ with $B^{\star}=\acl_{\omega}\left(B\otimes_{C}C^{\star}\right)$
and repeat the process. 

One can show that in retrospect that this
process terminates in finitely many stages, for example by induction
on $\text{SU}$ rank or the inertial dimension of $B$ over $A$;
in what follows we give a soft proof in which the process is repeated
transfinitely, exploiting the fact that the valuation group does not
increase in transformally algebraic extensions. An alternative approach directly using scatteredness is given in Remark \ref{scatteredness}

Before we turn to the proof of Theorem \ref{model-complete} we state and prove the Lemma
promised in the informal description:
\begin{lem}
\label{lem:partially-EC}Let $M$ be a saturated model of $\widetilde{\VFA}$.
Let $A\subseteq M$ be countable and transformally Newtonian. Let
$B=\acl_{\omega}\left(Ab\right)$ be a transformally algebraic extension
of $A$ which is algebraically closed and transformally Henselian,
where $b\in B$ is a single element lying outside $A$. Then there
is an element $c\in B$ with the following properties:

(1) We have the inequality $v\left(b-c\right)>v\left(a-c\right)$
for all $a\in A$

(2) The field $\acl_{\omega}\left(Ac\right)$ embeds in $M$ over
$A$
\end{lem}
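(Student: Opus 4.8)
The plan is to exploit the guiding principle of this paper that, in the $\omega$-increasing regime, a transformally algebraic extension of a sufficiently closed base neither enlarges the value group nor produces an immediate extension, so that essentially all of its new content sits in the residue field. Accordingly, the element $c$ I would look for is one whose $\acl_{\omega}$-hull over $A$ is a transformally \emph{unramified} extension of $A$ realizing a small residue-field extension, which can then be transported into $M$ by soft means.

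\textbf{Step 1 (the shape of $B$ over $A$).} Since $A$ is transformally Newtonian, Proposition \ref{newton-max} gives that $A$ is transformally Henselian, transformally algebraically maximal, and $\Gamma_{A}=\Gamma_{A}\otimes\mathbf{Q}\left(\sigma\right)$; moreover $k_{A}$ satisfies condition (2) of that Proposition, and applying this to the minimal polynomial of an element algebraic over $k_{A}$ but not in it (a nonzero root exists in a difference-field extension but none over $k_{A}$) forces $k_{A}$ to be algebraically closed. As $b$ is transformally algebraic over $A$, so is $B=\acl_{\omega}\left(Ab\right)$; hence $\Gamma_{B}=\Gamma_{A}$ by Corollary \ref{cor-transformally-alg-no-valgp-adjunction}, and since $A$ is transformally algebraically maximal with $b\notin A$ the extension $B/A$ is not immediate, so $k_{B}\supsetneq k_{A}$ and $k_{B}$ is transformally algebraic over $k_{A}$ (and perfect and inversive). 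Finally a short pseudo-Cauchy argument, using that $b$ is transformally algebraic together with Lemma \ref{lem-talg-max-characterization}, shows that the cut $C\left(b\right)=\left\{ v\left(b-a\right):a\in A\right\} $ attains a maximum $\gamma_{0}=v\left(b-a_{0}\right)$ for some $a_{0}\in A$ (otherwise $A$ would receive a pseudo-limit of $b$).

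\textbf{Step 2 (construction of $c$).} Pick $t\in A$ with $vt=\gamma_{0}$, set $c_{0}=\left(b-a_{0}\right)/t$, so $vc_{0}=0$, and note that the residue $\overline{c_{0}}$ lies outside $k_{A}$: a lift in $\mathcal{O}_{A}$ of $\overline{c_{0}}$ would produce an element of $A$ strictly closer to $b$ than $\gamma_{0}$. Let $A_{1}$ be the transformally Henselian, transformally algebraic extension of $A$ reproducing the embedding $k_{A}\hookrightarrow k_{B}$ residually, furnished by Theorem \ref{transformalhenselization}; its universal property (Theorem \ref{transformalhenselization}(1)) applied to the transformally Henselian $A$-algebra $B$ gives an embedding $A_{1}\hookrightarrow B$ over $A$ inducing $k_{A}\hookrightarrow k_{B}$ residually, and I identify $A_{1}$ with its image in $B$. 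Choose $e\in A_{1}$ with residue $\overline{c_{0}}$ (using a field of representatives of $A_{1}$), and put $c=a_{0}+te\in A_{1}\subseteq B$.

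\textbf{Step 3 (verification).} Property (1) is the ultrametric computation $b-c=t\left(c_{0}-e\right)$, of valuation $>\gamma_{0}$ since $\overline{c_{0}}=\overline{e}$, whereas for $a\in A$ one has $a-c=\left(a-a_{0}\right)-te$ with $v\left(te\right)=\gamma_{0}$ and $\overline{\left(a-a_{0}\right)/t}\in k_{A}$, which does not contain $\overline{c_{0}}$; so $v\left(a-c\right)\le\gamma_{0}<v\left(b-c\right)$. For (2): because $k_{A}$ is an algebraically closed difference field, $\ACFA_{k_{A}}$ is complete, so the countable difference field $k_{B}$ embeds over $k_{A}$ into the saturated model $k_{M}\models\ACFA$; by Theorem \ref{transformalhenselization}(1) applied with $L=M$ this lifts to an embedding $A_{1}\hookrightarrow M$ over $A$. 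Hence $A\left(c\right)_{\sigma}\subseteq A_{1}$ embeds in $M$ over $A$, and I may take $\acl_{\omega}\left(Ac\right)$ to be the smallest algebraically closed, transformally Henselian submodel of $M$ containing the image of $A\left(c\right)_{\sigma}$ — a legitimate choice of hull, by construction embedded in $M$. The main obstacle is Step 1: pinning down that the only new content of $B$ over $A$ is a residue-field extension (immediacy excluded by transformal algebraic maximality, value-group growth excluded by transformal divisibility and $\omega$-increasingness, and $k_{A}$ forced algebraically closed by the root-existence axiom). Once this is in hand, the remainder is bookkeeping with the transformal Henselization and the completeness of $\ACFA$ over an algebraically closed difference field.
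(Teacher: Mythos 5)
Your argument is essentially the paper's, with cosmetic differences in the normalization (you keep $b$ fixed and explicitly build $c = a_{0} + te$, where the paper first replaces $b$ by $\left(b - a^{\star}\right)/t$ and then takes any $c$ with the same residue class); Steps~1 and~2 and the verification of property~(1) are sound, including your observation that condition~(2) of Proposition~\ref{newton-max} forces $k_{A}$ to be algebraically closed.

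The one real gap is in the final sentence of Step~3. You only embed $A\left(c\right)_{\sigma}\subseteq A_{1}$ in $M$ over $A$, and then declare $\acl_{\omega}\left(Ac\right)$ to be the hull taken inside $M$. But in the tower construction of Theorem~\ref{model-complete}, the model $A_{\alpha+1}=\acl_{\omega}\left(A_{\alpha}c_{\alpha+1}\right)$ must come equipped with coherent embeddings $i_{\alpha+1}\colon A_{\alpha+1}\hookrightarrow M$ \emph{and} $j_{\alpha+1}\colon A_{\alpha+1}\hookrightarrow B_{\alpha+1}$; since hulls need not be unique in general (Corollary~\ref{cor-alg-closure-not-unique}, Example~\ref{exa:non-unique-spherically}), your hull-inside-$M$ might a priori not embed in $B$ over $A\left(c\right)_{\sigma}$. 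The paper closes this by observing that $A_{1}=A^{\text{tunr}}$ has no nontrivial $\text{h}$-finite $\sigma$-invariant separable extensions — this is Theorem~\ref{transformalhenselization}(3), applicable because $A$ is algebraically closed (hence Henselian with no finite $\sigma$-invariant Galois extensions) and $k_{A_{1}}=k_{B}$ is algebraically closed — so by Corollary~\ref{transformalHenselizationalg} the hull $\acl_{\omega}\left(A_{1}\right)$ is unique up to isomorphism and embeds over $A_{1}$ into $M$. Taking $\acl_{\omega}\left(Ac\right)$ inside that copy of $\acl_{\omega}\left(A_{1}\right)\subseteq B$ gives a single hull embedding in both $B$ and $M$, which is what the downstream argument requires. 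Adding this one step completes your proof.
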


\begin{proof}
We first reduce ourselves to the case where $vb=0$ and the residue
class of $b$ is not in $k_{A}$. Since $A$ is transformally Newtonian,
it is transformally algebraically maximal and we have $\Gamma_{A}=\Gamma_{A}\otimes\mathbf{Q}\left(\sigma\right)$.
So as $B$ is transformally algebraic over $A$, there is an element
$a^{\star}\in A$ maximizing the quantity $v\left(b-a^{\star}\right)$,
i.e, we have $v\left(b-a^{\star}\right)\geq v\left(b-a\right)$ for
all $a\in A$. Let $b^{\star}=b-a^{\star}$. Replacing $b$ by $b^{\star}$
is harmless: if $c^{\star}$ witnesses the truth of the statement
for $b^{\star}$, then $c=c^{\star}+a^{\star}$ witnesses the truth
of the original statement. We may therefore assume $a^{\star}=0$
maximizes the valuative distance, i.e we have the inequality $v\left(b\right)\geq v\left(b-a\right)$
for all $a\in A$. In a similar fashion, using $\Gamma_{A}=\Gamma_{A}\otimes\mathbf{Q}\left(\sigma\right)=\Gamma_{B}$,
after replacing $b$ by a scalar multiple of an element of $A$, we
may assume that $vb=0$; so the residue class of $b$ is not in $k_{A}$.

Let $A^{\text{tunr}}$ be the maximal transformally unramified extension
of $A$ in $B$, so $k_{A^{\text{tunr}}}=k_{B}$; so the field $A^{\text{tunr}}$
is the universal transformally Henselian extension of $A$ reproducing
the embedding $k_{A}\hookrightarrow k_{B}$ at the level of residue
fields. Since $M$ is a model of $\widetilde{\VFA}$, it is
strictly transformally Henselian, so the field $A^{\text{tunr}}$
embeds in $M$ over $A$. Since the field $A$ and the residue field
$k_{A^{\text{tunr}}}$ are both algebraically closed, it follows from \ref{transformalhenselization}
that $A^{\text{tunr}}$ admits no nontrivial finite $\sigma$-invariant
Galois extensions; using $M=\acl_{\omega}M$ and Corollary \ref{transformalHenselizationalg},
we can further find an embedding of $\acl_{\omega}\left(A^{\text{tunr}}\right)$
in $M$ over $A$. Finally, since $k_{A^{\text{tunr}}}=k_{B}$ and
the residue class of $b$ is not in $k_{A}$, we can find an element
$c\in\acl_{\omega}\left(A^{\text{tunr}}\right)$ which is as in the
lemma (namely any element whose residue class coincides with that
of $b$).
\end{proof}
\textbf{Proof of Theorem \ref{model-complete}} We now turn to the details.

\textbf{Step 1.} Every model of $\VFA$ embeds in a model of
$\widetilde{\VFA}$. Let $K$ be a model of $\VFA$. If
$K$ is trivially valued then we may embed it in a field of Hahn series
so assume that $K$ is nontrivially valued. Using Lemma \ref{valuation-group-increase}
we may assume that $\Gamma=\Gamma\otimes\mathbf{Q}\left(\sigma\right)$
and using Proposition \ref{strict-t-henselization} we may assume
that $K$ is strictly transformally Henselian. By Corollary \ref{newton-max}
we may assume that $K$ is transformally Newtonian; this implies that
every nonconstant difference polynomial with coefficients in $K$
has a $K$-root; as this holds in particular for additive operators,
we find that $K$ is a model of $\widetilde{\VFA}$

Next we show that the models of $\widetilde{\VFA}$ are existentially
closed. Fix a saturated model $M$ of $\widetilde{\VFA}$. Let
$A\subseteq M$ be countable with $\acl_{\omega}A=A$. Let $B=\acl_{\omega}B$
be a countable extension of $A$; we want to prove that $B$ embeds
in $M$ over $A$.

\textbf{Step 2.} We first deal with the case where $B$ is transformally
algebraic over $A$. By compactness, we may assume that $B$ is $\acl_{\omega}$-finitely
generated over $A$, that is, we have $B=\acl_{\omega}\left(Ab\right)$
for a finite tuple $b\in B$. By induction on the number of generators,
we may moreover assume that $b$ is a single element.

We shall construct continuous transfinite sequences
\[
A=A_{0}\subseteq A_{1}\subseteq A_{2}\subseteq\ldots\subseteq A_{\alpha}\subseteq\ldots
\]
and
\[
B=B_{0}\subseteq B_{1}\subseteq\ldots\subseteq B_{\alpha}\subseteq\ldots
\]
of models of $\VFA$ with $B_{\alpha}=\acl_{\omega}B_{\alpha}$
and $A_{\alpha}=\acl_{\omega}A_{\alpha}$, for each countable ordinal
$\alpha$. Each $A_{\alpha}$ will be equipped with an embedding $i_{\alpha}\colon A_{\alpha}\hookrightarrow M$
and an embedding $j_{\alpha}\colon A_{\alpha}\hookrightarrow B_{\alpha}$
over $A=A_{0}$. These embeddings will be coherent in the sense that
for $\alpha<\beta$ we have that $i_{\alpha}$ is the restriction
of $i_{\beta}$ to $A_{\alpha}$ and likewise for $j_{\alpha}$ and
$j_{\beta}$. Furthermore, the fields $B_{\alpha}$ will all be transformally
algebraic over $B=B_{0}$. Finally, for each $\alpha$, if $b\notin A_{\alpha}$,
then there will be some $\beta>\alpha$ and some $c_{\beta}\in A_{\beta}$
with the property that $v\left(b-c_{\beta}\right)>v\left(b-a\right)$
for all $a\in A_{\alpha}$. Since $\Gamma_{B_{\alpha}}=\Gamma_{A}\otimes\mathbf{Q}\left(\sigma\right)$
such a sequence must eventually terminate; the embedding of $A_{\alpha}$
in $M$ over $A$ for $\alpha$ large will thus restrict to an embedding
of $\acl_{\omega}\left(Ab\right)$ in $M$ over $A$, as wanted.

The construction is as follows. For $\alpha$ a limit ordinal take
the union. If $\alpha$ is even, let $A_{\alpha+1}$ be a countable
transformally Newtonian model of $\VFA$ over $A_{\alpha}$
inside $M$, and let $B_{\alpha+1}=\acl_{\omega}\left(B_{\alpha}\otimes_{A_{\alpha}}A_{\alpha+1}\right)$; thus $B_{\alpha+1}$ is an algebraically closed and transformally Henselian extension of $A_{\alpha}$ containing copies of $B_{\alpha}$ and $A_{\alpha + 1}$, and generated as an algebraically closed and transformally Henselian field by the images of those fields; the choice of $B_{\alpha+1}$ is not in general unique, even up to isomorphism, but its existence is guaranteed by Theorem \ref{TheAmalgamationProperty}. If $\alpha$ is
odd, we let $B_{\alpha+1}=B_{\alpha}$ and $A_{\alpha+1}=\acl_{\omega}\left(A_{\alpha}c_{\alpha+1}\right)$,
where $c_{\alpha+1}\in B_{\alpha+1}$ is the element provided by Lemma
\ref{lem:partially-EC}.

\textbf{Step 3.} For the general case, increasing $B$ (while keeping it countable) we may assume
that it is itself a model of $\widetilde{\VFA}$ and in particular transformally Newtonian. Let $C$ be a model of $\VFA$ with $A\subseteq C\subseteq B$
and with $C=\acl_{\omega}C$ maximal with
respect to the property that it embeds in $M$ over $A$. By Step
2, we find that no element of $B$ is transformally algebraic over
$C$ unless it already lies in $C$. Since $B$ is a model of $\widetilde{\VFA}$,
it is transformally Newtonian, so as $C$ is transformally algebraically
closed in $B$, the field $C$ is transformally Newtonian also. By
Corollary \ref{newton-generic} we find every $a\in B$ not in $C$
is generic over $C$, thus $\acl_{\omega}\left(Ca\right)$ embeds
in $M$ over $C$; using maximality of $C$ we thus find that $C=B$,
and the proof is finished.

\begin{rem}
\label{scatteredness}
We now give an alternative approach to the proof of Theorem \ref{model-complete}, avoiding transfinite towers. In fact, the argument shows that the transfinite tower constructed above terminates in finitely many steps, with an explicit bound.

Let $K$ be a countable model of $\VFA$ which is algebraically closed , transformally Henselian and nontrivially valued. Let $M$ be a saturated model of $\widetilde{\VFA}$ over $K$, and $L=K\left(a\right)_\sigma$ a transformally algebraic extension of $K$, with $a$ a single element; we want to embed $L$ in $M$ over $K$. Using Example \ref{example-monogonic-t-henselian} and compactness, this guarantees that every countable transformally algebraic extension of $K$ embeds in $M$ over $K$.

The element $a$ is a root of some difference polynomial $fx$ with coefficients in $K$. After twisting, we may assume that $f'a \neq 0$. By scatteredness, only finitely many valuative distances are attained between distinct roots of $fx$, say $\gamma_{1} < \ldots < \gamma_{N}$. Note that this set of valuative distances is fixed, and does not change upon base extension. To facilitate the induction below, we also put formally $\gamma_0 = -\infty$.

We will find inductively a countable $K_n = \acl_{\omega}K_n$ over $K$ contained in $M$, an element $a_n \in K_n$ and common amalgamation of $L$ and $K_n$ over $K$, such that the valuative distance $v\left(a-a_n\right) > \gamma_n$ inside the amalgam; i.e, the element $a$ lies in an open ball of valuative radius $\gamma_n$ around an element of $K_n$. For $n = 0$, we can simply put $K_0 = K$ and $a_0 = 0$.

Let us assume for that this has been achieved. Relabelling, we may write $K=K_n$. Then $a$ is the unique root of of $fx$ inside some open ball $K$-ball; since $K$ is transformally Henselian, the element $a$ lies in $K$, and we are done.

It remains to find $K_{n+1}$ given $K_n$. So assume that $K_n$ and $c_n$ were found; for simplicity of notation, relabelling, we may write $K = K_n$, $c_n = c$, $\alpha = \gamma_{n}$ and $\beta = \gamma_{n+1}$. So $a$ lies an open ball of valuative radius $\alpha$ around an element $K$; we seek to increase $K$ inside $M$ and amalgamate, so that $a$ will lie in an open ball of valuative radius $\beta$ around an element of $K$. Since $M$ is transformally Newtonian, there is an $M$-root $d$ of $fx$ in the open ball of valuative radius $\alpha$ around the element $a$. Let us amalgamate $\acl_{\omega}Kc$ and $L$ together over $K$; replacing $K$ by $\acl_{\omega}Kd$ we may therefore assume that $a$ lies in a closed ball of valuative radius $\beta$ around an element of $K$. Increasing $K$ again inside $M$ we may assume that $\Gamma$ is transformally divisible; after an affine change of variables, we may thus assume that $\beta = 0$.

To finish it is enough to find some $d \in \acl_{\omega}Ka$, whose residue class coincides with that $a$, and such that $\acl_{\omega}Kd$ embeds in $M$ over $K$. If the residue class of $a$ has a representative in $K$, then this is trivial. But in any case, the argument of Lemma \ref{lem:partially-EC} applies, and the induction step is complete.

\end{rem}

\begin{prop}
\label{saturation}Let $K$ be a model of $\widetilde{\VFA}$
and $\kappa$ an infinite cardinal. Then $K$ is $\kappa^{+}$-saturated
as a model of $\widetilde{\VFA}$ if and only if $k$ and $\Gamma$
$\kappa^{+}$-saturated as models of $\ACFA$ and $\widetilde{\omega\OGA}$
respectively, and moreover every nested family of balls in $K$ of
cardinality $\kappa$ has a nonempty intersection.
\end{prop}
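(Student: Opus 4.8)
The plan is to prove both directions of the equivalence by reducing, via the amalgamation machinery already developed, to the purely residual and purely valuation-group problems, together with the scatteredness principle controlling ball intersections. First I would fix a model $K$ of $\widetilde{\VFA}$ and recall that by Theorem \ref{model-complete} the theory $\widetilde{\VFA}_{F}$ is complete whenever $F$ is an algebraically closed and transformally Henselian submodel with no nontrivial $\text{h}$-finite $\sigma$-invariant extensions; hence types over such a base are determined by their restrictions to a small elementary substructure, and the question of realizing a type over a parameter set $A$ of size $\kappa$ reduces to embedding a small extension of $\acl_\omega A$ into $K$ over $A$. The right-to-left direction is then essentially a rerun of Step 2 and Step 3 of the proof of Theorem \ref{model-complete}, relativized to a $\kappa^+$-saturated setting: given $A$ of cardinality $\kappa$, form $\acl_\omega A$ (still of size $\kappa$ since the language is countable), split a one-generator extension into its transformally algebraic and transformally transcendental parts, handle the transformally transcendental part using genericity (Corollary \ref{newton-generic}) and $\kappa^+$-saturation of $k$ and $\Gamma$, and handle the transformally algebraic part by the finite tower of Remark \ref{scatteredness}, where at each of the $N$ stages one must realize a residual type (using $\kappa^+$-saturation of $k$) or locate a point in a nested family of balls of size at most $\kappa$ (using the ball-intersection hypothesis). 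The key point is that scatteredness (Theorem \ref{Scatteredness}) bounds the number of stages by the number of distinct valuative distances between roots of the relevant difference polynomial, so no transfinite iteration and no accumulation of cardinality occurs.

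For the left-to-right direction I would argue that $\kappa^+$-saturation of $K$ transfers to each of the named structures and to the ball-intersection property. Stable embeddedness of $k$ and $\Gamma$ with the induced structure being the pure difference field and the pure ordered module, respectively, is part of Theorem \ref{main-thm}(3); combined with model-completeness this gives that $k$ (resp. $\Gamma$) with its full induced structure is an elementary substructure of a reduct interpretable in $K$, so a type over $\le\kappa$ parameters in $k$ (resp. $\Gamma$) lifts to a type over $\le\kappa$ parameters in $K$, is realized there, and the realization lies in $k$ (resp. $\Gamma$) by stable embeddedness — hence $k\models\ACFA$ and $\Gamma\models\widetilde{\omega\OGA}$ are $\kappa^+$-saturated. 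For the ball-intersection property, given a nested family $(b_i)_{i<\kappa}$ of balls in $K$, the partial type $\{x\in b_i : i<\kappa\}$ is consistent (each finite subfamily has a point since the family is nested) and uses $\le\kappa$ parameters, so by $\kappa^+$-saturation of $K$ it is realized, giving a point in the intersection.

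The main obstacle I expect is bookkeeping in the right-to-left direction: one must be careful that at each stage of the finite tower the base remains of cardinality $\kappa$, remains algebraically closed and transformally Henselian (so that the amalgamation results of Theorem \ref{TheAmalgamationProperty} and the completeness results apply), and that the residual and valuation-group realization steps genuinely only consume $\le\kappa$ parameters and return data inside the relevant stably embedded sort. A subtler point is that $\widetilde{\VFA}_{F}$ is complete only when $F$ has no nontrivial $\text{h}$-finite $\sigma$-invariant extensions, so when passing to $\acl_\omega A$ one should either arrange this or, more simply, observe that the realization argument only needs embeddability of a small extension over $A$ and not full completeness over $A$ — so the structure of the proof of Theorem \ref{model-complete} (which works over an arbitrary countable $\acl_\omega$-closed base) is exactly what should be imitated, now with "countable" replaced by "of cardinality $\le\kappa$" and compactness/saturation of the target upgraded accordingly. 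A final routine check is that "every nested family of balls of cardinality $\kappa$ has nonempty intersection" is the correct saturation-theoretic avatar of spherical completeness at level $\kappa^+$, i.e. that it is equivalent to the non-omission of the relevant ball-types; this follows since any such type is, up to the affine change of variables of Definition \ref{genericofaball}, cofinally indexed by such a nested family.
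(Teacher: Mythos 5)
Your proposal is correct in outline and the left-to-right direction is exactly the routine argument the paper leaves implicit: stable embeddedness of $k$ and $\Gamma$ (Theorem \ref{main-thm}(3)) together with the consistency of nested-ball partial types immediately transfers $\kappa^+$-saturation from $K$ to $k$, $\Gamma$, and the ball-intersection property.

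The interesting comparison is in the right-to-left direction, where you and the paper diverge on how to handle the transformally algebraic case. After replacing the base by the transformal algebraic closure of $\acl_\omega A$ inside $K$ (small, $\acl_\omega$-closed, and transformally Newtonian by Remark \ref{rem-newton-basic}(1) since $K$ is), the paper uses a one-step argument: $A$ is transformally algebraically maximal with $\Gamma_A$ transformally divisible, so any proper transformally algebraic $B\supsetneq A$ must satisfy $k_B\supsetneq k_A$, and one application of $\kappa^+$-saturation of $k$ embeds $k_B$ over $k_A$; this lifts by Theorem \ref{transformalhenselization} to an embedding of the transformally unramified part $B^{\text{tunr}}$, which has no finite $\sigma$-invariant Galois extensions because $k_B$ is algebraically closed, and which properly extends $A$. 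That single step, iterated by a Zorn-type argument exactly as in Step~3 of Theorem \ref{model-complete}, already consumes the transformally algebraic case; no finite tower and no appeal to scatteredness is needed. Your proposed detour through the finite tower of Remark \ref{scatteredness} would also work but is more than is required, and would need some extra bookkeeping: the tower was designed for a target that is $|L|^+$-saturated as a model of $\widetilde{\VFA}$, whereas here that is precisely what one is trying to prove, so each amalgamation step needs to be re-justified using only $\kappa^+$-saturation of $k$. One small inaccuracy: in your description of the tower stages you have the ball-intersection hypothesis doing work there, but it is not needed for the transformally algebraic case; ball-intersection and $\Gamma$-saturation enter only for the generic (transformally transcendental) types of Definition \ref{generic-definition}, namely the immediate-nest type and the valuation-group-adjunction type. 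The transformally algebraic case consumes only $k$-saturation.
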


\begin{proof}
As in the proof of Theorem \ref{model-complete}, it is enough to
establish the following. Let $A$ be a small model contained in $K$
and $B$ a small algebraically closed and transformally Henselian
extension of $A$ properly extending $A$; then there is some $A\subset B'\subset B$
with $B'$ transformally Henselian, without any finite $\sigma$-invariant
Galois extensions, properly extending $A$, and so that $B'$ embeds
in $K$ over $A$.

If $B$ is transformally algebraic over $A$ then $k_{B}$ properly
extends $k_{A}$, as $A$ is transformally Newtonian. By saturation
of $k$ we can embed $k_{B}$ in $k$ over $k_{A}$; this lifts uniquely
to an embedding of $B^{\text{tunr}}$ in $K$ over $A$. The field
$B^{\text{tunr}}$ has no nontrivial finite $\sigma$-invariant Galois
extensions, since $k_{B}$ is algebraically closed. So we may assume
that no element of $B$ is transformally algebraic over $A$ unless
it already lies in $A$; thus every element of $B$ not in $A$ is
generic over $A$. Saturation of $k$ implies that it is of large
transformal transcendence degree over the prime field, so the generic
of $\mathcal{O}$ over $A$ is realized in $K$. Similarly saturation
of $\Gamma$ and the fact that every small nested family of closed
balls in $K$ has a nonempty intersection implies that the other two
kinds of generic types over $A$ are realized in $K$.
\end{proof}
\begin{thm}\label{thm:stably-embedded}
Let $K$ be a model of $\widetilde{\VFA}$. Then $k$ and $\Gamma$
are embedded, stably embedded and fully orthogonal pure models of
$\ACFA$ and $\widetilde{\omega\OGA}$ respectively. In other words,
let $D\subset k^{n}\times\Gamma^{m}$ be a definable set which is
definable with parameters in $K$ and in the language of $\widetilde{\VFA}$.
Then $D$ is a boolean combination of definable sets of the form $X_{i}\times Y_{j}$
where $X_{i}\subset k^{n}$ and $Y_{j}\subset\Gamma^{m}$ are definable in the language of difference fields (resp. ordered modules) and with parameters in $k$ (resp. $\Gamma$). Furthermore, the canonical parameter of every definable subset of $k^d$ in $\widetilde{\VFA}^{eq}$ is equidefinable in $\widetilde{\VFA}$ with a tuple from $k$, and likewise for $\Gamma$.
\end{thm}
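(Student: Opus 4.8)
The plan is to prove the stable embeddedness and full orthogonality of $k$ and $\Gamma$ in a model $K$ of $\widetilde{\VFA}$ by the standard model-theoretic device: given a definable set $D\subseteq k^n\times\Gamma^m$ with parameters in $K$, pass to a monster model $M\succ K$, choose a small model $K_0\preceq M$ containing all the parameters, and show that whether a tuple $(\bar\alpha,\bar\gamma)\in k^n\times\Gamma^m$ lies in $D$ depends only on $\tp_{\ACFA}(\bar\alpha/k_{K_0})$ together with $\tp_{\widetilde{\omega\OGA}}(\bar\gamma/\Gamma_{K_0})$, and moreover that these two types can be prescribed independently. Concretely, it suffices to show: if $\bar\alpha,\bar\alpha'$ realize the same type over $k_{K_0}$ in the pure difference field $k$, and $\bar\gamma,\bar\gamma'$ realize the same type over $\Gamma_{K_0}$ in the pure ordered module $\Gamma$, then $(\bar\alpha,\bar\gamma)$ and $(\bar\alpha',\bar\gamma')$ have the same type over $K_0$ in $M$ (as models of $\widetilde{\VFA}$). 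Once this is known, an automorphism-of-$M$ argument (or a routine compactness argument on the defining formula of $D$) yields that $D$ is cut out by a boolean combination of rectangles $X_i\times Y_j$ with $X_i$ definable in $k$ over $k$ and $Y_j$ definable in $\Gamma$ over $\Gamma$; purity of the induced structures is the special case $m=0$ and $n=0$ respectively, read off from the quantifier elimination.

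The heart of the matter is therefore the amalgamation statement, and here is where I would invoke the machinery already built. By Theorem~\ref{model-complete} the theory $\widetilde{\VFA}_{K_0}$ over any model $K_0$ of $\VFA$ with no nontrivial $\h$-finite $\sigma$-invariant extensions is complete; so it is enough to produce, starting from $K_0$ and the two prescribed pure types, a \emph{single} extension $L$ of $K_0$ inside some model of $\VFA$ realizing $\bar\alpha$ and $\bar\gamma$ simultaneously with the right residual and valuation-group data, because any two such extensions can then be amalgamated over $K_0$ and the realized tuples matched up by completeness. To build $L$: first enlarge $K_0$ by a generic realization of $\bar\gamma$ over $\Gamma_{K_0}$ in the value group. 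By Remark~\ref{rem-qf-complete-valgp-adjunction} (generic element of clause (2) of Definition~\ref{generic-definition}) and $o$-minimality plus quantifier elimination of $\widetilde{\omega\OGA}$, the quantifier-free type of such an element over $K_0$ is determined by the cut, so this is canonical; adjoining finitely many such and passing to $\acl_\omega$ produces an extension $K_1$ of $K_0$ with $\Gamma_{K_1}\otimes\mathbf{Q}(\sigma)$ containing a copy of the desired configuration, with no new residue field. Next, over $K_1$, adjoin a transformally transcendental element generic in $\mathcal{O}$ whose residue class realizes $\bar\alpha$ over $k_{K_1}$; again by Lemma~\ref{generic-qf} and Lemma~\ref{transformally-transcendental-hens}, this is governed by the pure difference-field type of $\bar\alpha$, and by Proposition~\ref{generic-closed-open} plus Corollary~\ref{transformalHenselizationalg} the resulting $\acl_\omega$-hull still has no nontrivial $\h$-finite $\sigma$-invariant extensions, so the whole construction stays inside the class to which Theorem~\ref{model-complete} applies. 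Finally embed this $L$ into a model of $\widetilde{\VFA}$.

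The one subtlety — and the step I expect to be the main obstacle — is the \emph{orthogonality} between the two coordinates: one must make sure that when we prescribe the $\Gamma$-type and the $k$-type, there is no hidden interaction forced by the valued-field structure, i.e.\ that the residue-field and value-group extensions really can be carried out one after the other without the second disturbing the first. This is exactly what Corollary~\ref{cor-transformally-alg-no-valgp-adjunction} (transformally algebraic extensions do not enlarge $\Gamma\otimes\mathbf{Q}(\sigma)$) and the $\omega$-increasing Abhyankar inequality Lemma~\ref{lem-transformally-Abhyankar} guarantee in one direction, and what the genericity of the residue-lift (it adds a transformally transcendental element, hence by Fact~\ref{fact-ttranscendental-nofing} and Proposition~\ref{generic-closed-open} does not create $\sigma$-invariant finite covers) guarantees in the other; combined with stable embeddedness of $k$ and $\Gamma$ as \emph{pure} structures, which follows from the corresponding quantifier-elimination facts for $\ACFA$ (Fact~\ref{frobacfa}) and $\widetilde{\omega\OGA}$ (Proposition~\ref{woga-frobenius}), one concludes that the type of $(\bar\alpha,\bar\gamma)$ over $K_0$ factors as the product of the two pure types. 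Assembling these pieces gives the boolean combination of rectangles, with the parameter-fields as stated, completing the proof.
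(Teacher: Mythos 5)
Your strategy differs genuinely from the paper's. You aim to show that $\tp_{\widetilde{\VFA}}(\bar\alpha,\bar\gamma/K_0)$ is determined by $\tp_{\ACFA}(\bar\alpha/k_{K_0})$ together with $\tp_{\widetilde{\omega\OGA}}(\bar\gamma/\Gamma_{K_0})$ by directly building an extension $L$ of $K_0$ that realizes the two pure types and then invoking completeness of $\widetilde{\VFA}_{K_0}$ via Theorem~\ref{model-complete}; this formulation is the right one. The paper instead works in the saturated Hahn-series model $K = k\left(\left(t^{\Gamma}\right)\right)$ (Proposition~\ref{saturation}), observes that the restriction $\Aut(K) \to \Aut(k)\times\Aut(\Gamma)$ has a section because both the residue map and the valuation map split there, and reduces to lifting partial isomorphisms of the pure structures; the key input is Corollary~\ref{trivially-valued}, ensuring the substructures $k_1\left(t^{\Gamma_1}\right)$ admit no nontrivial $\text{h}$-finite $\sigma$-invariant extensions, so that quantifier elimination and saturation finish the job. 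The Hahn-series route is more compact and sidesteps the case-splitting your construction incurs.

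There is a genuine gap in your step realizing the residue-field type. You propose to adjoin a transformally transcendental element generic in $\mathcal{O}$ ``whose residue class realizes $\bar\alpha$'' over $k_{K_1}$. But by Definition~\ref{genericofaball}, a generic of $\mathcal{O}$ over $K_1$ has residue class \emph{transformally transcendental} over $k_{K_1}$; this handles $\bar\alpha$ only if $\bar\alpha$ is transformally transcendental over $k_{K_0}$. In general $\bar\alpha$ may be transformally algebraic, and the correct device for lifting such a residue-field extension is the relative (strict) transformal Henselization of Theorem~\ref{transformalhenselization}: given a transformally algebraic $\widetilde{k} \supseteq k_{K_1}$, this produces the canonical transformally Henselian extension of $K_1$ reproducing $k_{K_1}\hookrightarrow\widetilde{k}$ residually and, crucially, controlling its finite $\sigma$-invariant Galois covers (clause (3)). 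You must therefore split the residue-field adjunction into a transformally algebraic part (handled by Theorem~\ref{transformalhenselization}) and a transformally transcendental part (handled by generics of $\mathcal{O}$ and Proposition~\ref{generic-closed-open}); as written, your argument tacitly restricts to the transcendental case, which your closing paragraph confirms by appealing only to Fact~\ref{fact-ttranscendental-nofing}. Once this is repaired, the remaining ingredients you invoke — Corollary~\ref{cor-transformally-alg-no-valgp-adjunction} to decouple the value-group and residue-field adjunctions, and Corollary~\ref{transformalHenselizationalg} to ensure the final $\acl_{\omega}$-hull stays an amalgamation base so that Theorem~\ref{model-complete} applies — should assemble into a complete proof.
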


\begin{rem}
    Let $T$ be a theory with elimination of imaginaries in a language $\mathcal{L}$ and let $\widetilde{T}$ be an expansion of $T$ in a language $\widetilde{\mathcal{L}}$ and possibly additional sorts (so every model of $T$ can be extended to a model of $\widetilde{T}$). It follows from \cite{chatzidakis1999model}, Appendix $A$ that $T$ is stably embedded in $\widetilde{T}$ if and only if the following condition is satisfied: in a saturated model of $\widetilde{T}$, every automorphism on the sorts of $T$ (i.e a partial elementary map which is bijective on the sorts of $T$) extends to a global automorphism. This is equivalent to the requirement that every definable subset of the sorts of $T$ is definable with parameters on the sorts of $T$. Moreover, say that $T$ is \emph{fully embedded} in $\widetilde{T}$ if it is stably embedded in $\widetilde{T}$ and every definable subset of the sorts of $T$ which is definable without parameters and in the language $\widetilde{\mathcal{L}}$ is already definable without parameters and in the language $\mathcal{L}$; then $T$ is fully embedded in $\widetilde{T}$ if and only if it is stably embedded in $T$ and every partial map on the sorts of $T$ which is elementary with respect to $\mathcal{L}$-formulas is also elementary with respect to $\widetilde{\mathcal{L}}$ formulas. In this language, Theorem \ref{thm:stably-embedded} says that $\ACFA$ and $\widetilde{\omega\OGA}$ are fully embedded in $\widetilde{\VFA}$. This will be used below without further mention.
\end{rem}

\begin{proof}
We may take $K=k\left(\left(t^{\Gamma}\right)\right)$ where $k,\Gamma$
are saturated; so by Proposition \ref{saturation} the field $K$
is saturated as a model of $\widetilde{\VFA}$. The restriction
map $\text{Aut}\left(K\right)\to\text{Aut}\left(k\right)\times\text{Aut}\left(\Gamma\right)$
is evidently surjective, indeed has a section. So $k$ and $\Gamma$
are stably embedded and fully orthogonal. To finish we must show that the induced structure on $k$ and $\Gamma$ is the
pure difference field and pure ordered transformal module structure.
By quantifier elimination in $\ACFA$ and $\widetilde{\omega\OGA}$,
it is enough to prove: if $k_{1}\cong k_{2}$ is an isomorphism between
small algebraically closed difference subfields of $k$ and $\Gamma_{1}\cong\Gamma_{2}$
is an isomorphism between finitely generated transformal submodules
of $\Gamma$ then it lifts to a global automorphism of $K$. The valuation
map and the residue map both have a section; so we get an isomorphism
$k_{1}\left(t^{\Gamma_{1}}\right)\cong k_{2}\left(t^{\Gamma_{2}}\right)$
of transformal valued subfields of $K$. We then finish using quantifier
elimination in $\widetilde{\VFA}$, saturation and Corollary
\ref{trivially-valued}.
\end{proof}
\begin{prop}
The theories $\widetilde{\VFA},\widetilde{\VFA_{\mathbf{Q}}}$
and $\widetilde{\VFA_{\mathbf{F}_{p}}}$ are recursive.
\end{prop}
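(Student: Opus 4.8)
The plan is to show that each of the three theories is simultaneously recursively enumerable and co-recursively enumerable, hence recursive. Recursive enumerability is immediate once one notes that all three are recursively axiomatized: the transformal Henselianity axioms of Definition \ref{def:transformally-henselian} form a recursive scheme, indexed by the supports in $\mathbf{N}\left[\sigma\right]$ of difference polynomials and by the twist parameter $m\in\mathbf{Z}$; the clause ``$k\models\ACFA$'' imports, through the definable residue field, the recursive axiomatization of $\ACFA$, and ``$\Gamma\models\widetilde{\omega\OGA}$'' imports, through the definable value group, that of $\widetilde{\omega\OGA}$ (the transformal divisibility part being the scheme $\nu\Gamma=\Gamma$ for $0\neq\nu\in\mathbf{Z}\left[\sigma\right]$); surjectivity of the additive operators is a recursive scheme; and for $\widetilde{\VFA_{\mathbf{Q}}}$ and $\widetilde{\VFA_{\mathbf{F}_{p}}}$ one further adds the recursive atomic diagram of the prime field.

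The real work is co-recursive enumerability, and for that I would first pin down the completions. Every model of $\widetilde{\VFA}$ is an algebraically closed field (the opening observation in the proof of Proposition \ref{wvfa-newtonian}), so it contains a canonical copy of the algebraic closure $\overline{k_{0}}$ of the prime field $k_{0}$; restricting $\sigma$ to $\overline{k_{0}}$ yields an element $\tau$ of the absolute Galois group $G_{k_{0}}$, and the model satisfies $\widetilde{\VFA}$ together with the atomic diagram of the difference field $(\overline{k_{0}},\tau)$. Since $(\overline{k_{0}},\tau)$, trivially valued, is a model of $\VFA$ with no nontrivial $\text{h}$-finite $\sigma$-invariant separable extensions, Theorem \ref{model-complete} gives that $\widetilde{\VFA}_{(\overline{k_{0}},\tau)}$ is complete, and conversely every such $\tau$ is realised, by the first step of the proof of that theorem. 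Thus, in a fixed characteristic, the completions of $\widetilde{\VFA}$ are exactly the $\widetilde{\VFA}_{(\overline{k_{0}},\tau)}$. The crucial further point is that, because $\widetilde{\VFA}$ eliminates quantifiers down to the field-theoretic algebraic closure (Theorem \ref{main-thm}(2), Proposition \ref{qe-wvfa}) and the valuation is trivial on $\overline{k_{0}}$, the truth value of any fixed sentence $\phi$ in a completion $\widetilde{\VFA}_{(\overline{k_{0}},\tau)}$ depends only on the restriction of $\tau$ to a finite normal subextension $K_{\phi}/k_{0}$ of $\overline{k_{0}}$; only the \emph{existence} of such a $K_{\phi}$ is needed, not its effective computation.

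Granting this, co-recursive enumerability falls out. For $\widetilde{\VFA_{\mathbf{F}_{p}}}$: if $\phi$ is not a consequence, then $\neg\phi$ holds in some completion $\widetilde{\VFA}_{(\overline{\mathbf{F}_{p}},\tau)}$, its truth there depends only on $\tau$ modulo some $N$, and the Frobenius powers $x\mapsto x^{p^{r}}$ run through all residue classes modulo $N$ in $G_{\mathbf{F}_{p}}\cong\widehat{\mathbf{Z}}$; hence
\[
\widetilde{\VFA_{\mathbf{F}_{p}}}\not\vdash\phi \iff \exists\,r\geq0:\ \widetilde{\VFA}_{(\overline{\mathbf{F}_{p}},\,x\mapsto x^{p^{r}})}\vdash\neg\phi ,
\]
and each theory on the right is complete (by the previous paragraph) and recursively axiomatized uniformly in $r$, hence decidable uniformly in $r$, so the right side defines an r.e.\ set. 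The argument for $\widetilde{\VFA_{\mathbf{Q}}}$ is the same, the witnessing completions now being $\widetilde{\VFA}_{(\overline{\mathbf{Q}},\tau_{s})}$ with $s\in\mathrm{Gal}(K_{0}/\mathbf{Q})$ for a finite Galois $K_{0}$ one effectively searches over and $\tau_{s}$ a canonical recursively presented automorphism of $\overline{\mathbf{Q}}$ restricting to $s$, built by choosing compatible lifts up a recursive tower of finite Galois extensions. Finally $\widetilde{\VFA}$ itself is, up to the definitional expansion by constants for the prime field, one of $\widetilde{\VFA_{\mathbf{Q}}}$, $\widetilde{\VFA_{\mathbf{F}_{p}}}$ for the fixed characteristic exponent $p$ (and if read across all characteristics, one simply unions the r.e.\ witness sets above, over $p$ as well).

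The proof has no genuinely hard step; the one point to treat carefully is the reduction ``only finitely much of $\tau$ matters'', which is precisely where the quantifier elimination down to the field-theoretic algebraic closure is invoked, and one should check that its somewhat inexplicit form (Proposition \ref{qe-wvfa}) still delivers this finiteness. I note that the Frobenius limit theorem, Corollary \ref{frob:non}, plays no role here; it is what subsequently upgrades this to the uniform-in-$q$ decidability of the classes of algebraically closed valued Frobenius difference fields.
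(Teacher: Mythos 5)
Your argument is correct and proves the proposition, but the co-r.e.\ half takes a different route than the paper's. The r.e.\ half is the same in both: recursive axiomatization. For co-r.e., the paper uses decidability of $\ACFA$ as a black box: since the elementary theory of a model of $\widetilde{\VFA}$ is determined by the theory of its residue field together with the $\widetilde{\VFA}$ axioms, compactness produces, for each $\phi$ consistent with $\widetilde{\VFA}$, a difference-field sentence $\psi$ consistent with $\ACFA$ such that $\widetilde{\VFA}$ together with ``the residue field satisfies $\psi$'' proves $\phi$; consistency of $\psi$ with $\ACFA$ (resp.\ $\ACFA_{\mathbf{Q}}$, $\ACFA_{\mathbf{F}_{p}}$) is decidable, so the set of $\phi$ consistent with $\widetilde{\VFA}$ is r.e. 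You instead parametrize the completions explicitly by $\tau\in G_{k_{0}}$ and enumerate finite Galois data directly---Frobenius powers and density in $\widehat{\mathbf{Z}}$ for $\mathbf{F}_{p}$, and a recursive tower of finite Galois extensions for $\mathbf{Q}$---which effectively re-derives, inside the proof, the same kind of argument that underlies the decidability of $\ACFA$ rather than quoting it. Both work: the paper's version is more modular and treats the three theories uniformly in two lines each, while yours makes the completion structure concrete and anticipates the Frobenius-specialization theme of Theorem \ref{frobarevfa}.

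Two small remarks. The finiteness reduction you flag as the delicate point---that truth of a fixed sentence in $\widetilde{\VFA}_{(\overline{k_{0}},\tau)}$ depends only on $\tau$ restricted to a finite normal sub-extension---is pure compactness applied to the atomic diagram of $(\overline{k_{0}},\tau)$ once one has completeness of $\widetilde{\VFA}_{(\overline{k_{0}},\tau)}$ (Theorem \ref{model-complete}); it does not rely on Proposition \ref{qe-wvfa}, so there is nothing extra to verify about the ``inexplicit'' form of that quantifier elimination. And for $\mathbf{Q}$ the ``canonical recursively presented automorphism of $\overline{\mathbf{Q}}$'' machinery is unnecessary: it suffices to enumerate pairs $(K_{0},s)$ of a finite Galois extension $K_{0}/\mathbf{Q}$ and $s\in\mathrm{Gal}(K_{0}/\mathbf{Q})$, together with purported proofs of $\neg\phi$ from $\widetilde{\VFA}$ plus the single first-order sentence asserting that $\sigma$ restricts to $s$ on $K_{0}$. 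That theory is consistent for every $(K_{0},s)$ and proves $\neg\phi$ as soon as $K_{0}$ contains the finite sub-extension furnished by compactness, so the r.e.\ predicate is obtained without ever having to present or decide an individual completion $\widetilde{\VFA}_{(\overline{\mathbf{Q}},\tau)}$.
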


\begin{proof}
Let $\phi$ be a sentence of the language of $\VFA$. If $\neg\phi$
holds in every model of $\widetilde{\VFA}$ then by the completeness
theorem there is a syntactic proof of $\neg\phi$ derived from the
$\widetilde{\VFA}$ axioms; this is then witnessed computably
as $\widetilde{\VFA}$ is recursively axiomatized. Suppose conversely
that $M$ is a model of $\widetilde{\VFA}$ and $M\models\phi$.
The elementary theory of $M$ as a model of $\VFA$ is determined
by the elementary theory of $k_{M}$ as a pure difference field and
the fact that $M$ is a model of $\widetilde{\VFA}$. By compactness
there is therefore a sentence $\psi$ consistent with $\ACFA$ such
that $\phi$ continues to hold in every model of $\widetilde{\VFA}$,
provided only that the residue field obeys $\psi$; since $\ACFA$
is recursive, systematically enumerating all such sentences and searching
for a syntactic proof, we find that if the theory of models of $\widetilde{\VFA}$
where $\phi$ holds is consistent then this can be witnessed computably.
It follows that $\widetilde{\VFA}$ is recursive; the proof
of the other cases goes through without change since the theories
$\ACFA_{\mathbf{Q}}$ and $\ACFA_{\mathbf{F}_{p}}$ are recursive.
\end{proof}
\begin{prop}
\label{dcl}Let $M$ be a model of $\widetilde{\VFA}$ and let
$K$ be a transformal valued subfield. Then $K$ is model theoretically
algebraically closed in $M$ if and only if it is field theoretically
algebraically closed, inversive, and transformally Henselian.
\end{prop}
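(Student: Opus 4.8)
The plan is to treat the two implications separately: the ``only if'' direction reduces to the elementary fact that definable functions and finite solution sets land in $\mathrm{dcl}$ and $\acl$, while the ``if'' direction is extracted from the model completeness and quantifier elimination of Theorem \ref{model-complete} together with the amalgamation theorem. Throughout I would use that $\widetilde{\VFA}$ is model complete, so that $\acl$ and $\mathrm{dcl}$ computed in $M$ agree with those computed in any elementary extension, and that $M$, being a model of $\widetilde{\VFA}$, is algebraically closed as a field and, together with all of its Frobenius twists, transformally Henselian (Proposition \ref{wvfa-newtonian} and its proof, and Definition \ref{def:transformally-henselian}).

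\emph{The forward direction.} Assume $K$ is model theoretically algebraically closed in $M$. Since $\sigma^{\pm1}$ and $\phi^{\pm1}$ are function symbols of $\mathcal{L}_{\VFA}$, total on $M$, one has $\sigma^{-1}(K)\cup\phi^{-1}(K)\subseteq\mathrm{dcl}_M(K)\subseteq\acl_M(K)=K$, so $K$ is inversive and perfect. If $a\in M$ is field algebraic over $K$, then $a$ lies in the finite, $K$-definable zero set of its minimal polynomial, so $a\in\acl_M(K)=K$; as $M$ is algebraically closed this forces $K$ to be field theoretically algebraically closed. For transformal Henselianity, fix $m\in\mathbf{Z}$, set $\tau=\sigma\circ\phi^{m}$ (defined on $K$ since $K$ is perfect), and let $fx\in\mathcal{O}_K[x]_\tau$ and $a\in\mathcal{O}_K$ satisfy $vfa>0$ and $vf'a=0$. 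Because $(M,\tau)$ is transformally Henselian there is $b\in\mathcal{O}_M$ with $fb=0$ and $v(a-b)>0$, and by uniqueness of transformal Hensel lifts (the lemma following Definition \ref{def:transformally-henselian}) this $b$ is the unique element of $\mathcal{O}_M$ with this property. Since the conditions ``$x\in\mathcal{O}$'', ``$fx=0$'' and ``$a-x\in\mathcal{M}$'' are given by an $\mathcal{L}_{\VFA}$-formula over $K$ (the twisted polynomial $f$ being a polynomial in the $\sigma$- and $\phi$-images of $x$ with coefficients in $K$), we get $b\in\mathrm{dcl}_M(K)\subseteq K$, so $b\in\mathcal{O}_K$ reduces to $a$. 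Hence $K$ and all of its Frobenius twists obey the transformal analogue of Hensel's lemma.

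\emph{The backward direction.} Assume $K$ is field theoretically algebraically closed, inversive, and transformally Henselian. Then $K$ is a model of $\VFA$ — it is perfect because algebraically closed, it is inversive, and the $\omega$-increasing axiom passes from $\Gamma_M$ to the sub-ordered module $\Gamma_K$ — which is algebraically closed and transformally Henselian, and in particular has no nontrivial $\text{h}$-finite $\sigma$-invariant extensions. By Theorem \ref{model-complete} the theory $\widetilde{\VFA}_K$ is complete and $\widetilde{\VFA}$ eliminates quantifiers over $K$ down to the field theoretic algebraic closure; concretely, the $\mathcal{L}_{\VFA}$-type over $K$ of an element of a model of $\widetilde{\VFA}$ is determined by the isomorphism type, over $K$, of the field theoretic algebraic closure of the transformal valued field it generates over $K$ (together with the distinguished element). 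Now take $a\in M\setminus K$; since $K$ is algebraically closed, $a$ is field transcendental over $K$. Let $E$ be the field theoretic algebraic closure of $K(a)_\sigma$ inside $M$ (an algebraically closed, hence Henselian, transformal valued field). For each $n$, using the amalgamation property (Theorem \ref{TheAmalgamationProperty}(2), applicable as $K$ is algebraically closed and transformally Henselian) together with the embeddability of models of $\VFA$ into models of $\widetilde{\VFA}$ (Theorem \ref{model-complete}), amalgamate $n$ copies $E_1,\dots,E_n$ of $E$ over $K$, pairwise linearly disjoint over $K$, inside one model $N$ of $\widetilde{\VFA}$. The corresponding copies $a_1,\dots,a_n$ of $a$ are algebraically independent over $K$, hence pairwise distinct, and by the quantifier elimination each realizes $\tp_M(a/K)$. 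Thus $\tp(a/K)$ has arbitrarily many, hence infinitely many, realizations, so $a\notin\acl_M(K)$. As $a$ was arbitrary, $\acl_M(K)=K$.

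The substantive point is the backward direction, and within it the step producing infinitely many realizations of $\tp(a/K)$: this is exactly where the structure theory of Section \ref{sec:The-Model-Companion} enters — the amalgamation theorem over an algebraically closed transformally Henselian base and the quantifier elimination down to the field theoretic algebraic closure. The only delicate point is that the ``field theoretic algebraic closure'' appearing there really is a transformal valued field on the nose (it is algebraically closed, hence Henselian, so its valuation is unambiguous), so that amalgamating copies of it is an honest amalgamation of transformal valued fields; the forward direction is entirely formal.
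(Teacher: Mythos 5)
Your proof is correct and takes essentially the same approach as the paper: the forward direction rests on the fact that uniqueness of transformal Hensel lifts places the transformal Henselization in the definable closure (and that the inversive hull and field-theoretic algebraic closure sit in the model-theoretic algebraic closure), and the backward direction uses the amalgamation theorem over an algebraically closed, transformally Henselian base together with quantifier elimination to the level of the field-theoretic algebraic closure to produce extra realizations of $\tp(a/K)$. The only cosmetic difference is that you make the iteration explicit (amalgamating $n$ pairwise linearly disjoint copies for each $n$), whereas the paper's single ``$E\otimes_K E$'' step leaves the bootstrapping to infinitely many conjugates implicit; both resolve the same issue in the same way.
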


\begin{proof}
By uniqueness of Hensel lifts, the transformal Henselization is contained
in the definable closure. The inversive hull and the field theoretic
algebraic closure are moreover evidently contained in the model theoretic
algebraic closure. For the converse, let $E$ be an algebraically
closed, transformally Henselian extension of $K$ in $M$. Then $E\otimes_{K}E$
can be equipped with the structure of a model of $\VFA$. By
model completeness of $\widetilde{\VFA}$ and quantifier elimination,
we find that $\text{tp}\left(\nicefrac{E}{K}\right)$ is realized
anew over $E$, so that no element of $E$ is model theoretically
algebraic over $K$ unless it already lies in $K$.
\end{proof}

\subsection{Quantifier Elimination}

We give now give a precise version of the quantifier elimination statement.

\subsubsection{~}

Let $\FA$ be the theory of perfect, inversive difference fields (with no valuation).
The language $\mathcal{L}_{\FA}$ expands the language of fields by
unary function symbols $\sigma^{\pm1}$ and $\phi^{\pm1}$ with the
obvious interpretations. Let $\widetilde{\mathcal{L}_{\FA}}$ be the
expansion of $\mathcal{L}_{\FA}$ where basic formulas are of the
form:
\[
\exists y\theta\left(x_{1},\ldots,x_{n},y\right)
\]
where $\theta\left(x_{1},\ldots,x_{n},y\right)$ is a quantifier free
formula in the language $\mathcal{L}_{\FA}$; and moreover for some
$m\in\mathbf{N}$ and difference polynomials $f_{0},\ldots,f_{m-1},g_{0},\ldots,g_{m-1}\in\mathbf{Z}\left[x_{1},\ldots,x_{n}\right]_{\sigma}$
the formula $\theta\left(x_{1},\ldots,x_{n},y\right)$ implies that:
\[
y^{m}=f_{0}\left(x_{1},\ldots,x_{n}\right)+\ldots+f_{m-1}\left(x_{1},\ldots,x_{n}\right)\cdot y^{m-1}
\]
\[
y^{\sigma}=g_{0}\left(x_{1},\ldots,x_{n}\right)+g_{m-1}\left(x_{1},\ldots,x_{n}\right)\cdot y^{m-1}
\]

This means that quantification is only allowed over algebraic difference field extensions which are primitive already as abstract field extensions (and in particular finite).

\begin{prop}
\label{qe-acfa}The theory $\ACFA$ admits elimination of quantifiers
in $\widetilde{\mathcal{L}_{\FA}}$.
\end{prop}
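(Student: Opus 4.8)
The plan is to run the standard test for quantifier elimination relative to the class $\Delta$ of basic $\widetilde{\mathcal{L}_{\FA}}$-formulas. First one checks that $\Delta$ behaves well enough: modulo $\FA$ it is closed under finite conjunctions, since if $\exists y\,\theta_{1}(\bar x,y)$ and $\exists y\,\theta_{2}(\bar x,y)$ are basic then the two named extensions of the difference field generated by $\bar x$ are finite, monogenic and $\sigma$-closed, hence so is their compositum, and a primitive element of the compositum gives a single basic formula equivalent to the conjunction; and every quantifier-free $\mathcal{L}_{\FA}$-formula $\psi(\bar x)$ is already basic, via $\theta(\bar x,y):=\psi(\bar x)\wedge y=0$ (which trivially implies $y^{1}=0$ and $y^{\sigma}=0$). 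By the test it then suffices to show: if $M,N\models\ACFA$ with $N$ $|M|^{+}$-saturated, and $\bar a\in M$, $\bar b\in N$ satisfy exactly the same basic formulas, then $\tp^{M}(\bar a)=\tp^{N}(\bar b)$.

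For this I would invoke the known elimination of quantifiers of $\ACFA$ down to the algebraic closure (\cite{chatzidakis1999model}): the complete type of a tuple in a model of $\ACFA$ is determined by the isomorphism type, as a difference field over the prime field with the distinguished tuple, of its model-theoretic algebraic closure $\acl(\bar a)$ — namely the field-theoretic algebraic closure of the difference field generated by the tuple, with the induced action of $\sigma$; since $\sigma^{\pm1},\phi^{\pm1}$ are basic operations, this is a perfect inversive difference field. So the problem reduces to producing a difference-field isomorphism $\acl^{M}(\bar a)\to\acl^{N}(\bar b)$ carrying $\bar a$ to $\bar b$. As $\bar a$ and $\bar b$ have the same quantifier-free $\mathcal{L}_{\FA}$-type, the assignment $\bar a\mapsto\bar b$ already extends to an isomorphism of the difference fields they generate, and I would extend this by a back-and-forth along the directed systems of finitely generated difference subfields of $\acl^{M}(\bar a)$ and $\acl^{N}(\bar b)$. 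The steps that pass to a perfect inversive hull, or adjoin a transformally transcendental element, are routine: the relevant extensions are regular, the quantifier-free type over the base is determined, and such types are realized in the saturated, existentially closed model $N$.

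The remaining step — adjoining a field-algebraic element — is where the hypothesis is actually used, and I expect it to be the delicate point. After passing to the perfect inversive hull, a finite extension of the base is $\sigma$-closed precisely when it is $\sigma$-invariant on the nose (a dimension count over the inversive base), in which case it is monogenic with primitive element $y$ satisfying $y^{\sigma}$ in the base field extended by $y$; these are exactly the elements nameable by basic formulas. What must be verified is that knowing which finite $\sigma$-invariant extensions occur, together with the quantifier-free type, pins down the difference-field structure of $\acl(\bar a)$ over $\bar a$; this is controlled by the classification of $\sigma$-invariant finite Galois extensions and the non-uniqueness of lifts of $\sigma^{n}$ to a separable closure, i.e. by Fact \ref{figalois}, together with the genericity of $\sigma$ on the algebraic closure forced by $\ACFA$. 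The hard part will be carrying out this Galois-theoretic matching coherently through the back-and-forth so that the $\sigma$-action is respected in the limit; everything else is bookkeeping. Once the isomorphism $\acl^{M}(\bar a)\cong\acl^{N}(\bar b)$ is obtained, its image is a field-theoretically algebraically closed inversive difference subfield of $N$ containing $\bar b$, hence equals $\acl^{N}(\bar b)$, and the reduction above concludes the proof.
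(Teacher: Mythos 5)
Your plan tracks the paper's proof closely, and the reduction you set up — from elimination relative to $\widetilde{\mathcal{L}_{\FA}}$ to producing a difference-field isomorphism $\acl^{M}(\bar a)\cong\acl^{N}(\bar b)$ over $\bar a\mapsto\bar b$, using elimination to the algebraic closure in $\ACFA$ — is exactly right, as is your identification of Fact \ref{figalois} as the engine. But you explicitly stop at "the hard part," the coherent propagation of the Galois-theoretic matching through the back-and-forth, and this is the part of the argument that actually requires an idea; it is not bookkeeping.

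The paper closes this gap without running an unstructured back-and-forth over all finitely generated subfields of $\acl(\bar a)$. Write $K$ for the difference field generated by $\bar a$ (and $L$ for the one generated by $\bar b$, with $\varphi\colon K\cong L$ the isomorphism induced by $\bar a\mapsto\bar b$). A finite $\sigma$-invariant Galois extension $\widetilde{K}/K$ is, by the primitive element theorem, finitely presented as a difference algebra over $K$, and that presentation is precisely a basic $\widetilde{\mathcal{L}_{\FA}}$-formula; preservation of basic formulas therefore yields a matching $\widetilde{L}/L$ and a lift of $\varphi$ to $\widetilde{K}\cong\widetilde{L}$. Doing this for every finite $\sigma$-invariant Galois extension at once, the possible extensions of $\varphi$ form a nonempty inverse system of finite sets, and compactness of the profinite Galois group produces a single coherent isomorphism $\overline{K}\cong\overline{L}$, where $\overline{K}$ is the compositum of all finite $\sigma$-invariant Galois extensions of $K$. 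This is the step that makes "the $\sigma$-action respected in the limit" a theorem rather than a hope. After replacing $K$ by $\overline{K}$, the base has no nontrivial finite $\sigma$-invariant Galois extensions, so by Fact \ref{figalois} the extension of $\sigma$ to a field-theoretic algebraic closure is unique up to $G_{\overline{K}}$-conjugacy; the isomorphism then extends to the full algebraic closures, and quantifier elimination to the algebraic closure in $\ACFA$ finishes the argument. In short: absorb all of $\overline{K}$ into the base in one compactness step before invoking Fact \ref{figalois}, rather than trying to weave the Galois matching into the back-and-forth.
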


\begin{proof}
Let $M$ be a saturated model of $\ACFA$ and $\varphi\colon K\cong L$
an isomorphism between models of $\FA$ contained in $M$ preserving
$\widetilde{\mathcal{L}_{\FA}}$ formulas; we must prove that $\varphi$
lifts to a global automorphism. Let $\widetilde{K}$ be a finite Galois
extension of $K$ left invariant under $\sigma$ and $\widetilde{L}$
the corresponding $\sigma$-invariant Galois extension of $L$ under
the isomorphism $\varphi$. By the primitive element theorem, we have
$\widetilde{K}=K\left[a\right]=K\left[a\right]_{\sigma}$ for a single
element $a\in\widetilde{K}$. Now $\nicefrac{\widetilde{K}}{K}$ is
finitely presented as a difference algebra over $K$, so as $\varphi$
preserves $\widetilde{\mathcal{L}_{\FA}}$ formulas there is some
$b\in\widetilde{L}$ such that $\widetilde{K}$ and $\widetilde{L}$
are isomorphic as difference fields by an isomorphism extending $\varphi\colon K\cong L$
and which carries $a$ to $b$. So the isomorphism extends to any
finite $\sigma$-invariant Galois extension.

Let $\overline{K}$ and $\overline{L}$ denote the union of all finite
$\sigma$-invariant Galois extensions of $K$ and $L$ respectively.
By the previous paragraph we can extend $\varphi$ to an isomorphism
of any finite $\sigma$-invariant extension of $K$ contained in $\overline{K}$;
by compactness of the Galois group we can extend $\varphi$ to an
isomorphism of difference fields between $\overline{K}$ and $\overline{L}$.
Changing the notation assume that $K$ has no nontrivial finite $\sigma$-invariant
Galois extensions. In this case there is up to isomorphism a unique
field theoretic algebraic closure of $K$. Thus we may assume that $K$ is algebraically closed. In this case the category of difference fields over $K$ admits the amalgamation property, hence $\ACFA_{K}$ is complete, and we can lift the isomorphism $K \cong L$ to an automorphism of $M$.
\end{proof}

\subsubsection{~}

Let $\VFA^{h}$ be the theory of algebraically Henselian models
of $\VFA$. The language $\mathcal{L}_{\VFA^{h}}$ can
be described as follows. It admits a single sort $K$ together with
unary predicates $K^{\times},\mathcal{O}$ and $\mathcal{M}$ as well
as constant symbols $0,1$. We have binary function symbols $+,-,\cdot\colon K^{2}\to K$
and finally unary function symbols $\sigma^{\pm1}$ and $\phi^{\pm1}$.
Finally, for each $n\geq2$ we have a function $h_{n-2}\colon\mathcal{M}^{n-2}\to\mathcal{O}$.
The axioms assert that $K$ is a model of $\VFA$ and for all
$t_{2},\ldots,t_{n}\in\mathcal{M}^{n-2}$, if $x=h_{n-2}\left(t_{2},\ldots,t_{n}\right)$
then we have
\[
1+x+t_{2}x^{2}+\ldots+t_{n}x^{n}=0
\]
we call the $h_{n}$ the Hensel lifting functions of $K$. If $K$
is an algebraically Henselian model of $\VFA$, then $K$ can
be uniquely expanded to a model of $\VFA^{h}$ and conversely.

Now consider the language $\widetilde{\mathcal{L}_{\VFA}}$
where basic formulas are of the form:

\[
\phi\left(x_{1},\ldots,x_{n}\right)\wedge\psi\left(x_{1},\ldots,x_{n}\right)
\]
where $\phi$ is quantifier free in the language of $\VFA^{h}$
and $\psi\left(x_{1},\ldots,x_{n}\right)$ is a formula in the language
$\widetilde{\mathcal{L}_{\FA}}$.
\begin{prop}
\label{qe-wvfa}Notation as above. Then $\widetilde{\VFA}$
eliminates quantifiers in $\widetilde{\mathcal{L}_{\VFA}}$.
\end{prop}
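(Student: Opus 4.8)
The plan is to prove Proposition \ref{qe-wvfa} by the standard back-and-forth argument, leveraging the two quantifier elimination results already available: Proposition \ref{qe-acfa} for $\ACFA$ in the language $\widetilde{\mathcal{L}_{\FA}}$, and the elimination of quantifiers down to the field-theoretic algebraic closure furnished by Theorem \ref{model-complete}. Concretely, I would fix a saturated model $M$ of $\widetilde{\VFA}$ (realized as $k\left(\left(t^{\Gamma}\right)\right)$ with $k$, $\Gamma$ saturated, using Proposition \ref{saturation}), and two small substructures $A, B$ of $M$ for the language $\widetilde{\mathcal{L}_{\VFA}}$, together with an isomorphism $\varphi\colon A \cong B$ that preserves all basic $\widetilde{\mathcal{L}_{\VFA}}$-formulas. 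The goal is to extend $\varphi$ to an automorphism of $M$; a compactness argument then yields quantifier elimination.

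The first step is to identify what a substructure for $\widetilde{\mathcal{L}_{\VFA}}$ looks like. Since $\widetilde{\mathcal{L}_{\VFA}}$ contains the function symbols of $\VFA^h$ (the field operations, $\sigma^{\pm 1}$, $\phi^{\pm 1}$, and the Hensel lifting functions $h_n$), a substructure $A$ is a perfect, inversive, $\omega$-increasing transformal valued subfield of $M$ which is closed under the Hensel lifting functions; by the discussion preceding Proposition \ref{qe-wvfa} this means precisely that $A$ is algebraically Henselian, i.e. a model of $\VFA^h$. Moreover, because the basic formulas also incorporate the $\widetilde{\mathcal{L}_{\FA}}$-formulas, preservation of basic formulas by $\varphi$ tells us that $\varphi$ respects the $\ACFA$-induced quantifier-free structure at the level of finite $\sigma$-invariant Galois extensions. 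I would then first enlarge $A$ (and correspondingly $B$, transporting via $\varphi$) to their relative field-theoretic algebraic closures inside $M$: because $A$ is already transformally Henselian and algebraically Henselian, the preservation of $\widetilde{\mathcal{L}_{\FA}}$-formulas — exactly as in the proof of Proposition \ref{qe-acfa} — lets us extend $\varphi$ over every finite $\sigma$-invariant Galois extension, using the primitive element theorem and the fact that such extensions are finitely presented as difference algebras. Since $A$ is Henselian, the valuation lifts uniquely to each such extension, so this extension of $\varphi$ automatically preserves the valued field structure. After iterating and taking unions over the (pro-finite) Galois group, we may assume $A$ and $B$ are algebraically closed and transformally Henselian, with $\varphi$ an isomorphism of such.

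With $A = \acl_\omega A$ now algebraically closed and transformally Henselian, the remaining work is exactly the extension problem solved inside the proof of Theorem \ref{model-complete} and Proposition \ref{saturation}: given a small algebraically closed, transformally Henselian $A \subseteq M$ and a proper such extension $B$, one shows $B$ embeds in $M$ over $A$. This splits into the transformally algebraic case — handled by the relative transformal Henselization machinery (Theorem \ref{transformalhenselization}) together with saturation of the residue field as a model of $\ACFA$, absorbing finite $\sigma$-invariant Galois extensions via $M = \acl_\omega M$ and Corollary \ref{transformalHenselizationalg} — and the transformally transcendental case, where by Corollary \ref{newton-generic} every new element of $B$ is generic over $A$, and the three kinds of generic types (generic in $\mathcal{O}$, valuation-group adjunction, immediate extension by an infinite nested family of balls) are realized in $M$ using saturation of $k$, saturation of $\Gamma$ together with $o$-minimality of $\widetilde{\omega\OGA}$, and the intersection property of nested families of balls from Proposition \ref{saturation}, respectively. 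Running this back-and-forth to an automorphism of the saturated $M$ gives the result.

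The main obstacle, and the step that requires genuine care rather than bookkeeping, is verifying that preservation of basic $\widetilde{\mathcal{L}_{\VFA}}$-formulas by $\varphi$ is \emph{exactly strong enough} to perform the initial reduction to algebraically closed bases: one must check that the $\widetilde{\mathcal{L}_{\FA}}$-part of a basic formula, which only allows quantification over primitive (hence finite) algebraic difference field extensions, suffices to transport every finite $\sigma$-invariant Galois extension of $A$ to the corresponding one over $B$ in a way compatible with the unique Henselian extension of the valuation — this is where one genuinely needs both the primitive element theorem and the finite presentability of such extensions, and where Example \ref{example-no-qe-charp-wildly-ramified} and Corollary \ref{cor-nonsplit-h-finite} show that no cheaper language will do. Once that reduction is in place, the rest is an assembly of results already proved. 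I would also remark at the end that this recovers and strengthens clause (2) of Theorem \ref{main-thm}, and note, via Remark \ref{rem-ake-splitting-qe} and Corollary \ref{cor-no-qe-relative-tores}, that the Hensel lifting functions (equivalently, quantification over finite extensions) cannot be dropped.
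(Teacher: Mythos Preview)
Your approach is essentially the same as the paper's: reduce via the $\widetilde{\mathcal{L}_{\FA}}$-part of the basic formulas to an algebraically closed base (exactly as in Proposition \ref{qe-acfa}), then invoke the completeness of $\widetilde{\VFA}$ over such a base from Theorem \ref{model-complete}. Two small points: a substructure for $\widetilde{\mathcal{L}_{\VFA}}$ is only \emph{algebraically} Henselian (the $h_n$ witness algebraic Hensel lifts, not transformal ones), so your aside ``$A$ is already transformally Henselian'' is a slip, though harmless since you only use algebraic Henselianity; and the passage from ``no finite $\sigma$-invariant Galois extensions'' to ``algebraically closed'' requires the criterion of Proposition \ref{criterion-unique-algclosure} (the $h$-finite version of Fact \ref{figalois}), which the paper cites explicitly and which you should name rather than fold silently into ``iterating and taking unions''.
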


\begin{proof}
Let $M$ be a saturated model of $\widetilde{\VFA}$. Let $E,F$
be models of $\VFA^{h}$ which are contained in $M$ and $\varphi\colon E\cong F$
an isomorphism of transformal valued fields which preserves $\widetilde{\mathcal{L}_{\VFA}}$-formulas;
we must show that $\varphi$ lifts to a global automorphism. The proof
is the same as Lemma \ref{qe-acfa}, using the criterion of \ref{criterion-unique-algclosure}.
\end{proof}

\subsection{The Elementary Theory of the Frobenius Automorphism}

Recall that by a \textit{Frobenius transformal valued field} we mean
a transformal valued field of $K$ of positive characteristic $p>0$
whose distinguished endomorphism coincides with a power of the Frobenius
endomorphism $\phi x=x^{p}$. We shall regard perfect Frobenius transformal valued
fields as structures for the language $\VFA$ of perfect, inversive
transformal valued fields.
\begin{fact}
\label{effective-frob}Let $\varphi$ be a sentence in the language
of difference fields and assume that $\ACFA\models\varphi$. Then
there is an effectively computable $N=N\left(\varphi\right)$ such
that whenever $\left(K,\sigma\right)$ is an algebraically closed
Frobenius difference field and $x^{\sigma}=x^{q}$ where $q>N$, then
$\left(K,\sigma\right)\models\varphi$.
\end{fact}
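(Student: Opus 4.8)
The non-effective form of the statement --- that \emph{some} threshold $N$ exists --- is already contained in Theorem \ref{thm:frobacfa}(2): since $\ACFA\models\varphi$, the sentence $\varphi$ is true in every algebraically closed Frobenius difference field outside a finite exceptional set of prime powers. So the real content here is that $N$ can be extracted \emph{effectively} from $\varphi$, and the plan is the classical two-step reduction. First I would replace the infinite axiomatisation of $\ACFA$ by an explicit finite fragment that proves $\varphi$; then I would bound, effectively, the prime powers $q$ for which that fragment holds in $(K,x\mapsto x^{q})$, using the effective form of the key algebro-geometric estimate.

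For the first step: $\ACFA$ is recursively axiomatised by the field axioms, the sentences expressing that $\sigma$ is an automorphism, and the geometric axiom scheme (one sentence for each bound on the dimension and degree of the correspondences involved, asserting that a correspondence $U\subseteq V\times V^{\sigma}$ projecting dominantly to both factors has a point of the form $(x,\sigma x)$). Since $\ACFA\models\varphi$ we have $\ACFA\vdash\varphi$, so a systematic search through formal derivations terminates and outputs a finite set $\Sigma\subseteq\ACFA$ with $\Sigma\vdash\varphi$; from the syntactic form of $\Sigma$ one reads off computable bounds $(d,e)$ on the dimensions and degrees occurring in the geometric-axiom instances that appear in $\Sigma$. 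The field axioms and the automorphism axioms in $\Sigma$ hold in $(K,x\mapsto x^{q})$ for every algebraically closed field $K$ of the characteristic $p$ with $q=p^{n}$ (such $K$ being perfect and algebraically closed), so everything reduces to bounding the $q$ for which the finitely many geometric instances of complexity at most $(d,e)$ hold in $(K,x\mapsto x^{q})$.

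For the second step I would invoke the twisted Lang--Weil bound of \cite{hrushovski2004elementary}, given an algebro-geometric proof in \cite{shuddhodan2021hrushovski}, in effective form: there is a computable $N_{0}(d,e)$ such that if $U\subseteq V\times V$ is a correspondence over an algebraically closed field $K$ of characteristic $p$, of complexity at most $(d,e)$, dominating both projections, and $q=p^{n}>N_{0}(d,e)$, then the graph of $x\mapsto x^{q}$ meets $U$ in a $K$-point --- when $U$ is defined over $\mathbf F_{q}$ the number of such points is $q^{\dim V}\bigl(1+O(q^{-1/2})\bigr)$ with effective implied constant. To pass from $\bar{\mathbf F}_{q}$ to an arbitrary $K$ I would use the standard spreading-out and specialisation argument: $U$ is defined over a finitely generated subfield, hence over the function field of a variety $W$ over $\bar{\mathbf F}_{p}$; spreading $U$ out to a family over $W$ and specialising the parameters to a suitably generic point of $W(\bar{\mathbf F}_{q^{m}})$ preserves dominance for $q$ past a further effective threshold, and reduces the existence of a twisted point over $K$ to its existence over $\bar{\mathbf F}_{q^{m}}$, which is handled by the estimate. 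Taking $N$ to be the maximum of $N_{0}(d,e)$ and the threshold coming from the specialisation step yields $(K,x\mapsto x^{q})\models\Sigma$, hence $(K,x\mapsto x^{q})\models\varphi$, for all $q>N$.

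The main obstacle is the second step: one needs the twisted Lang--Weil estimate not merely with \emph{effective} constants but \emph{uniformly in the ground field} $K$, and the descent from finite fields to an arbitrary algebraically closed $K$ must be organised so that every threshold it produces is a computable function of $(d,e)$ alone. This is exactly what the cited works supply; granted it, the first step is routine proof-theoretic bookkeeping and the conclusion follows.
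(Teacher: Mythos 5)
Your proposal is correct and takes essentially the same approach as the paper, whose entire proof is the one-line observation that the constant implicit in the geometric estimates of \cite{hrushovski2004elementary} is effectively computable. You have simply spelled out in detail what that remark is shorthand for: extract a finite fragment of $\ACFA$ proving $\varphi$ by proof search, then bound the Frobenius powers for which the relevant geometric-axiom instances hold via the effective twisted Lang--Weil estimate.
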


\begin{proof}
This follows from the fact that the constant implicit in the geometric
estimates of \cite{hrushovski2004elementary}
is effectively computable; see the discussion on pp. 125.
\end{proof}
\begin{thm}
\label{frobarevfa} Let $T$ be the theory of algebraically closed
and nontrivially valued Frobenius transformal valued fields. Then
$T$ is decidable, and the theory $\widetilde{\VFA}$ is the
theory of $\omega$-increasing models of $T$.
\end{thm}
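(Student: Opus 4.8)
The plan is to deduce Theorem \ref{frobarevfa} from the model completeness and recursive axiomatizability of $\widetilde{\VFA}$ (Theorem \ref{model-complete} and the recursiveness proposition) together with the effective Frobenius transfer of Fact \ref{effective-frob} applied to $\ACFA$ at the residue field level. First I would unwind what $T$, the theory of algebraically closed nontrivially valued Frobenius transformal valued fields, asserts: each such field $(K,\phi^n)$ has residue field $(k,\phi^n)$ again an algebraically closed Frobenius difference field, and value group an ordered abelian group on which $\phi^n$ acts by multiplication by $p^n$. It is Henselian (being algebraically closed), hence algebraically Henselian; and crucially, since $K$ is algebraically closed, every additive $p$-polynomial over $K$ is onto. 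The one axiom of $\widetilde{\VFA}$ that fails for a literal Frobenius field is transformal Henselianity in the strong $\omega$-increasing sense and transformal divisibility of $\Gamma$ — but these only fail because $\phi^n$ is not $\omega$-increasing, which is exactly the content of the restriction to $\omega$-increasing models.

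Next I would prove the substantive half: an $\omega$-increasing model of $T$ is a model of $\widetilde{\VFA}$. Such a model is an elementary substructure-limit, in fact an elementary extension, of an ultraproduct of Frobenius transformal valued fields; by \L o\'s it suffices to see that the three axioms of $\widetilde{\VFA}$ hold in ultraproducts of algebraically closed nontrivially valued Frobenius transformal valued fields with $q\to\infty$ along the ultrafilter. For axiom (1): the residue field is the ultraproduct of algebraically closed Frobenius difference fields, hence a model of $\ACFA$ by Fact \ref{frobacfa}(2); transformal Henselianity of the ultraproduct is Lemma \ref{lem:(Basic-properties)}(1). For axiom (2): $\Gamma$ is nonzero since the fields are nontrivially valued, it is $\omega$-increasing by hypothesis, and transformal divisibility follows because $\nu\Gamma=\Gamma$ for $0\neq\nu\in\mathbf{Z}[\sigma]$ reduces, under $\sigma\mapsto q$, to divisibility by a fixed integer polynomial in $q$, which holds in the ultraproduct as each factor has divisible value group (being algebraically closed). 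For axiom (3): a nonconstant additive difference operator, under $\sigma\mapsto q$ for $q$ large, becomes a nonzero additive $p$-polynomial over the algebraically closed field $K_q$, hence onto; so the ultraproduct satisfies the axiom. Conversely, every $\omega$-increasing model of $\widetilde{\VFA}$ is a model of $T$ in the relevant sense — one should note that $\widetilde{\VFA}$ does not literally axiomatize $T$ but its $\omega$-increasing completion, so the precise statement is that $\widetilde{\VFA}$ and the theory of $\omega$-increasing models of $T$ have the same models.

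For decidability of $T$: a sentence $\varphi$ is in $T$ iff it holds in all algebraically closed nontrivially valued Frobenius transformal valued fields, which by the standard $\ACFA$-style argument (using Fact \ref{effective-frob} and the fact that finitely many exceptional primes and small $q$ can be checked directly, since for fixed $q$ the theory of algebraically closed nontrivially valued fields with the explicit endomorphism $x\mapsto x^q$ is decidable by quantifier elimination in $\ACVF$ together with decidability of the relevant residue/value-group conditions) is equivalent to a computable condition. More precisely, $\varphi\in T$ iff $\varphi$ holds in every $\omega$-increasing model of $T$, iff $\widetilde{\VFA}\vdash\varphi$ (by the previous paragraph), and $\widetilde{\VFA}$ is recursively axiomatized, so $T$ is co-r.e.; for the r.e. direction one searches for a counterexample, which by compactness and Fact \ref{effective-frob} can be found among finitely many small prime powers, each decidable. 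The main obstacle is the bookkeeping in the third axiom: checking that under the specialization $\sigma\mapsto q$ a \emph{transformal} additive operator (a $\mathbf{Z}[\sigma,p]$-linear combination of $x^{\sigma^n p^m}$) specializes to an additive $p$-polynomial over $\mathbf{F}_{p^\ell}$ that is genuinely nonzero for all large $q$ — one must verify that distinct monomials $x^{\sigma^n p^m}$ map to distinct $p$-power monomials $x^{q^n p^m}$ for $q$ large (injectivity of $\sigma\mapsto q$ on the finite support, exactly as in the proof of Lemma \ref{lem:(Basic-properties)}(1)), so no cancellation destroys surjectivity, and to handle the Frobenius twists uniformly.
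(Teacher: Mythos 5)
Your forward direction, checking the axioms of $\widetilde{\VFA}$ against ultraproducts of Frobenius transformal valued fields with $q\to\infty$ and paying attention to injectivity of the substitution $\sigma\mapsto q$ on the finite support of an additive operator, is essentially correct and follows the paper's route. However, the converse inclusion, that every model of $\widetilde{\VFA}$ satisfies the common theory of $\omega$-increasing models of $T$, is only \emph{restated} in your proposal, not proved. The substantive content is that any $K\models\widetilde{\VFA}$ must be elementarily equivalent to a nonstandard Frobenius transformal valued field; without this you cannot exclude that $\widetilde{\VFA}$ proves a sentence failing in every nonstandard Frobenius field, which would make the theories differ. The paper gets this transfer from the fact that completions of $\widetilde{\VFA}$ are determined by the pure difference field theory of the residue field (stable embededness plus Proposition \ref{saturation}), together with Proposition \ref{ACFA-t-hens} (which gives $K\equiv k$ as difference fields) and Fact \ref{frobacfa}(2) identifying $\ACFA$ with the theory of nonstandard Frobenius difference fields.

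The decidability argument also contains an actual error: the biconditional ``$\varphi\in T$ iff $\widetilde{\VFA}\vdash\varphi$'' is false. The theory $T$ includes the literal Frobenius fields with $\sigma=\phi^{n}$, which are not $\omega$-increasing, so $T\subsetneq\widetilde{\VFA}$. For example, $\widetilde{\VFA}$ proves that no $x$ with $v(x)>0$ satisfies $v(x^{\sigma})=v(x^{4})$, yet this sentence fails in the $q=4$ Frobenius field and hence does not belong to $T$. You even note earlier that $\widetilde{\VFA}$ ``does not literally axiomatize $T$,'' but the decidability paragraph forgets it. The correct reduction, which the paper carries out, is: $\varphi\in T$ iff $\widetilde{\VFA}\models\varphi$ and $\varphi$ holds in the finitely many Frobenius fields with $q\leq N(\varphi)$, where $N(\varphi)$ is an effectively computable bound extracted axiom by axiom from Fact \ref{effective-frob} together with the injectivity-of-specialization considerations you already sketch; combined with decidability of $\widetilde{\VFA}$ and decidability of each fixed-$q$ theory (via quantifier elimination in $\ACVF$), this yields decidability of $T$. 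Note that establishing ``$\widetilde{\VFA}\not\models\varphi\Rightarrow\varphi\notin T$'' in this reduction itself already uses the converse direction you left unproved, so the two gaps are not independent.
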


\begin{proof}
First we show that $\widetilde{\VFA}$ is the theory of $\omega$-increasing
and nontrivially valued models of $T$. Let $K$ be an $\omega$-increasing
and nontrivially valued model of $T$. Then $K$ is a model of $\widetilde{\VFA}$;
it is transformally Henselian by \ref{lem:(Basic-properties)}; the
valuation group is nonzero and transformally divisible by Proposition
\ref{woga-frobenius}; the residue field is a model of $\ACFA$ by
the main theorem of \cite{hrushovski2004elementary}; and finally,
it is easy to check that the additive difference operators are onto.
Conversely, we must show that every model of $\widetilde{\VFA}$
arises in this way. Now in $\widetilde{\VFA}$ the residue field
is stably embedded and the induced structure is precisely the pure
difference field structure. Moreover, the completions of $\widetilde{\VFA}$
are determined by the elementary theory of the residue field as a
pure difference field. Since $K$ is elementarily equivalent to its
residue field as a difference field, the result follows from \ref{frobacfa}.

Now we show that $T$ is decidable. Since $\widetilde{\VFA}$
is decidable, this boils down to the following: given a sentence $\phi$
such that $\widetilde{\VFA}\models\phi$ find effectively a
natural number $N=N\left(\varphi\right)$ such that $K\models\varphi$,
whenever $K$ is an algebraically closed and nontrivially valued Frobenius
transformal valued field with $x^{\sigma}=x^{q}$ and $q>N$. It is
enough to do this for a sentence $\varphi$ which is part of our axiomatization.
For the $\ACFA$ axioms over the residue field we can use Fact \ref{effective-frob}.
Now consider for instance a part of the axiom scheme asserting that
the field is transformally Henselian. Every finite fragment asserts that the
Hensel lifting lemma holds for difference polynomials supported on a fixed
finite subset $I\subset\mathbf{N}\left[\frac{\sigma}{p^{\infty}}\right]$.
If $p=1$, it is sufficient to choose $q$ large enough so that the
substitution $\sigma\mapsto q$ from $\mathbf{N}\left[\sigma\right]$
to $\mathbf{N}$ is injective when restricted to $I$; simply take
$q$ large enough so it exceeds the coefficients of all the elements
of $I$. In finite characteristic $p$, let $m\in\mathbf{N}$ be the
largest power of $p$ which appears in the denominator of the various
$\nu\in I$ one should choose $q$ large so that $\frac{q}{p^{m}}$
is a natural number, and then apply the same reasoning. 
\end{proof}

\subsection{Various Results}
\begin{prop}
Let $K$ be a saturated model of $\widetilde{\VFA}$ and let
$F$ be the fixed field of $K$. Then $F$ is a pseudofinite field
which inherits from $K$ the trivial valuation. Furthermore, every
definable subset of $F^{n}$ which is definable with parameters in
$K$ and in the language of $\VFA$ is already definable with
parameters in $F$ and in the pure field language.
\end{prop}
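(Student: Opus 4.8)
The plan is to transfer the whole statement to the residue field $k$, using that $k$ is a model of $\ACFA$ and that its fixed field is well understood.

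First I would record that $F$ carries the trivial valuation, which is the second assertion. Since the action of $\sigma$ on $\Gamma$ is $\omega$-increasing it has no nonzero fixed point: if $0<\gamma$ and $\sigma\gamma=\gamma$ then $2\gamma<\sigma\gamma=\gamma$, absurd, and similarly for $\gamma<0$. But for $0\ne x\in F$ one has $\sigma(vx)=v(x^{\sigma})=vx$, whence $vx=0$. Consequently $F\subseteq\mathcal{O}_{K}$, the residue map restricts to an injective homomorphism of difference fields $\mathrm{res}\colon F\hookrightarrow k$, and since $\mathrm{res}$ commutes with $\sigma$ its image lies in the fixed field $F_{k}$ of $k$. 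Next I would show that $\mathrm{res}\colon F\to F_{k}$ is onto, hence a field isomorphism: given $\alpha\in F_{k}$, lift it to some $a\in\mathcal{O}_{K}$ and apply transformal Henselianity (which holds in $K$ since $K\models\widetilde{\VFA}$) to the difference polynomial $fx=x^{\sigma}-x\in\mathcal{O}_{K}[x]_{\sigma}$; here $f'=-1$, so $vf'a=0$, while $\mathrm{res}(fa)=\alpha^{\sigma}-\alpha=0$ forces $vfa>0$. Thus there is $b\in\mathcal{O}_{K}$ with $b^{\sigma}-b=0$ and $\mathrm{res}(b)=\alpha$; such $b$ lies in $F$, so $\mathrm{res}$ is surjective. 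Hence $F\cong F_{k}$ as fields, both carrying trivial $\sigma$.

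Given this, the first assertion is immediate: $k\models\ACFA$, so by Chatzidakis--Hrushovski \cite{chatzidakis1999model} the fixed field $F_{k}$ is pseudofinite, hence so is $F$. (Alternatively, Theorem \ref{frobarevfa} identifies $K$ up to elementary equivalence with an ultraproduct of algebraically closed nontrivially valued Frobenius transformal valued fields $(K_{i},x\mapsto x^{q_{i}})$; the fixed field of $K_{i}$ is the finite field $\mathbf{F}_{q_{i}}$, and since $F$ is $\emptyset$-definable in $K$ it is elementarily equivalent, in the ring language, to $\prod_{i}\mathbf{F}_{q_{i}}/\mathcal{U}$, a pseudofinite field.)

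For the last assertion I would combine two stable-embeddedness facts. By the preceding theorem, $k$ is stably embedded in $K$ with induced structure the pure difference field; by \cite{chatzidakis1999model}, $F_{k}$ is stably embedded in the difference field $k$ with induced structure the pure field. The residue map is definable in $K^{\mathrm{eq}}$, so $\mathrm{res}|_{F}$ is a $\emptyset$-definable bijection $F\to F_{k}$. Now if $D\subseteq F^{n}$ is $\mathcal{L}_{\VFA}$-definable with parameters in $K$, then $\mathrm{res}(D)\subseteq F_{k}^{n}$ is $K$-definable, hence — by stable embeddedness of $k$ — definable in $k$ by an $\mathcal{L}_{\FA}$-formula with parameters in $k$; being contained in $F_{k}^{n}$, it is — by stable embeddedness of $F_{k}$ in $k$ — definable in the pure field language with parameters in $F_{k}$; pulling back along $\mathrm{res}|_{F}$ exhibits $D$ as definable in the pure field language of $F$ with parameters in $F$. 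The only delicate point, and what I would regard as the main obstacle, is the bookkeeping: tracking the imaginary residue sort and checking that stable embeddedness composes along the chain $F\cong F_{k}\subseteq k\subseteq K^{\mathrm{eq}}$ at the level of definable sets together with their parameters. Both are routine, so the real content is just the two quoted structural theorems and the transformal-Hensel computation above.
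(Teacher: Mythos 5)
Your proof is correct and follows essentially the same route as the paper's: use transformal Hensel lifting (applied to $x^{\sigma}-x$) to identify $F$ isomorphically with the fixed field $F_k$ of $k$, then compose stable embeddedness of $k$ in $\widetilde{\VFA}$ with the corresponding Chatzidakis--Hrushovski result for the fixed field in $\ACFA$. You merely spell out the Hensel computation and the chain $F\cong F_{k}\subseteq k\subseteq K$ that the paper compresses into one sentence.
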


\begin{proof}
By Hensel lifting the residue map identifies $F$ with the fixed field
of $k$, so the Theorem follows from stable embededness of the residue
field in $\widetilde{\VFA}$ and the corresponding statement
for $\ACFA$; see \cite{chatzidakis1999model}.
\end{proof}
\begin{prop}
Let $K$ be a model of $\widetilde{\VFA}$ and $k_{0}$ the
algebraic closure of the prime field.

(1) Let $0<n\in\mathbf{N}$ and $m\in\mathbf{Z}$; then $\left(K,\sigma^{n}\circ\phi^{m}\right)$
is a model of $\widetilde{\VFA}$, which is saturated provided
that $\left(K,\sigma\right)$ is.

(2) Let us assume that $K$ is saturated. Fix $n>0$ and let $\tau_{0}$
be an automorphism of $k_{0}$ such that $\tau_{0}^{n}$ and $\sigma$
agree on $k_{0}$; then there is an $\omega$-increasing automorphism
$\tau$ of $K$ extending $\tau_{0}$, which renders $\left(K,\tau\right)$
a saturated model of $\widetilde{\VFA}$, and such that $\tau^{n}=\sigma$
on $K$.

(3) The elementary theory of $K$ is determined by the isomorphism
type of the restriction of $\sigma$ to $k_{0}$.
\end{prop}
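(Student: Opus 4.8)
The plan is to read off clause (3) directly from Theorem~\ref{model-complete}, to settle (1) by a quick check of the axioms plus a reduct argument for saturation, and to prove the substantive clause (2) by passing to a language with an auxiliary automorphism symbol. For (1), I would verify that $\left(K,\sigma^{n}\circ\phi^{m}\right)$ models $\widetilde{\VFA}$: it is perfect and inversive since $\phi$ and $\sigma$ are onto; it is $\omega$-increasing because the action $\sigma^{n}p^{m}$ on $\Gamma$ still exceeds multiplication by every natural number (as $\sigma^{n}\gg1$ and $p^{m}$ is a positive rational); transformal Henselianity is Lemma~\ref{lem:(Basic-properties)}(5); the residue field $\left(k,\sigma^{n}\circ\phi^{m}\right)$ is again a model of $\ACFA$ by the corresponding closure of $\ACFA$ under powers and Frobenius twists (\cite{chatzidakis1999model}, Fact~\ref{frobacfa}); the value group is a nonzero transformally divisible model of $\omega\OGA$ because $\mathbf{Z}\left[\sigma^{n}p^{m}\right]\subseteq\mathbf{Z}\left[\sigma^{\pm1},p^{\pm1}\right]$ and $\Gamma$ is already perfect and transformally divisible; and every nonconstant additive operator in $\sigma^{n}\circ\phi^{m}$ and $\phi$ becomes, after multiplying through by a suitable power of $\phi$, one of the operators governed by the last axiom of $\widetilde{\VFA}$, hence is onto. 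Saturation then follows because $\left(K,\sigma^{n}\circ\phi^{m},\phi\right)$ is a reduct of a definitional expansion of $\left(K,\sigma,\phi\right)$ (both $\sigma^{n}\circ\phi^{m}$ and its inverse are term-definable from $\sigma$ and $\phi$), and a reduct of a $\kappa^{+}$-saturated structure is $\kappa^{+}$-saturated.

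For (3), observe that $\left(k_{0},\sigma|_{k_{0}}\right)$, where $k_{0}$ is the algebraic closure of the prime field with the trivial valuation, is a model of $\VFA$ which, being algebraically closed, has no nontrivial $\h$-finite $\sigma$-invariant separable extensions; by Theorem~\ref{model-complete} the theory $\widetilde{\VFA}_{\left(k_{0},\sigma|_{k_{0}}\right)}$ is complete. Since $k_{0}$ sits canonically in every model $K$ of $\widetilde{\VFA}$ as a trivially valued transformal subfield (a prime of nonzero valuation would violate $\omega$-increasingness), the elementary theory of $K$ is exactly this complete theory, which depends only on the isomorphism type of $\sigma|_{k_{0}}$.

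For (2), I work in $\mathcal{L}^{\ast}=\mathcal{L}_{\VFA}\cup\{\rho^{\pm1}\}$ and let $T^{\ast}$ contain: the axioms of $\widetilde{\VFA}$ for $\sigma$; the axioms of $\widetilde{\VFA}$ with $\rho$ replacing $\sigma$ throughout; ``$\rho$ is an $\omega$-increasing valued field automorphism commuting with $\phi$''; ``$\rho^{n}=\sigma$''; and the scheme expressing ``$\rho$ agrees with $\tau_{0}$ on the algebraic closure of the prime field''. The first task is to show $T^{\ast}$ consistent. The crucial point is that the $\sigma$-action on a saturated value group admits an order-preserving $n$-th root, in contrast to the non-saturated case where it need not (multiplication by $\sigma$ on $\mathbf{Q}\left(\sigma\right)$ has no such root), and this is witnessed concretely by $\Gamma'=\mathbf{Q}\left(\sigma^{1/n}\right)$ ordered with $\sigma^{1/n}\gg1$, carrying the distinguished automorphism ``multiplication by $\sigma$'' together with its $n$-th root ``multiplication by $\sigma^{1/n}$''; this is a model of $\widetilde{\omega\OGA}$ for either choice. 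Dually, take $\left(k',\rho'\right)$ a model of $\ACFA$ extending the difference field $\left(k_{0},\tau_{0}\right)$ (using that $\ACFA$ is the model companion of $\FA$) and give $k'$ the distinguished automorphism $\left(\rho'\right)^{n}$. Then the spherically complete Hahn series field $k'\left(\left(t^{\Gamma'}\right)\right)$, with the diagonal actions of $\rho$ and of $\sigma:=\rho^{n}$, is a model of $T^{\ast}$: by Proposition~\ref{newton-max} it is transformally Newtonian for either automorphism (spherical completeness gives transformal algebraic maximality, the residue field is a model of $\ACFA$, the value group is transformally divisible), hence transformally Henselian with all additive operators onto.

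Given consistency of $T^{\ast}$, the model $N$ just built has $\mathcal{L}_{\VFA}$-reduct a model of $\widetilde{\VFA}$ containing $\left(k_{0},\sigma|_{k_{0}}\right)$, hence elementarily equivalent to $\left(K,\sigma\right)$ by completeness of $\widetilde{\VFA}_{\left(k_{0},\sigma|_{k_{0}}\right)}$ (as in (3)); a compactness argument over finite pieces of the elementary diagram then shows $T^{\ast}\cup\mathrm{ElDiag}_{\mathcal{L}_{\VFA}}\left(K,\sigma\right)$ consistent, so there is $\hat K\models T^{\ast}$, which we may take saturated of the same cardinality as $K$, with $\left(K,\sigma\right)\preceq\hat K|_{\mathcal{L}_{\VFA}}$. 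Now $\left(K,\sigma\right)$ and $\hat K|_{\mathcal{L}_{\VFA}}$ are saturated models of one complete theory and $k_{0}$ realizes the same type over $\emptyset$ in both, so there is an isomorphism $\Theta\colon\left(K,\sigma\right)\xrightarrow{\sim}\hat K|_{\mathcal{L}_{\VFA}}$ restricting to the identity on $k_{0}$; set $\tau=\Theta^{-1}\circ\rho^{\hat K}\circ\Theta$. Then $\tau$ is an $\omega$-increasing automorphism of $K$ with $\tau|_{k_{0}}=\tau_{0}$ and $\tau^{n}=\Theta^{-1}\sigma^{\hat K}\Theta=\sigma$; and reading $\Theta$ as an $\mathcal{L}_{\VFA}$-isomorphism with $\tau$, respectively $\rho^{\hat K}$, in the role of $\sigma$ shows that $\left(K,\tau\right)$ is a model of $\widetilde{\VFA}$, and it is saturated because it is a reduct of the saturated $\hat K$. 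The main obstacle is thus concentrated entirely in the consistency of $T^{\ast}$, i.e. in exhibiting one model of $\widetilde{\VFA}$ on which $\sigma$ has a compatible $n$-th root restricting to $\tau_{0}$ on $k_{0}$; the rest is bookkeeping with completeness over $k_{0}$, reducts, and homogeneity of saturated models.
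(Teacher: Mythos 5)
Your proof is correct, and for clauses (1) and (3) it is essentially the paper's argument (the paper verifies saturation for (1) via the criterion of Proposition~\ref{saturation}, whereas you use the slightly slicker observation that $(K,\sigma^{n}\circ\phi^{m},\phi)$ is a reduct of a definitional expansion of $(K,\sigma,\phi)$ and hence inherits saturation; for (3) both you and the paper invoke Theorem~\ref{model-complete} over $(k_0,\sigma|_{k_0})$, which has no nontrivial $\h$-finite $\sigma$-invariant extensions).

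For clause (2), however, your route is genuinely different from the paper's. The paper's proof is a one-line appeal to the $\ACFA$ analogue (Chatzidakis--Hrushovski 1.12, which produces an $n$-th root of $\sigma$ on the residue field extending $\tau_0$), stable embeddedness of the residue field in $\widetilde{\VFA}$, and the saturation criterion of Proposition~\ref{saturation}; in effect, one builds the root on $k$ and $\Gamma$ separately and then pieces them together using stable embeddedness and orthogonality. You instead pass to an expanded language $\mathcal{L}^{\ast}$ with an auxiliary automorphism symbol $\rho$, axiomatize the desired configuration as a theory $T^{\ast}$, and settle consistency by exhibiting a concrete Hahn-series model $k'\left(\left(t^{\mathbf{Q}(\sigma^{1/n})}\right)\right)$ that is transformally Newtonian for both $\sigma$ and $\rho$ (via Proposition~\ref{newton-max}), then pull $\rho$ back to $K$ by a compactness argument over the elementary diagram followed by a standard homogeneity/back-and-forth argument fixing $k_0$. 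Your approach is more self-contained (it does not rely on the internal statement of CH~1.12 and uses only the model-companion property of $\ACFA$), and the explicit Hahn-series witness makes the $n$-th-root issue at the level of the value group very transparent; the paper's approach is shorter given the citation to CH~1.12 and the already-established stable-embeddedness machinery. Both are valid, and your argument is a legitimate alternative proof.

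One point worth making explicit for polish: in your $T^{\ast}$ as stated, the clause ``$\rho$ agrees with $\tau_0$ on $k_0$'' is not expressible without names for the elements of $k_0$ (individual elements of $k_0$ are algebraic but not in general $\emptyset$-definable). This is harmless, since you immediately pass to $T^{\ast}\cup\mathrm{ElDiag}_{\mathcal{L}_{\VFA}}(K,\sigma)$, which supplies constants for all of $k_0$ and lets you state the scheme $\rho(c_a)=c_{\tau_0(a)}$; but the order of quantification over languages deserves a sentence of explanation so the reader does not stumble.
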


\begin{proof}
For (3) use Theorem \ref{model-complete}. For (1), let $\widetilde{K}$ denote
the model of $\VFA$ whose underlying valued field is the same
and whose automorphism is given by $\tau = \sigma^n \circ \phi^m$. Then $\widetilde{k}$
is a model of $\ACFA$ which is saturated provided that $k$ is; see
\cite{chatzidakis1999model}, 1.12. One then easily checks that $\widetilde{K}$ is a model of $\widetilde{\VFA}$; saturation follows at once from Proposition \ref{saturation} (namely $\widetilde{\Gamma}$ is still saturated by $o$-minimality and the condition on intersections of nested families of balls is still satisfied, as it depends only on the underlying valued field). Finally, for (2), using Hensel's lemma we may identify $k_0$ with the algebraic closure of the prime field in $k$; the claim follows from full embeddedness of the residue field in $\widetilde{\VFA}$ and \cite{chatzidakis1999model}, 1.12.
\end{proof}
\begin{prop}
\label{ACFA-t-hens}Let $K$ be a model of $\VFA$ which is
nontrivially valued. Then $K$ is a model of $\widetilde{\VFA}$
if and only if is transformally Henselian and its underlying difference
field is a model of $\ACFA$.
\end{prop}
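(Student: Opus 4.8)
The plan is to treat the two implications separately, after disposing of the parts common to both. That a model of $\widetilde{\VFA}$ is transformally Henselian is immediate from the definition of strict transformal Henselianity. Conversely, suppose $K$ is transformally Henselian and nontrivially valued and that its underlying difference field is a model of $\ACFA$; then by Fact \ref{frobacfa} this difference field is algebraically closed and existentially closed, and the remaining two axioms of $\widetilde{\VFA}$ follow at once. Writing $\nu = \nu_{+} - \nu_{-}$ with $\nu_{\pm} \in \mathbf{N}\left[\sigma\right]$, for any $0 \neq \nu \in \mathbf{Z}\left[\sigma\right]$ and $t \in K^{\times}$ the nonconstant difference polynomial equation $x^{\nu_{+}} = t \cdot x^{\nu_{-}}$ has a nonzero solution in a difference field extension and hence, by existential closedness, in $K$; applying $v$ gives $v(t) \in \nu\Gamma$, so $\Gamma$ is transformally divisible, and $\Gamma \neq 0$ since $K$ is nontrivially valued. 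Similarly, for a nonconstant additive operator $\tau x$ and $d \in K$ the equation $\tau x - d = 0$ is a nonconstant difference polynomial equation, hence solvable in $K$. So the substance is: for $K$ transformally Henselian and nontrivially valued, $K$ is a model of $\ACFA$ as a difference field if and only if its residue field $k$ is.

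For the forward implication I would invoke Theorem \ref{frobarevfa}: $\widetilde{\VFA}$ is the common theory of the $\omega$-increasing models of the theory $T$ of algebraically closed, nontrivially valued Frobenius transformal valued fields. A model $K$ of this common theory is elementarily equivalent (in $\mathcal{L}_{\VFA}$), by the usual compactness argument, to an ultraproduct $\prod_{\mathcal{U}} K_{i}$ of algebraically closed, nontrivially valued Frobenius transformal valued fields; and since $K$ is $\omega$-increasing, the associated Frobenius powers $q_{i}$ satisfy $\{i : q_{i} > n\} \in \mathcal{U}$ for every $n$. Taking $\mathcal{L}_{\FA}$-reducts (which commute with ultraproducts), the underlying difference field of $K$ is elementarily equivalent to $\prod_{\mathcal{U}}\left(K_{i},\sigma_{i}\right)$, an ultraproduct of algebraically closed Frobenius difference fields that is not itself a Frobenius difference field; by Fact \ref{frobacfa} it is a model of $\ACFA$, and since this is an elementary property, so is $K$ as a difference field.

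For the converse it remains to show $k \models \ACFA$. Using Lemma \ref{field-of-rep-exists-th} together with Remark \ref{remhens}, applied after lifting a transformal transcendence basis of $k$ over the prime field to units of $\mathcal{O}_{K}$ with transcendental residues (these span a trivially valued difference subfield $F_{0}$ over which $k$ is transformally algebraic), one obtains a field of representatives $F \subseteq K$ that is relatively transformally algebraically closed in $K$, with the residue map restricting to an isomorphism $F \cong k$; since $K$ is algebraically closed and algebraically Henselian (Lemma \ref{lem:(Basic-properties)}), $F$ is algebraically closed. To verify the $\ACFA$ axiom for $F$, let $V$ be an irreducible variety over $F$ and $W \subseteq V \times V^{\sigma}$ an irreducible subvariety with both projections dominant; we must find $b \in V(F)$ with $(b, \sigma b) \in W(F)$. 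If $W = V \times V^{\sigma}$, any $F$-point of $V$ works. Otherwise pick $g$ in the ideal of $W$ but not in the ideal of $V \times V^{\sigma}$, both regarded inside the polynomial ring in the ambient coordinates; then $g \neq 0$, and since the substitution sending the two ambient coordinate tuples to $y$ and $y^{\sigma}$ is an injection of polynomial rings into the difference polynomial ring over $F$, the difference polynomial $G(y) := g(y, y^{\sigma})$ is nonzero. Now base-change $(V, W)$ along $F \hookrightarrow K$; irreducibility and dominance of the projections survive because $F$ is algebraically closed, so the $\ACFA$ axiom in the difference field $K$ yields $b \in V(K)$ with $(b, \sigma b) \in W(K)$. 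Then $G(b) = 0$, so $b$ is transformally algebraic over $F$, and therefore $b \in F$. This gives a solution over $F \cong k$, so $k \models \ACFA$ and $K \models \widetilde{\VFA}$.

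The genuine obstacles I anticipate are, first, that the forward direction rests on Theorem \ref{frobarevfa} and hence ultimately on the Frobenius estimates of \cite{hrushovski2004elementary}; and second, in the converse, the two points that make the descent argument go through: arranging that the field of representatives be relatively transformally algebraically closed (the hypothesis of Remark \ref{remhens} must be set up by hand via the chosen lifts), and the observation that the correspondence $W$, unless it is everything, forces a nonzero difference-polynomial relation on any solution — it is this that lets one pull an $\ACFA$-solution from the difference field $K$ back into the relatively closed subfield $F$, hence into the residue field.
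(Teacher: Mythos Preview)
Your forward direction via Theorem~\ref{frobarevfa}, and your verification of axioms (2) and (3) in the converse, are correct and match the paper. The genuine gap is in the step showing $k \models \ACFA$.

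You pick a single polynomial $g$ in the ideal of $W$ but not of $V \times V^{\sigma}$, set $G(y) = g(y, y^{\sigma})$, find $b \in V(K)$ with $(b, \sigma b) \in W$ using $K \models \ACFA$, and conclude from $G(b) = 0$ that $b$ is transformally algebraic over $F$. But $G$ is one difference polynomial in the \emph{tuple} $y = (y_1, \ldots, y_n)$; its vanishing at $b$ does not force any coordinate of $b$ to be transformally algebraic over $F$. Concretely, take $V = \mathbf{A}^2$ and $W = \{z_1 = y_1\} \subseteq V \times V^{\sigma}$: this is irreducible with both projections dominant and $\dim W = 3 > 2 = \dim V$; your $G$ is $y_1^{\sigma} - y_1$, and a solution $b = (b_1, b_2)$ merely has $b_1$ in the fixed field while $b_2$ is unconstrained and may be transformally transcendental over $F$. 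The phenomenon is general: whenever $\dim W > \dim V$, the fibre of $W$ over $b$ is positive-dimensional, so $\sigma b$ need not be algebraic over $F(b)$ and $F(b)_{\sigma}$ can have infinite transcendence degree over $F$.

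The paper's proof avoids this by quoting the standard fact that a difference subfield which is relatively transformally algebraically closed inside a model of $\ACFA$ is itself a model of $\ACFA$; your field of representatives $F$ is exactly the input needed to invoke it. If you want to salvage the direct verification, first cut $W$ down: since $F$ is algebraically closed, iterated generic hyperplane sections over $F$ produce an irreducible $W_0 \subseteq W$ with $\dim W_0 = \dim V$ and both projections still dominant. For a solution $(b, \sigma b) \in W_0$ one then has $\sigma b$ (and, via the other projection, $\sigma^{-1} b$) algebraic over $F(b)$, so $F(b)_{\sigma}$ has transcendence degree at most $\dim V$ over $F$, and now $b$ genuinely is transformally algebraic over $F$, hence lies in $F$.
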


\begin{proof}

By definition models of $\widetilde{\VFA}$ are transformally Henselian. Furthermore, 
if $K$ is a model of $\widetilde{\VFA}$ then $K$ is elementarily equivalent to an ultraproduct of algebraically closed Frobenius difference fields, using Theorem \ref{frobarevfa}; thus
the underlying difference field of a model of $\widetilde{\VFA}$
is a model of $\ACFA$, using \ref{frobacfa}. 

For the converse, assume $K\models\ACFA$
is nontrivially valued and transformally Henselian. We want to prove that $K \models \widetilde{\VFA}$.
Since $K$ is transformally Henselian, the residue field lifts, and we can identify
$k$ with a relatively transformally algebraically closed difference
subfield of $K$; a difference field transformally algebraically closed
inside a model of $\ACFA$ is a model of $\ACFA$, so $K$ is strictly
transformally Henselian. Moreover, every nonconstant difference polynomial
has a root; so $\left(K^{\times}\right)^{\nu}=K^{\times}$ for all
$0\neq\nu\in\mathbf{Z}\left[\sigma\right]$. Finally, all additive
difference operators are onto, thus $K\models\widetilde{\VFA}$.
\end{proof}

\begin{rem}
Let $K$ be a valued field. Then $K$ is EC if and only if it is EC as a field (that is, algebraically closed) and nontrivially valued. But this is no longer true in the transformal settings: the condition that $K$ is transformally Henselian is necessary. See Example \ref{ex:optimal-axiom}.
\end{rem}

\begin{prop}
Let $\left(K_{20},K_{21},K_{10}\right)$ be a triplet of models of
$\VFA$ with $K_{21}$ and $K_{10}$ nontrivially valued. Then
$K_{20}$ is a model of $\widetilde{\VFA}$ if and only if $K_{21}$
and $K_{10}$ are so. In particular, the transformally Archimedean
ultrapower of a transformally Archimedean model of $\widetilde{\VFA}$
is a model of $\widetilde{\VFA}$.
\end{prop}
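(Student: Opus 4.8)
The plan is to deduce both directions of the equivalence from Proposition \ref{ACFA-t-hens} together with the transitivity in triplets of transformal Henselianity (Lemma \ref{transitivetriplets}). First I would record the elementary observations that make this work: in a triplet $\left(K_{21},K_{20},K_{10}\right)$ the fields $K_{20}$ and $K_{21}$ have the same underlying difference field $K_{2}$, and the underlying difference field of $K_{10}$ is the residue field $K_{1}$ of $K_{21}$; moreover there is a short exact sequence $0\to\Gamma_{10}\to\Gamma_{20}\to\Gamma_{21}\to0$, so $\Gamma_{21}\neq0$ forces $\Gamma_{20}\neq0$. Hence if $K_{21}$ and $K_{10}$ are nontrivially valued then so is $K_{20}$, and all four fields are nontrivially valued models of $\VFA$; by Proposition \ref{ACFA-t-hens}, for such a field being a model of $\widetilde{\VFA}$ is the same as being transformally Henselian with underlying difference field a model of $\ACFA$.

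Given this, the forward direction is immediate: if $K_{20}\models\widetilde{\VFA}$ then $K_{20}$ is transformally Henselian, so $K_{21}$ and $K_{10}$ are transformally Henselian by Lemma \ref{transitivetriplets}; the underlying difference field $K_{2}$ of $K_{20}$ -- hence of $K_{21}$ -- is a model of $\ACFA$ by Proposition \ref{ACFA-t-hens}, whence $K_{21}\models\widetilde{\VFA}$ by the same proposition; and then its residue field $K_{1}$, being the underlying difference field of $K_{10}$, is a model of $\ACFA$, so $K_{10}\models\widetilde{\VFA}$ by Proposition \ref{ACFA-t-hens} once more. The converse reads the same chain backwards: if $K_{21},K_{10}\models\widetilde{\VFA}$ then $K_{20}$ is nontrivially valued, transformally Henselian by Lemma \ref{transitivetriplets}, and has underlying difference field equal to that of $K_{21}$, a model of $\ACFA$; so $K_{20}\models\widetilde{\VFA}$ by Proposition \ref{ACFA-t-hens}.

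For the consequence about the transformally Archimedean ultrapower, let $K$ be a transformally Archimedean model of $\widetilde{\VFA}$ and $K^{\star}$ its ordinary ultrapower; since $\widetilde{\VFA}$ is first order, $K^{\star}\models\widetilde{\VFA}$, and it is nontrivially valued. Recall from Section \ref{sec:Transformally-Archimedean-Ultrap} that $\mathcal{O}^{\mathcal{F}}$ is obtained from $\mathcal{O}^{\star}$ by localizing away from $\mathfrak{q}$ and factoring out $\mathfrak{p}$, where $\mathfrak{p}\subseteq\mathfrak{q}$ are the transformally prime ideals of $\mathcal{O}^{\star}$ whose associated convex $\sigma$-invariant subgroups of $\Gamma^{\star}$ are $\text{conv}\Gamma$ and $\text{inf}\Gamma$. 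I would realize $K^{\mathcal{F}}$ by applying the main statement twice along the chain $\text{inf}\Gamma\subseteq\text{conv}\Gamma\subseteq\Gamma^{\star}$: the ideal $\mathfrak{q}$ exhibits $K^{\star}$ as the middle ($K_{20}$) term of a triplet whose coarsening $K_{21}:=\left(K^{\star},\left(\mathcal{O}^{\star}\right)_{\mathfrak{q}}\right)$ and residue field have valuation groups $\Gamma^{\star}/\text{inf}\Gamma$ and $\text{inf}\Gamma$, both nonzero -- using $\text{inf}\Gamma\neq0$, witnessed by the diagonal backward orbit $\left[\left(\sigma^{-n}\alpha_{0}\right)_{n}\right]$ of a positive $\alpha_{0}\in\Gamma$, and $\text{conv}\Gamma\subsetneq\Gamma^{\star}$, witnessed by the diagonal forward orbit $\left[\left(\sigma^{n}\alpha_{0}\right)_{n}\right]$ -- so $K_{21}\models\widetilde{\VFA}$; applying the main statement again to $K_{21}$ with the image of $\mathfrak{p}$, whose triplet has outer terms of valuation groups $\Gamma^{\star}/\text{conv}\Gamma$ and $\text{conv}\Gamma/\text{inf}\Gamma=\Gamma^{\mathcal{F}}$, both nonzero by the strict inclusion $\text{inf}\Gamma\subsetneq\text{conv}\Gamma$, identifies $K^{\mathcal{F}}$ with the residue-field term of that triplet, so $K^{\mathcal{F}}\models\widetilde{\VFA}$. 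The only genuinely delicate point is this last identification -- matching the localize-then-quotient description of $\mathcal{O}^{\mathcal{F}}$ with the residue term of the second triplet and checking that the four intermediate valuation groups are all nonzero -- while everything else is formal.
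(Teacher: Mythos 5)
Your proof is correct and takes essentially the same route as the paper's: both directions reduce to Lemma~\ref{transitivetriplets} (transitivity of transformal Henselianity in triplets) combined with the criterion of Proposition~\ref{ACFA-t-hens}, and the nontriviality bookkeeping via the short exact sequence $0\to\Gamma_{10}\to\Gamma_{20}\to\Gamma_{21}\to 0$ is exactly what is needed. Your intermediate step is if anything slightly cleaner than the paper's --- you read off $K_{1}\models\ACFA$ directly as the residue field of $K_{21}\models\widetilde{\VFA}$, rather than via the lifting-to-a-transformally-algebraically-closed-subfield argument the paper uses --- and you usefully spell out the ``in particular'' about the transformally Archimedean ultrapower (which the paper leaves implicit), correctly identifying $K^{\mathcal{F}}$ as the residue-field term after two applications along the chain $0\subsetneq\text{inf}\,\Gamma\subsetneq\text{conv}\,\Gamma\subsetneq\Gamma^{\star}$, with the diagonal backward and forward orbits witnessing the two outer strict inclusions.
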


\begin{proof}
Let us assume that $K_{20}\models\widetilde{\VFA}$. We must prove that $K_{21}$ and $K_{10}$ are models of $\widetilde{\VFA}$.

The abstract difference field $K_{2}\models\ACFA$ and $K_{21}$ is transformally
Henselian by Lemma \ref{transitivetriplets}; as $K_{21}$ is by assumption
nontrivially valued, it is a model of $\widetilde{\VFA}$ using
Proposition \ref{ACFA-t-hens}. Moreover $K_{1}$ is isomorphic to
a difference subfield of $K_{2}$ which is transformally algebraically
closed in $K_{2}$; so $K_{1}$ is a model of $\ACFA$. Since $K_{10}$
is transformally Henselian, it follows that $K_{10}$ is a model of
$\widetilde{\VFA}$. For the converse, we know that $K_{20}$ is transformally
Henselian so it is enough to prove that $K_{2}\models\ACFA$; but
this follows from the fact that $K_{21}\models\widetilde{\VFA}$.

The last part follows immediately from the explicit construction of the transformally Archimedean ultrapower (and the fact that $\widetilde{\VFA}$ is preserved under ultrapowers)
\end{proof}
\begin{prop}
Let $K$ be a model of $\widetilde{\VFA}$. Then the completion
$\widehat{K}$ of $K$ is a model of $\widetilde{\VFA}$.
\end{prop}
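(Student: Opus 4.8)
The plan is to verify the three defining axioms of $\widetilde{\VFA}$ for $\widehat{K}$, the only substantial points being transformal Henselianity and transformal algebraic maximality, which I would obtain by sandwiching $\widehat{K}$ between $K$ and a spherically complete immediate extension of $K$. Since $\widehat{K}$ is an immediate extension of $K$, its valuation group remains nonzero and transformally divisible and its residue field remains $k\models\ACFA$; moreover $K$, being algebraically closed, is algebraically Henselian, so $\widehat{K}$ is algebraically Henselian with $G_{\widehat{K}}\cong G_{K}=1$ and is perfect, hence algebraically closed. By Proposition \ref{wvfa-newtonian} the field $K$ is transformally Newtonian, hence transformally algebraically maximal, so Lemma \ref{t-Newtonian-spherically complete} furnishes a spherically complete immediate extension $\bar{K}$ of $K$; as $\bar{K}$ is complete with valuation group $\Gamma$, the universal property of the completion gives embeddings $K\subseteq\widehat{K}\subseteq\bar{K}$. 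Since $\bar{K}$ is spherically complete it is transformally Henselian and transformally algebraically maximal, with residue field $k\models\ACFA$ and transformally divisible valuation group, so by Proposition \ref{newton-max} it is transformally Newtonian, i.e. $\bar{K}\models\widetilde{\VFA}$; and since $\bar{K}$ is an immediate extension of the transformally algebraically maximal $K$, the field $K$ is transformally algebraically closed in $\bar{K}$.

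The heart of the argument is the claim that $\widehat{K}$ is transformally algebraically closed in $\bar{K}$. Let $a\in\bar{K}$ be transformally algebraic over $\widehat{K}$. After a Frobenius twist (Lemma \ref{twist}) and an affine change of variables (available since $\Gamma$ is transformally divisible), we may take $a$ to be a root of a nonzero $fx\in\mathcal{O}_{\widehat{K}}[x]_{\sigma}$ with $vf'a=0$. Using density of $K$ in $\widehat{K}$, choose $f_{\delta}\in K[x]_{\sigma}$ approximating the coefficients of $f$ to precision cofinal in $\Gamma$; then $vf_{\delta}'a=0$ while $vf_{\delta}a$ is cofinal in $\Gamma$, so Newton's lemma in the transformally Henselian $\bar{K}$ (Lemma \ref{newton}) produces a $\bar{K}$-root $c_{\delta}$ of $f_{\delta}$ with $v(a-c_{\delta})=vf_{\delta}a$. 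Since $c_{\delta}$ is transformally algebraic over $K$ and $K$ is transformally algebraically closed in $\bar{K}$, we have $c_{\delta}\in K$; as $v(a-c_{\delta})$ is cofinal, the $c_{\delta}$ form a Cauchy net in $K$ whose limit in $\widehat{K}$ must equal $a$, so $a\in\widehat{K}$. In particular $\widehat{K}$, being relatively transformally algebraically closed in the transformally Henselian field $\bar{K}$, is itself transformally Henselian (as in Lemma \ref{lem:(Basic-properties)} and the proof of Lemma \ref{complete-t-henselian}).

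Next I would show that $\widehat{K}$ is transformally algebraically maximal. Suppose not: let $L=\widehat{K}(a)_{\sigma}$ be a proper immediate transformally algebraic extension, and (after twisting) choose a least-degree difference polynomial $fx$ over $\widehat{K}$ with $fa=0$, so $f'a\neq0$. Let $(b_{\alpha})$ be the nested family of all closed $\widehat{K}$-balls containing $a$; by minimality and Lemma \ref{lem-transformal-herbarnd-concave} the generic values $vfb_{\alpha}$ are strictly increasing, and since $L$ is immediate over $\widehat{K}$ the intersection $\bigcap_{\alpha}b_{\alpha}$ has no point of $\widehat{K}$. But $a\in b_{\alpha}$ for each $\alpha$, so rescaling $b_{\alpha}$ to $\mathcal{O}$ and invoking Lemma \ref{lem-Newton-Polygon-roots}, the value $0$ is a tropical root, and since $\bar{K}$ is transformally Newtonian $f$ has a $\bar{K}$-root inside each $b_{\alpha}$; by scatteredness (Theorem \ref{Scatteredness}) $f$ then has a $\bar{K}$-root $c$ in $\bigcap_{\alpha}b_{\alpha}$. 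As $c$ is transformally algebraic over $\widehat{K}$ the previous paragraph gives $c\in\widehat{K}$, contradicting that $\bigcap_{\alpha}b_{\alpha}$ has no $\widehat{K}$-points.

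Finally, $\widehat{K}$ is transformally algebraically maximal with residue field $k\models\ACFA$ and transformally divisible valuation group, so by Proposition \ref{newton-max} it is transformally Newtonian; in particular every nonconstant additive operator over $\widehat{K}$ is onto (the Newton polygon of $\tau x-d$ for $d\neq0$ has a finite slope), and together with transformal Henselianity and $k\models\ACFA$ this gives $\widehat{K}\models\widetilde{\VFA}$. The step I expect to be the main obstacle is the Cauchy-convergence argument for transformal algebraic closedness in $\bar{K}$: one must keep the approximating roots $c_{\delta}$ on the single branch through $a$, which is precisely what the twist achieving $vf'a=0$ and the use of Newton's lemma over the spherically complete $\bar{K}$ ensure.
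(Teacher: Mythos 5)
Your proof is correct, but it takes a genuinely different and considerably more elaborate route than the paper's. The paper proceeds directly in $\widehat{K}$: it cites Lemma~\ref{complete-t-henselian} for transformal Henselianity of $\widehat{K}$, observes that the residue field and value group are unchanged since the extension is immediate, and verifies surjectivity of an additive operator $\tau x - c$ by a short approximation argument --- approximate its coefficients by elements of $K$, solve approximately in $K$, then apply Newton's lemma (Lemma~\ref{newton}) inside $\widehat{K}$ to refine the solution. You instead sandwich $\widehat{K}$ between $K$ and a spherically complete immediate extension $\bar{K}$, prove $\widehat{K}$ is relatively transformally algebraically closed in $\bar{K}$ (the same density-plus-Newton engine, but carried out inside $\bar{K}$), deduce transformal Henselianity from this, then separately establish transformal algebraic maximality of $\widehat{K}$ via scatteredness in $\bar{K}$, and finally invoke Proposition~\ref{newton-max} to get transformal Newtonianity and hence axiom~(3). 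The two approaches rest on the same two pillars (density of $K$ in $\widehat{K}$, Newton's lemma), but your routing through $\bar{K}$ has the advantage of making the transformal Henselianity of $\widehat{K}$ self-contained --- the paper's appeal to Lemma~\ref{complete-t-henselian} here is terse, since that lemma as stated gives that $K$ is relatively transformally algebraically closed in $\widehat{K}$, and passing from there to transformal Henselianity of $\widehat{K}$ itself needs either the transformally Archimedean case of part~(2) plus a d\'evissage, or the argument you give. The price you pay is proving transformal algebraic maximality of $\widehat{K}$ as an intermediate step, which is more than the three axioms strictly require (the paper obtains that statement only afterwards, via Proposition~\ref{wvfa-newtonian}, as a consequence of $\widehat{K}\models\widetilde{\VFA}$); one could shortcut your last two steps by running the paper's direct additive-operator argument inside $\bar{K}$ instead.
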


\begin{proof}
By Proposition \ref{complete-t-henselian} the field $\widehat{K}$
is transformally Henselian; the residue field and the valuation groups
of $K$ and $\widehat{K}$ coincide so it is enough to prove that
the additive difference operators are onto on $\widehat{K}$. Since
$\widehat{K}$ is perfect and inversive, after twisting, we may assume
that $\tau'\neq0$. By scatteredness and rescaling we may assume that
all the roots of $\tau x$ are integral. Since $\mathcal{O}$ lies
dense in $\widehat{\mathcal{O}}$, approximating the coefficients
we can find $a\in\mathcal{O}$ with $v\tau a>\beta$ where $\beta$
is the valuation of the coefficient of the constant term; since $\widehat{K}$
is transformally Henselian, by Lemma \ref{newton} we find that $\tau$
admits a root in $\widehat{\mathcal{O}}$.
\end{proof}
\begin{cor}
Let $k$ be a model of $\ACFA$ and $t$ an element transformally
transcendental over $k$. Then there is a model $K\models\ACFA$ over
$k\left(t\right)_{\sigma}$ and such that no element of $K$ is transformally
algebraic over $k$, unless it already lies in $k$.
\end{cor}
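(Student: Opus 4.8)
The plan is to exhibit $K$ explicitly as a field of Hahn series. Let $\Gamma=\mathbf{Q}(\sigma)$, ordered so that $\sigma$ exceeds every natural number; then multiplication by $\sigma$ is $\omega$-increasing, and the only $\sigma$-invariant convex submodules of $\Gamma$ are $0$ and $\Gamma$ (the forward $\sigma$-orbit of a positive element is cofinal, its backward orbit coinitial), so $\Gamma$ is transformally divisible and transformally Archimedean. Form $M=k\bigl(\bigl(t^{\Gamma}\bigr)\bigr)$, expanded to a transformal valued field in the evident way; it is perfect, inversive, $\omega$-increasing, spherically complete, and algebraically closed as a field (a Hahn field over an algebraically closed field with divisible value group), with residue field $k$ and value group $\Gamma$. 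The exponents $1,\sigma,\sigma^{2},\dots$ are $\mathbf{Z}$-linearly independent, so $t:=t^{1}$ is transformally transcendental over $k$; since any purely transformally transcendental extension of $k$ by one element is unique up to isomorphism over $k$, it suffices to take $K$ to be the difference field underlying $M$.

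First I would check that $M$ is a model of $\widetilde{\VFA}$. Being spherically complete, $M$ is complete and transformally algebraically maximal; being algebraically closed it is algebraically Henselian; and being transformally Archimedean, complete, algebraically Henselian, and (trivially) transformally algebraically closed in its completion $\widehat{M}=M$, it is transformally Henselian by Lemma \ref{complete-t-henselian}(2). Its residue field $k$ is a model of $\ACFA$, so $M$ is strictly transformally Henselian, and its value group $\Gamma=\mathbf{Q}(\sigma)$ is nonzero and transformally divisible. Finally, $M$ is transformally algebraically maximal, the residue field $k$ has enough roots of difference polynomials (it is existentially closed), and $\Gamma$ is transformally divisible, so Proposition \ref{newton-max} shows that $M$ is transformally Newtonian. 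In particular, for every nonconstant additive operator $\tau$ over $M$ and every $c\in M$ the difference polynomial $\tau x-c$ has an $M$-root — it has a finite tropical root when $c\neq 0$ (an edge of its Newton polygon), and $0$ is a root when $c=0$ — so every nonconstant additive operator is onto. Thus all the axioms of $\widetilde{\VFA}$ are satisfied.

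It then follows from Proposition \ref{ACFA-t-hens} (or Theorem \ref{frobarevfa}) that the difference field $K$ underlying $M$ is a model of $\ACFA$, and by construction $K$ contains $k(t)_{\sigma}$. Let $E$ be the relative transformal algebraic closure of $k$ in $K$; it is a perfect inversive difference subfield of $M$, transformally algebraic over $k$, hence a model of $\VFA$. By Corollary \ref{cor-transformally-alg-no-valgp-adjunction}, $\Gamma_{E}\otimes\mathbf{Q}(\sigma)=\Gamma_{k}\otimes\mathbf{Q}(\sigma)=0$; since an ordered transformal module is torsion-free over $\mathbf{Z}[\sigma]$, this forces $\Gamma_{E}=0$, i.e.\ $E$ carries the trivial valuation induced from $M$. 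Hence the residue map $\mathcal{O}_{M}\to k$ is injective on $E$ and restricts to the identity on $k\subseteq E$, so its image is a difference subfield of $k$ containing $k$, namely $k$ itself; therefore $E=k$. That is, no element of $K$ is transformally algebraic over $k$ unless it already lies in $k$, which completes the proof.

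The main obstacle is the verification that $M$ is a model of $\widetilde{\VFA}$: everything turns on the axiom that nonconstant additive operators are onto, which I extract from transformal Newtonianity via Proposition \ref{newton-max} together with the elementary observation that a difference polynomial with nonzero constant term has a finite tropical root. The remaining ingredients — spherical completeness and algebraic closedness of the Hahn field, transformal Archimedeanity of $\mathbf{Q}(\sigma)$, and the Abhyankar-type control of the value group under transformally algebraic extensions — are routine given the results already established.
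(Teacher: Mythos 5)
Your proof is correct and uses the same construction as the paper ($K=k\bigl(\bigl(t^{\mathbf{Q}(\sigma)}\bigr)\bigr)$ viewed as a model of $\widetilde{\VFA}$, then applying Proposition~\ref{ACFA-t-hens}), but you supply two verifications the paper elides: a detailed check that the Hahn field satisfies the $\widetilde{\VFA}$ axioms (via Lemma~\ref{complete-t-henselian}, Proposition~\ref{newton-max}, and the Newton-polygon observation about additive operators), and the argument that the relative transformal algebraic closure of $k$ in $K$ is trivially valued, hence equal to $k$ via the residue map --- the paper's one-line proof doesn't address the ``no transformally algebraic elements'' clause at all. A minor redundancy: transformal Newtonianity already implies transformal Henselianity (Remark on Definition~\ref{defn-t-newtonian}), so the separate appeal to Lemma~\ref{complete-t-henselian} could be dropped.
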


\begin{proof}
Let us regard $k$ as a trivially valued model of $\VFA$ and
set $K=k\left(\left(t^{\mathbf{Q}\left(\sigma\right)}\right)\right)$;
then $K\models\widetilde{\VFA}$ and hence $K\models\ACFA$
by Proposition \ref{ACFA-t-hens}.
\end{proof}
\begin{example}
\label{ex:optimal-axiom}If $K$ is an algebraically closed and nontrivially
valued field, then $K$ is existentially closed as a valued field
and in particular Henselian. This is no longer true in the presence
of an automorphism; the requirement that $K$ be transformally Henselian
in Proposition \ref{ACFA-t-hens} is necessary. For example, let $K\models\widetilde{\VFA}$.
Let $x=a+t$ where $a^{\sigma}=a$ and $t$ is generic in $\mathcal{M}$
over $K$; assume moreover the residue class of $a$ is algebraically
transcendental over $k$. Let $L$ be a model of $\ACFA$ over $K\left(x\right)_{\sigma}$
with no element of $L$ transformally algebraic over $K$, unless
it already lies in $K$. The fixed field of $L$ must then coincide
with the fixed field of $K$. On the other hand, the fixed field of
$l$ properly extends the fixed field of $k$. So the fixed field
of $L$ does not lift to a trivially valued field; it follows that
$L$ cannot be transformally Henselian.

\end{example}
\newpage{}
\bibliographystyle{plain}

\end{document}